\newtheorem{te}{Theorem}[section]
\newtheorem{prop}[te]{Proposition}
\newtheorem{pb}[te]{Problem}
\newtheorem{co}[te]{Corollary}
\newtheorem{conj}[te]{Conjecture}
\newtheorem{lemme}[te]{Lemma}
\theoremstyle{definition}
\newtheorem{de}[te]{Definition}
\newtheorem{ex}[te]{Example}
\theoremstyle{remark}
\newtheorem{rque}[te]{Remark}
\newenvironment{proofnash}{\noindent\textit{Proof of Proposition \ref{theorie2}.~}}{\hfill$\square$\bigbreak} 
\newenvironment{proofnashsac}{\noindent\textit{Proof of Proposition \ref{sacerdote1}.~}}{\hfill$\square$\bigbreak} 
\newenvironment{proofnashlemme}{\noindent\textit{Proof of Lemma \ref{abovelemma}.~}}{\hfill$\square$\bigbreak} 
\newlength{\plarg}
\title{On Tarski's problem for virtually free groups}
\author{Simon André}
\begin{document}
\clearpage
\begin{minipage}{\linewidth}
	\begin{abstract}
We give a complete classification of finitely generated virtually free groups up to $\forall\exists$-elementary equivalence. As a corollary, we give an algorithm that takes as input two finite presentations of virtually free groups, and decides whether these groups have the same $\forall\exists$-theory or not.
	\end{abstract}
	\maketitle
\end{minipage}

\section{Introduction}

\thispagestyle{empty}

The problem of classifying algebraic structures up to elementary equivalence emerged in the middle of the twentieth century. Around 1945, Tarski asked whether all non-abelian finitely generated free groups are elementarily equivalent. Two decades later, Merzlyakov made an important step forward by proving that free groups have the same \emph{positive theory}, i.e.\ satisfy the same first-order sentences without inequalities (see \cite{Mer66}). Sacerdote subsequently generalized Merzlyakov's result and proved in \cite{Sac73b} that all groups that split as a non-trivial free product have the same positive theory, except the infinite dihedral group $D_{\infty}=\mathbb{Z}/2\mathbb{Z}\ast\mathbb{Z}/2\mathbb{Z}$. In the same year, Sacerdote proved in \cite{Sac73} that free groups have the same $\forall\exists$\emph{-theory}, meaning that they satisfy the same sentences of the form $\forall\boldsymbol{x}\exists\boldsymbol{y} \psi(\boldsymbol{x},\boldsymbol{y})$, where $\boldsymbol{x}$ and $\boldsymbol{y}$ are two tuples of variables, and $\psi$ is a quantifier-free formula in these variables. Merzlyakov and Sacerdote's proofs rely heavily on small cancellation theory.

\smallskip 

Another major breakthrough towards the resolution of Tarski's problem was the study of systems of equations defined over a free group, due to Makanin and Razborov (see \cite{Mak82}, \cite{Mak84} and \cite{Raz84}).

\smallskip 

A positive answer to Tarski's question was eventually given by Sela in \cite{Sel06} and by Kharlampovich and Myasnikov in \cite{KM06}, as the culmination of two voluminous series of papers. 

\smallskip 

Sela further generalized his work and classified torsion-free hyperbolic groups up to elementary equivalence. His solution involves a study of the $\forall\exists$-theory of a given hyperbolic group (see \cite{Sel04} and \cite{Sel09}), combined with a quantifier elimination procedure down to $\forall\exists$-sentences (see \cite{Sel05a} and \cite{Sel05b}). The theory of group actions on real trees plays a crucial role in his approach (see for instance \cite{GLP94}, \cite{RS94}, \cite{BF95b}, \cite{Sel97}).

\smallskip 

In this paper, we give a complete classification of finitely generated virtually free groups up to $\forall\exists$-elementary equivalence, i.e.\ we give necessary and sufficient conditions for two finitely generated virtually free groups $G$ and $G'$ to have the same $\forall\exists$-theory, denoted by $\mathrm{Th}_{\forall\exists}(G)=\mathrm{Th}_{\forall\exists}(G')$. Recall that a group is said to be \emph{virtually free} if it has a free subgroup of finite index. For instance, it is well-known that $\mathrm{SL}_2(\mathbb{Z})$ has a subgroup of index 12 isomorphic to the free group $F_2$.

%$\mathrm{SL}_2(\mathbb{Z})=\mathbb{Z}/6\mathbb{Z}\ast_{\mathbb{Z}/2\mathbb{Z}}\mathbb{Z}/4\mathbb{Z}$

\smallskip 

Among virtually free groups, a wide variety of behaviours can be observed from the point of view of first-order logic. Here is an interesting illustration: on the one hand, all non-abelian free groups are elementarily equivalent (see \cite{Sel06} and \cite{KM06}), while at the other extreme, it can be proved that two co-Hopfian virtually free groups are elementarily equivalent if and only if they are isomorphic. Recall that a group is said to be \emph{co-Hopfian} if every monomorphism from this group into itself is bijective. One example of a co-Hopfian virtually free group is $\mathrm{GL}_2(\mathbb{Z})$. Between these two extremes, the picture is much more varied, and our goal in this paper is to give a description of it.

\smallskip 

In fact, in the class of virtually cyclic groups, we already have a glimpse of the unexpected influence of torsion on the first-order theory, as shown by the following example.

\begin{ex}\label{exemple3}Consider the following two $\mathbb{Z}/25\mathbb{Z}$-by-$\mathbb{Z}$ groups:\[N=\langle a,t \ \vert \ a^{25}=1, \ tat^{-1}=a^6\rangle\ \ \ \text{ and } \ \ \ N'=\langle a',t' \ \vert \ a'^{25}=1, \ t'a't'^{-1}=a'^{11}\rangle.\]These groups are non-isomorphic, but $N\times \mathbb{Z}$ and $N'\times \mathbb{Z}$ are isomorphic. It follows from a theorem of Oger (see \cite{Oge83}) that $N$ and $N'$ are elementarily equivalent.\end{ex}

This example is a particular manifestation of a more general phenomenon that plays an important role in our classification. Here below is an informal version of our main result; see Theorem \ref{principal0} for a precise statement.

\subsection*{Main result (see Theorem \ref{principal0})}\emph{Two finitely generated virtually free groups $G$ and $G'$ are $\forall\exists$-elementarily equivalent if and only if there exist two isomorphic groups $\Gamma \supset G$ and $\Gamma'\supset G'$ obtained respectively from $G$ and $G'$ by performing a finite sequence of specific HNN extensions over finite groups (called \emph{legal large extensions}) or replacements of virtually cyclic subgroups by virtually cyclic overgroups (called \emph{legal small extensions}).}

\medskip

As a corollary of this classification, we give an algorithm that takes as input two finite presentations of virtually free groups, and decides whether these groups have the same $\forall\exists$-theory or not. This algorithm relies on the main algorithm of \cite{DG11}, that takes as input two finite presentations of hyperbolic groups, and decides whether these groups are isomorphic or not.

Moreover, Theorem \ref{principal} gives three other characterizations of $\forall\exists$-elementary equivalence among virtually free groups. In fact, it is worth noting that some of our results are proved in the more general context of hyperbolic groups (see in particular Theorem \ref{legalte} and Theorem \ref{legal2te}), and we except that they will be useful in a future classification of hyperbolic groups (possibly with torsion) up to elementary equivalence. 

In addition, in some cases, we establish results stronger than $\forall\exists$-elementary equivalence, namely the existence of elementary embeddings (or rather $\exists\forall\exists$-elementary embeddings, see Defintion \ref{elememb}). We refer the reader to Theorem \ref{legalteplus}.

Before stating precise results, we need to introduce some definitions. Throughout the paper, all virtually free groups are assumed to be finitely generated, and we shall not repeat this assumption anymore. 

\subsection*{Legal large extensions}

By \cite{KPS73} (see also \cite{SW79} Theorem 7.3), a finitely generated group is virtually free if and only if it splits as a finite graph of finite groups, i.e.\ acts cocompactly by isometries on a simplicial tree with finite vertex stabilizers. Hence, every finitely generated virtually free group can be obtained from finite groups by iterating amalgamated free products and HNN extensions over finite groups. As a consequence, one of the basic questions we have to answer is the following: \emph{how amalgamated free products and HNN extensions over finite groups do affect the $\forall\exists$-theory of a virtually free group, or more generally of a hyperbolic group?}

\medskip

It can easily be seen that the number of conjugacy classes of finite subgroups of a given group is determined by its $\forall\exists$-theory. Thus, if a virtually free group $G$ splits as $G=A\ast_C B$ over a finite group $C$, then $G$ and $A,B$ have distinct $\forall\exists$-theories provided that $A$ or $B$ is not isomorphic to $C\rtimes F_n$, in which case the amalgamated product can be written as a multiple HNN extension. Hence, we can restrict our attention to the case where $G=A\ast_C$, with $C$ finite, which is more subtle: sometimes, the $\forall\exists$-theory is preserved when performing an HNN extension over finite groups, as shown by the following example.

\begin{ex}\label{provide}Let $G$ be a virtually free group (and more generally a hyperbolic group) without non-trivial normal finite subgroup, for instance $F_2$ or $\mathrm{PSL}_2(\mathbb{Z})=\mathbb{Z}/3\mathbb{Z}\ast\mathbb{Z}/2\mathbb{Z}$. Then, by Theorem \ref{legalte} below, we have $\mathrm{Th}_{\forall\exists}(G)=\mathrm{Th}_{\forall\exists}\left(G\ast_{\lbrace 1\rbrace}\right)$.\end{ex}

%It is worth noting that if $G$ is torsion-free, Sela proved that $G$ and $G\ast_{\lbrace 1\rbrace}$ have the same first-order theory, as a special case of his classification of torsion-free hyperbolic groups up to elementary equivalence.

But sometimes, performing an HNN extension over finite groups modifies the $\forall\exists$-theory of a (non-elementary) virtually free group (and even its universal theory). 

\begin{ex}\label{instable}Let $G=F_2\times\mathbb{Z}/2\mathbb{Z}$. The universal sentence $\forall x\forall y \ (x^2=1)\Rightarrow (xy=yx)$ is satisfied by $G$, but not by $G\ast_{\lbrace 1\rbrace}=G\ast\mathbb{Z}$. \emph{A fortiori}, $G$ and $G\ast_{\lbrace 1\rbrace}$ do not have the same $\forall\exists$-theory. More generally, if $G$ is hyperbolic and if the normalizer $N_G(C)$ of a finite subgroup $C\subset G$ normalizes a finite subgroup $C'$ that contains $C$ strictly, then $G\ast_C$ and $G$ have different $\forall\exists$-theories.\end{ex}

This raises the following problem.

\begin{pb}Given a hyperbolic group $G$, characterize the HNN extensions $G\ast_{\alpha}$ over finite groups such that $\mathrm{Th}_{\forall\exists}(G)=\mathrm{Th}_{\forall\exists}(G\ast_{\alpha})$.
\end{pb}

In order to solve this problem (whose solution is given by Theorem \ref{legalte} below), let us consider an isomorphism $\alpha : C_1\rightarrow C_2$ between two finite subgroups of a hyperbolic group $G$, and suppose that $G$ and the HNN extension $G\ast_{\alpha}=\langle G,t \ \vert \ \alpha(c)=tct^{-1}, \ \forall c\in C\rangle$ have the same $\forall\exists$-theory. Let us derive some easy consequences from this assumption. 

First, note that $G$ must be non-elementary. Indeed, a hyperbolic group is finite if and only if it satisfies the first-order sentence $\forall x \ (x^N=1)$ for some integer $N\geq 1$, and virtually cyclic if and only if it satisfies $\forall x\forall y \ ([x^N,y^N]=1)$ for some integer $N\geq 1$. 

Then, observe that $C_1$ and $C_2$ are necessarily conjugate in $G$, because the number of conjugacy classes of finite subgroups is an invariant of the $\forall\exists$-theory. Therefore, one can assume without loss of generality that $C_1=C_2:=C$. 

In addition, denoting by $\mathrm{Aut}_G(C)$ the subgroup \[\lbrace\sigma\in\mathrm{Aut}(C) \ \vert \ \exists g\in N_G(C), \ \sigma=\mathrm{ad}(g)_{\vert C}\rbrace\] of $\mathrm{Aut}(C)$, where $\mathrm{ad}(g)$ denotes the inner automorphism $x\mapsto gxg^{-1}$ and $N_G(C)$ denotes the normalizer of $C$, it can be observed that we have $\vert \mathrm{Aut}_G(C)\vert =\vert \mathrm{Aut}_{G\ast_{\alpha}}(C)\vert$. We refer the reader to Proposition \ref{facile} for further details. This means that there exists an element $g\in G$ such that $\mathrm{ad}(g)_{\vert C}=\alpha$. 

Before giving two other consequences of the equality $\mathrm{Th}_{\forall\exists}(G)=\mathrm{Th}_{\forall\exists}(G\ast_{\alpha})$, let us recall the following result, proved by Olshanskiy in \cite{Ol93}.

\begin{prop}\label{Olshanskiy}Let $G$ be a non-elementary hyperbolic group, and let $H$ be a non-elementary subgroup of $G$. There exists a unique maximal finite subgroup of $G$ normalized by $H$. This group is denoted by $E_G(H)$.\end{prop}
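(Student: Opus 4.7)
The plan is to identify $E_G(H)$ as the unique maximum of the family $\mathcal{F}$ of finite subgroups of $G$ normalized by $H$, by showing that $\mathcal{F}$ is directed under inclusion and uniformly bounded in order. Two classical facts about the hyperbolic group $G$ will carry most of the weight: first, the orders of finite subgroups of $G$ are uniformly bounded (a standard consequence of local finiteness of the Rips complex, going back to Gromov); second, the centralizer in $G$ of any non-elementary subgroup is finite. I would treat this second fact as the substantive input: it follows from the fact that a non-elementary subgroup contains two loxodromic elements $h_1, h_2$ with pairwise distinct fixed point pairs on $\partial G$, so that any element centralizing both lies in the intersection $E_G(\langle h_1\rangle)\cap E_G(\langle h_2\rangle)$ of two virtually cyclic groups with disjoint limit sets, which is finite.

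The heart of the argument is then a closure property: if $F_1, F_2 \in \mathcal{F}$, then $\langle F_1, F_2\rangle \in \mathcal{F}$. To obtain this, I would consider for each $i$ the conjugation morphism $H \to \mathrm{Aut}(F_i)$; since $\mathrm{Aut}(F_i)$ is finite, its kernel $K_i = C_H(F_i)$ has finite index in $H$, and consequently the intersection $K := K_1 \cap K_2$ still has finite index in $H$. In particular $K$ is non-elementary. By construction $K$ centralizes both $F_1$ and $F_2$, so $\langle F_1, F_2\rangle \subseteq C_G(K)$, which is finite by the second fact recalled above; and $\langle F_1, F_2\rangle$ is obviously normalized by $H$, hence lies in $\mathcal{F}$.

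Granted the closure property, the family $\mathcal{F}$ (which is non-empty, containing $\{1\}$) is directed under inclusion, and by the first recalled fact its elements have uniformly bounded order. Therefore $\mathcal{F}$ admits a (necessarily unique) maximum element, which we denote $E_G(H)$. I expect the only real obstacle to be justifying the finiteness of $C_G(K)$ for a non-elementary subgroup $K$ of a hyperbolic group; once this is cited or established, the rest of the proof is essentially bookkeeping.
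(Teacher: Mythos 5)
Your proof is correct, but it is not the argument the paper relies on: the paper gives no proof of this statement at all, citing Olshanskiy \cite{Ol93}, whose argument proceeds by establishing the explicit description $E_G(H)=\bigcap_{h\in H^0}M(h)$ (recorded separately in the paper as Proposition \ref{Olshanskiy2}) and checking that this intersection is finite, normalized by $H$, and contains every finite subgroup normalized by $H$. Your route is a self-contained directedness argument: the family $\mathcal{F}$ of finite subgroups normalized by $H$ is closed under $(F_1,F_2)\mapsto\langle F_1,F_2\rangle$ because the finite-index (hence non-elementary) subgroup $K=C_H(F_1)\cap C_H(F_2)$ of $H$ centralizes both, forcing $\langle F_1,F_2\rangle\subseteq C_G(K)$, and $C_G(K)$ is finite since it lies in $M(h_1)\cap M(h_2)$ for two independent loxodromic elements $h_1,h_2\in K$; combined with the uniform bound on orders of finite subgroups, directedness yields a unique maximum. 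All the steps check out (the one cosmetic slip is writing $E_G(\langle h_1\rangle)$ for a cyclic subgroup, where $E_G$ is only defined for non-elementary subgroups; you mean $M(h_1)$ in the paper's notation). The trade-off is that your argument proves existence and uniqueness abstractly but does not by itself produce the intersection formula $E_G(H)=\bigcap_{h\in H^0}M(h)$, which the paper uses substantively later (e.g.\ in Lemma \ref{immlemm0307} and the chain-complexity arguments), whereas Olshanskiy's proof delivers that description as a byproduct.
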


One can prove (see Proposition \ref{facile}) that the equality $\mathrm{Th}_{\forall\exists}(G)=\mathrm{Th}_{\forall\exists}(G\ast_{\alpha})$ implies that the normalizer $N_G(C)$ of $C$ in $G$ is non-elementary, and that $C$ is the unique maximal finite subgroup of $G$ normalized by $N_G(C)$, i.e.\ that $E_G(N_G(C))=C$. The importance of this last condition is illustrated by Example \ref{instable} above. 

This leads us to the following definition.

\begin{de}[\emph{Legal large extension}]\label{legal}Let $G$ be a non-elementary hyperbolic group, and let $C_1,C_2$ be two finite subgroups of $G$. Suppose that $C_1$ and $C_2$ are isomorphic, and let $\alpha : C_1 \rightarrow C_2$ be an isomorphism. The HNN extension $G\ast_{\alpha}=\langle G,t \ \vert \ \mathrm{ad}(t)_{\vert C_1}=\alpha\rangle$ is said to be \emph{legal} if the following three conditions hold.
	\begin{enumerate}
		\item There exists an element $g\in G$ such that $gC_1g^{-1}=C_2$ and $\mathrm{ad}(g)_{\vert C_1}=\alpha$.
		\item $N_G(C_1)$ is non-elementary.
		\item $E_G(N_G(C_1))=C_1$.
	\end{enumerate}
A group $\Gamma$ is said to be a \emph{legal large extension} of $G$ if it splits as a legal HNN extension $\Gamma=G\ast_{\alpha}$. Sometimes we need to keep track of the order $m$ of the finite group over which the HNN extension is performed, and we say that $\Gamma$ is a \emph{$m$-legal large extension} of $G$
\end{de}

\begin{rque}\label{sansperte}Up to replacing $t$ by $g^{-1}t$ in the presentation above, one can assume without loss of generality that the presentation has the following form: $\langle G,t \ \vert \ \mathrm{ad}(t)_{\vert C_1}=\mathrm{id}_{C_1}\rangle$.\end{rque}

Example \ref{instable} is a typical illustration of a non-legal extension. Indeed, the third condition of the previous definition is clearly violated. By contrast, Example \ref{provide} (that is $\mathrm{PSL}_2(\mathbb{Z})\ast_{\lbrace 1\rbrace}$) is a legal large extension. Here is another example of a legal large extension (to be compared to Example \ref{instable}).

\begin{ex}Let $G=F_2\times\mathbb{Z}/2\mathbb{Z}$. The HNN extension $G\ast_{\mathbb{Z}/2\mathbb{Z}}=G\ast_{\mathbb{Z}/2\mathbb{Z}}(\mathbb{Z}/2\mathbb{Z}\times \mathbb{Z})$ is legal.\end{ex}

%\begin{ex}Let $G=\mathrm{SL}_2(\mathbb{Z})=\mathbb{Z}_6\ast_{\mathbb{Z}_2}\mathbb{Z}_4$. The HNN extension $\Gamma=G\ast_{\mathbb{Z}_2}=G\ast_{\mathbb{Z}_2}(\mathbb{Z}_2\times \mathbb{Z})$ is legal.\end{ex}

If $G$ and $G\ast_{\alpha}$ have the same $\forall\exists$-theories, the previous discussion shows that $G\ast_{\alpha}$ is a legal large extension of $G$ (see Proposition \ref{facile}). One of our main results is that the converse also holds: if $G\ast_{\alpha}$ is a legal large extension of $G$, then we have $\mathrm{Th}_{\forall\exists}(G\ast_{\alpha})=\mathrm{Th}_{\forall\exists}(G)$.

\begin{te}\label{legalte}Let $G$ be a non-elementary hyperbolic group, and let $G\ast_{\alpha}$ be an HNN extension over finite groups. Then, $\mathrm{Th}_{\forall\exists}(G\ast_{\alpha})=\mathrm{Th}_{\forall\exists}(G)$ if and only if $G\ast_{\alpha}$ is a legal large extension of $G$ in the sense of Definition \ref{legal}.\end{te}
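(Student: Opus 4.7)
The forward implication ($\Rightarrow$) is Proposition~\ref{facile}, already summarised in the paragraphs leading up to the theorem: the hypothesis $\mathrm{Th}_{\forall\exists}(G)=\mathrm{Th}_{\forall\exists}(G\ast_\alpha)$ forces each of conditions (1)--(3) of Definition~\ref{legal}. The substantive task is the converse. By Remark~\ref{sansperte} I may assume $\alpha=\mathrm{id}_C$, so that $G\ast_\alpha=\langle G,t\mid tct^{-1}=c\ \forall c\in C\rangle$.

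The first step is to build a family of retractions. For every $u\in Z_G(C)$ the assignment $r_u|_G=\mathrm{id}_G$, $r_u(t)=u$ extends (by the universal property of the HNN extension, using that $u$ commutes with $C$) to a retraction $r_u\colon G\ast_\alpha\to G$; condition~(1) guarantees such retractions exist (after the normalisation one may take $u=1$). Since $[N_G(C):Z_G(C)]\leq|\mathrm{Aut}(C)|$ is finite, condition~(2) implies that $Z_G(C)$ is itself a non-elementary subgroup of $G$, and condition~(3) then forces $E_G(Z_G(C))=C$. Thus the parameter space $Z_G(C)$ is a non-elementary subgroup of $G$ whose only maximal normalised finite subgroup is $C$ itself; this is the geometric property on which everything that follows rests.

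For the inclusion $\mathrm{Th}_{\forall\exists}(G\ast_\alpha)\subseteq\mathrm{Th}_{\forall\exists}(G)$, I would take a sentence $\forall\bar x\,\exists\bar y\,\psi(\bar x,\bar y)$ satisfied by $G\ast_\alpha$, a tuple $\bar g\in G$, and a witness $\bar h\in G\ast_\alpha$. Each $r_u$ automatically preserves the equation-part of $\psi(\bar g,\bar h)$, so it suffices to produce $u\in Z_G(C)$ for which none of the finitely many words witnessing the inequations is killed. I would pick a hyperbolic element $u_0\in Z_G(C)$ and take $u_n:=u_0^n$: after rescaling, the retractions $r_{u_n}$ converge in the Bestvina--Paulin sense to a $G\ast_\alpha$-action on an $\mathbb{R}$-tree in which the HNN edge stabiliser is exactly $C$ --- this is where the property $E_G(Z_G(C))=C$ is used to prevent finite subgroups larger than $C$ from appearing in the limit. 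Generic injectivity on the relevant finite set then forces $r_{u_n}$ to preserve all the inequations for $n$ large, so $\bar h':=r_{u_n}(\bar h)\in G$ is the required witness.

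The reverse inclusion $\mathrm{Th}_{\forall\exists}(G)\subseteq\mathrm{Th}_{\forall\exists}(G\ast_\alpha)$ is a Merzlyakov/Sacerdote-type statement and constitutes the main obstacle. Given $\forall\bar x\,\exists\bar y\,\psi$ satisfied by $G$ and a tuple $\bar h\in G\ast_\alpha$, the plan is to apply $r_{u_n}$, extract witnesses $\bar l_n\in G$ with $\psi(r_{u_n}(\bar h),\bar l_n)$, and assemble from them a single witness $\bar k\in G\ast_\alpha$ for $\bar h$. The extraction goes via the test-sequence and formal-solution machinery of Sela, adapted to the torsion setting developed in this paper: after passing to a subsequence and running a shortening argument on the induced $G\ast_\alpha$-action on its limit $\mathbb{R}$-tree (relative to the vertex group $G$), the words representing the $\bar l_n$ stabilise modulo modular automorphisms of $G\ast_\alpha$ fixing $C$, and the stable form encodes an actual tuple in $G\ast_\alpha$ witnessing $\psi(\bar h,\cdot)$. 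The hard part is making the shortening procedure operate correctly in the presence of torsion: this is exactly what conditions~(2) and~(3) buy, since they rule out parasitic finite normalisers that would otherwise obstruct the argument in the limit action.
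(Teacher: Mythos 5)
Your high-level architecture matches the paper's: the forward direction is Proposition~\ref{facile}, and the converse is proved by building retractions $\Gamma=G\ast_\alpha\to G$, passing to a Bestvina--Paulin limit, and running a shortening/Sacerdote-type argument. However, there is a concrete gap in your choice of retractions: taking $u_n=u_0^n$ for a single hyperbolic $u_0\in Z_G(C)$ does not work. The sequence $(r_{u_0^n})$ is not even discriminating: for any $k\neq 0$ the element $[t,u_0^k]$ is non-trivial in $\Gamma$ (the centraliser of $t$ in $\Gamma$ is $\langle t\rangle\times C$), yet $r_{u_0^n}([t,u_0^k])=[u_0^n,u_0^k]=1$ for \emph{every} $n$. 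So already your ``easy'' inclusion $\mathrm{Th}_{\forall\exists}(\Gamma)\subseteq\mathrm{Th}_{\forall\exists}(G)$ fails for witnesses $\bar h$ involving such commutators, and the limit-tree analysis in the hard direction breaks as well: one has $M(u_0^n)=M(u_0)$ for all $n$, so $\bigcap_n M(u_0^n)=M(u_0)\supsetneq C$, the images $u_0^n$ have roots, and the small-cancellation constants do not improve with $n$. Consequently the limit action is not the Bass-Serre tree of $G\ast_C$ (elements of $M(u_0)\smallsetminus C$ fix the edge), which is exactly the step your sketch relies on to ``prevent finite subgroups larger than $C$ from appearing in the limit.''

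This is precisely why the paper's test sequence (Definition~\ref{suitetest}, Proposition~\ref{jesaispas}) is more elaborate: using $E_G(N_G(C))=C$ and Olshanskiy's lemma one finds $a\in C_G(C)$ with $M(a)=\langle a\rangle\times C$, then $b$ so that $\langle a,b,C\rangle\cong C\times F_2$ is quasiconvex, and one sets $t_n=a^nba^{n+1}b\cdots a^{2n}b$. These words satisfy $\varepsilon_n$-small cancellation with $\varepsilon_n\to0$, have no root modulo $C$, and satisfy $\bigcap_n M(t_n)=C$ (Lemma~\ref{intersection}); those three properties are what force the minimal $\Gamma$-subtree of the limit tree to be the Bass-Serre tree of $G\ast_C$, transverse to its translates, and hence make the transverse-covering decomposition and the descending-chain induction of the generalized Sacerdote lemma (Proposition~\ref{sacerdote1}, Lemma~\ref{abovelemma}) close up. Your last paragraph also compresses that induction (proper quotients $L\twoheadrightarrow L_1\twoheadrightarrow\cdots$, equational noetherianity, and the final splitting $L=U''\ast_C$ from which the retraction is read off) into one sentence; that part would need to be carried out, but the essential missing idea is the correct test sequence.
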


The proof of this result relies on a generalization of the key lemma of \cite{Sac73}, using techniques introduced by Sela for torsion-free hyperbolic groups and extended by Reinfeldt and Weidmann to hyperbolic groups with torsion in \cite{RW14}, in particular the \emph{shortening argument}. We also refer the reader to \cite{Hei18} for some results about $\forall\exists$-sentences in hyperbolic groups (possibly with torsion), namely a generalization of Merzlyakov's formal solutions.

In fact, we shall prove the following result, which is stronger than Theorem \ref{legalte}. 

\begin{te}\label{legalteplus}Let $G$ be a non-elementary hyperbolic group, and let $G\ast_{\alpha}$ be an HNN extension over finite groups. Then, the inclusion of $G$ into $G\ast_{\alpha}$ is a $\exists\forall\exists$-elementary embedding (see Definition \ref{elememb}) if and only if $G\ast_{\alpha}$ is a legal large extension of $G$.\end{te}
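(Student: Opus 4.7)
\noindent\textit{Proof plan for Theorem \ref{legalteplus}.}

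The easy direction is a direct consequence of Theorem \ref{legalte}: every $\forall\exists$-sentence is a $\exists\forall\exists$-formula with empty outer existential block and no free variables, so an $\exists\forall\exists$-elementary embedding forces $\mathrm{Th}_{\forall\exists}(G) = \mathrm{Th}_{\forall\exists}(G\ast_\alpha)$, and Theorem \ref{legalte} yields that $G\ast_\alpha$ is a legal large extension.

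For the converse, I would start from the observation that legality condition (1) of Definition \ref{legal} provides an element $g_0 \in G$ with $\mathrm{ad}(g_0)_{|C_1} = \alpha$ (so $g_0 = 1$ after the normalization of Remark \ref{sansperte}), hence a group-theoretic retraction $r : G\ast_\alpha \to G$ sending $t \mapsto g_0$ and fixing $G$ pointwise. Given an $\exists\forall\exists$-formula $\varphi(x) = \exists y \, \forall z \, \exists w \, \psi(x,y,z,w)$ with $\psi$ quantifier-free and a tuple $g$ in $G$, the equivalence $G \models \varphi(g) \Leftrightarrow G\ast_\alpha \models \varphi(g)$ splits into two implications that I would handle separately.

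For the ``$\Rightarrow$'' implication, I would pick the outer witness $h \in G$ and reduce to showing that the parameterized $\forall\exists$-formula $\Phi(y) := \forall z \, \exists w \, \psi(g,y,z,w)$ is preserved from $G$ to $G\ast_\alpha$ at $y = h$. This is a direct strengthening of the ``legal $\Rightarrow$ $\forall\exists$-preservation'' half of Theorem \ref{legalte}: the underlying techniques (shortening argument of \cite{RW14}, Merzlyakov-type formal solutions of \cite{Hei18}) treat constants and free variables symmetrically, so the argument goes through verbatim when finitely many parameters from $G$ are added to the language.

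For the ``$\Leftarrow$'' implication, I would pick a witness $h \in G\ast_\alpha$ with $G\ast_\alpha \models \Phi(h)$ and take $h' := r(h) \in G$ as the candidate witness in $G$. For every $z \in G$, the assumption provides some $w \in G\ast_\alpha$ with $\psi(g,h,z,w)$, and the retraction preserves the equational content of $\psi$; the difficulty is that it may destroy its inequations. To fix this I would invoke a parameterized Merzlyakov/Heil-type theorem producing, after partitioning $\psi$ into finitely many disjunctive cases, formal witnesses whose coordinates are words in the constants $g,h,t$ and the variable tuple $z$, valid as formal identities in the appropriate limit-group-like universal structure. Applying $r$ to such a formal solution gives words in $g, r(h), g_0, z$ over $G$ whose formal validity transfers to $G$, establishing $G \models \Phi(r(h))$. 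The main obstacle lies precisely here: formulating and proving a parameterized Merzlyakov/Heil theorem whose formal solutions survive the retraction. This is exactly where the non-degeneracy conditions (2) and (3) of Definition \ref{legal} are essential, as they guarantee both the existence of the formal solutions and the termination of the shortening within $G$ rather than in a proper overgroup.
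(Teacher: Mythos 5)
Your easy direction and the upward half of your hard direction are consistent with the paper. The paper proves Theorems \ref{legalte} and \ref{legalteplus} together as a single three-way equivalence (Theorem \ref{theorie}): the implication from $\forall\exists$-equivalence to legality is Proposition \ref{facile}, via the invariants $n_1,\dots,n_5$, and the engine for the converse is the generalized Sacerdote lemma (Proposition \ref{sacerdote1}), which is already stated with a tuple of constants $\bm{g}$ from $G$ and an arbitrary tuple $\bm{\gamma}$ of elements of $\Gamma$ in place of the universally quantified variables. So your reduction of $G\models\varphi(g)\Rightarrow G\ast_\alpha\models\varphi(g)$ to a parameterized $\forall\exists$-preservation statement is exactly the paper's route; what you defer as ``goes through verbatim'' is the construction of a test sequence of retractions $\varphi_n:\Gamma\to G$ sending $t$ to long small-cancellation elements $t_n$ centralizing $C$ (Proposition \ref{jesaispas}, which is where conditions (2) and (3) of Definition \ref{legal} actually enter), the analysis of the limit action on a real tree via a transverse covering, and an iterated shortening argument producing a discriminating sequence of retractions $\Gamma_\Sigma\twoheadrightarrow\Gamma$.

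The genuine gap is in your ``$\Leftarrow$'' implication. The single retraction $r:t\mapsto g_0$ (with $g_0=1$ after the normalization of Remark \ref{sansperte}, so $r(t)=1$) is a fixed, massively non-injective homomorphism: a formal solution $W(g,h,t,z)$ whose inequations hold formally over $\Gamma$ has no reason to keep them after the substitution $t\mapsto g_0$ — any coordinate of $W$ or of $\Psi$ whose nontriviality depends on the letter $t$ is simply destroyed — and conditions (2)--(3) of Definition \ref{legal} give no control over this single map. In the paper's framework, the only mechanism for pushing data from $\Gamma$ down to $G$ while preserving inequations is a \emph{discriminating} sequence $(\varphi_n)$ with $\varphi_n(t)=t_n$ a small-cancellation element, never the degenerate retraction $r$; and even with $(\varphi_n)$ one faces a uniformity problem, since the index $n$ for which the inequations survive depends on the universally quantified tuple, and removing that dependence is precisely the content of the Sacerdote-type lemma (via limit groups, equational noetherianity and the shortening argument), not of a Merzlyakov-type formal solution. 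You correctly flag this as the main obstacle but do not resolve it, and the specific fix you propose would fail at the first inequation involving $t$. (For what it is worth, the paper's own written proof of Proposition \ref{theorie2} establishes only the upward implication $G\models\theta(\bm{g})\Rightarrow\Gamma\models\theta(\bm{g})$ and is silent on the downward one, so the difficulty you isolate is real; but your proposed route does not close it.)
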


\subsection*{Legal small extensions}\label{referreader}

Perhaps more surprisingly, another phenomenon of a different nature plays a crucial role in our classification of virtually free groups up to $\forall\exists$-elementary equivalence, as illustrated by Example \ref{exemple3}. This phenomenon is not limited to infinite virtually cyclic groups: more generally, if $G$ is a hyperbolic group, we will prove that one can replace a virtually cyclic subgroup $N\subset G$ by a virtually cyclic overbgroup $N'\supset N$ without modifying the $\forall\exists$-theory of $G$, as soon as certain additional technical conditions are satisfied (in particular, $N$ has to be the normalizer of a finite subgroup of $G$). Before giving a precise statement (Theorem \ref{legal2te} below), we need some definitions.

\begin{de}Given an infinite virtually cyclic group $N$ and an integer $p$, we denote by $D_p(N)$ the definable subset $D_p(N)=\lbrace n^{p} \ \vert \ n\in N\rbrace$.\end{de} 

Let $K_N$ be the maximal order of a finite subgroup of $N$. One can easily prove that for every integer $K\geq K_N$, the set $D_{2K!}(N)$ is a normal subgroup of $N$ (see Lemma \ref{claim}). Note that the quotient group $N/D_{2K!}(N)$ is finite. This finite group is determined by the $\forall\exists$-theory of $N$.

\begin{de}\label{special0}Let $N$ and $N'$ be two infinite virtually cyclic groups. Let $K_N$ and $K_{N'}$ denote the maximal order of a finite subgroup of $N$ and $N'$ respectively, and let $K\geq \mathrm{max}(K_N,K_{N'})$ be an integer. A homomorphism $\varphi : N \rightarrow N'$ is said to be $K$-\emph{nice} if it satisfies the following three properties.
	\begin{itemize}
		\item[$\bullet$]$\varphi$ is injective.
		\item[$\bullet$]If $C_1$ and $C_2$ are two non-conjugate finite subgroups of $N$, then $\varphi(C_1)$ and $\varphi(C_2)$ are non-conjugate in $N'$.
		\item[$\bullet$]The induced homomorphism $\overline{\varphi} : N/D_{2K!}(N) \rightarrow N'/D_{2K!}(N)$ is injective.
	\end{itemize}
\end{de}

\begin{de}[\emph{Legal small extension}]\label{legal2}Let $G$ be a hyperbolic group. Let $K_G$ denote the maximal order of a finite subgroup of $G$. Suppose that $G$ splits as $A\ast_C B$ or $A\ast_C$ over a finite subgroup $C$ whose normalizer $N$ is infinite virtually cyclic and non-elliptic in the splitting. Let $N'$ be a virtually cyclic group such that $K_{N'}\leq K_G$ and let $\iota : N\hookrightarrow N'$ be a $K_G$-nice embedding (in the sense of Definition \ref{special0} above). The amalgamated product \[G'=G\ast_N N'=\langle G, N' \ \vert \ g=\iota(g), \ \forall g\in N\rangle\] is called a \emph{legal small extension} of $G$ if there exists a $K_G$-nice embedding $\iota' : N'\hookrightarrow N$. Sometimes we need to keep track of the cardinality $m$ of the edge group $C$, and we say that $\Gamma$ is a $m$-legal extension of $G$.
\end{de}

For instance, the two virtually cyclic groups of Example \ref{exemple3} are legal small extensions of each other: in this example, $C$ is the cyclic group $\langle a\rangle\simeq \mathbb{Z}/25\mathbb{Z}$, and one can define $\iota : N \hookrightarrow N'$ by $\iota : a \mapsto a', t\mapsto t'^3$ and $\iota' : N' \hookrightarrow N$ by $\iota' : a' \mapsto a, t'\mapsto t^2$.

We will prove the following result.

\begin{te}\label{legal2te}Let $G$ be a hyperbolic group that splits as $A\ast_C B$ or $A\ast_C$ over a finite subgroup $C$ whose normalizer $N$ is infinite virtually cyclic and non-elliptic in the splitting. Let $K_G$ denote the maximal order of a finite subgroup of $G$. Let $N'$ be a virtually cyclic group such that $K_{N'}\leq K_G$, and let $\iota : N\hookrightarrow N'$ be a $K_G$-nice embedding. The amalgamated product \[G'=G\ast_N N'=\langle G, N' \ \vert \ g=\iota(g), \ \forall g\in N\rangle\] is a legal small extension in the sense of Definition \ref{legal2} if and only if $\mathrm{Th}_{\forall\exists}(G')=\mathrm{Th}_{\forall\exists}(G)$.\end{te}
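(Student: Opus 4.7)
The theorem splits into two directions, and I would handle the \emph{only if} direction first since it is more direct. Assume $\mathrm{Th}_{\forall\exists}(G)=\mathrm{Th}_{\forall\exists}(G')$; the task is to exhibit a $K_G$-nice embedding $\iota':N'\hookrightarrow N$. My plan is to observe that the three niceness conditions of Definition \ref{special0}—injectivity, preservation of non-conjugacy of finite subgroups, and injectivity modulo $D_{2K_G!}$—are each expressible by universal formulas: the bounded-torsion property of hyperbolic groups renders ``has infinite order'' atomic, while the finiteness of the sets of torsion conjugacy classes of $N'$ and of $N'/D_{2K_G!}(N')$ reduces the remaining two conditions to finite universal Boolean combinations. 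Hence the existence of $\iota'$ is expressible as an $\exists\forall$-sentence in the parameters of a finite subgroup conjugate to $C$. I would then verify that this sentence holds in $G'$, combining the given $K_G$-niceness of $\iota:N\hookrightarrow N'$ with the $\forall\exists$-invariants shared by $G$ and $G'$—in particular the conjugacy classes of finite subgroups, and the first-order structure of normalizers of finite subgroups modulo the $D_{2K_G!}$ operation. Transferring the sentence to $G$ via the $\exists\forall$-equivalence obtained by negating $\forall\exists$-equivalence then produces $\iota'$.

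For the \emph{if} direction, I would adapt the approach used in the proof of Theorem \ref{legalte}, namely a generalization of Sacerdote's key lemma combined with the shortening argument of Reinfeldt and Weidmann \cite{RW14}. Fix a $\forall\exists$-sentence $\varphi=\forall\boldsymbol{x}\exists\boldsymbol{y}\,\psi(\boldsymbol{x},\boldsymbol{y})$ and suppose $G\models\varphi$; I aim to deduce $G'\models\varphi$, the converse being dual after exchanging the roles of $\iota$ and $\iota'$. Given $\boldsymbol{a}'\in G'$, write each coordinate in Bass-Serre normal form with respect to the splitting $G'=G\ast_N N'$ and let $L$ denote the finitely generated subgroup generated by $\boldsymbol{a}'$. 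The goal is to construct a homomorphism $\rho:L\to G$ acting by $\iota'$ on $N'$-syllables (with correcting elements in $N$) and by the identity on $G$-syllables, in such a way that $\rho(\boldsymbol{a}')$ preserves enough quantifier-free data of $\boldsymbol{a}'$ for the formula $\psi$. Then witnesses $\boldsymbol{b}\in G$ obtained from $G\models\varphi$ applied at $\rho(\boldsymbol{a}')$ are lifted back to $\boldsymbol{b}'\in G'$ using $\iota$ on $N$-components, yielding $\psi(\boldsymbol{a}',\boldsymbol{b}')$.

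The main obstacle, and the technical heart of the proof, is that the naive ``retraction'' $G'\to G$ is not a well-defined homomorphism: $\iota'\circ\iota$ need not equal $\mathrm{id}_N$, so the amalgamation relations are not respected. Closing this gap requires a limiting procedure—a sequence of test maps from $L$ to $G$ twisted by increasing powers of a hyperbolic element of $N$, yielding in the limit an action of $L$ on an $\mathbb R$-tree to which the Reinfeldt-Weidmann shortening machinery applies. The three clauses of Definition \ref{special0} are precisely what is needed for this procedure to terminate with a usable map: injectivity of $\iota$ and $\iota'$ ensures that the induced map between Bass-Serre trees is non-degenerate; preservation of non-conjugacy of finite subgroups controls the torsion data surviving to the limit; and injectivity modulo $D_{2K_G!}$ governs the obstruction $\iota'\circ\iota\neq\mathrm{id}_N$ precisely at the scale $2K_G!$ relevant to the shortening. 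Interfacing these invariants with the Reinfeldt-Weidmann framework for hyperbolic groups with torsion is the step I expect to be the most delicate, as it requires adapting shortening from virtually cyclic edge stabilizers to a setting where the niceness conditions, rather than an actual retraction, provide the control on high-power behavior.
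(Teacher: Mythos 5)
Your overall architecture coincides with the paper's: the ``only if'' direction is obtained by transferring an $\exists\forall$-expressible existence statement (the paper packages this as the implication $(1)\Rightarrow(4)$ of Theorem \ref{principal}, producing a strongly special homomorphism $\varphi':G'\to G$ and taking $\iota'=(\mathrm{ad}(g'^{-1})\circ\varphi'^{\,n})_{\vert N'}$ for a suitable power $n$ so that $C$ returns to a conjugate of itself), and the ``if'' direction is a Sacerdote-type lemma proved with twisted test maps, limit trees and the Reinfeldt--Weidmann shortening argument. Your easy direction is fine modulo one point of care: the map $\iota'$ goes from a subgroup of $G'$ to a subgroup of $G$, so the transferred sentence must quantify over a tuple realizing a finite presentation of $N'$ (together with the universal niceness clauses and the condition that the image of $C$ lands in the right conjugacy class); this is exactly what the formulas $\mathrm{Special}^{\forall}$ and $\zeta_G$ of Sections 3--4 do, and taking a power of the resulting map is needed to fix the conjugacy class of $C$.

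In the hard direction there is a genuine gap at the point you yourself flag as delicate. Saying that ``the shortening machinery applies'' does not by itself produce a retraction of the limit group $L$ onto $\Gamma=G\ast_N N'$: after the graph-of-actions decomposition, the global stabilizer $S$ of the axis of $z$ (a generator of $N'$ modulo torsion) is only known to be a finitely generated virtually abelian group containing $N'$, and one must retract $S$ onto $N'$ compatibly with the rest of $L$. The paper's resolution is an arithmetic design of the test sequence that your sketch omits: the twists $\tau_{\delta_n}$ are chosen (via Dirichlet's theorem on primes in arithmetic progressions, Lemma \ref{guaranteed nash}) so that $[N:\varphi_n(N')]=p_n$ is a strictly increasing sequence of \emph{primes}. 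This is what forces the image of $z$ in the free abelian quotient $V$ of $S$ to have no proper root (the ``$j$ divides $p_n$ for all $n$, hence $j=\pm1$'' argument), which in turn allows $z$ to be completed to a basis of $V$ and the discriminating retractions $r_n:L\twoheadrightarrow\Gamma$ to be built by sending the complementary basis vectors through $\tau_{f(n)}\circ\rho_n$ with $f(n)$ growing fast enough to keep normal forms reduced. Without this (or an equivalent device controlling the divisibility of $z$ in the limit), the construction of the retraction fails, and the three clauses of Definition \ref{special0} alone do not supply it --- they are used elsewhere (to make the twists well defined and injective, and to prove the symmetry Lemma \ref{symétrie}), not to kill the root problem in the limit.
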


\begin{rque}In general, the group $G$ is not $\exists\forall$-elementarily embedded into $G'$ (note the difference with Theorem \ref{legalteplus}, which generalizes Theorem \ref{legalte}). For instance, in Example \ref{exemple3}, the element $a\in N$ satisfies the following $\exists\forall$-formula $\theta(a)$: \[\theta(a):\exists t \forall u \ (tat^{-1}=a^6)\wedge (t\neq u^3),\] while it can be easily seen that any monomorphism $\iota : N \hookrightarrow N'$ maps $a$ to $a'^p$ for some integer $p$ satisfying $\gcd(p,25)=1$, and that $\theta(a'^p)$ is false is $N'$.\end{rque}

\subsection*{A remark about the terminology}Suppose that $G'$ is a legal large or small extension of a hyperbolic group $G$. Then, in both cases, $G'$ can be written as an amalgamated free product $G'=G\ast_N N'$, where $N$ is the normalizer of a finite subgroup $C$ of $G$, and $N'$ is an overgroup of $N$ in which $C$ is the maximal normal finite subgroup. The terminology "large" or "small" refers to the size of $N$ and $N'$: in the case where the legal extension is large, the groups $N$ and $N'$ are non-elementary, and in the case where the extension is small, $N$ and $N'$ are infinite virtually cyclic. 

\subsection*{Classification of virtually free groups up to $\forall\exists$-elementary equivalence}

Our main result, Theorem \ref{principal0}, asserts that the two kinds of extensions defined above are the only ones we need in order to classify virtually free groups up to $\forall\exists$-equivalence. 

\begin{de}Let $G$ be a hyperbolic group. A group $\Gamma$ is called a \emph{multiple legal extension} of $G$ if there exists a finite sequence of groups $G=G_0\subset G_1\subset \cdots \subset G_n\simeq \Gamma$ where $G_{i+1}$ is a legal (large or small) extension of $G_i$ in the sense of Definitions \ref{legal} or \ref{legal2}, for every integer $0\leq i\leq n-1$. 
\end{de}

Here is our main result (see also Theorem \ref{principal}).

\begin{te}\label{principal0}Two finitely generated virtually free groups $G$ and $G'$ have the same $\forall\exists$-theory if and only if there exist two multiple legal extensions $\Gamma$ and $\Gamma'$ of $G$ and $G'$ respectively, such that $\Gamma\simeq \Gamma'$.\end{te}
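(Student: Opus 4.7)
The easy direction is immediate from Theorems \ref{legalte} and \ref{legal2te}: each legal large or small extension preserves the $\forall\exists$-theory, so iterating along the chains $G \subset G_1 \subset \cdots \subset \Gamma$ and $G' \subset G'_1 \subset \cdots \subset \Gamma'$ yields $\mathrm{Th}_{\forall\exists}(G) = \mathrm{Th}_{\forall\exists}(\Gamma) = \mathrm{Th}_{\forall\exists}(\Gamma') = \mathrm{Th}_{\forall\exists}(G')$ whenever $\Gamma \simeq \Gamma'$.

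For the hard direction, my plan is to associate to each virtually free group $G$ a canonical multiple legal extension $\widehat{G}$ whose isomorphism type depends only on $\mathrm{Th}_{\forall\exists}(G)$. The starting point is a Karrass--Pietrowski--Solitar splitting of $G$ as a finite graph of finite groups. From it I extract three pieces of $\forall\exists$-invariant data: (i) the isomorphism types and conjugacy classes of finite subgroups together with the subgroups $\mathrm{Aut}_G(C) \subset \mathrm{Aut}(C)$; (ii) for each finite $C$ satisfying $E_G(N_G(C)) = C$ with $N_G(C)$ non-elementary, the number of HNN stable letters attached to $C$ in the splitting; and (iii) for each finite $C$ whose normalizer is infinite virtually cyclic and non-elliptic in the splitting, the isomorphism type of $N_G(C)/D_{2K_G!}(N_G(C))$. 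That each of these items is read off from the $\forall\exists$-theory is a first-order verification generalising Proposition \ref{facile} and the discussion following Example \ref{exemple3}.

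To produce $\widehat{G}$, I would first perform legal large extensions at each conjugacy class of type (ii) to raise the number of stable letters to a uniform integer $N$ larger than any value appearing in either $G$ or $G'$; Definition \ref{legal} is directly applicable at these $C$'s. I would then perform legal small extensions at each conjugacy class of type (iii), replacing $N_G(C)$ by a canonical virtually cyclic overgroup $N^{\ast}(C)$ prescribed by the quotient data of (iii), and arranging the reciprocal $K_G$-nice embedding required by Definition \ref{legal2} by factoring the extension through a common cover if necessary. By construction, $\widehat{G}$ is built from (i), (ii), (iii) alone, so $\widehat{G} \simeq \widehat{G'}$ whenever $\mathrm{Th}_{\forall\exists}(G) = \mathrm{Th}_{\forall\exists}(G')$.

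The main obstacle is the rigidity step: showing that two such saturated virtually free groups with identical enriched Stallings data are actually isomorphic. This is a Bass--Serre type uniqueness statement for finite graphs of finite groups whose visible parameters (finite vertex groups, the maps $\mathrm{Aut}_G(C)$, the HNN ranks at each large normalizer orbit, and the virtually cyclic overgroups at each small normalizer orbit) have all been equalised; uniform $N$-saturation of the HNN part should absorb the Nielsen-type ambiguities in the graph of groups, while the canonical choice of virtually cyclic overgroups should kill the remaining flexibility at small normalizers. A delicate secondary point is the order of operations, since legal small extensions modify normalizers of finite edge groups and can interact with previously performed legal large extensions; I would therefore carry out all large extensions first and then all small ones, checking at each stage that the legality hypotheses of Definitions \ref{legal} and \ref{legal2} remain satisfied.
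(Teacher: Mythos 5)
Your easy direction is exactly the paper's argument (Section \ref{5 implique 1}) and is fine. The hard direction, however, has a genuine gap, and it sits precisely where you locate "the main obstacle": the rigidity step is not a secondary verification but the entire content of the theorem, and the invariants (i)--(iii) you propose to saturate are not sufficient to determine the isomorphism type of the saturated group. Data about conjugacy classes of finite subgroups, the groups $\mathrm{Aut}_G(C)$, HNN ranks at non-elementary normalizers, and the quotients $N/D_{2K_G!}(N)$ at virtually cyclic normalizers say nothing about the $(\leq m)$-rigid vertex groups of the JSJ splittings (the $m$-factors) or about how they are amalgamated along edge groups of varying orders; two virtually free groups can agree on all of (i)--(iii) after saturation and still fail to be isomorphic, or even $\forall\exists$-equivalent. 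A further concrete problem is your canonical overgroup $N^{\ast}(C)$ at small normalizers: the groups $N'$ admitting $K_G$-nice embeddings both into and from $N$ form a class with no canonical (e.g.\ maximal) representative inside virtually cyclic groups, as Example \ref{exemple3} already illustrates, so "prescribed by the quotient data of (iii)" does not define anything.

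The paper takes a genuinely different route that avoids a canonical form altogether. From $\mathrm{Th}_{\forall\exists}(G)=\mathrm{Th}_{\forall\exists}(G')$ it first extracts a pair of \emph{strongly special homomorphisms} $\varphi : G \rightarrow G'$ and $\varphi' : G' \rightarrow G$ (implications $(1)\Rightarrow(3)\Rightarrow(4)$ of Theorem \ref{principal}, via the definable formulas $\mathrm{Special}^{\forall}$ and $\mathrm{PreStrong}^{\exists}$). The implication $(4)\Rightarrow(5)$ then builds $\Gamma$ and $\Gamma'$ \emph{in tandem}, by induction on the number of edges of reduced Stallings splittings: one expands both groups so that $\varphi$ and $\varphi'$ become isomorphisms, using Remark \ref{capermute} (the maps already induce isomorphisms between $m$-factors), Lemma \ref{normalisateur} for the single-cylinder case, the tree of cylinders, and finally Lemma \ref{perin} plus Hopficity to upgrade the resulting endomorphism to an automorphism. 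The homomorphisms $\varphi,\varphi'$ are what let one compare the rigid vertex groups of the two splittings at all; without them your saturation has no mechanism to match up the $m$-factors of $G$ with those of $G'$. If you want to salvage your approach, you would need to add the $m$-factor data (with its gluing pattern) to your list of invariants and prove it is $\forall\exists$-definable and determines the saturated isomorphism type --- at which point you would essentially be reconstructing the paper's induction.
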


\begin{ex}One can deduce from Theorem \ref{principal0} that a virtually free group $G$ has the same $\forall\exists$-theory as $\mathrm{SL}_2(\mathbb{Z})=\mathbb{Z}/6\mathbb{Z}\ast_{\mathbb{Z}/2\mathbb{Z}}\mathbb{Z}/4\mathbb{Z}$ if and only if $G$ splits as \[(\mathbb{Z}/6\mathbb{Z}\ast_{\mathbb{Z}/2\mathbb{Z}}\mathbb{Z}/4\mathbb{Z})\ast_{\mathbb{Z}/2\mathbb{Z}}(\mathbb{Z}/2\mathbb{Z}\times F_n),\] where $F_n$ denotes the free group of rank $n\geq 0$.\end{ex}

Theorem \ref{principal}, which extends Theorem \ref{principal0} above, gives three other characterizations of $\forall\exists$-equivalence among virtually free groups. Before stating this result, we need to generalize the definition of a nice homomorphism (see Definition \ref{special0}).

\begin{de}\label{special1}Let $G$ and $G'$ be two hyperbolic groups. Let $K_G$ (resp.\ $K_{G'}$) denote the maximal order of a finite subgroup of $G$ (resp.\ $G'$). Suppose that $K_G\geq K_{G'}$. A homomorphism $\varphi : G \rightarrow G'$ is said to be \emph{special} if it satisfies the following three properties.
\begin{itemize}
\item[$\bullet$]It is injective on finite subgroups.
\item[$\bullet$]If $C_1$ and $C_2$ are two non-conjugate finite subgroups of $G$, then $\varphi(C_1)$ and $\varphi(C_2)$ are non-conjugate in $G'$.
\item[$\bullet$]If $C$ is a finite subgroup of $G$ whose normalizer is infinite virtually cyclic maximal, then $N_{G'}(\varphi(C))$ is infinite virtually cyclic, and the restriction \[\varphi_{\vert N_G(C)}:N_G(C)\rightarrow N_{G'}(\varphi(C))\] is $K_G$-nice in the sense of Definition \ref{special0} (in particular, $\varphi_{\vert N_G(C)}$ is injective).
\end{itemize}
\end{de}

\begin{rque}Note that if $G$ and $G'$ have the same universal theory, then $K_G=K_{G'}$. 
\end{rque}

\begin{rque}If $G$ and $G'$ are infinite virtually cyclic, then a homomorphism $\varphi : G \rightarrow G'$ is special if and only if it is $K_G$-nice.
\end{rque}

%\begin{rque}Note that Definition \ref{special1} coincides with Definition \ref{special0} if $G$ is virtually cyclic.\end{rque}

%\begin{rque}Note that the first condition above is a consequence of the second condition. However, we will later have to relax the second condition (see Definition \ref{special1.1}), and the first condition will then not be automatically satisfied.\end{rque}

\begin{de}\label{strongmor}Let $G$ and $G'$ be two hyperbolic groups. A special homomorphism $\varphi : G \rightarrow G'$ is said to be \emph{strongly special} if the following holds: for every finite subgroup $C$ of $G$, if the normalizer $N_G(C)$ of $C$ in $G$ is not virtually cyclic, then $N_{G'}(\varphi(C))$ is not virtually cyclic and $\varphi(E_G(N_G(C)))=E_{G'}(N_{G'}(\varphi(C)))$.\end{de}

Recall that a sequence of homomorphisms $(\varphi_n : G \rightarrow G')_{n\in\mathbb{N}}$ is said to be \emph{discriminating} if the following holds: for every $g\in G\setminus \lbrace 1\rbrace$, $\varphi_n(g)$ is non-trivial for every integer $n$ sufficiently large.

We associate to every virtually free group $G$ a sentence $\zeta_G\in\mathrm{Th}_{\exists\forall}(G)$ (see Section \ref{zeta}) such that the following result holds. 

\begin{te}\label{principal}Let $G$ and $G'$ be two finitely generated virtually free groups. The following five assertions are equivalent.
\begin{enumerate}
\item $\mathrm{Th}_{\forall\exists}(G)=\mathrm{Th}_{\forall\exists}(G')$.
\item $G'\models \zeta_G$ and $G\models \zeta_{G'}$.
\item There exist two discriminating sequences $(\varphi_n : G \rightarrow G')_{n\in\mathbb{N}}$ and $(\varphi'_n : G' \rightarrow G)_{n\in\mathbb{N}}$ of special homomorphisms .
\item There exists two strongly special homomorphisms $\varphi : G \rightarrow G' $ and $\varphi' : G' \rightarrow G$.
\item There exist two multiple legal extensions $\Gamma$ and $\Gamma'$ of $G$ and $G'$ respectively, such that $\Gamma\simeq \Gamma'$.
\end{enumerate}
\end{te}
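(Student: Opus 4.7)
The plan is to prove the theorem by establishing the cycle of implications $(5) \Rightarrow (1) \Rightarrow (2) \Rightarrow (3) \Rightarrow (4) \Rightarrow (5)$. The implication $(5) \Rightarrow (1)$ would follow by induction on the length of the sequence of legal extensions: at each step, Theorem \ref{legalte} (large case) or Theorem \ref{legal2te} (small case) yields $\mathrm{Th}_{\forall\exists}(G_i) = \mathrm{Th}_{\forall\exists}(G_{i+1})$, and then $\Gamma \simeq \Gamma'$ forces $\mathrm{Th}_{\forall\exists}(G) = \mathrm{Th}_{\forall\exists}(G')$. The implication $(1) \Rightarrow (2)$ is purely logical: the negation of an $\exists\forall$-sentence is a $\forall\exists$-sentence, so equality of $\forall\exists$-theories gives equality of $\exists\forall$-theories, and since $\zeta_G \in \mathrm{Th}_{\exists\forall}(G)$ by construction, satisfaction transfers to $G'$, and symmetrically for $\zeta_{G'}$.

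For $(2) \Rightarrow (3)$, I would unpack the definition of $\zeta_G$ from Section \ref{zeta}: this sentence should be built to existentially assert the existence of a homomorphism satisfying universally-quantified conditions that encode Definition \ref{special1}, so its truth in $G'$ directly yields at least one special homomorphism $\varphi : G \to G'$. To upgrade to a discriminating sequence rather than a single map, I would exploit that, for every finite $S \subset G \setminus \{1\}$, the existence of a special homomorphism $G \to G$ injective on $S$ is a consequence of $\zeta_G$ expressible as a $\forall\exists$-sentence, hence transfers to $G'$; letting $S$ exhaust $G$ produces the sequence $(\varphi_n)$, and the symmetric argument gives $(\varphi'_n)$.

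The implication $(3) \Rightarrow (4)$ would rely on a Bestvina--Paulin type limit construction. After rescaling, a discriminating sequence $(\varphi_n : G \to G')$ produces an action of $G$ on a real tree $T$; using the Rips--Sela machinery for group actions on trees, adapted to hyperbolic groups with torsion in \cite{RW14}, combined with the shortening argument, one shows that after precomposition with a suitable modular automorphism the sequence can be shortened until it stabilizes to a homomorphism whose specialness conditions on finite subgroups and their normalizers all pass to the limit. The additional constraint on maximal normal finite subgroups of non-elementary normalizers required by Definition \ref{strongmor} also passes to the limit by semicontinuity of the Olshanskiy invariant $E_{G'}(\cdot)$, yielding a strongly special $\varphi : G \to G'$; symmetrically one obtains $\varphi' : G' \to G$.

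The main obstacle is $(4) \Rightarrow (5)$: constructing from the pair $(\varphi, \varphi')$ of strongly special homomorphisms two multiple legal extensions $\Gamma \supset G$ and $\Gamma' \supset G'$ with $\Gamma \simeq \Gamma'$. My plan would be to run a ping-pong procedure on the Stallings--Dunwoody decompositions of $G$ and $G'$. For each finite subgroup $C \subset G$, the strongly special condition forces $N_G(C)$ and $N_{G'}(\varphi(C))$ to match in the virtually cyclic/non-elementary dichotomy and to have matching maximal normal finite subgroups. Wherever $\varphi_{|N_G(C)}$ fails to be surjective, the defect is absorbed by a legal extension on the smaller side: large if $N_G(C)$ is non-elementary (the conditions of Definition \ref{legal} are supplied by strong specialness), and small if $N_G(C)$ is infinite virtually cyclic (the conditions of Definition \ref{legal2} are supplied by the $K_G$-nice property of $\varphi_{|N_G(C)}$ and of $\varphi'_{|N_{G'}(\varphi(C))}$). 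The delicate point will be to coordinate the extensions on the two sides so that the iterations terminate with isomorphic limits; I expect termination to follow from a finite complexity invariant of the Stallings--Dunwoody decomposition (for instance, sum of orders of vertex groups modulo the $D_{2K!}$-quotients of their normalizers) that strictly decreases or stabilizes at each step, and coherence of the ping-pong from careful bookkeeping of edge and vertex data using both $\varphi$ and $\varphi'$ simultaneously.
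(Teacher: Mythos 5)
Your cycle $(5)\Rightarrow(1)\Rightarrow(2)\Rightarrow(3)\Rightarrow(4)\Rightarrow(5)$ differs from the paper's implication diagram (the paper proves $(1)\Rightarrow(3)$, $(2)\Leftrightarrow(4)$ and $(3)\Rightarrow(4)$ rather than $(2)\Rightarrow(3)$), and your link $(2)\Rightarrow(3)$ has a genuine gap. The sentence $\zeta_G$ is a \emph{single} $\exists\forall$-sentence asserting the existence of one pre-strongly special homomorphism. The sentences $\zeta_n:\exists\bm{x}\ \mathrm{Special}^{\forall}_n(\bm{x})$, which assert the existence of a special homomorphism injective on the ball of radius $n$, are \emph{not} logical consequences of $\zeta_G$ — they merely happen to also belong to $\mathrm{Th}_{\exists\forall}(G)$ (because the identity of $G$ witnesses them). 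So $G'\models\zeta_G$ does not give $G'\models\zeta_n$; to transfer the whole family you need the inclusion $\mathrm{Th}_{\exists\forall}(G)\subset\mathrm{Th}_{\exists\forall}(G')$, i.e.\ assertion (1), not (2). (These sentences are also $\exists\forall$, not $\forall\exists$ as you wrote.) This is exactly why the paper derives (3) from (1) via Proposition \ref{plusdideedutout} and Corollary \ref{1904}, and connects (2) to the rest only through the equivalence $(2)\Leftrightarrow(4)$.

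Your mechanism for $(3)\Rightarrow(4)$ is also wrong: no Bestvina--Paulin limit or shortening argument is needed, and "semicontinuity of $E_{G'}(\cdot)$" along a single discriminating sequence is false. The paper's remark after Proposition \ref{stronglyexistence2} gives the counterexample $G=\langle c\mid c^4=1\rangle\times F_2$, $G'=\langle G,t\mid [t,c^2]=1\rangle$: the (constant, discriminating, special) sequence of inclusions satisfies $\varphi_n(E_G(N_G(c^2)))=\langle c\rangle\not\subset\langle c^2\rangle=E_{G'}(N_{G'}(c^2))$. The equality $\varphi(E_G(N_G(C)))=E_{G'}(N_{G'}(\varphi(C)))$ of Definition \ref{strongmor} cannot be extracted from one sequence; the correct argument (Proposition \ref{stronglyexistence2} plus Lemma \ref{lemmeAE}) first notes that the \emph{existential} chain formula $\mathrm{PreStrong}^{\exists}$ is preserved by a discriminating sequence for $n$ large, yielding only \emph{pre}-strongly special maps, and then uses the existence of such maps in \emph{both} directions to force equality of chain complexities and hence of the subgroups $E$. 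Your $(5)\Rightarrow(1)$ and $(1)\Rightarrow(2)$ match the paper. For $(4)\Rightarrow(5)$ — by far the hardest implication — you give only a plan whose general shape (induction on Stallings splittings, matching normalizers of edge groups, absorbing defects by legal extensions) is right, but the crux is termination and coherence of the two-sided construction: your proposed complexity invariant is not shown to decrease, whereas the paper inducts on the number of edges of reduced Stallings splittings and needs the apparatus of strongly $(\geq m)$-special pairs, expansions, trees of cylinders and Lemma \ref{perin} to close the argument.
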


\begin{rque}A classical and easy result claims that two finitely presented groups $G$ and $G'$ have the same existential theory if and only if there exist two discriminating sequences of homomorphisms $(\varphi_n : G \rightarrow G')_{n\in\mathbb{N}}$ and $(\varphi'_n : G' \rightarrow G)_{n\in\mathbb{N}}$ (see for instance \cite{And18}, Proposition 2.1). This should be compared with the third assertion above: from this perspective, as a consequence of Theorem \ref{principal}, the only difference between the existential and $\forall\exists$ theories is that one cannot talk about the conjugacy classes of finite subgroups with only one quantifier, whereas it is possible with two quantifiers.\end{rque}

It seems reasonable to make the following conjecture, which generalizes the famous Tarski's problem about the elementary equivalence of non-abelian free groups (see \cite{Sel06} and \cite{KM06}).

\begin{conj}Two virtually free groups have the same $\forall\exists$-theory if and only if they are elementarily equivalent.\end{conj}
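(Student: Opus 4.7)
The easy direction of the conjecture is automatic: elementary equivalence implies $\forall\exists$-elementary equivalence. For the converse, assuming $\mathrm{Th}_{\forall\exists}(G)=\mathrm{Th}_{\forall\exists}(G')$ among virtually free groups, the natural plan is to carry out a quantifier-elimination procedure that reduces every first-order sentence to an equivalent $\forall\exists$-sentence modulo the theory of virtually free groups, following the blueprint established by Sela in the torsion-free hyperbolic setting (\cite{Sel05a}, \cite{Sel05b}). Combined with the classification of Theorem \ref{principal}, this would upgrade $\forall\exists$-equivalence to full elementary equivalence.

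Concretely, I would proceed by induction on the number of quantifier alternations. Given a sentence of the form $\forall \boldsymbol{x}\exists \boldsymbol{y}\forall \boldsymbol{z}\,\theta(\boldsymbol{x},\boldsymbol{y},\boldsymbol{z})$, for each Makanin--Razborov resolution of the system of solutions in $\boldsymbol{y}$ one would construct finitely many tuples of ``formal solutions'' $\boldsymbol{y}(\boldsymbol{x})$ in a suitable canonical overgroup of $G$ such that $\forall \boldsymbol{z}\,\theta(\boldsymbol{x},\boldsymbol{y}(\boldsymbol{x}),\boldsymbol{z})$ holds universally whenever the tested resolution is the one realized by $\boldsymbol{y}$. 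This replaces the inner $\exists$ by witnesses definable from $\boldsymbol{x}$, reducing the complexity by one alternation. The construction rests on formal Merzlyakov-type theorems, extended to hyperbolic groups with torsion in \cite{Hei18}, and on the shortening argument in its torsion-enabled version due to Reinfeldt and Weidmann \cite{RW14}. Iterating the procedure eventually produces a $\forall\exists$-sentence equivalent, in the theory of virtually free groups, to the original one.

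The central ingredient, which I expect to be the main obstacle, is the correct interaction of the quantifier elimination with the two new phenomena isolated in this paper, namely legal large and legal small extensions. The shortening argument and the resolutions must be carried out not inside $G$ itself but inside its \emph{legal extension closure}, so that the formal solutions live in a canonical overgroup which is $\forall\exists$-indistinguishable from $G$ by Theorems \ref{legalte} and \ref{legal2te}. Small legal extensions (Definition \ref{legal2}) are especially delicate: they introduce power maps $a \mapsto a'^p$ on virtually cyclic normalizers of finite subgroups, and these maps interact non-trivially with the JSJ decomposition of limit groups over $G$. Simultaneously tracking, throughout all levels of a resolution, the conjugacy classes of finite subgroups, the invariants $E_G(N_G(C))$, and the small-extension data required by Definition \ref{special1}, is the technical heart of the program. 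Once this bookkeeping is made to work, an iterative Sela-style reduction combined with Theorem \ref{principal} would yield the conjecture.
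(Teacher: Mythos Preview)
The statement you are addressing is labelled a \emph{conjecture} in the paper, and the paper does not attempt to prove it. The only content accompanying it is a remark observing that the analogous statement is known for torsion-free hyperbolic groups (via Sela's quantifier elimination) and for free products of finite groups with a free group (via Sela's work on free products). There is therefore no ``paper's own proof'' to compare against.

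Your proposal is not a proof but a programme, and you are candid about this (``the natural plan is'', ``I expect to be the main obstacle'', ``once this bookkeeping is made to work''). As a programme it is reasonable and broadly in line with what the remark after the conjecture suggests: adapt the Sela quantifier-elimination machinery from \cite{Sel05a}, \cite{Sel05b} to the setting with torsion, using \cite{RW14} and \cite{Hei18} as inputs, and then invoke Theorem \ref{principal}. However, you should be aware that this is precisely the open problem, not a route that has been carried out. The difficulties you flag---tracking conjugacy classes of finite subgroups and the small-extension data through towers of resolutions, and understanding how legal small extensions interact with the JSJ analysis of limit groups---are genuine and substantial, and no one has yet shown that the Sela procedure goes through in this generality. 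In particular, the existence of a finite collection of formal solutions covering all cases, and the termination of the iterative elimination, both rely on structural results (graded resolutions, completions, closures) whose torsion analogues are not established in the literature for general virtually free groups. Your write-up should therefore be presented as motivation or a strategy for attacking the conjecture, not as a proof.
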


\begin{rque}As a consequence of Sela's work on the first-order theory of hyperbolic groups without torsion (see \cite{Sel09}), the above conjecture is known to be true if one replaces "virtually free" by "hyperbolic without torsion". Moreover, thanks to Sela's result on the first-order theory of free products, the conjecture is known to be true if the two virtually free groups in question are free products of finite groups with a free group.\end{rque}

\newpage

\subsection{Outline of the proof of Theorem \ref{principal}}

We shall prove the following series of implications.

\begin{figure}[h!]
\includegraphics[scale=0.45]{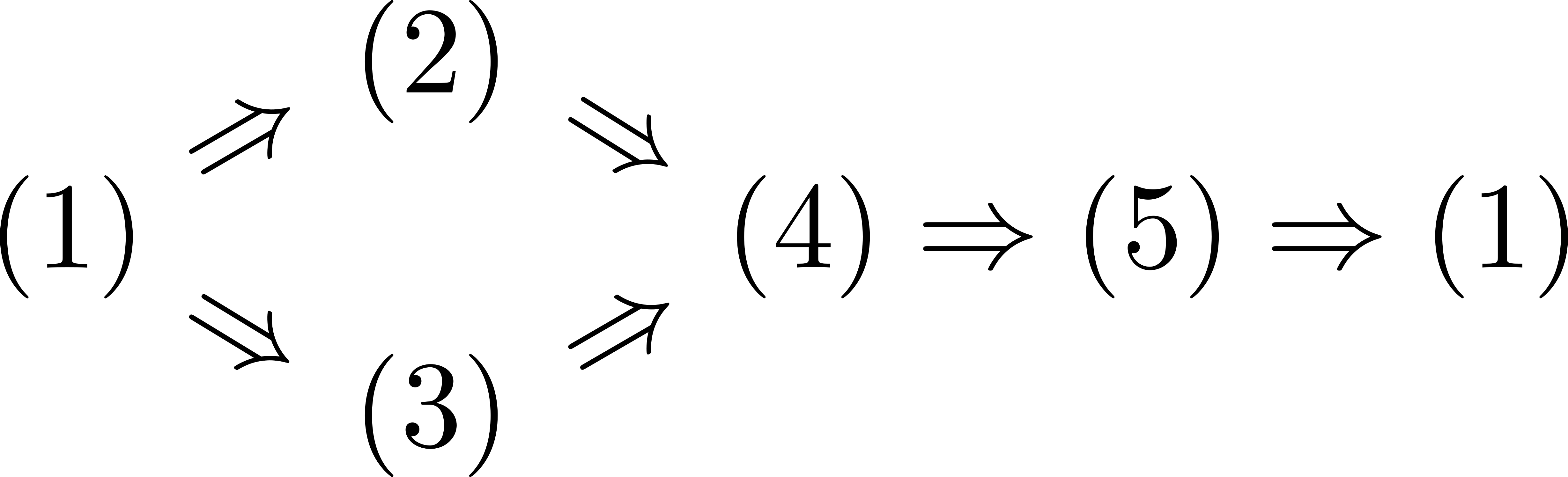}
\end{figure}

Note that $(1)\Rightarrow (2)$ is obvious. Implications $(1)\Rightarrow (3)$, $(2)\Rightarrow (4)$ and $(3)\Rightarrow (4)$ consist mainly in proving that our definitions are expressible by means of $\forall\exists$-sentences or $\exists\forall$-sentences.

The proof of $(5)\Rightarrow (1)$ is a consequence of Theorems \ref{legalte} and \ref{legal2te} (see Section \ref{5 implique 1}). The proof of Theorem \ref{legalte} (as well as of Theorem \ref{legalteplus}) consists in revisiting and generalizing the key lemma of Sacerdote's paper \cite{Sac73} dating from 1973, using some of the tools developed since then by Sela and others (in particular, the theory of group actions on real trees, the shortening argument and test sequences). The proof of Theorem \ref{legal2te} makes also important use of these techniques, but involves more technicalities.

We prove $(4)\Rightarrow (5)$ in three steps: first, we assume that all edge groups in reduced Stallings splittings of $G$ and $G'$ are equal. Then, we deal with the case where all edge groups have the same cardinality, by using a construction called the tree of cylinders, introduced by Guirardel and Levitt. In the general case, different cardinalities of edge groups may coexist in reduced Stallings splittings of $G$ and $G'$. The proof is by induction on the number of edges in these splittings. 

%By carefully collapsing certain edges, we can assume that there is only one cardinality of edge groups, and the proof boils down to the second case above.

The existence of an algorithm that takes as input two finite presentations of virtually free groups and decides if these groups have the same $\forall\exists$-theory is established by proving that one can bound the number of legal (small or large) extensions involved in the construction of $\Gamma$ and $\Gamma'$ (with the notation of Theorem \ref{principal0}), and that this bound is computable from finite presentations of $G$ and $G'$.

\subsection*{Acknowledgements}

I am very grateful to Vincent Guirardel for his valuable help. I would also like to thank Frédéric Paulin for his careful reading of a previous version of this paper, which led to many improvements.

\hypersetup{colorlinks=true, linkcolor=black}
\tableofcontents

\newpage

\section{Preliminaries}\label{preliminaires}

%In this section, we recall some classical facts. We also prove some minor results that will be useful in the sequel.

%Fix $n\geq 1$ an integer. $\llbracket 1,n\rrbracket$ denotes the set of integers $k$ such that $1\leq k\leq n$.

\subsection{First-order logic}For detailed background, we refer the reader to \cite{Mar02}.

\begin{de}A \emph{first-order formula} in the language of groups is a finite formula using the following symbols: $\forall$, $\exists$, $=$, $\wedge$, $\vee$, $\Rightarrow$, $\neq$, $1$ (standing for the identity element), ${}^{-1}$ (standing for the inverse), $\cdot$ (standing for the group multiplication) and variables $x,y,g,z\ldots$ which are to be interpreted as elements of a group. A variable is \emph{free} if it is not bound by any quantifier $\forall$ or $\exists$. A \emph{sentence} is a formula without free variables.\end{de}

\begin{de}Given a formula $\psi(x_1,\ldots ,x_n)$ with $n\geq 0$ free variables, and $n$ elements $g_1,\ldots ,g_n$ of a group $G$, we say that $\psi(g_1,\ldots ,g_n)$ is \emph{satisfied} by $G$ if its interpretation is true in $G$. This is denoted by $G\models \psi(g_1,\ldots ,g_n)$. For brevity, we use the notation $\psi(\bm{x})$ where $\bm{x}$ denotes a tuple of variables.\end{de}

\begin{de}The \emph{elementary theory} of a group $G$, denoted by $\mathrm{Th}(G)$, is the collection of all sentences that are true in $G$. The \emph{universal-existential theory} of $G$, denoted by $\mathrm{Th}_{\forall\exists}(G)$, is the collection of sentences true in $G$ of the form \[\forall x_1\ldots\forall x_m\exists y_1\ldots\exists y_n\ \psi(x_1,\dots,x_m,y_1,\dots , y_n)\] where $m,n\geq 1$ and $\psi$ is a quantifier-free formula with $m+n$ free variables. In the same way, we define the \emph{universal theory} of $G$, denoted by $\mathrm{Th}_{\forall}(G)$, its \emph{existential theory} $\mathrm{Th}_{\exists}(G)$, etc. We say that two groups $G$ and $G'$ are \emph{elementarily equivalent} (resp.\ \emph{$\forall\exists$-elementarily equivalent}) if $\mathrm{Th}(G)=\mathrm{Th}(G')$ (resp.\ $\mathrm{Th}_{\forall\exists}(G)=\mathrm{Th}_{\forall\exists}(G')$).\end{de}

To keep track of quantifiers and make the first-order formulas more readable, we will often use notations such as $\mathrm{Formula}_1^{\forall}(\bm{x})$ for universal formulas, $\mathrm{Formula}_2^{\exists}(\bm{x})$ for existential formulas, and so on.

\begin{de}\label{elememb}Let $G$ and $\Gamma$ be two groups. An \emph{elementary embedding} of $G$ into $\Gamma$ is a map $i : G \rightarrow \Gamma$ such that, for every first-order formula $\theta(x_1,\ldots,x_n)$ with $n$ free variables and for every tuple $(g_1,\ldots,g_n)\in G^n$, the group $G$ satisfies $\theta(g_1,\ldots,g_n)$ if and only if $\Gamma$ satisfies $\theta(i(g_1),\ldots,i(g_n))$. We define \emph{$\exists\forall$-elementary embeddings} and \emph{$\exists\forall\exists$-elementary embeddings} in the same way, by considering only $\exists\forall$-formulas and $\exists\forall\exists$-formulas respectively, i.e.\ first-order formulas of the form $\exists\boldsymbol{x}\forall\boldsymbol{y} \psi(\boldsymbol{x},\boldsymbol{y},\boldsymbol{t})$ and $\exists\boldsymbol{x}\forall\boldsymbol{y}\exists\boldsymbol{z} \psi(\boldsymbol{x},\boldsymbol{y},\boldsymbol{z},\boldsymbol{t})$ respectively, where $\boldsymbol{x}$, $\boldsymbol{y}$, $\boldsymbol{z}$ and $\boldsymbol{t}$ are tuples of variables, and $\psi$ is a quantifier-free formula in these variables.\end{de}

\subsection{Virtually cyclic groups}

Recall that an infinite virtually cyclic group $G$ can be written as an extension of exactly one of the following two forms:\[1\rightarrow C\rightarrow G\rightarrow \mathbb{Z}\rightarrow 1 \ \ \ \text{or} \ \ \ 1\rightarrow C\rightarrow G\rightarrow D_{\infty}\rightarrow 1,\] where $C$ is finite and $D_{\infty}=\mathbb{Z}/2\mathbb{Z}\ast\mathbb{Z}/2\mathbb{Z}$ denotes the infinite dihedral group. In the first case, $G$ has infinite center and splits as $G=C\rtimes\mathbb{Z}$. We say that $G$ is of \emph{cyclic type}. In the second case, $G$ has finite center, and we say that $G$ is of \emph{dihedral type}. It splits as an amalgamated free product $A\ast_C B$ with $[A:C]=[B:C]=2$. 

We need to describe under which conditions the normalizer of a finite edge group in a splitting is a virtually cyclic group.

\begin{lemme}\label{diédral1}Let $G$ be a group. Suppose that $G$ splits as an amalgamated free product $G=A\ast_CB$ over a finite group $C$, and that $N_G(C)$ is not contained in a conjugate of $A$ or $B$. Then $N_G(C)$ is infinite virtually cyclic if and only if $C$ has index 2 in $N_A(C)$ and in $N_B(C)$. In this case, $N_G(C)$ is of dihedral type, equal to $N_A(C)\ast_C N_B(C)$.\end{lemme}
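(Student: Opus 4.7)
The natural strategy is Bass–Serre theoretic. Let $T$ be the Bass–Serre tree of the splitting $G=A\ast_C B$, with a distinguished edge $e_0$ of stabilizer $C$ joining a vertex $v_A$ (stabilizer $A$) to $v_B$ (stabilizer $B$). I would consider the fixed-point set $T^C$, which is a subtree of $T$ containing $e_0$, and observe that $N_G(C)$ acts on $T^C$. A first key remark is that, for any edge $e\in T^C$, the stabilizer $G_e$ is a conjugate of $C$ containing $C$; since $|C|=|G_e|$ is finite, this forces $G_e=C$, and the element conjugating $e_0$ onto $e$ lies in $N_G(C)$. So $N_G(C)$ acts transitively on the edges of $T^C$, all with stabilizer $C$.

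Next I would translate the hypothesis that $N_G(C)$ is not contained in a conjugate of $A$ or $B$ into the statement that $N_G(C)$ acts on $T^C$ without global fixed point: a global fixed vertex would lie in the $G$-orbit of $v_A$ or $v_B$ and have a conjugate of $A$ or $B$ as stabilizer, containing $N_G(C)$. Because the $G$-action on $T$ is bipartite (it preserves the type of a vertex), the two endpoints of $e_0$ lie in distinct $N_G(C)$-orbits, so the quotient $N_G(C)\backslash T^C$ has exactly one edge and two vertices. The stabilizers of the representatives $v_A$ and $v_B$ under $N_G(C)$ are $N_G(C)\cap A=N_A(C)$ and $N_G(C)\cap B=N_B(C)$, and neither can equal $C$ (otherwise collapsing the edge would give $N_G(C)\subseteq N_B(C)\subseteq B$ or $\subseteq A$, contradicting the hypothesis). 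Bass–Serre theory therefore yields the amalgamated decomposition
\[
N_G(C)=N_A(C)\ast_C N_B(C),\qquad [N_A(C):C]\geq 2,\ [N_B(C):C]\geq 2.
\]

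It then remains to decide when this amalgam is infinite virtually cyclic. This is a classical fact about amalgamated products over finite groups, which I would justify by ends: an amalgam $X\ast_Y Z$ with $Y$ finite and $[X:Y],[Z:Y]\geq 2$ has two ends when $[X:Y]=[Z:Y]=2$, and infinitely many ends otherwise (the Bass–Serre tree has valence $2$ at every vertex in the first case, and contains a regular tree of higher valence in the second, producing a non-abelian free subgroup). Since a finitely generated group is infinite virtually cyclic iff it has two ends, we conclude that $N_G(C)$ is infinite virtually cyclic iff $[N_A(C):C]=[N_B(C):C]=2$. In that case the Bass–Serre tree of the splitting $N_A(C)\ast_C N_B(C)$ is a line on which $N_G(C)$ acts with kernel equal to $C$ (as $C$ is normal in $N_G(C)$ and lies in every vertex and edge stabilizer, while the bipartite structure forces the image in $\mathrm{Isom}(\mathbb{Z})$ to be all of $D_\infty$); hence $N_G(C)/C\simeq D_\infty$, so $N_G(C)$ is of dihedral type, as claimed.

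The only genuine content beyond routine Bass–Serre bookkeeping is the ends computation in the last paragraph; I expect this to be the main step, though it is standard and could be cited rather than reproved. The subtlety to watch for is the step where the hypothesis is converted into ``no global fixed point on $T^C$'', which uses that a conjugate of $A$ or $B$ stabilizing all of $T^C$ necessarily contains $C$ (so the corresponding fixed vertex really lies in $T^C$).
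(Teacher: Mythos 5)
Your argument is correct. The paper actually states this lemma without proof (it is treated as a standard Bass--Serre fact in the preliminaries), so there is no authorial argument to compare against; your route --- acting on the fixed subtree $T^{C}$ of the Bass--Serre tree, noting that finiteness of $C$ forces every edge stabilizer in $T^{C}$ to equal $C$ so that $N_G(C)$ is edge-transitive on $T^{C}$, deducing $N_G(C)=N_A(C)\ast_C N_B(C)$, and then invoking the two-ends criterion for amalgams over finite subgroups --- is exactly the expected one, and all the delicate points (converting the non-ellipticity hypothesis, ruling out index $1$, identifying the quotient $N_G(C)/C$ with $D_\infty$) are handled correctly.
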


\begin{lemme}\label{diédral2}Let $G$ be a group. Suppose that $G$ splits as an HNN extension $G=A\ast_C$ over a finite group $C$. Let $C_1$ and $C_2$ denote the two copies of $C$ in $A$ and $t$ be the stable letter associated with the HNN extension. Suppose that $N_G(C)$ is not contained in a conjugate of $A$. Then $N_G(C)$ is infinite virtually cyclic if and only if one of the following two cases holds.
\begin{enumerate}
\item If $C_1$ and $C_2$ are conjugate in $A$ and $N_A(C_1)=C_1$, then the normalizer $N_G(C_1)$ is of cyclic type, equal to $C_1\rtimes \langle at\rangle$, where $a$ denotes an element of $A$ such that $aC_2a^{-1}=C_1$.
\item If $C_1$ and $C_2=t^{-1}C_1t$ are non-conjugate in $G$ and $C_1$ has index 2 in $N_A(C_1)$ and $N_{tAt^{-1}}(C_1)$, then the normalizer $N_G(C_1)$ is of dihedral type, equal to \[N_A(C_1)\ast_{C_1} N_{tAt^{-1}}(C_1).\]
\end{enumerate}\end{lemme}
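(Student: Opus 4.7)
The plan is to analyze the action of $N_G(C_1)$ on the fixed-point subtree $T^{C_1}$ of the Bass--Serre tree $T$ associated with the HNN splitting $G=A\ast_C$. Let $v_0$ be the base vertex of $T$, with stabilizer $A$, and let $e_0$ be the base edge, stabilized by $C_1$ and with endpoints $v_0$ and $tv_0$ (for the appropriate convention on $t$). Since $C_1$ is finite, an edge $g\cdot e_0$ lies in $T^{C_1}$ if and only if $C_1\subseteq gC_1g^{-1}$, that is, if and only if $g\in N_G(C_1)$; hence $N_G(C_1)$ acts transitively on the edges of $T^{C_1}$. The hypothesis that $N_G(C_1)$ is not contained in any conjugate of $A$ rules out a global fixed vertex, so $T^{C_1}$ is the minimal $N_G(C_1)$-invariant subtree, and Bass--Serre theory then presents $N_G(C_1)$ as the fundamental group of a one-edge graph of groups.

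I then split the analysis into two sub-cases according to whether or not $v_0$ and $tv_0$ lie in the same $N_G(C_1)$-orbit. A direct calculation shows that they do if and only if there exists $a\in A$ with $aC_2a^{-1}=C_1$, i.e.\ if and only if $C_1$ and $C_2$ are conjugate in $A$. In that case the quotient graph of $T^{C_1}$ is a loop, the unique vertex group is $N_A(C_1)=A\cap N_G(C_1)$, the edge group is $C_1$, and the stable letter can be taken in the form $at$ as in the lemma, giving $N_G(C_1)=N_A(C_1)\ast_{C_1}$. Otherwise the quotient graph is a segment with vertex groups $N_A(C_1)$ and $N_{tAt^{-1}}(C_1)$ (the stabilizers of $v_0$ and $tv_0$ in $N_G(C_1)$), producing the amalgamated free product decomposition claimed in case (2).

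It then remains to translate the condition ``$N_G(C_1)$ is infinite virtually cyclic'' into concrete arithmetic conditions on these two graph-of-groups decompositions. The two standard facts I will invoke are that an HNN extension $B\ast_C$ over a finite group $C$ is infinite virtually cyclic if and only if $B=C$, in which case it is of cyclic type $C\rtimes\mathbb{Z}$, and that an amalgam $B_1\ast_C B_2$ over a finite $C$ is infinite virtually cyclic if and only if $[B_1:C]=[B_2:C]=2$, in which case it is of dihedral type; both follow quickly from the normal form theorem, since any larger inclusion produces a non-abelian free subgroup. Applying these to the loop and segment cases respectively yields exactly conditions (1) and (2) of the lemma, together with the asserted cyclic or dihedral type.

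The main delicacy I anticipate is in case (1), namely producing the stable letter in the precise form $at$ with $aC_2a^{-1}=C_1$ used in the statement, so that the sign and side conventions for $t$ match those of the paper; the rest is routine Bass--Serre analysis and does not pose any essential obstacle.
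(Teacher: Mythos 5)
The paper states this lemma without proof (it appears verbatim twice, as Lemma \ref{diédral2} and Lemma \ref{diédral22}, both times as an unproved preliminary), so there is no argument of the author's to compare against; your Bass--Serre analysis of the action of $N_G(C_1)$ on the fixed subtree $T^{C_1}$ is the standard way to establish it, and it is correct. Two small points. First, your ``standard fact'' for amalgams, as literally stated, fails in the degenerate case $B_1=C$ (then $B_1\ast_C B_2=B_2$ could be infinite virtually cyclic without $[B_1:C]=2$); you should say explicitly that non-ellipticity of $N_G(C_1)$ rules this out, since a vertex group equal to the edge group would make the amalgam collapse onto the other vertex group and hence be elliptic. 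Relatedly, you do not actually need $T^{C_1}$ to be the \emph{minimal} invariant subtree — the Bass--Serre structure theorem applies to any action without inversions on a tree, and edge-transitivity already gives a one-edge quotient — so the only role of the non-ellipticity hypothesis is the non-degeneracy just mentioned. Second, your dichotomy ``$v_0$ and $tv_0$ in the same $N_G(C_1)$-orbit iff $C_1$ and $C_2$ are conjugate in $A$'' is the right one; note that the paper's phrasing of case (2), ``non-conjugate in $G$'', cannot be meant literally since $C_2=t^{-1}C_1t$ is always conjugate to $C_1$ in $G$ — it should read ``non-conjugate in $A$'', which is exactly the condition your argument produces. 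The convention-matching issue you flag for the stable letter $at$ in case (1) is real (with the paper's normalization one most naturally obtains a stable letter of the form $ta'$ with $a'C_1a'^{-1}=C_2$, i.e.\ the inverse/transpose of the paper's $at$), but this is a bookkeeping discrepancy in the statement, not a gap in your proof.
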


\subsection{Maximal infinite virtually cyclic subgroups}\label{z}

Let $G$ be a hyperbolic group. If $g\in G$ has infinite order, we denote by $g^{+}$ and $g^{-}$ the attracting and repellings fixed points of $g$ on the boundary $\partial_{\infty} G$ of $G$. The stabilizer of the pair $\lbrace g^{+},g^{-}\rbrace$ is the unique maximal virtually cyclic subgroup of $G$ containing $g$. We denote this subgroup by $M(g)$. If $h$ and $g$ are two elements of infinite order, either $M(h)=M(g)$ or $M(h)\cap M(g)$ is finite; in the latter case, the subgroup $\langle h,g\rangle$ is non-elementary. The following easy lemma shows that $M(g)$ is definable by means of a quantifier-free formula.

\begin{lemme}\label{elementary}Let $g$ be an element of $G$ of infinite order. Let $K$ denote the maximum order of an element of $G$ of finite order. 
\begin{enumerate}
\item For every element $h\in G$, we have
\[h \ \text{belongs to} \ M(g) \ \Leftrightarrow \ [g^{K!},hg^{K!}h^{-1}]=1.\]
\item For every element $h\in G$ of infinite order, we have
\[h \ \text{belongs to} \ M(g) \ \Leftrightarrow \ [g^{K!},h^{K!}]=1.\]
\end{enumerate}
\end{lemme}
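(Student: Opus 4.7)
\textbf{Proof plan for Lemma \ref{elementary}.}

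The direction $(\Leftarrow)$ in both (1) and (2) rests on the standard hyperbolic-group fact that two infinite-order elements $a,b$ satisfying $[a,b]=1$ must share the same pair of attracting/repelling fixed points on $\partial_\infty G$ (since $b$ preserves the axis of $a$, hence its endpoints at infinity, and $b$ having infinite order has its own pair of fixed points contained in that set). For part (1), the elements $g^{K!}$ and $hg^{K!}h^{-1}$ both have infinite order and their fixed-point pairs are $\{g^+,g^-\}$ and $h\cdot\{g^+,g^-\}$ respectively; equality of these pairs says exactly that $h\in\mathrm{Stab}(\{g^+,g^-\})=M(g)$. For part (2), the fixed points of $g^{K!}$ and $h^{K!}$ are $\{g^\pm\}$ and $\{h^\pm\}$, whose coincidence gives $h\in M(g)$.

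For the direction $(\Rightarrow)$, everything takes place inside the infinite virtually cyclic group $V=M(g)$, and the task reduces to the following algebraic claim: letting $F\trianglelefteq V$ denote the maximal finite normal subgroup (of order at most $K$) and $V^+\leq V$ denote the subgroup of index $\leq 2$ whose quotient by $F$ is the maximal infinite cyclic subgroup of $V/F\in\{\mathbb{Z},D_\infty\}$, every infinite-order element of $V$ already lies in $V^+$, and $g^{K!}$ is central in $V^+$. Granting this, $hg^{K!}h^{-1}$ lies in $V^+$ for every $h\in V$ (because $V^+$ is normal in $V$), so it commutes with the central element $g^{K!}$, proving (1). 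For (2), an infinite-order $h\in M(g)$ also lies in $V^+$, hence $h^{K!}$ is central in $V^+$ by the same argument applied to $h$, so it commutes with $g^{K!}$.

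The centrality of $g^{K!}$ in $V^+$ is the technical heart of the argument. Write $V^+ = F\rtimes_\psi\mathbb{Z}$ and $g=(c,n)$ with $n\neq 0$. Because any automorphism of $F$ permutes the set $F\setminus\{e\}$ of size $|F|-1\leq K-1$, the order $d$ of $\psi$ divides $|\mathrm{Aut}(F)|\leq (|F|-1)!$, hence divides $(K-1)!$, and in particular $\psi^{K!}=\mathrm{id}$. A direct computation gives $g^{K!}=(c_{K!},K!\,n)$ with $c_{K!}=\prod_{i=0}^{K!-1}\psi^{in}(c)$, and the identity $c_{m_1+m_2}=c_{m_1}\cdot\psi^{m_1 n}(c_{m_2})$ together with $\psi^{d'n}=\mathrm{id}$ (where $d'=\mathrm{ord}(\psi^n)$ divides $d$) yields $c_{jd'}=c_{d'}^{\,j}$ for every $j\in\mathbb{N}$. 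Taking $j=K!/d'$ and using $d'\mid(K-1)!$, which forces $K\mid K!/d'$, the relation $c_{d'}^{\,|F|}=e$ (Lagrange) gives $c_{K!}=c_{d'}^{\,K!/d'}=e$. Thus $g^{K!}=(e,K!\,n)$, and this element commutes with every $(c,j)\in V^+$ because $\psi^{K!\,n}=\mathrm{id}$.

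The main obstacle is the arithmetic bookkeeping showing $c_{K!}=e$; the key points are the sharp inequality $|\mathrm{Aut}(F)|\leq(|F|-1)!$ and the resulting divisibility $K\mid K!/d'$, without which the telescoping argument would fail. The surrounding material, namely the description of $V^+$, the observation that every infinite-order element of an infinite virtually cyclic group lies in the unique cyclic-type index-$\leq 2$ subgroup, and the north–south dynamics on $\partial_\infty G$, is standard and can be quoted without further comment.
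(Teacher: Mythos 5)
Your proof is correct in substance, and the two directions split as follows when compared with the paper. The implication $(\Leftarrow)$ is exactly the paper's argument (commuting infinite-order elements share their fixed pair on $\partial_\infty G$, and north--south dynamics forces $h\cdot\{g^+,g^-\}=\{g^+,g^-\}$). For $(\Rightarrow)$ the paper is much terser: it simply observes that $M(g)$ contains a cyclic subgroup of index $\leq K$, so that all $K!$-th powers of elements of $M(g)$ land in a common abelian (cyclic) subgroup and therefore commute. Your semidirect-product computation in $V^+=F\rtimes_\psi\mathbb{Z}$ re-derives this by hand and in fact proves the slightly stronger statement that $g^{K!}$ is central in $V^+$ (which is essentially the paper's Lemma \ref{claim}); what this buys is an explicit explanation of why the exponent $K!$ suffices, namely that it simultaneously kills the order of $\psi$ in $\mathrm{Aut}(F)$ and the order of the $F$-component. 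Either route is acceptable.

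One step is misjustified, though the conclusion is right and the repair uses only facts you already established. You deduce $c_{d'}^{K!/d'}=e$ from "$K\mid K!/d'$" together with $c_{d'}^{\vert F\vert}=e$; but $\mathrm{ord}(c_{d'})$ divides $\vert F\vert$, not $K$, and $\vert F\vert\leq K$ does not give $\vert F\vert\mid K$. What you actually need is $\vert F\vert\mid K!/d'$, and this does hold: since $d'\mid\vert\mathrm{Aut}(F)\vert$ and $\vert\mathrm{Aut}(F)\vert$ divides $(\vert F\vert-1)!$, one gets $\vert F\vert\, d'\mid\vert F\vert!$, which divides $K!$ because $\vert F\vert\leq K$; hence $K!/d'$ is a multiple of $\vert F\vert$ and $c_{K!}=c_{d'}^{K!/d'}=e$ as claimed. (Relatedly, note that the lemma as printed defines $K$ as the maximal order of a torsion \emph{element}, whereas both your argument and the paper's "cyclic subgroup of index $\leq K$" really use the maximal order of a finite \emph{subgroup}, i.e.\ the quantity the paper elsewhere calls $K_G$; you should read $K$ that way, since otherwise the bound $\vert F\vert\leq K$ fails for, say, $F\simeq(\mathbb{Z}/2\mathbb{Z})^2$.)
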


\begin{proof}We only prove the first point, the proof of the second point is similar. If $h$ belongs to $M(g)$, then $hgh^{-1}$ belongs to $M(g)$. Therefore, $g^{K!}$ and $(hgh^{-1})^{K!}$ commute, since $M(g)$ has a cyclic subgroup of index $\leq K$. Conversely, if $g^{K!}$ and $hg^{K!}h^{-1}$ commute, $hg^{K!}h^{-1}$ fixes the pair of points $\lbrace g^{+},g^{-}\rbrace$, so $h$ fixes $\lbrace g^{+},g^{-}\rbrace$ as well. Thus, $h$ belongs to $M(g)$.\end{proof}

\begin{co}\label{pour plus tard!!!}Let $g,h$ be two elements of $G$ of infinite order. The subgroup $\langle g,h\rangle$ is elementary if and only if $[g^{K!},h^{K!}]=1$.
\end{co}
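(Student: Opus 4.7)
The plan is to reduce this to Lemma \ref{elementary}(2). The key observation is that in a hyperbolic group, a subgroup is elementary if and only if it is finite or virtually cyclic, and the maximal virtually cyclic subgroup $M(g)$ containing an element $g$ of infinite order is unique. So I would argue that $\langle g,h\rangle$ is elementary if and only if $h\in M(g)$, and then quote Lemma \ref{elementary}(2).

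More concretely, for the forward direction, I would argue as follows. Assume $\langle g,h\rangle$ is elementary. Since it contains $g$, which has infinite order, it is infinite, and therefore it must be (infinite) virtually cyclic. In particular, $\langle g,h\rangle$ is a virtually cyclic subgroup of $G$ containing $g$, so by the uniqueness of the maximal virtually cyclic subgroup $M(g)$ (recalled in Section \ref{z}), we have $\langle g,h\rangle\subseteq M(g)$. Hence $h\in M(g)$, and Lemma \ref{elementary}(2) (applicable since $h$ has infinite order) gives $[g^{K!},h^{K!}]=1$.

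For the converse direction, assume $[g^{K!},h^{K!}]=1$. Since $h$ has infinite order by hypothesis, Lemma \ref{elementary}(2) yields $h\in M(g)$. Then $\langle g,h\rangle\subseteq M(g)$, and since $M(g)$ is virtually cyclic, so is $\langle g,h\rangle$. In particular, $\langle g,h\rangle$ is elementary.

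There is no real obstacle here: the whole content has already been packaged into Lemma \ref{elementary} together with the standard fact, recalled at the beginning of Section \ref{z}, that each element of infinite order in a hyperbolic group sits inside a unique maximal virtually cyclic subgroup. The only mild subtlety is making sure to apply part (2) rather than part (1) of Lemma \ref{elementary}, which requires the hypothesis that \emph{both} $g$ and $h$ have infinite order, as stated in the corollary.
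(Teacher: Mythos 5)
Your proof is correct and is exactly the argument the paper intends: the corollary is stated without proof as an immediate consequence of Lemma \ref{elementary}(2) together with the fact, recalled at the start of Section \ref{z}, that $M(g)$ is the unique maximal virtually cyclic subgroup containing $g$. Nothing is missing.
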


%\begin{proof}
%The subgroup $\langle g,h\rangle$ is elementary if and only if $M(g)=M(h)$, if and only if $h$ belongs to $M(g)$, if and only if $[g^{K!},h^{K!}]=1$.
%\end{proof}

Recall that if $H$ is a non-elementary subgroup of $G$, there exists a unique maximal finite subgroup $E_G(H)$ of $G$ normalized by $H$. The following fact is proved by Olshanskiy in \cite{Ol93}.

\begin{prop}[\cite{Ol93} Proposition 1]\label{Olshanskiy2}The finite subgroup $E_G(H)$ admits the following description:\[E_G(H)=\bigcap_{h\in H^0} M(h)\]where $H^0$ denotes the set of elements of $H$ of infinite order.\end{prop}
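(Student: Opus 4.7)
My plan is to prove the two inclusions separately, exploiting the uniqueness of the maximal virtually cyclic subgroup $M(h)$ containing a given element $h$ of infinite order, together with the maximality property that characterizes $E_G(H)$. Set $F := \bigcap_{h\in H^0} M(h)$. Since $H$ is non-elementary, it contains two elements $h_1,h_2\in H^0$ generating a non-elementary subgroup of $G$; by the dichotomy stated just before Lemma \ref{elementary}, this forces $M(h_1)\cap M(h_2)$ to be finite, so that $F$ itself is finite.

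To prove $E_G(H)\subseteq F$, I would fix an arbitrary $h\in H^0$ and examine the subgroup $\langle h,E_G(H)\rangle$. Because $E_G(H)$ is normalized by every element of $H$, in particular by $h$, it is a finite normal subgroup of $\langle h,E_G(H)\rangle$, and the quotient is cyclic, generated by the image of $h$ (which has infinite order in the quotient since $E_G(H)$ is finite). Hence $\langle h,E_G(H)\rangle$ is virtually cyclic and contains $h$, so it is contained in $M(h)$ by the very definition of the latter as the unique maximal virtually cyclic subgroup through $h$. Intersecting over $h\in H^0$ gives $E_G(H)\subseteq F$.

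For the reverse inclusion, by the maximality of $E_G(H)$ among finite subgroups of $G$ normalized by $H$, together with the finiteness of $F$ established above, it suffices to show that $F$ is itself normalized by $H$. Given $g\in H$ and $x\in F$, I would observe that for any $h'\in H^0$ the conjugate $h := g^{-1}h'g$ again belongs to $H^0$, so that $x\in F\subseteq M(h)=g^{-1}M(h')g$, which yields $gxg^{-1}\in M(h')$. Since $h'$ was arbitrary, this shows $gFg^{-1}\subseteq F$, and finiteness of $F$ upgrades the inclusion to an equality. The only slightly delicate point in the whole argument is the finiteness of $F$, which reduces to finding two elements of $H^0$ with distinct maximal virtually cyclic subgroups; everything else is essentially formal once the definitions of $M(h)$ and $E_G(H)$ are unfolded.
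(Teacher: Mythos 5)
Your argument is correct. Note that the paper itself does not prove this statement: it is quoted verbatim from Olshanskiy (\cite{Ol93}, Proposition 1), so there is no in-paper proof to compare against; your two-inclusion argument is the standard one. Both directions are sound: $\langle h, E_G(H)\rangle$ is finite-by-cyclic, hence virtually cyclic, hence contained in the unique maximal virtually cyclic subgroup $M(h)$, giving $E_G(H)\subseteq F$; and the equivariance $M(g^{-1}h'g)=g^{-1}M(h')g$ shows $F$ is a finite subgroup normalized by $H$, so maximality of $E_G(H)$ gives the reverse inclusion. The only ingredient you use beyond the definitions is that a non-elementary subgroup $H$ of a hyperbolic group contains two infinite-order elements generating a non-elementary subgroup (so that $F$ is finite); this is standard and is implicitly used elsewhere in the paper (e.g.\ in the discussion of $G$-chains), so the proof is complete.
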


\subsection{Small cancellation condition}\label{Coulon}

Let $G$ be a hyperbolic group, let $(X,d)$ be a Cayley graph of $G$, and let $\delta$ be its hyperbolicity constant. Let $g$ be an element of $G$ of infinite order. We define the \emph{translation length} of $g$ as \[\vert\vert g\vert\vert=\inf_{x\in X}d(x,gx).\]

The \emph{quasi-axis} of $g$, denoted by $A(g)$, is the union of all geodesics joining $g^{-}$ and $g^{+}$. By Lemma 2.26 in \cite{Cou13} (see also Remark below Definition 2.8, \emph{loc.\ cit.}), the quasi-axis $A(g)$ is 11$\delta$-quasi-convex. If $g'$ is another element of $G$ of infinite order, $\Delta(g,g')$ is defined as follows: \[\Delta(g,g')=\mathrm{diam}\left(A(g)^{+100\delta}\cap A(g')^{+100\delta}\right)\in\mathbb{N}\cup\lbrace\infty\rbrace,\] where $A(g)^{+100\delta}$ is the $100\delta$-neighbourhood of $A(g)$ in $(X,d)$, and $A(g')^{+100\delta}$ is defined similarly. It is well-known that there exists a constant $ N(g) \geq 0 $ such that every element $h \in G $ satisfying $\Delta (g,hgh^{-1}) \geq N(g) $ belongs to $ M (g)$. 

%Roughly speaking, the element $g$ is said to satisfy a small cancellation condition if $N(g)$ can be chosen small compared to $\vert \vert g\vert \vert$. 

The small-cancellation condition defined below will play a crucial role in the proof of the implication $(5)\Rightarrow (1)$ of Theorem \ref{principal} (for further details, see Section \ref{HNN}).

\begin{de}\label{SCC}Let $G$ be a hyperbolic group. Let $\varepsilon>0$. An element $g$ of infinite order satisfies the $\varepsilon$-small cancellation condition if the following holds: for every $h\in G$, if \[\Delta(g,hgh^{-1}) > \varepsilon \vert \vert g\vert \vert,\] then $h$ and $g$ commute (so $h$ belongs to $M(g)$). In particular, $g$ is central in $M(g)$.\end{de}

%\begin{rque}Let $K$ denote the maximum order of an element of $G$ of finite order. Let $g\in G$ be an element of infinite order. If $k$ is a multiple of $K!$, then $g^k$ is central in $M(g)$. Moreover, one can see that $N(g)=N(g^k)$ for every integer $k\neq 0$. Since $\vert\vert g^k\vert \vert \sim k\vert\vert g\vert \vert$, the element $g^k$ satisfies the $\varepsilon$-small cancellation condition provided that $k$ is multiple of $K!$ sufficiently large.\end{rque}

\subsection{Actions on real trees}

Recall that a \emph{real tree} is a geodesic metric space in which every triangle is a tripod. A group action by isometries on a real tree is \emph{minimal} if it has no proper invariant subtree. Note that if an action of a finitely generated group on a real tree has no global fixed point, then there is a unique invariant minimal subtree, which is the union of all translation axes. A subtree $T'$ of a real tree $T$ is said to be \emph{non-degenerate} if it contains more than one point.

\subsubsection{Stable and superstable actions}\label{stabletreenash}

General results about group actions on real trees involve hypotheses on infinite sequences of nested arc stabilizers (see for instance Theorem \ref{guilev} and Theorem \ref{reinfeldt} below). An action of a group on a real tree is said to be \emph{stable} in the sense of Bestvina and Feighn (see \cite{BF95b}) if the pointwise stabilizer of any arc eventually stabilizes when this arc gets smaller and smaller. Here is a formal definition.

\begin{de}Let $T$ be a real tree. A non-degenerate subtree $T'$ of $T$ is \emph{stable} if, for every non-degenerate subtree $T''\subset T'$, the pointwise stabilizers of $T''$ and $T'$ coincide. Otherwise, $T'$ is called \emph{unstable}. An action on a real tree is \emph{stable} if any non-degenerate arc contains a non-degenerate stable subarc.\end{de}

In \cite{Gui08}, Guirardel introduced the notion of a \emph{superstable} action on a real tree.

\begin{de}\label{Msuper}An action on a real tree is $M$\emph{-superstable} if every arc whose pointwise stabilizer has order $> M$ is stable.\end{de}

Let $T$ be a real tree, and let $x$ be a point of $T$. A \emph{direction} at $x$ is a connected component of $T\setminus\lbrace x\rbrace$. We say that $x$ is a \emph{branch point} if there are at least three directions at $x$. The following result is a work in preparation by Guirardel and Levitt (improving \cite{Gui01}).

%Theorem 4.1 page 20 du travail en cours de Guirardel-Levitt.

\begin{te}\label{guilev}Let $L$ be a finitely generated group acting on a real tree $T$. Suppose that the action is $M$-superstable, with finitely generated arc stabilizers. Then every point stabilizer is finitely generated, the number of orbit of branch points in $T$ is finite, the number of orbit of directions at branch points in $T$ is finite.\end{te}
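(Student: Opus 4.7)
The plan is to invoke the Rips--Bestvina--Feighn structure theory for finitely generated group actions on real trees, in its $M$-superstable version (cf.\ Guirardel \cite{Gui01}). The first step is to decompose the action of $L$ on $T$ as a graph of actions whose vertex actions are of the four standard types: simplicial, axial (action on a line by translations), interval exchange transformation (Seifert-type), and Levitt (thin). The hypotheses that arc stabilizers are finitely generated and that the action is $M$-superstable are precisely what is required to run this decomposition: above the threshold $M$, arc stabilizers are stable in the classical Bestvina--Feighn sense, so the Rips dismantling terminates and produces a finite skeleton.

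The second step is to check the three conclusions on each type of vertex action, then assemble them through the skeleton. Simplicial pieces yield a finite quotient graph of groups (from minimality of the minimal invariant subtree together with finite generation of $L$), hence finitely many orbits of branch points and of directions at them, and finitely generated vertex stabilizers obtained from arc stabilizers by extension. Axial pieces produce no branch points, and the line stabilizer is an arc stabilizer extended by a finitely generated virtually cyclic quotient. IET pieces arise from measured foliations on finite band complexes, and thus contribute only finitely many orbits of singular points and prongs, with point stabilizers equal to arc stabilizers up to finite index. Levitt (thin) components are handled by the combinatorial finiteness of the associated thin band complex, which directly gives finitely many orbits of branch points and of directions, as well as finitely generated point stabilizers. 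Every branch point of $T$ is then either interior to a vertex tree (controlled piecewise) or sits on the skeleton (controlled by its finiteness), and an arbitrary point stabilizer is assembled from vertex-piece stabilizers via a Bass--Serre argument in the skeletal graph of groups.

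The main obstacle I anticipate is the finiteness of the decomposition under mere $M$-superstability. In the classical stable case, this rests on an accessibility-type argument (Bestvina--Feighn, \cite{BF95b}) bounding the number of strict changes in arc stabilizers along the Rips dismantling. Here, strict drops among stabilizers of order at most $M$ are still permitted by superstability, so one must bound their number separately, using a Dunwoody/Linnell-style accessibility estimate based on finite generation of $L$ and of the arc stabilizers. Combining this bound with the stabilization of \emph{large} arc stabilizers supplied by $M$-superstability is what forces termination of the machine and produces the finite skeleton. Supplying this quantitative termination argument is the content of the Guirardel--Levitt work in preparation, and it is the technical heart of the proof.
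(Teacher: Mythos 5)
The paper does not actually prove this statement: Theorem \ref{guilev} is quoted as a result from a work in preparation by Guirardel and Levitt (improving \cite{Gui01}), and no argument is given in the text. So there is no ``paper proof'' to match your proposal against; the relevant comparison is whether your outline constitutes a proof on its own, and it does not.

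Your strategy — run the Rips machine to get a graph-of-actions decomposition, verify the three conclusions on each type of vertex action, and assemble through the skeleton — is the expected framework, but the two points you would need to establish are precisely the ones you defer. First, termination of the dismantling under mere $M$-superstability (rather than Bestvina--Feighn stability) is not a routine accessibility estimate: superstability allows unboundedly many strict drops among small arc stabilizers a priori, and bounding them from finite generation of $L$ and of the arc stabilizers is, as you say yourself, ``the content of the Guirardel--Levitt work in preparation.'' A proof cannot cite the theorem it is proving. Second, even granting a finite graph of actions, the finiteness of orbits of branch points and of directions is not automatic on the indecomposable pieces: in an axial or Seifert-type component the orbits are dense, and the branch points of $T$ lying inside such a component are the attaching points of other subtrees of the transverse covering; showing these fall into finitely many orbits (equivalently, that the skeleton in the sense of \cite{Gui04} is cocompact with finitely generated vertex stabilizers) is again part of what is being asserted, not something that follows formally from the decomposition. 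The same issue affects your claim that point stabilizers are ``assembled via a Bass--Serre argument'': that argument needs finite generation of the attaching-point stabilizers inside each component, which is one of the conclusions. In short, the proposal correctly names the machinery but leaves the theorem's actual content unproved.
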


\subsubsection{The Bestvina-Paulin method}In the sequel, $\omega$ denotes a non-principal ultrafilter, i.e.\ a finitely additive probability measure $\omega : \mathcal{P}(\mathbb{N})\rightarrow \lbrace 0,1\rbrace$ such that $\omega(F)=0$ whenever $F\subset\mathbb{N}$ is finite.

Let $G$ be a hyperbolic group, and let $(X,d)$ be a Cayley graph of $G$. Let $G'$ be a finitely generated group, equipped with a finite generating set $S$. Let $(\varphi_n : G'\rightarrow G)_{n\in\mathbb{N}}$ be a sequence of homomorphisms. We define the displacement of $\varphi_n$ as \[\lambda=\underset{s\in S}{\mathrm{max}} \ d(1,\varphi_n(s)).\]Suppose that $(\lambda_n)_{n\in\mathbb{N}}\in\mathbb{R}^{\mathbb{N}}$ tends to infinity. Let $d_n$ denote the modified metric $d/\lambda_n$ on $X$. The following result is sometimes called the Bestvina-Paulin method in reference to \cite{Bes88} and \cite{Pau88}.

%establishes a link between group actions on real trees and homomorphisms from a given group to a hyperbolic group

\begin{te}\label{existencearbre}The ultralimit $(X_{\omega},d_{\omega})$ of the metric spaces $(X,d_n)_{n\in\mathbb{N}}$ is a real tree endowed with an action of $G'$, and there exists a unique minimal $G'$-invariant non-degenerate subtree $T\subset X_{\omega}$. Moreover, some subsequence of the sequence $((X,d_n))_{n\in\mathbb{N}}$ converges to $T$ in the Gromov-Hausdorff topology.\end{te}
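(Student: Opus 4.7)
The plan is to follow the classical Bestvina--Paulin construction. The key observation is that rescaling the $\delta$-hyperbolic Cayley graph $(X,d)$ by a factor $1/\lambda_n$ yields a $(\delta/\lambda_n)$-hyperbolic space, so as $\lambda_n\to \infty$ the hyperbolicity constants of the rescaled Cayley graphs $(X,d_n)$ shrink to $0$. First I would recall that the pointed ultralimit of a sequence of pointed geodesic metric spaces is itself a complete geodesic metric space (the basepoints $1\in X$ providing the required normalisation), and then verify that $0$-hyperbolicity passes to the ultralimit: every geodesic triangle in $X_\omega$ arises as an $\omega$-limit of $\delta_n$-thin geodesic triangles in $(X,d_n)$ and is therefore $0$-thin in the limit. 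A complete $0$-hyperbolic geodesic space is a real tree, which handles the first assertion.

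Next I would define the $G'$-action. For each $g\in G'$, left multiplication $x\mapsto \varphi_n(g)\cdot x$ is a $d_n$-isometry of $X$ because $d$ is left-invariant. Writing $g=s_1\cdots s_k$ in the generating set $S$, the triangle inequality gives $d(1,\varphi_n(g))\leq k\lambda_n$, so $d_n(1,\varphi_n(g))\leq |g|_S$. In particular the orbit of the basepoint $\ast = [(1)_n]$ has $\omega$-bounded diameter, and the formula $g\cdot [(y_n)_n] = [(\varphi_n(g)y_n)_n]$ defines an isometric $G'$-action on $X_\omega$. By definition of $\lambda_n$, after extracting a subsequence along which a fixed generator $s\in S$ realises the maximum $\omega$-almost surely, one has $d_\omega(\ast,s\cdot \ast)=1$, so the basepoint orbit is non-trivial. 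Under the standard Bestvina--Paulin assumption that $\lambda_n$ is essentially the infimum displacement over all basepoints (which is the setup implicit in this type of argument), a short argument by contradiction shows that the $G'$-action has no global fixed point. Since $G'$ is finitely generated, the action then admits a unique minimal non-degenerate invariant subtree $T$, namely the convex hull of any non-trivial $G'$-orbit, or equivalently the union of translation axes of loxodromic elements together with fixed sets of elliptic elements.

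For the Gromov--Hausdorff convergence, I would use that, by the very construction of the ultralimit, for every $R,\varepsilon>0$ the set of integers $n$ for which $B_{(X,d_n)}(1,R)$ is $\varepsilon$-Gromov--Hausdorff close to $B_{X_\omega}(\ast,R)$ has $\omega$-measure $1$, hence is infinite. A standard diagonal extraction with $R_k\to \infty$ and $\varepsilon_k\to 0$ then produces a subsequence along which $(X,d_n,1)$ converges in the pointed Gromov--Hausdorff topology to $(X_\omega,d_\omega,\ast)$. To upgrade this to convergence to the minimal subtree $T$, one re-bases inside $T$ (projecting the original basepoint onto $T$, which is legitimate because $T$ is closed and convex in a real tree) and truncates to balls of bounded radius, so that the approximating sets in $(X,d_n)$ can be taken inside the corresponding preimage of $T$. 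The main technical obstacle is the verification that the thin-triangle condition passes to the ultralimit with vanishing hyperbolicity constant: although this is a $\forall$-type condition, it must be applied to geodesics, which themselves have to be obtained as ultralimits of uniform (quasi-)geodesics in the rescaled Cayley graphs, and one must make sure these limit geodesics remain uniformly thin. A secondary subtlety, as mentioned above, is ensuring fixed-point-freeness, which is why one normally normalises by taking $\lambda_n$ to be the infimum displacement rather than the displacement at a distinguished point.
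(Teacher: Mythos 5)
The paper does not actually prove this statement: it is quoted as the classical Bestvina--Paulin construction with references to \cite{Bes88} and \cite{Pau88} (and the torsion version is developed in \cite{RW14}). Your reconstruction is essentially the standard argument from those sources and is correct in outline. Two remarks. First, on the technical obstacle you single out: the cleanest way to see that the ultralimit is $0$-hyperbolic is to use the four-point (Gromov product) formulation of $\delta_n$-hyperbolicity, which is a purely metric inequality in four variables and passes verbatim to the ultralimit with $\delta_n=\delta/\lambda_n\to 0$; combined with the standard fact that an ultralimit of geodesic spaces is a complete geodesic space, this sidesteps the issue of realizing limit geodesics as limits of geodesics. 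Second, your worry about fixed-point-freeness is well placed: with the paper's normalization $\lambda_n=\max_{s\in S}d(1,\varphi_n(s))$ (displacement at the basepoint, not the infimal displacement), the limiting action could a priori fix a point away from the basepoint, in which case no minimal non-degenerate invariant subtree exists; the statement implicitly assumes this does not happen, and in every application in the paper the homomorphisms are first made short in their orbit under $\mathrm{Aut}\times\mathrm{Inn}$, which post-conjugation makes $1$ an almost-minimally displaced point and restores the hypothesis. One small inaccuracy: the unique minimal invariant subtree of a non-trivial action of a finitely generated group is the union of the translation axes of the hyperbolic elements; it is neither ``the convex hull of any non-trivial orbit'' (that can be strictly larger when the chosen point lies outside $T$) nor does it contain the full fixed sets of elliptic elements. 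None of this affects the validity of your approach.
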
 

%\begin{te}\label{existencearbre}Let $G$ be a hyperbolic group, and let $(X,d)$ be a Cayley graph of $G$. Let $G'$ be a finitely generated group, endowed with a finite generating set $S$. Let $(\varphi_n : G'\rightarrow G)$ be a sequence of homomorphisms, and let $\omega$ be a non-principal ultrafilter. Suppose that the rescaling factor $\ell_n=\mathrm{max}_{s\in S}d(1,\varphi_n(s))$ goes to infinity as $n$ goes to infinity. Then the ultraproduct $(X_{\omega},d_{\omega})$ of the metric spaces $(X,d/\ell_n)$ is a real tree, and some subsequence of the sequence $((X,d/\ell_n))_{n\in\mathbb{N}}$ converges to $(X_{\omega},d_{\omega})$ in the Gromov-Hausdorff topology.\end{te} 

%Let $G'$ and $T$ as above. Let $N$ denote the kernel of the action of $G'$ on $T$. The action of $L=G'/N$ on $T$ can be analysed using the so-called Rips machine, which enables us to decompose the action into tractable building blocks (see \cite{GLP94}, \cite{RS94}, \cite{BF95b}, \cite{Sel97}, \cite{Gui08}). 

\subsubsection{The Rips machine}

Under certain conditions, group actions on real trees can be analysed using the so-called Rips machine, which enables us to decompose the action into tractable building blocks. We shall use the following version of the Rips machine, proved by Guirardel in \cite{Gui08} (Theorem 5.1). See also \cite{GLP94}, \cite{RS94}, \cite{BF95b}, \cite{Sel97}. 

Given a group $G$ and a family $\mathcal{H}$ of subgroups of $G$, an action of the pair $(G,\mathcal{H})$ on a tree $T$ is an action of $G$ on $T$ such that each $H\in\mathcal{H}$ fixes a point.

\begin{te}\label{guirardel1}Let $G$ be a finitely generated group. Consider a minimal and non-trivial action of $(G,\mathcal{H})$ on an $\mathbb{R}$-tree $T$ by isometries. Assume that
\begin{enumerate}
\item[$(i)$] $T$ satisfies the ascending chain condition: for any decreasing sequence of non-degenerate arcs $I_1\supset I_2\supset\ldots$ whose lengths converge to $0$, the sequence of their pointwise stabilizers $\mathrm{Stab}(I_1)\subset\mathrm{Stab}(I_2)\subset\ldots$ stabilizes.
\item[$(ii)$] For any unstable arc $I\subset T$, 
\begin{enumerate}
\item $\mathrm{Stab}(I)$ is finitely generated,
\item $\forall g\in G$, $g\mathrm{Stab}(I)g^{-1}\subset \mathrm{Stab}(I)\Rightarrow g\mathrm{Stab}(I)g^{-1} = \mathrm{Stab}(I)$.
\end{enumerate}
\end{enumerate}
Then either $(G,\mathcal{H})$ splits over the pointwise stabilizer of an unstable arc, or over the pointwise stabilizer of a non-degenerate tripod whose normalizer contains $F_2$, or $T$ has a decomposition into a graph of actions where each vertex action $G_v\curvearrowright Y_v$ is either
\begin{enumerate}
\item simplicial: $G_v\curvearrowright Y_v$ is a simplicial action on a simplicial tree;
\item of Seifert type: the vertex action $G_v\curvearrowright Y_v$ has kernel $N_v$, and the
faithful action $G_v/N_v\curvearrowright Y_v$ is dual to an arational measured foliation on a compact conical 2-orbifold with boundary;
\item axial: $Y_v$ is a line, and the image of $G_v$ in $\mathrm{Isom}(Y_v)$ is a finitely generated group acting with dense orbits on $Y_v$.
\end{enumerate}
\end{te}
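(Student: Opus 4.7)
The plan is to attack this by the Rips--Bestvina--Feighn machine, extended to accommodate torsion (the approach of \cite{BF95b}, refined in \cite{Sel97} and further by Guirardel). The first step is to \emph{resolve} $T$ by a finite measured band complex $Y$: a $2$-complex carrying a system of transverse measured bands together with a $G$-equivariant map from the universal cover $\widetilde{Y}$ to $T$ whose dual $\mathbb{R}$-tree is equivariantly isomorphic to $T$. Such a resolution exists because $G$ is finitely generated and each $H\in\mathcal{H}$ acts elliptically on $T$, so one can build $Y$ from a finite presentation of $G$ together with finite sets of generators for the subgroups in $\mathcal{H}$, each of which contributes a vertex with trivial transverse measure.

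Next I would iterate the standard Rips moves on $Y$ (collapses of trivial bands, transverse splittings, removal of singular points) so as to preserve the dual tree up to $G$-equivariant isometry. The target is a normal form in which the minimal components of $Y$ fall into three kinds --- simplicial, thin (dual to an arational measured foliation on a compact $2$-orbifold with boundary), and Levitt/axial --- whose dual trees produce the three types of vertex actions in the conclusion; gluing along the transverse structure recovers $T$ as a graph of actions. Here the ascending chain condition (i) is the crucial termination device: an infinite Rips process must produce a nested sequence of arcs whose pointwise stabilizers strictly ascend, and (i) rules this out except when the bad chain sits on an unstable arc or on a thick tripod. The first case yields a splitting of $(G,\mathcal{H})$ over the unstable arc stabilizer; the second, combined with a standard ping-pong argument in $G/\mathrm{Stab}(\mathrm{tripod})$, yields a splitting over a tripod stabilizer whose normalizer contains a copy of $F_2$. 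These are exactly the two exceptional conclusions allowed by the theorem.

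The main obstacle, and the reason torsion makes the classical Rips machine non-trivial to adapt, is ensuring that Seifert-type components genuinely produce actions dual to arational measured foliations on $2$-orbifolds rather than on more degenerate spaces. One must show that the kernel $N_v$ of a Seifert vertex action coincides with the pointwise stabilizer of a generic leaf, so that $G_v/N_v$ acts faithfully on a compact $2$-orbifold with boundary. Hypothesis (ii) is what makes this possible: finite generation of unstable arc stabilizers in (ii)(a) keeps the intermediate complexes produced by the machine under control, while the self-normalization condition (ii)(b) prevents conjugation chains in which a finite group would collapse leaves in a non-orbifold manner. Superstability is then used to upgrade the stabilized transverse measure so that generic leaves are dense and have trivial stabilizer modulo $N_v$, at which point the classification of arational measured foliations on compact $2$-orbifolds applies and delivers the orbifold description of the Seifert pieces.
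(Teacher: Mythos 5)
First, a point of context: the paper does not prove this statement at all. It is quoted verbatim as Theorem 5.1 of \cite{Gui08} (Guirardel's extension of the Rips machine), so there is no internal proof to compare yours against; your sketch has to be judged against the actual argument in that reference.

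Measured against that, your proposal has a genuine gap at its very first step. You claim to resolve $T$ by a finite measured band complex whose dual tree is \emph{equivariantly isomorphic} to $T$. That is only true when $T$ is a geometric tree, and a general $T$ satisfying the hypotheses need not be geometric. What exists in general is a sequence of geometric trees $T_k$ (dual to finite foliated complexes built from larger and larger finite subsets of $G$ and of generating sets of the $H\in\mathcal{H}$) together with resolutions $T_k\rightarrow T$ converging strongly to $T$. The real content of the theorem is in controlling this approximation: the ascending chain condition $(i)$ is what guarantees that arc stabilizers along the approximating sequence stabilize, so that the decomposition of the geometric approximations into minimal components passes to the limit. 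Your sketch uses $(i)$ only as a ``termination device'' for Rips moves on a single complex, which both misplaces the hypothesis and silently assumes geometricity.

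There are two further, related misattributions. The exceptional conclusions (splitting over an unstable arc stabilizer, or over a tripod stabilizer with $F_2$ in its normalizer) do not come from a failure of the Rips process to terminate; they come from the \emph{exotic} (thin) minimal components, which you have omitted from your trichotomy --- indeed you label the orbifold components ``thin,'' whereas in the standard classification the thin components are precisely the non-orbifold, exotic ones. Hypothesis $(ii)$ (finite generation and the non-strict-nesting condition for unstable arc stabilizers) is used exactly in the analysis of those exotic components to promote a splitting of a vertex group to a splitting of $(G,\mathcal{H})$; it is not, as you suggest, a device for making Seifert components genuinely orbifold-dual. Finally, ``superstability'' is not a hypothesis of this theorem and cannot be invoked; only the ACC and the conditions on unstable arcs are available.
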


\subsubsection{Transverse covering}

We will use the following definitions (see \cite{Gui04}, Definitions 4.6 and 4.8).

\begin{de}\label{transverse}Let $T$ be a real tree endowed with an action of a group $G$, and let $(Y_j)_{i\in J}$ be a $G$-invariant family of non-degenerate closed subtrees of $T$. We say that $(Y_j)_{j\in J}$ is a transverse covering of $T$ if the following two conditions hold.
\begin{itemize}
\item[$\bullet$]\emph{Transverse intersection:} if $Y_i\cap Y_j$ contains more than one point, then $Y_i=Y_j$.
\item[$\bullet$]\emph{Finiteness condition:} every arc of $T$ is covered by finitely many $Y_j$.
\end{itemize}
\end{de}

\begin{de}\label{squelette}Let $T$ be a real tree, and let $(Y_j)_{j\in J}$ be a transverse covering of $T$. The \emph{skeleton} of this transverse covering is the bipartite simplicial tree $S$ defined as follows:
\begin{enumerate}
\item $V(S)=V_0(S)\sqcup V_1(S)$ where $V_1(S)=\lbrace Y_j \ \vert \ j\in J\rbrace$ and $V_0(S)$ is the set of points $x\in T$ that belong to at least two distinct subtrees $Y_i$ and $Y_j$. The stabilizer of a vertex of $S$ is the global stabilizer of the corresponding subtree of $T$.
\item There is an edge $\varepsilon=(Y_j,x)$ between $Y_j\in V_1(S)$ and $x\in V_0(S)$ if and only if $x$, viewed as a point of $T$, belongs to $Y_j$, viewed as a subtree of $T$. The stabilizer of $\varepsilon$ is $G_{Y_i}\cap G_x$.
\end{enumerate}
Moreover, the action of $G$ on $S$ is minimal provided that the action of $G$ on $T$ is minimal (see \cite{Gui04} Lemma 4.9).
\end{de}

%Note that the stabilizer of a set refers to its pointwise stabilizer, unless explicitly stated otherwise. 

\subsection{$G$-limit groups and the shortening argument}Let $G$ be a hyperbolic group, and let $G'$ be a finitely generated group. A sequence of homomorphisms $(\varphi_n:G'\rightarrow G)_{n\in\mathbb{N}}$ is termed stable if, for every element $x\in G'$, either $\varphi_n(x)$ is trivial for every $n$ large enough, or $\varphi_n(x)$ is non-trivial for every $n$ large enough. The stable kernel of the sequence is defined as follows:\[\ker((\varphi_n)_{n\in\mathbb{N}})=\lbrace x\in G' \ \vert \ \varphi_n(x)=1 \ \text{for every $n$ large enough}\rbrace.\]The quotient $L=G'/\ker((\varphi_n)_{n\in\mathbb{N}})$ is called the $G$-limit group associated with the sequence $(\varphi_n)$. This group acts on the tree $T$ given by Theorem \ref{existencearbre}. The class of $G$-limit groups admits several equivalent descriptions.

\begin{te}Let $G$ be a hyperbolic group, and let $L$ be a finitely generated group. The following three assertions are equivalent.
\begin{itemize}
\item[$\bullet$]$L$ is a $G$-limit group.
\item[$\bullet$]$L$ is fully residually $G$, meaning that there exists a sequence of homomorphisms $(\varphi_n)\in\mathrm{Hom}(L,G)^{\mathbb{N}}$ such that, for every non-trivial element $x\in L$, $\varphi_n(x)$ is non-trivial for every $n$ large enough. Such a sequence is said to be discriminating.
\item[$\bullet$]$\mathrm{Th}_{\exists}(L)\subset\mathrm{Th}_{\exists}(G)$.
\end{itemize}
\end{te}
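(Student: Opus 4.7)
The plan is to establish $(1)\Leftrightarrow (2)$ by a direct unwinding of definitions, and then the more substantive equivalence $(2)\Leftrightarrow (3)$. For $(2)\Rightarrow (1)$, any discriminating sequence $(\varphi_n : L \to G)$ is automatically stable (for $x=1$ the first alternative in the definition of stability is trivially verified; for $x \neq 1$ the second alternative is exactly the discriminating property), and its stable kernel is trivial, so the identification $L = L/\ker((\varphi_n))$ exhibits $L$ as a $G$-limit group. Conversely, for $(1)\Rightarrow (2)$, if $L = G'/\ker((\varphi_n))$ arises from a stable sequence $(\varphi_n : G' \to G)$, consider the induced maps $\overline{\varphi}_n : L \to G$. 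Given $x \in L \setminus \{1\}$, any lift $\widetilde{x} \in G'$ lies outside the stable kernel, so by stability of $(\varphi_n)$ the element $\varphi_n(\widetilde{x})$ must be nontrivial for all $n$ sufficiently large; hence $(\overline{\varphi}_n)$ is a discriminating sequence.

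For $(2)\Rightarrow (3)$, any existential sentence is logically equivalent to a disjunction of formulas of the form $\exists \bm{x}\bigl(\bigwedge_i w_i(\bm{x})=1 \wedge \bigwedge_j v_j(\bm{x})\neq 1\bigr)$. If $L$ satisfies such a sentence via a tuple $\bm{h}$ and $(\varphi_n : L \to G)$ is discriminating, then each $\varphi_n$ preserves the equations (since it is a homomorphism), while the finitely many inequations $v_j(\bm{h}) \neq 1$ survive under $\varphi_n$ for $n$ sufficiently large by the discriminating property applied to each $v_j(\bm{h})$. Hence $G$ satisfies the same sentence, giving $\mathrm{Th}_{\exists}(L) \subset \mathrm{Th}_{\exists}(G)$.

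For $(3)\Rightarrow (2)$, fix a finite generating tuple $(g_1,\ldots,g_r)$ of $L$ and write $L = F_r/N$ for the induced surjection $F_r \twoheadrightarrow L$. Since $F_r$ is countable, enumerate $N = \{\rho_k\}_{k\geq 1}$ and $F_r\setminus N = \{\sigma_k\}_{k\geq 1}$. For each $n\in\mathbb{N}$, the existential sentence
\[
\theta_n \;:=\; \exists x_1\cdots\exists x_r\Bigl(\bigwedge_{i\leq n}\rho_i(x_1,\ldots,x_r)=1 \;\wedge\; \bigwedge_{j\leq n}\sigma_j(x_1,\ldots,x_r)\neq 1\Bigr)
\]
holds in $L$ (witnessed by the generators $g_i$), hence by hypothesis (3) it also holds in $G$. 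A witness yields a homomorphism $\varphi_n : F_r \to G$ that kills $\rho_1,\ldots,\rho_n$ and sends $\sigma_1,\ldots,\sigma_n$ to nontrivial elements; by construction $\ker((\varphi_n)) = N$, so $(\varphi_n)$ descends to a discriminating sequence $L \to G$, exhibiting $L$ as fully residually $G$. All three implications are elementary quantifier manipulations; the only step deserving comment is the diagonal enumeration used in $(3)\Rightarrow (2)$, which is available precisely because $L$ (hence $F_r$) is countable, so no serious obstacle arises. It is worth noting that the argument makes no use of hyperbolicity of $G$: the same equivalences hold verbatim for any finitely generated group $G$.
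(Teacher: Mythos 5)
The paper states this theorem without proof, as a standard fact from the literature, so there is no internal proof to compare against; but your argument has a genuine gap, and it sits exactly where the hypothesis that $G$ is hyperbolic is needed. In both $(1)\Rightarrow(2)$ and $(3)\Rightarrow(2)$ you construct homomorphisms defined on a group surjecting onto $L$ (the group $G'$, respectively the free group $F_r$) and then speak of ``the induced maps'' or say the sequence ``descends'' to $L$. It does not: $\varphi_n$ kills each element of the stable kernel only for $n$ large \emph{depending on that element} (respectively, kills only the finitely many relators $\rho_1,\ldots,\rho_n$), so no individual $\varphi_n$ need annihilate the full kernel of $G'\twoheadrightarrow L$ (resp.\ of $F_r\twoheadrightarrow L$), and hence need not factor through $L$ at all. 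Since $L$ is only assumed finitely generated, not finitely presented, this cannot be waved away.

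The standard repair is equational noetherianity of hyperbolic groups (Theorem \ref{eqnoethhyp333} of the paper, due to Sela in the torsion-free case and Reinfeldt--Weidmann in general): the kernel of $F_r\twoheadrightarrow L$, viewed as a system of equations in $r$ variables, is equivalent over $G$ to a finite subsystem $\Sigma_0$; for $n$ large enough $\varphi_n$ kills $\Sigma_0$ and therefore the whole kernel, so it does factor through $L$. This is precisely the device the paper invokes each time it writes that ``there exists a unique homomorphism $\rho_n : L\rightarrow G$ such that $\widehat{\varphi}_n=\rho_n\circ\pi$, for $n$ sufficiently large''. Consequently your closing remark that the equivalences hold verbatim for an arbitrary finitely generated $G$ is false: without equational noetherianity the class of $G$-limit groups defined via stable sequences can be strictly larger than the class of fully residually $G$ groups, and the implications into $(2)$ can fail. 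The implications $(2)\Rightarrow(1)$ and $(2)\Rightarrow(3)$ in your write-up are correct as stated.
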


The third point justifies why $G$-limit groups play a crucial role in Sela's resolution of the Tarski's problem \cite{Sel06}, and more generally in his classification of torsion-free hyperbolic groups up to elementary equivalence \cite{Sel09}. 

Given $\omega$ and $(\varphi_n)_{n\in\mathbb{N}}$, the action of the limit group on the real tree given by Theorem \ref{existencearbre} has nice properties. The following result generalizes a theorem proved by Sela for torsion-free hyperbolic groups.

\begin{te}[\cite{RW14}, Theorem 1.16]\label{reinfeldt}Let $G$ be a hyperbolic group, and let $L$ be a $G$-limit group. Let $T$ be the corresponding real tree. The following hold, for the action of $L$ on $T$:
\begin{itemize}
\item[$\bullet$]the pointwise stabilizer of any non-degenerate tripod is finite;
\item[$\bullet$]the pointwise stabilizer of any non-degenerate arc is finitely generated and finite-by-abelian;
\item[$\bullet$]the pointwise stabilizer of any unstable arc is finite.
\end{itemize}
\end{te}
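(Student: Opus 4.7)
The plan is to exploit the Bestvina--Paulin construction (Theorem \ref{existencearbre}) in order to translate facts about stabilizers of the action $L\curvearrowright T$ into facts about the hyperbolic geometry of $G$, and then to pull conclusions back to $L$ using that the defining sequence $(\varphi_n)$ is discriminating. For each lift $g\in L$, set $g_n:=\varphi_n(g)$. The basic dictionary I will rely on is: $g$ fixes a point of $T$ iff $d(1,g_n)=o(\lambda_n)$, and $g$ fixes a non-degenerate arc $[a,b]\subset T$ iff, assuming $g_n$ has infinite order, the quasi-axis $A(g_n)$ shadows the corresponding geodesic in $(X,d_n)$ up to $o(\lambda_n)$-error. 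Throughout, $K$ will denote the maximum order of a torsion element of $G$.

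For the arc stabilizer assertion, take $g,h\in\mathrm{Stab}([a,b])$, both of infinite order without loss of generality. Their quasi-axes share an overlap whose $d$-length is linear in $\lambda_n$, while $\|g_n\|$ and $\|h_n\|$ are $o(\lambda_n)$. Hence $\Delta(g_n,h_ng_nh_n^{-1})$, measuring the overlap between $A(g_n)$ and $h_nA(g_n)$, is eventually much larger than the constant $N(g_n)$ recalled in Section \ref{Coulon}, forcing $h_n\in M(g_n)$ for large $n$. Lemma \ref{elementary} then yields $[g_n^{K!},h_n^{K!}]=1$, and since $(\varphi_n)$ is discriminating, this lifts to $[g^{K!},h^{K!}]=1$ in $L$. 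Consequently the subgroup of $K!$-th powers in $\mathrm{Stab}([a,b])$ is abelian, so $\mathrm{Stab}([a,b])$ is finite-by-abelian with finite part of exponent dividing $K!$; finite generation follows because the translation-length homomorphism on $M(g_n)$ passes to the limit and places the abelian part inside a cyclic subgroup of $\mathbb{R}$.

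For the tripod claim, if $g$ fixes a non-degenerate tripod and has infinite order, then $g_n$ has infinite order for large $n$, its quasi-axis is quasi-isometric to a line, and the set of points of $X$ moved by $g_n$ with displacement $o(\lambda_n)$ lies in a uniform neighbourhood of $A(g_n)$. Rescaling by $\lambda_n$ and passing to the ultralimit, the fixed-point set of $g$ in $T$ embeds into a line, contradicting the existence of a fixed tripod. Hence tripod stabilizers are torsion, and being contained in arc stabilizers (finite-by-abelian with bounded exponent), they must be finite. For the last assertion, suppose $I$ is unstable with infinite stabilizer $H$: by the preceding items $H$ is virtually cyclic, contained in a maximal virtually cyclic overgroup $M\subseteq L$ arising as the limit of $(M(g_n))_{n\in\mathbb{N}}$. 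The same hyperbolic-geometric control forces every $\mathrm{Stab}(I')$ with $I'\subsetneq I$ to lie in $M$; iterating along a nested sequence of subarcs of lengths tending to $0$ would produce an ascending chain of subgroups of the virtually cyclic group $M$, which must stabilize, contradicting unstability. The main obstacle, and the step where the extension from Sela's torsion-free case to Reinfeldt--Weidmann's setting is genuinely used, is controlling how torsion in the approximants $M(g_n)$ passes to the limit, so as to guarantee that the chain of stabilizers produced is strictly increasing rather than growing only by finite-kernel extensions.
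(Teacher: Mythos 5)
The first thing to say is that the paper contains no proof of this statement to compare against: Theorem \ref{reinfeldt} is imported verbatim from Reinfeldt and Weidmann (\cite{RW14}, Theorem 1.16) and used as a black box. What you have written is therefore an attempted reproof of a substantial external theorem, and as a proof it has concrete gaps. For the arc stabilizers: from $[g^{K!},h^{K!}]=1$ for all pairs of infinite-order elements you only get that the normal subgroup generated by $K!$-th powers is abelian, i.e.\ that $\mathrm{Stab}([a,b])$ is abelian-by-(exponent dividing $K!$); this is not the same as finite-by-abelian, and passing from one to the other is exactly a Burnside-type issue you do not address (a quotient of exponent $K!$ is not a subgroup of $L$, so you cannot invoke finiteness of torsion subgroups of $G$-limit groups). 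Your justification of finite generation is not a proof: elements of the arc stabilizer have translation length zero on $T$, there is no reason its abelian part embeds into $\mathbb{R}$, and subgroups of $\mathbb{R}$ need not be cyclic in any case; finite generation is a genuinely nontrivial part of the theorem in \cite{RW14}. A smaller but real point: the constant $N(g_n)$ recalled in Section \ref{Coulon} depends on $g_n$, hence on $n$, so the comparison "overlap $\sim c\lambda_n$, hence eventually exceeds $N(g_n)$" requires a uniform estimate such as $N(g)\leq \Vert g\Vert + C(\delta)$, which is never stated and must be supplied.

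The unstable-arc case is where the argument really breaks down. You assert that an infinite stabilizer $H$ of an unstable arc would be virtually cyclic "by the preceding items", but those items only give finitely generated finite-by-abelian, which is far from virtually cyclic (the abelian part of a limit arc stabilizer can have rank bigger than one). The maximal virtually cyclic overgroup $M\subset L$ "arising as the limit of $(M(g_n))_{n\in\mathbb{N}}$" is not a defined object: $M(g_n)$ lives in $G$ and varies with $n$, and no canonical limit subgroup of $L$ is constructed; the claim that all stabilizers of subarcs lie in it is asserted, not proved. Moreover, instability of $I$ only provides one subarc whose pointwise stabilizer is strictly larger; it does not give an infinite strictly increasing chain along nested subarcs, so the ascending-chain contradiction does not materialize without further work. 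You acknowledge at the end that the torsion bookkeeping needed to make the chain strictly increase (rather than grow by finite kernels) is missing — that missing step, together with finite generation of arc stabilizers, is precisely where the real content of \cite{RW14}, Theorem 1.16 lies, so the proposal cannot be accepted as a proof of the statement.
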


\begin{rque}\label{rquereinfeldt}Note that the action of $L$ on $T$ is $M$-superstable, where $M$ denotes the maximal order of a finite subgroup of $L$ (which is bounded from above by the maximal order of a finite subgroup of $G$). In particular, Theorem \ref{guilev} is applicable, so the number of orbit of branch points in $T$ for the action of $L$ is finite. This fact will be useful later. Note also that the tree $T$ satisfies the ascending chain condition of Theorem \ref{guirardel1} since any ascending sequence of finite-by-abelian subgroups of a hyperbolic group stabilizes.\end{rque}

In order to study the set $\mathrm{Hom}(L,G)$, Sela introduced a technique called the shortening argument, later generalized to hyperbolic groups possibly with torsion by Reinfeldt and Weidmann in \cite{RW14}. In this paper, we need a relative version of this argument.

\begin{de}\label{modulnash}Let $G$ be a hyperbolic group and let $H$ be a finitely generated subgroup of $G$. Suppose that $G$ is one-ended relative to $H$. We denote by $\mathrm{Aut}_{H}(G)$ the subgroup of $\mathrm{Aut}(U)$ consisting of all automorphisms $\sigma$ such that the following two conditions hold:
\begin{enumerate}
\item $\sigma_{\vert H}=\mathrm{id}_{\vert H}$;
\item for every finite subgroup $F$ of $G$, there exists an element $g\in G$ such that $\sigma_{\vert F}=\mathrm{ad}(g)_{\vert F}$.
\end{enumerate}
\end{de}

\begin{de}Let $G$ and $\Gamma$ be two hyperbolic groups, and let $H$ be a finitely generated subgroup of $G$. Suppose that $G$ is one-ended relative to $H$ and that there exists a monomorphism $i : H \hookrightarrow \Gamma$. Let $S$ be a finite generating set of $G$. A homomorphism $\varphi : G \rightarrow \Gamma$ such that $\varphi_{\vert H}=\mathrm{ad}(\gamma)\circ i$ for some $\gamma\in \Gamma$ is said to be \emph{short} if its length $\ell(\varphi):=\max_{s\in S}d(1,\varphi(s))$ is minimal among the lengths of homomorphisms in the orbit of $\varphi$ under the action of $\mathrm{Aut}_{H}(G)\times \mathrm{Inn}(\Gamma)$.\end{de}

\begin{prop}\label{nashpaulinnash}We keep the same notations as in the previous definition. Let $(\phi_n:G\rightarrow\Gamma )_{n\in\mathbb{N}}$ be a stable sequence of distinct short homomorphisms. Then the stable kernel of the sequence is non-trivial.\end{prop}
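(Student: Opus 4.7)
The plan is to argue by contradiction. Suppose that $\underrightarrow{\ker}(\phi_n)$ is trivial, so that the associated $G$-limit group is $G$ itself. Because the $\phi_n$ are pairwise distinct, the displacement $\lambda_n = \max_{s \in S} d(1,\phi_n(s))$ cannot stay bounded: otherwise the images of the generating set would lie in a fixed finite ball of the Cayley graph of $\Gamma$, forcing only finitely many distinct $\phi_n$. Up to extraction we may therefore assume $\lambda_n \to \infty$, and Theorem~\ref{existencearbre} provides an isometric action of $G$ on a real tree $T$ with no global fixed point.

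The next step is to ensure that $H$ is elliptic in $T$. The hypothesis $\phi_{n|H} = \mathrm{ad}(\gamma_n)\circ i$, together with the fact that shortness is taken modulo $\mathrm{Inn}(\Gamma)$, allows us to replace $\phi_n$ by its conjugate by $\gamma_n^{-1}$; then $\phi_{n|H} = i$ is a fixed map, so the displacement of any $h \in H$ on the unrescaled Cayley graph is bounded independently of $n$. After rescaling by $\lambda_n\to\infty$ this displacement collapses to $0$ in the limit, so $H$ fixes a point in $T$. Applying Theorem~\ref{reinfeldt}, the action of $G$ on $T$ has finite tripod stabilizers, finite-by-abelian arc stabilizers, and finite unstable-arc stabilizers, and by Remark~\ref{rquereinfeldt} it is $M$-superstable and satisfies the ascending chain condition of Theorem~\ref{guirardel1}.

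We can now run the Rips machine (Theorem~\ref{guirardel1}) on the pair $(G,\{H\})$. A splitting of $G$ over the pointwise stabilizer of an unstable arc or of a tripod would, by Theorem~\ref{reinfeldt}, be a splitting over a finite subgroup relative to $H$, contradicting the assumption that $G$ is one-ended relative to $H$. Hence $T$ admits a transverse covering by a graph of actions, with vertex actions of simplicial, Seifert, or axial type. This decomposition of $T$ translates into a graph-of-groups decomposition of $(G,\{H\})$, where $H$ is contained in one vertex group.

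From this decomposition, I would extract a generating family of \emph{modular automorphisms} of $G$ relative to $H$: Dehn twists along the simplicial edges, generalized Dehn twists supported on Seifert vertices, and unipotent twists coming from the axial components. Using the torsion-adapted construction of Reinfeldt-Weidmann \cite{RW14}, these automorphisms can be chosen to fix $H$ pointwise and to restrict to inner automorphisms on every finite subgroup of $G$, so they belong to $\mathrm{Aut}_H(G)$ in the sense of Definition~\ref{modulnash}. The relative shortening argument then produces, for $n$ sufficiently large, a pair $(\sigma_n,\iota_n) \in \mathrm{Aut}_H(G) \times \mathrm{Inn}(\Gamma)$ such that $\ell(\iota_n\circ\phi_n\circ\sigma_n) < \ell(\phi_n)$, contradicting the shortness of $\phi_n$. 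This contradiction shows that $\underrightarrow{\ker}(\phi_n)$ must be non-trivial. The main obstacle is the final step: verifying that the modular automorphisms arising from the Rips decomposition genuinely lie in $\mathrm{Aut}_H(G)$ --- in particular the \emph{inner on finite subgroups} condition (2) of Definition~\ref{modulnash}, which is delicate in the presence of torsion --- and then executing the shortening argument in this relative torsion setting, where the relevant technology is precisely that developed in \cite{RW14}.
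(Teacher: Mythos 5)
The paper does not actually prove Proposition \ref{nashpaulinnash}: it defers entirely to \cite{And18b} (itself based on \cite{Per08} and \cite{RW14}). Your outline follows exactly the route taken there — Bestvina--Paulin limit, ellipticity of $H$, Rips machine modulo the finiteness of tripod and unstable-arc stabilizers from Theorem \ref{reinfeldt} together with one-endedness relative to $H$, then shortening via $\mathrm{Aut}_H(G)\times\mathrm{Inn}(\Gamma)$ — and this is also the skeleton the author re-runs inside the proof of Lemma \ref{abovelemma}. So the architecture is right.

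Two remarks. First, your justification for the ellipticity of $H$ has a wrinkle: replacing $\phi_n$ by $\mathrm{ad}(\gamma_n^{-1})\circ\phi_n$ keeps it in the same $\mathrm{Aut}_H(G)\times\mathrm{Inn}(\Gamma)$-orbit but may strictly increase its length, so the conjugated map need not be short, and the final contradiction must be derived against the original $\phi_n$, not its conjugate. The clean fix is to not conjugate at all: for $h\in H$ one has $\|\phi_n(h)\|=\|\gamma_n i(h)\gamma_n^{-1}\|=\|i(h)\|$, which is constant in $n$, so every element of $H$ has translation length $0$ in $T$; since the ultralimit is complete, each $h$ is elliptic, and Serre's lemma then gives a global fixed point for the finitely generated group $H$. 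Second, your last paragraph asserts rather than proves the existence of shortening automorphisms lying in $\mathrm{Aut}_H(G)$ (in particular condition (2) of Definition \ref{modulnash}, being inner on finite subgroups) and the actual length decrease for large $n$; this is where essentially all of the content of \cite{RW14} sits. Since the paper itself outsources precisely this, your deferral is acceptable as a proof sketch, but be aware that the proposal as written is an outline of the argument of the cited references rather than a self-contained proof.
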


For a proof of this result, we refer the reader to \cite{And18b} (based on \cite{Per08} and \cite{RW14}).

Here below are two very useful consequences of the shortening argument whose proofs can be found in \cite{RW14} (see \cite{Sel09} for the torsion-free case).

\begin{te}[Descending chain condition for $G$-limit groups]\label{chainesela}Let $G$ be a hyperbolic group. Let $(L_n)_{n\in\mathbb{N}}$ be a sequence of $G$-limit groups. If $(\varphi_n : L_n \rightarrow L_{n+1})_{n\in\mathbb{N}}$ is a sequence of epimorphisms, then $\varphi_n$ is an isomorphism for all $n$ sufficiently large.
\end{te}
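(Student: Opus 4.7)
The plan is to argue by contradiction via the shortening argument. Suppose that infinitely many $\varphi_n$ fail to be isomorphisms; after passing to a subsequence, I may assume every $\varphi_n$ has non-trivial kernel. Setting $\pi_n = \varphi_{n-1} \circ \cdots \circ \varphi_0 : L_0 \twoheadrightarrow L_n$ and $K_n = \ker(\pi_n)$, the chain $K_0 \subsetneq K_1 \subsetneq K_2 \subsetneq \cdots$ is then strictly ascending in $L := L_0$.

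The first step is to build, by a diagonal construction, a stable sequence $(f_n : L \to G)_{n \geq 1}$ with non-trivial stable kernel. Fix a finite generating set of $L$ and let $B_n \subset L$ denote the ball of radius $n$. Since each $L_n$ is fully residually $G$, choose a discriminating sequence $(\eta_{n,k} : L_n \to G)_{k \in \mathbb{N}}$ and pick $k_n$ large enough that $\eta_{n,k_n}$ is injective on the finite set $\pi_n(B_n) \setminus \{1\}$. Define $f_n := \eta_{n,k_n} \circ \pi_n$. For any $x \in L$, one verifies that $f_n(x) = 1$ for all $n$ large enough if and only if $x \in \bigcup_n K_n$. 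Hence $(f_n)$ is a stable sequence whose stable kernel equals $\bigcup_n K_n$, which strictly contains $K_0$ and is in particular non-trivial.

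Next, I would replace each $f_n$ by a short representative. Precompose $f_n$ by a suitable automorphism in the modular group $\mathrm{Aut}_{H}(L)$ of $L$ (using, if needed, a Grushko decomposition to reduce to one-ended factors, where the relative modular group of Definition \ref{modulnash} applies) and postcompose by an inner automorphism of $G$ to obtain a short homomorphism $f'_n$. After passing to a subsequence, $(f'_n)$ is stable. The strict ascent $K_n \subsetneq K_{n+1}$ forces infinitely many $f'_n$ to be pairwise distinct, because homomorphisms factoring through $L_n$ and through the strict quotient $L_{n+1}$ of $L_n$ lie in different $\mathrm{Aut}_H(L) \times \mathrm{Inn}(G)$-orbits of $\mathrm{Hom}(L,G)$. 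Proposition \ref{nashpaulinnash} then asserts that the stable kernel of $(f'_n)$ is itself non-trivial, so the quotient $\overline{L} := L / \varinjlim \ker(f'_n)$ is a proper $G$-limit quotient of $L$.

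To close the argument, one iterates: the entire construction applies to $\overline{L}$ and to its own descending tower of $G$-limit quotients, producing at each stage a strict quotient. The contradiction comes from the existence of a well-founded complexity invariant that strictly drops under such shortening quotients — for example, the lexicographic pair consisting of the number of rigid vertex groups in the canonical JSJ decomposition over finite-by-abelian subgroups and the total complexity of its surface-type vertices, which in the torsion hyperbolic setting is controlled via the Reinfeldt-Weidmann framework (\cite{RW14}) in exactly the way Sela's analysis handles the torsion-free case. I expect Step 2 to be the main obstacle: one must verify carefully that $(f'_n)$ contains infinitely many distinct short homomorphisms (so that Proposition \ref{nashpaulinnash} genuinely applies), and that the modular automorphisms used for shortening do not collapse the chain prematurely — both points ultimately relying on the fine structure of the action of the $G$-limit group on the Bestvina-Paulin tree supplied by Theorem \ref{reinfeldt} and Remark \ref{rquereinfeldt}.
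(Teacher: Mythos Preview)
The paper does not prove this theorem; it simply cites \cite{RW14} (and \cite{Sel09} in the torsion-free case). So there is no ``paper's proof'' to compare against, only the literature argument. Your proposal is an attempt at that literature argument, but it has real gaps.

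First, Proposition~\ref{nashpaulinnash} as stated in this paper has as domain a \emph{hyperbolic} group $G$ that is one-ended relative to a subgroup $H$, with the homomorphisms agreeing with a fixed embedding on $H$ up to conjugacy. Your $L=L_0$ is only a $G$-limit group, not hyperbolic, and you have not specified any $H$. The shortening argument you need is the version for limit groups (relative to nothing, or relative to a Grushko factor), which is not what Proposition~\ref{nashpaulinnash} provides; invoking it here is a type mismatch.

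Second, your justification that infinitely many $f'_n$ are pairwise distinct is not correct. You claim that maps factoring through $L_n$ and through the strict quotient $L_{n+1}$ lie in different $\mathrm{Aut}_H(L)\times\mathrm{Inn}(G)$-orbits, but precomposition by a modular automorphism $\sigma$ replaces $\ker f$ by $\sigma^{-1}(\ker f)$, and there is no reason the chain $(K_n)$ should be preserved by the modular group. Nothing prevents all the shortened $f'_n$ from collapsing to finitely many maps.

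Third, and most seriously, your termination step is circular. You produce a proper $G$-limit quotient $\overline{L}$ of $L$ and propose to iterate, appealing to a ``well-founded complexity invariant'' (number of rigid vertices in a JSJ, surface complexity, etc.) that strictly drops. No such invariant is known to strictly decrease along arbitrary proper epimorphisms of $G$-limit groups; indeed, establishing that the iteration terminates is essentially equivalent to the descending chain condition you are trying to prove. The actual argument in \cite{RW14} does not proceed this way: the key extra ingredient is that $G$-limit groups are \emph{finitely presented}. Once you know that, your Step~1 already finishes the proof: the limit $L_\infty=L/\bigcup_n K_n$ is a $G$-limit group, hence finitely presented, so $\bigcup_n K_n$ is the normal closure in $L$ of finitely many elements, all of which lie in some $K_N$; thus $\bigcup_n K_n\subset K_N$, contradicting $K_N\subsetneq K_{N+1}$. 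Your diagonal construction is fine; what is missing is the finite-presentability input, not a JSJ complexity.
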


\begin{de}A group $L$ is said to be \emph{equationally noetherian} if, for any system of equations in finitely many variables $\Sigma$, there exists a finite subsystem $\Sigma_0$ of $\Sigma$ such that $\mathrm{Sol}(\Sigma)=\mathrm{Sol}(\Sigma_0)$ in $L$.
\end{de}

\begin{te}\label{eqnoethhyp333}Let $G$ be a hyperbolic group, and let $L$ be a $G$-limit group. Then $L$ is equationally noetherian.\end{te}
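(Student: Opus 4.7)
The plan is to invoke equational noetherianity of the hyperbolic group $G$ itself---a known result due to Sela in the torsion-free case and extended to the torsion case by Reinfeldt--Weidmann via the shortening argument and Theorem~\ref{chainesela}---and then transfer it to $L$ using the discriminating sequence provided by the $G$-limit structure.

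More precisely, I would use a \emph{uniform} version of equational noetherianity for $G$: for every countable system $\Sigma=\{w_i(\bar x,\bar y)\}_{i\in\mathbb N}$ of equations in variables $\bar x=(x_1,\dots,x_k)$ and parameter-variables $\bar y=(y_1,\dots,y_m)$, there exists a single finite $I\subset\mathbb N$ such that, for every substitution $\bar y\mapsto\bar g\in G^m$, the systems $\Sigma(\bar x,\bar g)$ and $\Sigma_I(\bar x,\bar g)$ have the same solutions in $G$. This uniform EN is a strengthening of plain EN that does hold for hyperbolic groups: if it failed, one could extract a sequence of bad parameter substitutions witnessing an infinite properly descending chain of algebraic sets in $G^k$, and the associated quotient groups would form a properly descending chain of $G$-limit groups, contradicting Theorem~\ref{chainesela}.

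Granting uniform EN of $G$, the transfer to $L$ is straightforward. Let $\Sigma=\{w_i(\bar x,\bar c)\}_{i\in\mathbb N}$ be any system over $L$ with coefficients $\bar c\in L^m$, and let $I\subset\mathbb N$ be the finite index set provided by uniform EN applied to $\Sigma$ viewed over the free group $F(\bar x,\bar y)$. I claim $\Sigma$ and $\Sigma_I$ are equivalent over $L$. Indeed, suppose for contradiction that $\bar a\in\mathrm{Sol}_L(\Sigma_I)\setminus\mathrm{Sol}_L(\Sigma)$, so that $w_j(\bar a,\bar c)\neq 1$ in $L$ for some $j\notin I$. Pick a discriminating sequence $(\varphi_\ell:L\to G)$; for $\ell$ large enough, $w_j(\varphi_\ell(\bar a),\varphi_\ell(\bar c))\neq 1$ in $G$. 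But $\varphi_\ell(\bar a)$ does satisfy $\Sigma_I(\bar x,\varphi_\ell(\bar c))$ in $G$, and so by the uniform EN property it also satisfies $\Sigma(\bar x,\varphi_\ell(\bar c))$, i.e.\ $w_j(\varphi_\ell(\bar a),\varphi_\ell(\bar c))=1$, a contradiction.

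The hard part is the first paragraph above: establishing \emph{uniform} equational noetherianity for $G$, since plain EN does not automatically upgrade to uniform EN for an arbitrary group. For hyperbolic groups, however, the known proof of EN via the shortening argument and Theorem~\ref{chainesela} produces uniform EN with essentially no extra cost, by running the descending chain argument parametrically in $\bar g$: a hypothetical counterexample gives a sequence $(\bar g^{(n)},\Sigma^{(n)})$ of parameters and subsystems whose limit action on a real tree, obtained via Theorem~\ref{existencearbre}, yields a $G$-limit group from which Proposition~\ref{nashpaulinnash} extracts a nontrivial kernel, and iterating contradicts Theorem~\ref{chainesela}. The remainder of the proof, once uniform EN of $G$ is in hand, is the short transfer argument outlined above.
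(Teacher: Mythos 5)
The paper does not actually prove Theorem \ref{eqnoethhyp333}: it is quoted as one of the consequences of the shortening argument ``whose proofs can be found in \cite{RW14}'' (Corollary 6.13 there; see \cite{Sel09} for the torsion-free case). So any proof you give is by necessity a different route. Your transfer step (paragraphs two and three) is correct and is the standard way of deducing equational noetherianity of a $G$-limit group from that of $G$: if $\bar a\in\mathrm{Sol}_L(\Sigma_I)$ but $w_j(\bar a,\bar c)\neq 1$ for some $j\notin I$, pushing through a discriminating sequence $\varphi_\ell$ preserves the relations of $\Sigma_I$ and, for $\ell$ large, the non-triviality of $w_j$, contradicting the choice of $I$ in $G$. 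What your argument buys is an honest reduction of the theorem to the single deep input ``hyperbolic groups are equationally noetherian'', which you cite --- just as the paper cites the full statement.

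Two remarks. First, the step you isolate as ``the hard part'' costs nothing: your uniform EN is exactly coefficient-free equational noetherianity of $G$ applied to $\Sigma$ viewed as a system in the $k+m$ variables $(\bar x,\bar y)$. Indeed, if $\mathrm{Sol}_G(\Sigma)=\mathrm{Sol}_G(\Sigma_I)$ inside $G^{k+m}$, then the fibres of these sets over every $\bar g\in G^m$ coincide, which is the uniform statement; no parametric rerun of the shortening argument is required. Second, the sketch you offer for that step in the last paragraph is the one shaky point of the proposal: a strictly descending chain of algebraic sets produces a sequence of epimorphisms of coordinate groups, but coordinate groups are only residually $G$, not in general fully residually $G$, so Theorem \ref{chainesela} does not apply to them directly --- this is precisely why \cite{RW14} establish equational noetherianity by a separate argument rather than deducing it formally from the descending chain condition for $G$-limit groups. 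Since that paragraph is dispensable once the first remark is taken into account, the proof as a whole stands.
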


%In order to study the set $\mathrm{Hom}(L,G)$, Sela introduced a tool called the \emph{shortening argument}, which can be viewed as a geometrical description of the Makanin-Razborov algorithm (in the case where $G$ is a free group)

%torsion-free relatively hyperbolic groups with finitely generated abelian parabolic subgroups by Groves in \cite{Gro05}, and 

\subsection{Tree of cylinders}\label{tree}

Let $k\geq 1$ be an integer, let $G$ be a finitely generated group, and let $T$ be a splitting of $G$ over finite groups of order exactly $k$. Recall that the deformation space of $T$ is the set of $G$-trees which can be obtained from $T$ by some collapse and expansion moves, or equivalently, which have the same elliptic subgroups as $T$. In \cite{GL11}, Guirardel and Levitt construct a tree that only depends on the deformation space of $T$. This tree is called the tree of cylinders of $T$, denoted by $T_c$. This tree will play a crucial role in our proof of the implication $(4)\Rightarrow (5)$ of Theorem \ref{principal} (see Section \ref{4 implique 5}). We summarize below the construction of the tree of cylinders $T_c$.

First, we define an equivalence relation $\sim$ on the set of edges of $T$. We declare two edges $e$ and $e'$ to be equivalent if $G_e=G_{e'}$. Since all edge stabilizers have the same order, the union of all edges having a given stabilizer $C$ is a subtree $Y_C$, called a cylinder of $T$. In other words, $Y_C$ is the subset of $T$ pointwise fixed by $C$. Two distinct cylinders meet in at most one point. The \emph{tree of cylinders} $T_c$ of $T$ is the bipartite tree with set of vertices $V_0(T_c)\sqcup V_1(T_c)$ such that $V_0(T_c)$ is the set of vertices $x$ of $T$ which belong to at least two cylinders, $V_1(T_c)$ is the set of cylinders of $T$, and there is an edge $\varepsilon=(x, Y_C)$ between $x\in V_0(T_c)$ and $Y_C\in V_1(T_c)$ in $T_c$ if and only if $x\in Y_C$. In other words, one obtains $T_c$ from $T$ by replacing each cylinder $Y_C$ by the cone on its boundary (defined as the set of
vertices of $Y_C$ belonging to some other cylinder). If $Y_C$ belongs to $V_1(T_c)$, the vertex group $G_{Y_C}$ is the global stabilizer of $Y_C$ in $T$, i.e.\ the normalizer of $C$ in $G$. 

\subsection{JSJ splittings over finite groups}\label{SD}

A splitting $T$ of a group $G$ is said to be \emph{reduced} if there is no edge of $T$ of the form $e=[v,w]$ with $G_v=G_e$.

%A finitely generated group $G$ is virtually free if and only if it splits as a minimal graph of groups with finite vertex groups, or equivalently if it acts minimally on a simplicial tree by simplicial automorphisms, without inversions and with finite vertex stabilizers. Such a tree, endowed with the action of $G$, is called a \emph{Stallings tree} (or splitting) of $G$.

Let $m\geq 1$ be an integer. A finitely generated group is termed $(\leq m)$\emph{-rigid} if it does not split non-trivially over a finite subgroup of order $\leq m$.

Let $G$ be a finitely generated virtually free group. Let $\mathcal{F}$ be the set of finite subgroups of $G$, and let $\mathcal{F}_m$ be the set of finite subgroups of $G$ of order $\leq m$. A splitting of $G$ over $\mathcal{F}$ all of whose vertex stabilizers are finite is called a \emph{Stallings splitting} of $G$, and a splitting of $G$ over $\mathcal{F}_m$ all of whose vertex stabilizers are $m$-rigid is called a $m$\emph{-JSJ splitting} of $G$. A $m$-JSJ splitting is not unique, but the conjugacy classes of vertex and edge groups do not depend on the choice of a reduced $m$-JSJ splitting. A vertex group of a reduced $m$-JSJ splitting is called a $m$\emph{-factor}.

Note that a Stallings splitting is a $m$-JSJ splitting whenever $m\geq M$, where $M$ denotes the maximal order of a finite subgroup of $G$. Note also that one gets a $m$-JSJ splitting from any $m'$-splitting of $G$, with $m'>m$, by collapsing all edges whose stabilizer has order $>m$.

%\href{https://books.google.fr/books?id=xbOl8L3CA2wC&pg=PA198&lpg=PA198&dq=stallings+virtually+free+groups&source=bl&ots=T6DraDPezY&sig=ACfU3U1lQug4Ft5xxkPv9qs_NwJfzfaWcQ&hl=fr&sa=X&ved=2ahUKEwjlpP7U1rzhAhXS1uAKHRo6Cq0Q6AEwC3oECAgQAQ#v=onepage&q=stallings%20virtually%20free%20groups&f=false}{ici} et \href{https://books.google.fr/books?id=mVM69mSzN2sC&pg=PA142&lpg=PA142&dq=stallings+virtually+free+groups&source=bl&ots=NiA0JvMDnR&sig=ACfU3U1_aunrEsabsidURkPEnHx9soO4EA&hl=fr&sa=X&ved=2ahUKEwik7OCf17zhAhVt1-AKHVEXAW04ChDoATAAegQIBhAB#v=onepage&q=stallings%20virtually%20free%20groups&f=false}{ici}.

\subsection{An extension lemma}

\begin{lemme}\label{s'étend}Let $G$ and $G'$ be two groups. Let $\varphi : G \rightarrow G'$ be a homomorphism. Consider a splitting of $G$ as a graph of groups $\Lambda$. For every vertex $v$ of $\Lambda$,  let $\widehat{G}_v$ be an overgroup of $G_v$, and let $\widehat{\varphi}_v : \widehat{G}_v\rightarrow G'$ be a homomorphism. Let $\widehat{G}$ denote the group obtained from $G$ by replacing each $G_v$ by $\widehat{G}_v$ in $\Lambda$. For every vertex $v\in \Lambda$ and every edge $e$ incident to $v$, suppose that there exists an element $g'_{e,v}\in G'$ such that \[({\widehat{\varphi}_v})_{\vert {G_e}}=\mathrm{ad}(g'_{e,v})\circ \varphi_{\vert G_e}.\]Then, there exists a homomorphism $\widehat{\varphi} : \widehat{G}\rightarrow G'$ such that, for every vertex $v$ of $\Lambda$, \[{\widehat{\varphi}}_{\vert \widehat{G}_v}=\mathrm{ad}(g'_v)\circ \widehat{\varphi}_v \text{ for some } g'_v\in G'.\]\end{lemme}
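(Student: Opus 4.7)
The plan is to invoke the universal property of the fundamental group of a graph of groups. Fix a maximal subtree $T_{0}$ of $\Lambda$ and a basepoint $v_{0}$. By construction, $\widehat{G}$ is the fundamental group of the graph of groups $\widehat{\Lambda}$ obtained from $\Lambda$ by substituting $\widehat{G}_{v}$ for $G_{v}$, with the same edge groups $G_{e}$ and the same edge inclusions $\alpha_{e}: G_{e}\hookrightarrow \widehat{G}_{v}$, $\beta_{e}: G_{e}\hookrightarrow \widehat{G}_{w}$ as in $G$. To produce $\widehat{\varphi}$, it is enough to specify its restriction to each $\widehat{G}_{v}$ and the image of each stable letter $\widehat{t}_{e}$ (for $e\notin T_{0}$), subject to the compatibilities $\widehat{\varphi}(\alpha_{e}(g))=\widehat{\varphi}(\beta_{e}(g))$ for $e\in T_{0}$ and $\widehat{\varphi}(\widehat{t}_{e})\widehat{\varphi}(\alpha_{e}(g))\widehat{\varphi}(\widehat{t}_{e})^{-1}=\widehat{\varphi}(\beta_{e}(g))$ for $e\notin T_{0}$, for every $g\in G_{e}$.

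First I would construct the conjugators $(g'_{v})_{v\in\Lambda}$ by induction along $T_{0}$: set $g'_{v_{0}}:=1$, and for each vertex $v\neq v_{0}$, letting $e$ denote the edge of $T_{0}$ joining $v$ to its parent $w$, set
\[
g'_{v}:=g'_{w}\,g'_{e,w}\,(g'_{e,v})^{-1}.
\]
Declaring $\widehat{\varphi}|_{\widehat{G}_{v}}:=\mathrm{ad}(g'_{v})\circ\widehat{\varphi}_{v}$, the hypothesis gives for every $g\in G_{e}$
\[
\widehat{\varphi}|_{\widehat{G}_{v}}(\alpha_{e}(g)) = \mathrm{ad}(g'_{v}g'_{e,v})(\varphi(\alpha_{e}(g))) = \mathrm{ad}(g'_{w}g'_{e,w})(\varphi(\beta_{e}(g))) = \widehat{\varphi}|_{\widehat{G}_{w}}(\beta_{e}(g)),
\]
where the middle equality uses that $\alpha_{e}(g)=\beta_{e}(g)$ already holds in $G$ (the stable letter of a tree edge is trivial), so that $\varphi(\alpha_{e}(g))=\varphi(\beta_{e}(g))$, and the outer equalities use $g'_{v}g'_{e,v}=g'_{w}g'_{e,w}$. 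This handles all tree-edge compatibilities.

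For each remaining edge $e\notin T_{0}$, with endpoints $v,w$ and stable letter $t_{e}\in G$ (so that $\varphi(\beta_{e}(g))=\varphi(t_{e})\varphi(\alpha_{e}(g))\varphi(t_{e})^{-1}$ in $G'$), I would set
\[
\widehat{\varphi}(\widehat{t}_{e}):=g'_{w}\,g'_{e,w}\,\varphi(t_{e})\,(g'_{v}g'_{e,v})^{-1}.
\]
A direct computation, substituting the formulas for $\widehat{\varphi}|_{\widehat{G}_{v}}(\alpha_{e}(g))$ and $\widehat{\varphi}|_{\widehat{G}_{w}}(\beta_{e}(g))$ and using the identity for $\varphi(\beta_{e}(g))$, verifies the stable-letter relation. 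The universal property of $\widehat{\Lambda}$ therefore yields a homomorphism $\widehat{\varphi}:\widehat{G}\to G'$ which by construction satisfies $\widehat{\varphi}|_{\widehat{G}_{v}}=\mathrm{ad}(g'_{v})\circ\widehat{\varphi}_{v}$ for every $v$. There is no substantive obstacle; the argument is pure bookkeeping, the only mild care being to distinguish the two conjugators $g'_{e,v},g'_{e,w}$ attached to the two endpoints of an edge and to split the edges of $\Lambda$ into the spanning tree $T_{0}$ and its complement.
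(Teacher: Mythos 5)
Your proof is correct and is essentially the same argument as the paper's: both define $\widehat{\varphi}$ on each $\widehat{G}_v$ by post-conjugating $\widehat{\varphi}_v$ with a suitable element of $G'$ and send stable letters to conjugator-corrected images of $\varphi(t_e)$, the only difference being that the paper organizes the bookkeeping as an induction on the number of edges (reducing to a single amalgam or HNN extension) while you do it globally via a maximal subtree and the presentation of the fundamental group of the graph of groups.
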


\begin{proof}We proceed by induction on the number of edges of the graph of groups $\Lambda$. It is enough to prove the lemma in the case where $\Lambda$ has only one edge.

\vspace{1mm}

\emph{First case.} Suppose that $G=G_v\ast_C G_w$. By hypothesis, there exist two elements $g'_1,g'_2\in G'$ such that \[({\widehat{\varphi}_v})_{\vert C}=\mathrm{ad}(g'_1)\circ {\varphi}_{\vert C} \ \ \text{   and   }\ \  ({\widehat{\varphi}_w})_{\vert C}=\mathrm{ad}(g'_2)\circ {\varphi}_{\vert C}.\] One can define $\widehat{\varphi}: \widehat{G}\rightarrow G'$ by \[{\widehat{\varphi}}_{\vert \widehat{G}_v}=\mathrm{ad}({g'_1}^{-1})\circ{\widehat{\varphi}_v} \ \ \text{   and   } \ \ {\widehat{\varphi}}_{\vert \widehat{G}_w}=\mathrm{ad}({g'_2}^{-1})\circ{\widehat{\varphi}_w}.\]

\emph{Second case.} Suppose that \[G=G_v\ast_{C}=\langle G_v,t\ \vert \ tct^{-1}=\alpha(c), \ \forall c\in C\rangle.\]By hypothesis, there exist two elements $g'_1,g'_2\in G'$ such that \[({\widehat{\varphi}_v})_{\vert C}=\mathrm{ad}(g'_1)\circ {\varphi}_{\vert C} \ \ \text{   and   } \ \ ({\widehat{\varphi}_v})_{\vert tCt^{-1}}=\mathrm{ad}(g'_2)\circ {\varphi}_{\vert tCt^{-1}}.\] One can define $\widehat{\varphi}: \widehat{G}\rightarrow G'$ by \[{\widehat{\varphi}}_{\vert \widehat{G}_v}={\widehat{\varphi}_v} \ \ \text{   and   } \ \ {\widehat{\varphi}}(t)=g'_2\varphi(t){g'_1}^{-1}.\]
\end{proof}

\section{Proof of $(1)\Rightarrow (3)$}

%Let $G$ and $G'$ be two virtually free groups. The implication $(1)\Rightarrow (3)$ of Theorem \ref{principal} claims that, if $\mathrm{Th}_{\forall\exists}(G)=\mathrm{Th}_{\forall\exists}(G')$, then there exist two discriminating sequences of special homomorphisms $(\varphi_n : G \rightarrow G')_{n\in\mathbb{N}}$ and $(\varphi'_n : G' \rightarrow G)_{n\in\mathbb{N}}$. It is an immediate consequence of Proposition \ref{1904} below.

%Let $K$ denote the maximal order of a finite subgroup of $G$. Given a virtually cyclic infinite subgroup $N\subset G$, recall that $D(N)$ denotes the definable subset \[D(N)=\lbrace g^{2K!} \ \vert \ g\in N\rbrace.\] One can show that $D(N)$ is a subgroup (see Section \ref{a} for details). 

\begin{de}For any hyperbolic group $G$, we denote by $K_G$ the maximal order of a finite subgroup of $G$.\end{de}

\begin{prop}\label{defspefor}Let $G=\langle s_1,\ldots ,s_p \rangle$ be a hyperbolic group. There exists a universal formula \[\mathrm{Special}^{\forall}(x_1,\ldots,x_p)\] such that, for every hyperbolic group $G'$ satisfying $K_{G'}\leq K_G$, and for every tuple $\bm{g'}=(g'_1,\ldots,g'_p)$ of elements of $G'$, the following two assertions are equivalent:
\begin{enumerate}
\item the group $G'$ satisfies $\mathrm{Special}^{\forall}(\bm{g'})$;
\item the map $\varphi_{\bm{g'}}:\lbrace s_1,\ldots,s_p\rbrace\rightarrow G'$ defined by $s_i\mapsto g'_i$ for $1\leq i\leq p$ extends to a special (see Definition \ref{special1}) homomorphism from $G$ to $G'$.
\end{enumerate}
\end{prop}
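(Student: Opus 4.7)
The plan is to build $\mathrm{Special}^{\forall}(\bm{x})$ as a conjunction of quantifier-free and purely universal subformulas, one for each clause of Definitions \ref{special1} and \ref{special0}. I fix $K := K_G$ once and for all; the hypothesis $K_{G'}\leq K_G$ lets me use the exponent $K!$ uniformly in the characterizations of $M(\cdot)$ (Lemma \ref{elementary}) and of $D_{2K!}(\cdot)$ (Lemma \ref{claim}) when interpreted inside $G'$.

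First I would encode that $\varphi_{\bm{g'}}$ extends to a homomorphism: since $G$ is hyperbolic, hence finitely presented, fix a finite presentation $G=\langle s_1,\ldots,s_p\mid r_1,\ldots,r_q\rangle$ and include the quantifier-free conjunction $\bigwedge_{j} r_j(\bm{x})=1$. Next, using that $G$ has only finitely many conjugacy classes of finite subgroups, fix representatives $C_1,\ldots,C_k$ with each element of each $C_i$ represented by a fixed word in $\bm{s}$. Injectivity on each $C_i$ (first bullet of Definition \ref{special1}) is a finite conjunction of $\neq 1$ statements, and preservation of non-conjugacy of non-conjugate pairs $(C_i,C_j)$ (second bullet) translates into the purely universal formula $\forall y,\ y\varphi(C_i)y^{-1}\neq \varphi(C_j)$, where set equality of two finite subgroups becomes a quantifier-free Boolean combination of equations once spelled out on generators.

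The third bullet of Definition \ref{special1} is the delicate part. For each $C_i$ whose normalizer $N_G(C_i)$ is a maximal infinite virtually cyclic subgroup, fix a finite generating set of $N_G(C_i)$ containing an element $t_i$ of infinite order (all represented by fixed words in $\bm{s}$). The condition ``$N_{G'}(\varphi(C_i))$ is infinite virtually cyclic'' is then expressed by the two formulas $w_{t_i}(\bm{x})^{K!}\neq 1$ and
\[
\forall y\in G',\ \bigl(y\varphi(C_i)y^{-1}=\varphi(C_i)\bigr)\ \Longrightarrow\ \bigl[w_{t_i}(\bm{x})^{K!},\ y\, w_{t_i}(\bm{x})^{K!}\, y^{-1}\bigr]=1,
\]
using Lemma \ref{elementary}(1) to characterize $M(\varphi(t_i))$; the implication is purely universal because its premise is quantifier-free in $y$ and $\bm{x}$. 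The three bullets of $K_G$-niceness of $\varphi_{|N_G(C_i)}$ (Definition \ref{special0}) are then handled as follows. Injectivity comes essentially for free: the kernel of $\varphi_{|N_G(C_i)}$ is a normal subgroup of a virtually cyclic group, hence either finite (and thus contained in the unique maximal finite normal subgroup of $N_G(C_i)$, which is $C_i$, making it trivial by the first bullet) or of finite index (ruled out by $\varphi(t_i)$ having infinite order). Preservation of non-conjugacy among the finitely many conjugacy classes of finite subgroups of $N_G(C_i)$ is encoded exactly as in the previous paragraph, by a universal implication with premise $y\varphi(C_i)y^{-1}=\varphi(C_i)$. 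Finally, injectivity of the induced map on the $D_{2K!}$-quotients is encoded by fixing representatives $e_0=1,e_1,\ldots,e_l$ of the finitely many cosets (Lemma \ref{claim}) of $D_{2K!}(N_G(C_i))$ in $N_G(C_i)$ as words in $\bm{s}$, and requiring, for every pair $0\leq a<b\leq l$,
\[
\forall y\in G',\ \bigl(y\varphi(C_i)y^{-1}=\varphi(C_i)\bigr)\ \Longrightarrow\ \varphi(e_a^{-1}e_b)\neq y^{2K!}.
\]

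Taking the conjunction of all of the above quantifier-free and universal subformulas yields a universal formula $\mathrm{Special}^{\forall}(\bm{x})$. Both implications of the stated equivalence then hold essentially by construction: each clause of ``special'' is encoded by exactly one subformula, and the reverse implication is tautological once one checks that a genuinely special homomorphism satisfies each clause, which in turn requires $K_{G'}\leq K_G$ for the Lemma \ref{elementary} characterization of $M(\cdot)$ in $G'$. The main technical obstacle is really structural bookkeeping rather than combinatorial: one must keep every quantification purely universal, which forces one to replace every ``$y\in N_{G'}(\varphi(C))$'' by ``$\forall y\in G'$ with $y\varphi(C)y^{-1}=\varphi(C)\Rightarrow\ldots$'', and to reduce injectivity on the \emph{infinite} group $N_G(C_i)$ to finitely many conditions by exploiting the structure of virtually cyclic groups (the unique maximal finite normal subgroup of $N_G(C_i)$ being $C_i$).
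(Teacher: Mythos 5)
Your construction is correct and follows essentially the same route as the paper: fix a finite presentation and representatives of the finitely many conjugacy classes of finite subgroups, then encode each clause of Definitions \ref{special1} and \ref{special0} by quantifier-free or purely universal subformulas, using $g^{K_G!}\neq 1$ to detect infinite order, the quantifier-free description of $M(\cdot)$ from Lemma \ref{elementary}, and the fact that non-membership in $D_{2K_G!}(N_{G'}(\varphi(C_i)))$ is universal. The only blemish is your parenthetical claim that $C_i$ is \emph{the} maximal finite normal subgroup of $N_G(C_i)$ (it need not be), but this is immaterial since your actual argument for injectivity on $N_G(C_i)$ — a finite kernel is a finite subgroup killed by injectivity on finite subgroups, an infinite kernel is excluded because $\varphi(t_i)$ has infinite order — is exactly the paper's and does not use that claim.
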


%Let $K$ be the maximal order of a an element of $G$ of finite order. Note that $K$ is also the maximal order of a an element of $G'$ of finite order, because $G$ and $G'$ have the same existential theory.

\begin{proof}Let $G=\langle s_1,\ldots ,s_p \ \vert \ \Sigma(s_1,\ldots,s_p)=1\rangle$ be a finite presentation of $G$, where $\Sigma(s_1,\ldots,s_p)=1$ denotes a finite system of equations in $p$ variables. Let $\lbrace C_1,\ldots,C_q\rbrace$ be a set of representatives of the conjugacy classes of finite subgroups of $G$, and let $I$ be the subset of $\llbracket 1,q\rrbracket$ such that $N_G(C_i)$ is infinite virtually cyclic maximal if and only if $i\in I$.

Observe that there is a one-to-one correspondence between the set of homomorphisms $\mathrm{Hom}(G,G')$ and the set $\mathrm{Sol}_{G'}(\Sigma)=\lbrace\boldsymbol{g}'\in G'^p \ \vert \ \Sigma(\boldsymbol{g}')=1\rbrace$. 
	
For $1\leq i\leq q$, the injectivity of the homomorphism $\varphi_{\boldsymbol{g}'}:\bm{s}\rightarrow\bm{g}'$ on $C_i$ can be expressed by a quantifier-free formula $\mathrm{Inj}_i^1(\boldsymbol{g}')$.
	
For $i\in I$, the injectivity of $\varphi_{\boldsymbol{g}'}$ on $N_i$ can be expressed by a quantifier-free formula $\mathrm{Inj}_i^2(\boldsymbol{g}')$. Indeed, it is enough to say that $\varphi_{\boldsymbol{g}'}$ is injective on finite subgroups and that $\varphi_{\boldsymbol{g}'}(N_i)$ has infinite order. The latter point is easily expressible, since an element $g'$ of $G'$ has infinite order if and only if $g'^{K_G!}\neq 1$, because $K_G\geq K_{G'}$.

If $1\leq i\neq j\leq q$, the assertion that $\varphi_{\boldsymbol{g}'}(C_i)$ and $\varphi_{\boldsymbol{g}'}(C_j)$ are non-conjugate translates into a universal formula $\mathrm{NonConj}_{i,j}^{\forall}(\boldsymbol{g}')$. 

If $i$ belongs to $I$, since the quotient $N_{i}/D_{2K_G!}(N_{i})$ is finite, one can choose a finite collection of representatives $g_1,\ldots, g_{r}\in N_i$ of the cosets of $D_{2K_G!}(N_{i})$. Each element $g_k$ can be written as a word $g_k(\boldsymbol{s})$, and the injectivity of the induced homomorphism \[\overline{\varphi_{\boldsymbol{g}'}}:N_{i}/D_{2K_G!}(N_{i})\rightarrow N_{G'}(\varphi_{\boldsymbol{g}'}(C_i))/D_{2K_G!}(N_{G'}(\varphi_{\boldsymbol{g}'}(C_i))),\]well-defined since $K_{G'}\leq K_G$, translates into a universal formula $\mathrm{InjD}_{i}^{\forall}(\boldsymbol{g}')$ expressing the fact that $g_k^{-1}(\boldsymbol{g}')g_{\ell}(\boldsymbol{g}')$ does not belong to $D_{2K_G!}(N_{G'}(\varphi_{\boldsymbol{g}'}(C_{i})))$ if $k\neq \ell$. 

Now, let us define the formula $\mathrm{Special}^{\forall}(\boldsymbol{x})$ by \[ \mathrm{Special}^{\forall}(\boldsymbol{x}):(\Sigma(\boldsymbol{x})=1)\wedge \bigwedge_{i=1}^q\bigwedge_{j\neq i} \mathrm{NonConj}_{i,j}^{\forall}(\boldsymbol{x})\wedge\bigwedge_{i\in I} (\mathrm{Inj}_i^1(\boldsymbol{g}') \wedge \mathrm{Inj}_i^2(\boldsymbol{g}')\wedge\mathrm{InjD}_i^{\forall}(\boldsymbol{x})).\]For any $\boldsymbol{g}'\in G'^p$, the group $G'$ satisfies $\mathrm{Special}^{\forall}(\bm{g}')$ if and only if the homomorphism $\varphi_{\boldsymbol{g}'} : G \rightarrow G': \boldsymbol{s}\mapsto \boldsymbol{g}'$ is special.\end{proof}

The following result is a slight variation of the previous proposition.

\begin{prop}\label{plusdideedutout}Let $G=\langle s_1,\ldots ,s_p \rangle$ be a hyperbolic group. For every integer $n\geq 1$, there exists a universal formula \[\mathrm{Special}^{\forall}_n(x_1,\ldots,x_p)\] such that, for every hyperbolic group $G'$ satisfying $K_{G'}\leq K_G$, and for every tuple $\bm{g'}=(g'_1,\ldots,g'_p)$ of elements of $G'$, the following two assertions are equivalent:
\begin{enumerate}
\item the group $G'$ satisfies $\mathrm{Special}^{\forall}_n(\bm{g'})$;
\item the map $\varphi_{\bm{g'}}:\lbrace s_1,\ldots,s_p\rbrace\rightarrow G'$ defined by $s_i\mapsto g'_i$ for $1\leq i\leq p$ extends to a special (see Definition \ref{special1}) homomorphism from $G$ to $G'$ injective on the ball of radius $n$ in $G$, with respect to $\lbrace s_1,\ldots,s_p\rbrace$.
\end{enumerate}
\end{prop}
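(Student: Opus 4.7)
The plan is to modify the formula $\mathrm{Special}^{\forall}(\boldsymbol{x})$ built in the proof of Proposition \ref{defspefor} by adding a quantifier-free conjunction that guarantees injectivity on the ball of radius $n$. Since every universal formula augmented by a conjunction of quantifier-free conditions remains universal, this will suffice.

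More concretely, I would first list the elements of the ball $B_n(G)$ of radius $n$ in $G$ with respect to the finite generating set $\{s_1,\ldots,s_p\}$. This is a finite set, so it can be enumerated as $B_n(G)=\{h_1,\ldots,h_N\}$, and for each $h_k$ we can choose a fixed word $w_k(s_1,\ldots,s_p)$ of length at most $n$ that represents $h_k$. For any tuple $\boldsymbol{g}'=(g'_1,\ldots,g'_p)\in G'^p$ extending to a homomorphism $\varphi_{\boldsymbol{g}'}:G\to G'$, the value of $\varphi_{\boldsymbol{g}'}$ on $h_k$ is precisely $w_k(g'_1,\ldots,g'_p)$. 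Therefore, $\varphi_{\boldsymbol{g}'}$ is injective on $B_n(G)$ if and only if $w_k(\boldsymbol{g}')\neq w_\ell(\boldsymbol{g}')$ for every pair of indices $k\neq\ell$ in $\llbracket 1,N\rrbracket$. This is expressed by the quantifier-free formula
\[
\mathrm{InjBall}_n(\boldsymbol{x}):\bigwedge_{1\leq k<\ell\leq N}w_k(\boldsymbol{x})\neq w_\ell(\boldsymbol{x}).
\]

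Then I would define
\[
\mathrm{Special}^{\forall}_n(\boldsymbol{x}):\mathrm{Special}^{\forall}(\boldsymbol{x})\wedge \mathrm{InjBall}_n(\boldsymbol{x}),
\]
where $\mathrm{Special}^{\forall}(\boldsymbol{x})$ is the universal formula produced by Proposition \ref{defspefor}. Since $\mathrm{InjBall}_n(\boldsymbol{x})$ is quantifier-free, the formula $\mathrm{Special}^{\forall}_n(\boldsymbol{x})$ is still universal. By construction, for $\boldsymbol{g}'\in G'^p$ the group $G'$ satisfies $\mathrm{Special}^{\forall}_n(\boldsymbol{g}')$ if and only if the map $\boldsymbol{s}\mapsto \boldsymbol{g}'$ extends to a homomorphism (this is the part $\Sigma(\boldsymbol{x})=1$), this homomorphism is special in the sense of Definition \ref{special1} (this is the rest of $\mathrm{Special}^{\forall}(\boldsymbol{x})$ and Proposition \ref{defspefor}), and the images of the representative words $w_1,\ldots,w_N$ remain pairwise distinct, i.e.\ $\varphi_{\boldsymbol{g}'}$ is injective on $B_n(G)$.

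There is essentially no new obstacle beyond what is already handled by Proposition \ref{defspefor}: the only subtlety to keep in mind is that the choice of the representative words $w_k(\boldsymbol{s})$ of length $\leq n$ must be fixed independently of $G'$, which is harmless since we are merely asserting the existence of the formula (no effectivity is claimed here). Note also that whether or not the $w_k(\boldsymbol{s})$ represent distinct elements of $G$ does not matter for the correctness of the statement: if two words $w_k$ and $w_\ell$ already represent the same element of $G$, they will \emph{a fortiori} represent the same element of $G'$ under any homomorphism, so the corresponding inequality $w_k(\boldsymbol{x})\neq w_\ell(\boldsymbol{x})$ is automatically false and the full formula is unsatisfied, reflecting that no homomorphism can separate them. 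Hence it is enough to include in $\mathrm{InjBall}_n$ only pairs $(k,\ell)$ such that $h_k\neq h_\ell$ in $G$, which ensures the equivalence in the statement.
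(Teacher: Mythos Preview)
Your proof is correct and follows essentially the same approach as the paper: both conjoin $\mathrm{Special}^{\forall}(\boldsymbol{x})$ with a finite system of inequations expressing injectivity on the ball of radius $n$, yielding a universal formula. The paper simply writes this system as $B_n(\boldsymbol{x})\neq 1$ without spelling out the words $w_k$, and your final paragraph's caveat is unnecessary since you already enumerated $B_n(G)=\{h_1,\ldots,h_N\}$ as distinct elements.
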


\begin{proof}We keep the same notations as in the proof of the previous proposition.

The injectivity of the homomorphism $\varphi_{\boldsymbol{g}'} : \boldsymbol{s}=(s_1,\ldots,s_p)\mapsto \boldsymbol{g}'\in \mathrm{Sol}_{G'}(\Sigma)$ on the ball of radius $n$ centered at $1$ in $G$ for the generating set $\lbrace s_1,\ldots ,s_p\rbrace$ is expressible by means of a finite system of inequations in $p$ variables $B_n(\boldsymbol{g'})\neq 1$.

The universal sentence $\mathrm{Special}_n^{\forall}(\boldsymbol{x})$ defined by \[ \mathrm{Special}_n^{\forall}(\boldsymbol{x}):\mathrm{Special}^{\forall}(\boldsymbol{x})\wedge (B_n(\boldsymbol{x})\neq 1)\]has a witness $\boldsymbol{g}'\in G'^p$ if and only if the homomorphism $\varphi_{\boldsymbol{g}'} : G \rightarrow G': \boldsymbol{s}\mapsto \boldsymbol{g}'$ is special and injective on the ball of radius $n$ in $G$.\end{proof}

\begin{co}\label{1904}Let $G$ and $G'$ be two hyperbolic groups. If $\mathrm{Th}_{\exists\forall}(G)\subset\mathrm{Th}_{\exists\forall}(G')$, then there exists a discriminating sequence of special homomorphisms $(\varphi_n : G \rightarrow G')_{n\in\mathbb{N}}$.\end{co}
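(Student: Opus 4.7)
The plan is to apply Proposition~\ref{plusdideedutout} and use the identity on $G$ as an immediate $G$-witness. Two preliminary observations are needed. First, Proposition~\ref{plusdideedutout} requires $K_{G'} \leq K_G$, and I would check that this is automatic from the hypothesis: the assertion ``$G$ has a finite subgroup of order exactly $K_G$'' is the conjunction of the existential sentence asserting the existence of $K_G$ pairwise distinct elements closed under multiplication and inverses, and the universal sentence asserting that no $K_G+1$ pairwise distinct elements are so closed; pulling the existential quantifier to the front, this is a single $\exists\forall$-sentence, true in $G$ and hence true in $G'$ by hypothesis, so $K_{G'} = K_G$. Second, the identity map $\mathrm{id}_G : G \to G$ trivially satisfies the three conditions of Definition~\ref{special1} and is injective on every ball.

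For each $n \geq 1$, Proposition~\ref{plusdideedutout} supplies a universal formula $\mathrm{Special}_n^{\forall}(x_1, \ldots, x_p)$. Thanks to the second observation, the generating tuple $\bm{s} = (s_1, \ldots, s_p)$ is a witness in $G$ of the $\exists\forall$-sentence $\exists \bm{x}\, \mathrm{Special}_n^{\forall}(\bm{x})$. The hypothesis $\mathrm{Th}_{\exists\forall}(G) \subset \mathrm{Th}_{\exists\forall}(G')$ transfers this sentence to $G'$, and since $K_{G'} \leq K_G$, any $G'$-witness of $\mathrm{Special}_n^{\forall}$ extends, via the ``if'' direction of Proposition~\ref{plusdideedutout}, to a special homomorphism $\varphi_n : G \to G'$ injective on the ball of radius $n$ in $G$ with respect to $\lbrace s_1, \ldots, s_p \rbrace$.

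Finally, the resulting sequence $(\varphi_n)_{n \in \mathbb{N}}$ is discriminating: any non-trivial $g \in G$ has some finite word length $\ell \geq 1$ in the generators, and for every $n \geq \ell$ the element $g$ lies in the ball of radius $n$ on which $\varphi_n$ is injective, so $\varphi_n(g) \neq 1$. There is no substantial obstacle in this argument: it is essentially a direct packaging of Proposition~\ref{plusdideedutout}, and the only minor subtlety is the preliminary check that $K_{G'} = K_G$, needed to make sense of ``special'' in the direction $G \to G'$ and to apply the proposition to $G'$.
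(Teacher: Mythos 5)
Your proof follows the same route as the paper: transfer the $\exists\forall$-sentences $\exists\bm{x}\,\mathrm{Special}_n^{\forall}(\bm{x})$, witnessed in $G$ by a generating tuple via the identity map, to $G'$, and read off from Proposition \ref{plusdideedutout} a special homomorphism injective on the ball of radius $n$; the discrimination argument at the end is correct. The one step that does not work as written is your justification that $K_{G'}\leq K_G$. The universal conjunct you propose --- ``no $K_G+1$ pairwise distinct elements are closed under multiplication and inverses'' --- is indeed true in $G$ and does transfer to $G'$, but in $G'$ it only rules out a finite subgroup of order \emph{exactly} $K_G+1$; it is perfectly compatible with $G'$ having a finite subgroup of order $K_G+2$ or larger. (For instance, if $K_G=2$, a group whose finite subgroups are trivial, $\mathbb{Z}/2\mathbb{Z}$ and $\mathbb{Z}/5\mathbb{Z}$ satisfies your sentence while having $K_{G'}=5$.) So the inference from the transferred sentence to $K_{G'}\leq K_G$ is invalid, and this inequality is exactly what is needed both to make sense of ``special'' in the direction $G\to G'$ and to invoke Proposition \ref{plusdideedutout}.

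The repair is standard and is what the paper relies on. A universal sentence is a degenerate $\exists\forall$-sentence, so the hypothesis gives $\mathrm{Th}_{\forall}(G)\subset\mathrm{Th}_{\forall}(G')$, which is equivalent to $\mathrm{Th}_{\exists}(G')\subset\mathrm{Th}_{\exists}(G)$. The existential sentence asserting the existence of $K_{G'}$ pairwise distinct elements closed under multiplication and inverses is true in $G'$, hence in $G$, so $K_G\geq K_{G'}$; your existential conjunct, which transfers correctly in the direction $G\to G'$, gives the reverse inequality, so in fact $K_G=K_{G'}$. With this replacement your argument is complete and coincides with the paper's proof.
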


\begin{proof}Note that for every integer $n\geq 1$, the group $G$ satisfies the $\exists\forall$-sentence \[\zeta_n:\exists \bm{x} \ \mathrm{Special}_n^{\forall}(\boldsymbol{x}),\] since the identity of $G$ is a special homomorphism. Then, $\mathrm{Th}_{\exists\forall}(G)$ being contained in $\mathrm{Th}_{\exists\forall}(G')$, the group $G'$ satisfies the sentence $\zeta_n$ as well. Since $G$ and $G'$ have the same existential theory, we have $K_G=K_{G'}$, hence Proposition \ref{plusdideedutout} above applies and tells us that there exists a special homomorphism $\varphi_n : G \rightarrow G'$ injective on $B_n$.\end{proof}

The following corollary is immediate.

\begin{co}[Implication $(1)\Rightarrow (3)$ of Theorem \ref{principal}]Let $G$ and $G'$ be two hyperbolic groups. If $\mathrm{Th}_{\exists\forall}(G)=\mathrm{Th}_{\exists\forall}(G')$, then there exists two discriminating sequences of special homomorphisms $(\varphi_n : G \rightarrow G')_{n\in\mathbb{N}}$ and $(\varphi'_n : G' \rightarrow G)_{n\in\mathbb{N}}$.
\end{co}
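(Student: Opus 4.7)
The plan is to apply Corollary~\ref{1904} in both directions. The hypothesis $\mathrm{Th}_{\exists\forall}(G) = \mathrm{Th}_{\exists\forall}(G')$ decomposes into the two inclusions $\mathrm{Th}_{\exists\forall}(G) \subset \mathrm{Th}_{\exists\forall}(G')$ and $\mathrm{Th}_{\exists\forall}(G') \subset \mathrm{Th}_{\exists\forall}(G)$. Applying Corollary~\ref{1904} to the first inclusion produces a discriminating sequence of special homomorphisms $(\varphi_n : G \to G')_{n \in \mathbb{N}}$, and applying it to the second (with the roles of $G$ and $G'$ exchanged) produces a discriminating sequence $(\varphi'_n : G' \to G)_{n \in \mathbb{N}}$.

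The only point that requires a moment's check is that the two inputs to Corollary~\ref{1904} are genuinely symmetric. This hinges on the observation that $K_G = K_{G'}$: the statement "there exists an element of order $> k$" is expressible by an $\exists$-sentence, hence a fortiori by an $\exists\forall$-sentence, so the maximal order of a finite subgroup of a hyperbolic group is an invariant of its $\exists\forall$-theory (in fact of its universal theory, as noted in a remark above). With $K_G = K_{G'}$, the hypothesis $K_{G'} \leq K_G$ required by Proposition~\ref{plusdideedutout} (and thus by Corollary~\ref{1904}) holds in both directions.

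The discriminating property itself is built into Corollary~\ref{1904}: each $\varphi_n$ is special and, by Proposition~\ref{plusdideedutout}, injective on the ball $B_n$ of radius $n$ in $G$ with respect to a fixed finite generating set. Hence for any nontrivial $g \in G$ of word length $\ell$, $\varphi_n(g) \neq 1$ for every $n \geq \ell$, which is exactly the definition of a discriminating sequence. There is no significant obstacle here, since all the real content has been absorbed into Proposition~\ref{defspefor}, Proposition~\ref{plusdideedutout} and Corollary~\ref{1904}; the final corollary is a purely formal symmetrisation.
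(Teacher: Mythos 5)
Your proposal is correct and is essentially the paper's own argument: the paper derives this corollary as an immediate consequence of Corollary \ref{1904} applied to both inclusions $\mathrm{Th}_{\exists\forall}(G)\subset\mathrm{Th}_{\exists\forall}(G')$ and $\mathrm{Th}_{\exists\forall}(G')\subset\mathrm{Th}_{\exists\forall}(G)$, with the equality $K_G=K_{G'}$ (an invariant of the existential, hence of the $\exists\forall$, theory) guaranteeing that the hypothesis of Proposition \ref{plusdideedutout} holds in both directions. Your additional remarks on why the sequences are discriminating match what is already built into Corollary \ref{1904}.
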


\section{Definition of $\zeta_G$ and proofs of $(2)\Leftrightarrow (4)$ and $(3)\Rightarrow (4)$}\label{zeta}

\subsection{Preliminary results}

\begin{de}\label{chain}Let $G$ be a hyperbolic group. A \emph{$G$-chain} is a tuple $(g_1,\ldots,g_c)$ of elements of $G$ of infinite order such that the inclusions \[M(g_1)\supset (M(g_1)\cap M(g_2))\supset \cdots \supset (M(g_1)\cap \cdots \cap M(g_c))\] are all strict.\end{de}

\begin{de}\label{complexity}Let $G$ be a hyperbolic group, and let $H$ be a non-elementary subgroup of $G$. The \emph{complexity} $c(H)$ of $H$ is the maximal size of a $G$-chain of elements of $H$.\end{de}

\begin{rque}If $(g_1,g_2)$ is a $G$-chain, then $M(g_1)\cap M(g_2)$ is finite. It follows that $c(H)<\infty$.\end{rque}

The following lemma is an immediate consequence of the fact that $E_G(H)=\bigcap_{h\in H^0} M(h)$, where $H^0$ denotes the set of elements of $H$ of infinite order.

\begin{lemme}\label{immlemm0307}If $(h_1,\ldots ,h_{c(H)})\in H^{c(H)}$ is a $G$-chain of length $c(H)$, then \[E_G(H)=\bigcap_{i=1}^{c(H)} M(h_i).\]
\end{lemme}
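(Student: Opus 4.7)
The plan is to establish the two inclusions separately. The inclusion $E_G(H)\subseteq \bigcap_{i=1}^{c(H)} M(h_i)$ is immediate from Proposition \ref{Olshanskiy2}, since each $h_i$ lies in $H^0$ and hence the intersection defining $E_G(H)$ is taken over a larger index set.

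For the reverse inclusion, I would argue by contradiction using the maximality built into the definition of $c(H)$. Set $I=\bigcap_{i=1}^{c(H)} M(h_i)$, and suppose that $I$ is not contained in $E_G(H)=\bigcap_{h\in H^0} M(h)$. Then there exists some $h\in H^0$ with $I\not\subseteq M(h)$, equivalently
\[
\bigcap_{i=1}^{c(H)} M(h_i)\ \supsetneq\ \bigcap_{i=1}^{c(H)} M(h_i)\cap M(h).
\]
Now consider the extended tuple $(h_1,\ldots,h_{c(H)},h)\in H^{c(H)+1}$. By hypothesis, the chain of intersections associated with $(h_1,\ldots,h_{c(H)})$ is already strictly decreasing, and the displayed inequality above shows that appending $h$ produces one further strict inclusion. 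Thus $(h_1,\ldots,h_{c(H)},h)$ is a $G$-chain of length $c(H)+1$ consisting of elements of $H$, contradicting the definition of $c(H)$ as the maximal such length.

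Hence every $h\in H^0$ satisfies $I\subseteq M(h)$, whence $I\subseteq \bigcap_{h\in H^0} M(h)=E_G(H)$ by Proposition \ref{Olshanskiy2}, giving the desired equality. No real obstacle is expected: once the definitions are unfolded, the argument is a one-line contradiction using the extremal choice of the chain length. The only subtle point is verifying that $h$ may legitimately be used to extend the chain (it must have infinite order), which is guaranteed because $h\in H^0$ by construction.
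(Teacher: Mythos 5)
Your proof is correct and is exactly the argument the paper has in mind: the paper gives no explicit proof, asserting the lemma is an immediate consequence of Proposition \ref{Olshanskiy2}, and your two inclusions (the forward one from Olshanskiy's formula, the reverse one by extending the chain and contradicting the maximality of $c(H)$) are the standard way to unpack that assertion. Nothing is missing.
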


\begin{lemme}\label{chain2}Let $N\geq 1$ and $K\geq 1$ be two integers. There exists an existential formula $\mathrm{Chain}_N^{\exists}(\boldsymbol{x})$ with $N$ free variables such that, for any hyperbolic group $G$ all of whose finite subgroups have order $\leq K$, a tuple $\boldsymbol{g}\in G^N$ is a $G$-chain if and only if $G\models\mathrm{Chain}_N^{\exists}(\boldsymbol{g})$.\end{lemme}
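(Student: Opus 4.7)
The plan is to directly translate the definition of a $G$-chain into an existential formula by using Lemma \ref{elementary} to eliminate the occurrence of membership in the subgroups $M(g_i)$. The only quantifier alternation we need is one existential block to produce witnesses for each of the $N-1$ strict inclusions.

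First I would recall that under the hypothesis on finite subgroups, an element $g\in G$ has infinite order if and only if $g^{K!}\neq 1$, which is quantifier-free. Next I would use Lemma \ref{elementary}(1): for an infinite order element $g$, a general element $h\in G$ belongs to $M(g)$ if and only if $[g^{K!},hg^{K!}h^{-1}]=1$. Let me abbreviate this quantifier-free predicate by $\mathrm{In}(h,g)$. Being a $G$-chain $(g_1,\ldots,g_N)$ amounts to two things: every $g_i$ has infinite order, and for each $i\in\{1,\ldots,N-1\}$ there exists an element that lies in $M(g_1)\cap\cdots\cap M(g_i)$ but not in $M(g_{i+1})$.

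Accordingly, I would set
\begin{align*}
\mathrm{Chain}_N^{\exists}(x_1,\ldots,x_N)\;:\;&\exists y_1\cdots\exists y_{N-1}\ \bigwedge_{i=1}^N (x_i^{K!}\neq 1)\;\wedge\\
&\bigwedge_{i=1}^{N-1}\Bigl(\bigwedge_{j=1}^{i}[x_j^{K!},y_i x_j^{K!} y_i^{-1}]=1\ \wedge\ [x_{i+1}^{K!},y_i x_{i+1}^{K!} y_i^{-1}]\neq 1\Bigr).
\end{align*}
This is an existential formula. If $G\models\mathrm{Chain}_N^{\exists}(\boldsymbol{g})$, then each $g_i$ has infinite order and each witness $y_i$ belongs to $M(g_1)\cap\cdots\cap M(g_i)\setminus M(g_{i+1})$ by Lemma \ref{elementary}(1), so all inclusions in the definition of a $G$-chain are strict. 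Conversely, if $(g_1,\ldots,g_N)$ is a $G$-chain, then for each $i$ the strict inclusion $M(g_1)\cap\cdots\cap M(g_i)\supsetneq M(g_1)\cap\cdots\cap M(g_{i+1})$ provides an element $y_i$ witnessing the $i$-th conjunct, and again Lemma \ref{elementary}(1) translates $y_i\in M(g_j)$ resp.\ $y_i\notin M(g_{i+1})$ into the commutator conditions displayed above.

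There is no substantial obstacle in this argument; the content is entirely in Lemma \ref{elementary}, which was already established. The mild subtlety worth flagging is that Lemma \ref{elementary}(1) is used uniformly for all $h\in G$ (not only those of infinite order), which is exactly why the formulation with a single commutator (rather than the two-variable version of Lemma \ref{elementary}(2)) is convenient: the witnesses $y_i$ need not be a priori restricted to elements of infinite order.
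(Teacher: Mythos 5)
Your proof is correct and follows exactly the paper's own argument: the paper likewise expresses infinite order by $g_k^{K!}\neq 1$, uses the quantifier-free description $M(g_k)=\lbrace g \ \vert \ [g_k^{K!},gg_k^{K!}g^{-1}]=1\rbrace$ from Lemma \ref{elementary}, and existentially quantifies over witnesses for each strict inclusion. You have simply written out the formula more explicitly than the paper does.
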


\begin{proof}Let $\boldsymbol{g}\in G^N$. The fact that every $g_k$ has infinite order translates into $g_k^{K!}\neq 1$. Recall that $M(g_k)=\lbrace g\in G \ \vert \ [g_k^{K!},gg_k^{K!}g^{-1}]=1\rbrace$. The $N$-tuple $\boldsymbol{g}$ is a chain if and only if, for every $1\leq k<N$, there exists an element $x_k$ in $\bigcap_{i=1}^kM(g_i)$ that does not belong to $M(g_{k+1})$. This condition is clearly expressible by means of an existential sentence $\mathrm{Chain}_N^{\exists}(\boldsymbol{g})$.\end{proof}

Strongly special homomorphisms are not definable by means of a $\forall\exists$-sentence. For that reason, we introduce below a weaker definition.

\begin{de}\label{defpre}Let $G$ and $G'$ be hyperbolic groups. A special homomorphism $\varphi : G \rightarrow G'$ is said to be \emph{pre-strongly special} if, for every finite subgroup $C$ of $G$, the following conditions hold.
\begin{enumerate}
\item If the normalizer $N_G(C)$ is virtually cyclic, then $\varphi$ is injective in restriction to $N_G(C)$.
\item If the normalizer $N_G(C)$ is not virtually cyclic, then
\begin{enumerate}
\item the normalizer $N_{G'}(\varphi(C))$ is not virtually cyclic, and 
\item there exists a $G$-chain $(h_1,\ldots,h_c)$, with $h_i\in N_G(C)$ and $c:=c(N_G(C))$ (see Definition \ref{complexity}), such that $(\varphi(h_1),\ldots,\varphi(h_c))$ is a $G'$-chain.
\end{enumerate}
\end{enumerate}
\end{de}

%\begin{rque}The condition (b) implies that $c(N_{G'}(\varphi(C)))\geq c(N_G(C))$.
%\end{rque}

In the case where two pre-strongly special homomorphisms $\varphi : G \rightarrow G'$ and $\varphi' : G' \rightarrow G$ exist simultaneously, then these two homomorphisms are in fact strongly special, as shown by the following lemma.

\begin{lemme}\label{lemmeAE}Let $G$ and $G'$ be hyperbolic groups. Let $\varphi : G \rightarrow G'$ and $\varphi' : G' \rightarrow G$ be pre-strongly special homomorphisms. Then $\varphi$ and $\varphi'$ are strongly special.\end{lemme}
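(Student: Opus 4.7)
The plan is to bootstrap from pre-strong specialness to strong specialness by a symmetry–iteration argument on conjugacy classes of finite subgroups. Condition (a) of Definition~\ref{strongmor}—that $N_{G'}(\varphi(C))$ is not virtually cyclic whenever $N_G(C)$ is not—is already built into Definition~\ref{defpre}(2a), so the real content is the equality $\varphi(E_G(N_G(C))) = E_{G'}(N_{G'}(\varphi(C)))$ for every finite $C\subset G$ whose normalizer is not virtually cyclic. By the symmetric form of the hypotheses, it suffices to handle $\varphi$.

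The key preliminary observation is that the existence of special homomorphisms in both directions forces $K_G=K_{G'}$, and that both $\varphi$ and $\varphi'$ induce injective maps on the set of conjugacy classes of finite subgroups of each given order (injectivity on finite subgroups plus the non-conjugacy preservation). Since a hyperbolic group has only finitely many conjugacy classes of finite subgroups, these injections are bijections, and the self-map $[C]\mapsto[\varphi'(\varphi(C))]$ on the finite set of conjugacy classes of finite subgroups of order $|C|$ is a permutation; some iterate returns to $[C]$.

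Fix $C$ as above and set $C_0=C$, $C_{n+1}=\varphi'(\varphi(C_n))$, $D_n=E_G(N_G(C_n))$ and $E_n=E_{G'}(N_{G'}(\varphi(C_n)))$. Pick $n_0\geq 1$ with $[C_{n_0}]=[C_0]$; then $N_G(C_{n_0})$ is conjugate to $N_G(C_0)$ in $G$, hence $|D_{n_0}|=|D_0|$ and $c(N_G(C_{n_0}))=c(N_G(C_0))$. Pre-strong specialness of $\varphi$ provides a maximal-length $G$-chain $(h_1,\dots,h_c)\in N_G(C_n)^c$ whose image is a $G'$-chain in $N_{G'}(\varphi(C_n))$, giving $c(N_G(C_n))\leq c(N_{G'}(\varphi(C_n)))$. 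The same statement applied to $\varphi'$ yields $c(N_{G'}(\varphi(C_n)))\leq c(N_G(C_{n+1}))$. Telescoping up to $n_0$ collapses these inequalities into equalities; in particular $c(N_G(C))=c(N_{G'}(\varphi(C)))$.

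With the complexity equality in hand, Lemma~\ref{immlemm0307} writes $D_0=\bigcap_{i=1}^{c} M_G(h_i)$. Using the commutator characterization of $M(g)$ from Lemma~\ref{elementary}, valid for the common bound $K=K_G=K_{G'}$, applying $\varphi$ to the relation $[g^{K!},hg^{K!}h^{-1}]=1$ shows $\varphi(M_G(h))\subset M_{G'}(\varphi(h))$ whenever $\varphi(h)$ has infinite order; hence $\varphi(D_0)\subset \bigcap_i M_{G'}(\varphi(h_i))$. Since $(\varphi(h_i))$ is a $G'$-chain of \emph{maximal} length in $N_{G'}(\varphi(C))$, Lemma~\ref{immlemm0307} identifies this intersection with $E_0$, so $\varphi(D_0)\subset E_0$ and therefore $|D_0|\leq|E_0|$ (by injectivity on finite subgroups). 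Symmetrically $\varphi'(E_n)\subset D_{n+1}$, so $|E_n|\leq|D_{n+1}|$. Telescoping $|D_0|\leq|E_0|\leq|D_1|\leq\cdots\leq|D_{n_0}|=|D_0|$ forces equality throughout; combined with $\varphi(D_0)\subset E_0$ this yields $\varphi(D_0)=E_0$. The same reasoning applied to $\varphi'$ completes the proof. The main obstacle is precisely the complexity equality: without it one would only obtain $E_0\subset\bigcap_i M_{G'}(\varphi(h_i))$—the wrong inclusion—and closing the loop on the finite set of conjugacy classes of finite subgroups is what unlocks both the complexity equality and, through the same telescoping pattern, the final equality of $E$-subgroups.
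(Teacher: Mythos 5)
Your proof is correct and follows essentially the same route as the paper's: it compares the complexities $c(N_G(C))$ and $c(N_{G'}(\varphi(C)))$ via the chains supplied by pre-strong specialness, then applies Lemma~\ref{immlemm0307} together with the inclusion $\varphi(M(h))\subset M(\varphi(h))$ to get $\varphi(E_G(N_G(C)))\subset E_{G'}(N_{G'}(\varphi(C)))$ and closes the loop by finiteness of the set of conjugacy classes of finite subgroups. Your permutation/telescoping argument is just a careful spelling-out of the paper's terse appeals to ``bijections between the conjugacy classes'' and ``by symmetry''.
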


\begin{proof}Let $C$ be a finite subgroup of $G$ such that $N_G(C)$ is non-elementary. By definition of a pre-strongly special homomorphism, there exists a $G$-chain $(h_1,\ldots,h_c)$, with $h_i\in N_G(C)$ and $c:=c(N_G(C))$, such that $(\varphi(h_1),\ldots,\varphi(h_c))$ is a $G'$-chain of elements of $N_{G'}(\varphi(C))$. As a consequence, the maximal size of a $G'$-chain of elements of $N_{G'}(\varphi(C))$ is $\geq c$, so \[c=c(N_G(C))\leq c(N_{G'}(\varphi(C))).\] Similarly, for every finite subgroup $C'$ of $G'$, the following inequality holds: \[c(N_{G'}(C'))\leq c(N_{G}(\varphi'(C'))).\] Now, note that the homomorphisms $\varphi$ and $\varphi'$ induce two bijections between the conjugacy classes of finite subgroups of $G$ and $G'$, because $\varphi$ and $\varphi'$ are special. It follows that the previous inequalities are in fact equalities. In particular, $(\varphi(h_1),\ldots,\varphi(h_c))$ is a maximal $G'$-chain of elements of $N_{G'}(\varphi(C))$. So we have \[E:=E_G(N_G(C))=\bigcap_{i=1}^c M(h_i) \ \ \text{ and } \ \ E':=E_{G'}(N_{G'}(\varphi(C)))=\bigcap_{i=1}^c M(\varphi(h_i)).\]It follows that \[\varphi(E)\subset \bigcap_{i=1}^c \varphi(M(h_i))\subset \bigcap_{i=1}^c M(\varphi(h_i))=E'.\]
By symmetry, these inclusions are in fact equalities.\end{proof}

\subsection{Definition of $\zeta_G$}

Recall that the universal formula $\mathrm{Special}^{\forall}(\bm{x})$ is defined in Proposition \ref{defspefor}.

\begin{prop}\label{mntab}Let $G=\langle s_1,\ldots ,s_p \rangle$ be a hyperbolic group. There exists an existential formula \[\mathrm{PreStrong}^{\exists}(x_1,\ldots,x_p)\] such that, for every hyperbolic group $G'$ satisfying $K_{G'}\leq K_G$, and for every tuple $\bm{g'}=(g'_1,\ldots,g'_p)$ of elements of $G'$, the following two assertions are equivalent:
\begin{enumerate}
\item the group $G'$ satisfies $\mathrm{Special}^{\forall}(\bm{g'})\wedge\mathrm{PreStrong}^{\exists}(\bm{g}')$;
\item the map $\varphi_{\bm{g'}}:\lbrace s_1,\ldots,s_p\rbrace\rightarrow G'$ defined by $s_i\mapsto g'_i$ for $1\leq i\leq p$ extends to a pre-strongly special homomorphism from $G$ to $G'$.
\end{enumerate}
\end{prop}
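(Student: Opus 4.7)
The plan is to follow the template of the proof of Proposition \ref{defspefor} and adjoin to $\mathrm{Special}^\forall$ an existential clause encoding the two conditions of Definition \ref{defpre}. Writing $G=\langle s_1,\ldots,s_p\,\vert\,\Sigma(\boldsymbol{s})=1\rangle$ with $\{C_1,\ldots,C_q\}$ a set of representatives of conjugacy classes of finite subgroups and $N_i:=N_G(C_i)$, the formula $\mathrm{PreStrong}^{\exists}(\boldsymbol{x})$ will be a conjunction of clauses indexed by $i$, each corresponding to one of the two cases of Definition \ref{defpre} according to whether $N_i$ is virtually cyclic.

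For each $i$ with $N_i$ virtually cyclic, the clause should demand injectivity of $\varphi_{\boldsymbol{x}}$ on $N_i$. When $N_i$ is either finite (handled by injectivity on finite subgroups, already in $\mathrm{Special}^\forall$) or infinite virtually cyclic maximal (handled by $K_G$-niceness, also in $\mathrm{Special}^\forall$), nothing is to be added. In the remaining subcase I will reuse the argument from the proof of Proposition \ref{defspefor} to obtain a quantifier-free clause: using $K_{G'}\leq K_G$, injectivity on the virtually cyclic $N_i$ is equivalent to injectivity on finite subgroups combined with the single inequation $\varphi(t)^{K_G!}\neq 1$ for a fixed infinite-order element $t\in N_i$. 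For each $i$ with $N_i$ not virtually cyclic, I will add two existential clauses. For condition $(2)(a)$ (non-virtual-cyclicity of $N_{G'}(\varphi(C_i))$), Corollary \ref{pour plus tard!!!} gives the existential statement $\exists y,z$ with $y,z\in N_{G'}(\varphi(C_i))$ and $[y^{K_G!},z^{K_G!}]\neq 1$, where the normalizer conditions on $C_i$ translate to a finite list of equations in $\boldsymbol{x}$. For condition $(2)(b)$, I will fix once and for all a concrete $G$-chain $(h_1,\ldots,h_c)$ of maximal length $c:=c(N_i)$ inside $N_i$, which exists by the very definition of complexity; each $h_k$ is a fixed word $h_k(\boldsymbol{s})$, and the assertion that $(\varphi(h_1),\ldots,\varphi(h_c))=(h_1(\boldsymbol{x}),\ldots,h_c(\boldsymbol{x}))$ is a $G'$-chain is then provided as an existential formula by $\mathrm{Chain}_c^\exists$ of Lemma \ref{chain2}. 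Taking the conjunction of all these clauses gives $\mathrm{PreStrong}^{\exists}(\boldsymbol{x})$, which is existential as a conjunction of existential (and quantifier-free) formulas.

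The main obstacle will be matching the $(2)(b)$ clause with Definition \ref{defpre}: the definition asks for the existence of \emph{some} $G$-chain in $N_i$ with $G'$-chain image, whereas the clause I just described is written for a \emph{fixed} chain $(h_1,\ldots,h_c)$. The direction "formula $\Rightarrow$ definition" is immediate since the fixed chain is itself a valid witness. For the converse I will strengthen the formula by additionally existentially quantifying over fresh target-side variables $y_1,\ldots,y_c\in G'$ constrained to lie in $N_{G'}(\varphi(C_i))$ and to form a $G'$-chain (via $\mathrm{Chain}_c^\exists$), and then argue, using Lemma \ref{immlemm0307} together with the conjugacy-class preservation and injectivity on finite subgroups already enforced by $\mathrm{Special}^\forall$, that such a target-side chain can always be lifted to a $G$-chain in $N_i$ whose image is a $G'$-chain; the key observation is that the intersection $\bigcap_k M(y_k)$ for a maximal-length chain coincides with $E_{G'}(N_{G'}(\varphi(C_i)))$, which matches $\varphi(E_G(N_i))$ on the image side. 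Finally, I will note that Lemma \ref{lemmeAE} shows that pre-strong specialness is only used downstream symmetrically with a reverse map $\varphi'$, so whether $\mathrm{PreStrong}^\exists$ characterizes exactly Definition \ref{defpre} or a very slightly stronger variant, the subsequent arguments for $(2)\Leftrightarrow(4)$ and $(3)\Rightarrow(4)$ in Theorem \ref{principal} proceed unchanged.
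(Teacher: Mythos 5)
Your construction of $\mathrm{PreStrong}^{\exists}$ coincides with the paper's: a conjunction, over the representatives $C_i$ whose normalizer is not virtually cyclic, of a quantifier-free clause asserting that $N_{G'}(\varphi_{\bm{g}'}(C_i))$ is not virtually cyclic (via Corollary \ref{pour plus tard!!!} and $K_{G'}\leq K_G$) and of the formula $\mathrm{Chain}^{\exists}$ of Lemma \ref{chain2} applied to the image of one \emph{fixed} maximal $G$-chain written as words in the generators; the virtually cyclic case is left to $\mathrm{Special}^{\forall}$, exactly as you do. The paper asserts the resulting equivalence without addressing the fixed-chain versus some-chain mismatch you raise. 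That mismatch is a genuine imprecision, but your proposed repair does not work: elements $y_1,\ldots,y_c$ of $G'$ have no preimages under $\varphi_{\bm{g}'}$ in general, so a target-side chain cannot be "lifted" to a $G$-chain in $N_i$, and a clause that only asserts the existence of a length-$c$ chain in $N_{G'}(\varphi(C_i))$ says merely that $c(N_{G'}(\varphi(C_i)))\geq c$, which is strictly weaker than condition $(2)(b)$ of Definition \ref{defpre}. The correct resolution is the one you state at the end: keep the fixed-chain formula (so that the formula characterizes pre-strong specialness witnessed by the chosen chains), observe that the direction "formula $\Rightarrow$ definition" is the one used to extract pre-strongly special homomorphisms from $\zeta_G$, that $G\models\zeta_G$ because the identity tautologically preserves the chosen chains, and that Lemma \ref{lemmeAE} and the downstream implications of Theorem \ref{principal} only consume these directions (the remaining implication $(4)\Rightarrow(2)$ is in any case recovered from the cycle $(4)\Rightarrow(5)\Rightarrow(1)\Rightarrow(2)$). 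So the proposal is essentially the paper's proof, and your closing remark, not the lifting argument, is what makes the converse direction harmless.
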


\begin{proof}Let $G=\langle s_1,\ldots ,s_p \ \vert \ \Sigma(s_1,\ldots,s_p)=1\rangle$ be a finite presentation of $G$, where $\Sigma(s_1,\ldots,s_p)=1$ denotes a finite system of equations in $p$ variables. Let $\lbrace C_1,\ldots,C_q\rbrace$ be a set of representatives of the conjugacy classes of finite subgroups of $G$. Let $I$ be the subset of $\llbracket 1,q\rrbracket$ such that $N_i=N_G(C_i)$ is virtually cyclic if and only if $i\in I$.

%and  let $I_2\subset I_1$ such that $N_G(G_i)$ is infinite virtually cyclic if and only if $i\in I_2$.

%If $i\neq j$, the assertion that $\varphi_{\boldsymbol{g}'}(C_i)$ and $\varphi_{\boldsymbol{g}'}(C_j)$ are non-conjugate in $G'$ translates into a universal formula $\mathrm{NonConj}_{i,j}^{\forall}(\boldsymbol{g'})$, where $\varphi_{\boldsymbol{g}'}: G \rightarrow G'$ denotes the homomorphism defined by the assignment $\boldsymbol{s}=(s_1,\ldots,s_p)\mapsto \boldsymbol{g}'$.

%For $i\in I$, the injectivity of $\varphi_{\boldsymbol{g}'}:\bm{s}\rightarrow\bm{g}'$ on $N_i$ can be expressed by a quantifier-free formula $\mathrm{Inj}_i(\boldsymbol{g}')$. Indeed, it is enough to say that $\varphi_{\boldsymbol{g}'}$ is injective on finite subgroups and that its image has infinite order. The latter point is expressible thanks to the hypothesis that $K_{G'}\leq K_G$, which implies that an element $g'$ of $G'$ has infinite order if and only if $g'^{K_G!}\neq 1$. 

%If $N_{i}$ is infinite virtually cyclic (i.e.\ $i\in I_2$), the fact that $\varphi_{\boldsymbol{g}'}(Q(N_{i}))$ is contained in $Q(N_{G'}(\varphi_{\boldsymbol{g}'}(C_{i})))$ is expressible by a universal formula $\mathrm{QuasiCenter}_{i}^{\forall}(\boldsymbol{g'})$. Since the quotient $N_{i}/Q(N_{i})$ is finite, the injectivity of the induced homomorphism $\overline{\varphi_{\boldsymbol{g}'}}$ on $N_{i}/Q(N_{i})$ translates into a universal formula $\mathrm{InjQ}_{i}^{\forall}(\boldsymbol{g'})$.

For $i\notin I$, there exists a quantifier-free formula $\mathrm{NonVC}_i(\boldsymbol{x})$ such that, for every $\boldsymbol{g}'$ in $\mathrm{Sol}_{G'}(\Sigma)$, the group $N_{G'}(\varphi_{\boldsymbol{g}'}(C_i))$ is not virtually cyclic if and only if $G'\models\mathrm{NonVC}_i(\boldsymbol{g}')$. This uses the fact that $K_{G'}\leq K_G$, which implies that two elements $g'_1,g'_2$ of $G'$ of infinite order generate a non virtually cyclic subgroup if and only if $[{g'_1}^{K_G!},{g'_2}^{K_G!}]\neq 1$ (see Corollary \ref{pour plus tard!!!}).

For $i\notin I$, let $E_i:=E_{G}(N_G(C_i))$ and $c_i:=c(N_G(E_i))$ (see Definition \ref{chain}). Consider $\boldsymbol{h}_i\in N_G(E_i)^{c_i}$ a $G$-chain. This chain can be written as a $c_i$-tuple of words $\boldsymbol{h}_i(\boldsymbol{s})$.

We define the existential formula $\mathrm{PreStrong}^{\exists}(x_1,\ldots,x_p)$ as follows:\[\mathrm{PreStrong}^{\exists}(\bm{x}) : \bigwedge_{i\notin I}\left(\mathrm{NonVC}_i(\boldsymbol{x})\wedge {\mathrm{Chain}}_i^{\exists}(\boldsymbol{h}_i(\boldsymbol{x}))\right),\]where $\mathrm{Chain}_i^{\exists}(\boldsymbol{h}_i)$ denotes the $\exists$-formula given by Lemma \ref{chain2}.

For any $\boldsymbol{g}'\in G'^p$, the group $G'$ satisfies $\mathrm{Special}^{\forall}(\bm{g'})\wedge\mathrm{PreStrong}^{\exists}(\bm{g}')$ if and only if the homomorphism $\varphi_{\boldsymbol{g}'} : G \rightarrow G': \boldsymbol{s}\mapsto \boldsymbol{g}'$ is pre-strongly special.\end{proof}

Corollary \ref{propi} below, which follows immediately from Proposition \ref{mntab} above, tells us that the existence of a pre-strongly special homomorphism from $G$ to a hyperbolic group $G'$ such that $K_{G'}\leq K_G$ is captured by a single $\exists\forall$-sentence $\zeta_G$ that does not depend on $G'$.

\begin{co}\label{propi}Let $G$ be a hyperbolic group. Let us define the $\exists\forall$-sentence $\zeta_G$ by \[\zeta_G : \exists \bm{x} \ \mathrm{Special}^{\forall}(\bm{x})\wedge\mathrm{PreStrong}^{\exists}(\bm{x}).\] Note that this sentence is satisfied by $G$ since the identity of $G$ is a pre-strongly special homomorphism. For every hyperbolic group $G'$ such that $K_{G'}\leq K_G$, the following two assertions are equivalent:
\begin{enumerate}
\item the group $G'$ satisfies $\zeta_G$;
\item there exists a pre-strongly special homomorphism from $G$ to $G'$.
\end{enumerate}
\end{co}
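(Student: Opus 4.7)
The plan is to reduce the statement immediately to Proposition \ref{mntab}; no genuinely new content is required, only unwinding the definition of $\zeta_G$. First, I would note that by definition $G'\models \zeta_G$ if and only if there exists a tuple $\bm{g'}=(g'_1,\ldots,g'_p)\in G'^p$ such that $G'\models \mathrm{Special}^{\forall}(\bm{g'})\wedge \mathrm{PreStrong}^{\exists}(\bm{g'})$. Since the hypothesis $K_{G'}\leq K_G$ is precisely what Proposition \ref{mntab} requires, I can apply that proposition pointwise in $\bm{g'}$: the satisfaction of the conjunction by $\bm{g'}$ is equivalent to the assertion that $s_i\mapsto g'_i$ extends to a pre-strongly special homomorphism $G\to G'$. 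Quantifying this equivalence existentially over $\bm{g'}$ yields directly the equivalence of (1) and (2).

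For the parenthetical claim that $G$ itself satisfies $\zeta_G$, I would apply the equivalence just established with $G'=G$ (so $K_{G'}=K_G$ trivially holds) and exhibit the identity homomorphism $\mathrm{id}_G$ as a witness. It is special because it preserves the presentation and conjugacy classes of finite subgroups, and it is pre-strongly special because, for each finite subgroup $C$ with $N_G(C)$ non-virtually-cyclic, any chosen $G$-chain $(h_1,\ldots,h_c)$ in $N_G(C)$ is mapped to itself, which is of course still a $G$-chain.

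A small point of hygiene is to verify that $\zeta_G$ really is an $\exists\forall$-sentence, since it is the conjunction of a universal formula $\mathrm{Special}^{\forall}(\bm{x})=\forall\bm{y}\,\phi(\bm{x},\bm{y})$ and an existential formula $\mathrm{PreStrong}^{\exists}(\bm{x})=\exists\bm{z}\,\psi(\bm{x},\bm{z})$. Pulling $\exists\bm{z}$ outside and leaving $\forall\bm{y}$ inside, one rewrites $\zeta_G$ as $\exists\bm{x}\,\exists\bm{z}\,\forall\bm{y}\,(\phi(\bm{x},\bm{y})\wedge\psi(\bm{x},\bm{z}))$, which is indeed of $\exists\forall$ form. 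There is no genuine obstacle here: the entire mathematical content is already packaged into Proposition \ref{mntab}, and the corollary is a purely formal consequence.
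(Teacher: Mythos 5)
Your proposal is correct and matches the paper's intent exactly: the paper itself presents Corollary \ref{propi} as an immediate consequence of Proposition \ref{mntab}, obtained precisely by existentially quantifying the equivalence of that proposition over the tuple $\bm{g}'$, with the identity of $G$ serving as the witness for $G\models\zeta_G$. Your additional check that $\zeta_G$ prenexes to an $\exists\forall$-sentence is a reasonable piece of hygiene that the paper leaves implicit.
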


\subsection{Proof of $(2)\Leftrightarrow (4)$}

The equivalence $(2)\Leftrightarrow(4)$ of Theorem \ref{principal} follows immediately from Corollary \ref{propi} together with Lemma \ref{lemmeAE}.

\begin{co}[Equivalence $(2)\Leftrightarrow (4)$ of Theorem \ref{principal}]Let $G$ and $G'$ be hyperbolic groups. There exist strongly special homomorphisms $\varphi : G \rightarrow G'$ and $\varphi' : G' \rightarrow G$ if and only if $G'\models \zeta_G$ and $G\models \zeta_{G'}$.\end{co}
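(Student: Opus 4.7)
The plan is to deduce the equivalence directly from two earlier results: Corollary \ref{propi}, which asserts (under $K_{G'} \leq K_G$) that $G' \models \zeta_G$ is equivalent to the existence of a pre-strongly special homomorphism $\varphi : G \to G'$, and Lemma \ref{lemmeAE}, which upgrades any pair of pre-strongly special homomorphisms going in both directions into a pair of strongly special homomorphisms. The proof is therefore essentially a matter of assembling these results and resolving the asymmetry in $K_G$ versus $K_{G'}$.

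For the implication $(\Leftarrow)$, I would first extract $K_G = K_{G'}$ from the hypothesis. Inspection of $\mathrm{Special}^{\forall}(\bm{x})$ as constructed in Proposition \ref{defspefor} shows that it forces injectivity on each finite subgroup of $G$, so $G' \models \zeta_G$ exhibits a copy of the largest finite subgroup of $G$ inside $G'$ and hence $K_G \leq K_{G'}$; the symmetric hypothesis yields the reverse inequality. With $K_G = K_{G'}$, Corollary \ref{propi} applies in both directions and produces pre-strongly special homomorphisms $\varphi : G \to G'$ and $\varphi' : G' \to G$, and Lemma \ref{lemmeAE} promotes both to strongly special.

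For the implication $(\Rightarrow)$, I would begin by noting that the existence of a special homomorphism $G \to G'$ requires $K_G \geq K_{G'}$ by Definition \ref{special1}, so the existence of strongly special homomorphisms both ways forces $K_G = K_{G'}$. Next I would verify that a strongly special homomorphism $\varphi : G \to G'$ is in particular pre-strongly special. Clause $(2a)$ of Definition \ref{defpre} is part of the strongly special hypothesis; clause $(1)$ follows from the injectivity-on-finite-subgroups bullet together with the $K_G$-niceness bullet of Definition \ref{special1}; and for clause $(2b)$ I would invoke Lemma \ref{immlemm0307}: choose a maximal $G$-chain $(h_1,\ldots,h_c)$ inside $N_G(C)$ with $c = c(N_G(C))$, so that $\bigcap_{i} M(h_i) = E_G(N_G(C))$. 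The hypothesis $\varphi(E_G(N_G(C))) = E_{G'}(N_{G'}(\varphi(C)))$ of strongly special then ensures that the corresponding nested intersections of $M(\varphi(h_i))$ in $G'$ cannot collapse, so $(\varphi(h_1),\ldots,\varphi(h_c))$ remains a $G'$-chain. With $\varphi$ and $\varphi'$ both pre-strongly special and $K_G = K_{G'}$, Corollary \ref{propi} applied in each direction yields $G' \models \zeta_G$ and $G \models \zeta_{G'}$.

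The only step that is not formally immediate from the cited results is the verification that strongly special implies pre-strongly special, which constitutes the mild obstacle. Once Lemma \ref{immlemm0307} is pressed into service to identify the intersection of the maximal virtually cyclic subgroups $M(h_i)$ along a maximal chain with $E_G(N_G(C))$, the chain-preservation argument follows without further machinery, and the rest of the proof is a direct concatenation of Corollary \ref{propi} and Lemma \ref{lemmeAE}.
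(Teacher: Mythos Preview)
Your approach matches the paper's: both directions are deduced from Corollary~\ref{propi} together with Lemma~\ref{lemmeAE}. For $(\Leftarrow)$ the paper simply assumes without loss of generality that $K_{G'}\le K_G$, applies Corollary~\ref{propi} once to obtain a pre-strongly special $\varphi:G\to G'$, reads off $K_G\le K_{G'}$ from injectivity on finite subgroups, and then applies Corollary~\ref{propi} in the other direction; your route via direct inspection of $\mathrm{Special}^{\forall}$ reaches the same conclusion (the $\mathrm{NonConj}^{\forall}_{i,j}$ clauses do force injectivity on every finite subgroup, since a nontrivial kernel would send some $C_k\neq\{1\}$ to the trivial group).

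For $(\Rightarrow)$ the paper is terser than you: it just says that since $\varphi,\varphi'$ are injective on finite groups one has $K_G=K_{G'}$, and then invokes Corollary~\ref{propi}. It does not spell out that a strongly special homomorphism is pre-strongly special. You attempt to supply this verification, and clauses~(1) and~(2a) are fine, but your argument for clause~(2b) has a genuine gap. Knowing $\varphi(E_G(N_G(C)))=E_{G'}(N_{G'}(\varphi(C)))$ constrains only the \emph{terminal} intersection $\bigcap_i M(\varphi(h_i))$; it says nothing about the intermediate inclusions
\[
M(\varphi(h_1))\supset M(\varphi(h_1))\cap M(\varphi(h_2))\supset\cdots
\]
being strict, nor does it guarantee that each $\varphi(h_i)$ has infinite order. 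The sentence ``the corresponding nested intersections \ldots{} cannot collapse'' is asserted, not proved. To close this you would need an additional argument---for instance, using both $\varphi$ and $\varphi'$ together (as in the proof of Lemma~\ref{lemmeAE}) to show $c(N_G(C))=c(N_{G'}(\varphi(C)))$, rather than treating $\varphi$ in isolation. The paper itself leaves this point implicit, so you are not doing worse than the paper, but the justification you offer does not actually establish the claim.
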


\begin{proof}Suppose that the morphisms $\varphi$ and $\varphi'$ exist. Since they are injective on finite groups, we have $K_G=K_{G'}$. Hence Corollary \ref{propi} applies and guarantees that $G$ satisfies $\zeta_{G'}$ and $G'$ satisfies $\zeta_G$.

Conversely, suppose that $G$ satisfies $\zeta_{G'}$ and $G'$ satisfies $\zeta_G$. Up to exchanging $G$ and $G'$, one can assume without loss of generality that $K_{G'}\leq K_G$. By Corollary \ref{propi}, there exists a pre-strongly special homomorphism $\varphi : G \rightarrow G'$. As a consequence, since $\varphi$ is injective on finite groups, we have $K_G=K_{G'}$. Then again by Corollary \ref{propi}, there exists a pre-strongly special homomorphism $\varphi' : G' \rightarrow G$.\end{proof}

\subsection{Proof of $(3)\Rightarrow (4)$}

\begin{prop}[$(3)\Rightarrow (4)$]\label{stronglyexistence}Let $G$ and $G'$ be hyperbolic groups. Let $(\varphi_n : G \rightarrow G')_{n\in\mathbb{N}}$ and $(\varphi'_n : G' \rightarrow G)_{n\in\mathbb{N}}$ be two discriminating sequences of special homomorphisms. Then, for $n$ large enough, $\varphi_n: G \rightarrow G'$ and $\varphi'_n: G' \rightarrow G$ are strongly special.
\end{prop}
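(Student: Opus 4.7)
My plan is to reduce the statement to showing that $\varphi_n$ and $\varphi'_n$ are \emph{pre-strongly special} for $n$ large enough, and then to invoke Lemma~\ref{lemmeAE}, which immediately upgrades such a pair to a pair of strongly special homomorphisms. Since $\varphi_n$ and $\varphi'_n$ are special they are injective on finite subgroups, so $K_G = K_{G'}$; write $K$ for this common value. The characterization of $M(g)$ via commutators in Lemma~\ref{elementary} then applies with the same $K$ in both $G$ and $G'$.

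The conditions for being pre-strongly special that are not already implied by specialness concern the finite subgroups $C \subset G$ with $N_G(C)$ not virtually cyclic. For each such $C$, the plan is to exhibit finitely many witnesses in $G$ that transfer via $\varphi_n$, for $n$ large, to witnesses in $G'$ of (a) $N_{G'}(\varphi_n(C))$ not virtually cyclic, and (b) the existence of a $G$-chain $(h_1,\ldots,h_c)\in N_G(C)^c$ of maximal length $c := c(N_G(C))$ whose image under $\varphi_n$ is a $G'$-chain. Since $G$ has only finitely many conjugacy classes of finite subgroups, only finitely many $C_i$ must be treated.

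For each such $C_i$ I will fix once and for all: a pair $a_i, b_i \in N_G(C_i)$ generating a non-elementary subgroup, equivalently satisfying $[a_i^{K!}, b_i^{K!}] \neq 1$ by Corollary~\ref{pour plus tard!!!}; a $G$-chain $(h_{i,1},\ldots,h_{i,c_i}) \in N_G(C_i)^{c_i}$ of maximal length $c_i := c(N_G(C_i))$; and, for each $1 \leq k < c_i$, a witness $x_{i,k} \in \bigcap_{j=1}^{k} M(h_{i,j}) \setminus M(h_{i,k+1})$. By Lemma~\ref{elementary}, membership in each $M(h_{i,j})$ is a finite conjunction of commutator equations, and non-membership in $M(h_{i,k+1})$ is the non-triviality of a single commutator. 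These equations are preserved by any homomorphism, while the finitely many non-triviality statements --- namely $[a_i^{K!}, b_i^{K!}] \neq 1$, each $h_{i,k}^{K!} \neq 1$, and each inequation certifying $x_{i,k} \notin M(h_{i,k+1})$ --- transfer for $n$ large by the discriminating property of the sequence $(\varphi_n)$.

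Choosing $n_0$ large enough to handle this finite collection of non-triviality conditions simultaneously, for $n \geq n_0$ the images $\varphi_n(a_i), \varphi_n(b_i)$ generate a non-elementary subgroup of $N_{G'}(\varphi_n(C_i))$, so $N_{G'}(\varphi_n(C_i))$ is not virtually cyclic, and $(\varphi_n(h_{i,1}),\ldots,\varphi_n(h_{i,c_i}))$ is a $G'$-chain. Hence $\varphi_n$ is pre-strongly special for $n \geq n_0$; the symmetric argument applied to $(\varphi'_n)$ yields the same conclusion for $\varphi'_n$, and Lemma~\ref{lemmeAE} then delivers the desired strong specialness for both. I do not foresee any serious obstacle beyond the bookkeeping: the key point is the uniformity in $n$, which is automatic from the finiteness of the set of conjugacy classes of finite subgroups in $G$ (and in $G'$).
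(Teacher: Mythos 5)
Your proof is correct and follows essentially the same route as the paper: reduce to pre-strong specialness and conclude via Lemma~\ref{lemmeAE}, then observe that pre-strong specialness is witnessed by finitely many fixed elements satisfying equations (preserved by any homomorphism) together with finitely many inequations (preserved for $n$ large by the discriminating property). The paper packages exactly these witnesses into the existential formula $\mathrm{PreStrong}^{\exists}$ of Proposition~\ref{mntab} and transfers it along the discriminating sequence; you have simply unpacked that formula explicitly.
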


Note that $K_G=K_{G'}$, with $G$ and $G'$ as above. Indeed, $\varphi_n$ and $\varphi'_n$ are injective on finite groups for $n$ large enough. So Proposition \ref{stronglyexistence} is an immediate consequence of the following result combined with Lemma \ref{lemmeAE}.

\begin{prop}\label{stronglyexistence2}Let $G$ and $G'$ be two hyperbolic groups, and let $(\varphi_n : G \rightarrow G')_{n\in\mathbb{N}}$ be a discriminating sequence of special homomorphisms. Suppose that $K_G=K_{G'}$. Then $\varphi_n$ is pre-strongly special, for $n$ large enough.\end{prop}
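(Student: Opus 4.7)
The plan is to verify the two defining conditions of Definition \ref{defpre} for $\varphi_n$ when $n$ is large enough. Since $G$ has only finitely many conjugacy classes of finite subgroups, with representatives $C_1,\ldots,C_q$ say, and both conditions are invariant under $G$-conjugation of the finite subgroup, it suffices to produce for each $i$ a threshold $n_i$ beyond which the conditions hold for $C_i$, and then set $N=\max_i n_i$. The three ingredients I will invoke are: that $\varphi_n$ is already special (hence in particular injective on finite subgroups), that $(\varphi_n)$ is discriminating, and that $K_G=K_{G'}$.

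For condition (1) of Definition \ref{defpre}, fix $C=C_i$ with $N:=N_G(C)$ virtually cyclic. If $N$ is finite then injectivity of $\varphi_n|_N$ follows from injectivity on finite subgroups. If $N$ is infinite virtually cyclic, I analyse the kernel $K_n:=\ker(\varphi_n|_N)$. Since $N$ is infinite virtually cyclic, any normal subgroup of $N$ is either finite or of finite index in $N$. In the first subcase $K_n$ is contained in the maximal finite normal subgroup $E_N$ of $N$, which is itself a finite subgroup of $G$, so the injectivity of $\varphi_n$ on $E_N$ forces $K_n=\{1\}$. In the second subcase $\varphi_n(N)\subset G'$ is a finite subgroup of order at most $K_{G'}=K_G$; picking any $z\in N$ of infinite order, the order of $\varphi_n(z)$ divides $K_G!$, so $\varphi_n(z^{K_G!})=1$; but $z^{K_G!}$ is a fixed nontrivial element of $G$, so this subcase is excluded by discrimination for $n$ large.

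For condition (2) of Definition \ref{defpre}, now fix $C=C_i$ with $N:=N_G(C)$ non-elementary. To check (b1), pick $g,h\in N$ of infinite order with $[g^{K_G!},h^{K_G!}]\neq 1$, which exist precisely because $N$ is non-elementary (Corollary \ref{pour plus tard!!!}); discrimination applied to this fixed nontrivial commutator yields $[\varphi_n(g)^{K_G!},\varphi_n(h)^{K_G!}]\neq 1$ for $n$ large, so by the same corollary applied in $G'$ the subgroup $\langle\varphi_n(g),\varphi_n(h)\rangle\subset N_{G'}(\varphi_n(C))$ is non-elementary. To check (b2), fix once and for all a $G$-chain $(h_1,\ldots,h_c)$ in $N$ of maximal length $c=c(N)$. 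By Lemma \ref{chain2}, the property of being a $G'$-chain of length $c$ is captured by an existential formula $\mathrm{Chain}_c^{\exists}$ (using the integer $K_G=K_{G'}$); choose witnesses $w_1,\ldots,w_m\in G$ realising this existential in $G$. The resulting quantifier-free condition is a Boolean combination of equalities (which $\varphi_n$ preserves automatically) and inequalities among finitely many fixed nontrivial words in the $h_j$ and $w_k$ (which discrimination preserves for $n$ large), so $(\varphi_n(h_1),\ldots,\varphi_n(h_c))$ is a $G'$-chain.

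The mildly delicate step is the second subcase of condition (1), where specialness of $\varphi_n$ gives no direct injectivity on $N_G(C)$ (the normalizer $N_G(C)$ need not be a maximal virtually cyclic subgroup of $G$, so Definition \ref{special1} is silent on that restriction). This is precisely where the assumption $K_G=K_{G'}$ is indispensable: it bounds the order of any finite quotient of $\varphi_n(N_G(C))$ by $K_G$, so a single fixed power $z^{K_G!}$ of an infinite-order element serves uniformly as a witness for the discrimination contradiction.
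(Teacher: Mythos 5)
Your proof is correct, and the underlying mechanism is the same as the paper's: pre-strong specialness is witnessed by finitely many fixed elements of $G$ satisfying a quantifier-free condition (equations are preserved by any homomorphism, and the finitely many inequations are eventually preserved by a discriminating sequence). The paper packages this as a two-line appeal to Proposition \ref{mntab}: since $G\models\mathrm{PreStrong}^{\exists}(\bm{s})$ via the identity, and $\mathrm{PreStrong}^{\exists}$ is existential, $G'\models\mathrm{PreStrong}^{\exists}(\varphi_n(\bm{s}))$ for $n$ large; combined with $\mathrm{Special}^{\forall}(\varphi_n(\bm{s}))$ this gives pre-strong specialness. Your treatment of condition (2) of Definition \ref{defpre} is exactly the unpacked version of that argument. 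Where you genuinely diverge is condition (1): the paper delegates injectivity on virtually cyclic normalizers to the formula $\mathrm{Special}^{\forall}$, which as you observe only records injectivity on $N_G(C)$ when that normalizer is infinite virtually cyclic \emph{maximal}; your kernel dichotomy (a normal subgroup of an infinite virtually cyclic group is finite or of finite index, the finite case being killed by injectivity on the maximal finite normal subgroup, the finite-index case by discrimination applied to $z^{K_G!}$ together with $K_G=K_{G'}$) is a correct and slightly more careful way to cover the non-maximal case. One micro-point you leave implicit in (2a): to apply Corollary \ref{pour plus tard!!!} in $G'$ you need $\varphi_n(g),\varphi_n(h)$ to have infinite order, but this is automatic since $[\varphi_n(g)^{K_G!},\varphi_n(h)^{K_G!}]\neq 1$ forces $\varphi_n(g)^{K_G!}\neq 1$, which a torsion element of $G'$ (of order at most $K_{G'}$) could not satisfy.
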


\begin{proof}We keep the same notations as in the proof of Proposition \ref{propi} above. Note that the existential formula $\mathrm{PreStrong}^{\exists}(\bm{x})$ is satisfied by the generating set $\boldsymbol{s}=\bm{x}$ of $G$. So, for $n$ large enough, the statement $\mathrm{PreStrong}^{\exists}(\varphi_n(\bm{s}))$ is satisfied by $G'$ since the sequence $(\varphi_n)_{n\in\mathbb{N}}$ is discriminating. Moreover, the morphism $\varphi_n$ being special, the tuple $\varphi_n(\boldsymbol{s})$ satisfies the universal formula $\mathrm{Special}^{\forall}(\bm{x})$. Thus, the group $G'$ satisfies the statement $\mathrm{Special}^{\forall}(\varphi_n(\boldsymbol{s}))\wedge\mathrm{PreStrong}^{\exists}(\varphi_n(\boldsymbol{s}))$. It follows from Proposition \ref{mntab} that $\varphi_n$ is pre-strongly special.\end{proof}

\begin{rque}Under the hypotheses of Proposition \ref{stronglyexistence2} above, $\varphi_n$ is not necessarily strongly special for $n$ large enough. More precisely, $\varphi_n(E_G(N_G(C)))$ is not necessarily equal to $E_{G'}(N_{G'}(\varphi_n(C)))$ for $n$ large enough, as shown by the following example. Let \[G=\langle c \ \vert \ c^4=1\rangle\times F_2 \ \ \text{and} \ \ G'=\langle G, t \ \vert \ [t,c^2]=1\rangle.\]Note that $E_G(N_G( c^2))=E_G(G)=\langle c\rangle$, and that $E_{G'}(N_{G'}( c^2))=E_{G'}(G')=\langle c^2\rangle$. Thus, the inclusion $\varphi$ of $G$ into $G '$ satisfies \[\varphi(E_G(N_G( c^2)))\not\subset E_{G'}(N_{G'}(\varphi( c^2))).\]Note that, in this example, there is no discriminating sequence $(\varphi'_n : G' \rightarrow G)$ since the commutator $[t,c]$ is killed by any homomorphism $\varphi' : G' \rightarrow G$.\end{rque}

\section{Proof of Theorems \ref{legalte} and \ref{legalteplus}}\label{HNN}

%As an immediate consequence, we get the implication $(5)\Rightarrow (1)$ of Theorem \ref{principal} when all extensions are large. 

%The implication $(5)\Rightarrow (1)$ of Theorem \ref{principal} for legal small extensions will be proved in the next Section. 

In this section, we prove Theorem \ref{legalte} and Theorem \ref{legalteplus}. The proof of these theorems consists mainly in revisiting and generalizing the key lemma of Sacerdote's paper \cite{Sac73} dating from 1973, using some of the tools developed since then by Sela and others (theory of group actions on real trees, shortening argument, test sequences).

\subsection{Legal large extensions}

Let $G$ be a hyperbolic group, and let $C_1,C_2$ be two finite subgroups of $G$. Suppose that $C_1$ and $C_2$ are isomorphic, and let $\alpha : C_1 \rightarrow C_2$ be an isomorphism. We want to find necessary and sufficient conditions under which $\mathrm{Th}_{\forall\exists}(G)=\mathrm{Th}_{\forall\exists}(G\ast_{\alpha})$. As observed in the introduction, if $G$ and $G\ast_{\alpha}$ have the same $\forall\exists$-theory, then $G$ is non-elementary. Indeed, a hyperbolic group is finite if and only if it satisfies the first-order sentence $\forall x \ (x^N=1)$ for some integer $N\geq 1$, and virtually cyclic if and only if it satisfies $\forall x\forall y \ ([x^N,y^N]=1)$ for some integer $N\geq 1$. We therefore restrict attention to non-elementary hyperbolic groups. We will prove the following result.

\begin{te}[see Theorems \ref{legalte} and \ref{legalteplus}]\label{theorie}Let $G$ be a non-elementary hyperbolic group, and let $C_1$ and $C_2$ be two finite isomorphic subgroups of $G$. Let $\alpha : C_1 \rightarrow C_2$ be an isomorphism. Let us consider the HNN extension $\Gamma=G\ast_{\alpha}=\langle G,t \ \vert \ \mathrm{ad}(t)_{\vert C_1}=\alpha\rangle$. The following three assertions are equivalent.
\begin{enumerate}
\item The inclusion of $G$ into $\Gamma$ is a $\exists\forall\exists$-elementary embedding (see Definition \ref{elememb}).
\item $\mathrm{Th}_{\forall\exists}(\Gamma)=\mathrm{Th}_{\forall\exists}(G)$.
\item The group $\Gamma=G\ast_{\alpha}$ is a legal large extension of $G$ in the sense of Definition \ref{legal}, i.e.\ there exists an element $g\in G$ such that $gC_1g^{-1}=C_2$ and $\mathrm{ad}(g)_{\vert C_1}=\alpha$, the normalizer $N_G(C_1)$ is non-elementary, and $E_G(N_G(C_1))=C_1$. 
\end{enumerate}\end{te}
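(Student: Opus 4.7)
The plan is to close the chain $(1)\Rightarrow(2)\Rightarrow(3)\Rightarrow(1)$. The first implication is immediate from Definition \ref{elememb}, since any $\forall\exists$-sentence without parameters is in particular an $\exists\forall\exists$-formula with no free variables. The implication $(2)\Rightarrow(3)$ is the "easy" direction essentially outlined in the introduction; I would treat it separately as Proposition \ref{facile} and then focus the bulk of the work on $(3)\Rightarrow(1)$.

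For $(2)\Rightarrow(3)$, each condition of Definition \ref{legal} must be shown to be captured by a $\forall\exists$-sentence (or a simpler one). Non-elementarity of $G$ is the negation of a universal sentence $\forall x\forall y\ [x^N,y^N]=1$. The existence of $g\in G$ with $gC_1g^{-1}=C_2$ and $\mathrm{ad}(g)_{\vert C_1}=\alpha$ (which also forces the conjugacy class count to match after the HNN extension) is existential with parameters naming generators of $C_1,C_2$. Non-elementarity of $N_G(C_1)$ is existential via Corollary \ref{pour plus tard!!!}. Finally, $E_G(N_G(C_1))=C_1$ is $\forall\exists$: using Proposition \ref{Olshanskiy2}, it says that for every finite tuple one names in $N_G(C_1)$ there is an element demonstrating that any putative larger finite normalized subgroup fails to be normalized. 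Once these sentences are in hand, applying them in $G$ and transferring to $\Gamma$ (or vice versa) via the $\forall\exists$-equivalence yields the legal conditions.

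The heart of the proof is $(3)\Rightarrow(1)$. By Remark \ref{sansperte} I normalize so that $\alpha=\mathrm{id}_{C_1}$ and $\Gamma=G\ast_{C_1}=\langle G,t\mid [t,c]=1,\ c\in C_1\rangle$. Let $\theta(\bar v)=\exists\bar x\,\forall\bar y\,\exists\bar z\ \psi(\bar x,\bar y,\bar z,\bar v)$ with parameter tuple $\bar g\in G$. Both directions rely on the same fundamental construction: a sequence of retractions $\pi_n:\Gamma\to G$ with $\pi_{n\vert G}=\mathrm{id}_G$. Such retractions exist because, by the hypothesis $E_G(N_G(C_1))=C_1$ combined with $N_G(C_1)$ non-elementary and Proposition \ref{Olshanskiy2}, the centralizer $Z_G(C_1)$ is non-elementary; one chooses $\pi_n(t)=w_n\in Z_G(C_1)$ where $(w_n)$ is a sequence of elements of infinite order satisfying an $\varepsilon_n$-small cancellation condition (Definition \ref{SCC}) with $\varepsilon_n\to 0$. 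The condition $E_G(N_G(C_1))=C_1$ is precisely what ensures the sequence $(\pi_n)$ is discriminating on any finite subset of $\Gamma$, because it excludes "trapped" finite extensions of $C_1$ collapsing the test sequence. With the $(\pi_n)$ in hand, the direction $\Gamma\models\theta(\bar g)\Rightarrow G\models\theta(\bar g)$ is obtained by applying a single $\pi_n$ (injective enough on the bounded subword set of a witness $\bar x_\Gamma$, controlled via Theorem \ref{eqnoethhyp333}) and using that the inner quantifier is preserved under any homomorphism fixing $G$ and discriminating on a computable ball.

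The converse direction $G\models\theta(\bar g)\Rightarrow\Gamma\models\theta(\bar g)$ is where the real work lies. Assume $\bar x_0\in G$ witnesses $\theta$ in $G$; one must show $\Gamma\models\forall\bar y\,\exists\bar z\ \psi(\bar x_0,\bar y,\bar z,\bar g)$. Suppose for contradiction this fails, so there is a tuple $\bar y_\Gamma\in\Gamma$ with no $\bar z$ satisfying $\psi$ in $\Gamma$. The strategy, in the tradition of Merzlyakov-Sacerdote-Sela, is to produce a \emph{formal solution}, i.e.\ a definable $\Gamma$-word $\bar z(\bar y)$ with $\Gamma\models\psi(\bar x_0,\bar y,\bar z(\bar y),\bar g)$ for all $\bar y$. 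Applying $\pi_n$ to $\bar y_\Gamma$ and using the hypothesis in $G$ yields $\bar z_n\in G$ with $G\models\psi(\bar x_0,\pi_n(\bar y_\Gamma),\bar z_n,\bar g)$. Bundling these into homomorphisms from an auxiliary finitely generated group (generated by $\bar g,\bar x_0$, free variables for $\bar y$, and $t$), Theorem \ref{existencearbre} produces a $G$-limit group $L$ acting on a real tree $T$; Theorem \ref{reinfeldt} says the action has finite tripod stabilizers, finite-by-abelian arc stabilizers, and is $M$-superstable. The Rips machine (Theorem \ref{guirardel1}) decomposes $T$ into axial, Seifert, and simplicial pieces and provides a JSJ-like structure on $L$. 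The shortening argument (Proposition \ref{nashpaulinnash}), applied relative to the finitely generated subgroup containing $\bar g,\bar x_0$, produces modular automorphisms that shorten the homomorphisms, which combined with the descending chain condition (Theorem \ref{chainesela}) terminates the process and yields the formal solution. This $\bar z(\bar y)$, applied to $\bar y_\Gamma$, contradicts the failure assumption.

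The main obstacles I anticipate are twofold. First, making the shortening argument work \emph{relative to} the base parameters $\bar g,\bar x_0$ and in the presence of torsion requires the Reinfeldt--Weidmann generalization and a careful analysis of $\mathrm{Aut}_{\langle \bar g,\bar x_0\rangle}(L)$. Second, the small cancellation condition on the sequence $(w_n)$ must be strong enough to ensure that the limit tree $T$ does not have unwanted elliptic subgroups that would derail the Rips decomposition; this is precisely the technical role of Definition \ref{SCC} and is where the hypothesis $E_G(N_G(C_1))=C_1$ is used once more, to prevent $C_1$ from being absorbed into a strictly larger finite subgroup fixed by $t$.
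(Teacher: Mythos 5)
Your overall architecture coincides with the paper's: $(1)\Rightarrow(2)$ is immediate, $(2)\Rightarrow(3)$ is the easy direction (Proposition \ref{facile}), and $(3)\Rightarrow(1)$ is carried out essentially as you describe — a test sequence of retractions $\varphi_n:\Gamma\to G$ with $\varphi_n(t)=t_n$ a small-cancellation element of the centralizer $C_G(C_1)$ satisfying $M(t_n)=\langle t_n\rangle\times C_1$ (this is where $E_G(N_G(C_1))=C_1$ enters, via Olshanskiy's lemma and quasi-convexity of $C_G(C_1)$), followed by passage to a $G$-limit group, the Rips machine, the relative shortening argument, and the descending chain condition, yielding a retraction of $\Gamma_{\Sigma}$ onto $\Gamma$ that preserves the inequations. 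So the main line is the paper's.

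Two concrete gaps. First, your argument for $(2)\Rightarrow(3)$ as written fails: you invoke sentences "with parameters naming generators of $C_1,C_2$", but $\forall\exists$-equivalence of $G$ and $\Gamma$ only transfers parameter-free sentences, so one cannot name $C_1$, $C_2$ or $\alpha$ when passing between the two groups. The paper's fix is to count: it introduces five parameter-free, $\exists\forall$-expressible invariants (the number $n_1$ of conjugacy classes of finite subgroups, the sum $n_2$ of the $\vert\mathrm{Aut}_G(C_k)\vert$, and the numbers of conjugacy classes of finite $C$ with $N_G(C)$ infinite virtually cyclic, non-elementary, or non-elementary with $E_G(N_G(C))\neq C$), shows each is preserved by $\forall\exists$-equivalence, and derives the three legality conditions from $n_i(G)=n_i(\Gamma)$, $1\leq i\leq 5$. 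Second, your treatment of the direction $\Gamma\models\theta(\bar{g})\Rightarrow G\models\theta(\bar{g})$ by "applying a single $\pi_n$" has a uniformity problem: the index $n$ needed to preserve the inner existential witnesses depends on the universally quantified tuple $\bar{y}$, while the outer witness $\pi_n(\bar{x}_\Gamma)$ must be fixed in advance; this direction is not a one-line consequence of discrimination (the paper's own write-up of Proposition \ref{theorie2} only carries out the direction $G\models\theta(\bar{g})\Rightarrow\Gamma\models\theta(\bar{g})$). Finally, note that the technical core of the hard direction is the dichotomy of Lemma \ref{abovelemma}: when $\Gamma$ is not elliptic in the limit tree one must show, via the transverse-covering and skeleton analysis, that $L$ splits as $U''\ast_{C_1}$ with $G\subset U''$, which is exactly what makes the retraction $r_n(t)=t$, $r_n\vert_{U''}=\rho_n\vert_{U''}$ well defined; your sketch compresses this step into "yields the formal solution", and the per-instance retraction produced here is weaker than a genuine Merzlyakov-type formal solution valid for all $\bar{y}$.
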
 

%In other words, $\Gamma=G\ast_{\alpha}$ is a legal large extension of $G$ in the sense of Definition \ref{legal}.

%Note that the implication $(5)\Rightarrow (1)$ of Theorem \ref{principal} relative to legal HNN extensions is an immediate consequence of Theorem \ref{theorie} above. 

The implication $(1)\Rightarrow (2)$ of Theorem \ref{theorie} is obvious. In order to prove the implication $(2)\Rightarrow (3)$, the first step consists in finding some $\forall\exists$-invariants of hyperbolic groups, i.e.\ some numbers that are preserved by $\forall\exists$-equivalence among hyperbolic groups. 

\begin{de}Let $G$ be a hyperbolic group. We associate to $G$ the following five integers:
\begin{itemize}
\item[$\bullet$]the number $n_1(G)$ of conjugacy classes of finite subgroups of $G$,
\item[$\bullet$]the sum $n_2(G)$ of $\vert \mathrm{Aut}_G(C_k)\vert$ for $1\leq k\leq n_1(G)$, where the $C_k$ are representatives of the conjugacy classes of finite subgroups of $G$, and \[\mathrm{Aut}_G(C_k)=\lbrace \alpha\in \mathrm{Aut}(C_k) \ \vert \ \exists g\in N_G(C_k), \ \mathrm{ad}(g)_{\vert C}=\alpha\rbrace,\]
\item[$\bullet$]the number $n_3(G)$ of conjugacy classes of finite subgroups $C$ of $G$ such that $N_G(C)$ is infinite virtually cyclic,
\item[$\bullet$]the number $n_4(G)$ of conjugacy classes of finite subgroups $C$ of $G$ such that $N_G(C)$ is non-elementary,
\item[$\bullet$]the number $n_5(G)$ of conjugacy classes of finite subgroups $C$ of $G$ such that $N_G(C)$ is non-elementary and $E(N_G(C))\neq C$.
\end{itemize}
\end{de}

One can easily see that these numbers are preserved by elementary equivalence. However, proving that they are preserved by $\forall\exists$-equivalence is a little bit more tedious.

\begin{lemme}Let $G$ and $G'$ be two hyperbolic groups. Suppose that $\mathrm{Th}_{\forall\exists}(G)=\mathrm{Th}_{\forall\exists}(G')$. Then $n_i(G)=n_i(G')$, for $1\leq i\leq 5$.\end{lemme}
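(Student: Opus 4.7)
My plan is to verify, for each $i \in \{1,\ldots,5\}$ and each integer $k \geq 1$, that the assertion ``$n_i(H) \geq k$'' is expressible by a single $\exists\forall$-sentence $\sigma_{i,k}$ in the language of groups, uniformly over all hyperbolic groups $H$. Since $\mathrm{Th}_{\forall\exists}(G) = \mathrm{Th}_{\forall\exists}(G')$ is equivalent by contraposition to $\mathrm{Th}_{\exists\forall}(G) = \mathrm{Th}_{\exists\forall}(G')$, each such $\sigma_{i,k}$ will be preserved between $G$ and $G'$, and evaluating at $k = n_i(G)$ and $k = n_i(G)+1$ will yield $n_i(G) = n_i(G')$.

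First I would set up common quantifier-free ingredients. Let $K := K_G$; the inequality $K_H \leq K$ is a single universal sentence (forbidding a $(K+1)$-element tuple closed under product and inverse and containing $1$), so $K_G = K_{G'}$ is automatic. A finite subgroup of order at most $K$ can be encoded by a tuple $\vec{y} \in H^K$ whose entries, together with $1$, form a subgroup --- a quantifier-free condition. Membership in $\vec y$ and in $N_H(\vec y)$ are quantifier-free, as is the existence of an element $t$ witnessing conjugacy of two such tuples; consequently, non-conjugacy is a universal condition. By Lemma \ref{elementary}, $h \in M(g)$ (for $g$ of infinite order, i.e.\ $g^{K!} \neq 1$) is quantifier-free, and by Corollary \ref{pour plus tard!!!}, ``$\langle g_1, g_2\rangle$ is non-elementary'' is the quantifier-free inequality $[g_1^{K!}, g_2^{K!}] \neq 1$.

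With these gadgets, I would translate each $n_i$. For $n_1$, ``$n_1 \geq k$'' reads $\exists \vec y_1, \ldots, \vec y_k\, \forall t$ (each $\vec y_i$ is a finite subgroup, and no $t$ conjugates $\vec y_i$ to $\vec y_j$ for $i \neq j$), clearly $\exists\forall$. For $n_2$, ``$n_2 \geq k$'' is witnessed by some integer $r \leq k$, pairwise non-conjugate finite subgroups $C_1, \ldots, C_r$, and elements $g_{i,j} \in N_H(C_i)$ inducing pairwise distinct automorphisms on each $C_i$, with total count $\geq k$; this is a genuine lower bound --- we need not enumerate \emph{all} conjugacy classes --- and is $\exists\forall$. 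For $n_3$, ``$N_H(C)$ is infinite virtually cyclic'' unpacks, via Lemma \ref{elementary}, as ``$\exists g \in N_H(C)$ of infinite order, $\forall h \in N_H(C)$, $h \in M(g)$'', itself $\exists\forall$; adjoining non-conjugacy keeps ``$n_3 \geq k$'' $\exists\forall$. For $n_4$, non-elementarity of $N_H(C)$ is purely existential by Corollary \ref{pour plus tard!!!}, so ``$n_4 \geq k$'' is $\exists\forall$ as well.

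The main subtlety is $n_5$, which requires expressing ``$E_G(N_G(C)) \supsetneq C$'' (the reverse inclusion $C \subseteq E_G(N_G(C))$ being automatic from the maximality in Proposition \ref{Olshanskiy}). By Proposition \ref{Olshanskiy2}, for any $c \in H$, the condition $c \in E_H(N_H(C))$ amounts to $c \in M(h)$ for every $h \in N_H(C)$ of infinite order, which by Lemma \ref{elementary} is a \emph{universal} condition on $c$. Thus ``$\exists c \notin C$ with $c \in E_H(N_H(C))$'' is $\exists\forall$, and combining with the $\exists\forall$ translation of non-elementarity and the universal non-conjugacy conditions for $C_1, \ldots, C_k$, the sentence ``$n_5 \geq k$'' is itself $\exists\forall$. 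This realizes all five $\sigma_{i,k}$ as $\exists\forall$-sentences and concludes the argument.
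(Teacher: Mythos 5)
Your proposal is correct and follows essentially the same route as the paper: both express each condition $n_i(H)\geq k$ by an explicit $\exists\forall$-sentence (using the quantifier-free characterizations of finite order, of membership in $M(g)$, and of non-elementarity via $[g^{K!},h^{K!}]\neq 1$) and then transfer these sentences between $G$ and $G'$. The only difference is cosmetic and concerns $n_5$: you encode $E_H(N_H(C))\supsetneq C$ through Olshanskiy's formula $E_H(N_H(C))=\bigcap_{h}M(h)$, which makes membership in $E$ a universal condition, whereas the paper instead asserts the existence of a finite subgroup $F'\supsetneq F$ normalized by everything that normalizes $F$; the two encodings are equivalent.
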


As usual, we denote by $K_G$ the maximal order of a finite subgroup of $G$. Since $G$ and $G'$ have the same existential theory, we have $K_G=K_{G'}$.

\begin{proof}Let $n\geq 1$ be an integer. If $n_1(G)\geq n$, then the following $\exists\forall$-sentence, written in natural language for convenience of the reader and denoted by $\theta_{1,n}$, is satisfied by $G$: there exist $n$ finite subgroups $F_1,\ldots ,F_n$ of $G$ such that, for every $g\in G$ and $1\leq i\neq j\leq n$, the groups $gF_ig^{-1}$ and $F_j$ are distinct. Since $G$ and $G'$ have the same $\exists\forall$-theory, the sentence $\theta_{1,n}$ is satisfied by $G'$ as well. As a consequence, $n_1(G')\geq n$. It follows that $n_1(G')\geq n_1(G)$. By symmetry, we have $n_1(G)=n_1(G')$.

\smallskip

In the rest of the proof, we give similar sentences $\theta_{2,n},\ldots,\theta_{5,n}$ such that the following series of implications hold: $n_i(G)\geq n \Rightarrow G$ satisfies $\theta_{i,n} \Rightarrow$ $G'$ satisfies $\theta_{i,n}\Rightarrow n_i(G')\geq n$.

\smallskip

If $n_2(G)\geq n$, then $G$ satisfies $\theta_{2,n}$: there exist $\ell$ finite subgroups $F_1,\ldots ,F_{\ell}$ of $G$ and a finite subset $\lbrace g_{i,j}\rbrace_{1\leq i\leq \ell, \ 1\leq j\leq n_i}$ of $G$, with $n_1+\cdots+n_{\ell}=n$, such that for every $g\in G$ and $1\leq p\neq q\leq \ell$, the groups $gF_pg^{-1}$ and $F_q$ are distinct, and for every $1\leq i\leq \ell$, we have:
\begin{itemize}
\item[$\bullet$]for every $1\leq j\leq n_i$, the element $g_{i,j}$ normalizes $F_i$,
\item[$\bullet$]and for every $1\leq j\neq k\leq n_i$, the automorphisms $\mathrm{ad}(g_j)_{\vert F_i}$ and $\mathrm{ad}(g_k)_{\vert F_i}$ of $F_i$ are distinct.
\end{itemize}

\smallskip

If $n_3(G)\geq n$, then $G$ satisfies $\theta_{3,n}$: there exist $n$ finite subgroups $F_1,\ldots ,F_n$ of $G$ such that, for every $g\in G$ and $1\leq i\neq j\leq n$, the groups $gF_ig^{-1}$ and $F_j$ are distinct, and for every $1\leq i\leq n$ and $g,h\in N_G(F_i)$, we have $[g^{K_G!},h^{K_G!}]=1$.

\smallskip

If $n_4(G)\geq n$, then $G$ satisfies $\theta_{4,n}$: there exist $n$ finite subgroups $F_1,\ldots ,F_n$ of $G$ and a finite subset $\lbrace g_{i,j}\rbrace_{1\leq i\leq n, \ 1\leq j\leq 2}$ of $G$ such that, for every $g\in G$ and $1\leq p\neq q\leq n$, the groups $gF_pg^{-1}$ and $F_q$ are distinct, and for every $1\leq i\leq n$, we have:
\begin{itemize}
\item[$\bullet$]$g_{i,1}$ and $g_{i,2}$ normalize $F_i$,
\item[$\bullet$]and $[g_{i,1}^{K_G!},g_{i,2}^{K_G!}]\neq 1$.
\end{itemize}

\smallskip

If $n_5(G)\geq n$, then $G$ satisfies $\theta_{5,n}$: there exist $2n$ finite subgroups $F_1,\ldots ,F_n$ and $F'_1,\ldots,F'_n$ of $G$ and a finite subset $\lbrace g_{i,j}\rbrace_{1\leq i\leq n, \ 1\leq j\leq 2}$ of $G$ such that, for every $g\in G$ and $1\leq p\neq q\leq n$, the groups $gF_pg^{-1}$ and $F_q$ are distinct, and for every $1\leq i\leq n$, we have:
\begin{itemize}
\item[$\bullet$]$g_{i,1}$ and $g_{i,2}$ normalize $F_i$,
\item[$\bullet$]$[g_{i,1}^{K_G!},g_{i,2}^{K_G!}]\neq 1$,
\item[$\bullet$]for every $h\in G$, if $h$ normalizes $F_i$ then $h$ normalizes $F'_i$,
\item[$\bullet$]and $F_i$ is strictly contained in $F'_i$.
\end{itemize}

This concludes the proof of the lemma.\end{proof}

As an application, we prove the implication $(2)\Rightarrow (3)$ of Theorem \ref{theorie}.

\begin{prop}[Implication $(2)\Rightarrow (3)$ of Theorem \ref{theorie}]\label{facile}We keep the same notations as in Theorem \ref{theorie}. If $\mathrm{Th}_{\forall\exists}(G)=\mathrm{Th}_{\forall\exists}(\Gamma)$, then $\Gamma$ is a legal large extension of $G$.\end{prop}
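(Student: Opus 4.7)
The plan is to verify the three conditions of Definition \ref{legal} using the invariants $n_1,\ldots,n_5$ from the preceding lemma (all preserved by $\forall\exists$-equivalence) together with basic Bass-Serre theory of $\Gamma=G\ast_{\alpha}$.

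For condition (1), I first observe that every finite subgroup of $\Gamma$ is elliptic in the Bass-Serre tree, hence conjugate into $G$; combined with $n_1(G)=n_1(\Gamma)$, the natural map from $G$-conjugacy classes to $\Gamma$-conjugacy classes of finite subgroups is a bijection. Since $C_1$ and $C_2$ are conjugate in $\Gamma$ via $t$, this yields some $h\in G$ with $hC_1h^{-1}=C_2$. Set $\beta:=\mathrm{ad}(h^{-1})\circ\alpha\in\mathrm{Aut}(C_1)$; then $\beta=\mathrm{ad}(h^{-1}t)|_{C_1}\in\mathrm{Aut}_{\Gamma}(C_1)$. Because $\mathrm{Aut}_G(C)\subseteq\mathrm{Aut}_{\Gamma}(C)$ for every class (paired via the above bijection) and the sums $n_2(G)=n_2(\Gamma)$ agree, these inclusions are termwise equalities; in particular $\beta\in\mathrm{Aut}_G(C_1)$, realized by some $g'\in N_G(C_1)$, and $g:=hg'$ witnesses condition (1). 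I then invoke Remark \ref{sansperte} to replace the stable letter by $\tau:=g^{-1}t$, so that $\tau$ centralizes $C_1$ and $\Gamma=G\ast_{C_1}$ with identity gluing.

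For condition (2), suppose for contradiction $N_G(C_1)$ is elementary. Since $\tau\in N_{\Gamma}(C_1)$ has infinite order, $N_{\Gamma}(C_1)$ is always infinite; and by Lemma \ref{diédral2}, case (1), applicable because after normalization the two copies of $C_1$ coincide in $G$, it is infinite virtually cyclic iff $N_G(C_1)=C_1$. A three-way case split (finite $\to$ virtually cyclic, finite $\to$ non-elementary, or virtually cyclic $\to$ non-elementary) shows that $[C_1]$ must jump to a strictly larger normalizer category. The monotonicity $N_G(C)\subseteq N_{\Gamma}(C)$ for every finite $C$ prevents any reverse migration, so at least one of $n_3$ or $n_4$ strictly increases, contradicting their preservation. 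Hence $N_G(C_1)$ is non-elementary.

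For condition (3), assume (1) and (2) and suppose for contradiction $E:=E_G(N_G(C_1))\supsetneq C_1$. The conceptual heart is a Bass-Serre computation: any finite $E'\supseteq C_1$ in $\Gamma$ normalized by $N_{\Gamma}(C_1)$ is in particular normalized by $\tau$, so $\mathrm{Fix}(E')\subseteq\mathrm{Fix}(C_1)$ in the Bass-Serre tree is a nonempty $\tau$-invariant subtree and hence contains the axis of $\tau$ (the line $(\tau^n G)_{n\in\mathbb{Z}}$); therefore $E'\subseteq G\cap \tau G\tau^{-1}=C_1$, proving $E_{\Gamma}(N_{\Gamma}(C_1))=C_1$. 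Thus $[C_1]$ contributes to $n_5(G)$ but not to $n_5(\Gamma)$. The same Bass-Serre argument applied to any $C'\subseteq C_1$ gives $E_{\Gamma}(N_{\Gamma}(C'))\subseteq C_1\subseteq G$, which is then a finite subgroup of $G$ normalized by $N_G(C')$, so $E_{\Gamma}(N_{\Gamma}(C'))\subseteq E_G(N_G(C'))$; and for $C'$ not conjugate to a subgroup of $C_1$, $\mathrm{Fix}(C')$ is a single vertex and the pair $(N_{\Gamma}(C'),E_{\Gamma}(N_{\Gamma}(C')))$ is $\Gamma$-conjugate to $(N_G(C'),E_G(N_G(C')))$. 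In all cases no conjugacy class enters $n_5(\Gamma)\setminus n_5(G)$, so $n_5(\Gamma)<n_5(G)$, contradicting preservation. The main technical point is this Bass-Serre bookkeeping: everything hinges on Remark \ref{sansperte} allowing $\tau$ to \emph{centralize}, not merely normalize, $C_1$, so that the axis of $\tau$ lies in the fixed set of $C_1$ and pins down $E_{\Gamma}(N_{\Gamma}(C_1))$ exactly.
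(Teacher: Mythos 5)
Your proof is correct and follows essentially the same route as the paper: both arguments deduce the three conditions of Definition \ref{legal} from the preservation of the invariants $n_1,\ldots,n_5$ under $\forall\exists$-equivalence. You supply some Bass--Serre details the paper leaves implicit (notably the verification that $E_{\Gamma}(N_{\Gamma}(C_1))=C_1$ and the ``no reverse migration'' bookkeeping for $n_3,n_4,n_5$), and your direction of the final inequality, $n_5(\Gamma)<n_5(G)$, is the correct one.
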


\begin{proof}Thanks to the previous lemma, we know that $n_i(G)=n_i(\Gamma)$ for $1\leq i\leq 5$. Recall that $\Gamma=\langle G,t \ \vert \ \mathrm{ad}(t)_{\vert C_1}=\alpha\rangle$, where $\alpha : C_1 \rightarrow C_2$ is an isomorphism. The equality $n_1(G)=n_1(\Gamma)$ implies that $gC_1g^{-1}=C_2$ for some $g\in G$. It follows that $g^{-1}t$ induces an automorphism of $C_1$. 

Then, observe that for every finite subgroup $F$ of $G$, we have $\vert \mathrm{Aut}_{\Gamma}(F)\vert \geq\vert \mathrm{Aut}_{G}(F)\vert$. Thus, the equality $n_2(G)=n_2(\Gamma)$ guarantees that $\vert \mathrm{Aut}_{\Gamma}(C_1)\vert =\vert \mathrm{Aut}_{G}(C_1)\vert$. Hence, since $\mathrm{ad}(g^{-1}t)_{\vert C_1}$ belongs to $\mathrm{Aut}_{\Gamma}(C_1)$, there exists an element $g'\in N_G(C_1)$ such that $\mathrm{ad}(g')_{\vert C_1}=\mathrm{ad}(g^{-1}t)_{\vert C_1}$. Therefore, $gg'\in G$ induces by conjugacy the same automorphism of $C_1$ as $t$, which proves that the first condition of Definition \ref{legal} holds. 

The equalities $n_3(G)=n_3(\Gamma)$ and $n_4(G)=n_4(\Gamma)$ ensure that $N_G(C_1)$ is non-elementary. Indeed, if $N_G(C_1)$ were finite, then $N_{\Gamma}(C_1)$ would be infinite virtually cyclic and $n_3(\Gamma)$ would be at least $n_3(G)+1$; similarly, if $N_G(C_1)$ were infinite virtually cyclic, then $N_{\Gamma}(C_1)$ would be non-elementary and $n_4(\Gamma)\geq n_4(G)+1$. Hence, the second condition of Definition \ref{legal} is satisfied. 

Lastly, it follows from the fact that $n_5(G)=n_5(\Gamma)$ that $E_G(N_G(C_1))=C_1$, otherwise $n_5(\Gamma)\geq n_5(G)+1$, since $E_{\Gamma}(N_{\Gamma}(C_1))=C_1$. Thus, the third condition of Definition \ref{legal} holds. As a conclusion, $\Gamma$ is a legal large extension of $G$.\end{proof}

We shall now prove the difficult part of Theorem \ref{theorie}, namely the following result.

\begin{prop}[Implication $(3)\Rightarrow (1)$ of Theorem \ref{theorie}]\label{theorie2}We keep the same notations as in Theorem \ref{theorie}. If $\Gamma$ is a legal large extension of $G$, then the inclusion of $G$ into $\Gamma$ is a $\exists\forall\exists$-elementary embedding.\end{prop}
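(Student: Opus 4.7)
The plan is to generalize Sacerdote's key lemma from \cite{Sac73} to the torsion setting, following the framework of test sequences, Bestvina-Paulin limits, Rips machine decomposition, and the torsion-aware shortening argument from \cite{RW14}. By Remark \ref{sansperte} I may assume $\alpha=\mathrm{id}_C$, so that $\Gamma=\langle G,t\mid tct^{-1}=c,\ \forall c\in C\rangle$. Using that $N:=N_G(C)$ is non-elementary together with $E_G(N)=C$ and Proposition \ref{Olshanskiy2}, I extract an element $h\in Z_G(C)$ of infinite order (after passing to a suitable power, since $N/Z_G(C)$ embeds into the finite group $\mathrm{Aut}(C)$) satisfying the $\varepsilon$-small cancellation condition of Definition \ref{SCC} for arbitrarily small $\varepsilon>0$. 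For each sufficiently large $n$ one then has a well-defined retraction
\[
\rho_n:\Gamma\longrightarrow G,\qquad \rho_n|_G=\mathrm{id}_G,\qquad \rho_n(t)=h^n,
\]
and the sequence $(\rho_n)$ is discriminating on $\Gamma$, in the sense that $\rho_n(w)\neq 1$ whenever $w\in\Gamma\setminus\{1\}$ and $n$ is large enough.

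The core technical step is to prove that every $\forall\exists$-sentence (with parameters allowed in $G$) true in $G$ remains true in $\Gamma$. Fix $\theta=\forall\bm x\exists\bm y\,\psi(\bm x,\bm y)$ satisfied by $G$ and, arguing by contradiction, suppose there exists $\bm\gamma\in\Gamma^p$ with no witness $\bm\gamma'\in\Gamma^q$ for $\psi(\bm\gamma,\bm\gamma')$. Set $\bm g_n=\rho_n(\bm\gamma)\in G$; since $G\models\theta$, pick $\bm g'_n\in G^q$ with $\psi(\bm g_n,\bm g'_n)$. Let $H$ be the subgroup of $\Gamma\ast F_q$ generated by $\bm\gamma$ and the free basis of $F_q$, and let $\sigma_n:H\to G$ restrict to $\rho_n$ on $\Gamma$ and send the basis of $F_q$ to $\bm g'_n$. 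Apply the Bestvina-Paulin method (Theorem \ref{existencearbre}) to $(\sigma_n)$ to obtain a $G$-limit quotient $L$ acting on a real tree $T$ which, by Theorem \ref{reinfeldt}, has finite tripod stabilizers, finite-by-abelian arc stabilizers, finite unstable-arc stabilizers, and is superstable.

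The Rips machine (Theorem \ref{guirardel1}) then decomposes $T$ into simplicial, axial and Seifert-type pieces, and the shortening argument (Proposition \ref{nashpaulinnash}) allows one, after modifying each $\sigma_n$ by suitable elements of the appropriate relative automorphism group of $L$, to assume that the sequence is short. The $\varepsilon$-small cancellation property of $h$ forces the image of $t$ to act hyperbolically on $T$ with an axis transverse to the $G$-invariant subtrees, while the legality condition $E_G(N)=C$ rules out axial vertex groups, Seifert components interacting non-trivially with $C$, and tripod stabilizers strictly larger than $C$, thereby forcing $T$ to reflect only the canonical HNN decomposition $\Gamma=G\ast_C$. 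Combining this with the descending chain condition for $G$-limit groups (Theorem \ref{chainesela}) and equational noetherianity (Theorem \ref{eqnoethhyp333}), one extracts from the sequence $(\bm g_n,\bm g'_n)$ a bona fide witness $\bm\gamma'\in\Gamma^q$ satisfying $\psi(\bm\gamma,\bm\gamma')$, contradicting the standing assumption. This yields $G\preceq_{\forall\exists}\Gamma$ for all $\forall\exists$-formulas with parameters in $G$.

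Both directions of $\exists\forall\exists$-elementary embedding then follow. For $(\Rightarrow)$, any witness in $G$ to the outer $\exists$ quantifier produces a $\forall\exists$-sentence with $G$-parameters, which transfers to $\Gamma$ by the previous step. For $(\Leftarrow)$, starting from a witness $\bm\beta\in\Gamma$, I argue that $\rho_n(\bm\beta)\in G$ witnesses the same $\forall\exists$-statement in $G$ for all sufficiently large $n$; a failure gives a sequence of ``bad'' instances $\bm b_n\in G$ which, together with the forced $\Gamma$-witnesses, feed back into the same Bestvina-Paulin, Rips-machine and shortening machinery and produce a contradiction. The main obstacle throughout is the real-tree analysis in the previous paragraph: the legality hypothesis $E_G(N)=C$ must be deployed at every stage of the Rips decomposition to exclude pathological components, and the passage from the torsion-free framework of \cite{Sac73,Sel09} to the framework of \cite{RW14} introduces substantial technical bookkeeping around finite subgroups, their normalizers and their centralizers.
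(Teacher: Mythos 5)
Your overall architecture (test sequence of retractions, Bestvina--Paulin limit, Rips machine, torsion shortening argument, descending chain condition) matches the paper's, but there is a fatal flaw at the very first step: the sequence $\rho_n(t)=h^n$ for a single fixed $h\in Z_G(C)$ of infinite order is \emph{not} discriminating on $\Gamma$. Indeed $[t,h]\neq 1$ in $\Gamma=G\ast_C$ by Britton's lemma (since $h\notin C$), yet $\rho_n([t,h])=[h^n,h]=1$ for every $n$. Consequently $\Gamma$ does not embed into the limit group $L$, and no retraction of $L$ onto $\Gamma$ can produce the desired witnesses. The same choice also destroys the real-tree analysis you rely on: $M(h^n)=M(h)$ for all $n$, so $\bigcap_n M(\rho_n(t))=M(h)\supsetneq C$, the limit axis of $t$ is stabilized by all of $M(h)$ rather than by $C$, and the limit tree is not the Bass--Serre tree of $G\ast_C$. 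Relatedly, a fixed $h$ does not satisfy the $\varepsilon$-small cancellation condition of Definition \ref{SCC} for arbitrarily small $\varepsilon$, and even for $h^n$ the condition ``$u\in M(h^n)$ implies $u$ commutes with $h^n$'' requires $M(h)=\langle h\rangle\times C$, which must be arranged via Olshanskiy's lemma and is not automatic.

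This is exactly where the hypotheses ``$N_G(C)$ non-elementary'' and ``$E_G(N_G(C))=C$'' must be used, and the paper uses them to build a genuinely generic sequence: by \cite{Ol93} one finds $a\in C_G(C)$ of infinite order with $M(a)=\langle a\rangle\times C$, then a second element $b$ so that $\langle a,b,C\rangle\simeq C\times(\langle a\rangle\ast\langle b\rangle)$ is quasi-convex, and one sets $t_n=a^nba^{n+1}b\cdots a^{2n}b$. These $t_n$ are not proper powers modulo $C$, satisfy $M(t_n)=\langle t_n\rangle\times C$ and $\varepsilon_n$-small cancellation with $\varepsilon_n\to 0$, and crucially $\bigcap_n M(t_n)=C$; this is what makes the sequence discriminating and forces the minimal $\Gamma$-subtree of the limit tree to be the Bass--Serre tree of $G\ast_C$ with edge stabilizer exactly $C$. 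Without replacing your $h^n$ by such a sequence, the remainder of your argument (transversality of the translates of $T_\Gamma$, the graph-of-actions decomposition, and the construction of the retraction giving the witness $\bm\gamma'$) cannot be carried out. A secondary, smaller issue: you should work with the full quotient $\Gamma_\Sigma=\langle\Gamma,\bm y\mid\Sigma(\bm\gamma,\bm y,\bm g)=1\rangle$ rather than the subgroup of $\Gamma\ast F_q$ generated by $\bm\gamma$ and the basis of $F_q$, since the retraction must be defined on an overgroup of all of $\Gamma$.
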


First, we define a notion of test sequences adapted to our context.

%\begin{rque}In fact, we shall prove a stronger result: if $\theta(\bm{g})$ is a $\exists\forall\exists$-sentence with constants $\bm{g}\in G^n$, and if $G\models \theta(\bm{g})$, then $\Gamma\models \theta(\bm{g})$. In other words, the inclusion of $G$ into $\Gamma$ is $\exists\forall\exists$-elementary. In particular, $\mathrm{Th}_{\forall\exists}(G)\subset \mathrm{Th}_{\forall\exists}(\Gamma)$ and $\mathrm{Th}_{\exists\forall}(G)\subset \mathrm{Th}_{\exists\forall}(\Gamma)$, i.e.\ $\mathrm{Th}_{\forall\exists}(G)= \mathrm{Th}_{\forall\exists}(\Gamma)$. See Proposition \ref{plusgeneral}.\end{rque}

\subsection{Test sequences}

\begin{de}\label{suitetest}Let $G$ be a non-elementary hyperbolic group, and let \[\Gamma = \langle G,t \ \vert \ \mathrm{ad}(t)_{\vert C}=\mathrm{id}_C\rangle\] be a legal extension of $G$ over a finite subroup $C$. Let $(\varphi_n : \Gamma \rightarrow G)_{n\in\mathbb{N}}$ be a sequence of homomorphisms. For every integer $n$, let $t_n:=\varphi_n(t)$. The sequence $(\varphi_n)_{n\in\mathbb{N}}$ is called a test sequence if the following conditions hold:
\begin{enumerate}
\item for every $n$, the morphism $\varphi_n$ is a retraction, i.e.\ $\varphi_n(g)=g$ for every $g\in G$;
\item the translation length $\vert \vert t_n\vert \vert$ of $t_n$ goes to infinity when $n$ goes to infinity;
\item there exists a sequence $(\varepsilon_n)_{n\in\mathbb{N}}$ converging to $0$ such that, for every $n$, the element $t_n$ satisfies the $\varepsilon_n$-small cancellation condition (see Definition \ref{SCC}), and $M(t_n)=\langle t_n\rangle\times C$. Therefore, the image of $t_n$ in $M(t_n)/C$ has not root.
\end{enumerate}
\end{de}

\begin{rque}Note that any subsequence of a test sequence is a test sequence as well.
\end{rque}

The following easy lemma will be useful in the sequel.

\begin{lemme}\label{intersection}Let $(\varphi_n)_{n\in\mathbb{N}}$ be a test sequence. For every infinite subset $A\subset \mathbb{N}$, we have \[\bigcap_{n\in A}M(t_n)=C.\] 
\end{lemme}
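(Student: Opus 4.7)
The plan is to prove the two inclusions separately, with the nontrivial direction handled via translation lengths.

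First I would observe the easy inclusion $C \subset \bigcap_{n \in A} M(t_n)$: by definition of a test sequence we have $M(t_n) = \langle t_n \rangle \times C$ for every $n$, so in particular $C \subset M(t_n)$ for each $n \in A$.

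For the converse inclusion, fix $g \in \bigcap_{n \in A} M(t_n)$. Then for every $n \in A$ we can write $g = t_n^{k_n} c_n$ for some $k_n \in \mathbb{Z}$ and $c_n \in C$, and since the product $\langle t_n\rangle \times C$ is direct, $t_n$ and $c_n$ commute. Set $m = \mathrm{lcm}\{|c| : c \in C\}$, so $c_n^m = 1$ for all $n$. Then $g^m = t_n^{k_n m}$, and passing to translation lengths (which are positively homogeneous under powers) gives $m \cdot \|g\| = \|g^m\| = |k_n| \, m \, \|t_n\|$, i.e.\ $\|g\| = |k_n| \cdot \|t_n\|$ for every $n \in A$.

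Since $g$ is a fixed element of $G$, the quantity $\|g\|$ is a fixed finite real number, whereas by the defining property of a test sequence $\|t_n\| \to \infty$ along $\mathbb{N}$ (and hence along the infinite subset $A$). Thus $|k_n| \to 0$, so $k_n = 0$ for all sufficiently large $n \in A$. Picking any such $n$ gives $g = c_n \in C$, completing the proof. The only step requiring even minor care is checking that one may extract $\|g\| = |k_n| \cdot \|t_n\|$ from the direct-product structure of $M(t_n)$; the rest is immediate from the definitions, so I do not expect any real obstacle here.
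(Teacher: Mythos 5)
Your proof is correct and follows essentially the same route as the paper's: write $g=t_n^{k_n}c_n$ using $M(t_n)=\langle t_n\rangle\times C$, and use $\vert\vert t_n\vert\vert\to\infty$ to force $k_n=0$ for large $n\in A$. The only (harmless) imprecision is the exact identity $\vert\vert g^m\vert\vert=\vert k_n\vert\, m\,\vert\vert t_n\vert\vert$: the minimal displacement $\inf_{x}d(x,gx)$ is homogeneous under powers only up to a bounded additive error in a $\delta$-hyperbolic space (exact homogeneity holds for the stable translation length), but the resulting lower bound $\vert\vert t_n^{k_nm}\vert\vert\geq \vert k_n\vert\, m\,(\vert\vert t_n\vert\vert-K\delta)$ still tends to infinity when $k_n\neq 0$, so the argument goes through unchanged.
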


\begin{proof}Suppose that $g$ belongs to $M(t_n)$ for every $n\in A$. Then, there exists an integer $k_n$ and an element $c_n\in C$ such that $g=t_n^{k_n}c_n$, for every $n\in A$. Now, observe that $k_n$ must be equal to $0$ for every $n$ large enough, otherwise (up to extracting a subsequence) $\vert\vert t_n^{k_n}\vert\vert$ goes to infinity, and so does the constant $\vert\vert g\vert\vert$, which is a contradiction. It follows that $g$ belongs to $C$.\end{proof}

The following result is well-known, however we are not aware of a reference in the literature.

\begin{lemme}\label{quasiconvfini}Let $G$ be a $\delta$-hyperbolic group, and let $C$ be a finite subgroup of $G$. Then the centralizer $C_G(C)$ of $C$ is quasi-convex in $G$.\end{lemme}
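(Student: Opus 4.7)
The plan is to verify quasi-convexity directly: for any $g \in C_G(C)$ and any vertex $p$ on a geodesic $[1,g]$ in the Cayley graph $X$ of $G$, I will bound $d_X(p, C_G(C))$ by a constant depending only on $C$, $G$, and $\delta$. Set $D := \max_{c \in C} d_X(1, c)$.

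First, I would establish a fellow-traveling estimate. Fix $c \in C$. Because $g$ commutes with $c$, one has $d_X(1, c) \leq D$ and $d_X(g, cg) = d_X(1, g^{-1}cg) = d_X(1,c) \leq D$, so the two geodesics $[1,g]$ and $c \cdot [1,g] = [c, cg]$ have endpoints at pairwise distance $\leq D$. The standard $\delta$-hyperbolic fellow-traveling estimate for geodesics with close endpoints then produces a constant $K_1 = K_1(D, \delta)$ such that for every vertex $p$ on $[1, g]$, the corresponding point $cp \in [c, cg]$ satisfies $d_X(cp, p) \leq K_1$. Equivalently, $p^{-1} c p \in B_X(1, K_1)$ for every $c \in C$, so the conjugation homomorphism $\phi_p : C \to G,\ c \mapsto p^{-1}cp$ takes values in the (finite) ball $B_X(1, K_1)$.

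Next I would run a pigeonhole argument. Since $B_X(1, K_1)$ is finite and $C$ is finite, the set $\Phi$ of homomorphisms $C \to G$ with image in $B_X(1, K_1)$ is finite. The key observation is that if $\phi_p = \phi_q$, then $qp^{-1}$ centralizes every $c \in C$, so $qp^{-1} \in C_G(C)$; i.e., $p$ and $q$ lie in the same right coset of $C_G(C)$. For each $\phi \in \Phi$ whose fiber $\{g \in G : \phi_g = \phi\}$ is nonempty, pick a representative $q_\phi$ of the corresponding coset, and set $R := \max_{\phi \in \Phi} d_X(1, q_\phi) < \infty$. Then for any vertex $p$ on a geodesic $[1,g]$ with $g \in C_G(C)$, writing $p = h \cdot q_{\phi_p}$ for some $h \in C_G(C)$ yields $d_X(p, h) = d_X(1, q_{\phi_p}) \leq R$. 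Hence $C_G(C)$ is $R$-quasi-convex.

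The main obstacle is the fellow-traveling estimate of the first step, where one must carefully use $\delta$-hyperbolicity to translate the closeness of endpoints into a bound on $d_X(cp, p)$ that is uniform in $p$; once this is in hand, the finiteness and pigeonhole steps are routine and do not require any further hyperbolic geometry.
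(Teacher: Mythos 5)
Your proof is correct, but it takes a different route from the paper's. The paper argues one element at a time: for $x$ on a geodesic $[1,h]$ with $h\in C_G(c)$, it invokes the short-conjugator theorem for torsion elements (Proposition 2.3 of Bridson--Howie, cited as \cite{BH05}) to produce $y$ with $x^{-1}cx=ycy^{-1}$ and $|y|\leq 2|c|+R(\delta,|S|)$, so that $xy\in C_G(c)$ is close to $x$; this shows each $C_G(c)$ is quasi-convex, and the full centralizer $C_G(C)=\bigcap_{c\in C}C_G(c)$ is then quasi-convex because finite intersections of quasi-convex subgroups are quasi-convex (Short's theorem). You instead treat the whole finite subgroup at once: the same fellow-traveling estimate shows the conjugation map $\phi_p\colon c\mapsto p^{-1}cp$ lands in a fixed finite ball, and a pigeonhole argument on the finitely many such homomorphisms (noting that $\phi_p=\phi_q$ forces $qp^{-1}\in C_G(C)$) bounds the distance from $p$ to $C_G(C)$. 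This is essentially Short's intersection argument applied directly, and it buys you self-containedness: you need neither the bounded-conjugator theorem nor the quasi-convex-intersection lemma, only the elementary fellow-traveling estimate for geodesics with close endpoints, which you correctly flag as the one piece of hyperbolic geometry required. The trade-off is that your quasi-convexity constant $R$ is non-effective (it comes from choosing coset representatives in nonempty fibers), whereas the paper's constant $K$ is explicit in $\delta$, $|S|$ and $|c|$; for the purposes of this lemma that makes no difference.
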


\begin{proof}
Let $c\in C$, and let $h\in C_G(c)$. Let $x$ be a vertex on the geodesic $[1,h]$ in the Cayley graph of $G$, for a given finite generating set $S$ of $G$. Let $d_S$ be the induced metric on $G$. Since the elements $c$ and $x^{-1}cx$ are conjugate, there exists an element $y$ such that $x^{-1}cx=ycy^{-1}$ and $d_S(1,y) \leq 2d_S(1,c) +R(\delta,\vert S\vert)=:K$ according to Proposition 2.3 in \cite{BH05}, where $R(\delta,\vert S\vert)$ is a constant that only depends on $\delta$ and $\vert S\vert$. Observe that $xy$ centralizes $c$, and that $d_S(x,xy)=d_S(1,y)\leq K$. This shows that $C_G(c)$ is $K$-quasi-convex in $G$. Now, recall that the intersection of two quasi-convex subgroups of a hyperbolic group is still quasi-convex (see for instance \cite{Sho91}). Hence, since $C$ is finite, the intersection $C_G(C)=\cap_{c\in C}C_G(c)$ is quasi-convex in $G$.\end{proof}

We now build a test sequence.

\begin{prop}\label{jesaispas}Let $G$ be a non-elementary hyperbolic group, and let \[\Gamma = \langle G,t \ \vert \ \mathrm{ad}(t)_{\vert C}=\mathrm{id}_C\rangle\] be a legal extension of $G$ over a finite subroup $C$. Then, there exists a test sequence $(\varphi_n : \Gamma \rightarrow G)_{n\in\mathbb{N}}$.\end{prop}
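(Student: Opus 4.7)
My plan is to build $t_n := \varphi_n(t)$ inside the centralizer $C_G(C)$, since the only relation in the presentation $\Gamma = \langle G, t \mid tct^{-1}=c, \ c\in C\rangle$ beyond those of $G$ is that $t$ centralizes $C$. Setting $\varphi_n\vert_G = \mathrm{id}_G$ and $\varphi_n(t)=t_n$ then automatically produces a retraction $\varphi_n\colon\Gamma\to G$, so the whole task reduces to exhibiting a sequence $(t_n)\subset C_G(C)$ whose translation lengths tend to infinity, whose small cancellation constants $\varepsilon_n$ tend to $0$, and for which $M(t_n) = \langle t_n\rangle\times C$.

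Before the construction I would establish two preparatory facts about $H := C_G(C)$. First, $H$ is non-elementary: it has finite index in $N_G(C)$ (as $N_G(C)/C_G(C)$ injects into $\mathrm{Aut}(C)$), and $N_G(C)$ is non-elementary by the legality of the extension; moreover $H$ is quasi-convex in $G$ by Lemma~\ref{quasiconvfini}, hence hyperbolic in its own right. Second, I would show $E_G(H) = C$. The inclusion $C\subset E_G(H)$ is immediate since $C$ is central in $H$. For the reverse, one observes that $H$ is normal in $N_G(C)$; a direct calculation then shows that $E_G(H)$ is invariant under conjugation by $N_G(C)$, so $E_G(H)$ is a finite subgroup of $G$ normalized by $N_G(C)$. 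The legality hypothesis $E_G(N_G(C))=C$ then forces $E_G(H)\subset C$. Combining this with Proposition~\ref{Olshanskiy2} gives $\bigcap_{h\in H^{0}} M(h) = C$, and by the descending chain condition on finite subgroups, one can pick finitely many $h_1,\dots,h_k\in H$ of infinite order with $\bigcap_{i=1}^k M(h_i) = C$.

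The heart of the proof is the construction of the $t_n$. Since $H$ is a non-elementary hyperbolic subgroup, one may find hyperbolic elements $a,b\in H$ with $M(a)\neq M(b)$. Using the standard small cancellation machinery in hyperbolic groups (Olshanskii, Delzant, Coulon; see Section~\ref{Coulon}), I would form words of the shape $t_n = a^{n}b^{n}a^{2n}b^{2n}\cdots a^{kn}b^{kn}$ (or any analogous long, ``aperiodic'' word in $a$ and $b$) for which $\lVert t_n\rVert\to\infty$, $t_n$ is not a proper power (in the sense of $M(t_n)$), and the $\varepsilon_n$-small cancellation condition holds with $\varepsilon_n\to 0$. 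The small cancellation condition forces $t_n$ to be central in $M(t_n)$, which already rules out the dihedral case: $M(t_n)$ is of cyclic type, so $M(t_n)= \langle t_n\rangle\cdot F_n$ where $F_n$ is the maximal finite normal subgroup of $M(t_n)$. Since $t_n\in C_G(C)$, $C\subset F_n$; to promote this to equality I would insert the $h_i$ above into the word $t_n$ in such a way that any finite subgroup of $M(t_n)$ is forced to centralize high powers of each $h_i$, so it lies in $\bigcap_i M(h_i)=C$. The ``no root mod $C$'' condition is then secured by the aperiodicity of the chosen word.

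The main obstacle I expect is the simultaneous control of the three conditions, especially the equality $M(t_n)=\langle t_n\rangle\times C$. Obtaining arbitrarily small cancellation and large translation length is standard in hyperbolic groups, but ensuring that no extraneous finite subgroup sneaks into $M(t_n)$ requires using the identity $E_G(H)=C$ in a delicate way: the construction of $t_n$ has to incorporate elements $h_1,\ldots,h_k$ whose intersection of maximal virtually cyclic envelopes is $C$, and then a uniform lower bound on $\lVert t_n\rVert$ together with the small cancellation geometry must be exploited to conclude that any finite subgroup fixing $\{t_n^+, t_n^-\}$ already fixes each pair $\{h_i^+, h_i^-\}$, hence lies in $\bigcap_i M(h_i) = C$. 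This is the step where techniques of the shortening-argument/test-sequence tradition of Sela and Reinfeldt--Weidmann will enter.
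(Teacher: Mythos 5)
Your overall route is the same as the paper's: work inside $H=C_G(C)$, use Lemma~\ref{quasiconvfini} to get quasi-convexity, build long words $t_n$ in a rank-two free subgroup of $H$, and define $\varphi_n$ as the retraction sending $t$ to $t_n$. Your preparatory observations are sound (in particular the argument that $E_G(H)=C$ because $H\trianglelefteq N_G(C)$ forces $E_G(H)$ to be normalized by $N_G(C)$, hence contained in $E_G(N_G(C))=C$). But the proof has a genuine gap exactly where you locate the "main obstacle": the verification of the $\varepsilon_n$-small cancellation condition and of $M(t_n)=\langle t_n\rangle\times C$ is the entire content of the proposition, and you only assert it via "standard small cancellation machinery" plus a plan to "insert the $h_i$ into the word". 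That plan is not carried out, and as stated it is harder than necessary: controlling which finite subgroups centralize high powers of several independent elements $h_1,\dots,h_k$ simultaneously inside a single word is precisely the kind of delicate bookkeeping you would need to write down, and nothing in your sketch explains how the overlap $\Delta(t_n,gt_ng^{-1})>\varepsilon_n\|t_n\|$ gets converted into membership of $g$ in $\bigcap_i M(h_i)$.

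The paper's device that collapses this difficulty is worth noting. Since the extension is legal, $E_G(N_G(C))=C$, and Olshanskiy's Lemma 3.3 produces a \emph{single} element $a\in C_G(C)$ of infinite order with $M(a)=\langle a\rangle\times C$ already; one then chooses $b$ via a result of Champetier so that $\langle a,b,C\rangle\simeq C\times(\langle a\rangle\ast\langle b\rangle)$ is quasi-convex, and takes $t_n=a^nba^{n+1}b\cdots a^{2n}b$. A large overlap of the quasi-axes of $t_n$ and $gt_ng^{-1}$ then forces a long common subpath labelled by a power of $a$, whence $g\in M(a)=\langle a\rangle\times C\subset\langle a,b\rangle\times C$; after that the problem lives in the Cayley tree of the free group $\langle a,b\rangle$, where distinct cyclic conjugates of $a^nba^{n+1}b\cdots a^{2n}b$ overlap in at most $4n-2$ letters, giving both the small cancellation constant $\varepsilon_n\sim 1/n$ and $M(t_n)=\langle t_n\rangle\times C$ in one stroke. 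Two further points your write-up omits and which this reduction needs: (i) the quasi-convexity and freeness of $\langle a,b\rangle$ itself (not just of $C_G(C)$ in $G$), without which the length estimates computed in the free group do not transfer to $G$ and $\|t_n\|\to\infty$ is not justified; and (ii) a reason why $t_n$ has no proper root modulo $C$, which in the paper again comes from the aperiodic shape of the specific word. To complete your proof you would either have to import the paper's choice of $a$ with $M(a)=\langle a\rangle\times C$ (which makes your auxiliary elements $h_i$ unnecessary), or actually execute the insertion-of-$h_i$ scheme with explicit overlap estimates.
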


\begin{proof}We proceed in two stages.
\begin{enumerate}
\item First, we define a subgroup $ \langle a, b \rangle $ of the centralizer $ C_G (C) $ of $C$ in $G$ that is free of rank 2 and quasi-convex in $ G $.
\item Then, we build a sequence of elements $ (t_n)_{n\in\mathbb{N}} $ that satisfies the $ (1 / n)$ -small cancellation condition in the free group $ \langle a, b \rangle $, and check that, for every $n$, the retraction $ \varphi_n: \Gamma \twoheadrightarrow G: t\mapsto t_n $ is well-defined and has the expected properties.
\end{enumerate}

Recall that $E_G(N_G(C))=C$. So, by \cite{Ol93} Lemma 3.3, there exists an element $a\in C_G(C)$ of infinite order such that $M(a)=\langle a \rangle \times C$ in $ G $. Recall that $N_G(C)$ is non-elementary, by definition of a legal large extension. As a finite-index subgroup of $N_G(C)$, the centralizer $C_G(C)$ is non-elementary as well. By \cite{Cha12} Corollary I.1.9, there exists an element $b\in C_G(C)$ of infinite order such that the subgroup of $C_G(C)$ generated by $\lbrace a,b,C\rbrace$ is quasi-convex in $C_G(C)$ and isomorphic to $\langle a,C\rangle\ast_C\langle b,C\rangle = C\times (\langle a\rangle\ast \langle b\rangle)$. Hence, the subgroup $\langle a, b \rangle$ is quasi-convex in $C_G(C) $, and free of rank 2. By Lemma \ref{quasiconvfini}, the centralizer $ C_G (C) $ of $C$ is quasi-convex in $ G $. Thus, the free group $ \langle a, b \rangle $ is quasi-convex in $ G $ as well. For any integer $n\geq 0 $, we set $t_n=a^n b a^{n+1} b\cdots a^{2n} b$. Let $\varphi_n: \Gamma\twoheadrightarrow G$ be the retraction defined by $\varphi_n (t) = t_n $ and $\varphi_n (g) = g $ for all $g\in G $, which is well-defined since $t_n$ centralizes $C$. We will prove that $(\varphi_n)_{n\in\mathbb{N}}$ is a test sequence. Since the definition of a test sequence is clearly invariant by a change of choice of a finite generating set of $G$, let us consider a convenient finite generating set $ S $ of $ G $ that contains the two elements $a$ and $b$ introduced above. Let $ (X, d) $ denote the Cayley graph of $ G $ for this generating set. Let $ \tau_n $ be the path of $ X $ that links $ 1 $ to $ t_n $ and is labeled with the word $ t_n $ in $ a $ and $ b $, and consider the bi-infinite path $\overline{\tau}_n=\cup_{k\in\mathbb{Z}} t_n^k\tau_n$. A standard argument shows that $\overline{\tau}_n$ is a quasi-geodesic in $(X,d)$, for some constants that do not depend on $n$. Consequently, $\overline{\tau}_n$ lies in the $\lambda$-neighborhood of $A(t_n)$ for some constant $\lambda\geq 0$ independent from $n$. Similarly, let $\alpha$ be the edge of $X$ linking $1$ to $a$, let $\overline{\alpha}$ denote the quasi-geodesic $\overline{\alpha}=\cup_{k\in\mathbb{Z}} a^k\alpha$ and let $\mu$ be a constant such that $\overline{\alpha}$ lies in the $\mu$-neighborhood of $A(a)$.

Let $d'$ denote the metric in the free group $ \langle a, b \rangle $ for the generating set $ \lbrace a, b \rbrace $. Since $t_n$ is cyclically reduced in $\langle a,b\rangle$,  $\vert\vert t_n\vert\vert_{d'}= d'(1, t_n) \sim (3/2) n^2 $. Since $ \langle a, b \rangle $ is quasi-convex in $ G $ by construction, there is a constant $ R> 0 $ such that $ \vert\vert t_n\vert\vert \geq Rn^ 2 $ for all $ n $ large enough.

It remains to prove the third condition of Definition \ref{suitetest}. Classically, since the element $ a $ has infinite order, there exists a constant $ N \geq 0 $ such that, for every element $ g \in G $, if $ \Delta (a, gag^{-1}) \geq N $, then $ g $ belongs to $M(a)=\langle a\rangle\times C$ (see paragraph \ref{Coulon}). Let $n_0$ be an integer such that $ Rn_0 $ is large compared to $N'=N+204\delta+2\lambda+2\mu$. We will show that for every $ n \geq n_0 $, the element $ t_n $ satisfies the $( 1 / n) $-small cancellation condition. Let $ n $ be an integer greater than $ n_0 $. Consider an element $ g \in G $ such that $\Delta(t_n,gt_ng^{-1})\geq \vert\vert t_n\vert\vert/n$. We will show that $ g $ belongs to the subgroup $\langle t_n\rangle\times C$.

We first show that $ g $ belongs to the subgroup $\langle a,b\rangle\times C$. Since \[\Delta(t_n,gt_ng^{-1})\geq \vert\vert t_n\vert\vert/n\geq Rn\geq Rn_0 >> N',\] we can choose two subpaths $\nu_n$ and $\mu_n$ of $\overline{\tau}_n$ and $g\overline{\tau}_n$ respectively, of length $ N' $ and labeled by $ a^{N'} $, such that $\mathrm{diam}((\nu_n)^{+(100\delta+\lambda)}\cap(\mu_n)^{+(100\delta+\lambda)})\geq N'$. Denoting by $ x_n $ and $ y_n $ the initial points of $ \nu_n $ and $ \mu_n $ respectively, we have $\mathrm{diam}(x_n\overline{\alpha}^{+(100\delta+\lambda)}\cap y_n\overline{\alpha}^{+(100\delta+\lambda)})\geq N'$. It follows that 
$\mathrm{diam}(A(a)^{+(100\delta+\lambda+\mu)}\cap x_n^{-1}y_nA(a)^{+(100\delta+\lambda+\mu)})\geq N'$. By Lemma 2.13 in \cite{Cou13}, we have:
{\small
\begin{align*}
\Delta(a,{x_n^{-1}y_n}a{x_n^{-1}y_n}^{-1})& \geq \mathrm{diam}(A(a)^{+(100\delta+\lambda+\mu)}\cap x_n^{-1}y_nA(a)^{+(100\delta+\lambda+\mu)}) - (204\delta+2\lambda+2\mu) \\
 & \geq N' - (204\delta+2\lambda+2\mu) =N.
\end{align*}}
So $ x_n ^ {-1} y_n $ belongs to $M(a)=\langle a\rangle\times C$. Now, observe that $ x_n $ is a word in $ a $ and $ b $ since it is on the quasi-geodesic $ \overline {\tau}_n $. Similarly, $ y_n $ can be written as $ y_n = gz_n $ with $ z_n $ a word in $ a $ and $ b $. It follows that $ g $ belongs to the subgroup $ \langle a, b \rangle \times C $.

Up to replacing $g$ with $gc$ for some $c \in C$, we can now assume that $g$ belongs to the free group $\langle a,b\rangle$. This does not affect the condition $\Delta(t_n,gt_ng^{-1})\geq \vert\vert t_n\vert\vert/n$; indeed, $gct_n{gc}^{-1}=gt_ng^{-1}$, since $ t_n $ centralizes $ C $. Recall in addition that the group $ \langle a, b \rangle $ is quasi-isometrically embedded into $ G $. Denoting by $ A> 0 $ and $ B \geq 0 $ two constants such that \[\frac{1}{A}d'(x,y)-B \leq d(x,y)\leq Ad'(x,y)+B\] for all $ x, y \in \langle a, b \rangle \times C $, we can verify that the following inequality holds, in the Cayley graph $ Y $ of the free group $ \langle a, b \rangle $ equipped with the distance $ d'$:\[\mathrm{diam}\left((\overline{\tau}_n)^{+(A(100\delta+B)+1)}\cap (g\overline{\tau}_n)^{+(A(100\delta+B)+1)}\right)\geq d'(1,t_n)/(2A^2n).\] Since the Cayley graph of $ \langle a, b \rangle $ is a tree, this inequality tells us that the axes of $ t_n $ and $ gt_ng^{-1} $ have an overlap of length larger than $4n-2$ in this tree.

To conclude, let us observe that two distinct cyclic conjugates of $a^nba^{n+1}b\cdots a^{2n}b$ have at most their first $ 4n-2 $ letters in common. Thus, if the axes of $ t_n $ and $ gt_ng^{-1} $ have a common subsegment in $ Y $ of length $> 4n-2 $, then $ t_n $ and $ gt_ng^{-1} $ have the same axis, so $ t_n $ and $ g $ have a common root. Now, observe that $ t_n $ has no root. It follows that $ g $ is a power of $ t_n $, which concludes the proof.\end{proof}

% Il y a deux possibilités : 1)$ axe(t^g)=axe(t)$, 2) le chevauchement est petit, de taille donnée par la petite simplification. On a ensuite besoin de préciser le cas 1 : si t n'a pas de racine, alors g est une puissance de t.

Let $G$ be a non-elementary hyperbolic group. Consider $\Gamma=\langle G, t \ \vert \ \mathrm{ad} (t) _ {\vert C} = \mathrm{id}_C \rangle$ a legal extension of $G$, and $(\varphi_n: \Gamma \rightarrow G)_{n\in\mathbb{N}}$ a test sequence. Let $\Gamma'$ be a finitely generated overgroup of $\Gamma$. Suppose that each $\varphi_n$ extends to a homomorphism $\widehat{\varphi}_n : \Gamma'\rightarrow G$. Let $L$ be the quotient of $\Gamma'$ by the stable kernel of the sequence $(\widehat{\varphi}_n)_{n\in\mathbb{N}}$, and $ r: \Gamma' \twoheadrightarrow L $ the associated epimorphism. Since $G$ is equationally noetherian (according to \cite{Sel09} and \cite{RW14} Corollary 6.13), there exists (for $n$ sufficiently large) a unique homomorphism $\rho_n : L\rightarrow G $ such that $\varphi_n=\rho_n\circ r$. Let $\lambda_n={\mathrm{max}}_{s\in S} \ d(1,\rho_n(s))$ be the displacement of $\rho_n$, where $S$ is a finite generating set of $L$ containing the image of $t$ in $L$. Let $(X,d)$ denote a Cayley graph of $G$, and consider the rescaled metric $d_n=d/\lambda_n$. Last, let $\omega$ be a non-principal ultrafilter and let $(X_{\omega},d_{\omega})$ be the ultralimit of $((X,d_n))_{n\in\mathbb{N}}$. By Theorem \ref{existencearbre}, $X_{\omega}$ is a real tree and there exists a unique minimal $L$-invariant non-degenerate subtree $T_L\subset X_{\omega}$. Moreover, some subsequence of the sequence $((X,d/\lambda_n))_{n\in\mathbb{N}}$ converges to $T_L$ in the Gromov-Hausdorff topology.

\begin{lemme}\label{lemme2}If $\Gamma$ does not fix a point in $T_L$, then the minimal subtree $ T_{\Gamma} $ is isometric to the Bass-Serre tree $T'$ of the splitting $ G \ast_C $, up to rescaling the metric on $T'$.\end{lemme}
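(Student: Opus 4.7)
The plan is to identify $T_\Gamma$ with the Bass--Serre tree $T'$ of the splitting $\Gamma=G\ast_C$ by locating a vertex $v\in T_L$ fixed by $G$, showing $t$ acts hyperbolically with axis through $v$, and matching edge stabilizers. First, I would show $G$ is elliptic and $v:=[1]\in X_\omega$ is fixed by $G$: because each $\varphi_n$ is a retraction, $\rho_n\circ r$ restricts to the identity on $G$, so the displacement $d(1,\rho_n(g))$ is bounded independently of $n$ by the word length of $g$, and after rescaling by $\lambda_n\to\infty$ the ultralimit fixes $v$ under $G$. For $t$, the construction in Proposition~\ref{jesaispas} produces $t_n=a^nba^{n+1}b\cdots a^{2n}b$ as a cyclically reduced word in the quasi-convex free subgroup $\langle a,b\rangle$, forcing $\vert\vert t_n\vert\vert\asymp d(1,t_n)$ with constants independent of $n$. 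Since the basepoint of $X$ lies at uniformly bounded distance from the quasi-axis of $t_n$, in the ultralimit $v$ lies on the axis $A(t)$ of $t$. Under the hypothesis that $\Gamma$ does not fix a point of $T_L$, $t$ cannot be elliptic at $v$, so its translation length $\ell:=\lim_\omega \vert\vert t_n\vert\vert/\lambda_n$ is strictly positive (using that $t$ can be arranged to dominate the displacements among generators of $L$).

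Next I would compute the relevant stabilizers. The legal-extension relation $\mathrm{ad}(t)_{\vert C}=\mathrm{id}_C$ means $[t,c]=1$ for every $c\in C$, so $C$ fixes $v$ (since $C\subset G$) and $tv$ (since $c\cdot tv=tcv=tv$), hence fixes $[v,tv]$; by commutation with $t$ and equivariance, $C$ pointwise fixes all of $A(t)$. The crucial no-folding property is that for every $g\in G\setminus C$, $g\cdot A(t)\cap A(t)=\{v\}$. At finite level, such $g$ does not belong to $M(t_n)=\langle t_n\rangle\times C$: a relation $g=t_n^kc$ with $k\neq 0$ would force $d(1,g)\to\infty$, contradicting that $g$ is fixed, while $k=0$ gives $g\in C$. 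The $\varepsilon_n$-small cancellation condition therefore gives $\Delta(t_n,gt_ng^{-1})\leq\varepsilon_n\vert\vert t_n\vert\vert$; dividing by $\lambda_n\asymp\vert\vert t_n\vert\vert$ and letting $\varepsilon_n\to 0$, the overlap of $g\cdot A(t)$ and $A(t)$ in $T_L$ has length zero, and since both contain $v$ the intersection reduces to $\{v\}$.

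Finally, the universal property of the Bass--Serre tree yields a unique $\Gamma$-equivariant map $f: T'\to T_L$ sending the base vertex $v_0$ to $v$ and the base edge $[v_0,tv_0]$ to $[v,tv]$; this is well defined because $\mathrm{Stab}_{T'}(v_0)=G$ fixes $v$ and $\mathrm{Stab}_{T'}([v_0,tv_0])=C$ fixes $[v,tv]$. The no-folding property at $v_0$ propagates by $\Gamma$-equivariance to every vertex, so $f$ is an isometric embedding once $T'$ is rescaled so each edge has length $\ell$. Minimality of $T_\Gamma\subseteq T_L$ forces $f(T')=T_\Gamma$, yielding the desired isometry. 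The main obstacle is the no-folding step, since it requires carefully translating the finite-scale small cancellation estimates for $t_n$ in $X$ into a clean limit statement about intersections of axes in $T_L$; the rest is routine Bass--Serre theory once the stabilizers are pinned down.
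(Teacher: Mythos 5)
Your proof is correct and follows essentially the same route as the paper: ellipticity of $G$ at $(1)_\omega$, hyperbolicity of $t$ with axis through that point, an equivariant map from the Bass--Serre tree, and a no-folding argument that converts a non-degenerate overlap of translates of the axis into the inequality $\Delta(t_n,gt_ng^{-1})\geq\varepsilon_n\vert\vert t_n\vert\vert$, forcing $g\in M(t_n)$ for all large $n$ and hence $g\in C$ by Lemma \ref{intersection}. The only (cosmetic) difference is that you establish the transversality of translates of $A(t)$ directly via the contrapositive of the small cancellation condition, whereas the paper assumes a folding and derives a contradiction.
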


%Since $(\rho_n)$ is discriminating, $\Gamma$ embeds into $L$ via $r_{\vert \Gamma}$. Henceforth, $\Gamma$ is viewed as a subgroup of $L$ and we do not mention the monomorphism $r_{\vert \Gamma}$ anymore. 

\begin{proof}Suppose that $\Gamma$ does not fix a point of $T_L$, and let us prove that $ T_{\Gamma} $ is isometric to the Bass-Serre tree $T'$ of the splitting $G\ast_C$. Observe that $G$ is elliptic in $T_L$, by the first assumption of Definition \ref{suitetest}. More precisely, the point $x:=(1)_{\omega}\in T_L$ is fixed by $G$. Note that $\vert \vert t_n\vert\vert/\lambda_n$ does not approach 0 as $n$ goes to infinity, otherwise $\Gamma$, which is generated by $G$ and $t$, would be elliptic in $T_L$. Hence, $t$ acts hyperbolically on $T_{\Gamma}$. In addition, the translation axis of $t$ contains $x$. Therefore, up to rescaling the metric on $T_{\Gamma}$, there exists a simplicial map $ f: T'\rightarrow T_{\Gamma}$ that is isometric in restriction to the axis of $t$. In order to prove that $f$ is an isometric embedding, let us prove that there is no folding. Note that the surjectivity is automatically satisfied because $T_{\Gamma}$ is minimal. Assume towards a contradiction that there is a folding at the vertex $ v \in T '$ fixed by $G$. Let $w$ and $w'$ denote two vertices of $ T' $ adjacent to $ v $ such that $f([v,w])\cap f([v,w'])$ is non-degenerate. One can assume without loss of generality that $w=tv$ (up to translating $w$ by an element of $G$ and replacing $t$ with $t^{- 1}$). Since $tv$ and $t^{- 1}v$ are on the axis of $t$, and since $ f $ is isometric on this axis, $f([v, tv])\cap f([v, t^{- 1} v])=\lbrace f(v)\rbrace $. Therefore, the vertex $ w '$ is of the form $ gtv $ or $ gt^{- 1} v $, for some element $ g\in G $ that does not belong to $ C $ (indeed, if $ g \in C $, then $ gtv = tv $ and $ gt^{- 1} v = t^{- 1} v $). Thus, the axes of $ t $ and $ gtg^{-1} $ have an overlap $ I $ of length $> 0 $ in the limit tree $ T $. It follows that \[\Delta(t_n,{\rho_n(g)}t_n{\rho_n(g)}^{-1})\geq \eta \lambda_n\] for every $ n $ large enough, for some $\eta>0$. But $\lambda_n/\vert\vert t_n\vert\vert$ is greater than $1$, and $(\varepsilon_n)_{n\in\mathbb{N}}$ approaches $0$ when $n$ goes to infinity. Thus, for $n$ large enough, we have: \[\Delta(t_n,{\rho_n(g)}t_n{\rho_n(g)}^{-1})\geq \eta \lambda_n \geq \varepsilon_n \vert \vert t_n\vert \vert.\]Then, since $t_n$ satisfies the $\varepsilon_n$-small cancellation condition, the element $\rho_n(g)=g$ belongs to $M(t_n)$. By Lemma \ref{intersection}, $g$ belongs to $C$, which is a contradiction. Hence, $T_{\Gamma}$ is isometric to $T'$.\end{proof}

\begin{co}\label{disc}Every test sequence is discriminating.\end{co}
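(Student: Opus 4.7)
The plan is to apply Lemma~\ref{lemme2} in the special case $\Gamma' = \Gamma$ (with $\widehat{\varphi}_n = \varphi_n$), and then exploit the fact that $\varphi_n$ is a retraction onto $G$. Since discrimination can be checked along any subsequence, I may pass to a stable subsequence (still a test sequence, by the remark following Definition~\ref{suitetest}) and consider the stable kernel $K$ of $(\varphi_n)$, so that $L = \Gamma/K$. The goal is to establish $K = \{1\}$.

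First I would verify the hypothesis of Lemma~\ref{lemme2}, namely that $\Gamma$ does not fix a point of $T_L$. Take as generating set of $L$ the image under $r$ of a finite generating set $S_G \cup \{t\}$ of $\Gamma$. For $s \in S_G$ the displacement $d(1,\rho_n(r(s))) = d(1,s)$ is bounded independently of $n$, while $\rho_n(r(t)) = t_n$ has both word length $d(1,t_n)$ and translation length $\|t_n\|$ of order $n^2$, by the construction in the proof of Proposition~\ref{jesaispas}. Hence for $n$ large, $\lambda_n = d(1,t_n)$ and $\|t_n\|/\lambda_n$ stays bounded away from $0$, so $t$ acts loxodromically on $T_L$ and, in particular, $\Gamma$ fixes no point.

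Lemma~\ref{lemme2} then identifies $T_\Gamma$ equivariantly (up to rescaling) with the Bass-Serre tree $T'$ of the splitting $\Gamma = G*_C$. Since $K = \ker(r)$ and $L$ acts on $T_L$, the group $K$ acts trivially on $T_L$, hence trivially on $T_\Gamma \cong T'$. But the kernel of the $\Gamma$-action on its Bass-Serre tree is the intersection of all vertex stabilizers, which is contained in the stabilizer $G$ of the base vertex; so $K \subseteq G$. On the other hand, $K \cap G = \{1\}$, because $\varphi_n|_G = \mathrm{id}_G$ forces any $g \in K \cap G$ to satisfy $g = \varphi_n(g) = 1$. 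Together these give $K = \{1\}$, which is precisely discrimination.

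The main technical point is the verification in the first step that $t$ genuinely translates in the limit tree, i.e.\ that $\lambda_n$ is not of strictly larger order than $\|t_n\|$. This ultimately rests on the quasi-convex embedding of $\langle a,b\rangle$ in $G$ from Proposition~\ref{jesaispas}, which guarantees that the translation length of $t_n$ in $G$ is comparable to (and not asymptotically dominated by) its word length; everything after this, including the identification $K \subseteq G$ via the Bass-Serre tree, is then a formal consequence of Lemma~\ref{lemme2} and the retraction property.
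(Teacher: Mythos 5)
Your overall strategy coincides with the paper's: apply Lemma \ref{lemme2} with $\Gamma'=\Gamma$, identify $T_{\Gamma}$ equivariantly with the Bass-Serre tree of $G\ast_C$, and use the retraction property to handle elliptic elements. Your packaging ("$K$ acts trivially on $T_{\Gamma}\cong T'$, hence $K\subseteq G$, hence $K=\lbrace 1\rbrace$") is a clean equivalent of the paper's element-by-element case analysis, and the reduction to stable subsequences at the start is the right way to pass from "trivial stable kernel" to "discriminating" — though you should spell it out: if discrimination fails, some $g\neq 1$ is killed along an infinite subsequence, and a stable sub-subsequence of that is again a test sequence having $g$ in its stable kernel.

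The one genuine problem is your verification that $\Gamma$ does not fix a point of $T_L$. You argue that $d(1,t_n)$ and $\|t_n\|$ are both of order $n^2$ "by the construction in the proof of Proposition \ref{jesaispas}", and your closing paragraph says the whole argument "ultimately rests on the quasi-convex embedding of $\langle a,b\rangle$". But the corollary asserts that \emph{every} test sequence is discriminating, and Definition \ref{suitetest} only requires $\|t_n\|\to\infty$; it gives no control of the word length $d(1,t_n)$ in terms of the translation length $\|t_n\|$, so for an arbitrary test sequence the estimate "$\|t_n\|/\lambda_n$ bounded away from $0$" is unjustified, and as written your proof covers only the particular test sequence built in Proposition \ref{jesaispas}. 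Fortunately the claim you actually need is much weaker and is free: since $\lambda_n\geq d(1,t_n)\geq\|t_n\|\to\infty$, Theorem \ref{existencearbre} applies and $T_L$ is the \emph{minimal non-degenerate} $L$-invariant subtree; as the image of $\Gamma$ in $L$ is all of $L$, it cannot fix a point of $T_L$ (a global fixed point would force the minimal invariant subtree to be a single point). The hyperbolicity of $t$, i.e.\ the fact that $\|t_n\|/\lambda_n$ does not tend to $0$, is then established inside the proof of Lemma \ref{lemme2} as a consequence of this non-ellipticity — it is not an input you need to supply.
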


\begin{proof}
Let $G$ be a non-elementary hyperbolic group. Consider $\Gamma=\langle G, t \ \vert \ \mathrm{ad} (t) _ {\vert C} = \mathrm{id}_C \rangle$ a legal extension of $G$, and $(\varphi_n: \Gamma \rightarrow G)_{n\in\mathbb{N}}$ a test sequence. By taking $\Gamma'=L=\Gamma$ in the previous lemma, $T_{\Gamma}$ is isometric to the Bass-Serre tree $T'$ of the splitting $\Gamma=G\ast_C$. Let $\gamma\in \Gamma$ be a non-trivial element. 

If $\gamma$ belongs to a conjugate of $G$, then $\rho_n(\gamma)\neq 1$ for every $n$ since $\rho_n$ is the identity on $G$. 

If $\gamma$ does not belong to a conjugate of $G$, i.e.\ if $\gamma$ is not elliptic in the splitting $T'$, then it acts hyperbolically on $T$, because $T'$ and $T$ are isometric. Thus, $\rho_n(\gamma)$ is non-trivial for infinitely many $n$ (otherwise $\gamma$ would be elliptic). It remains to prove that $\rho_n(\gamma)$ is non-trivial for every $n$ large enough. Assume towards a contradiction that some infinite subsequence $(\rho_{f(n)})$ kills $\gamma$ for every $n$. Applying the previous argument to $(\rho_{f(n)})_{n\in\mathbb{N}}$ instead of $(\rho_n)_{n\in\mathbb{N}}$, we get a contradiction. Hence, the sequence of morphisms $(\rho_n)_{n\in\mathbb{N}}$ is discriminating.\end{proof}

%Let $ x $ and $ y $ be two distinct points of $ I $, distant from $ \eta> 0 $. Up to extracting a subsequence from $X^{\mathbb{N}}$, there are two sequences $ (x_n) $ and $ (y_n) $ in $ X^{\mathbb{N}} $ converging to $ x $ and $ y $ respectively, such that $ x_n $ and $ y_n $ are on the quasi-axis of $ t_n $ in $ X $. Since $ d_{\omega} (x, y) = \eta $, we have $ d_n (x_n, y_n) \rightarrow \eta $, so $ d (x_n, y_n) \thicksim \eta \lambda_n $. Similarly, there are two sequences of points $ (x_n') $ and $ (y_n') $ on the quasi-axis of $ t_n^g $ in $ X $ converging to $ x $ and $ y $ respectively. Recall that the quasi-axes of $ t^n $ and $ t_n^g $ are $ 11 \delta $-quasi-convex (see Section \ref {Coulon}), where $ \delta $ denotes the constant of hyperbolicity of $ X $. By considering the geodesic quadrangle $ (x_n, y_n, y'_n, x'_n) $, one can see that \[\mathrm{diam}\left(A(t_n)^{+23\delta}\cap A(t_n^g)^{+23\delta}\right)\geq (\eta/2) \lambda_n\geq \varepsilon_n \vert\vert t_n\vert\vert\] for every $ n $ large enough, since $\vert\vert t_n\vert\vert\sim \lambda_n$, where $(\varepsilon_n)$ denotes the sequence of real numbers tending to $0$ associated with the test sequence $ (\varphi_n) $. Thus, $\Delta(t_n,t_n^g)\geq \varepsilon_n \vert \vert t_n\vert \vert $ for every $ n $ large enough. It follows from 

\begin{co}\label{lemme22}With the same notations and the same hypotheses as in Lemma \ref{lemme2}, $T_{\Gamma} $ is transverse to its translates, i.e.\ for every $ h \in L \setminus \Gamma $, $hT_{\Gamma}\cap T_{\Gamma}$ is at most one point. In addition, if $e$ is an edge of $T_{\Gamma}$, there are only finitely many branch points on $e$ in $T_L$.\end{co}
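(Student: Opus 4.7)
The plan is to argue by contradiction, adapting the folding argument from the proof of Lemma~\ref{lemme2}. Suppose there exists $h\in L\setminus\Gamma$ such that $I:=hT_{\Gamma}\cap T_{\Gamma}$ contains a non-degenerate arc. Since $T_{\Gamma}$ is isometric to the Bass--Serre tree of $G\ast_{C}$ by Lemma~\ref{lemme2}, $I$ is contained in some edge $e$ of $T_{\Gamma}$. Using the transitivity of $\Gamma$ on edges of $T_{\Gamma}$ we may, after replacing $h$ by $\gamma h$ for a suitable $\gamma\in\Gamma$ (which preserves the hypothesis $h\notin\Gamma$), assume that $I\subset [x,tx]$ with $x=(1)_{\omega}$. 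Similarly, $I$ is contained in an edge $hg'[x,tx]$ of $hT_{\Gamma}$ for some $g'\in\Gamma$. Consequently, the translation axes of $t$ and $(hg')\,t\,(hg')^{-1}$ in $T_{L}$ overlap in an arc of length at least $|I|$.

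\textbf{Transferring the overlap via small cancellation.} By Lemma~\ref{lemme2}, $t$ acts hyperbolically on $T_{\Gamma}$, so its translation length $\ell>0$ is positive and $\|t_{n}\|/\lambda_{n}\to\ell$. The Gromov--Hausdorff convergence provided by Theorem~\ref{existencearbre} then implies that, for every sufficiently large $n$, the axes of $t_{n}$ and of $\rho_{n}(hg')\,t_{n}\,\rho_{n}(hg')^{-1}$ have an overlap of length at least $c\,\|t_{n}\|$ for some constant $c>0$ depending only on $|I|$ and $\ell$. Since $\varepsilon_{n}\to 0$, eventually $\varepsilon_{n}<c$, and the $\varepsilon_{n}$-small cancellation condition satisfied by $t_{n}$ (Definition~\ref{suitetest}) yields $\rho_{n}(hg')\in M(t_{n})=\langle t_{n}\rangle\times C$. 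Write $\rho_{n}(hg')=t_{n}^{k_{n}}c_{n}$ with $k_{n}\in\mathbb{Z}$ and $c_{n}\in C$.

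\textbf{Extracting a constant subsequence and concluding.} The translation length of $\rho_{n}(hg')$ in the rescaled metric $d/\lambda_{n}$ converges to the translation length of $hg'$ in $T_{L}$, which is finite; since this rescaled translation length equals $|k_{n}|\,\|t_{n}\|/\lambda_{n}\to|k_{n}|\,\ell$ up to bounded error, the sequence $(k_{n})$ is bounded. Using that $C$ is finite, we may extract an infinite set $A\subset\mathbb{N}$ on which $(k_{n},c_{n})=(k,c)$ is constant. Then for all $n\in A$, $\rho_{n}(hg'\cdot c^{-1}t^{-k})=1$, so $hg'\cdot c^{-1}t^{-k}$ lies in the stable kernel of $(\rho_{n})_{n\in\mathbb{N}}$, which is trivial in $L$ by definition of $\rho_{n}$. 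Hence $hg'=t^{k}c\in\Gamma$, forcing $h\in\Gamma$ — the desired contradiction. I expect this to be the main technical step; the principal subtlety is ensuring that the overlap in $T_{L}$ transfers to a quantitatively large overlap in the Cayley graph (which is why we need $\|t_{n}\|$ and $\lambda_{n}$ to be comparable, and why the hyperbolicity of $t$ in $T_{L}$ from Lemma~\ref{lemme2} is used).

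\textbf{Finitely many branch points on $e$.} Once transversality is established, the family $\{lT_{\Gamma}\}_{l\Gamma\in L/\Gamma}$ is a $L$-invariant family of non-degenerate subtrees of $T_{L}$, pairwise intersecting in at most one point, and covering the minimal $L$-invariant subtree (which is $T_{L}$). The action of $L$ on $T_{L}$ has finite orbits of branch points and finite orbits of directions at branch points, thanks to Theorem~\ref{guilev} applied via Remark~\ref{rquereinfeldt}. If $e$ contained infinitely many branch points, pigeonhole would yield infinitely many of them in a common $L$-orbit $Lp_{0}$, say $p_{i}=l_{i}p_{0}$. For $i\neq j$, the element $l_{i}l_{j}^{-1}$ maps $p_{j}\in e$ to $p_{i}\in e$; since the pointwise stabilizer of $e$ in $\Gamma$ (a conjugate of $C$) acts trivially on $e$, one must have $l_{i}l_{j}^{-1}\notin\Gamma$, and transversality then forces $(l_{i}l_{j}^{-1})T_{\Gamma}\cap T_{\Gamma}=\{p_{i}\}$. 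Applying the same pigeonhole to outgoing directions at the $p_{i}$, using the finiteness of orbits of directions from Theorem~\ref{guilev}, yields a contradiction with the transverse covering structure, completing the proof.
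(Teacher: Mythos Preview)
Your argument is essentially the paper's, with two small imprecisions. First, the intersection $I=hT_\Gamma\cap T_\Gamma$ is a subtree, not necessarily contained in a single edge; you should say that some non-degenerate subarc of $I$ lies in an edge (and likewise for $hT_\Gamma$), which is all you use. Second, you conclude by saying $hg'\,c^{-1}t^{-k}$ lies in the stable kernel of $(\rho_n)$, but you have only shown $\rho_n(hg'\,c^{-1}t^{-k})=1$ for $n$ in an infinite subset $A$, not for all large $n$. What you need is that $(\rho_n)$ is \emph{discriminating} on $L$; this holds because $L$ is by construction the quotient of $\Gamma_\Sigma$ by the stable kernel of a \emph{stable} sequence, so any non-trivial element of $L$ survives under $\rho_n$ for all large $n$. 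With these fixes, your proof of transversality matches the paper's.

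\textbf{Branch points.} Here your argument diverges from the paper and does not reach a contradiction. Two issues. First, the translates $\{lT_\Gamma\}$ need not cover $T_L$ (compare the subtrees $Y_j$ appearing later in the proof of Lemma~\ref{abovelemma}), so you do not have a transverse covering to contradict. Second, and more seriously, your final step is not an argument: knowing $l_il_j^{-1}\notin\Gamma$ and $(l_il_j^{-1})T_\Gamma\cap T_\Gamma=\{p_i\}$ is perfectly consistent with there being infinitely many such $p_i$, and the phrase ``contradiction with the transverse covering structure'' names no actual contradiction.

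The paper's route is shorter and avoids this dead end. Use finiteness of orbits of \emph{directions} (Theorem~\ref{guilev} via Remark~\ref{rquereinfeldt}) rather than of branch points: among the infinitely many branch points on $e$, pigeonhole on the directions \emph{along $e$} produces $g\in L$ sending a non-degenerate subarc $I\subset e$ to a disjoint subarc $J\subset e$. Then $gT_\Gamma\cap T_\Gamma\supset J$ is non-degenerate, so transversality forces $g\in\Gamma$. But an element of $\Gamma$ carrying an interior subarc of the edge $e$ into $e$ must lie in the (finite) edge stabilizer and hence fix $e$ pointwise, contradicting $gI=J$ with $I\cap J=\varnothing$. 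The point is that pigeonholing on directions (not just on points) gives an element with \emph{non-degenerate} overlap, which is exactly what transversality can exploit.
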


\begin{proof}Let $h$ be an element of $L$ such that $hT_{\Gamma}\cap T_{\Gamma} $ is non-degenerate. Since $ T_{\Gamma} $ is isometric to the Bass-Serre tree of the splitting $G\ast_C $ of $ \Gamma $, we can find two elements $u, v \in \Gamma $ such that the axes of $ utu^{-1} $ and $ h(vtv^{-1})h^{-1} $ have a non-trivial overlap in the limit tree $T_L$, so \[\Delta(t_n,{\rho_n(u^{-1}hv)}t_n{\rho_n(u^{-1}hv)}^{-1})\geq \varepsilon_n \vert \vert t_n\vert \vert\] for $ n $ large enough. Hence, $ \rho_n (u^{- 1} hv) $ belongs to $ M (t_n) = C \times \langle t_n \rangle $. So, for every $ n $, there is an element $ c_n \in C $ and an integer $ p_n $ (possibly zero if $ u^{- 1} hv $ has finite order) such that $\rho_n(u^{-1}hv)=c_nt_n^{p_n}=\rho_n(c_nt^{p_n})$. Since $ C $ is finite, we can pass to a subsequence and assume that $ c_n = c $ for all $ n $. On the other hand, since $ t $ acts hyperbolically on $T_{\Gamma}$, the integer $ p_n $ is bounded by a constant that does not depend on $ n $. Otherwise, up to extracting, $\vert\vert\rho_n(t)\vert\vert/\vert\vert \rho_n(u^{-1}hv)\vert\vert$ tends to $0$. Hence, since $\rho_n(u^{-1}hv)/\lambda_n$ is bounded, $\vert\vert \rho_n(t)\vert\vert /\lambda_n$ tends to $0$, contradicting that $t$ is hyperbolic. Up to extracting, one can assume that $ p_n = p $ for all $ n $. So we have $\rho_n(u^{-1}hv)=\rho_n(ct^p)$ for all $ n $. The sequence $ (\rho_n)_{n\in\mathbb{N}} $ being discriminating, $h=uct^pv^{-1}\in \Gamma$, since $u,v\in \Gamma$.

Last, let $e$ be an edge of $T_{\Gamma}$. Let us prove that there are only finitely many branch points on $e$ in $T_L$. Let $M$ denote the maximal order of a finite subgroup of $G$. Note that $M$ is also the maximal order of a finite subgroup of $L$. By Theorem \ref{reinfeldt}, due to Reinfeldt and Weidmann, the action of $L$ on $T_{L}$ is $M$-superstable with finitely generated arc stabilizers. By Theorem \ref{guilev}, due to Guirardel and Levitt, the number of orbits of directions at branch points in $T_L$ is finite. Assume towards a contradiction that there are infinitely many branch points on $e$. Then there exist two non-degenerate subsegments $I$ and $J$ in $e$, with $I\cap J=\varnothing$, and an element $g\in G$ such that $gI=J$. But we proved that $T_{\Gamma}$ is transverse to its translates, so $g$ belongs to $\Gamma$. Since $T_{\Gamma}$ is isometric to the Bass-Serre tree of $G\ast_C$, it follows that $g$ fixes $e$. This is a contradiction.\end{proof}

\color{black}

\subsection{Generalized Sacerdote's lemma}

We are now ready to prove a generalization of the main lemma of \cite{Sac73}.

\begin{prop}\label{sacerdote1}Let $\Gamma$ be a legal large extension of $G$. Let $(\varphi_n : \Gamma \rightarrow G)_{n\in\mathbb{N}}$ be a test sequence. Let $\bm{g}$ be a tuple of elements of $G$. Let $\Sigma(\bm{x},\bm{y},\bm{g})=1\wedge \Psi(\bm{x},\bm{y},\bm{g})\neq 1$ be a conjunction of equations and inequations in the $p$-tuple $\bm{x}$ and the $q$-tuple $\bm{y}$. Let $\bm{\gamma}$ be a $p$-tuple of elements of $\Gamma$. Suppose that $G$ satisfies the following condition: for every $n$, there exists a $q$-tuple $\bm{g}_n\in G^q$ such that \[\Sigma(\varphi_n(\bm{\gamma}),\bm{g}_n,\bm{g})=1 \ \wedge \ \Psi(\varphi_n(\bm{\gamma}),\bm{g}_n,\bm{g})\neq 1.\] Then there exists a retraction $r$ from $\Gamma_{\Sigma}:=\langle \Gamma , \bm{y} \ \vert \ \Sigma(\bm{\gamma},\bm{y},\bm{g})=1\rangle$ onto $\Gamma$ such that all components of the tuple $r(\Psi(\bm{\gamma},\bm{y},\bm{g}))$ are non-trivial. In particular, the $q$-tuple $\bm{\gamma}':=r(\bm{y})\in \Gamma^q$ satisfies \[\Sigma(\bm{\gamma},\bm{\gamma}',\bm{g})=1 \ \wedge \ \Psi(\bm{\gamma},\bm{\gamma}',\bm{g})\neq 1.\]\end{prop}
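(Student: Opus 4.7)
The plan is to build a $G$-limit quotient of $\Gamma_\Sigma$ and use the structure of its action on a limit tree, together with a relative shortening argument, to produce the desired retraction.

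First I would extend each $\varphi_n$ to a homomorphism $\widehat{\varphi}_n : \Gamma_\Sigma \to G$ by setting $\widehat{\varphi}_n(\bm{y}) = \bm{g}_n$; this is well-defined precisely because $\Sigma(\varphi_n(\bm{\gamma}), \bm{g}_n, \bm{g}) = 1$. Fixing a non-principal ultrafilter $\omega$ and passing to an $\omega$-large subset, we may assume that for each component $\Psi_i$ of $\Psi$ the element $\Psi_i(\widehat{\varphi}_n(\bm{\gamma}, \bm{y}, \bm{g}))$ is non-trivial for every $n$ in the subset. Form the $G$-limit group $L$ of $(\widehat{\varphi}_n)$ and let $\pi : \Gamma_\Sigma \twoheadrightarrow L$ be the quotient map. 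Since the restrictions $\widehat{\varphi}_n|_\Gamma = \varphi_n$ form a discriminating sequence by Corollary \ref{disc}, the map $\pi|_\Gamma : \Gamma \to L$ is injective; by the extraction, each $\pi(\Psi_i(\bm{\gamma}, \bm{y}, \bm{g}))$ is non-trivial in $L$. It then suffices to produce a retraction $\tilde r : L \twoheadrightarrow \Gamma$ which is the identity on $\Gamma$ and kills none of the $\pi(\Psi_i)$, for then $r := \tilde r \circ \pi$ meets the requirements of the proposition.

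By equational noetherianness of $G$ (Theorem \ref{eqnoethhyp333}), for $n$ large $\widehat{\varphi}_n$ factors as $\rho_n \circ \pi$ for a unique $\rho_n : L \to G$. Applying the Bestvina-Paulin construction to $(\rho_n)$ yields an action of $L$ on a real tree $T_L$. By Lemma \ref{lemme2}, the minimal $\Gamma$-invariant subtree $T_\Gamma$ is (up to rescaling) the Bass-Serre tree of $\Gamma = G\ast_C$, and by Corollary \ref{lemme22} it is transverse to its $L$-translates with only finitely many branch points on each of its edges. The family $(hT_\Gamma)_{h \in L}$ is therefore a transverse covering (Definition \ref{transverse}) of the minimal $L$-invariant subtree, and its skeleton (Definition \ref{squelette}) carries a splitting of $L$ in which $\Gamma$ is a vertex stabilizer and all edge groups are finite, in fact conjugate to $C$ via the identification of $T_\Gamma$.

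The main step, and the principal obstacle, is to show that $L$ retracts onto $\Gamma$ in the desired way. I would apply the relative shortening argument: the stabilizers of branch points lying outside $L$-translates of $T_\Gamma$, together with Dehn twists along the finite edge groups of the skeleton, produce a rich group of modular automorphisms in $\mathrm{Aut}_\Gamma(L)$ in the sense of Definition \ref{modulnash}. If $L \neq \Gamma$, then by Proposition \ref{nashpaulinnash} applied to a short subsequence of $(\rho_n)$, the stable kernel is non-trivial, yielding a proper $G$-limit quotient of $L$ still containing $\Gamma$ and in which each $\pi(\Psi_i)$ remains non-trivial, since modular automorphisms are bijective. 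Iterating and invoking the descending chain condition for $G$-limit groups (Theorem \ref{chainesela}) forces the process to terminate at some $L_\infty \supseteq \Gamma$, and the absence of further shortening forces $L_\infty = \Gamma$. The hypothesis $E_G(N_G(C)) = C$ built into the definition of a legal large extension is essential at this last step: without it, vertex stabilizers adjacent to the finite edge groups in the skeleton of $L$ could acquire additional normal finite structure obstructing retraction down to $\Gamma$. The composition of the successive quotients is the retraction $\tilde r$ sought.
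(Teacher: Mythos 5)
Your setup — extending to $\widehat\varphi_n$, forming the $G$-limit group $L$, factoring through $\rho_n$ by equational noetherianness, and playing a relative shortening argument against the descending chain condition — is the same as the paper's. But the final step contains a genuine gap: you claim that ``the absence of further shortening forces $L_\infty = \Gamma$''. This is false, and it is exactly where the substance of the proof lies. The iteration terminates not when the limit quotient equals $\Gamma$, but when $\Gamma$ ceases to be elliptic in the limit tree of the \emph{shortened} maps; at that stage the terminal quotient $L_k$ is in general strictly larger than $\Gamma$ (the variables $\bm{y}$ typically survive as elements mapping into $G$ under $\rho_n$), and one must still \emph{construct} a retraction $L_k\twoheadrightarrow\Gamma$ and show it belongs to a discriminating sequence, so that no component of $\Psi$ is killed. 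In the paper this is the second case of Lemma \ref{abovelemma}: using the transversality of the translates of $T_\Gamma$ (Corollary \ref{lemme22}) together with the complementary subtrees $Y_j$, one builds a transverse covering whose skeleton yields $U=\Gamma\ast_G U'=U'\ast_C$, hence $L=U''\ast_C$ with $G\subset U''$; the retraction is $r_n(t)=t$, ${r_n}_{\vert U''}={\rho_n}_{\vert U''}$, and discrimination is verified on reduced forms in this HNN extension. This cannot be short-circuited by ``no further shortening'': when $\Gamma$ is non-elliptic, the component $T_\Gamma$ of the graph of actions is simplicial and admits no shortening, so Proposition \ref{nashpaulinnash} yields neither a contradiction nor a proper quotient there — the dichotomy of Lemma \ref{abovelemma} is unavoidable, and its non-elliptic branch is missing from your argument.

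A secondary problem: you invoke Lemma \ref{lemme2} and Corollary \ref{lemme22} for the tree obtained from the unshortened sequence $(\rho_n)$, but both are conditional on $\Gamma$ not fixing a point of $T_L$, which need not hold before shortening (the displacement of $\rho_n$ may be dominated by the images of $\bm{y}$, making $t$ elliptic in the rescaled limit). The paper first replaces $\rho_n$ by short representatives $\rho_n\circ\sigma_n$ in the orbit under $\mathrm{Aut}_{\Gamma}(U)\times\mathrm{Inn}(G)$, where $U$ is the one-ended factor of a Stallings splitting of $L$ relative to $\Gamma$ (not $L$ itself), checks that $\sigma_n$ extends to $L$, and only then runs the dichotomy. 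Relatedly, the translates of $T_\Gamma$ alone need not cover the minimal subtree, so they do not by themselves form a transverse covering, and the edge of the skeleton adjacent to $T_\Gamma$ has stabilizer a conjugate of $G$, not of $C$ — this is precisely what produces the splitting $U=\Gamma\ast_G U'$ used to define the retraction.
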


Before proving this result, whose proof is quite technical, we will use it to deduce that the inclusion of $G$ into $\Gamma$ is a $\exists\forall\exists$-elementary embedding. We begin by proving a corollary to Proposition \ref{sacerdote1} above, which allows us to deal with disjunctions of systems of equations and inequations.

\begin{co}\label{sacerdote2}Let $\Gamma$ be a legal large extension of $G$, and let $(\varphi_n : \Gamma \rightarrow G)_{n\in\mathbb{N}}$ be a test sequence. Let $g$ be a tuple of elements of $G$. Let \[\bigvee_{k=1}^N(\Sigma_k(\bm{x},\bm{y},\bm{g})=1 \ \wedge \ \Psi_k(\bm{x},\bm{y},\bm{g})\neq 1)\] be a disjunction of systems of equations and inequations in the $p$-tuple $\bm{x}$ and the $q$-tuple $\bm{y}$. Let $\bm{\gamma}$ be a $p$-tuple of elements of $\Gamma$. Suppose that $G$ satisfies the following condition: for every integer $n$, there exists a $q$-tuple $\bm{g}_n\in G^q$ such that \[\bigvee_{k=1}^N(\Sigma_k(\varphi_n(\bm{\gamma}),\bm{g}_n,\bm{g})=1 \ \wedge \ \psi_k(\varphi_n(\bm{\gamma}),\bm{g}_n,\bm{g})\neq 1).\] Then there exists a $q$-tuple $\bm{\gamma}'$ such that \[\bigvee_{k=1}^N(\Sigma_k(\bm{\gamma},\bm{\gamma'},\bm{g})=1 \ \wedge \ \Psi_k(\bm{\gamma},\bm{\gamma'},\bm{g})\neq 1).\]\end{co}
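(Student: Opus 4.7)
The plan is to reduce the disjunction to a single conjunction by a pigeonhole argument and then invoke Proposition \ref{sacerdote1} directly. For each integer $n$, the hypothesis provides a $q$-tuple $\bm{g}_n$ satisfying at least one of the $N$ disjuncts, so I can select an index $k_n\in\llbracket 1,N\rrbracket$ such that
\[\Sigma_{k_n}(\varphi_n(\bm{\gamma}),\bm{g}_n,\bm{g})=1 \ \wedge \ \Psi_{k_n}(\varphi_n(\bm{\gamma}),\bm{g}_n,\bm{g})\neq 1.\]
Since $N$ is finite, by the pigeonhole principle there exist an index $k\in\llbracket 1,N\rrbracket$ and an infinite subset $A\subset\mathbb{N}$ such that $k_n=k$ for every $n\in A$.

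Next, I would restrict to this subsequence. By the remark immediately following Definition \ref{suitetest}, any subsequence of a test sequence is itself a test sequence, so $(\varphi_n)_{n\in A}$ is a test sequence from $\Gamma$ to $G$. Relabelling if desired, we are now in a position to apply Proposition \ref{sacerdote1} to the single system $\Sigma_k(\bm{x},\bm{y},\bm{g})=1 \wedge \Psi_k(\bm{x},\bm{y},\bm{g})\neq 1$, with the same $p$-tuple $\bm{\gamma}\in\Gamma^p$: for every $n\in A$ the tuple $\bm{g}_n$ witnesses the required satisfaction in $G$.

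Proposition \ref{sacerdote1} then yields a retraction $r:\Gamma_{\Sigma_k}\twoheadrightarrow\Gamma$, and in particular a $q$-tuple $\bm{\gamma}':=r(\bm{y})\in\Gamma^q$ such that
\[\Sigma_k(\bm{\gamma},\bm{\gamma}',\bm{g})=1 \ \wedge \ \Psi_k(\bm{\gamma},\bm{\gamma}',\bm{g})\neq 1.\]
This $\bm{\gamma}'$ satisfies the $k$-th disjunct, hence \emph{a fortiori} the full disjunction $\bigvee_{k=1}^N(\Sigma_k(\bm{\gamma},\bm{\gamma'},\bm{g})=1 \wedge \Psi_k(\bm{\gamma},\bm{\gamma'},\bm{g})\neq 1)$, which is the desired conclusion.

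There is no real obstacle here: all of the analytic and geometric content sits in Proposition \ref{sacerdote1}, and the only ingredients needed for the corollary are finiteness of $N$ (for pigeonholing) together with the stability of being a test sequence under passing to subsequences. The reduction is purely combinatorial.
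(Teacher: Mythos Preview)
Your proof is correct and essentially identical to the paper's own argument: the paper also extracts a subsequence (via pigeonhole on the finitely many indices $k$), notes that this remains a test sequence, and then applies Proposition~\ref{sacerdote1} to the single system $\Sigma_k=1\wedge\Psi_k\neq 1$. There is nothing to add.
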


\begin{proof}Up to extracting a subsequence of $(\varphi_n)$ (which is still a test sequence), one can assume that there exists an integer $1\leq k\leq N$ such that, for every $n$, there exists $\bm{g}_n\in G^q$ such that $\Sigma_k(\varphi_n(\bm{\gamma}),\bm{g}_n,\bm{g})=1$ and $\Psi_k(\varphi_n(\bm{\gamma}),\bm{g}_n,\bm{g})\neq 1$. Proposition \ref{sacerdote1} applies and establishes the existence of a tuple $\bm{\gamma}'\in \Gamma^q$ satisfying $\Sigma_k(\bm{\gamma},\bm{\gamma'},\bm{g})=1$ and $\Psi_k(\bm{\gamma},\bm{\gamma'},\bm{g})\neq1$, which concludes the proof.\end{proof}

We deduce Proposition \ref{theorie2} from Corollary \ref{sacerdote2}.

%claiming that $\mathrm{Th}_{\forall\exists}(\Gamma)=\mathrm{Th}_{\forall\exists}(G)$. In fact, we shall prove the following stronger result.

%\begin{te}\label{plusgeneral}Let $\Gamma$ be a legal large extension of $G$. Let $\theta(\bm{t})$ be a $\exists\forall\exists$-formula with $m$ free variables. Let $\bm{g}$ be a $m$-tuple of elements of $G$. If $G\models \theta(\bm{g})$, then $\Gamma\models \theta(\bm{g})$. In particular, $\mathrm{Th}_{\forall\exists}(G)\subset \mathrm{Th}_{\forall\exists}(\Gamma)$ and $\mathrm{Th}_{\exists\forall}(G)\subset \mathrm{Th}_{\exists\forall}(\Gamma)$, so $\mathrm{Th}_{\forall\exists}(G)= \mathrm{Th}_{\forall\exists}(\Gamma)$.\end{te}

\medskip

\begin{proofnash}Let $\theta(\bm{t})$ be a $\exists\forall\exists$-formula with $m$ free variables. This formula has the following form:\[\theta(\bm{t}):\exists \bm{x}\forall \bm{y} \exists \bm{z}  \bigvee_{k=1}^N(\Sigma_k(\bm{x},\bm{y},\bm{z},\bm{t})=1 \ \wedge \ \Psi_k(\bm{x},\bm{y},\bm{z},\bm{t})\neq 1).\]Let $\bm{g}$ be a tuple of elements of $G$ such that $G\models \theta(\bm{g})$, and let us prove that $\Gamma\models \theta(\bm{g})$. There exists a tuple of elements of $G$, denoted by $\bm{x}$, such that \begin{equation}G\models \forall \bm{y} \exists \bm{z} \bigvee_{k=1}^N(\Sigma_k(\bm{x},\bm{y},\bm{z},\bm{g})=1 \ \wedge \ \Psi_k(\bm{x},\bm{y},\bm{z},\bm{g})\neq 1).
\end{equation}Let us prove that \begin{equation}\Gamma\models \forall \bm{y} \exists \bm{z} \bigvee_{k=1}^N(\Sigma_k(\bm{x},\bm{y},\bm{z},\bm{g})=1 \ \wedge \ \Psi_k(\bm{x},\bm{y},\bm{z},\bm{g})\neq 1).
\end{equation}
Let $\bm{y}$ be a tuple of elements of $\Gamma$. By (1), for every integer $n$, there exists a tuple $\bm{z}_n$ of elements of $G$ such that
\begin{equation}G\models \bigvee_{k=1}^N(\Sigma_k(\bm{x},\varphi_n(\bm{y}),\bm{z}_n,\bm{g})=1 \ \wedge \ \Psi_k(\bm{x},\varphi_n(\bm{y}),\bm{z}_n,\bm{g})\neq 1).
\end{equation}
By Corollary \ref{sacerdote2}, there exists a tuple $\bm{z}$ of elements of $\Gamma$ such that \begin{equation}\Gamma\models \bigvee_{k=1}^N(\Sigma_k(\bm{x},\bm{y},\bm{z},\bm{g})=1 \ \wedge \ \Psi_k(\bm{x},\bm{y},\bm{z},\bm{g})\neq 1).
\end{equation}\end{proofnash}

We now prove Proposition \ref{sacerdote1}, that is the generalized Sacerdote's lemma. 

\medskip

\begin{proofnashsac}Let $\bm{\gamma}\in\Gamma^p$. Suppose that, for every integer $n$, there exists $\bm{g}_n\in G^q$ such that \[\Sigma(\varphi_n(\bm{\gamma}),\bm{g}_n,\bm{g})=1 \ \wedge \ \Psi(\varphi_n(\bm{\gamma}),\bm{g}_n,\bm{g})\neq 1.\]
Let $\Gamma_{\Sigma}=\langle \Gamma , \bm{y} \ \vert \ \Sigma(\bm{\gamma},\bm{y},\bm{g})=1\rangle$. Let $i$ denote the natural morphism from $\Gamma$ to $\Gamma_{\Sigma}$. By hypothesis, for every $n$, there exists a homomorphism $\widehat{\varphi}_n:\Gamma_{\Sigma}\rightarrow G$ mapping $\bm{y}$ to $\bm{g}_n$ such that $\widehat{\varphi}_n\circ i=\varphi_n$. Note that the test sequence $(\varphi_n)_{n\in\mathbb{N}}$ is discriminating by Corollary \ref{disc}. As a consequence, the homomorphism $i$ is injective. From now on, we omit mentioning the morphism $i$. 

We shall construct a retraction $r : \Gamma_{\Sigma}\twoheadrightarrow \Gamma$ that does not kill any component of the tuple $\Psi(\bm{\gamma},\bm{y},\bm{g})$. Let $L=\Gamma_{\Sigma}/\ker((\widehat{\varphi}_n)_{n\in\mathbb{N}})$ and let $\pi : \Gamma_{\Sigma} \twoheadrightarrow L$ be the associated epimorphism. As a $G$-limit group, $L$ is equationally noetherian (see \cite{RW14} Corollary 6.13). It follows that there exists a unique homomorphism $\rho_{n} : L\rightarrow G$ such that $\widehat{\varphi}_n=\rho_{n}\circ \pi$. 

Since the sequence $(\varphi_n)_{n\in\mathbb{N}}$ is discriminating, we have $\widehat{\varphi}_n(x)=\rho_n(\pi(x))\neq 1$ for every $x\in\Gamma$ and every $n$ large enough. In addition, by construction, the morphism $\widehat{\varphi}_n$ does not kill any component of $\Psi(\varphi_n(\bm{\gamma}),\bm{g}_n,\bm{g})$. Thus, the homomorphism $\pi : \Gamma \rightarrow L$ is injective and does not kill any component of $\Psi(\varphi_n(\bm{\gamma}),\bm{g}_n,\bm{g})$. In the sequel, we identify $\Gamma$ and $\pi(\Gamma)$.

In order to construct $r$, we will construct a discriminating sequence of retractions $(r_n : L \twoheadrightarrow \Gamma)_{n\in\mathbb{N}}$. Then, we will conclude by taking $r:=r_n\circ \pi$ for $n$ sufficiently large. 

%$\Gamma< L \overset{\widehat{\varphi}_n}{\rightarrow}  G$ avec $(\widehat{\varphi}_n)$ discriminante qui vaut l'identité sur $G$ et $(({\widehat{\varphi}_n})_{\vert \Gamma} : \Gamma \rightarrow G)$ suite test.

Let $(X,d)$ be a Cayley graph of $G$. Let us consider a Stallings splitting $\Lambda$ of $L$ relative to $\Gamma$, and let $U$ be the one-ended factor that contains $\Gamma$. Let $S$ be a generating set of $U$. Recall that $\mathrm{Aut}_{\Gamma}(U)$ is the subgroup of $\mathrm{Aut}(U)$ consisting of all automorphisms $\sigma$ satisfying the following two conditions:
\begin{enumerate}
\item $\sigma_{\vert \Gamma}=\mathrm{id}_{\vert \Gamma}$;
\item for every finite subgroup $F$ of $U$, there exists an element $u\in U$ such that $\sigma_{\vert F}=\mathrm{ad}(u)_{\vert F}$.
\end{enumerate}
Recall that a homomorphism $\varphi : U \rightarrow G$ is said to be \emph{short} if its length $\ell(\varphi):=\max_{s\in S}d(1,\varphi(s))$ is minimal among the lengths of homomorphisms in the orbit of $\varphi$ under the action of $\mathrm{Aut}_{\Gamma}(U)\times \mathrm{Inn}(G)$. Since $\vert\vert t_n\vert\vert$ goes to infinity, there exists a sequence of automorphisms $(\sigma_{n})_{n\in\mathbb{N}}\in\mathrm{Aut}_{\Gamma}(U)^{\mathbb{N}}$ and a sequence of elements $(x_n)_{n\in\mathbb{N}}\in G^n$ such that the homomorphisms $\mathrm{ad}(x_n)\circ \rho_{n}\circ \sigma_{n}$ are short, pairwise distinct, and such that the sequence $(\mathrm{ad}(x_n)\circ \rho_{n}\circ \sigma_{n})_{n\in\mathbb{N}}$ is stable (see paragraph \ref{stabletreenash}), up to extracting a subsequence. Since $\rho_n$ coincides with the identity on $G$, we have $\mathrm{ad}(x_n)\circ \rho_{n}\circ \sigma_{n}= \rho_{n}\circ \mathrm{ad}(x_n)\circ \sigma_{n}$ and, up to replacing $\sigma_n$ by $\mathrm{ad}(x_n)\circ \sigma_{n}$, we can forget the postconjugation by $x_n$ and assume that ${\sigma_n}_{\vert \Gamma}$ is a conjugation by an element of $G$.

We claim that $\sigma_n$ extends to an automorphism of $L$, still denoted by $\sigma_n$. By the second condition above, $\sigma_n$ is a conjugacy on finite subgroups of $U$. We proceed by induction on the number of edges of $\Lambda$ (the Stallings splitting of $L$ relative to $\Gamma$ used previously in order to define $U$). It is enough to prove the claim in the case where $\Lambda$ has only one edge. 

If $L=U\ast_C B$ with ${\sigma_n}_{\vert C}=\mathrm{ad}(u)$, one defines $\sigma_n : L \rightarrow G$ by ${\sigma_n}_{\vert U}=\sigma_n$ and ${\sigma_n}_{\vert B}=\mathrm{ad}(u)$. 

If $L=U\ast_C=\langle U,t \ \vert \ tct^{-1}=\alpha(c), \forall c\in C\rangle$ with ${\sigma_n}_{\vert C}=\mathrm{ad}(u_1)$ and ${\sigma_n}_{\vert \alpha{(C)}}=\mathrm{ad}(u_2)$, one defines $\sigma_n : L \rightarrow G$ by ${\sigma_n}_{\vert U}=\sigma_n$ and $\sigma_n(t)=u_2^{-1}tu_1$.

In order to complete the proof of the generalized Sacerdote's lemma, we will use the following result.

\begin{lemme}\label{abovelemma}We keep the same notations as above. Let $T$ be the limit tree of the sequence of metric spaces $(X,d/\ell({\rho_n\circ \sigma_n}))_{n\in\mathbb{N}}$. The following dichotomy holds:
\begin{itemize}
\item[$\bullet$]either $\Gamma$ does not fix a point of $T$, in which case there exists a discriminating sequence of retractions $(r_n : L \twoheadrightarrow\Gamma)_{n\in\mathbb{N}}$,
\item[$\bullet$]or $\Gamma$ is elliptic, and there exist a proper quotient $L_1$ of $L$, an embedding $\Gamma\hookrightarrow L_1$ allowing us to identify $\Gamma$ with a subgroup of $L_1$, and two discriminating sequences $(\rho_n^1 : L_1\rightarrow G)_{n\in\mathbb{N}}$ and $(\theta_n^1 : L\twoheadrightarrow L_1)_{n\in\mathbb{N}}$ such that the following three conditions are satisfied:
\begin{enumerate}
\item $\rho_n\circ \sigma_n=\rho_n^1\circ \theta_n^1$;
\item $\rho_n^1$ coincides with $\rho_n$ on $\Gamma$; in particular, $({\rho_n^1}_{\vert \Gamma} : \Gamma \rightarrow G)_{n\in\mathbb{N}}$ is a test sequence.
\item There exists an element $g_n\in G$ such that $\sigma_n$ and $\theta_n^1$ coincides with $\mathrm{ad}(g_n)$ on $\Gamma$.
\end{enumerate}
\end{itemize}
\end{lemme}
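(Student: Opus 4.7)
\emph{Overall strategy.} The plan is to exploit the action of $L$ on the limit $\mathbb{R}$-tree $T$ of the sequence $(\rho_n\circ\sigma_n)$. The elliptic/non-elliptic dichotomy for $\Gamma$ governs which of two very different machineries to deploy: Lemma \ref{lemme2} and the theory of transverse coverings in the non-elliptic case, and the relative shortening argument (Proposition \ref{nashpaulinnash}) in the elliptic case.

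\emph{Non-elliptic case.} First I would apply Lemma \ref{lemme2} to the sequence $(\rho_n\circ\sigma_n : L \to G)$; its restriction to $\Gamma$ is the test sequence $(\varphi_n)$ up to inner conjugation by elements of $G$, so the lemma identifies $T_\Gamma$ with the Bass-Serre tree of the splitting $\Gamma = G\ast_C$. By Corollary \ref{lemme22}, $T_\Gamma$ is transverse to its $L$-translates, is stabilized in $L$ exactly by $\Gamma$, and contains only finitely many branch points of $T$ per edge. Minimality of the $L$-action gives $T = L \cdot T_\Gamma$, so $(gT_\Gamma)_{g\Gamma\in L/\Gamma}$ is a transverse covering of $T$ in the sense of Definition \ref{transverse}. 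Its skeleton $S$ (Definition \ref{squelette}) is a simplicial $L$-tree giving $L$ a graph-of-groups decomposition in which $\Gamma$ appears as a type-$1$ vertex stabilizer. I would then construct retractions $r_n : L \twoheadrightarrow \Gamma$ by extending $\mathrm{id}_\Gamma$ across the other vertex and edge groups of this decomposition: the type-$0$ vertex stabilizers are elliptic in $T_\Gamma$, hence contained in conjugates of $G$ or $C$ inside $\Gamma$, and can be mapped to $\Gamma$ using the approximating homomorphisms $\rho_n\circ\sigma_n$ restricted to branch-point stabilizers; the incident edge stabilizers are handled coherently via Lemma \ref{s'étend}. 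Discrimination of the sequence $(r_n)$ follows from Corollary \ref{disc} applied to the test sequence, combined with the detection of non-triviality in $L$ by its action on $S$.

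\emph{Elliptic case.} Here the homomorphisms $\mathrm{ad}(x_n)\circ\rho_n\circ\sigma_n$ are stable, pairwise distinct, and short by construction of $\sigma_n$, so Proposition \ref{nashpaulinnash} yields a non-trivial stable kernel in $L$. Set $L_1 := L / \underline{\ker}((\rho_n\circ\sigma_n))$, a proper quotient of $L$, with quotient map $\theta : L \twoheadrightarrow L_1$. By equational noetherianity of $G$-limit groups (Theorem \ref{eqnoethhyp333}), the factorizations $\rho_n\circ\sigma_n = \rho_n^1\circ\theta$ define homomorphisms $\rho_n^1 : L_1 \to G$ for $n$ large; an appropriate choice of $\theta_n^1$ (built from $\theta$ and $\sigma_n$) yields condition (1) together with the required discriminating property. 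Because $\sigma_n|_\Gamma = \mathrm{ad}(g_n)$ with $g_n\in G$ and $\rho_n|_\Gamma = \varphi_n$ is discriminating on $\Gamma$ (Corollary \ref{disc}), no non-trivial element of $\Gamma$ enters the stable kernel, so $\theta|_\Gamma$ embeds $\Gamma$ into $L_1$. Identifying $\Gamma$ with its image yields conditions (2) and (3) by unraveling the restrictions of $\rho_n^1$ and $\theta_n^1$; the discriminating property of $\rho_n^1$ is immediate from the definition of the stable kernel.

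\emph{Main obstacle.} The delicate step is Case 1: verifying that the retractions $r_n : L \twoheadrightarrow \Gamma$ can be defined coherently across the skeleton and that the resulting sequence is discriminating. While the skeleton places $\Gamma$ as one vertex group, the other vertex and edge groups of the decomposition must be shown to admit compatible maps into $\Gamma$. Controlling this requires the Reinfeldt--Weidmann analysis of real-tree actions for $G$-limit groups (Theorem \ref{reinfeldt}), the finiteness of orbit of directions at branch points (Theorem \ref{guilev}, invoked via Corollary \ref{lemme22}), and the small cancellation structure of the test sequence intrinsic to Definition \ref{suitetest}.
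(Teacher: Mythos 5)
Your elliptic case is essentially the paper's argument: Proposition \ref{nashpaulinnash} applied to the short, distinct, stable sequence gives a non-trivial stable kernel, $L_1$ is the corresponding quotient, equational noetherianity produces the $\rho_n^1$, and $\theta_n^1=\pi_1\circ\sigma_n^{-1}$ yields the three conditions; this part is fine.

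The non-elliptic case has a genuine gap. First, minimality of the action does not give $T=L\cdot T_\Gamma$: the union of the translates of $T_\Gamma$ is an invariant family of subtrees that need not cover $T$ (there may be hyperbolic elements of $L$ not conjugate into $\Gamma$, whose axes are missed). So $(gT_\Gamma)_{g\Gamma\in L/\Gamma}$ is not a transverse covering; one must add the non-degenerate classes $Y_j$ of the relation ``$x\sim y$ iff $[x,y]\cap uT_\Gamma$ has at most one point for every $u$'' and take $(Y_j)_{j\in J}\cup\lbrace uT_\Gamma\rbrace$ as the covering. Second, and more seriously, your recipe for the retraction does not work as stated: the type-$0$ vertex stabilizers of the skeleton are point stabilizers for the action of $U$ on $T$, not of $\Gamma$ on $T_\Gamma$, so they are in general \emph{not} contained in conjugates of $G$ or $C$ inside $\Gamma$, and there is no coherent way to ``extend $\mathrm{id}_\Gamma$'' across them directly. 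The missing idea is the one-endedness argument: if some $Y_j$ or $uT_\Gamma$ (with $u\notin\Gamma$) met $T_\Gamma$ at a point that is not an extremity of an edge of $T_\Gamma$, the corresponding edge of the skeleton would have stabilizer $\mathrm{Stab}(x)\cap\Gamma=C$, giving a splitting of $U$ over the finite group $C$ relative to $\Gamma$ and contradicting the one-endedness of $U$ relative to $\Gamma$. This forces the unique edge of the quotient graph adjacent to $T_\Gamma$ to have stabilizer $G$, whence $U=\Gamma\ast_G U'$, i.e.\ $L=U''\ast_C=\langle U'',t\ \vert\ [t,c]=1,\ \forall c\in C\rangle$ with $G\subset U''$. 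Only after this structural step can one define $r_n(t)=t$ and ${r_n}_{\vert U''}={\rho_n}_{\vert U''}$; and the discrimination of $(r_n)$ is then verified on reduced forms of the HNN extension via Britton's lemma, not by a direct appeal to Corollary \ref{disc}.
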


Before proving this lemma, we will use it to conclude the proof of Proposition \ref{sacerdote1}. If $\Gamma$ does not fix a point of $T$, we are done. If $\Gamma$ fixes a point of $T$, by iterating Lemma \ref{abovelemma}, we get a sequence of proper quotients \[L_0=L \twoheadrightarrow L_1\twoheadrightarrow \cdots L_i\twoheadrightarrow \cdots\]such that, for every integer $i\geq 1$, there exist two discriminating sequences of morphisms $(\rho_{n}^i : L_i \rightarrow G)_{n\in\mathbb{N}}$ and $(\theta_{n}^i: L_{i-1} \twoheadrightarrow L_{i})_{n\in\mathbb{N}}$ such that $\rho_{n}^{i-1}\circ\sigma_n=\rho_{n}^{i}\circ \theta_{n}^i$, $\rho_n^i$ coincides with $\rho_n^{i-1}$ on $\Gamma$, and there exists an element $g_n\in G$ such that $\sigma_n$ and $\theta_n^i$ coincides with $\mathrm{ad}(g_n)$ on $\Gamma$.

By the descending chain condition \ref{chainesela}, the iteration eventually terminates. Let $L_k$ be the last quotient of the series. By Lemma \ref{abovelemma}, there exists a discriminating sequence of retractions $(r_n : L_k \twoheadrightarrow\Gamma)_{n\in\mathbb{N}}$. For every finite set $F\subset L$, one can find some integers $n_1,\ldots ,n_{k}$ such that the morphism $r_{n_{k}}\circ \theta_{n_k}^k\circ \cdots\circ \theta_{n_1}^1 : L\twoheadrightarrow\Gamma$ is injective on $F$. Moreover, since every $\theta_{n_i}$ is a conjugation on $\Gamma$ by an element of $G$, there exists an element $v_n\in G$ such that $\mathrm{ad}(v_n)\circ r_{n_{k}}\circ \theta_{n_k}^k\circ \cdots\circ \theta_{n_1}^1 : L\twoheadrightarrow\Gamma$ is a retraction. This concludes the proof of Lemma \ref{sacerdote1}.\end{proofnashsac}

It remains to prove Lemma \ref{abovelemma}. 

\medskip

\begin{proofnashlemme}Recall that $U$ denotes the one-ended factor of $L$ relative to $\Gamma$. We distinguish two cases.

\vspace{2mm}

\textbf{First case.} Suppose that $\Gamma$ fixes a point of $T$. Let us prove that the stable kernel of the sequence $(\rho_n\circ \sigma_n)_{n\in\mathbb{N}}$ is non-trivial. Assume towards a contradiction that the stable kernel is trivial. Then, by Theorem 1.16 of \cite{RW14}, the action of $(U,\Gamma)$ on the limit tree $T$ has the following properties:
\begin{itemize}
\item[$\bullet$]the pointwise stabilizer of any non-degenerate tripod is finite;
\item[$\bullet$]the pointwise stabilizer of any non-degenerate arc is finitely generated and finite-by-abelian;
\item[$\bullet$]the pointwise stabilizer of any unstable arc is finite.
\end{itemize}
In particular, the tree $T$ satisfies the ascending chain condition of Theorem \ref{guirardel1} since any ascending sequence of finitely generated and finite-by-abelian subgroups of a hyperbolic group stabilizes.

Then, it follows from Theorem \ref{guirardel1} that either $(U,\Gamma)$ splits over the stabilizer of an unstable arc, or over the stabilizer of an infinite tripod, or $T$ has a decomposition into a graph of actions. Since $U$ is one-ended relative to $\Gamma$, and since the stabilizer of an unstable arc or of an infinite tripod is finite, it follows that $T$ has a decomposition into a graph of actions.

Now, it follows from Theorem \ref{nashpaulinnash} that there exists a sequence of automorphisms $(\alpha_n)_{n\in\mathbb{N}}\in\mathrm{Aut}_{\Gamma}(U)^{\mathbb{N}}$ such that $(\rho_n\circ\sigma_n)\circ \alpha_n$ is shorter than $\rho_n\circ \sigma_n$ for $n$ large enough. This is a contradiction since the morphisms $\rho_n\circ \sigma_n$ are assumed to be short. Hence, the stable kernel of the sequence $(\rho_n\circ \sigma_n)_{n\in\mathbb{N}}$ is non-trivial.

As in the proof of Lemma \ref{sacerdote1} above, since $\sigma_{n}$ coincides with an inner automorphism on each finite subgroup of $U$ (by definition of $\mathrm{Aut}_{\Gamma}(U)$), it extends to an automorphism of $L$, still denoted by $\sigma_n$. Let $L_1:=L/\ker(({\rho_{n}}\circ \sigma_{n})_{n\in\mathbb{N}})$ and let $\pi_1 : L \twoheadrightarrow L_1$ be the corresponding epimorphism. Observe that $\pi_1$ is injective on $\Gamma$, allowing to identify $\Gamma$ with a subgroup of $L_1$. 

As a $G$-limit group, $L_1$ is equationally noetherian (see \cite{RW14} Corollary 6.13). It follows that, for every integer $n$, there exists a unique homomorphism $\tau_n^1 : L_1\rightarrow G$ such that $\rho_n\circ \sigma_n=\tau_n^1\circ\pi_1$. There exists an element $g_n\in G$ such that $\sigma_n$ coincides with $\mathrm{ad}(g_n)$ on $\Gamma$. Hence, since $\rho_n$ coincides with the identity on $G$, we can write $\rho_n\circ \sigma_n=\mathrm{ad}(g_n)\circ \rho_n^1\circ\pi_1$ in such a way that $\rho_n^1:=\mathrm{ad}(g_n^{-1})\circ \tau_n^1$ coincides with the identity on $G$. For every $n$, let $\theta_{n}^1= \pi_1\circ (\sigma_n)^{-1}$, so that $\rho_n=\mathrm{ad}(g_n)\circ \rho_n^1\circ \theta_n^1$. The sequence $(\theta_n^1 : L \rightarrow L_1)_{n\in\mathbb{N}}$ is therefore discriminating, and every homomorphism $\theta_n^1$ coincides with $\mathrm{ad}(g_n^{-1})$ on $\Gamma$.

\vspace{2mm}

\emph{Second case.} Suppose that $\Gamma$ is not elliptic in $T$. We will construct a discriminating sequence of retractions $(r_n : L \rightarrow \Gamma)_{n\in\mathbb{N}}$. Let $T_{\Gamma}\subset T$ be the minimal invariant subtree of $\Gamma$. By Lemma \ref{lemme2}, we may assume up to rescaling that $T_{\Gamma}$ is isometric to the Bass-Serre tree of the splitting $\Gamma=G\ast_C$. 

Let $\sim$ be the relation on $T$ defined by $x\sim y$ if $[x,y]\cap u T_{\Gamma}$ contains at most one point, for every element $u\in U$. Note that $\sim$ is an equivalence relation. Let $(Y_j)_{j\in J}$ denote the equivalence classes that are not reduced to a point. Each $Y_j$ is a subtree of $T$. Let us prove that $(Y_j)_{j\in J}\cup \lbrace uT_{\Gamma} \ \vert \ u\in U/\Gamma\rbrace$ is a transverse covering of $T$, in the sense of Definition \ref{transverse}.

\begin{itemize}
\item[$\bullet$]\emph{Transverse intersection.} For every $i\neq j$, the intersection $Y_i\cap Y_j$ is clearly empty. For every $i$ and $u\in U$, $Y_i\cap uT_{\Gamma}$ contains at most one point by definition. For every $u,u'\in U$ such that $u'u^{-1}\notin \Gamma$, $\vert uT_{\Gamma}\cap u'T_{\Gamma}\vert \leq 1$ thanks to Lemma \ref{lemme22}.
\item[$\bullet$]\emph{Finiteness condition.} Let $x$ and $y$ be two points of $T$. By Lemma \ref{lemme22}, there exists a constant $\varepsilon > 0$ such that, for every $u\in U$, if the intersection $[x,y]\cap uT_{\Gamma}$ is non-degenerate, the length of $[x,y]\cap uT_{\Gamma}$ is bounded from below by $\varepsilon$. Consequently, the arc $[x,y]$ is covered by at most $\lfloor d(x,y)/\varepsilon\rfloor$ translates of $T_{\Gamma}$ and at most $\lfloor d(x,y)/\varepsilon\rfloor+1$ distinct subtrees $Y_j$.
\end{itemize}

Hence, the collection $(Y_j)_{j\in J}\cup \lbrace uT_{\Gamma} \ \vert \ u\in U\rbrace$ is a transverse covering of $T$. One can construct what Guirardel calls the skeleton of this transverse covering (see Definition \ref{squelette}), denoted by $T_c$. Since the action of $U$ on $T$ is minimal, the same holds for the action of $U$ on $T_c$, according to Lemma 4.9 of \cite{Gui04}. The question is now to understand the decomposition $\Delta_c=T_c/U$ of $U$ as a graph of groups.

We begin with a description of the stabilizer in $U$ of an edge $e$ of $T_{\Gamma}$. Let $u$ be an element of $U$ that fixes $e$. Then $e$ is contained in $T_{\Gamma}\cap u T_{\Gamma}$, so $u$ belongs to $\Gamma$, thanks to Lemma \ref{lemme22}. It follows that $u$ belongs to $C$, because the stabilizer of $e$ in $\Gamma$ is equal to $C$ (indeed, recall that $T_{\Gamma}$ is isometric to the Bass-Serre tree of the splitting $\Gamma=G\ast_C$, by Lemma \ref{lemme2}). Thus, the stabilizer of $e$ in $U$ is equal to $C$.

We now prove that if one of the subtrees of the covering other than $T_{\Gamma}$ intersects $T_{\Gamma}$ in a point, then this point is necessarily one of the extremities of a translate of the edge $e\in T_{\Gamma}$. Assume towards a contradiction that $Y_j$ or $uT_{\Gamma}$ with $u\notin \Gamma$ intersects $T_{\Gamma}$ in a point $x$ that is not one of the extremities of $e$. Then, $T_c$ contains an edge $\varepsilon=(x,T_{\Gamma})$ whose stabilizer is $\mathrm{Stab}(x) \cap \Gamma$ (where $\mathrm{Stab}(x)$ denotes the stabilizer of $x$ in $U$), which is equal to $C$ by the previous paragraph. So the splitting $\Delta_c$ of $U$ is a non-trivial splitting over the finite subgroup $C$, relative to $\Gamma$. This is impossible since $U$ is one-ended relative to $\Gamma$, by definition of $U$. Hence, if $Y_j\cap T_{\Gamma}=\lbrace x\rbrace$ or $uT_{\Gamma}\cap T_{\Gamma}=\lbrace x\rbrace$ with $u\notin \Gamma$, then the point $x$ is one of the extremities of $e$ in $T_{\Gamma}$. As a consequence, $\mathrm{Stab}(x)$ is a conjugate of $G$ in $\Gamma$, and every edge adjacent to $T_{\Gamma}$ in $T_c$ is of the form $(\gamma x,T_{\Gamma})=\gamma\varepsilon$ with $\varepsilon=(x,T_{\Gamma})$. 

Therefore, $\varepsilon$ is the only edge adjacent to $T_{\Gamma}$ in the quotient graph $\Delta_c$. Its stabilizer is $G$. By collapsing all edges of $\Delta_c$ except $\varepsilon$, one gets a splitting of $U$ of the following form: $U=\Gamma\ast_G U'$ for some subgroup $U'\subset U$. This splitting can be written as \[U=U'\ast_C=\langle U', t \ \vert \ [t,c]=1, \ \forall c\in C\rangle.\] 

Since every finite subgroup of $U$ is conjugate to a finite subgroup of $U'$, the group $L$ splits as $L=U''\ast_C=\langle U'', t \ \vert \ [t,c]=1, \ \forall c\in C\rangle$ with $G\subset U'\subset U''$. One now defines a retraction $r_n : L \rightarrow \Gamma$ by $r_n(t)=t$ and ${r_n}_{\vert U''}={\rho_n}_{\vert U''}$, well-defined since $\rho_n$ coincides with the identity on $G$. 

To conclude, let us prove that the sequence $(r_n)_{n\in\mathbb{N}}$ is discriminating. Let $\ell$ be a non-trivial element of $L$. This element can be written in reduced form, with respect to the HNN extension $L=U''\ast_C$, as $\ell=u_0t^{\varepsilon_1}u_1t^{\varepsilon_2}u_2\cdots t^{\varepsilon_p}u_{p+1}$, with $u_i\in U''$. For every $i$, if $\varepsilon_i=-\varepsilon_{i+1}$, then $u_i\notin C$. Thus $r_n(u_i)\notin C$ for every $n$ large enough (otherwise, up to extracting a subsequence, one can assume that $r_n(u_i)=u_n(c)$ for every $n$, so $u_i=c$ (since $({r_n}_{\vert U''})$ is discriminating), which is impossible). Hence, for every $n$ large enough, $r_n(\ell)=\rho_n^k(u_0)t^{\varepsilon_1}\rho_n^k(u_1)t^{\varepsilon_2}\rho_n^k(u_2)\cdots t^{\varepsilon_p}\rho_n^k(u_{p+1})$ is non-trivial. \end{proofnashlemme}

\section{Proof of Theorem \ref{legal2te}}\label{a}

We will prove the following result.

\begin{te}[see Theorem \ref{legal2te}]\label{jjjjjjj}Let $G$ be a hyperbolic group that splits as $A\ast_C B$ or $A\ast_C$ over a finite subgroup $C$ whose normalizer $N$ is infinite virtually cyclic and non-elliptic in the splitting. Let $K_G$ be the maximal order of a finite subgroup of $G$. Let $N'$ be a virtually cyclic group such that $K_{N'}\leq K_G$, and let $\iota : N\hookrightarrow N'$ be a $K_G$-nice embedding (see Definition \ref{special0}). Let us define $G'$ by\[G'=G\ast_N N'=\langle G, N' \ \vert \ g=\iota(g), \ \forall g\in N\rangle.\]The following two assertions are equivalent.
\begin{enumerate}
\item The group $G'$ is a legal small extension of $G$ in the sense of Definition \ref{legal2}, i.e.\ there exists a $K_G$-nice embedding $\iota' : N'\hookrightarrow N$.
\item $\mathrm{Th}_{\forall\exists}(G')=\mathrm{Th}_{\forall\exists}(G)$.
\end{enumerate}\end{te}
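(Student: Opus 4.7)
The plan is to establish the two implications separately, mirroring the strategy used for Theorem~\ref{theorie}, with additional technicalities coming from the fact that the amalgamating subgroup $N$ is now infinite virtually cyclic rather than finite.

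For $(2)\Rightarrow(1)$, I would use the special homomorphism machinery from Section~\ref{zeta}. Since $\forall\exists$-equivalence implies $\exists\forall$-equivalence (and in particular $K_G=K_{G'}$), Corollary~\ref{1904} produces a discriminating sequence of special homomorphisms $\psi_n:G'\to G$. In $G'$, the finite group $C$ has normalizer equal to $N'$ (the image $\iota(C)$ being the maximal normal finite subgroup of the virtually cyclic group $N'$), which is infinite virtually cyclic maximal. By the third bullet of Definition~\ref{special1}, for $n$ sufficiently large the restriction $\psi_n|_{N'}$ is $K_G$-nice into $N_G(\psi_n(C))$; post-composing with the inner automorphism of $G$ that sends $\psi_n(C)$ back to $C$ produces a $K_G$-nice embedding $\iota':N'\hookrightarrow N$, as required.

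For $(1)\Rightarrow(2)$, I follow the template of Section~\ref{HNN}. Using both nice embeddings $\iota:N\hookrightarrow N'$ and $\iota':N'\hookrightarrow N$, the composition $\iota'\circ\iota$ is an injective endomorphism of $N$; iterating this composition and twisting by powers of a central hyperbolic element of $N$ (exploiting that $N$ is infinite virtually cyclic with finite-index cyclic subgroup) produces a sequence $(\psi_n:N'\to N)_{n\in\mathbb{N}}$ whose images have translation length tending to infinity in $G$ and satisfy an $\varepsilon_n$-small cancellation condition with $\varepsilon_n\to 0$ (Definition~\ref{SCC}). By Lemma~\ref{s'étend} applied to the one-edge splitting $G'=G\ast_N N'$, these $\psi_n$ extend to retractions $\varphi_n:G'\twoheadrightarrow G$. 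I then prove a small-extension analogue of the generalized Sacerdote lemma (Proposition~\ref{sacerdote1}): given a system $\Sigma(\bm{x},\bm{y},\bm{g})=1\wedge\Psi(\bm{x},\bm{y},\bm{g})\neq 1$ and a tuple $\bm{\gamma}\in{G'}^p$ such that each $\varphi_n(\bm{\gamma})$ admits a $G$-solution $\bm{g}_n$, one constructs a retraction $r:G'_{\Sigma}\twoheadrightarrow G'$ killing no component of $\Psi$, via the shortening argument (Proposition~\ref{nashpaulinnash}), the descending chain condition for $G$-limit groups (Theorem~\ref{chainesela}), and a Rips-type analysis (Theorem~\ref{guirardel1}) of the limit tree produced by the extensions $\widehat{\varphi}_n$. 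The transfer of $\forall\exists$-sentences from $G$ to $G'$ then proceeds exactly as in the proof of Proposition~\ref{theorie2}.

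The main obstacle is adapting Lemmas~\ref{lemme2} and~\ref{abovelemma} to the virtually cyclic edge setting. When $\Gamma:=G'$ acts on the limit tree $T$, the amalgamating subgroup $N$ is no longer finite, so $N'$ acts with an axial rather than elliptic component on the minimal invariant subtree $T_\Gamma$; correspondingly, the transverse covering built in the proof of Lemma~\ref{abovelemma} must incorporate $G$-translates of the $N'$-axis alongside copies of the Bass-Serre tree of $G$. I would need to show that, up to rescaling, $T_\Gamma$ is isometric to the Bass-Serre tree of $G'=G\ast_N N'$ with edge stabilizers exactly $N$, that $T_\Gamma$ remains transverse to its translates under the $G$-limit group action, and that the $K_G$-niceness of both $\iota$ and $\iota'$ forbids any folding. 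The crucial new ingredient is that the injectivity of the induced maps on $N/D_{2K_G!}(N)$ and $N'/D_{2K_G!}(N')$ survives the passage to the limit: this is exactly what prevents a collapse of the axial component into a shorter cyclic quotient, and what ensures that the proper quotients obtained from the dichotomy of Lemma~\ref{abovelemma} continue to be $G$-limit groups with the right splittings. Controlling the axial pieces produced by item~(3) of Theorem~\ref{guirardel1} is where the bulk of the new technical work lies, and accounts for the additional technicalities alluded to in the introduction.
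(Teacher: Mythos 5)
Your overall architecture coincides with the paper's: the easy direction via the special-homomorphism machinery, the hard direction via twisted retractions $G'\twoheadrightarrow G$, a Sacerdote-type lemma, and a limit-tree/shortening analysis. However, there are concrete gaps in both directions.

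In $(2)\Rightarrow(1)$, you post-compose $\psi_n|_{N'}$ with ``the inner automorphism of $G$ that sends $\psi_n(C)$ back to $C$'', but a special homomorphism $\psi_n:G'\to G$ need only induce a \emph{bijection} on conjugacy classes of finite subgroups, not the identity: $\psi_n(C)$ may be conjugate to a different finite subgroup of $G$. The paper repairs this by viewing the map as an endomorphism $G'\to G\subset G'$ and passing to a power $\varphi'^{\,n}$ so that $C$ is sent to a conjugate of itself; you need this step (together with the observation that a power of a nice map is nice) before you can conjugate back into $N$.

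In $(1)\Rightarrow(2)$, the missing ingredient is the precise choice of twists. The paper's small test sequences are twists $\tau_{\delta_n}$ with $\delta_n\in D_{2K_G!}(N')$ chosen, via Dirichlet's theorem on primes in arithmetic progressions, so that $[N:\varphi_n(N')]=p_n$ runs through a strictly increasing sequence of \emph{primes}. This is not a cosmetic detail: in the limit the stabilizer $S$ of the axis of $z$ is an extension $1\to C\to S\to V\to 1$ with $V$ free abelian of possibly large rank (or $V\rtimes\mathbb{Z}/2\mathbb{Z}$ in the dihedral case), and the retraction of $S$ onto $N'$ exists only because $\chi(z)$ is primitive in $V$ --- which is proved by noting that the exponent of any root of $\chi(z)$ would divide every $p_n$. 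Your appeal to ``injectivity on $N/D_{2K_G!}(N)$ surviving the limit'' does not by itself rule out $\chi(z)$ acquiring a proper root. Relatedly, your claim that $T_\Gamma$ is isometric to the Bass--Serre tree of $G'=G\ast_N N'$ with edge stabilizers exactly $N$ is not the right picture here: since $\varphi_n(z)=t^{p_n}$ modulo $C$, the generator $t$ of $N$ has vanishing rescaled translation length, and the correct objects are the axis $d$ of $z$ and its global stabilizer $S$ (virtually $\mathbb{Z}^m$, $m\geq 1$), for which one must treat separately the discrete and dense-orbit cases before splitting $U$ with $S$ as a vertex group. Finally, the argument that the resulting retractions $r_n$ are discriminating requires a normal-form computation with a carefully chosen gauge $f(n)$, and the virtually-cyclic base case must be handled separately; neither appears in your sketch.
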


\begin{rque}As observed in the introduction, $G$ is not $\exists\forall$-elementarily embedded into $G'$ in general.\end{rque}

First, we prove the easy direction.

\begin{prop}We keep the same notations as in Theorem \ref{jjjjjjj}. If $\mathrm{Th}_{\forall\exists}(G')=\mathrm{Th}_{\forall\exists}(G)$, then $G'$ is a legal small extension of $G$.
\end{prop}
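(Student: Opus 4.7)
The plan is to encode the existence of the sought-after $K_G$-nice embedding $\iota':N'\hookrightarrow N$ in a single $\exists\forall$-sentence $\phi$, to verify this sentence in $G'$ via the identity embedding $N'\hookrightarrow N'=N_{G'}(C)$, and then to transfer it to $G$. The transfer is legitimate because the hypothesis $\mathrm{Th}_{\forall\exists}(G)=\mathrm{Th}_{\forall\exists}(G')$ is equivalent, by taking negations, to $\mathrm{Th}_{\exists\forall}(G)=\mathrm{Th}_{\exists\forall}(G')$. Observe also that the universal theories agree; moreover $G'=G\ast_N N'$ together with $K_{N'}\leq K_G$ forces $K_{G'}=K_G$ (finite subgroups in an amalgamated free product are conjugate into a factor), so every mention of $K_G$ below will be interpreted consistently in both groups.

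Fix a finite presentation $\langle s_1,\ldots,s_p\mid R_1,\ldots,R_q\rangle$ of the virtually cyclic group $N'$, representatives $D_1,\ldots,D_\ell$ of the conjugacy classes of finite subgroups of $N'$ written as finite subsets of words in $\bm{s}$, and representatives $r_1,\ldots,r_m\in N'$ of the cosets of the finite-index normal subgroup $D_{2K_G!}(N')$ (finiteness of the index is Lemma \ref{claim}). Let $\phi$ be the sentence
\[\exists\,x_1,\ldots,x_p\ \Bigl[\textstyle\bigwedge_k R_k(\bm{x})=1\Bigr]\wedge\mathrm{Norm}(\bm{x})\wedge\mathrm{Inj}(\bm{x})\wedge\mathrm{NonConj}^{\forall}(\bm{x})\wedge\mathrm{Quot}^{\forall}(\bm{x}),\]
where $\mathrm{Norm}(\bm{x})$ is the quantifier-free assertion that each $x_i$ normalizes $C$ (a conjunction of finite disjunctions, since $C$ is finite), $\mathrm{Inj}(\bm{x})$ is the quantifier-free conjunction expressing injectivity of $\bm{s}\mapsto\bm{x}$ (thanks to the virtually cyclic structure of $N'$ and the uniform torsion bound $K_G$, this reduces to injectivity on the maximal finite normal subgroup of $N'$ together with $w(\bm{x})^{K_G!}\neq 1$ for a distinguished hyperbolic element $w(\bm{s})\in N'$), $\mathrm{NonConj}^{\forall}(\bm{x})$ is the universal clause $\bigwedge_{i\neq j}\forall g\ [gCg^{-1}=C\Rightarrow gD_i(\bm{x})g^{-1}\neq D_j(\bm{x})]$ (the set inequality unfolds as a finite disjunction of quantifier-free inequations over the elements of $D_i(\bm{x})$), and $\mathrm{Quot}^{\forall}(\bm{x})$ is the universal clause $\bigwedge_{i\neq j}\forall n\ [nCn^{-1}=C\Rightarrow r_i(\bm{x})r_j(\bm{x})^{-1}\neq n^{2K_G!}]$. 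Then $\phi$ is an $\exists\forall$-sentence, and by construction any witness $\bm{x}$ in a hyperbolic group $H$ with $K_H\leq K_G$ defines a $K_G$-nice embedding $N'\hookrightarrow N_H(C)$ in the sense of Definition \ref{special0}.

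The sentence $\phi$ holds in $G'$: take $\bm{x}$ to be the generating tuple $\bm{s}$ of $N'$ viewed inside $G'$. Its image lies in $N_{G'}(C)=N'$; the relations and normalization clauses are immediate; injectivity holds identically; the non-conjugacy clauses reduce to the defining non-conjugacy of the $D_i$ inside $N'$; and the quotient clause says precisely that the $r_i$ represent distinct cosets of $D_{2K_G!}(N')$, true by construction. The equality $\mathrm{Th}_{\exists\forall}(G)=\mathrm{Th}_{\exists\forall}(G')$ transfers $\phi$ to $G$, and a witness there lies in $N_G(C)=N$ and defines the desired $K_G$-nice embedding $\iota':N'\hookrightarrow N$. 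The main obstacle in executing this plan is the compact quantifier-free encoding of injectivity in $\mathrm{Inj}(\bm{x})$: this works here only because $N'$ is virtually cyclic and torsion orders in $G$ are uniformly bounded by $K_G$, and it is what would prevent a naive extension of the argument beyond the virtually cyclic setting of Theorem \ref{jjjjjjj}.
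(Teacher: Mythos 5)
Your overall strategy --- express the existence of the required nice embedding by an $\exists\forall$-sentence, verify it in $G'$, and transfer it using the equality of the $\exists\forall$-theories --- is the right one in spirit, and it is close to how the paper organizes things. But there is a genuine gap in the execution: your sentence $\phi$ is not a sentence of the language of groups, because the clauses $\mathrm{Norm}(\bm{x})$, $\mathrm{NonConj}^{\forall}(\bm{x})$ and $\mathrm{Quot}^{\forall}(\bm{x})$ all refer to the particular subgroup $C$ of $G$ (e.g.\ via the hypothesis $gCg^{-1}=C$), and $C$ is not a constant of the language, nor is its conjugacy class definable by the local data you use. The only way to eliminate these constants is to quantify existentially over a tuple $\bm{c}$ generating a finite group with the multiplication table of $C$; but then the witness obtained in $G$ after transfer only yields a $K_G$-nice embedding of $N'$ into $N_G(C_0)$ for \emph{some} finite subgroup $C_0\cong C$, with no control over its conjugacy class. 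Definition \ref{legal2} requires an embedding $\iota':N'\hookrightarrow N=N_G(C)$ for the specific $C$ over which $G$ splits, and there is no reason for $C_0$ to be conjugate to $C$: composing with the given $\iota:N\hookrightarrow N'$ only produces $N\hookrightarrow N_G(C_0)$, not $N'\hookrightarrow N$. Pinning down which conjugacy class the witness's copy of $C$ lands in is precisely the non-trivial part of the argument, and your sentence does not address it.

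The paper avoids this by quoting the already-established implication $(1)\Rightarrow(4)$ of Theorem \ref{principal}: from $\mathrm{Th}_{\forall\exists}(G)=\mathrm{Th}_{\forall\exists}(G')$ one gets a \emph{strongly special} homomorphism $\varphi':G'\to G\subset G'$, defined on all of $G'$ via a sentence built from a full presentation of $G'$ (Proposition \ref{defspefor}), so that the image of $C$ is a definite word in the witness tuple. Since $\varphi'$ sends non-conjugate finite subgroups to non-conjugate ones and there are finitely many conjugacy classes, some power $\varphi'^{\,n}$ returns $C$ to a conjugate $g'Cg'^{-1}$, and $\iota':=(\mathrm{ad}(g'^{-1})\circ\varphi'^{\,n})_{\vert N'}$ lands in $N$ and is $K_G$-nice because $\varphi'$ is strongly special. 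To repair your proof you would either have to reproduce this global encoding and iteration argument, or explain how an $\exists\forall$-sentence mentioning only $N'$ and an abstract copy of $C$ can force the witness in $G$ into the correct conjugacy class; as written, it cannot.
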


\begin{proof}According to the implication $(1)\Rightarrow (4)$ of Theorem \ref{principal}, there exists a strongly special morphism $\varphi' : G'\rightarrow G\subset G'$. Since $\varphi'$ maps non-conjugate finite subgroups to non-conjugate finite subgroups, there exists an integer $n\geq 1$ such that $\varphi'^n$ maps $C$ to $g'Cg'^{-1}$ for some $g'\in G'$. Hence we have $\varphi'^n(N')\subset N_{G}(g'Cg'^{-1})=g'Ng'^{-1}$, since $\varphi'(G')$ is contained in $G$. Let us define $\iota'$ by $\iota':=(\mathrm{ad}(g'^{-1})\circ \varphi'^n)_{\vert N'}$. This morphism is a $K_G$-nice embedding, because $\varphi'$ is strongly special.\end{proof}

In the rest of this section, we prove that the converse also holds.

\begin{te}We keep the same notations as in Theorem \ref{jjjjjjj}. If $G'$ is a legal small extension of $G$, then $\mathrm{Th}_{\forall\exists}(G')=\mathrm{Th}_{\forall\exists}(G)$.
\end{te}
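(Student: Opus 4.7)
The plan is to adapt the proof of Theorem \ref{legalte}, establishing both inclusions $\mathrm{Th}_{\forall\exists}(G) \subseteq \mathrm{Th}_{\forall\exists}(G')$ and $\mathrm{Th}_{\forall\exists}(G') \subseteq \mathrm{Th}_{\forall\exists}(G)$ via a Sacerdote-style lemma analogous to Proposition \ref{sacerdote1}. The two directions are symmetric: for the first, we use the $K_G$-nice embedding $\iota' : N' \hookrightarrow N$ guaranteed by the legality hypothesis; for the second, the $K_G$-nice embedding $\iota : N \hookrightarrow N'$ plays the dual role. In both cases, the key object is a test sequence of homomorphisms between $G$ and $G'$ along which the images of infinite-order elements of the replaced virtually cyclic subgroup acquire arbitrarily large translation length and satisfy an ever stronger small cancellation condition.

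First, I would define the test sequence $(\varphi_n : G' \to G)_{n \in \mathbb{N}}$. Unlike in the legal large case, where $\varphi_n$ is a retraction (Definition \ref{suitetest}), here $\iota$ need not split, so $\varphi_n|_G$ cannot in general be taken to be the identity. Instead, pick a central element $z$ of $N$ of infinite order, and set $\varphi_n|_G = \mathrm{ad}(z^{a_n})$ for a suitable integer sequence $a_n \to \infty$, while $\varphi_n|_{N'}$ is defined as $\iota'$ post-composed by a ``stretching'' endomorphism of $N$ that raises the generator modulo torsion to a power depending on $n$. The compatibility of the two pieces on the amalgamated subgroup $N$ is arranged using the third condition of $K_G$-niceness (Definition \ref{special0}): the injectivity of $\iota'$ modulo $D_{2K_G!}$ lets us pick the stretching factor so that the two induced cosets in $N/D_{2K_G!}(N)$ match. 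An analogue of Proposition \ref{jesaispas}, using the fact that $N$ is the normalizer of the finite subgroup $C$ and non-elliptic in the given splitting of $G$, then yields an $\varepsilon_n$-small cancellation property with $\varepsilon_n \to 0$ for the images of a generator of $N'$ of infinite order.

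Next, I would prove the generalized Sacerdote lemma in this setting: given a system $\Sigma(\boldsymbol{x},\boldsymbol{y},\boldsymbol{g}) = 1 \wedge \Psi(\boldsymbol{x},\boldsymbol{y},\boldsymbol{g}) \neq 1$ with parameters $\boldsymbol{g}$ in $G'$ and a tuple $\boldsymbol{\gamma} \in (G')^p$ such that each $\varphi_n$ pushes the instance forward to a solvable system in $G$, the original system admits a solution $\boldsymbol{\gamma}' \in (G')^q$. The argument follows the architecture of Proposition \ref{sacerdote1}: extend $G'$ by the new variables, pass to the $G$-limit group $L$ via the stable kernel of the lifted sequence (which is equationally noetherian by Theorem \ref{eqnoethhyp333}), apply the shortening argument (Proposition \ref{nashpaulinnash}) relative to $G'$, invoke the Rips machine (Theorem \ref{guirardel1}) together with Theorem \ref{reinfeldt} to analyze the Bestvina-Paulin limit tree, and iterate on proper quotients until the descending chain condition (Theorem \ref{chainesela}) terminates. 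The output is a discriminating sequence of retractions $L \twoheadrightarrow G'$, from which we extract the required witnesses, in the style of Lemma \ref{abovelemma}.

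The main obstacle, distinguishing this from the legal large case, is that the amalgamated subgroup $N$ is infinite virtually cyclic, so arc stabilizers in the limit tree of $L$ can be infinite. Theorem \ref{reinfeldt} still guarantees that arc stabilizers are finitely generated and finite-by-abelian and that unstable arcs have finite stabilizers, and the action is $M$-superstable so Theorems \ref{guilev} and \ref{guirardel1} apply; however, the transverse covering argument of Lemma \ref{abovelemma} must now be carried out with virtually cyclic edge groups. The skeleton of the covering will split the one-ended factor of $L$ (relative to $G'$) over subgroups that are conjugate to pieces of $N$, and one must rule out the non-trivial such splittings by invoking relative one-endedness together with the hypothesis that $N$ is the normalizer of $C$. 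Carrying this analysis through the transverse covering and the resulting graph of actions, while keeping track of the virtually cyclic edge stabilizers and of which splittings are compatible with the structure $G' = G \ast_N N'$, is the technical heart of the proof; this is also the place where the symmetry between $\iota$ and $\iota'$, i.e.\ the full strength of the legal small extension hypothesis, enters in an essential way.
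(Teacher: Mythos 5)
Your overall architecture matches the paper's: reduce to a Sacerdote-style lemma along a sequence of homomorphisms $\varphi_n : G' \to G$, pass to a $G$-limit group, shorten, run the Rips machine, and iterate through proper quotients until the descending chain condition stops the process (the paper even gets the second inclusion for free from the symmetry Lemma \ref{symétrie} rather than rerunning the argument with $\iota$). But your construction of the test sequence has a genuine gap. You define $\varphi_n$ piecewise on the amalgam $G' = G\ast_N N'$, with $\varphi_n|_G = \mathrm{ad}(z^{a_n})$ and $\varphi_n|_{N'} = (\text{stretch})\circ \iota'$, and you claim compatibility on $N$ because the two maps agree modulo $D_{2K_G!}(N)$. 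Agreement modulo $D_{2K_G!}(N)$ is not enough: to define a homomorphism on $G\ast_N N'$ the two pieces must agree on $N$ element by element, and they do not ($\mathrm{ad}(z^{a_n})$ restricts to the identity on $N$ when $z$ is central, while $(\text{stretch})\circ\iota'$ restricted to $N$ is a proper self-embedding). Moreover, a central element of $N$ of infinite order does not exist when $N$ is of dihedral type. The paper sidesteps this entirely by never amalgamating over the infinite group $N$: it uses the splitting of $G$ over the \emph{finite} group $C$ (e.g.\ $G=A\ast_C N$, $\Gamma=A\ast_C N'$, with the HNN and dihedral variants), takes $\varphi_n$ to be the identity on $A$ and a modified embedding $\iota'_n : N'\to N$ on $N'$, so that compatibility need only be checked on $C$, where $\iota'_n$ can be normalized to be the identity (Lemma \ref{guaranteed nash}).

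A second, related discrepancy is the mechanism you attribute to the test sequence. You import the legal-large-case features (translation length $\to\infty$, $\varepsilon_n$-small cancellation with $\varepsilon_n\to 0$, an analogue of Proposition \ref{jesaispas}), but here $\varphi_n(N')$ stays inside the fixed virtually cyclic subgroup $N=M(t)$, so no such escalating small cancellation condition in the sense of Definition \ref{SCC} is available or needed. The operative property of a small test sequence (Definition \ref{testsuite}) is arithmetic: $\varphi_n$ is an injective twist by an element of $D(N')$ and the index $[N:\varphi_n(N')]=p_n$ is a strictly increasing sequence of \emph{primes}, obtained via Dirichlet's theorem. This primality is what later shows that the image of $z$ has no proper root in the abelianized part of the axis stabilizer $S$ of the limit tree, which is the hinge on which the construction of the discriminating retractions $r_n : L\to\Gamma$ turns. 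Your sketch also leaves the hard part of Lemma \ref{abovelemma2} at the level of intentions: in the small case the axis stabilizer $S$ is virtually $\mathbb{Z}^n$ (not virtually cyclic), one must treat separately the discrete and dense-orbit actions of $S$ on the axis, and the retractions are built by explicit Dehn-twist stretching with a carefully chosen function $f(n)$ and normal-form arguments (plus the $D_\infty$-limit group Lemma \ref{lemmeDinf} in the dihedral case); none of this is supplied by the legal-large-case template you invoke.
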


Recall that an infinite virtually cyclic group $N$ can be written as an extension of exactly one of the following two forms:\[1\rightarrow C\rightarrow N\rightarrow \mathbb{Z}\rightarrow 1 \ \ \ \text{or} \ \ \ 1\rightarrow C\rightarrow N\rightarrow D_{\infty}\rightarrow 1,\] where $C$ is finite and $D_{\infty}=\mathbb{Z}/2\mathbb{Z}\ast\mathbb{Z}/2\mathbb{Z}$ denotes the infinite dihedral group. In the first case, $N$ splits as $N=C\rtimes\langle t\rangle$, where $t$ denotes an element of infinite order. In the second case, $N$ splits as an amalgamated free product $\langle C,a\rangle\ast_C\langle C,b\rangle$ where $a$ and $b$ have order $2$ in $N/C$. We choose such elements $a$ and $b$ and we define $t$ by $t=ba$. Note that the image of $t$ in $N/C$ generates $N^{+}/C$, where $N^{+}$ denotes the kernel of the epimorphism $N\twoheadrightarrow D_{\infty}\twoheadrightarrow\mathbb{Z}/2\mathbb{Z}$. In other words, $N^{+}\simeq C\rtimes \langle t\rangle$.

In the sequel, we say that two elements $g,g'\in N$ are equal modulo $C$, and we write $g'=g\mod C$, if $g^{-1}g'$ belongs to $C$.

Recall that for every integer $p$, we denote by $D_p(N)$ the definable subset $\lbrace n^{p} \ \vert \ n\in N\rbrace$.

\begin{lemme}\label{claim}Let $N$ be a virtually cyclic group. Let $C$ be the maximal finite normal subgroup of $N$, and let $m$ be the order of $\mathrm{ad}(t)_{\vert C}$ in $\mathrm{Aut}(C)$. Then $D_{2m\vert C\vert}(N)=\langle t^{2m\vert C\vert}\rangle$.\end{lemme}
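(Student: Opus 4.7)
The plan is to reduce the identity to a direct calculation inside the finite group $C$, treating both types of infinite virtually cyclic group uniformly. In cyclic type, $N=C\rtimes\langle t\rangle$; in dihedral type, $N^{+}:=C\rtimes\langle t\rangle$ has index $2$ in $N$, and any $n\in N\setminus N^{+}$ projects to a non-trivial involution of $N/C\simeq D_{\infty}$, so $n^{2}\in C$. Lagrange in $C$ then gives
\[n^{2m\vert C\vert}=(n^{2})^{m\vert C\vert}=1\in\langle t^{2m\vert C\vert}\rangle\]
for any such $n$, which disposes of all elements outside $N^{+}$. It thus suffices to compute $(ct^{k})^{2m\vert C\vert}$ for arbitrary $c\in C$ and $k\in\mathbb{Z}$.

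For this, set $\sigma:=\mathrm{ad}(t)_{\vert C}\in\mathrm{Aut}(C)$; a direct expansion yields
\[(ct^{k})^{p}=\left(\prod_{i=0}^{p-1}\sigma^{ik}(c)\right)\cdot t^{pk}.\]
The goal is to show the product in $C$ is trivial when $p=2m\vert C\vert$. Let $\ell$ denote the order of $\sigma^{k}$ in $\mathrm{Aut}(C)$; since $\sigma$ has order $m$, $\ell$ divides $m$. Introducing $P:=\prod_{i=0}^{\ell-1}\sigma^{ik}(c)\in C$, I would decompose the full product into $p/\ell$ consecutive blocks of length $\ell$, each equal to $P$ because $\sigma^{\ell k}=\mathrm{id}_{C}$; this gives
\[\prod_{i=0}^{p-1}\sigma^{ik}(c)=P^{p/\ell}.\]
Since $\ell$ divides $m$, the exponent $p/\ell=2\vert C\vert(m/\ell)$ is a multiple of $\vert C\vert$, so $P^{p/\ell}=1$ by Lagrange. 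Therefore $(ct^{k})^{2m\vert C\vert}=t^{2m\vert C\vert k}\in\langle t^{2m\vert C\vert}\rangle$, proving $D_{2m\vert C\vert}(N)\subseteq\langle t^{2m\vert C\vert}\rangle$. The reverse inclusion is immediate from $t^{2m\vert C\vert k}=(t^{k})^{2m\vert C\vert}\in D_{2m\vert C\vert}(N)$.

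There is no substantive obstacle; the entire argument is bookkeeping around the two finite cyclic structures $\langle\sigma\rangle\subset\mathrm{Aut}(C)$ and $C$ itself. The shape $2m\vert C\vert$ of the exponent is engineered precisely to handle both the contribution in $N\setminus N^{+}$ (via the factor $2$, which brings any such element into $C$) and the residual factor $P\in C$ coming from the semidirect-product expansion of $(ct^{k})^{p}$ (via the factor $m\vert C\vert$, which makes the block exponent a multiple of $\vert C\vert$).
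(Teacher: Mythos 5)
Your proof is correct and uses essentially the same mechanism as the paper's: the factor $2$ disposes of the dihedral reflections, the factor $m$ trivializes the conjugation action on $C$, and the factor $\vert C\vert$ kills the residual element of $C$ by Lagrange. The paper's version is slightly more streamlined — it squares an arbitrary $g$ to get $ct^{2r}$ uniformly (covering both types at once) and then takes the $m$-th and $\vert C\vert$-th powers in succession, avoiding your case split and explicit block decomposition — but the two arguments are the same in substance.
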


\begin{rque}In particular, $D_{2m\vert C\vert}(N)$ is a subgroup of $N$. Moreover, it is central in $N$.\end{rque}

\begin{proof}Let $g$ be an element of $N$. The element $g^2$ can be written as $ct^{2r}$ for some $c\in C$ and $r\in \mathbb{Z}$, so $g^{2m}=c't^{2mr}$ for some $c'\in C$. By definition of $m$, $t^m$ commutes with $c'$, so $(g^{2m})^{\vert C\vert }=(c')^{\vert C\vert}t^{2m\vert C\vert}=t^{2m\vert C\vert }$. As a consequence, $D_{2m\vert C\vert}(N)$ is contained in $\langle t^{2m\vert C\vert}\rangle$. The other inclusion is obvious.\end{proof}

\begin{rque}Note that the action of $\mathrm{Aut}(C)$ on $C\setminus \lbrace 1\rbrace$ gives an embedding from $\mathrm{Aut}(C)$ into the symmetric group $\mathfrak{S}_{\vert C\vert -1}$. It follows that $\vert C\vert \mathrm{Aut}(C)$ divides $\vert C\vert !$. Hence, $2m\vert C\vert$ divides $2 \vert C\vert !$.\end{rque}

%\begin{rque}$t^{2m}$ appartient au centralisateur de $C$ dans $G$ (par définition de $m$), au centre de $G^{+}$ (le sous-groupe des éléments qui ne renversent pas l'orientation), ainsi qu'au quasi-centre de $G$.\end{rque}

In the sequel, $G$ denotes a hyperbolic group and $K$ denotes the maximal order of a finite subgroup of $G$. For every virtually cyclic infinite subgroup $N$ of $G$, we define the subgroup $D(N)$ of $N$ by $D(N):=D_{2K!}(N)$. The following result in an immediate consequence of Lemma \ref{claim} (see the remark above).

\begin{co}\label{introintrointrointro}If $C$ is a finite subgroup of $G$ whose normalizer $N$ is virtually cyclic infinite, then $D(N)=\langle t^{2K!}\rangle$ (for any element $t$ chosen as above).\end{co}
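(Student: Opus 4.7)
The plan is to derive this corollary directly from Lemma \ref{claim} together with the divisibility remark that follows it, applied to the maximal finite normal subgroup of $N$ rather than the arbitrary finite subgroup $C$ whose normalizer is $N$.

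First I would let $E$ denote the maximal finite normal subgroup of $N$ (so that $C\subseteq E$, but $C$ itself plays no further role), and let $m$ be the order of $\mathrm{ad}(t)_{\mid E}$ in $\mathrm{Aut}(E)$. By Lemma \ref{claim} applied to $N$, we have $D_{2m|E|}(N)=\langle t^{2m|E|}\rangle$. The remark after Lemma \ref{claim} says that $2m|E|$ divides $2|E|!$. Since $E$ is a finite subgroup of $G$, we have $|E|\leq K$, so $|E|!$ divides $K!$, and therefore $2m|E|$ divides $2K!$. Write $2K!=q\cdot 2m|E|$ for some positive integer $q$.

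For the inclusion $D_{2K!}(N)\subseteq\langle t^{2K!}\rangle$, take any $g\in N$. By Lemma \ref{claim}, $g^{2m|E|}=t^{2m|E|\cdot \ell}$ for some $\ell\in\mathbb{Z}$, and then
\[
g^{2K!}=\bigl(g^{2m|E|}\bigr)^{q}=t^{2m|E|\cdot \ell\cdot q}=t^{2K!\cdot \ell}\in\langle t^{2K!}\rangle.
\]
The reverse inclusion $\langle t^{2K!}\rangle\subseteq D_{2K!}(N)$ is immediate, since $t\in N$ and any power of $t^{2K!}$ is of the form $(t^{k})^{2K!}$. This gives $D(N)=D_{2K!}(N)=\langle t^{2K!}\rangle$, which is the desired equality.

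There is no real obstacle here: the corollary is essentially a repackaging of Lemma \ref{claim}, observing that replacing the minimal exponent $2m|E|$ (which depends on $N$) by the uniform exponent $2K!$ (which depends only on the ambient hyperbolic group $G$) is harmless because $2m|E|\mid 2K!$. The only mild care needed is to not confuse the $C$ in the statement of the corollary with the $C$ playing the role of the maximal finite normal subgroup in Lemma \ref{claim}; this is why I prefer to rename the latter as $E$ in the write-up.
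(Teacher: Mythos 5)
Your proof is correct and follows exactly the route the paper intends: the paper gives no argument beyond declaring the corollary an immediate consequence of Lemma \ref{claim} and the divisibility remark, and your write-up simply fills in those details (including the sensible renaming of the maximal finite normal subgroup as $E$ to avoid clashing with the $C$ of the corollary's statement). The two directions of inclusion are handled properly, so nothing is missing.
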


%\begin{rque}$t^{K!}$ centralise $C$ et $N^{+}$, et appartient au quasi-centre de $N$.\end{rque}

%\begin{proof}L'ordre $m$ de $\mathrm{ad}(t)_{\vert C}$ dans $\mathrm{Aut}(C)$ divise $(K-1)!$ puisque $\mathrm{Aut}(C)$ se plonge dans $\mathfrak{S}(C\setminus \lbrace 1\rbrace)$, donc $m\vert C\vert$ divise $K!$. Le résultat découle dès lors du lemme \ref{claim} précédent.\end{proof}

Recall that the normalizer of a finite edge group in a splitting is an infinite virtually cyclic group if and only if one of the situations described below arises.

\begin{lemme}\label{diédral11}Let $G$ be a group. Suppose that $G$ splits as an amalgamated free product $G=A\ast_CB$ over a finite group $C$, and that $N_G(C)$ is not contained in a conjugate of $A$ or $B$. Then $N_G(C)$ is virtually cyclic if and only if $C$ has index 2 in $N_A(C)$ and in $N_B(C)$. In this case, $N_G(C)$ is of dihedral type, equal to $N_A(C)\ast_C N_B(C)$.\end{lemme}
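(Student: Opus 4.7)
The natural strategy is to analyse the action of $N := N_G(C)$ on the subtree $T^C$ of the Bass--Serre tree $T$ of the splitting $G = A \ast_C B$ that is fixed pointwise by $C$, and to read off the structure of $N$ from Bass--Serre theory. Recall that the edges of $T$ are parametrised by $G/C$, the edge labelled $gC$ joining the vertices $gA \in G/A$ and $gB \in G/B$, with stabiliser $gCg^{-1}$. Since $C$ is finite and $|gCg^{-1}| = |C|$, the edge $gC$ lies in $T^C$ if and only if $C \subseteq gCg^{-1}$ if and only if $gCg^{-1} = C$, i.e., $g \in N$; similarly $gA$ lies in $T^C$ if and only if $g^{-1}Cg \subseteq A$.

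The plan proceeds in three steps. First, I would note that the hypothesis ``$N$ is not contained in a conjugate of $A$ or $B$'' translates exactly into the statement that $N$ does not fix a vertex of $T^C$, so Bass--Serre applies and $N$ inherits a non-trivial graph of groups decomposition $\Lambda = T^C / N$. Second, I would observe that every edge stabiliser in this action is equal to $C$: for an edge $gC$ of $T^C$ the stabiliser in $N$ is $N \cap gCg^{-1} = N \cap C = C$. Third, I would show that $N$ acts transitively on the edges of $T^C$, since the edge set of $T^C$ is precisely $\{gC : g \in N\} = N/C$ on which $N$ acts by left multiplication. Hence $\Lambda$ has a single edge, and because $G$ preserves the bipartition $G/A \sqcup G/B$ of the vertex set of $T$, the two endpoints of this edge are of different types, so $\Lambda$ is a segment rather than a loop. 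Reading off the vertex groups $N \cap A = N_A(C)$ and $N \cap B = N_B(C)$, Bass--Serre theory yields \[ N_G(C) = N_A(C) \ast_C N_B(C) \] unconditionally (given the hypothesis), which is much more than what the statement claims.

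The equivalence then reduces to a routine property of amalgams over a finite subgroup. Neither index $[N_A(C):C]$, $[N_B(C):C]$ can equal $1$, since that would collapse the amalgam to $N_B(C) \subseteq B$ or $N_A(C) \subseteq A$, contradicting the hypothesis. If either index is at least $3$ (the other being at least $2$), the amalgam contains a non-abelian free subgroup and is not virtually cyclic. When both indices equal $2$, on the other hand, $N/C$ is the free product $(\mathbb{Z}/2\mathbb{Z}) \ast (\mathbb{Z}/2\mathbb{Z}) = D_{\infty}$, so $N$ is infinite virtually cyclic of dihedral type, as required.

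The main obstacle in this plan is verifying that the quotient graph $\Lambda$ is genuinely a segment, which rests on the transitivity of $N$ on edges of $T^C$ together with the bipartition argument; once that is in place, the remainder of the argument is purely combinatorial.
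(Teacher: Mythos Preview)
Your proof is correct. The paper states this lemma (twice, as Lemma~\ref{diédral1} and again as Lemma~\ref{diédral11}) without proof, treating it as a standard background fact about normalisers of edge groups in amalgamated free products; so there is no argument to compare yours against.

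Your approach via the fixed-point subtree $T^C$ and Bass--Serre theory is the natural one and is cleanly executed. The key observations --- that the edge set of $T^C$ is exactly $N/C$, so $N$ acts edge-transitively, and that the bipartition forces the quotient to be a segment rather than a loop --- are correctly identified and justified. The deduction $N_G(C)=N_A(C)\ast_C N_B(C)$ then follows, and since $C$ is normal in $N$ the quotient $N/C$ is the genuine free product $(N_A(C)/C)\ast(N_B(C)/C)$, from which the virtually-cyclic characterisation is immediate. One could shortcut slightly by noting from the outset that $C\lhd N$ forces every edge of $T^C$ to have stabiliser exactly $C$ in $N$, but you have this anyway.
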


\begin{lemme}\label{diédral22}Let $G$ be a group. Suppose that $G$ splits as an HNN extension $G=A\ast_C$ over a finite group $C$. Let $C_1$ and $C_2$ denote the two copies of $C$ in $A$ and $t$ be the stable letter associated with the HNN extension. Suppose that $N_G(C)$ is not contained in a conjugate of $A$. Then $N_G(C)$ is virtually cyclic if and only if one of the following two cases holds.
\begin{enumerate}
\item If $C_1$ and $C_2$ are conjugate in $A$ and $N_A(C_1)=C_1$, then the normalizer $N_G(C_1)$ is of cyclic type, equal to $C_1\rtimes \langle at\rangle$, where $a$ denotes an element of $A$ such that $aC_2a^{-1}=C_1$.
\item If $C_1$ and $C_2=t^{-1}C_1t$ are non-conjugate in $G$ and $C_1$ has index 2 in $N_A(C_1)$ and $N_{tAt^{-1}}(C_1)$, then the normalizer $N_G(C_1)$ is of dihedral type, equal to \[N_A(C_1)\ast_{C_1} N_{tAt^{-1}}(C_1).\]
\end{enumerate}\end{lemme}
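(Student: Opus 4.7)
The plan is to analyze the action of $N:=N_G(C_1)$ on the Bass-Serre tree $T$ associated with the HNN splitting $G=A\ast_C$. Since $C_1$ normalizes itself, $N$ preserves the fixed-point subtree $T^{C_1}\subset T$; conversely, any $g\in G$ with $gT^{C_1}=T^{C_1}$ forces $gC_1g^{-1}$ and $C_1$ to have the same fixed-point set, whence $gC_1g^{-1}=C_1$, so the setwise stabilizer of $T^{C_1}$ in $G$ is exactly $N$. Moreover every edge in $T^{C_1}$ has stabilizer equal to $C_1$ (the edge stabilizers in $T$ are $G$-conjugates of $C_1$, and a conjugate of $C_1$ containing $C_1$ must equal $C_1$ since they have the same finite order), so the pointwise stabilizer of $T^{C_1}$ is $C_1$ and $N/C_1$ acts faithfully on $T^{C_1}$ by isometries. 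The hypothesis that $N$ is not contained in a conjugate of $A$ means this action has no global fixed vertex. Since a faithful, non-elliptic isometric action of a group on a tree produces an infinite virtually cyclic group precisely when the tree is a line with cocompact action, the lemma reduces to determining when $T^{C_1}$ is a line.

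The key local computation is the valence of $T^{C_1}$ at the base vertex $v=A$. Edges of $T$ adjacent to $v$ split into two families corresponding to the two half-edges of the HNN loop in the graph of groups: one family is indexed by $A/C_1$ with edge stabilizers of the form $aC_1a^{-1}$, and the other by $A/C_2$ with edge stabilizers of the form $aC_2a^{-1}$. An edge of the first family is fixed pointwise by $C_1$ iff $a\in N_A(C_1)$, contributing $[N_A(C_1):C_1]$ edges to $T^{C_1}$; an edge of the second family is fixed iff $aC_2a^{-1}=C_1$, contributing $[N_A(C_2):C_2]$ edges when $C_1$ and $C_2$ are $A$-conjugate, and zero otherwise. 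The symmetric analysis at $tv$ (equivalently, applying the same argument inside $tAt^{-1}$, where $N_{tAt^{-1}}(C_1)$ plays the role of $N_A(C_2)$) computes the valence at $tv$. Requiring valence $2$ at each vertex singles out precisely the two configurations of the statement: either $C_1$ and $C_2$ are conjugate in $A$ with $N_A(C_1)=C_1$ (giving $1+1$ fixed edges at each vertex), or $C_1$ and $C_2$ are non-conjugate with $[N_A(C_1):C_1]=[N_{tAt^{-1}}(C_1):C_1]=2$ (giving $2+0$ at $v$ and $0+2$ at $tv$).

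Once $T^{C_1}$ is known to be a line, the structure of $N$ follows from the standard Bass-Serre dictionary applied to $N$ acting on this line. In case (1), the vertex stabilizer of $v$ in $N$ is $N_A(C_1)=C_1$, so $N/C_1$ acts freely on the line, hence by pure translations, giving $N/C_1\simeq\mathbb{Z}$; the element $at$ with $aC_2a^{-1}=C_1$ is easily checked to normalize $C_1$ and to realize a single-edge translation, yielding $N=C_1\rtimes\langle at\rangle$ of cyclic type. In case (2), the stabilizers $N_A(C_1)/C_1$ and $N_{tAt^{-1}}(C_1)/C_1$ of $v$ and $tv$ in $N/C_1$ are both of order $2$, so the action on the line is dihedral; taking $[v,tv]$ as a fundamental domain, Bass-Serre theory identifies $N$ with the amalgam $N_A(C_1)\ast_{C_1}N_{tAt^{-1}}(C_1)$ of dihedral type. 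The main delicate point is the bookkeeping at $tv$, where one must consistently match the two families of half-edges with the $tAt^{-1}$-conjugacy classes of $C_1$ and $tC_1t^{-1}$ (i.e.\ the copies of $C_2$ and $C_1$ sitting inside $tAt^{-1}$); once this convention is fixed, both directions of the equivalence follow at once from the counting above.
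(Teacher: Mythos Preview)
The paper does not actually prove this lemma: it is stated twice (once in the preliminaries as Lemma~\ref{diédral2}, and again here) as a standard Bass--Serre fact, with no argument given. Your proof is correct and is exactly the natural one: analyze the action of $N=N_G(C_1)$ on the fixed subtree $T^{C_1}$, observe that $N$ acts cocompactly with all edge stabilizers equal to $C_1$ (so $C_1\triangleleft N$ and $N/C_1$ acts faithfully with trivial edge stabilizers), and then read off the structure of $N$ from the quotient graph of groups, which has a single edge. The dichotomy loop/segment is governed by whether $C_1$ and $C_2$ are conjugate in $A$, and the valence count forces $N_A(C_1)=C_1$ in the loop case and $[N_A(C_1):C_1]=[N_{tAt^{-1}}(C_1):C_1]=2$ in the segment case, exactly as you wrote.

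Two small remarks. First, the sentence ``a faithful, non-elliptic isometric action of a group on a tree produces an infinite virtually cyclic group precisely when the tree is a line with cocompact action'' is a bit loose as a general statement; what makes it work here is that you already know the action is cocompact with a single orbit of edges and edge stabilizer $C_1$, so that $N/C_1$ is a free product of the vertex groups (segment case) or an HNN extension over the trivial group (loop case), and virtual cyclicity is then immediate to read off. Second, note that the hypothesis ``$C_1$ and $C_2=t^{-1}C_1t$ are non-conjugate in $G$'' in case~(2) is a typo in the paper (they are always $G$-conjugate via $t$); you correctly interpreted it as non-conjugate in $A$, which is the genuine dichotomy.
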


\subsection{Small test sequences}

\begin{de}[Twist]Let $G$ be a group. Suppose that $G$ splits as $A\ast_C B$ or $A\ast_C$ over a finite subgroup $C$ whose normalizer $N$ is virtually cyclic infinite and non-elliptic in the splitting. Let $\delta$ be an element of $D(N)$.
\begin{itemize}
\item[$\bullet$]If $G=A\ast_C=\langle A,t \ \vert \ tct^{-1}=\theta(c), \forall c\in C\rangle$ where $\theta\in\mathrm{Aut}(C)$, the \emph{twist} $\tau_{\delta}$ is the endomorphism of $G$ that coincides with the identity on $A$ and that maps the stable letter $t$ to $t \delta$.
\item[$\bullet$]If $G=A\ast_C B$, the twist $\tau_{\delta}$ is the endomorphism of $G$ that coincides with the identity on $A$, and that coincides with $\mathrm{ad}(\delta)$ on $B$.
\end{itemize}
\end{de}

\begin{rque}\label{inj nashville}Note that $\tau_{\delta}$ is well-defined in both cases because $\delta$ centralizes $C$, as en element of $D(N)$. Moreover, in both cases, $\tau_{\delta}$ is a monomorphism: in the first case, it suffices to observe that $t\delta$ has infinite order, which is true since $t$ has infinite order and $\delta$ is a power of $t^{2K!}$; in the second case, the injectivity is automatic thanks to Britton's lemma. In addition, in the second case, note that $\tau_{\delta}$ maps $t=ba$ to $\delta b\delta^{-1}a=t^rbt^{-r}a=(ba)^rb(ba)^{-r}a=(ba)^{2r+1}=t^{2r+1}=t\delta^2$ for some multiple $r$ of $2K!$.\end{rque}

%Let $G$ be a group. Suppose that $G$ splits as $A\ast_C B$ or $A\ast_C$ over a finite subgroup $C$ whose normalizer $N$ is virtually cyclic infinite and non-elliptic in the splitting. Recall that the amalgamated product \[\Gamma=G\ast_N N'=\langle G, N' \ \vert \ g=\iota(g), \ \forall g\in N\rangle\] is called a \emph{legal small extension} of $G$, where $N'$ is virtually cyclic, with two special embeddings $\iota : N\hookrightarrow N'$ and $\iota' : N'\hookrightarrow N$. 

By analogy with test sequences defined in the previous section, we introduce below the notion of a \emph{small test sequence}, designed for legal small extensions.

\begin{lemme}Let $N$ and $N'$ be two virtually cyclic infinite groups. Let $C$ and $C'$ be the maximal normal finite subgroups of $N$ and $N'$ respectively. Suppose that there exist two embeddings $\iota : N \hookrightarrow N'$ and $\iota ' : N'\hookrightarrow N$. Then $\iota(C)=C'$ and $\iota'(C')=C$.\end{lemme}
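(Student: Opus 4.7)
The plan is to give an intrinsic characterization of the maximal finite normal subgroup $C$ inside $N$ that is manifestly preserved by any injection from $N$ into another infinite virtually cyclic group, and then to compare orders. The characterization I would use is
\[ C = \{g \in N : g \text{ has finite order and } [N : C_N(g)] < \infty\}, \]
and the same for $C'$ in $N'$. Once this is established, the two inclusions $\iota(C) \subseteq C'$ and $\iota'(C') \subseteq C$ follow from the trivial observations that $\iota$ and $\iota'$ preserve finite order and carry finite-index centralizers to finite-index centralizers, together with the fact that $\iota(N)$ has finite index in $N'$ (and symmetrically). Equality of orders then forces the inclusions to be equalities.

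The characterization splits by type. The inclusion $\subseteq$ holds in both cases: every $c \in C$ has finite order, and the conjugation action of $N$ on the finite group $C$ factors through $\mathrm{Aut}(C)$, so its kernel is a finite-index subgroup of $N$ contained in $C_N(c)$. For the reverse inclusion, in the cyclic case $N = C \rtimes \langle t\rangle$ the torsion subgroup of $N$ is visibly $C$, and we are done. In the dihedral case, let $N^+$ denote the kernel of $N \twoheadrightarrow D_\infty \twoheadrightarrow \mathbb{Z}/2\mathbb{Z}$, so $N^+ \cong C \rtimes \mathbb{Z}$ has index $2$ in $N$ and its torsion is $C$. It then suffices to show that any $g \in N \setminus N^+$ has finite centralizer. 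The image $gC$ in $N/C \cong D_\infty$ is a reflection, whose centralizer in $D_\infty$ has order $2$; since the kernel of the induced map $C_N(g) \to C_{N/C}(gC)$ is contained in the finite group $C$, one gets $|C_N(g)| \leq 2|C|$, which is finite, hence $[N : C_N(g)] = \infty$, so such a $g$ is excluded from the right-hand side.

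To apply the characterization, I would first observe that $\iota(N)$ has finite index in $N'$: picking any infinite cyclic subgroup $\langle t'\rangle$ of finite index in $N'$, the subgroup $\iota(N) \cap \langle t'\rangle$ has finite index in $\iota(N)$, hence is infinite, hence equals $\langle t'^k\rangle$ for some $k \geq 1$, which has finite index in $N'$. Now for $c \in C$, $\iota(c)$ has finite order, and
\[ [N' : C_{N'}(\iota(c))] \leq [N' : \iota(C_N(c))] = [N' : \iota(N)] \cdot [N : C_N(c)] < \infty, \]
so $\iota(c) \in C'$ by the characterization. Thus $\iota(C) \subseteq C'$ and, by symmetry, $\iota'(C') \subseteq C$. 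Since $\iota$ and $\iota'$ are injective this gives $|C| \leq |C'| \leq |C|$, so the orders agree and the inclusions are equalities. The one step requiring genuine care is the dihedral half of the characterization, namely verifying that every element outside $N^+$ has finite centralizer; everything else (the cyclic case, the finite-index property of $\iota(N)$, and the multiplicativity of indices) is standard.
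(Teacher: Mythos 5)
Your proof is correct, and it takes a genuinely different route from the paper's. The paper first observes that $N$ and $N'$ must have the same type (cyclic or dihedral, since a dihedral-type group does not embed into a cyclic-type one), handles the cyclic case by noting that the torsion of $C\rtimes\mathbb{Z}$ is exactly $C$, and handles the dihedral case by classifying the finite subgroups of $N$ not contained in $C$ (all of the form $n\langle C_1,a\rangle n^{-1}$ or $n\langle C_1,b\rangle n^{-1}$) and observing that these have finite normalizers, whereas $\iota'(C')$ is normalized by the infinite group $\iota'(N')$. You instead isolate an intrinsic, embedding-stable characterization of $C$ — finite order together with finite-index centralizer — verify it in each type, and push it through $\iota$ using the finite-index property of $\iota(N)$ in $N'$. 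What your approach buys: you never need to know that $N$ and $N'$ have the same type, you avoid the classification of finite subgroups in the dihedral case (which the paper asserts somewhat tersely), and the final transfer step is uniform across both types. What the paper's approach buys: in the cyclic case it is essentially immediate, and its normalizer criterion is the one reused elsewhere in the paper. All the individual steps of your argument check out — in particular the dihedral computation that a reflection in $D_\infty$ has centralizer of order $2$, so that $\vert C_N(g)\vert\leq 2\vert C\vert$ for $g\notin N^{+}$, and the index estimate $[N':C_{N'}(\iota(c))]\leq [N':\iota(N)]\cdot[N:C_N(c)]$.
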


\begin{proof}
Note that $N$ and $N'$ are both of cyclic type or of dihedral type, since a virtually cyclic group of dihedral type does not embed into a virtually cyclic group of cyclic type.

\emph{First case.} Suppose that $N$ and $N'$ are of cyclic type. Then $N=C\rtimes\mathbb{Z}$ and $N'=C'\rtimes \mathbb{Z}$. It follows that $\iota(C)\subset C'$ and $\iota'(C')\subset C$. Hence $C$ and $C'$ have the same cardinality, and we have $\iota(C)= C'$ and $\iota'(C')= C$.

\emph{Second case.} Suppose that $N$ and $N'$ are of dihedral type. There exist two elements $a,b\in N$ such that $N=\langle C,a,b \ \vert \ a^2\in C, b^2\in C, aC=Ca, bC=Cb\rangle$. Note that all finite subgroups of $N$ that are not contained in $C$ are of the form $n\langle C_1,a\rangle n^{-1}$ or $n \langle C_1 , b \rangle n^{-1}$ with $C_1\subset C$ and $n\in N$. In addition, note that the normalizers of $a$ and $b$ in $N$ are finite. Thus, the normalizer of the finite groups $\langle C_1,a\rangle$ and $\langle C_1,b\rangle$ are finite. Then, observe that the normalizer of $\iota'(C')$ is equal to $\iota'(N')$, which is infinite since $\iota'$ is injective and $N'$ is infinite. It follows that $\iota'(C')$ is contained in $C$. Likewise, $\iota(C)$ is contained in $C'$. Hence $C$ and $C'$ have the same cardinality, and we have $\iota(C)= C'$ and $\iota'(C')= C$.\end{proof} 

Let $G$ be a hyperbolic group that splits over a finite group $C$ whose normalizer $N$ is virtually cyclic infinite and non-elliptic in the splitting. Let \[\Gamma=G\ast_N N'=\langle G, N' \ \vert \ g=\iota(g), \ \forall g\in N\rangle\] be a legal small extension of $G$, where $N'$ is virtually cyclic and $\iota : N \hookrightarrow N'$ is $K$-nice. Let $C$ and $C'$ be the maximal normal finite subgroups of $N$ and $N'$ respectively. By the previous lemma, $\iota(C)$ is equal to $C'$. As a consequence, in $\Gamma$, we have $C=C'$. We make the following two observations.

\begin{enumerate}
\item The group $\Gamma$ splits as $A'\ast_C B'$ or $A'\ast_C$. The normalizer $N_{\Gamma}(C)$ of $C$ in $\Gamma$ is equal to $N'$. This group is virtually cyclic infinite and non-elliptic in the previous splitting of $\Gamma$ over $C$. 
\item The maximal order of a finite subgroup is the same for $\Gamma$ and $G$. Indeed, every finite subgroup $F$ of $\Gamma$ is contained in a conjugate of $G$ or in a conjugate of $N'$. In the second case, $F$ embeds into $N$ since there exists an embedding from $N'$ into $N$. Hence, in both cases, $F$ embeds into $G$. As a consequence, $K_{\Gamma}$ is equal to $K_G$ and there is no ambiguity about the notation $D(N')$
\end{enumerate}

Thanks to these two observations, the Dehn twist $\tau_{\delta}$ is well-defined, as an endomorphism of $\Gamma$, for any element $\delta\in D(N')$. We are now ready to define small test sequences.

\begin{de}[Small test sequence]\label{testsuite}Let $G$ be a hyperbolic group. Suppose that $G$ splits over a finite group $C$ whose normalizer $N$ is virtually cyclic infinite and non-elliptic in the splitting. Let \[\Gamma=G\ast_N N'=\langle G, N' \ \vert \ g=\iota(g), \ \forall g\in N\rangle\] be a legal small extension of $G$, where $N'$ is virtually cyclic and $\iota : N \hookrightarrow N'$ is $K$-nice. A sequence of homomorphisms $(\varphi_n : \Gamma \rightarrow G)_{n\in\mathbb{N}}$ is called a small test sequence if there exist a strictly increasing sequence of prime numbers $(p_n)_{n\in\mathbb{N}}$ and a sequence $(\delta_n)_{n\in\mathbb{N}}\in D(N')^{\mathbb{N}}$ such that $\varphi_n=\tau_{\delta_n}$ (viewed as an endomorphism of $\Gamma$) and $[N:\varphi_n(N')]=p_n$, for every integer $n$.\end{de}

%\begin{rque}In fact, the hypothesis that $\delta_n$ belongs to $Z_{\Gamma}(C)$ is automatic provided that $\delta_n$ belongs to $Z^{\pm}(N')$ and $p_n$ goes to infinity. Indeed, if $\delta_n$ does not belong to $Z_{\Gamma}(C)$, there is an element $c\in C$ such that $c\delta_n c^{-1}=\delta_n^{-1}$, so $\delta_n^2$ belongs to $C$ and $\psi\ circ \varphi_n$ is an automorphism. It follows that $(\varphi_n)_{\vert N} : N \rightarrow N'$ is an isomorphism, so $p_n=1$ for every $n$, a contradiction.\end{rque}

The following lemma shows that small test sequences exist as soon as $\iota : N \hookrightarrow N'$ is not surjective.

\begin{lemme}\label{guaranteed nash}Let $G$ be a hyperbolic group. Suppose that $G$ splits over a finite group $C$ whose normalizer $N=N_G(C)$ is virtually cyclic infinite and non-elliptic in the splitting. Let $\Gamma=G\ast_N N'$ be a legal small extension of $G$. Suppose that $N$ is a strict subgroup of $N'$ in $\Gamma$. Then, there exists a small test sequence $(\varphi_n : \Gamma \rightarrow G)_{n\in\mathbb{N}}$.\end{lemme}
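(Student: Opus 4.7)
The plan is to construct the small test sequence via Dehn twists, choosing the twisting parameter $\delta_n \in D(N')$ so that $\tau_{\delta_n}(N') \subset N$ with prime index $p_n$; by design the image of $\tau_{\delta_n}$ will then lie in $G$. Before any computation, I set up the splitting of $\Gamma$ over $C$: since $N = N_G(C)$ is non-elliptic in $G$'s splitting, $\Gamma = G \ast_N N'$ inherits a splitting over $C$ of the same form (HNN $A' \ast_C$ or amalgam $A' \ast_C B'$) with $N_\Gamma(C) = N'$ still non-elliptic. In all cases the vertex group(s) of this splitting sit inside $G$, so to force $\tau_{\delta_n}(\Gamma) \subset G$ it is enough to ensure $\tau_{\delta_n}(N') \subset N$.

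Next I extract the key divisibility constraint from $K$-niceness of $\iota : N \hookrightarrow N'$. Writing $N'^+ = C \rtimes \langle t'\rangle$, Corollary \ref{introintrointrointro} identifies $D(N') = \langle t'^{2K!}\rangle$. A brief computation using $\alpha^{2K!} = \mathrm{id}_C$ (for $\alpha := \mathrm{ad}(t')_{\vert C}$) shows that every $2K!$-th power in $N'$ already lies in $\langle t'^{2K!}\rangle$, the $C$-part collapsing. Combined with the third clause of Definition \ref{special0}, this implies that the image in $N'/D(N')$ of the infinite-order generator of $N^+/C$ has order exactly $2K!/\gcd(m, 2K!)$, where $m := [N'^+ : \iota(N)^+]$. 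The required injectivity of the induced quotient map therefore forces $\gcd(m, 2K!) = 1$.

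Then I compute $\tau_\delta$ for $\delta = t_0^k$, with $t_0$ the canonical generator of $D(N')$. A case analysis on the splitting type (HNN versus amalgam) and on the virtually cyclic type of $N'$ (cyclic versus dihedral, per Lemmas \ref{diédral11} and \ref{diédral22}) shows that in every case $[N' : \tau_\delta(N')] = |1 + c \cdot 2K! \, k|$ for some integer $c \in \{1, 2\}$. The factor $c = 2$ arises in the dihedral regime because the non-central generators of $N'$ invert $\delta$ (the computation $b'\delta b'^{-1} = \delta^{-1}$, $a'\delta a'^{-1} = \delta^{-1}$), yielding $\tau_\delta(t') = t'\delta^2$ in place of $t'\delta$.

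Finally I apply Dirichlet's theorem. Solving $1 + 2cK!\,k \equiv 0 \pmod{m}$, which is solvable since $\gcd(m, 2cK!) = 1$, produces a solution set $k = k_0 + m\ell$, and then $[N : \tau_\delta(N')] = (1 + 2cK!\,k)/m = q_0 + 2cK!\,\ell$ with $\gcd(q_0, 2cK!) = 1$ by construction. Dirichlet's theorem on primes in arithmetic progressions then supplies infinitely many primes $p_n = q_0 + 2cK!\,\ell_n$; setting $\delta_n := t_0^{k_0 + m\ell_n}$ and $\varphi_n := \tau_{\delta_n}$ produces the small test sequence, since Remark \ref{inj nashville} ensures that $\tau_{\delta_n}$ is a well-defined endomorphism of $\Gamma$ and divisibility by $m$ places its image inside $G$. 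The main obstacle I anticipate is the dihedral bookkeeping: the second clause of Definition \ref{special0} is needed so that $\iota$ matches the two $N$-conjugacy classes of index-$2$ finite subgroups with those of $N'$, which is required both to interpret $\iota(N)$ as a genuine dihedral subgroup of $N'$ of index $m$ and to verify that $\tau_\delta(N')$ actually lies inside $\iota(N)$ once the congruence on $k$ is satisfied.
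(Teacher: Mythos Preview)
Your overall strategy is correct and parallels the paper's closely: both arguments parametrize the candidate homomorphisms by Dehn twists, extract a coprimality condition, and invoke Dirichlet. The one genuine difference is that the paper starts from the \emph{reverse} nice embedding $\iota':N'\hookrightarrow N$ (supplied by legality), normalizes it so that $\iota'|_C=\mathrm{id}$, $\iota'(a')=a$ and $\iota'$ induces the identity on $N'/D(N')$, and then twists $\iota'$ by powers of $t^{K!}$ to obtain $\iota'_n:N'\to N$, which it extends to $\varphi_n:\Gamma\to G$ via the tree-of-cylinders splitting $\Gamma=A\ast_{C_1}N'\ast_{C_2}B$. You instead never touch $\iota'$: you work only with the inclusion $\iota:N\hookrightarrow N'$, deduce $\gcd(m,2K!)=1$ from the third clause of $K$-niceness (this is correct: since $|N/D(N)|=|N'/D(N')|$, injectivity of $\bar\iota$ forces surjectivity, hence the induced map $\mathbb Z/2K!\to\mathbb Z/2K!$, $\bar t\mapsto\bar z^{\,m}$, is onto), and then solve the congruence directly. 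That is a legitimate and slightly more economical route.

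There is, however, a real gap in your reduction step. Your sentence ``in all cases the vertex group(s) of this splitting sit inside $G$'' is false in the dihedral amalgam case. With the normalization $a'=a$ one does get $A'=A\subset G$, but the other vertex group is $B'=z^{-(m-1)/2}\,B\,z^{(m-1)/2}$, conjugated by an element of $N'\smallsetminus N$ when $m>1$; so $B'\not\subset G$. The implication you actually need, $\tau_\delta(N')\subset N\Rightarrow\tau_\delta(\Gamma)\subset G$, is nevertheless true, but for a different reason: writing $B=wB'w^{-1}$ with $w=z^{(m-1)/2}\in N'$, one has $\tau_\delta(B)=hBh^{-1}$ with $h=\tau_\delta(w)\,\delta\,w^{-1}\in N'$; a short computation shows $\delta w^{-1}=z^{2K!k-(m-1)/2}\in N$ if and only if $m\mid 1+4K!k$, which is exactly the condition $\tau_\delta(N')\subset N$. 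Since $\tau_\delta(w)\in\tau_\delta(N')\subset N$ as well, $h\in N\subset G$ and hence $\tau_\delta(B)\subset G$, giving $\tau_\delta(\Gamma)=\langle A,\tau_\delta(B),\tau_\delta(N')\rangle\subset G$. So your congruence and your Dirichlet step are fine; what needs repair is only the bridge from $\tau_\delta(N')\subset N$ to $\tau_\delta(\Gamma)\subset G$. The paper sidesteps this issue entirely by building $\varphi_n$ from $\iota'_n$ on $N'$ and extending by $\mathrm{id}$ on $A$ and an inner automorphism on $B$ through Lemma~\ref{s'étend}, so that landing in $G$ is automatic.
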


%Every element of $N=N_G(C)$ of infinite order is of the form $ct^k$, with $c\in C$ and $k\neq 0$. If $x,y\in N$ have infinite order, the notation $x\sim y$ means that $x^{-1}y$ belongs to $C$.

\begin{proof}By definiton of a legal small extension, there exist two $K$-nice embeddings $\iota : N \hookrightarrow N'$ and $\iota' : N'\hookrightarrow N$. Let $C$ and $C'$ be the maximal normal finite subgroups of $N$ and $N'$ respectively. In $\Gamma$, we have the identification $\iota(n)=n$ for every $n\in N$. In particular, $C$ and $C'$ are identified. In the sequel, we do not mention $\iota$ anymore. We distinguish two cases.

\smallskip

\emph{First case.} Suppose that $N$ is virtually cyclic of cyclic type. Since $\iota'$ is special, there exists an integer $m\geq 1$ such that $\iota'^m$ coincides with the identity on $C$ and induces the identity of $N'/D(N')$. Up to replacing $\iota'$ by $\iota'^{m}$, one can assume without loss of generality that $\iota'$ coincides with the identity on $C$ and induces the identity of $N'/D(N')$. 

Let $t$ and $z$ denote two elements such that such that $N=N_G(C)=C\rtimes \langle t\rangle$ and $N'=N_{\Gamma}(C)=C\rtimes \langle z\rangle$. Recall that $D(N')=\langle z^{2K!}\rangle$, by Corollary \ref{introintrointrointro}. Since $\iota'$ induces the identity of $N'/D(N')$, we have $\iota'(z)=z^{1+ 2K!q}$ for some integer $q$. Note that $q$ is non-zero because $N$ is a proper subgroup of $N'$ by assumption. Let $k$ and $\ell$ denote two integers such that $t=z^{\ell} \mod C$ and $\iota'(z)=t^k\mod C$. It follows that $k\ell=1+2K!q$. In particular, $\gcd(k,2K!)=1$. By the Dirichlet prime number theorem, there exists a strictly increasing sequence of integers $(\lambda_n)_{n\in\mathbb{N}}$ such that $p_n:=k+2K!\lambda_n$ is prime for every integer $n$. Let $\delta=t^{2K!}$, and let us define $\iota'_n : N' \rightarrow N$ by $\iota'_n(z)=\iota'(z)\delta^{\lambda_n}$ and $\iota'_n(c)=\iota'(c)=c$ for every $c\in C$. This homomorphism is well-defined: if $zcz^{-1}=c'$, then $\iota'_n(zcz^{-1})=\iota'(z)\delta^{\lambda_n}\iota'(c)(\iota'(z)\delta^{\lambda_n})^{-1}=\iota'(z)\iota'(c)\iota'(z)^{-1}=\iota'(czc^{-1})=\iota'(c')=\iota'_n(c')$, because $\delta$ belongs to the center $Z(N)$ of $N$. An easy calculation gives $\iota'_n(z)=t^{p_n} \mod C$. As a consequence $p_n=[N:\iota'_n(N')]$.

Last, by considering the decompositions $G=A\ast_C N$ and $\Gamma=A\ast_C N'$, one can define a homomorphism $\varphi_n : \Gamma \rightarrow G$ that coincides with the identity on $A$ and with $\iota'_n$ on $N'$ (well-defined since $\iota'_n$ coincides with the identity on $C$). Note that $\delta_n=z^{-1}\varphi_n(z)=z^{-1}t^{p_n}=z^{\ell p_n-1}=z^{\ell k+2K!\ell \lambda_n-1}=z^{2K!(\ell\lambda_n-q)}$ belongs to $D(N')$. Hence, the sequence $(\varphi_n)_{n\in\mathbb{N}}$ is a small test sequence in the sense of Definition \ref{testsuite}.

\smallskip

\emph{Second case.} Suppose that $N$ is virtually cyclic of dihedral type. It splits as $N=\langle C,a\rangle\ast_C \langle C,b\rangle$ with $a,b$ of order 2 modulo $C$. There exists two elements $a'$ and $b'$ of order 2 modulo $C$ such that $N'=\langle C,a'\rangle\ast_{C} \langle C,b'\rangle$. Up to exchanging $a'$ and $b'$, one can suppose without loss of generality that $a$ is a conjugate of $a'$ in $N'$ (modulo $C$) and that $b$ is a conjugate of $b'$ (modulo $C$); indeed, the inclusion of $N$ into $N'$ maps non-conjugate finite subgroups to non-conjugate finite subgroups ($\iota$ is $K$-nice).

Since $\iota'$ is $K$-nice, $\iota'(a')$ and $\iota'(b')$ are not conjugate modulo $C$. Hence, there exists an integer $j\in\lbrace 1,2\rbrace$ such that $(\iota'\circ\iota)^j$ maps $a'$ to a conjugate $gag^{-1}$ of $a$, with $g\in G$, and $b'$ to a conjugate of $b$. Up to replacing $\iota'$ by $\mathrm{ad}(g^{-1})\circ\iota'^{j}$, one can assume without loss of generality that $\iota'(a')=a$. Then, note that there exists an integer $m\geq 1$ such that $\iota'^m$ coincides with the identity on $C$ and induces the identity of $N'/D(N')$. Up to replacing $\iota'$ by $\iota'^{m}$, one can assume without loss of generality that $\iota'$ coincides with the identity on $C$ and induces the identity of $N'/D(N')$. 

%From now on, we identify $C'$ and $C$ via $\iota'$. Similarly, we identify $\langle a',C\rangle$ with $\langle a,C\rangle$. 

Let us define $z$ by $z=b'a'\in N'\subset \Gamma$. Note that $\iota'(z)=z^{1+ 2K!q}$ for some integer $q$, because $\iota'$ induces the identity of $N'/D(N')$ and $D(N')=\langle z^{2K!}\rangle$, by Corollary \ref{introintrointrointro}. The integer $q$ is non-zero since $N$ is a strict subgroup of $N'$ by assumption. Let $k$ and $\ell$ denote two integers such that $t=z^{\ell} \mod C$ and $\iota'(z)=t^k\mod C$. It follows that $k\ell=1+2K!q$. In particular, $\gcd(k,2K!)=1$. By the Dirichlet prime number theorem, there exists a strictly increasing sequence of integers $(\lambda_n)_{n\in\mathbb{N}}$ such that $p_n:=k+2K!\lambda_n$ is prime for every integer $n$. Let $\delta=t^{K!}$, and let us define $\iota'_n : N' \rightarrow N$ by $\iota'_n=\iota'$ on $\langle C,a'\rangle$ and $\iota'_n=\mathrm{ad}(\delta^{\lambda_n})\circ\iota'$ on $\langle C,b'\rangle$. This homomorphism is well-defined since $\delta$ centralizes $C$ and $N'$ splits as $N'=\langle C,a'\rangle\ast_C\langle C,b'\rangle$. Since $\iota'(a')=a$ and $t=ba$, the following series of equalities holds (modulo $C$):
\begin{align*}
\iota'_n(z)&=\delta^{\lambda_n}\iota'(b')\delta^{-\lambda_n}\iota'(a')\\
&= \delta^{\lambda_n}\iota'(b')(ba)^{-\lambda_n K!}a\\
&=\delta^{\lambda_n}\iota'(b')(ab)^{\lambda_n K!}a\\
&=\delta^{\lambda_n}\iota'(b')a(ba)^{\lambda_n K}\\
&=\delta^{\lambda_n}\iota'(b'a')\delta^{\lambda_n}\\
 &= \iota'(z)\delta^{2\lambda_n}\\
 &= t^{p_n}.
\end{align*}
It follows that $p_n=[N:\iota'_n(N')]$.

Last, as in the first case, $\iota'_n$ extends to a homomorphism $\varphi_n : \Gamma \rightarrow G$ that coincides with the identity on $A$ and with an inner automorphism on $B$.\end{proof}

%It remains to show that $\iota'_n$ extends to a homomorphism $\varphi_n : \Gamma \rightarrow G$ that coincides with the identity on $A$ and with an inner automorphism on $B$. This can be achieved as in the first case by considering the trees of cylinders of the splittings of $G$ and $\Gamma$ over $C$, combined with Lemma \ref{s'étend}.

%$G=A\ast_{\langle C,a\rangle} N\ast_{\langle C,b\rangle} B$ and $\Gamma=A\ast_{\langle C,a\rangle} N\ast_{\langle C,b'\rangle} B$

%with $\iota'_n$ on $C$ (voir petit lemme). The sequence $(\varphi_n)$ is a test sequence in the sense of Definition \ref{testsuite}.

%$\iota'_n$ s'étend en $\varphi_n : \Gamma\rightarrow G$.

%Le cas centre infini est très clair car $G=A\ast_C N$ et $G'=A\ast_C N'$. Les deux autres cas sont moins clairs car les scindements sont plus compliqués : $G=A\ast_{\langle C,a\rangle}N\ast_{\langle C,b\rangle} B$ avec $[\langle C,a\rangle: C]=2$ et $[\langle C,b\rangle:C]=2 $

\subsection{Main result}

We will prove the difficult part of Theorem \ref{jjjjjjj}, that is the following result.

\begin{te}\label{mainsac}Let $G$ be a hyperbolic group. Suppose that $G$ splits over a finite subgroup $C$ whose normalizer $N$ is virtually cyclic and non-elliptic in the splitting. Let $\Gamma=G\ast_N N'$ be a legal small extension of $G$. Then $\mathrm{Th}_{\forall\exists}(G)= \mathrm{Th}_{\forall\exists}(\Gamma)$.\end{te}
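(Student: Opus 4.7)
The plan is to parallel the proof of Proposition \ref{theorie2} for legal large extensions, replacing the test sequences of Definition \ref{suitetest} by the small test sequences of Definition \ref{testsuite} produced by Lemma \ref{guaranteed nash} (the degenerate case $\iota(N)=N'$ being trivial). Unlike the large case, however, one cannot hope for $G$ to be $\exists\forall$-elementarily embedded in $\Gamma$, so the two inclusions $\mathrm{Th}_{\forall\exists}(G)\subseteq\mathrm{Th}_{\forall\exists}(\Gamma)$ and $\mathrm{Th}_{\forall\exists}(\Gamma)\subseteq\mathrm{Th}_{\forall\exists}(G)$ must be established separately.

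I would first handle the easier inclusion $\mathrm{Th}_{\forall\exists}(\Gamma)\subseteq\mathrm{Th}_{\forall\exists}(G)$. Fix a small test sequence $\varphi_n:\Gamma\to G$; a Britton's lemma argument, using the primality of $[N:\varphi_n(N')]=p_n$ together with $p_n\to\infty$, shows that $(\varphi_n)_{n\in\mathbb{N}}$ is discriminating. The key observation is that, restricted to $G$, each $\varphi_n$ equals the Dehn twist $\tau_{\delta_n}$, which (as already observed in Remark \ref{inj nashville}) is an automorphism of $G$. Hence, given $\forall\bm{x}\exists\bm{y}\,\psi(\bm{x},\bm{y})\in\mathrm{Th}_{\forall\exists}(\Gamma)$ and any $\bm{x}\in G^m$, a witness $\bm{y}\in\Gamma^n$ in $\Gamma$ yields, for $n$ sufficiently large that discrimination preserves all inequations in $\psi(\bm{x},\bm{y})$, the statement $G\models\psi(\tau_{\delta_n}(\bm{x}),\varphi_n(\bm{y}))$; applying the automorphism $\tau_{\delta_n}^{-1}$ of $G$ provides the required witness $\tau_{\delta_n}^{-1}(\varphi_n(\bm{y}))\in G^n$.

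For the harder inclusion $\mathrm{Th}_{\forall\exists}(G)\subseteq\mathrm{Th}_{\forall\exists}(\Gamma)$, the strategy is to establish a direct analogue of the generalized Sacerdote lemma (Proposition \ref{sacerdote1}) in which a small test sequence plays the role of the large-extension test sequence. Granted this analogue, the passage to arbitrary $\forall\exists$-sentences runs verbatim as in Corollary \ref{sacerdote2} and the proof of Proposition \ref{theorie2}. Concretely, one would form $\Gamma_\Sigma=\langle\Gamma,\bm{y}\mid\Sigma(\bm{\gamma},\bm{y},\bm{g})=1\rangle$, extend each $\varphi_n$ to $\widehat\varphi_n:\Gamma_\Sigma\to G$, pass to the $G$-limit group $L$ (equationally noetherian by Theorem \ref{eqnoethhyp333}), shorten the induced sequence $\rho_n:L\to G$ relative to $\Gamma\subset L$ via Proposition \ref{nashpaulinnash}, and run the dichotomy of Lemma \ref{abovelemma}: either $\Gamma$ is not elliptic in the limit tree $T$, in which case a transverse covering of $T$ by the $L$-translates of the minimal $\Gamma$-invariant subtree $T_\Gamma$ yields a discriminating sequence of retractions $L\twoheadrightarrow\Gamma$; or $\Gamma$ is elliptic, and one passes to a proper $G$-limit quotient $L_1$ and iterates, the procedure terminating in finitely many steps by the descending chain condition of Theorem \ref{chainesela}.

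The main obstacle, carrying the bulk of the technical novelty, is the geometric analogue of Lemma \ref{lemme2}: proving that in the non-elliptic case, $T_\Gamma$ is isometric, up to rescaling, to the Bass-Serre tree of the splitting $\Gamma=A'\ast_C B'$ or $\Gamma=A'\ast_C$. In the large case the required rigidity was extracted from the $\varepsilon_n$-small cancellation condition imposed on the stable letters $t_n$. Here the small-cancellation input must instead be extracted from the arithmetic structure of the twist: $\delta_n\in D(N')=\langle z^{2K!}\rangle$ has exponent tending to infinity, and the primality of $p_n=[N:\varphi_n(N')]$ together with the $K_G$-niceness of $\iota'$ forces any potential folding between two translates of $T_\Gamma$ at a vertex stabilized by a conjugate of $N'$ to descend to an equality of cosets in $N'/D(N')$ that is incompatible with Britton's lemma applied to $\Gamma=G\ast_N N'$. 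Once this rigidity is in hand, the remaining ingredients — the transverse-covering and skeleton analysis of Definitions \ref{transverse} and \ref{squelette}, the one-endedness of the limit factor $U$ relative to $\Gamma$, and the Rips machine (Theorem \ref{guirardel1}) combined with Theorem \ref{reinfeldt} — transfer essentially verbatim from Section \ref{HNN}.
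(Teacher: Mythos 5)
Your overall architecture is reasonable, but there are two genuine problems. First, the ``easy'' inclusion $\mathrm{Th}_{\forall\exists}(\Gamma)\subseteq\mathrm{Th}_{\forall\exists}(G)$ as you argue it is broken: Remark \ref{inj nashville} only asserts that the twist $\tau_{\delta_n}$ is a \emph{monomorphism}, not an automorphism. Its restriction to $G$ has image $\langle A,\iota'_n(N)\rangle$, a proper subgroup of $G$ (by Britton's lemma, since $[N:\varphi_n(N')]=p_n>1$), so $\tau_{\delta_n}^{-1}$ does not exist on $G$ and the final pullback step producing a witness for the original tuple $\bm{x}$ fails. The paper avoids this entirely via Lemma \ref{symétrie}: the injective twist $\varphi:\Gamma\to G$ exhibits $G=\varphi(\Gamma)\ast_{N''}N$ as a legal small extension of $\Gamma$, so the hard inclusion, proved once, applies symmetrically to both pairs. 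There is no shortcut direction here; both inclusions require the Sacerdote-type machinery.

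Second, for the hard inclusion you misidentify the main obstacle. You aim to show that the minimal $\Gamma$-subtree $T_\Gamma$ of the limit tree is isometric to the Bass--Serre tree of the splitting of $\Gamma$, in analogy with Lemma \ref{lemme2}. That is not the right statement and is not what happens: in the limit tree the element $z$ acts hyperbolically with axis $d$, and the global stabilizer $S=N_U(C)$ of $d$ is a finitely generated virtually abelian group which is in general virtually $\mathbb{Z}^m$ with $m\geq 2$, possibly acting on $d$ with dense orbits. The geometric step is to decompose $U$ as a graph of groups with $S$ as a vertex group (via a transverse covering when the action of $S$ on $d$ is discrete, and via the Rips machine together with an unstable-arc/tripod analysis when it is not). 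The real technical core, which you dismiss as transferring ``essentially verbatim'', is the construction of the discriminating retractions $r_n:L\twoheadrightarrow\Gamma$ out of this higher-rank vertex group: one uses the primality of $p_n$ to show $\chi(z)$ is primitive in $V\cong\mathbb{Z}^m$, chooses a basis of $V$ containing $\chi(z)$, defines $r_n$ on the extra generators via $\tau_{f(n)}\circ\rho_n$ for a carefully tuned function $f$, checks well-definedness using the centrality of $\delta_n\in D(N')$, and then verifies discrimination by a normal-form analysis that splits into cyclic and dihedral cases (the dihedral case alone requiring a lengthy case-by-case study of junction subwords). None of this is present in, or deducible from, the large-extension argument of Section \ref{HNN}.
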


The proof of this theorem relies on the following lemma, which can be viewed as an analogue of Proposition \ref{sacerdote1}.

\begin{lemme}\label{sacerdote3}We keep the same notations as in the statement of Theorem \ref{mainsac}. Let $(\varphi_n : \Gamma\rightarrow G)_{n\in\mathbb{N}}$ be a small test sequence (whose existence is guaranteed by Lemma \ref{guaranteed nash}). Let $\Sigma(\boldsymbol{x},\boldsymbol{y})=1\wedge \Psi(\boldsymbol{x},\boldsymbol{y})\neq 1$ be a system of equations and inequations, where $\boldsymbol{x}$ denotes a $p$-tuple of variables and $\boldsymbol{y}$ denotes a $q$-tuple of variables. Let $\boldsymbol{\gamma}\in\Gamma^p$. Suppose that $G$ satisfies the following condition: for every integer $n$, there exists $\boldsymbol{g}_n\in G^q$ such that \[\Sigma(\varphi_n(\boldsymbol{\gamma}),\boldsymbol{g}_n)=1 \ \wedge \ \Psi(\varphi_n(\boldsymbol{\gamma}),\boldsymbol{g}_n)\neq 1.\] Then there exists a retraction $r$ from $\Gamma_{\Sigma,\boldsymbol{\gamma}}=\langle \Gamma , \boldsymbol{y} \ \vert \ \Sigma(\boldsymbol{\gamma},\boldsymbol{y})=1\rangle$ onto $\Gamma$ such that each component of the tuple $r(\Psi(\boldsymbol{\gamma},\boldsymbol{y}))$ is non-trivial. In particular, the $q$-tuple $\boldsymbol{\gamma}':=r(\boldsymbol{y})\in \Gamma^q$ satisfies \[\Sigma(\boldsymbol{\gamma},\boldsymbol{\gamma}')=1 \ \wedge \ \Psi(\boldsymbol{\gamma},\boldsymbol{\gamma}')\neq 1.\]\end{lemme}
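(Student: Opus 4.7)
The plan is to follow the proof of Proposition \ref{sacerdote1} line by line, adapting each step to the small-extension setting. Form $\Gamma_{\Sigma, \bm{\gamma}} = \langle \Gamma, \bm{y} \mid \Sigma(\bm{\gamma}, \bm{y}) = 1\rangle$ and extend each $\varphi_n$ to $\widehat{\varphi}_n : \Gamma_{\Sigma, \bm{\gamma}} \to G$ by $\bm{y} \mapsto \bm{g}_n$. Let $L$ be the associated $G$-limit group, with projection $\pi$ and induced homomorphisms $\rho_n : L \to G$ (well-defined by Theorem \ref{eqnoethhyp333}). A first step is to verify that the small test sequence is discriminating on $\Gamma$: any non-trivial $\gamma \in \Gamma$ written in reduced form with respect to $\Gamma = G \ast_N N'$ remains non-trivial after applying $\tau_{\delta_n} = \varphi_n$, using Remark \ref{inj nashville} and the fact that $\tau_{\delta_n}$ is the identity on the vertex group $A$ of the splitting of $G$ over $C$. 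Hence $\pi$ embeds $\Gamma$ into $L$, and we identify them.

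Next, run the shortening argument: fix a Stallings splitting of $L$ relative to $\Gamma$ with one-ended factor $U \supset \Gamma$, and replace each $\rho_n$ by a short representative under the action of $\mathrm{Aut}_\Gamma(U) \times \mathrm{Inn}(G)$. Passing to the limit tree $T$ of the rescaled Cayley graphs of $G$, I expect the two-case dichotomy of Lemma \ref{abovelemma}. If $\Gamma$ is elliptic in $T$, Theorem \ref{guirardel1} (the Rips machine) combined with Proposition \ref{nashpaulinnash} produces a proper $G$-limit quotient $L_1$ of $L$ together with discriminating sequences $\rho_n^1$ and $\theta_n^1$ of the type appearing in Lemma \ref{abovelemma}; the iteration terminates by the descending chain condition (Theorem \ref{chainesela}). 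In the other case, where $\Gamma$ does not fix a point of $T$, the minimal $\Gamma$-subtree $T_\Gamma$ inherits a structure in which $G$ is elliptic (bounded displacement in $X$) and $z \in N'$ is hyperbolic: its image $\varphi_n(z) \equiv t^{p_n} \pmod{C}$ has translation length $p_n \|t\|_d \asymp \lambda_n$. One then checks that $T_\Gamma$ is isometric, up to rescaling, to the Bass--Serre tree of a splitting of $\Gamma$ over $N$ with vertex groups $G$ and $N'$, and that $T_\Gamma$ is transverse to its $L$-translates (analogue of Corollary \ref{lemme22}); this yields a splitting of $U$ over $C$ relative to $\Gamma$, which must be trivial by one-endedness, and the desired retraction $L \twoheadrightarrow \Gamma$ is read off from this.

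The main obstacle, compared with the proof of Lemma \ref{lemme2}, is that $\varphi_n(z) = t^{p_n}$ is \emph{not} a small-cancellation element: its maximal virtually cyclic subgroup $M_G(\varphi_n(z)) = N$ is the same for every $n$, rather than giving an intersection reduced to $C$ as in Lemma \ref{intersection}. Consequently, when analyzing a putative folding at the $G$-vertex, the conclusion $\rho_n(g) \in N$ only yields $\rho_n(g) = c_n t^{k_n}$ with $|k_n| = O(p_n)$ (from bounded displacement in the rescaled tree), rather than bounded $k_n$. One must then show, using the primality of $p_n$ and the $K_G$-niceness of $\iota$ to control residues modulo $p_n$, that on a subsequence $g$ coincides with some $c z^a t^r \in \Gamma$ via the discriminating property. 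This residue-class analysis, together with the decisive role of the hypothesis $N = N_G(C)$ (which pins down $M_G(t) = N$ and rules out foldings coming from elements $g \in G \setminus N$ via the control on $N_G(C)$), is the technical heart of the adaptation.
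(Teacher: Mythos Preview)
Your setup and the elliptic branch are correct and match the paper. The non-elliptic branch has a genuine gap.

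First, $T_\Gamma$ is not the Bass--Serre tree of $\Gamma = G \ast_N N'$: in that tree $G$ would be a vertex stabilizer, but $G$ is \emph{not} elliptic in $T$. Indeed $\varphi_n$ is a twist, so $A$ (and $B$) are elliptic but $t \in N \subset G$ is hyperbolic (its image is roughly $t^{p_n \ell}$). The paper does not attempt to describe $T_\Gamma$ at all; it works instead with the axis $d$ of the single element $z$ and its global stabilizer $S$ in $U$. Transversality of the translates of $d$ follows directly from $M_G(t) = N_G(C) = N$ together with the discriminating property --- no small-cancellation or folding argument is used, so the obstacle you spend the last paragraph on is one you have created by choosing the wrong object to analyse.

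Second, and more substantially, the stabilizer $S$ is in general a finite extension $1 \to C \to S \to V \to 1$ with $V \simeq \mathbb{Z}^m$ (or $\mathbb{Z}^m \rtimes \mathbb{Z}/2$ in the dihedral case) for arbitrary $m \geq 1$; it need not be virtually cyclic. One first obtains a splitting of $U$ with $S$ as a vertex group (via a transverse covering when $S \curvearrowright d$ is discrete, via the Rips machine and indecomposability of $d$ when the orbits are dense), and after refinement a splitting $L = L_0 \ast_{S_0} S$ with $S_0 = \langle C, z_2, \ldots, z_m\rangle$. The primality of $p_n$ enters here, not in any residue-class control of foldings: it forces $\chi(z) \in V$ to be primitive, so that $z$ can be taken as the first element of a basis $z = z_1, z_2, \ldots, z_m$. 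The retraction $r_n$ is then defined by hand --- identity on $\langle C, z\rangle$, and an auxiliary twist $\tau_{f(n)} \circ \rho_n$ on $z_2, \ldots, z_m$ and on $L_0$ --- and proved discriminating by a lengthy reduced-word analysis in the splitting of $\Gamma$ over $C$, with $f(n)$ chosen large enough at each stage. None of this structure is present in your outline.
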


\subsubsection{Proof of Lemma \ref{sacerdote3} in a particular case}

%Consider the following two virtually cyclic groups: \[\Gamma=\langle c,t \ \vert \ c^{25}=1, \ tct^{-1}=c^{11}\rangle \ \ \text{ and } \ \ G=\langle c,z \ \vert \ c^{25}=1, \ zcz^{-1}=c^{6}\rangle.\] The homomorphism $\varphi : \Gamma \rightarrow G : c\mapsto c, t\mapsto z^2$ is well-defined since $6^2\equiv 11\mod 25$. In addition, it is clearly injective. The groups $\Gamma$ and $G$ are not isomorphic, but \[\Gamma\times \mathbb{Z}=\langle \Gamma, s \ \vert \ [s,c]=[s,t]=1\rangle \ \ \text{ and } \ \ G\times \mathbb{Z}=\langle G, y \ \vert \ [y,c]=[y,z]=1\rangle\] are isomorphic. Indeed, since the matrix\[\begin{pmatrix}2 & 5 \\ 1 & 2
%\end{pmatrix}\]is invertible, the homomorphism $\theta : \Gamma\times \mathbb{Z}\rightarrow G\times \mathbb{Z}$ that fixes $c$ and maps $t$ to $z^2y$ and $s$ to $z^5y^2$ is bijective. Therefore, by a result due to Oger \cite{Oge83}, $\Gamma$ and $G$ have the same first-order theory. We will reprove a particular case of this result, namely that $\Gamma$ and $G$ have the same $\forall\exists$-theory (with any finite group $C$ instead of $\langle c\rangle\simeq\mathbb{Z}_{25}$).

We first prove Lemma \ref{sacerdote3} in the case where $G$ and $\Gamma$ are virtually cyclic of cyclic type. Let $G=C\rtimes \langle t\rangle$ and $\Gamma=C\rtimes \langle z\rangle$. The main difference compared to the general case is that we do not have to deal with actions on real trees here.

\begin{proof}For every $n$, the morphism $\varphi_n$ extends to a homomorphism $\widehat{\varphi}_n:\Gamma_{\Sigma,\boldsymbol{\gamma}}\rightarrow G$ mapping $\bm{y}$ to $\bm{g}_n$. We shall construct a retraction $r : \Gamma_{\Sigma,\boldsymbol{\gamma}}\twoheadrightarrow \Gamma$ that does not kill any component of the tuple $\Psi(\bm{\gamma},\bm{y})$. Up to extracting a subsequence, one can suppose that the sequence $(\widehat{\varphi}_n)_{n\in\mathbb{N}}$ is stable. Let $K=\ker((\widehat{\varphi}_n)_{n\in\mathbb{N}})$ be the stable kernel of the sequence, let $L=\Gamma_{\Sigma,\boldsymbol{\gamma}}/K$ and $\pi : \Gamma_{\Sigma,\boldsymbol{\gamma}} \twoheadrightarrow L$ be the associated epimorphism. As a $G$-limit group, $L$ is equationally noetherian (see \cite{RW14} Corollary 6.13). It follows that there exists a unique homomorphism $\rho_{n} : L\rightarrow G$ such that $\widehat{\varphi}_n=\rho_{n}\circ \pi$, for $n$ sufficiently large. 

By Remark \ref{inj nashville}, every $\rho_n$ is injective. As a consequence, the homomorphism $\pi : \Gamma \rightarrow L$ is injective, and every component of the tuple $\pi(\Psi(\bm{\gamma},\bm{y}))\in L$ is non-trivial. From now on, $\Gamma$ is viewed as a subgroup of $L$ and we do not mention the monomorphism $\pi : \Gamma \hookrightarrow L$ anymore.

In order to construct $r$, we will construct a discriminating sequence of retractions $(r_n : L \twoheadrightarrow \Gamma)_{n\in\mathbb{N}}$. Then, we will conclude by taking $r:=r_n\circ \pi$ for $n$ sufficiently large. Note that $\rho_n$ coincides with $\widehat{\varphi}_n$ on $\Gamma$; in particular, $({\rho_n}_{\vert \Gamma} : \Gamma \rightarrow G)_{n\in\mathbb{N}}$ is a small test sequence.  

Note that $L$ is finitely generated, as a quotient of the finitely generated group $\Gamma_{\Sigma,\boldsymbol{\gamma}}$. In addition, the sequence $(\rho_n)_{n\in\mathbb{N}}$ is discriminating. Therefore, $L$ is an extension \[1\rightarrow C\rightarrow L \overset{\chi}{\twoheadrightarrow}V\simeq \mathbb{Z}^m\rightarrow 1.\]First, we prove that $\chi(z)$ has no root in $V$. This element can be written as $\chi(z)=v^j$ for some element $v\in V$ with no root, and some integer $j\neq 0$. We will prove that $j=\pm 1$. Each homomorphism $\rho_n : L \rightarrow G$ induces a homomorphism $\overline{\rho}_n : V \rightarrow G/C\simeq \langle t\rangle$. For every integer $n$, we have $\overline{\rho}_n(v)^j=t^{p_n}$. It follows that $j$ divides $p_n$, for every $n$. Since $(p_n)_{n\in\mathbb{N}}$ is a strictly increasing sequence of prime numbers, $j=\pm 1$. 

In order to define the retraction $r_n : L \twoheadrightarrow \Gamma$, we use a presentation of $L$. Let $(v_1,v_2,\ldots,v_m)$ be a basis of $V$, with $v_1=v$. For $1\leq i\leq m$, let $z_i$ be a preimage of $v_i$ in $L$. One can suppose that $z_1=z$. Each element $z_i$ induces an automorphism $\zeta_i$ of $C$, and each commutator $[z_i,z_j]$ is equal to an element $c_{i,j}\in C$. Here is a presentation of $L$:\[ \langle C,z_1,\ldots ,z_m \ \vert \ \mathrm{ad}(z_i)_{\vert C}=\zeta_i,\ [z_i,z_j]=c_{i,j}\rangle.\]

Let $\gamma=z^{K!}$. We denote by $\tau_{n}$ the endomorphism of $\Gamma$ defined by $\tau_n=\mathrm{id}$ on $C$ and $\tau_n(z)=z\gamma^n$ (well-defined since $\gamma$ centralizes $C$). Let us define $r_n : L \rightarrow \Gamma$ by $r_n=\mathrm{id}$ on $\langle C,z\rangle$ and $r_n(z_i)=\tau_n\circ \rho_n(z_i)$ for $2\leq i\leq m$. In order to verify that $r_n$ is well-defined, it suffices to show that $[r_n(z),r_n(z_i)]=r_n(c_{1,i})$, since $r_n$ coincides with the identity on $C$. Recall that $\rho_n(z)=z\delta_n$ with $\delta_n\in D(\Gamma)$. As a consequence, $\tau_n\circ \rho_n(z)=zz^{R_n}$ with $z^{R_n}$ in the center of $\Gamma$. Therefore,\[\tau_n\circ \rho_n([z,z_i])=[zz^{R_n},\tau_n\circ \rho_n(z_i)]=[z,\tau_n\circ \rho_n(z_i)]=c_{1,i}.\]Hence, $[r_n(z),r_n(z_i)]=r_n(c_{1,i})$, so $r_n$ is well-defined.

It remains to prove that the sequence $(r_n)_{n\in\mathbb{N}}$ is discriminating. Let $x\in L$ be a non-trivial element, and let us prove that $r_{n}(x)$ is non-trivial for every $n$ sufficiently large. The element $x$ can be written as $x=cz^{q_1}z_2^{q_2}\cdots z_m^{q_m}$ with $c\in C$ and $q_i\in\mathbb{Z}$. If $x$ lies in $\Gamma$, then $r_n(x)=x\neq 1$. Else, if $x$ does not belong to $\Gamma$, then $y=z_2^{q_2}\cdots z_m^{q_m}$ has infinite order (otherwise, $y$ would belong to $C$, so $x$ would belong to $\Gamma$). Since the sequence $(\rho_n)_{n\in\mathbb{N}}$ is discriminating, $\rho_n(y)$ has infinite order for every $n$ large enough, so $\rho_n(y)=z^{\ell_n} \mod C$ with $\ell_n\neq 0$. Thus $\tau_n\circ \rho_n(y)=(zz^{nK!})^{\ell_n}=z^{\ell_n(1+nK!)}$. For $n> \vert q_1\vert$, $\vert \ell_n(1+nK!)\vert > \vert q_1\vert$, so $r_n(x)$ is non-trivial. As a conclusion, the sequence of retractions $(r_{n}: L \twoheadrightarrow \Gamma)_{n\in\mathbb{N}}$ is discriminating.\end{proof}

\subsubsection{Proof of Lemma \ref{sacerdote3} in the general case}We now prove Lemma \ref{sacerdote3}.

\begin{proof}For every $n$, the map $\varphi_n$ extends to a homomorphism $\widehat{\varphi}_n:\Gamma_{\Sigma,\boldsymbol{\gamma}}\rightarrow G$ mapping $\bm{y}$ to $\bm{g}_n$. We shall construct a retraction $r : \Gamma_{\Sigma,\boldsymbol{\gamma}}\twoheadrightarrow \Gamma$ that does not kill any component of the tuple $\Psi(\bm{\gamma},\bm{y})$. Up to exctracting a subsequence, one can suppose that the sequence $(\widehat{\varphi}_n)_{n\in\mathbb{N}}$ is stable. Let $K=\ker((\widehat{\varphi}_n)_{n\in\mathbb{N}})$ be the stable kernel of the sequence, let $L=\Gamma_{\Sigma,\boldsymbol{\gamma}}/K$ and $\pi : \Gamma_{\Sigma,\boldsymbol{\gamma}} \twoheadrightarrow L$ be the associated epimorphism. As a $G$-limit group, $L$ is equationally noetherian (see \cite{RW14} Corollary 6.13). It follows that there exists a unique homomorphism $\rho_{n} : L\rightarrow G$ such that $\widehat{\varphi}_n=\rho_{n}\circ \pi$, for $n$ sufficiently large. 

By Remark \ref{inj nashville}, every $\rho_n$ is injective. It follows that the homomorphism $\pi : \Gamma \rightarrow L$ is injective, and every component of the tuple $\pi(\Psi(\bm{\gamma},\bm{y}))\in L$ is non-trivial. From now on, $\Gamma$ is viewed as a subgroup of $L$ and we do not mention the monomorphism $\pi : \Gamma \hookrightarrow L$ anymore.

In order to construct $r$, we will construct a discriminating sequence of retractions $(r_n : L \twoheadrightarrow \Gamma)_{n\in\mathbb{N}}$. Then, we will conclude by taking $r:=r_n\circ \pi$ for $n$ sufficiently large. Note that $\rho_n$ coincides with $\widehat{\varphi}_n$ on $\Gamma$; in particular, $({\rho_n}_{\vert \Gamma} : \Gamma \rightarrow G)_{n\in\mathbb{N}}$ is a small test sequence. 
%Notons $U$ le facteur à un bout de $L$ relativement à $\Gamma$, et $T$ l'arbre réel associé à la suite $({\rho_n}_{\vert U})$. Le cas où $\Gamma$ agit elliptiquement sur $T$ se traite exactement comme pour les HNN (shortening argument). On suppose donc que $\Gamma$ n'est pas elliptique. 

%$\Gamma< L \overset{\widehat{\varphi}_n}{\rightarrow}  G$ avec $(\widehat{\varphi}_n)$ discriminante qui vaut l'identité sur $G$ et $(({\widehat{\varphi}_n})_{\vert \Gamma} : \Gamma \rightarrow G)$ suite test.

Let $(X,d)$ be a Cayley graph of $G$. Let us consider a Stallings splitting $\Lambda$ of $L$ relative to $\Gamma$, and let $U$ be the one-ended factor that contains $\Gamma$. Let $S$ be a generating set of $U$. Recall that $\mathrm{Aut}_{\Gamma}(U)$ is the subgroup of $\mathrm{Aut}(U)$ consisting of all automorphisms $\sigma$ satisfying the following two conditions:
\begin{enumerate}
\item $\sigma_{\vert \Gamma}=\mathrm{id}_{\vert \Gamma}$;
\item for every finite subgroup $F$ of $U$, there exists an element $u\in U$ such that $\sigma_{\vert F}=\mathrm{ad}(u)_{\vert F}$.
\end{enumerate}
Recall that a homomorphism $\varphi : U \rightarrow G$ is said to be \emph{short} if its length $\ell(\varphi):=\max_{s\in S}d(1,\varphi(s))$ is minimal among the lengths of homomorphisms in the orbit of $\varphi$ under the action of $\mathrm{Aut}_{\Gamma}(U)\times \mathrm{Inn}(G)$. 

Since $\vert\vert t_n\vert\vert$ goes to infinity, there exists a sequence of automorphisms $(\sigma_{n})_{n\in\mathbb{N}}\in\mathrm{Aut}_{\Gamma}(U)^{\mathbb{N}}$ and a sequence of elements $(x_n)_{n\in\mathbb{N}}\in G^n$ such that the homomorphisms $\mathrm{ad}(x_n)\circ \rho_{n}\circ \sigma_{n}$ are short, pairwise distinct, and such that the sequence $(\mathrm{ad}(x_n)\circ \rho_{n}\circ \sigma_{n})_{n\in\mathbb{N}}$ is stable (see paragraph \ref{stabletreenash}), up to extracting a subsequence. Since $\rho_n$ coincides with the identity on $G$, we have $\mathrm{ad}(x_n)\circ \rho_{n}\circ \sigma_{n}= \rho_{n}\circ \mathrm{ad}(x_n)\circ \sigma_{n}$ and, up to replacing $\sigma_n$ by $\mathrm{ad}(x_n)\circ \sigma_{n}$, we can forget the postconjugation by $x_n$ and assume that ${\sigma_n}_{\vert \Gamma}$ is a conjugation by an element of $G$.

We claim that $\sigma_n$ extends to an automorphism of $L$, still denoted by $\sigma_n$. By the second condition above, $\sigma_n$ is a conjugacy on finite subgroups of $U$. We proceed by induction on the number of edges of $\Lambda$ (the Stallings splitting of $L$ relative to $\Gamma$ used previously in order to define $U$). It is enough to prove the claim in the case where $\Lambda$ has only one edge. 

If $L=U\ast_C B$ with ${\sigma_n}_{\vert C}=\mathrm{ad}(u)$, one defines $\sigma_n : L \rightarrow G$ by ${\sigma_n}_{\vert U}=\sigma_n$ and ${\sigma_n}_{\vert B}=\mathrm{ad}(u)$. 

If $L=U\ast_C=\langle U,t \ \vert \ tct^{-1}=\alpha(c), \forall c\in C\rangle$ with ${\sigma_n}_{\vert C}=\mathrm{ad}(u_1)$ and ${\sigma_n}_{\vert \alpha{(C)}}=\mathrm{ad}(u_2)$, one defines $\sigma_n : L \rightarrow G$ by ${\sigma_n}_{\vert U}=\sigma_n$ and $\sigma_n(t)=u_2^{-1}tu_1$.

In order to complete the proof of Lemma \ref{sacerdote3}, we will use the following lemma. 

\begin{lemme}\label{abovelemma2}We keep the same notations as above. Let $T$ be the limit tree of the sequence of metric spaces $(X,d/\ell({\rho_n\circ \sigma_n}))_{n\in\mathbb{N}}$. The following dichotomy holds:
\begin{itemize}
\item[$\bullet$]either $\Gamma$ does not fix a point of $T$, in which case there exists a discriminating sequence of retractions $(r_n : L \twoheadrightarrow\Gamma)_{n\in\mathbb{N}}$,
\item[$\bullet$]or $\Gamma$ is elliptic, and there exist a proper quotient $L_1$ of $L$, an embedding $\Gamma\hookrightarrow L_1$ allowing us to identify $\Gamma$ with a subgroup of $L_1$, and two discriminating sequences $(\rho_n^1 : L_1\rightarrow G)_{n\in\mathbb{N}}$ and $(\theta_n^1 : L\twoheadrightarrow L_1)_{n\in\mathbb{N}}$ such that the following three conditions are satisfied: 
\begin{enumerate}
\item $\rho_n\circ \sigma_n=\rho_n^1\circ \theta_n^1$;
\item $\rho_n^1$ coincides with $\rho_n$ on $\Gamma$; in particular, $({\rho_n^1}_{\vert \Gamma} : \Gamma \rightarrow G)_{n\in\mathbb{N}}$ is a test sequence.
\item There exists an element $g_n\in G$ such that $\sigma_n$ and $\theta_n^1$ coincides with $\mathrm{ad}(g_n)$ on $\Gamma$.
\end{enumerate}
\end{itemize}
\end{lemme}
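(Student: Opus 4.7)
The plan is to imitate the two-case dichotomy of the proof of Lemma~\ref{abovelemma}, adapting its geometric input to the small-extension setting. In the first case, when $\Gamma$ is elliptic in $T$, the goal is to show that the stable kernel of $(\rho_n\circ\sigma_n)_{n\in\mathbb{N}}$ is non-trivial. If it were trivial, $L$ would act faithfully on $T$, so Theorem~\ref{reinfeldt} would give the usual finiteness properties for tripod and unstable-arc stabilizers together with the ascending chain condition. The Rips machine (Theorem~\ref{guirardel1}) applied to the pair $(U,\Gamma)$ would then force a graph-of-actions decomposition of $T_U$, because one-endedness of $U$ relative to $\Gamma$ rules out splittings over a finite arc or tripod stabilizer. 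The relative shortening argument (Proposition~\ref{nashpaulinnash}) would produce $\alpha_n\in\mathrm{Aut}_\Gamma(U)$ with $\rho_n\circ\sigma_n\circ\alpha_n$ strictly shorter than $\rho_n\circ\sigma_n$ for $n$ large, contradicting shortness. Setting $L_1:=L/\ker((\rho_n\circ\sigma_n)_{n\in\mathbb{N}})$ and extending each $\sigma_n$ to an automorphism of $L$ exactly as in the proof of Lemma~\ref{abovelemma}, the homomorphisms $\rho_n^1$ and $\theta_n^1$ are then obtained by peeling off the inner part $\mathrm{ad}(g_n)$ of $\sigma_n|_\Gamma$; the fact that $\pi_1$ embeds $\Gamma$ into $L_1$ follows from the discriminating property of the restriction of a small test sequence to $\Gamma$, which is a consequence of the normal form theorem for amalgamated products and the injectivity of $\iota'_n$.

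In the second case, when $\Gamma$ does not fix a point of $T$, we construct a discriminating sequence of retractions $(r_n:L\twoheadrightarrow\Gamma)_{n\in\mathbb{N}}$ following the blueprint of the second case of Lemma~\ref{abovelemma}. First, one identifies (up to rescaling) the minimal invariant subtree $T_\Gamma$ with the Bass--Serre tree of the appropriate splitting of $\Gamma$ over $C$: after rescaling by $\lambda_n\sim p_n$ the subgroup $A$ becomes elliptic while the generator $z$ (or $b'a'$ in the dihedral case) acts hyperbolically with bounded positive translation length, which pins down the splitting. Next, build the transverse covering $(Y_j)_{j\in J}\cup\{uT_\Gamma\mid u\in U/\Gamma\}$ of $T$, where the $Y_j$ are the non-degenerate equivalence classes of the relation ``$x\sim y$ iff $[x,y]\cap uT_\Gamma$ has at most one point for every $u\in U$''. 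Form the skeleton $T_c$; since edges of $T_\Gamma$ are stabilized by $C$ and $U$ is one-ended relative to $\Gamma$, the only edge adjacent to $T_\Gamma$ in $T_c/U$ must be stabilized by $C$, yielding a splitting $U=U'\ast_C$ that promotes to $L=U''\ast_C$ with $G\subset U'\subset U''$. Define $r_n$ to be the identity on the stable letter and to agree with $\rho_n$ on $U''$; the discriminating property of $(r_n)$ is then verified by a reduced-form argument in the HNN extension, exactly as at the end of Lemma~\ref{abovelemma}.

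The main obstacle is the identification of $T_\Gamma$ with a Bass--Serre tree of $\Gamma$ together with the transversality of its $U$-translates. The corresponding step in the large-extension proof relied on the small-cancellation property of $t_n$ in Definition~\ref{suitetest} and on Lemma~\ref{intersection} ($\bigcap_n M(t_n)=C$); both are unavailable here because the images $\varphi_n(z)=t^{p_n}\pmod{C}$ all share the single axis of $t$. The substitute argument exploits the rescaling factor $\lambda_n\sim p_n$ and the primality of $[N:\varphi_n(N')]=p_n$: a would-be folding or overlap would force an element $g\in G$ to satisfy $\varphi_n(g)\in M(t)$ and, modulo $C$, to equal a power of $t$ whose exponent divides $p_n$ for infinitely many $n$; the $K_G$-nice hypothesis on $\iota'$ then forces $g\in C$, which completes the analogues of Lemmas~\ref{lemme2} and~\ref{lemme22} needed for the geometric setup of the second case.
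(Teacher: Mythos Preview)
Your treatment of the first case (when $\Gamma$ is elliptic) is correct and matches the paper's argument essentially verbatim.

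The second case, however, contains a genuine gap. You try to reproduce the large-extension proof by identifying $T_\Gamma$ with a Bass--Serre tree whose edge stabilizers in $U$ equal $C$, then extracting a splitting $L=U''\ast_C$ with stable letter $z$ and setting $r_n(z)=z$, $r_n|_{U''}=\rho_n|_{U''}$. This does not work here. The obstruction is that all $\varphi_n(z)$ share the \emph{same} axis (that of $t$), so in the limit the global stabilizer $S\subset U$ of the axis $d$ of $z$ is not the virtually cyclic group $N'$ but the full preimage in $U$ of $M(t)$ under $(\rho_n)$; the paper shows $S=N_U(C)$ is finite-by-abelian, with $S/C$ possibly of rank $m\geq 2$. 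Consequently the pointwise stabilizer $S_0$ of $d$ in $U$ is $\langle C,z_2,\dots,z_m\rangle$, which is infinite whenever $m\geq 2$. Your claim that ``edges of $T_\Gamma$ are stabilized by $C$'' is therefore false in general, your transverse covering fails the finiteness condition when $S\curvearrowright d$ has dense orbits (there is no lower bound $\varepsilon$ on intersection lengths), and the one-endedness of $U$ relative to $\Gamma$ does \emph{not} produce a splitting over $C$. The paper instead obtains a splitting of $U$ with $S$ as a vertex group (using the Rips machine when the action is dense), refines it to get $L=L_0\ast_{S_0}$ with stable letter $z$ in the cyclic case (or $L=L_1\ast_{S_0}L_2$ in the dihedral case), and then builds $r_n$ by hand on $S$ using a presentation.

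A second, related gap: the primality of $p_n$ is used in the paper not for transversality but to guarantee that the image $\chi(z)$ of $z$ in $S/C\simeq\mathbb{Z}^m$ has \emph{no proper root}. Without this, one cannot choose a basis $(v_1,\dots,v_m)$ of $S/C$ with $v_1=\chi(z)$, and the definition $r_n|_{\langle C,z\rangle}=\mathrm{id}$, $r_n(z_i)=\tau_{f(n)}\circ\rho_n(z_i)$ for $i\geq 2$ would not yield a well-defined homomorphism on $S$. Your recipe ``$r_n(z)=z$ and $r_n|_{U''}=\rho_n|_{U''}$'' ignores this issue entirely and, even when $m=1$, the verification that $(r_n)$ is discriminating requires the auxiliary endomorphisms $\tau_{f(n)}$ and a careful normal-form analysis (especially in the dihedral case, which occupies several pages in the paper) rather than the one-line reduced-form argument you suggest.
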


%Note that $\rho_n^1$ coincides with $\rho_n$ on $\Gamma$; in particular, $({\rho_n^1}_{\vert \Gamma} : \Gamma \rightarrow G)_{n\in\mathbb{N}}$ is a small test sequence.

Before proving this lemma, we will use it to conclude the proof of Lemma \ref{sacerdote3}. If $\Gamma$ does not fix a point of $T$, we are done. If $\Gamma$ fixes a point of $T$, by iterating Lemma \ref{abovelemma2}, we get a sequence of proper quotients \[L_0=L \twoheadrightarrow L_1\twoheadrightarrow \cdots L_i\twoheadrightarrow \cdots\]such that, for every integer $i\geq 1$, there exist two discriminating sequences of morphisms $(\rho_{n}^i : L_i \rightarrow G)_{n\in\mathbb{N}}$ and $(\theta_{n}^i: L_{i-1} \twoheadrightarrow L_{i})_{n\in\mathbb{N}}$ such that $\rho_{n}^{i-1}\circ\sigma_n=\rho_{n}^{i}\circ \theta_{n}^i$, $\rho_n^i$ coincides with $\rho_n^{i-1}$ on $\Gamma$, and there exists an element $g_n\in G$ such that $\sigma_n$ and $\theta_n^i$ coincides with $\mathrm{ad}(g_n)$ on $\Gamma$.

By the descending chain condition, the iteration eventually terminates. Let $L_k$ be the last quotient of the series. By Lemma \ref{abovelemma2}, there exists a discriminating sequence of retractions $(r_n : L_k \twoheadrightarrow\Gamma)_{n\in\mathbb{N}}$. For every finite set $F\subset L$, one can find some integers $n_1,\ldots ,n_{k}$ such that the morphism $r_{n_{k}}\circ \theta_{n_k}^k\circ \cdots\circ \theta_{n_1}^1 : L\twoheadrightarrow\Gamma$ is injective on $F$. Moreover, since every $\theta_{n_i}$ is a conjugation on $\Gamma$ by an element of $G$, there exists an element $v_n\in G$ such that $\mathrm{ad}(v_n)\circ r_{n_{k}}\circ \theta_{n_k}^k\circ \cdots\circ \theta_{n_1}^1 : L\twoheadrightarrow\Gamma$ is a retraction. This concludes the proof of Lemma \ref{sacerdote3}\end{proof}

It remains to prove Lemma \ref{abovelemma2}. 

\begin{proof}Recall that $U$ denotes the one-ended factor of $L$ relative to $\Gamma$. We distinguish two cases.

\vspace{2mm}

\textbf{First case.} Suppose that $\Gamma$ fixes a point of $T$. Let us prove that the stable kernel of the sequence $(\rho_n\circ \sigma_n)_{n\in\mathbb{N}}$ is non-trivial. Assume towards a contradiction that the stable kernel is trivial. Then, by Theorem 1.16 of \cite{RW14}, the action of $(U,\Gamma)$ on the limit tree $T$ has the following properties:
\begin{itemize}
\item[$\bullet$]the pointwise stabilizer of any non-degenerate tripod is finite;
\item[$\bullet$]the pointwise stabilizer of any non-degenerate arc is finitely generated and finite-by-abelian;
\item[$\bullet$]the pointwise stabilizer of any unstable arc is finite.
\end{itemize}

In particular, the tree $T$ satisfies the ascending chain condition of Theorem \ref{guirardel1} since any ascending sequence of finitely generated and finite-by-abelian subgroups of a hyperbolic group stabilizes.

Then, it follows from Theorem \ref{guirardel1} that either $(U,\Gamma)$ splits over the stabilizer of an unstable arc, or over the stabilizer of an infinite tripod, or $T$ has a decomposition into a graph of actions. Since $U$ is one-ended relative to $\Gamma$, and since the stabilizer of an unstable arc or of an infinite tripod is finite, it follows that $T$ has a decomposition into a graph of actions.

Now, it follows from Theorem \ref{nashpaulinnash} that there exists a sequence of automorphisms $(\alpha_n)_{n\in\mathbb{N}}\in\mathrm{Aut}_{\Gamma}(U)^{\mathbb{N}}$ such that $(\rho_n\circ\sigma_n)\circ \alpha_n$ is shorter than $\rho_n\circ \sigma_n$ for $n$ large enough. This is a contradiction since the morphisms $\rho_n\circ \sigma_n$ are assumed to be short. Hence, the stable kernel of the sequence $(\rho_n\circ \sigma_n)_{n\in\mathbb{N}}$ is non-trivial.

As in the proof of Lemma \ref{sacerdote3} above, since $\sigma_{n}$ coincides with an inner automorphism on each finite subgroup of $U$ (by definition of $\mathrm{Aut}_{\Gamma}(U)$), it extends to an automorphism of $L$, still denoted by $\sigma_n$. Let $L_1:=L/\ker(({\rho_{n}}\circ \sigma_{n})_{n\in\mathbb{N}})$ and let $\pi_1 : L \twoheadrightarrow L_1$ be the corresponding epimorphism. Observe that $\pi_1$ is injective on $\Gamma$, allowing to identify $\Gamma$ with a subgroup of $L_1$. 

As a $G$-limit group, $L_1$ is equationally noetherian (see \cite{RW14} Corollary 6.13). It follows that, for every integer $n$, there exists a unique homomorphism $\tau_n^1 : L_1\rightarrow G$ such that $\rho_n\circ \sigma_n=\tau_n^1\circ\pi_1$. There exists an element $g_n\in G$ such that $\sigma_n$ coincides with $\mathrm{ad}(g_n)$ on $\Gamma$. Hence, since $\rho_n$ coincides with the identity on $G$, we can write $\rho_n\circ \sigma_n=\mathrm{ad}(g_n)\circ \rho_n^1\circ\pi_1$ in such a way that $\rho_n^1:=\mathrm{ad}(g_n^{-1})\circ \tau_n^1$ coincides with the identity on $G$. For every $n$, let $\theta_{n}^1= \pi_1\circ (\sigma_n)^{-1}$, so that $\rho_n=\mathrm{ad}(g_n)\circ \rho_n^1\circ \theta_n^1$. The sequence $(\theta_n^1 : L \rightarrow L_1)_{n\in\mathbb{N}}$ is therefore discriminating, and every homomorphism $\theta_n^1$ coincides with $\mathrm{ad}(g_n^{-1})$ on $\Gamma$.

\vspace{2mm}

\textbf{Second case.} Suppose that $\Gamma$ does not fix a point of $T$. In the sequel, the letter $z$ denotes an element of $N'=N_{\Gamma}(C)$ defined as follows.
\begin{itemize}
\item[$\bullet$]If $\Gamma=A\ast_C$ and the copies of $C$ in $A$ are conjugate, one can suppose without without loss of generality that there are equal. The letter $z$ denotes the stable letter of the HNN extension. We have $N_{\Gamma}(C)=C\rtimes \langle z\rangle$.
\item[$\bullet$]If $\Gamma=A\ast_C B$, we define $z$ by $z=ba$ where $b$ and $a$ are such that $N_B(C)=C\rtimes \langle b\rangle$ et $N_A(C)=C\rtimes \langle a\rangle$. Note that $b$ and $a$ have order 2 modulo $C$ in $N_B(C)$ and $N_A(C)$ respectively.
\item[$\bullet$]If $\Gamma=A\ast_C$ and the two copies of $C$ in $A$ are not conjugate, let $s$ be the stable letter of the HNN extension, and let $a\in A$ and $b\in sAs^{-1}$ be two elements such that $N_{A}(C)=C\rtimes \langle a\rangle$ and $N_{sAs^{-1}}(C)=C\rtimes \langle b\rangle$. Then we define $z$ by $z=ba$.
\end{itemize}
In the same way, we define $t\in N=N_G(C)$.

Since $\Gamma$ is not elliptic in $T$, and since $A$ and $B$ are elliptic, $z$ acts hyperbolically on $T$. Let $d$ denote its axis. We will proceed in two steps.

\begin{itemize}
\item[$\bullet$]\textbf{First step.} We prove that the one-ended factor $U$ relative to $\Gamma$ can be decomposed as a graph of groups $\Delta$ in which $S$ is a vertex group, where $S$ denotes the global stabilizer of the axis $d$ of $z$ in $T$.
\item[$\bullet$]\textbf{Second step.} We construct a discriminating sequence of retractions $(r_n : L \rightarrow \Gamma)$. 
\end{itemize}

\vspace{2mm}

\textbf{First step.} We shall prove that $U$ splits as a graph of groups in which $S$ is a vertex group. First, we prove that the translates of $d$ are transverse. Let $u$ be an element of $U$ such that $ud\cap d$ contains a segment $I$ which is not reduced to a point. Let us prove that $ud=d$. There exists a constant $R_t$ such that the following holds: every element $g\in G$ such that $A(t)^{+\delta}\cap gA(t)^{+\delta}$ has a diameter larger than $R_t$ belongs to $M(t)$, where $A(t)$ refers to the quasi-axis of $t$ (see Section \ref{Coulon}). Taking $g=\rho_n(u)$, we have $\rho_n(u)\in M(t)=N_G(C)$ for $n$ large enough. Therefore, since $\rho_n(z)$ belongs to $\langle t\rangle$, one of the following two possibilities occurs, for $n$ sufficiently large: either $\rho_n([u,z])$ belongs to $C$, or $\rho_n((uz)^2)$ belongs to $C$. Since $C=\rho_n(C)$, and since the sequence $(\rho_n)$ is discriminating, we deduce that $[u,z]$ or $(uz)^2$ belongs to $C$. Since $C$ fixes $d$ pointwise, $u$ fixes $d$ pointwise in the first case; in the second case, $u$ acts on $d$ by reversing the orientation. As a conclusion, $ud=d$. Hence, the translates of the axis $d$ are transverse.

Recall that $S$ is the global stabilizer of $d$ in $U$. The following facts can be proved by using the discriminating sequence $({\rho_n}_{\vert S} : S \rightarrow N_G(C)=M(t))$: 
\begin{enumerate}
\item $C$ is the maximal finite normal subgroup of $S$,
\item $S=N_{U}(C)$,
\item $S$ is virtually abelian. To be more precise, $S/C$ is abelian in the case where $N_G(C)$ is of cyclic type, and $S/C$ has an abelian subgroup of index 2 in the case where $N_G(C)$ is of dihedral type (see Lemma \ref{lemmeDinf} below).
\end{enumerate}
By \cite{RW14} Theorem 6.3, the group $S$ is finitely generated. We are ready to prove that $U$ splits as a graph of groups in which $S$ is a vertex group. 

\vspace{2mm}

\emph{First case.} Suppose that the action of $S$ on $d$ is discrete. We define an equivalence relation $\sim$ on $T$ by $x\sim y$ if the intersection $[x,y]\cap u d$ contains at most one point, for any element $u\in U$. Let $(Y_j)_{j\in J}$ denote the equivalence classes that are not reduced to a point. Note that each $Y_j$ is a closed subtree of $T$. Let us verify that the family $(Y_j)_{j\in J}\cup \lbrace ud\ \vert \in U\rbrace$ is a transverse covering of $T$ (see Definition \ref{transverse}). 

\begin{itemize}
\item[$\bullet$]\emph{Transverse intersection.} By definition, $Y_i\cap Y_j=\varnothing$ for every $i\in I$, and $\vert Y_i\cap ud\vert \leq 1$ for every $i\in I$ and every $u\in U$. In addition, $d$ is transverse to its translates.
\item[$\bullet$]\emph{Finiteness condition.} Let $x$ and $y$ be two points of $T$. Since the action of $S$ on $d$ is discrete, there exists a constant $\varepsilon > 0$ such that the following holds: for any $u\in U$, if the intersection $[x,y]\cap ud$ is non-degenerate, then the length of $[x,y]\cap ud$ is bounded from below by $\varepsilon$. Therefore, the arc $[x,y]$ is covered by at most $\lfloor d'(x,y)/\varepsilon\rfloor$ translates of the axis $d$ and at most $\lfloor d'(x,y)/\varepsilon\rfloor+1$ distinct subtrees $Y_j$, where $d'$ denotes the metric on $T$.
\end{itemize}

Thus, the family $(Y_j)_{j\in J}\cup \lbrace ud \ \ \vert \ u\in U\rbrace$ is a transverse covering of the tree $T$. We can now build the skeleton of this transverse covering in the sense of Guirardel (see Definition \ref{squelette}), denoted by $T'$. Since the action of $U$ on $T$ is minimal, so is the action of $U$ on $T'$, according to Lemma 4.9 of \cite{Gui04}. One of the vertex group of $T'$ is equal to $S$, by construction. This concludes the proof of the first case.

\vspace{2mm}

\emph{Second case.} Suppose that the action of $S$ on $d$ has dense orbits. The previous argument no longer works (because $\varepsilon$ does not exist). Note that $S$ is virtually $\mathbb{Z}^n$ with $n\geq 2$. 

Since the discriminating sequence $(\rho_n)$ coincides with inner automorphisms on $A$ and $B$, these groups are elliptic in $T$. We will apply Theorem \ref{guirardel1} to the pair $(U,\lbrace A,B\rbrace)$ (resp.\ $(U,A)$) in order to decompose $T$ as a graph of actions. This is enough to conclude because $d$ is one of the components of the graph of actions. Indeed, the action of $S$ on $d$ is indecomposable (see Definition 1.17 in \cite{Gui08}), so $d$ is contained in one of the components of the graph of actions by Lemma 1.18 of \cite{Gui08}. Let $\mathcal{C}$ denote this component. Note that each component given by Theorem \ref{guirardel1} is either indecomposable or simplicial. Since $S\curvearrowright d$ has dense orbits, $\mathcal{C}$ is not simplicial. Thus, $\mathcal{C}$ is indecomposable. The axis $d\subset\mathcal{C}$ being transverse to its translates, $d$ is necessarily equal to $\mathcal{C}$.

Assume towards a contradiction that Theorem \ref{guirardel1} does not give a splitting of $T$ as a graph of actions. Then, still by Theorem \ref{guirardel1}, the pair $(U,A)$ (resp.\ $(U,\lbrace A,B\rbrace)$) splits over a finite subgroup $E$ which is either the pointwise stabilizer of an unstable arc, or the pointwise stabilizer of an infinite tripod whose normalizer contains the free group $F_2$. Let $Y$ be the Bass-Serre tree of this splitting. We will prove that $\Gamma$ is elliptic in $Y$, contradicting the fact that $U$ is one-ended relative to $\Gamma$. Let $Y_{\Gamma}$ be the minimal subtree of $\Gamma$. Let $Z$ be the tree of cylinders of the splitting $A\ast_C$ or $A\ast_C B$ of $\Gamma$. Its vertex groups are $A$ and $N$ in the first case, and $A,B$ and $N$ in the second case. In the cyclic case, there is only one edge group in $Z$, namely $C$; in the dihedral case, there are at most two edges groups $C_1$ and $C_2$ that contain $C$ with index 2. We claim that $Z$ dominates $Y_{\Gamma}$, i.e.\ that its vertex groups are elliptic in $Y_{\Gamma}$. Since $Y_{\Gamma}$ is a splitting relative to the pair $\lbrace A,B\rbrace$, we juste have to prove that $N$ is elliptic in $Y_{\Gamma}$. Since $N\subset S$, it is enough to observe that $S$ is elliptic in $Y$, as a one-ended group (because it is virtually $\mathbb{Z}^n$ with $n\geq 2$). Thus, the tree of cylinders $Z$ dominates $Y_{\Gamma}$. As a consequence, $E$ contains $C$ up to conjugation. Since $N_U(C)=S$ is virtually abelian, the normalizer $N_U(E)$ of $E$ is virtually abelian as well. It follows that $E$ is not the pointwise stabilizer of an infinite tripod whose normalizer contains the free group $F_2$. So $E$ is the stabilizer of an unstable arc $I\subset T$. Therefore, there exists a subarc $I'\subset I$ whose stabilizer $E'$ satisfies $E'\supset E$ and $E'\neq E$. The set of fixed points of $C$ is exactly $d$. Since $E\supset C$, $\mathrm{Fix}(E)$ is contained in $d=\mathrm{Fix}(C)$, and since $d$ is transverse to its translates, $\mathrm{Fix}(E)=d$ or $\mathrm{Fix}(E)$ is one point of $d$. This second possibility cannot happen since $\mathrm{Fix}(E)$ contains $I$. Hence, $E$ is contained in $S$, so $E=S_0$ (the pointwise stabilizer of $d$). The same argument shows that $E'=S_0=E$, a contradiction. As a conclusion, $T$ splits as a graph of actions one component of which is $d$. So $U$ splits as a graph of groups in which $S$ is a vertex group. This completes the second case.

\begin{comment}
Note that the set of fixed points of $C$ is exactly $d$ (because $N_G(C)$ acts co-compactly on the set of quasi-fixed points of $C$ in the Cayley graph of $G$).

A propos de ma rq sur les quasi points fixes de C pour montrer que l'ensemble des points fixes de C fixe
uniquement la droite d dans l'arbre reel limite:
Plutot que de parler de quasi-points fixes dans le graphe de Cayley, c'est probablement plus confortable de prendre
un scindement de Stallings de G a la place du graphe de Cayley (ils sont qi de toutes facons).
Dans ce cas, l'enonce dont tu as besoin dans cet arbre S est que
1. l'ensemble des point fixes de C est a distance bornée de l'axe de N
2. si C deplace un point x de S d'au plus d, alors C fixe le barycentre de C.x qui est a distance <= d de x,
donc x est a distance d+K de l'axe de N

En passant a la limite, ca te dit que l'ensemble des points fixes de T dans l'arbre reel est exactement l'axe de N, pas plus.
\end{comment}

\vspace{2mm}

It remains to construct a discriminating sequence of retractions $(r_n : L \rightarrow \Gamma)$. We distinguish two cases, depending on the type of $N_G(C)$. In the sequel, we denote by $\Delta$ a splitting of $U$ as a graph of groups in which $S$ is a vertex group.

\vspace{2mm}

\textbf{Second step.} Construction of the discriminating sequence of retractions $(r_n : L \rightarrow \Gamma)$.

\vspace{2mm}

\textbf{A. The cyclic case.} Suppose that $N_G(C)$ is of cyclic type. Then $S$ is an extension \[1\rightarrow C\rightarrow S \overset{\chi}{\twoheadrightarrow}V\simeq \mathbb{Z}^m\rightarrow 1,\]for some integer $m$. We claim that $\chi(z)$ has no root in $V$, i.e.\ that $\langle \chi(z)\rangle$ is cyclic maximal. This element can be written as $\chi(z)=v^j$ for some element $v\in V$ with no proper root, and some integer $j\neq 0$. We will prove that $j=\pm 1$. Each homomorphism ${\rho_n}_{\vert S} : S \rightarrow G$ induces a homomorphism $\overline{\rho}_n : V \rightarrow G/C\simeq \langle t\rangle$. For every integer $n$, we have $\overline{\rho}_n(v)^j=t^{p_n}$. It follows that $j$ divides $p_n$, for every $n$. Since $(p_n)$ is a strictly increasing sequence of prime numbers, $j=\pm 1$. 

In order to define the retraction $r_n : L \twoheadrightarrow \Gamma$, we need a presentation of $S$. Let $(v_1,v_2,\ldots,v_m)$ be a basis of $V$, with $v_1=v$. For $1\leq i\leq m$, let $z_i$ be a preimage of $v_i$ in $L$. One can suppose that $z_1=z$. Each element $z_i$ induces an automorphism $\zeta_i$ of $C$, and each commutator $[z_i,z_j]$ is equal to an element $c_{i,j}\in C$. Here is a presentation of $S$:\[ \langle C,z_1,\ldots ,z_m \ \vert \ \mathrm{ad}(z_i)_{\vert C}=\zeta_i,\ [z_i,z_j]=c_{i,j}\rangle.\]

Let $v$ denote the vertex of $\Delta$ whose stabilizer is $S$. Note that any edge adjacent to $v$ in $\Delta$ has a stabilizer contained in $S_0=\langle C, z_2,\ldots ,z_m\rangle$, because any stabilizer of a point of $d$ is contained in $S_0$. Let us refine $\Delta$ by replacing the vertex $v$ with the decomposition $S=S\ast_{S_0}S_0$ of $S$. More precisely, 
\begin{itemize}
\item[$\bullet$]the $v$ vertex is replaced by a graph of groups with two vertices denoted by $v_0$ and $v_1$, linked by a single edge $e$, such that the stabilizers of $v_0$ and $e$ are equal to $S_0$ and the stabilizer of $v_1$ is equal to $S$;
\item[$\bullet$]we replace each edge $\varepsilon=[v,w]$ $\Delta$ with an edge $[v_0,w]$ if $w\neq v$, and with an edge $[v_0,v_0]$ if $w=v$, which is always possible since $U_{\varepsilon}$ is contained in $S_0$.
\end{itemize}
We denote by $\Delta'$ this new decomposition of $U$ as a graph of groups. Then, let us consider a JSJ splitting of $L$ relative to $U$ over finite groups, and let $\Lambda$ be the splitting of $L$ obtained from this JSJ splitting by replacing the vertex fixed by $U$ with the splitting $\Delta' $ of $U$. We keep the notations $v_0$ and $v_1$ for the vertices of $\Lambda$ corresponding to the vertices $v_0$ and $v_1$ of $\Delta'$. Since all finite subgroups of $S$ are contained in $S_0$, we can assume without loss of generality (up to performing some slides of edges) that there is a only one edge $e$ adjacent to $v_1$ in $\Lambda$, namely the one that connects $v_1$ to $v_0$. Now, we collapse all the edges of $\Lambda$ other than $e$, and we get a decomposition of $L$ as an amalgamated product $L=L_0\ast_{S_0}S$, which can also be viewed as an HNN extension $L=L_0\ast_{S_0}$ with stable letter $z$.

We are now ready to define the retraction $r_n: L \rightarrow \Gamma$. First, we define $r_n$ on $S$. In order to do this, we follow the same strategy as in the proof of the particular case discussed above. Let $\gamma=z^{K!}$. We denote by $\tau_{n}$ the endomorphism of $\Gamma$ defined by $\tau_n=\mathrm{id}$ on $C$ and $\tau_n(z)=z\gamma^n$ (well-defined since $\gamma$ centralizes $C$). Let us define $r_n : S \rightarrow \Gamma$ by $r_n=\mathrm{id}$ on $\langle C,z\rangle$ and $r_n(z_i)=\tau_{f(n)}\circ \rho_n(z_i)$ for $2\leq i\leq m$ (where $f:\mathbb{N}\rightarrow\mathbb{N}$ denotes a strictly increasing function that will be specified later). In order to verify that $r_n$ is well-defined, it suffices to show that $[r_n(z),r_n(z_i)]=r_n(c_{1,i})$, since $r_n$ coincides with the identity on $C$. Recall that $\rho_n(z)=z\delta_n$ with $\delta_n\in D(\Gamma)$. As a consequence, $\tau_{f(n)}\circ \rho_n(z)=zz^{R_n}$ with $z^{R_n}$ in the center of $\Gamma$. Therefore,\[\tau_{f(n)}\circ \rho_n([z,z_i])=[zz^{R_n},\tau_{f(n)}\circ \rho_n(z_i)]=[z,\tau_{f(n)}\circ \rho_n(z_i)]=c_{1,i}.\]Hence, $[r_n(z),r_n(z_i)]=r_n(c_{1,i})$, so $r_n$ is well-defined.

Since $L=S\ast_{S_0} L_0$ and since $r_n$ coincides with $\tau_{f(n)}\circ\rho_n$ on $S_0$, the morphism $r_n : S \rightarrow \Gamma$ extends to a morphism from $L$ to $\Gamma$ that coincides with $r_n$ on $S$ and with $\tau_{f(n)}\circ \rho_n$ on $L_0$. This new morphism is still denoted by $r_n$. Note that $r_n$ is a retraction from $L$ onto $\Gamma$. Indeed, $r_n(z)=z$ and $r_n=\tau_{f(n)}\circ \rho_n=\mathrm{id}$ on $A$, and $\Gamma=\langle z,A\rangle$.

To conclude, we will prove that the sequence $(r_n)$ is discriminating, provided that the function $f:\mathbb{N}\rightarrow\mathbb{N}$ is properly chosen. Let $F\subset L\setminus \lbrace 1\rbrace$ be a finite set. For simplicity, one can assume that $F=\lbrace x\rbrace$, the proof being identical in the case where $F$ has more than one element.

If $x$ belongs to $L_0$, since the sequence $(\rho_n)$ is discriminating, we have $\rho_n(x)\neq 1$ for $n$ large. Since $\tau_{f(n)} : \Gamma \rightarrow \Gamma$ is injective, we have $\tau_{f(n)}\circ \rho_n(x)=r_n(x)\neq 1$. Now, suppose that $x$ does not belong to $L_0$. Then $x$ can be written as $x=y_{1}z^{n_{1}}y_{2}z^{n_{2}}\cdots y_kz^{n_k}$ with $k\geq 1$, $y_{i}\in L\setminus S_0$ and $n_{i}\neq 0$ for all $1\leq i\leq k$ (except possibly $n_k$, which can be zero). Note that $S_0$ is the normalizer of $C$ in $L_0$. It follows from this observation that for $n$ sufficiently large, $\rho_n(y_{i})$ does not belong to $N_{\Gamma}(C)=\langle C,z\rangle$. We can therefore write $\rho_n(y_i)$ as a product $a_{i,1}z^{\ell_{i,1}}a_{i,2}z^{\ell_{i,2}}\cdots a_{i,q_i}z^{\ell_{i,q_i}}$ with $q_i\geq 1$, $a_{i,j}\in A\setminus C$ for all $1\leq j\leq q_i$ and $\ell_{i,j}\neq 0$ for all $1\leq j\leq q_i-1$ (the integer $\ell_{i,q_i}$ can be zero). So $r_n(y_i)$ can be written as a reduced form as follows \[r_n(y_i)=\tau_{f(n)}\circ\rho_n(y_i)=a_{i,1}z^{\ell_{i,1}f(n)}a_{i,2}z^{\ell_{i,2}f(n)}\cdots a_{i,q_i}z^{\ell_{i,q_i}f(n)}.\] 

To prove that $r_n(x)$ is non-trivial, it suffices to prove that the word obtained by concatenating the reduced forms of $r_n(y_i)$ and $z^{n_i}$ is still in reduced form in the splitting $\Gamma=A\ast_C$ with stable letter $z$, i.e.\ is of the form $r_n(x)=u_1z^{k_1}u_2z^{k_2}\cdots u_Mz^{k_M}$ with $M\geq 2$, $k_i\neq 0$ and $u\in A\setminus C$ (except maybe $k_M=0$ and $u_1\in C$). Let's look at the subword at the junction of $r_n(y_i)$, $z^{n_i}$ and $r_n(y_{i+1})$, for $1\leq i<k$. This subword is of the form $a_{i,q_i}z^{\ell_{i,q_i}f(n)}z^{n_i}a_{i+1,1}$. Since $i<k$, the integer $n_i$ is not zero. We distinguish two possibilities: either $\ell_{i,q_i}= 0$, in which case there is nothing to be done, or $\ell_{i,q_i}\neq  0$, in which case we can choose $f(n)$ large enough so that $\ell_{i,q_i}f(n)+n_i\neq 0$. 

As a conclusion, $f(n)$ can be chosen sufficiently large so that $r_n(x)\neq 1$, and the same proof is still valid if one replaces the singleton $\lbrace x\rbrace$ by a finite subset of $L\setminus \lbrace 1\rbrace$. Hence, the sequence of retractions $(r_n : L \twoheadrightarrow \Gamma)$ is discriminating.

\vspace{2mm} 

\textbf{B. The dihedral case.} Suppose that $N_G(C)$ is of dihedral type. Let us begin with an easy lemma about $D_{\infty}$-limit groups.

\begin{lemme}\label{lemmeDinf}Let $L$ be a group. Suppose that $\mathrm{Th}_{\forall}(D_{\infty})\subset \mathrm{Th}_{\forall}(L)$. Then, either $L$ is torsion-free abelian, or $L$ is a semi-direct product $Z\rtimes \langle w\rangle$ with $Z$ torsion-free abelian, and $w$ an element of order 2 acting by $-\mathrm{id}$ on $Z$.\end{lemme}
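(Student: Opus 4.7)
\emph{Plan of proof.} I would extract a short list of universal sentences from $\mathrm{Th}_{\forall}(D_{\infty})$ and read off the structure of $L$ from them. First, for every $n\geq 1$, one has $D_{\infty}\models \forall x\,(x^n=1\Rightarrow x^2=1)$ since the only torsion elements of $D_{\infty}$ are the identity and the reflections; hence every torsion element of $L$ has order at most $2$.

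Next, since the translations in $D_{\infty}$ form an abelian subgroup closed under products, both sentences
\[\forall x,y\,\bigl(x^2=1\vee y^2=1\vee xy=yx\bigr)\quad\text{and}\quad \forall x,y\,\bigl(x^2=1\vee y^2=1\vee xy=1\vee (xy)^2\neq 1\bigr)\]
belong to $\mathrm{Th}_{\forall}(D_{\infty})$, hence to $\mathrm{Th}_{\forall}(L)$. Setting $Z=\{1\}\cup\{x\in L : x^2\neq 1\}$, the first ensures that elements of $Z$ pairwise commute while the second ensures that $Z$ is closed under products; closure under inversion is automatic from $(x^{-1})^2=(x^2)^{-1}$. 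Combined with the previous step, $Z$ is a torsion-free abelian subgroup of $L$.

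If $L=Z$, we are done. Otherwise pick $w\in L\setminus Z$; then $w\neq 1$ and $w^2=1$. In $D_{\infty}$, a non-trivial reflection times a translation-or-identity is a reflection, and a product of two involutions lies in $Z$ modulo degenerate cases; this yields the universal sentences
\begin{align*}
&\forall x,y\,\bigl(x^2\neq 1\vee x=1\vee (y\neq 1\wedge y^2=1)\vee (xy)^2=1\bigr),\\
&\forall x,y\,\bigl(x^2\neq 1\vee y^2\neq 1\vee x=1\vee y=1\vee xy=1\vee (xy)^2\neq 1\bigr).
\end{align*}
Applied with $x=w$, the first yields $wZ\subset I:=\{x\in L : x^2=1\}$; applied with $x=w$ and $y\in I\setminus\{1\}$, the second yields $wy\in Z$, so $I\setminus\{1\}\subset wZ$. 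Since $w\notin Z$ forces $1\notin wZ$, we obtain the disjoint decomposition $L=Z\sqcup wZ$, hence $[L:Z]=2$ and $Z$ is normal. For any $z\in Z$, the relation $(wz)^2=1$ expands to $wzw^{-1}=z^{-1}$, so $w$ acts on $Z$ by inversion and $L=Z\rtimes\langle w\rangle$. The main subtlety throughout is to formulate the universal sentences so as to accommodate the identity element (which belongs to $I\cap Z$ without being a genuine involution); once the disjunctions $x=1\vee y=1\vee xy=1$ are inserted in the right places, the argument runs purely formally.
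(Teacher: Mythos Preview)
Your proof is correct and follows essentially the same approach as the paper's: both define $Z$ as the set of non-involutions together with $1$, use universal sentences of $D_{\infty}$ to show $Z$ is a torsion-free abelian subgroup of index at most $2$, and then read off the $-\mathrm{id}$ action from the fact that $wz$ is an involution. The only cosmetic difference is that the paper states the inversion directly via the sentence $\forall x\forall y\ ((x\neq 1)\wedge(x^2=1)\wedge(y^2\neq 1))\Rightarrow(xyx^{-1}=y^{-1})$, whereas you deduce it from $(wz)^2=1$; and the paper leaves the ``index $\leq 2$'' sentences implicit while you spell out the coset decomposition $L=Z\sqcup wZ$ explicitly.
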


\begin{proof}The universal sentence $\forall x\forall y \ ((x^2\neq 1)\wedge (y^2\neq 1)\wedge (xy\neq 1))\Rightarrow ((xy)^2\neq 1)$, which is satisfies by $D_{\infty}$, express the fact that the $\forall$-definable subset of $D_{\infty}$ composed of all elements of order $>2$ is a subgroup (if one adds $1$). Moreover, one can express by means of universal sentences that this subgroup is torsion-free abelian, and has index at most 2. Therefore, any group $L$ such that $\mathrm{Th}_{\forall}(D_{\infty})\subset \mathrm{Th}_{\forall}(L)$ contains a torsion-free abelian subgroup of index at most 2. It follows that if $L$ is not torsion-free abelian, then it splits as a semi-direct product $Z\rtimes \langle w\rangle$ with $Z$ torsion-free abelian, and $w$ of order 2. Last, the action of $w$ on $Z$ is described by the following universal sentence, satisfied by $D_{\infty}$:\[\forall x \forall y \ ((x\neq 1)\wedge (x^2=1)\wedge (y^2\neq 1)) \Rightarrow (xyx^{-1}=y^{-1}).\]This concludes the proof of Lemma \ref{lemmeDinf}.\end{proof}

Since $N_G(C)$ is of dihedral type, it follows from the lemma above that $S$ is an extension \[1\rightarrow C\rightarrow S \overset{\chi}{\twoheadrightarrow}V\rtimes \mathbb{Z}_2\rightarrow 1,\]with $V\simeq \mathbb{Z}^m$ such that $\chi(z)\in V$, for some integer $m$. We claim that $\chi(z)$ has no root in $V$. This element can be written as $\chi(z)=v^j$ for some element $v\in V$ with no proper root, and some integer $j\neq 0$. We will prove that $j=\pm 1$. Each homomorphism ${\rho_n}_{\vert S} : S \rightarrow G$ induces a homomorphism $\overline{\rho}_n : V \rightarrow \langle t\rangle$. For every integer $n\geq n_0$, we have $\overline{\rho}_n(v)^j=t^{p_n}$. It follows that $j$ divides $p_n$, for every $n\geq n_0$. Since $(p_n)$ is a strictly increasing sequence of prime numbers, $j=\pm 1$. 

Let $\lbrace z_1 , z_2,\ldots ,z_r\rbrace\subset S$ be a finite set, with $z_1=z$, such that $\lbrace \chi(z_1),\ldots ,\chi(z_r)\rbrace$ is a basis of $V$. Before we construct the retraction $r_n: L \twoheadrightarrow \Gamma$, we need a splitting of $L$. Let $v$ denote the vertex of $\Delta$ whose stabilizer is $S$. We refine $\Delta$ by replacing $v$ by the decomposition $S=S_1\ast_{S_0}S_2$ where $S_0=\langle C,z_2,\ldots ,z_r\rangle$, $S_1=S_0\rtimes \langle a\rangle$ and $S_2=S_0\rtimes \langle b\rangle$ with $b=za$ (where $a$ and $b$ have been defined above). Note that any edge adjacent to $v$ in $\Delta$ has a stabilizer contained in a conjugate of $S_1$ or $S_2$ in $S$. We refine $\Delta$ by replacing $v$ by this splitting of $S$. Let $\Delta'$ denote this new decomposition of $U$ as a graph of groups. Then, consider a JSJ decomposition of $L$ relative to $U$ over finite groups, and let $\Lambda$ denote the decomposition of $L$ obtained from this JSJ splitting by replacing the vertex fixed by $U$ with the splitting $\Delta' $ of $U$. Then, collapsing edges if necessary, we obtain a decomposition of $L$ of the form \[L=L_1\ast_{S_1}\ast S\ast_{S_2} L_2 \ \ \ \text{ our } \ \ \ L=(L_1\ast_{S_1}\ast S)\ast_{S_2}.\]

\emph{First case:} $L=L_1\ast_{S_1}\ast S\ast_{S_2} L_2$. The group $L$ can also be decomposed as $L=L_1\ast_{S_0}L_2$. We will suppose that $\Gamma=A\ast_C B$, with $C$ is strictly contained in $A$ and in $B$. The proof is similar in the case where $\Gamma=A\ast_C$ (left to the reader).

%Let us show that this second possibility does not occur here. Assume towards a contradiction that $\Gamma=A\ast_C$. Since $\rho_n$ is the identity on $A$, this group is elliptic in the splitting $L=L_1\ast_{S_0}L_2$ of $L$, and we can therefore assume without loss of generality that $A$ is contained in $L_1$. Since $z$ is not elliptic in this splitting and has no root in $L$ (modulo $C$), it acts hyperbolically with translation length equal to $1$ on the Bass-Serre tree $T$ of the decomposition $L_1\ast_{S_0}L_2$. Since we assumed that $\Gamma=A\ast_C$, the conjugate $A^z$ of $A$ fixes a point of $T$ adjacent to the vertex $v$ fixed by $L_1$. Up to performing a rotation around $v$ in the tree $T$, we can assume without loss of generality that $A^z$ is contained in $L_2$. But $A^z$ is contained in $L_1^z$, so $L_2\cap L_1^z$ contains $A^z$, which is not a conjugate of the edge group $S_0$ (otherwise $A=S_0=C$ and $\Gamma=C\rtimes \langle z\rangle$, which is not of a dihedral type, which contradicts our assumption). This is a contradiction. 

Since $A$ and $B$ are elliptic in the decomposition $L=L_1\ast_{S_0} L_2$, and since $z$ has a translation length equal to 2, one can suppose without loss of generality that $A\subset L_1$ et $B\subset L_2$.

Recall that the morphism $\rho_n$ coincides with the identity on $A$ and with the inner automorphism $\mathrm{ad}(\delta_n)$ on $B$, with $\delta_n\in D(N_{\Gamma}(C))=\langle z^{2K!}\rangle$. Set $\delta=z^{2K!}$, so that $\delta_n=\delta^{\lambda_n}=z^{2K!\lambda_n}$ for a certain integer $\lambda_n\neq 0$ (which goes to infinity as $n$ goes to infinity.

We denote by $\tau_{n}$ the endomorphism of $\Gamma$ defined by $\tau_n=\mathrm{id}$ on $A$ and $\tau_n=\mathrm{ad}(\delta^n)$ on $B$ (well-defined since $\delta$ centralizes $C$). Let $f: \mathbb{N}\rightarrow\mathbb{N}$ be a strictly increasing function that will be specified later. The homomorphism $\tau_{f(n)}\circ \rho_n : L \rightarrow \Gamma$ coincides with $\mathrm{ad}(\tau_{f(n)}(\delta_n)\delta^{f(n)})$ on $B$. Let $\gamma_n=\tau_{f(n)}(\delta_n)\delta^{f(n)}$. An easy calculation shows that $\gamma_n=z^{\mu_n}$ where $\mu_n=2K!\lambda_n(1+4K!f(n))+2K!f(n)$. 

Let us define $r_n$ on $L$ by $r_n=\tau_{f(n)}\circ \rho_n$ on $L_1$ and $r_n=\mathrm{ad}(\gamma_n^{-1})\circ \tau_{f(n)}\circ \rho_n$ on $L_2$. Note that $r_n$ is a retraction from $L$ onto $\Gamma=A\ast_C B$ (well-defined since $\gamma_n$ centralizes $S_0$). Let us summarize the properties of $r_n$.

\begin{align*}
& r_n(a)=a \\
& r_n(b)=b \\
& r_n(z)=z \\
& {r_n}_{\vert S_0}=\tau_{f(n)}\circ {\rho_n}_{\vert S_0} \\
& {r_n}_{\vert L_1}=\tau_{f(n)}\circ {\rho_n}_{\vert L_1} \\
& {r_n}_{\vert L_2}=\mathrm{ad}(\gamma_n^{-1})\circ \tau_{f(n)}\circ {\rho_n}_{\vert S_0}
\end{align*}

To conclude, we will prove that the sequence $(r_n : L \rightarrow \Gamma)$ is discriminating provided that the sequence $(f(n))$ is properly chosen. Let us consider the decompositions \[L=L_1\ast_{S_1}\ast S\ast_{S_2} L_2 \ \ \ \ \text{ and } \ \ \ \Gamma=A\ast_{C_1}\ast N\ast_{C_2} B,\]where $C_1$ and $C_2$ are overgroups of $C$ of index 2. More precisely, $C_1=\langle C, a\rangle$ and $C_2=\langle C, b\rangle$ where $a$ and $b$ denote two elements of order 2 modulo $C$ such that $z=ba$.

The sequences $({r_n}_{\vert L_1})$ and $({r_n}_{\vert L_2})$ are both discriminating since the sequence $(\rho_n)$ is discriminatig and since the homomorphisms $\tau_{f(n)}$ are injective. We claim that the sequence $({r_n}_{\vert S})$ is discriminating as well. Let $F\subset S\setminus \lbrace 1\rbrace$ be a finite set. For simplicity, we suppose that $F=\lbrace s\rbrace$, the proof being identical in the case where $F$ has more than one element. This element $s$ can be written as $s=z^ks_0a^{\varepsilon} \mod C$ with $k\in\mathbb{Z}$, $s_0\in S_0$ and $\varepsilon\in\lbrace 0,1\rbrace$, where $a$ is an element of order 2 modulo $C$ such that $S_1=\langle S_0,a\rangle$. We distinguish two cases. If $\varepsilon=1$, then $szs^{-1}=z^{-1}\mod C$, and this relation is preserved by $r_n$, so $r_n(s)$ is non-trivial. If $\varepsilon=0$, either $s_0$ has finite order, in which case there is nothing to be done, or $s_0$ has infinite order, in which case $\rho_n(s_0)$ has infinite order for $n$ large enough. Then $\rho_n(s_0)=z^{\ell_n}\mod C$ with $\ell_n\neq 0$, so $r_n(s_0)=\tau_{f(n)}\circ \rho_n(s_0)=z^{\ell_n(1+4K!f(n))}\mod C$. For $f(n)$ large enough, the element $r_n(s)$ has infinite order. Hence, the sequence $({r_n}_{\vert S})$ is discriminating. It remains to prove that $(r_n)$ is discriminating.

%(Ou alors, dire simplement $r_n(s)=z^kr_n(s_0)$ avec $r_n(s_0)$ dans un sous-groupe de $\Gamma$ d'indice fini tendant vers l'infini avec $n$, par construction de $r_n$.)

Let $ x \in L $ be a non-trivial element. We will prove that $ r_n (x) $ is non-trivial for $ n $ large enough, by appropriately choosing the sequence of integers $(f (n))$. If $ x $ belongs to a conjugate of one of the vertex groups of the graph of groups $ L = L_1 \ast_{S_1} \ast S \ast_{S_2} L_2 $, then $ r_n ( x) \neq 1$ by the previous paragraph. From now on, we assume that $ x $ is not elliptic in this decomposition. Let us write $ x $ as a non-trivial reduced word $ x_1s_1x_2s_2x_3 \cdots $ in the graph of groups $ L = L_1 \ast_{S_1} \ast S \ast_{S_2} L_2 $, with $ x_i \in L_1 $ or $ L_2 $ and $ s_i \in S $. By definition of a reduced word, the following three conditions hold.
\begin{itemize}
\item[$\bullet$]$x_i$ does not belong to $S_1=\langle S_0,a\rangle$ or $S_2=\langle S_0,b\rangle$.
\item[$\bullet$]If $x_i$ and $x_{i+1}$ are both in $L_1$, then $s_i$ does not belong to $S_1$.
\item[$\bullet$]If $x_i$ and $x_{i+1}$ are both in $L_2$, then $s_i$ does not belong to $S_2$.
\end{itemize}

We will prove that $ r_n (x)$ can be written as a non-trivial reduced word in the graph of groups $ \Gamma = A \ast_{C_1} N \ast_{C_2} B $, which implies that $r_n(x)\neq 0$. Note first that $ r_n (s_i) $ belongs to $ N $ since $ S = N_{L} (C) $. Each element $ r_n (x_i) $ can be decomposed as a reduced word $w_i$ in the graph of groups $ \Gamma = A \ast_{C_1} N \ast_{C_2} B $. Let us consider the concatenation of these reduced words $ w = w_1r_n (s_1) w_2r_n (s_2) w_3 \cdots$. We will prove that $ w $ is (almost) a reduced word in the decomposition $ \Gamma = A \ast_{C_1} N \ast_{C_2} B $. The subwords of $ x $ (viewed as a non-trivial reduced word in $L_1 \ast_{S_1} \ast S \ast_{S_2} L_2 $) are of one of the following three forms.
\begin{itemize}
\item[$\bullet$]Case I: $x_is_ix_{i+1}$ with $x_i,x_{i+1}\in L_1$ (or $L_2$).
\item[$\bullet$]Case II: $x_is_ix_{i+1}$ with $x_i\in L_1$ and $x_{i+1}\in L_2$ (or $x_i\in L_2$ and $x_{i+1}\in L_1$).
\item[$\bullet$]Case III: $s_ix_is_{i+1}$ with $x_i\in L_1$ (or $L_2$).
\end{itemize}
In each case, we will see that the corresponding subword $w_ir_n(s_i)w_{i+1}$ or $r_n(s_i)w_{i}r_n(s_{i+1})$ of $w$ is (almost) reduced.

\vspace{2mm}

\textbf{Case I.} Let $x_is_ix_{j}$ be a subword of $x$ with $s_i\in S$ and $x_i,x_{j}\in L_1$, where $j=i+1$ (the case $x_i,x_j\in L_2$ is identical). Since $x$ is reduced, $s_i$ does not belong to $S_1=\langle S_0,a\rangle$, so $s_i=z^ks_0a^{\varepsilon}\mod C$ with $k\neq 0$, $s_0\in S_0$ and $\varepsilon\in\lbrace 0,1\rbrace$. We have \[r_n(s_i)=z^{k+\ell_n(1+4K!f(n))}a^{\varepsilon}\mod C,\] where $\ell_n$ denotes the integer such that $\rho_n(s_0)=z^{\ell_n}$ (modulo $C$). 

Then, let us decompose $r_n(x_i)$ and $r_n(x_j)$ as reduced words in $\Gamma=A\ast_{C_1} N\ast_{C_2} B$, where $C_1=\langle C,a\rangle$ et $C_2=\langle c,b\rangle$. First, we decompose $\rho_n(x_i)$ as a reduced word whose first and last letters belong to $N$. This word ends with $y_{i,n}n_{i,n}$ where $n_{i,n}\in N$ and $y_{i,n}\in A\setminus C_1$ or $y_{i,n}\in B\setminus C_2$. The element $n_{i,n}$ can be written as $n_{i,n}=z^{k_{i,n}}a^{\varepsilon_{i,n}}\mod C$ with $k_{i,n}\in\mathbb{Z}$ and $\varepsilon_{i,n}\in\lbrace 0,1\rbrace$. We decompose $\rho_n(x_{j})$ in the same way. The corresponding reduced word begin with $n_{j,n}y_{j,n}$, where $n_{j,n}\in N$ and $y_{j,n}\in A\setminus C_1$ ou $B\setminus C_2$. Again, $n_{j,n}$ can be written as $n_{j,n}=z^{k_{j,n}}a^{\varepsilon_{j,n}}\mod C$ where $k_{j,n}\in\mathbb{Z}$ and $\varepsilon_{j,n}\in\lbrace 0,1\rbrace$. So we have \[\tau_{f(n)}(n_{i,n})=z^{k_{i,n}(1+4K!f(n))}a^{\varepsilon_{i,n}} \mod C \ \ \ \text{ and } \ \ \ \tau_{f(n)}(n_{j,n})=z^{k_{j,n}(1+4K!f(n))}a^{\varepsilon_{j,n}} \mod C.\]

%First, we decompose $\rho_n(x_i)$ as a reduced word whose first and last letters belong to $N$ (maybe trivial) (i.e.\ on fixe le point base du lacet en $N$).

\vspace{2mm}

\emph{Subcase i.} Suppose that $y_{i,n}$ and $y_{j,n}$ are both in $A$. At the junction of the concatenation of the reduced forms of $r_n(x_i)$, $r_n(s_i)$ and $r_n(x_j)$, we see the subword \[y_{i,n}\omega_ny_{j,n}, \ \text{with} \  \omega_n = \tau_{f(n)}(n_{i,n})r_n(s_i)\tau_{f(n)}(n_{j,n}).\]
We have to prove that one can choose $f(n)$ so that $\omega_n$ does not belong to $C_1=\langle C,a\rangle$. Any easy calculation shows that \[\omega=z^{R_n}a^{\varepsilon_{i,n}+\varepsilon+\varepsilon_{j,n}},\]where $R_n$ is of the form \[R_n = \pm k + (1+4K!f(n))D\]for some integer $D$. Since the integer $k$ is not zero, one can always choose $f(n)$ in such a way that $R_n$ is non-zero. This concludes the first subcase.

%En concaténant les écritures précédentes de $r_n(x_i)$, $r_n(s_i)$ et $r_n(x_j)$, on voit apparaître le sous-mot \[y_{i,n}\tau_{f(n)}(n_{i,n})r_n(s_i)\tau_{f(n)}(n_{j,n})y_{j,n}=y_{i,n}z^{R_n}a^{\varepsilon_{i,n}+\varepsilon+\varepsilon_{j,n}}y_{j,n},\]où \[R_n = \pm k + (1+4K!f(n))(k_{i,n}\pm \ell_n\pm k_{j,n}).\]
%Or, on a vu que l'entier $k$ est non nul, donc on peut toujours faire en sorte que $R_n$ soit non nul. En effet, soit $k_{i,n}\pm \ell_n\pm k_{j,n}=0$, auquel cas il n'y a rien à faire, soit $k_{i,n}\pm \ell_n\pm k_{j,n}\neq 0$, auquel cas on peut choisir $f(n)$ suffisamment grand de sorte que $R_n\neq 0$. Soulignons que les entiers $k_{i,n}$, $\ell_n$ et $k_{j,n}$ ne dépendent pas du choix de $f(n)$, que l'on peut donc faire à notre guise. Ainsi, comme $R_n\neq 0$, le sous-mot $z^{R_n}a^{\varepsilon_{i,n}+\varepsilon+\varepsilon_{j,n}}$ coincé entre $y_{i,n}$ et $y_{j,n}$ n'appartient pas à $C_1$, ce qui montre que le sous-mot $w_ir_n(s_i)w_j$ de $r_n(x)$ obtenu en concaténant les sous-mots réduits qui correspondent à $r_n(x_i)$, $r_n(s_i)$ et $r_n(x_j)$ est encore réduit (une fois qu'on a fait la réduction évidente qui consiste à concaténer les trois puissances de $z$ comprises entre $y_{i,n}$ et $y_{j,n}$ de façon à faire apparaître le $z^{R_n}$ avec $R_n\neq 0$).

\vspace{2mm}

\emph{Subcase ii.} Suppose that $y_{i,n}$ and $y_{j,n}$ are both in $B$. Then \[\tau_{f(n)}(y_{i,n})=\delta^{f(n)}y_{i,n}\delta^{-f(n)} \ \ \ \text{and} \ \ \ \tau_{f(n)}(y_{j,n})=\delta^{f(n)}y_{j,n}\delta^{-f(n)}.\] 

By concatenating $r_n(x_i)$, $r_n(s_i)$ and $r_n(x_j)$, we see the subword \[y_{i,n}\delta^{-f(n)}z^{R_n}a^{\varepsilon_{i,n}+\varepsilon+\varepsilon_{j,n}}\delta^{f(n)}y_{j,n}=y_{i,n}z^{R'_n}a^{\varepsilon_{i,n}+\varepsilon+\varepsilon_{j,n}}y_{j,n},\]where \[R'_n=R_n-2K!f(n)(1+(-1)^{\varepsilon_{i,n}+\varepsilon+\varepsilon_{j,n}+1}).\]

Let us prove that the subword $z^{R'_n}a^{\varepsilon_{i,n}+\varepsilon+\varepsilon_{j,n}}$ does not belong to $C_2=\langle C,b\rangle$. First, observe that this subword belongs to $C_2$ if and only if one of the following two conditions holds:
\begin{itemize}
\item[$\bullet$]$R'_n=0$ and $\varepsilon_{i,n}+\varepsilon+\varepsilon_{j,n}=0\mod 2$, in which case $z^{R'_n}a^{\varepsilon_{i,n}+\varepsilon+\varepsilon_{j,n}}=1 \mod C$, or
\item[$\bullet$]$R'_n=1$ and $\varepsilon_{i,n}+\varepsilon+\varepsilon_{j,n}=1\mod 2$, in which case $z^{R'_n}a^{\varepsilon_{i,n}+\varepsilon+\varepsilon_{j,n}}=b\mod C$.
\end{itemize}

In the first case $R'_n=R_n$ and we saw in the first subcase that $f(n)$ can be chosen large enough so that $R_n\neq 0$ (because $k\neq 0$). In the second case, \begin{align*}
R'_n &= R_n-4K!f(n)\\
 & = \pm k + (1+4K!f(n))(k_{i,n}\pm \ell_n\pm k_{j,n})-4K!f(n)
\end{align*}
so \[R'_n=1 \Leftrightarrow  \pm k + (1+4K!f(n))(k_{i,n}\pm \ell_n\pm k_{j,n}-1)=0.\]
But $k$ is non-zero, so $f(n)$ can be chosen sufficiently large so that the previous equality does not hold.

\vspace{2mm}

\emph{Subcase iii.} If $y_{i,n}\in A$ and $y_{j,n}\in B$, or $y_{i,n}\in B$ and $y_{j,n}\in A$, there is nothing to be done.

\vspace{2mm}

\textbf{Case II.} Let us consider the subword $x_is_ix_{j}$ of $x$ (viewed as word) with $x_i\in L_1$, $s_i\in S$ and $x_j\in L_2$ (the case where $x_i\in L_2$ and $x_j\in L_1$ can be tackled in exactly the same way), where $j=i+1$. The element $s_i$ can be decomposed as $s_i=z^ks_0a^{\varepsilon}\mod C$ with $k\in\mathbb{Z}$ (note that $k$ may be zero here), $s_0\in S_0$ and $\varepsilon\in\lbrace 0,1\rbrace$. So we have \[r_n(s_i)=z^{k+\ell_n(1+4K!f(n))}a^{\varepsilon}\mod C,\] where $\ell_n$ is the integer such that $\rho_n(s_0)=z^{\ell_n}$. Then, as above, we decompose $r_n(x_i)$ and $r_n(x_j)$ as reduced words in the decomposition $\Gamma=A\ast_{C_1} N\ast_{C_2} B$. With the same notations, we have \[\tau_{f(n)}(n_{i,n})=z^{k_{i,n}(1+4K!f(n))}a^{\varepsilon_{i,n}} \mod C \ \ \ \text{ et } \ \ \ \tau_{f(n)}(n_{j,n})=z^{k_{j,n}(1+4K!f(n))}a^{\varepsilon_{j,n}} \mod C.\]As in Case I, there are three subcases because $y_{i,n}$ and $y_{j,n}$ may be in $A$ or in $B$. We will suppose that $y_{i,n}\in A$ and $y_{j,n}\in A$. The reader can check that the three other cases can be solved in the same manner.

By concatenating the words corresponding to $r_n(x_i)$, $r_n(s_i)$ and $r_n(x_j)$, we see the subword \[y_{i,n}\tau_{f(n)}(n_{i,n})r_n(s_i)z^{-\mu_n}\tau_{f(n)}(n_{j,n})y_{j,n}=y_{i,n}z^{R_n}a^{\varepsilon_{i,n}+\varepsilon+\varepsilon_{j,n}}y_{j,n},\]where{\small \begin{align*}
R_n&=k_{i,n}(1+4K!f(n))+(-1)^{\varepsilon_{i,n}}(k+\ell_n(1+4K!f(n)))+(-1)^{\varepsilon_{i,n}+\varepsilon}(k_{j,n}(1+4K!f(n))-\mu_n)\\
 & = \pm k + (1+4K!f(n))(k_{i,n}\pm \ell_n\pm k_{j,n})\pm\mu_n \\
 & = \pm k + 2K!f(n)(2\alpha_n\pm 4K!\lambda_n\pm 1)+(\alpha_n\pm 2K!\lambda_n).
\end{align*}}%
where $\alpha_n=k_{i,n}\pm \ell_n\pm k_{j,n}$ and $\mu_n=(1+4K!f(n))2K!\lambda_n+2K!f(n)$. The integer $\mu_n$ comes from the fact that $r_n=\mathrm{ad}(z^{-\mu_n})\circ \tau_{f(n)}\circ \rho_n$ on $L_2$). We claim that $f(n)$ can be chosen in such a way that $z^{R_n}a^{\varepsilon_{i,n}+\varepsilon+\varepsilon_{j,n}}\notin C_1=\langle C,a\rangle$. Indeed, for $f(n)$ large enough, $R_n\neq 0$ since $2\alpha_n\pm 4K!\lambda_n\pm 1$ is odd so non-zero.

\vspace{2mm}

\textbf{Case III.} Consider a subword $s_ix_is_{j}$ with $s_i,s_j\in S$ and $x_i\in L_1$ (or $x_i\in L_2$), where $j=i+1$. 

Since the word representing $x$ is reduced, $x_i$ does not belong to $S$, so $\rho_n(x_i)$ does not belong to $N$. Hence, the decomposition of $\rho_n(x)$ as a reduced word in $\Gamma=A\ast_{C_1} N\ast_{C_2} B$ is of the form $n_{1}y_{1}\cdots y_{r}n_{r}$ with $r\geq 1$, $y_{k}\in A\setminus C$ or $B\setminus C$, and $n_{1},n_{r}\in N$ (maybe zero), so there is nothing to be done.\end{proof}

\vspace{2mm}

%Second sous-cas : $L=L_1\ast_{S_0}$. Alors on est dans le cas où $\Gamma=A\ast_C$ (à justifier).

We will use Lemma \ref{sacerdote3} to prove Theorem \ref{mainsac}.

\subsubsection{Proof of Theorem \ref{mainsac}}Let $G$ be a hyperbolic group. Suppose that $G$ splits over a finite subgroup $C$ whose normalizer $N$ is virtually cyclic. Let $\Gamma=G\ast_N N'$ be a legal small extension of $G$. We shall prove that $\mathrm{Th}_{\forall\exists}(G)= \mathrm{Th}_{\forall\exists}(\Gamma)$. By Lemma \ref{symétrie} below, it suffices to prove that $\mathrm{Th}_{\forall\exists}(G)\subset \mathrm{Th}_{\forall\exists}(\Gamma)$. 

\begin{lemme}\label{symétrie}Let $G$ be a hyperbolic group. Suppose that $G$ splits over a finite subgroup $C$ whose normalizer $N$ is virtually cyclic. Let $\Gamma=G\ast_N N'$ be a legal small extension of $G$. Then $G$ is a legal small extension of $\Gamma$ (viewed as abstract groups).\end{lemme}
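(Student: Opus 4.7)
The plan is to exhibit $G$ explicitly as an amalgamated product $\Gamma \ast_{N'} N$, using the embedding $\iota' : N' \hookrightarrow N$ that comes with the assumption that $\Gamma$ is a legal small extension of $G$. The structural prerequisites are already in place: as observed just before Definition \ref{testsuite}, the group $\Gamma$ splits over the same finite subgroup $C$, with $N_\Gamma(C) = N'$ virtually cyclic and non-elliptic in the splitting, and $K_\Gamma = K_G$. Consequently $\iota : N \hookrightarrow N'$ and $\iota' : N' \hookrightarrow N$ are both $K_\Gamma$-nice, so proving $\Gamma \ast_{N'} N \cong G$ as abstract groups is enough to establish the lemma.

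The first step is to simplify the iterated pushout
\[
\Gamma \ast_{N'} N \;=\; (G \ast_N N') \ast_{N'} N \;\cong\; G \ast_N N,
\]
where in the last expression the amalgamation is along the injective endomorphism $\iota' \circ \iota : N \hookrightarrow N$. This follows immediately from the universal property of colimits: a morphism out of either group is the same datum, namely a compatible pair of homomorphisms from $G$ and from the fresh copy of $N$ agreeing on $N$ via $\iota' \circ \iota$, since the intermediate vertex $N'$ is sandwiched between the two copies of $N$ by $\iota$ and $\iota'$ and can be eliminated.

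The heart of the argument is then to show that $G \ast_N N \cong G$. I treat the cyclic case first, the dihedral case being analogous. Write $G = A \ast_C$ with stable letter $s$ and HNN automorphism $\theta \in \mathrm{Aut}(C)$, write $N' = C \rtimes \langle z\rangle$ with HNN automorphism $\sigma$, and set $\iota(s) = z^k c_0$ and $\iota'(z) = \tilde s^\ell c'$ (with $\tilde s$ the stable letter in the fresh copy of $N$). The fact that $\iota$ and $\iota'$ are group homomorphisms forces the consistency relations $\sigma^k \circ \mathrm{ad}(c_0) = \theta$ and $\theta^\ell \circ \mathrm{ad}(c') = \sigma$, which combine to give $\theta^{\ell k - 1} = \mathrm{ad}(c''^{-1})$ in $\mathrm{Aut}(C)$, where $c'' \in C$ is the explicit element such that $(\iota' \circ \iota)(s) = \tilde s^{\ell k} c''$. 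One then writes a presentation
\[
G \ast_N N \;=\; \langle A,\, s,\, \tilde s \mid \mathrm{rel}(A),\; scs^{-1} = \theta(c),\; \tilde s c \tilde s^{-1} = \theta(c),\; s = \tilde s^{\ell k} c''\rangle,
\]
and the Tietze transformation eliminating $s$ reduces the first HNN relation to a trivial consequence of the others (precisely by means of the consistency relation), leaving the presentation $\langle A, \tilde s \mid \mathrm{rel}(A),\; \tilde s c \tilde s^{-1} = \theta(c)\rangle$ of $G$ itself, with $\tilde s$ playing the role of the stable letter.

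The main obstacle lies in the dihedral case: there $N \cong N_A(C) \ast_C N_B(C)$ and similarly for $N'$, so one must use the niceness of $\iota$ and $\iota'$ to match the two distinguished conjugacy classes of finite subgroups (the index-two extensions of $C$) coherently, and the combinatorial bookkeeping on the generators $a, b, t = ba$ becomes heavier. Nevertheless the general strategy (absorbing $N'$, then checking that the resulting presentation collapses to that of $G$) goes through, and we conclude that $G \cong \Gamma \ast_{N'} N$ is a legal small extension of $\Gamma$ with legality witnessed by the pair $(\iota', \iota)$ of $K_\Gamma$-nice embeddings.
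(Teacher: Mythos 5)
Your overall strategy---exhibiting $G$ directly as an amalgam of $\Gamma$ with a virtually cyclic group along $N'$---is the right one, and it is essentially the paper's proof read in the other direction: the paper takes an injective twist $\varphi$ from the small test sequence and reads off $G=\varphi(\Gamma)\ast_{N''}N$ from the tree-of-cylinders decomposition $G=A\ast_{C}N$ (resp.\ $A\ast_{C_1}N\ast_{C_2}B$), whereas you build the amalgam abstractly and try to collapse its presentation onto that of $G$. There is, however, a genuine gap: you tacitly assume that $\iota'$ restricts to the identity on $C$. This assumption is baked into your consistency relations $\sigma^k\circ\mathrm{ad}(c_0)=\theta$ and $\theta^\ell\circ\mathrm{ad}(c')=\sigma$; the correct relations carry the automorphism $\rho:=\iota'_{\vert C}\in\mathrm{Aut}(C)$ (for instance $\theta^\ell\circ\mathrm{ad}(c')\circ\rho=\rho\circ\sigma$), and Definition \ref{special0} places no restriction whatsoever on $\rho$ beyond its being an isomorphism onto $C$.

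This matters because the iterated pushout collapses further than you record. Eliminating $N'$ (whose image is absorbed into $\tilde N$) and then the old stable letter $s$, one finds that $\Gamma\ast_{\iota',N'}\tilde N$ is the amalgam of $A$ and $\tilde N$ over $C$ glued via $\rho$, i.e.\ the HNN extension $A\ast_{\rho^{-1}\theta\rho}$: your ``redundant'' relation is indeed a consequence of the others, but the surviving relation reads $\tilde s c\tilde s^{-1}=\rho^{-1}\theta\rho(c)$, not $\tilde s c\tilde s^{-1}=\theta(c)$. Since $N_A(C)=C$ in this case, $A\ast_{\theta}$ and $A\ast_{\rho^{-1}\theta\rho}$ are isomorphic compatibly with the splitting only when $\rho^{-1}\theta\rho$ lies in $\theta^{\pm1}\,\mathrm{Inn}(C)$ up to automorphisms of $A$ preserving $C$; the homomorphism conditions only force $\rho\theta\rho^{-1}=\theta^{\ell k}$ modulo $\mathrm{Inn}(C)$, and $\ell k\not\equiv\pm1$ modulo the order of $\theta$ in $\mathrm{Aut}(C)/\mathrm{Inn}(C)$ is perfectly possible (e.g.\ $C=(\mathbb{Z}/2\mathbb{Z})^3$, $\theta$ of order $7$, $\rho$ of order $3$ with $\rho\theta\rho^{-1}=\theta^2$). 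The fix is exactly the normalization carried out in the proof of Lemma \ref{guaranteed nash}: replace $\iota'$ by a power $\iota'^m$ that restricts to the identity on $C$ and induces the identity of $N'/D(N')$ (and, in the dihedral case, preserves each of the two conjugacy classes of index-two overgroups of $C$ rather than swapping them). This replacement is legitimate because Definition \ref{legal2} only requires the existence of \emph{some} $K_G$-nice embedding $N'\hookrightarrow N$, and with that normalization your Tietze argument does go through.
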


\begin{proof}There exists an injective twist $\varphi : \Gamma \rightarrow G \subset \Gamma$ (one can take any homomorphism of the small test sequence $(\varphi_n : \Gamma\rightarrow G)_{n\in\mathbb{N}}$ whose existence was proved above). Let $N''=N_{\varphi(\Gamma)}(C)$. The group $G$ can be decomposed as $G=\varphi(\Gamma)\ast_{N''} N$. The inclusion of $N''$ into $N$ is legal, an there exists a legal embedding of $N$ into $N''$ (for example $\iota'\circ \iota$, with the same notations as above).\end{proof}

%Si on veut détailler cette égalité, il faut passer par les arbres des cylindres, il y a trois cas. 

%1) $\varphi(\Gamma)=A\ast_C N''$ et $G=A\ast_C N$. 

%2) $\varphi(\Gamma)=A\ast_C B^g=A\ast_{\langle C,a\rangle} N''\ast_{\langle C,b^g\rangle} B^g$ avec $g\in N$ et $G=A\ast_{\langle C,a\rangle} N\ast_{\langle C,b\rangle} B$. 

%3) Un peu plus pénible à écrire, l'arbre des cylindres de $G$ a deux sommets $A$ et $N$, reliés par deux arêtes. Idem pour $\varphi(\Gamma)$. 

Before proving Theorem \ref{mainsac}, we prove that Lemma \ref{sacerdote3} remains true, more generally, if one replaces the system of equations and inequations by a boolean combination of equations and inequations.

%\begin{lemme}\label{sacerdote2}Soit $g$ un uplet d'éléments de $\langle a\rangle$. Soit $\bigvee_{k=1}^N(\Sigma_k(x,y,g)=1\wedge \psi_k(x,y,g)\neq 1)$ une combinaison booléenne d'équations et d'inéquations en le $p$-uplet $x$ et le $q$-uplet $y$. Soit enfin $\gamma$ un $p$-uplet d'éléments de $\Gamma$. Supposons que le groupe $\Lambda$ satisfasse la condition suivante : pour tout entier $n$, il existe un $q$-uplet $\lambda_n\in \Lambda^q$ tel que \[\bigvee_{k=1}^N(\Sigma_k(\phi_n(\gamma),\lambda_n,g)=1 \ \wedge \ \psi_k(\phi_n(\gamma),\lambda_n,g)\neq 1).\] Alors il existe un $q$-uplet $\gamma'\in \Gamma^q$ tel que \[\bigvee_{k=1}^N(\Sigma_k(\gamma,\gamma',g)=1 \ \wedge \ \psi_k(\gamma,\gamma',g)\neq 1).\]\end{lemme}

\begin{co}\label{sacerdote4}Let $\Gamma$ be a legal small extension of $G$, and let $(\varphi_n : \Gamma \rightarrow G)_{n\in\mathbb{N}}$ be a small test sequence. Let \[\bigvee_{k=1}^N(\Sigma_k(\bm{x},\bm{y})=1 \ \wedge \ \Psi_k(\bm{x},\bm{y})\neq 1)\] be a disjunction of systems of equations and inequations, where $\bm{x}$ is a $p$-tuple of variables and $\bm{y}$ is a $q$-tuple of variables. Let $\bm{\gamma}\in\Gamma^p$. Suppose that $G$ satisfies the following condition: for every integer $n$, there exists $\bm{g}_n\in G^q$ such that \[\bigvee_{k=1}^N(\Sigma_k(\varphi_n(\bm{\gamma}),\bm{g}_n)=1 \ \wedge \ \psi_k(\varphi_n(\bm{\gamma}),\bm{g}_n)\neq 1).\] Then there exists $\bm{\gamma}'\in\Gamma^q$ such that \[\bigvee_{k=1}^N(\Sigma_k(\bm{\gamma},\bm{\gamma'})=1 \ \wedge \ \Psi_k(\bm{\gamma},\bm{\gamma'})\neq 1).\]\end{co}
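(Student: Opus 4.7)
The plan is to reduce the disjunctive statement to the single-system statement of Lemma \ref{sacerdote3} by a pigeonhole argument on the index $k$, exactly as in the proof of Corollary \ref{sacerdote2} for large extensions.

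First I would verify that any subsequence of a small test sequence is again a small test sequence: this is immediate from Definition \ref{testsuite}, since if $(\varphi_n)_{n\in\mathbb{N}}$ is associated with a strictly increasing sequence of primes $(p_n)_{n\in\mathbb{N}}$ and twists by $(\delta_n)_{n\in\mathbb{N}}\in D(N')^{\mathbb{N}}$, then any infinite subsequence inherits the same structure with a still strictly increasing sequence of primes. This invariance is exactly what allows the pigeonhole step to remain within the hypotheses of Lemma \ref{sacerdote3}.

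Next, by hypothesis, for every integer $n$ there is at least one index $k\in\{1,\ldots,N\}$ and a tuple $\bm{g}_n\in G^q$ such that $\Sigma_k(\varphi_n(\bm{\gamma}),\bm{g}_n)=1 \wedge \Psi_k(\varphi_n(\bm{\gamma}),\bm{g}_n)\neq 1$. Define a coloring $c:\mathbb{N}\to\{1,\ldots,N\}$ by choosing, for each $n$, such a $k=c(n)$. Since $\{1,\ldots,N\}$ is finite, at least one color $k_0$ has an infinite preimage $A=c^{-1}(k_0)\subset\mathbb{N}$. Restrict to the subsequence $(\varphi_n)_{n\in A}$, which by the previous paragraph is still a small test sequence, and observe that for every $n\in A$ the single system $\Sigma_{k_0}(\varphi_n(\bm{\gamma}),\bm{g}_n)=1\wedge \Psi_{k_0}(\varphi_n(\bm{\gamma}),\bm{g}_n)\neq 1$ is satisfied in $G$.

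Finally I would apply Lemma \ref{sacerdote3} to the system $\Sigma_{k_0}(\bm{x},\bm{y})=1 \wedge \Psi_{k_0}(\bm{x},\bm{y})\neq 1$ along the extracted small test sequence, producing a tuple $\bm{\gamma}'\in\Gamma^q$ with $\Sigma_{k_0}(\bm{\gamma},\bm{\gamma}')=1$ and $\Psi_{k_0}(\bm{\gamma},\bm{\gamma}')\neq 1$. This $\bm{\gamma}'$ satisfies the $k_0$-th disjunct, hence the full disjunction. There is no real obstacle here; the only point requiring care is the preservation of the small test sequence property under extraction, which is built into the definition.
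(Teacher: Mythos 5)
Your proposal is correct and follows exactly the paper's own argument: pigeonhole on the finitely many disjuncts to extract an infinite subsequence along which a single system $k_0$ is satisfied, note that a subsequence of a small test sequence is again a small test sequence, and apply Lemma \ref{sacerdote3}. The explicit verification that extraction preserves the small test sequence property (via the strictly increasing sequence of primes) is a detail the paper leaves implicit, but it is the right thing to check.
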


\begin{proof}Up to extracting a subsequence of $(\varphi_n)_{n\in\mathbb{N}}$ (which is still a test sequence), one can assume that there exists an integer $1\leq k\leq N$ such that, for every integer $n$, there exits $\bm{g}_n\in G^q$ such that $\Sigma_k(\varphi_n(\bm{\gamma}),\bm{g}_n)=1$ and $\Psi_k(\varphi_n(\bm{\gamma}),\bm{g}_n)\neq 1$. Proposition \ref{sacerdote3} applies and asserts the existence of a tuple $\bm{\gamma}'\in \Gamma^q$ satisfying $\Sigma_k(\bm{\gamma},\bm{\gamma'})=1$ and $\Psi_k(\bm{\gamma},\bm{\gamma'})\neq1$, which concludes the proof.\end{proof}

We now prove Theorem \ref{mainsac}.

\begin{proof}
According to Lemma \ref{symétrie}, it suffices to prove that $\mathrm{Th}_{\forall\exists}(G)\subset \mathrm{Th}_{\forall\exists}(\Gamma)$. Let $\theta$ be a $\forall\exists$-sentence such that $G \models\theta$. Let us prove that $\Gamma\models \theta$. The sentence $\theta$ has the following form:\[\theta:\forall \bm{x} \exists \bm{y}  \bigvee_{k=1}^N(\Sigma_k(\bm{x},\bm{y})=1 \ \wedge \ \Psi_k(\bm{x},\bm{y})\neq 1),\]where $\bm{x}$ is a $p$-tuple of variables and $\bm{y}$ is a $q$-tuple of variables. Let $\bm{\gamma}$ a $p$-tuple of elements of $\Gamma$. For every integer $n$, there exists $\bm{g}_n\in G^q$ such that \[G\models \bigvee_{k=1}^N(\Sigma_k(\varphi_n(\bm{\gamma}),\bm{g}_n)=1 \ \wedge \ \Psi_k(\varphi_n(\bm{\gamma}),\bm{g}_n)\neq 1).\] By Lemma \ref{sacerdote4}, there exists $\bm{\gamma'}\in \Gamma^q$ such that \[\Gamma\models\bigvee_{k=1}^N(\Sigma_k(\bm{\gamma},\bm{\gamma'})=1 \ \wedge \ \Psi_k(\bm{\gamma},\bm{\gamma'})\neq 1).\]Hence, $\Gamma\models\theta$.\end{proof}

\section{Proof of $(5)\Rightarrow (1)$}\label{5 implique 1}

We are now ready to prove the implication $(5)\Rightarrow (1)$ of Theorem \ref{principal}. In fact, we prove a stronger result, since we only assume that $G$ and $G'$ are hyperbolic.

\begin{te}[Implication $(5)\Rightarrow (1)$ of Theorem \ref{principal}]Let $G,G'$ be two hyperbolic groups. Suppose that there exist two multiple legal extensions $\Gamma$ and $\Gamma'$ of $G$ and $G'$ respectively, such that $\Gamma\simeq \Gamma'$. Then $\mathrm{Th}_{\forall\exists}(G)=\mathrm{Th}_{\forall\exists}(G')$.\end{te}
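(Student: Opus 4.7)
The plan is straightforward given the heavy machinery already established: the statement reduces by induction to the single-step cases handled by Theorem \ref{legalte} and Theorem \ref{legal2te}, combined with the obvious fact that isomorphic groups have identical $\forall\exists$-theories.

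More precisely, I would argue as follows. By the definition of a multiple legal extension, there exist finite chains
\[
G = G_0 \subset G_1 \subset \cdots \subset G_n = \Gamma
\quad\text{and}\quad
G' = G'_0 \subset G'_1 \subset \cdots \subset G'_m = \Gamma',
\]
in which each $G_{i+1}$ (resp.\ $G'_{j+1}$) is a legal large extension of $G_i$ (resp.\ $G'_j$) in the sense of Definition \ref{legal}, or a legal small extension in the sense of Definition \ref{legal2}. The goal is to show $\mathrm{Th}_{\forall\exists}(G_i)=\mathrm{Th}_{\forall\exists}(G_{i+1})$ for each $i$, whence by transitivity $\mathrm{Th}_{\forall\exists}(G)=\mathrm{Th}_{\forall\exists}(\Gamma)$; by symmetry the same holds for the primed chain, and since $\Gamma\simeq\Gamma'$ we conclude
\[
\mathrm{Th}_{\forall\exists}(G)=\mathrm{Th}_{\forall\exists}(\Gamma)=\mathrm{Th}_{\forall\exists}(\Gamma')=\mathrm{Th}_{\forall\exists}(G').
\]

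The one-step preservation is exactly the content of Theorems \ref{legalte} and \ref{legal2te}: if $G_{i+1}$ is a legal large extension of $G_i$, then $G_i$ is non-elementary hyperbolic by definition and Theorem \ref{legalte} gives $\mathrm{Th}_{\forall\exists}(G_i)=\mathrm{Th}_{\forall\exists}(G_{i+1})$; if $G_{i+1}$ is a legal small extension, then Theorem \ref{legal2te} applies provided $G_i$ is hyperbolic. The induction therefore requires checking that hyperbolicity propagates along the chain. This is routine: a legal large extension is an HNN extension of a hyperbolic group over a finite subgroup, and such extensions remain hyperbolic; a legal small extension is an amalgamated product $G_i \ast_N N'$ where $N$ and $N'$ are virtually cyclic, hence word-hyperbolic, and $N$ is quasi-convex in $G_i$ (as the normalizer of a finite subgroup, which is elliptic in a splitting over finite groups, so contained in a vertex group up to conjugacy, and virtually cyclic subgroups of hyperbolic groups are quasi-convex), so the Bestvina–Feighn combination theorem guarantees that $G_{i+1}$ is hyperbolic as well. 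One also notes that if $G_i$ is non-elementary, so is $G_{i+1}$, so the hypothesis of Theorem \ref{legalte} is preserved whenever it is needed.

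There is essentially no obstacle here: the entire difficulty has been packaged into the two theorems cited, whose proofs occupy Sections \ref{HNN} and \ref{a}. The only potentially delicate point is the verification that the preservation of hyperbolicity along the chain allows the iterative application of Theorems \ref{legalte} and \ref{legal2te}; but as sketched above, this follows from standard combination-theoretic considerations, and once it is in hand, the implication $(5)\Rightarrow(1)$ is immediate.
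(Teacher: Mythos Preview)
Your proof is correct and follows essentially the same approach as the paper: reduce to the one-step case via Theorems \ref{legalte} and \ref{legal2te}, then use transitivity and the isomorphism $\Gamma\simeq\Gamma'$. The paper's proof is terser and does not spell out the preservation of hyperbolicity along the chain, which you verify explicitly; this is a reasonable addition, though the paper evidently regards it as routine (legal extensions being graphs of hyperbolic groups over finite or virtually cyclic edge groups).
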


\begin{proof}By definition of a multiple legal extension, there exists a finite sequence of groups $G=G_0\subset G_1\subset \cdots \subset G_n\simeq \Gamma$ where $G_{i+1}$ is a legal large or small extension of $G_i$ in the sense of Definitions \ref{legal} or \ref{legal2}, for every integer $0\leq i\leq n-1$. According to Theorems \ref{legalte} and \ref{legal2te}, we have $\mathrm{Th}_{\forall\exists}(G_i)=\mathrm{Th}_{\forall\exists}(G_{i+1})$, for every $0\leq i\leq n-1$. Thus, $G$ and $\Gamma$ have the same $\forall\exists$-theory. Similarly, $G'$ and $\Gamma'$ have the same $\forall\exists$-theory. But $\Gamma$ and $\Gamma'$ are isomorphic, so $\mathrm{Th}_{\forall\exists}(\Gamma)=\mathrm{Th}_{\forall\exists}(\Gamma')$. Hence, $G$ and $G'$ have the same $\forall\exists$-theory.\end{proof}

\section{Proof of $(4)\Rightarrow (5)$}\label{4 implique 5}

This section is dedicated to a proof of the implication $(4)\Rightarrow (5)$ of Theorem \ref{principal}, that is the following result.

\begin{prop}[Implication $(4)\Rightarrow (5)$ of Theorem \ref{principal}]\label{vandernash}Let $G$ and $G'$ be two virtually free groups. Suppose that there exist two strongly special homomorphisms $\varphi : G \rightarrow G'$ and $\varphi' : G' \rightarrow G$. Then, there exist two multiple legal extensions $\Gamma$ and $\Gamma'$ of $G$ and $G'$ respectively, such that $\Gamma\simeq \Gamma'$.\end{prop}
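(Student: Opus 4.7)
The plan is to follow the three-step strategy outlined in Section 1.2, building multiple legal extensions of $G$ and $G'$ until their reduced Stallings splittings can be matched under the bijections induced by $\varphi$ and $\varphi'$, then assembling vertex-wise isomorphisms via Lemma \ref{s'étend}. Throughout, strong speciality of $\varphi$ and $\varphi'$ gives $K_G=K_{G'}$, mutually inverse bijections between conjugacy classes of finite subgroups of $G$ and $G'$, matching of the $E_G$-invariants attached to non-elementary normalizers, and $K_G$-nice restrictions to virtually cyclic normalizers. The end result we target is an isomorphism $\Gamma\simeq\Gamma'$ of multiple legal extensions.

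\textbf{Base case: edge groups all conjugate to a single finite $C$.} Here reduced Stallings splittings of $G$ and $G'$ are over $C$ and $\varphi(C)$ respectively, and each vertex group decomposes relative to $C$ as $N\ast_C R$ with $N$ the normalizer of $C$ and $R$ rigid. If $N_G(C)$ is non-elementary, we perform legal large extensions over $C$ on both sides, alternating, until the two normalizers become isomorphic relative to $C$; the legality condition $E_G(N_G(C))=C$ of Definition \ref{legal} is preserved under a suitable sequence of such extensions, and Theorem \ref{legalte} guarantees the $\forall\exists$-theory is unchanged. If $N_G(C)$ is infinite virtually cyclic, we use legal small extensions (Theorem \ref{legal2te}) and the $K_G$-nice restrictions supplied by strong speciality to equalize the two normalizers. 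The rigid pieces $R$ are preserved up to isomorphism by $\varphi$ restricted to them.

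\textbf{Intermediate case: edge groups of common cardinality $m$.} We pass to the tree of cylinders $T_c$ (Subsection \ref{tree}) of the chosen Stallings splittings. By construction, the cylinder-type vertex stabilizers of $T_c$ are exactly the normalizers $N_G(C)$ of the finite subgroups $C$ of order $m$ that appear as edge stabilizers, and the remaining vertex stabilizers are finite intersections of such normalizers. Strong speciality of $\varphi$ induces a bijection of cylinders of $T_c$ and $T'_c$ with the correct matching of normalizers (either both non-elementary with matching $E_G$, or both virtually cyclic with $K_G$-nice restriction). Applying the base case at each cylinder vertex and invoking Lemma \ref{s'étend} assembles the local isomorphisms into an isomorphism of multiple legal extensions of $G$ and $G'$ after finitely many legal (large or small) moves.

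\textbf{General case by induction on edges.} Let $n(G)+n(G')$ be the sum of the edge counts in reduced Stallings splittings of $G$ and $G'$. If all edge groups have a common cardinality, the intermediate case applies. Otherwise, let $m$ be the maximum cardinality of an edge group occurring on either side; by the correspondence of conjugacy classes of finite subgroups (and of the virtually cyclic or non-elementary type of their normalizers), whenever $T$ has an edge of stabilizer of order $m$ without a counterpart in $T'$, we grow one in the appropriate $(\leq m)$-factor of $G'$ by a legal extension over $\varphi(C)$: this factor is itself hyperbolic, and Theorems \ref{legalte}/\ref{legal2te} make the move legal at the level of $G'$. After repeating this symmetrically, we may collapse all top-order edges on both sides, obtaining graphs of groups of strictly smaller total edge count whose vertex groups inherit strong speciality from the strong speciality of $\varphi$ and $\varphi'$, and the induction hypothesis finishes the proof.

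\textbf{Main obstacle.} The main technical difficulty is in the induction step: one must verify that after growing a missing top-order edge in $G'$ via a legal extension and then collapsing all edges of that top order, the restriction of $\varphi$ and $\varphi'$ to the vertex groups of the residual graphs of groups remains strongly special. This requires careful use of the third clause of Definition \ref{special1} (so that normalizers of finite subgroups in vertex groups correspond correctly under $\varphi$) together with the interaction between legal extensions and the tree of cylinders, ensuring that cylinders are respected throughout the inductive procedure.
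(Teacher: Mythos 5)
Your outline matches the paper's architecture at the top level (one-cylinder base case, tree of cylinders for a single edge order, induction on the number of edges), but there are several genuine gaps, two of which are fatal as written. First, you work throughout with two homomorphisms that are merely \emph{individually} strongly special. The paper's proof cannot even get started from that hypothesis: it first replaces $(\varphi,\varphi')$ by a power so that $\varphi'\circ\varphi$ and $\varphi\circ\varphi'$ coincide with conjugations on every finite subgroup (Lemma \ref{strong}, Definition \ref{defstrongpair}, and later property $\mathcal{P}_m$). Every matching statement you use — that $\varphi$ and $\varphi'$ induce mutually inverse bijections on conjugacy classes of vertex groups and of cylinders, that $\varphi$ maps each factor \emph{isomorphically} onto a factor of $G'$, and ultimately that the assembled map $\Gamma\to\Gamma'$ is bijective — depends on this. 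Individually special maps only give injections on the relevant data. Relatedly, you never explain why the final assembled homomorphism is an isomorphism: after matching vertex groups and applying Lemma \ref{s'étend} one still only has a pair of special maps, and the paper needs Lemma \ref{perin} (no folding of the tree of cylinders) together with Hopficity of hyperbolic groups to upgrade this to a bijection.

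Second, your induction is set up in the wrong direction and the key legality verification is asserted rather than proved. The paper inducts using $m=m_{G,G'}$, the \emph{minimum} order of an edge group, forms the $m$-JSJ splittings, and recurses into the $m$-factors; the point is that a finite subgroup of order $>m$ fixes a unique vertex of an $m$-JSJ tree, which is exactly what forces $\varphi$ to carry $m$-factors into $m$-factors, makes the restricted pair strongly $(>m)$-special (Lemma \ref{rest2906}), and strictly decreases the total Stallings edge count of the factor pairs. Your scheme of growing and then collapsing edges of \emph{maximal} order does not obviously decrease the induction quantity (the collapsed vertex groups retain all the top-order edges internally), and it leaves unaddressed the hardest single step: when $\varphi(C)$ is not an edge group of $T'$, you must \emph{prove} that the HNN extension over $\varphi(C)$ is legal, i.e.\ that $N_{G'}(\varphi(C))$ is non-elementary and $E_{G'}(N_{G'}(\varphi(C)))=\varphi(C)$. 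In the paper this is Step 1 of Proposition \ref{ledernierlemme}: one produces a nontrivial element $k\in N_G(C)$ with $\varphi(k)=1$ acting hyperbolically on $T$ (using property $\mathcal{P}_m$ and the fact that edge groups of $T$ have the minimal order $m$), and deduces both conditions. Nothing in your proposal substitutes for this argument, and your closing ``main obstacle'' paragraph names the difficulty without resolving it.
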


While in the previous sections we stated and proved results in the general context of hyperbolic groups (with torsion), the present section is specific to virtually free groups. However, we believe that the construction of multiple legal extensions should play a role in a classification of hyperbolic groups up to elementary equivalence as well.

There are, in brief, three increasing levels of complexity in the proof of Proposition \ref{vandernash} above.

\begin{enumerate}
\item We assume that all edge groups in reduced Stallings splittings of $G$ and $G'$ are equal. In other words, we suppose that there is only one cylinder in these splittings (see paragraph \ref{tree} for the definition of a cylinder). We refer the reader to Corollary \ref{f-by-f0}.
\item We assume that all edge groups in reduced Stallings splittings $T$ and $T'$ of $G$ and $G'$ have the same cardinality. The importance of the previous point appears when one considers the trees of cylinders $T_c$ and $T'_c$ of $T$ and $T'$ (see paragraph \ref{tree} for the definition of the tree of cylinders). See Proposition \ref{2implique3facile}.
\item In the general case, different cardinalities of edge groups may coexist in reduced Stallings splittings of $G$ and $G'$. The proof of Proposition \ref{vandernash} is by induction on the number of edges in reduced Stallings splittings of $G$ and $G'$. By carefully collapsing certain edges, we can assume that there is only one cardinality of edge groups in the splittings we consider, and we can use the same techniques as in the second point above.
\end{enumerate} 

As usual, we denote by $K_G$ the maximal order of a finite subgroup of $G$. Note that, under the hypotheses of Proposition \ref{vandernash}, the integers $K_G$ and $K_{G'}$ are equal, because strongly special morphisms are injective on finite subgroups. We define $K:=K_G=K_{G'}\geq 1$. Note that this integer is preserved by legal extensions. In the sequel, we assume that the maximal order of a finite subgroup in all virtually free groups we consider is at most equal to $K$.

Before proving Proposition \ref{vandernash}, we need to introduce new definitions and to prove some lemmas.

\subsection{Preliminaries} 

\subsubsection{Strongly special pair of homomorphisms}

\begin{de}\label{equivalencerelnash}Let $G$ and $G'$ be two groups. Given two morphisms $\varphi,\varphi'\in\mathrm{Hom}(G,G')$, we use the notation $\varphi\sim\varphi'$ if, for every finite subgroup $C$ of $G$, there exists an element $g'\in G'$ such that $\varphi'=\mathrm{ad}(g')\circ \varphi$ on $C$.\end{de}

\begin{de}\label{defstrongpair}Let $G$ and $G'$ be two groups. Let $\varphi : G \rightarrow G'$ and $\varphi' : G' \rightarrow G$ be two homomorphisms. The pair $(\varphi,\varphi')$ is said to be strongly special if the following two conditions hold.
\begin{enumerate}
\item $\varphi$ and $\varphi$ are strongly special.
\item $\varphi'\circ \varphi\sim \mathrm{id}_G$ and $\varphi\circ \varphi' \sim \mathrm{id}_{G'}$.
\end{enumerate}
\end{de}

%\begin{rque}\label{compo}If $\varphi : G \rightarrow G'$ and $\varphi' : G' \rightarrow G$ are special, then $\varphi'\circ \varphi$ is special.\end{rque}

%\begin{rque}\label{compo2}If $\varphi : G \rightarrow G'$ and $\varphi : G' \rightarrow G$ are both strongly special, then $\varphi'\circ \varphi$ is strongly special. Note that this would be false in general if the second condition was about $\varphi(N_G(C))$ instead of $N_{G'}(\varphi(C))$.\end{rque}

Note that if $\varphi : G \rightarrow G'$ and $\varphi : G' \rightarrow G$ are both strongly special, then $\varphi'\circ \varphi$ is strongly special. As a consequence, taking $\psi=\varphi$ and $\psi'=\varphi'\circ (\varphi\circ \varphi')^k$ for a suitable $k$, one gets the following result.

\begin{lemme}\label{strong}Let $G$ and $G'$ be two groups. Let $\varphi : G \rightarrow G'$ and $\varphi' : G'\rightarrow G$ be two strongly special homomorphisms. Suppose that $G$ and $G'$ have finitely many conjugacy classes of finite subgroups. Then there exists a strongly special pair $(\psi,\psi')$.
\end{lemme}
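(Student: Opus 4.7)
The plan is to follow the strategy indicated immediately before the lemma: set $\psi=\varphi$ and $\psi'=\varphi'\circ(\varphi\circ\varphi')^k$ for a suitably chosen exponent $k$, so that the two composition identities required by Definition \ref{defstrongpair} become powers of the endomorphisms $\eta_1:=\varphi'\circ\varphi$ of $G$ and $\eta_2:=\varphi\circ\varphi'$ of $G'$. Using the elementary intertwining $\eta_2\circ\varphi=\varphi\circ\varphi'\circ\varphi=\varphi\circ\eta_1$ and an induction on $k$, one obtains $\psi\circ\psi'=\eta_2^{k+1}$ and $\psi'\circ\psi=\eta_1^{k+1}$. Since $\eta_1$ and $\eta_2$ are compositions of strongly special morphisms (hence strongly special by the fact stated just above the lemma), and $\psi'$ itself will be a composition of strongly special morphisms, the whole proof reduces to finding a single $k$ such that $\eta_1^{k+1}\sim\mathrm{id}_G$ and $\eta_2^{k+1}\sim\mathrm{id}_{G'}$ hold simultaneously.

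The key step is therefore the following claim: \emph{if $\eta$ is a strongly special endomorphism of a group $G$ with finitely many conjugacy classes of finite subgroups, then $\eta^m\sim\mathrm{id}_G$ for some $m\geq 1$.} To prove it, I would observe that $\eta$ induces a self-map $\pi$ on the finite set $\mathcal{C}_G$ of conjugacy classes of finite subgroups. Since $\eta$ is injective on finite subgroups and sends non-conjugate finite subgroups to non-conjugate ones, $\pi$ is injective, hence a permutation of $\mathcal{C}_G$. Choose $N\geq 1$ with $\pi^N=\mathrm{id}$. Then for every representative $C\in\mathcal{C}_G$ there exists $g_C\in G$ with $\eta^N(C)=g_CCg_C^{-1}$, and $\beta_C:=\mathrm{ad}(g_C^{-1})\circ\eta^N|_C$ lies in the finite group $\mathrm{Aut}(C)$. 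A straightforward induction gives $(\eta^N)^j|_C=\mathrm{ad}(h_{C,j})\circ\beta_C^j$ for suitable elements $h_{C,j}\in G$: indeed, if $(\eta^N)^j|_C=\mathrm{ad}(h)\circ\beta_C^j$, then applying $\eta^N$ once more uses $\eta^N|_C=\mathrm{ad}(g_C)\circ\beta_C$ and absorbs all resulting $\mathrm{ad}$'s into a single outer conjugation. Taking $j$ to be the least common multiple of the orders of the finitely many $\beta_C$'s, all $\beta_C^j$ become trivial, so $\eta^{Nj}|_C=\mathrm{ad}(h_{C,j})|_C$ for every representative $C$. Since the relation $\sim$ depends only on conjugacy classes (a quick check using the computation $\eta(hch^{-1})=\mathrm{ad}(\eta(h)gh^{-1})(hch^{-1})$ when $\eta|_C=\mathrm{ad}(g)|_C$), this yields $\eta^{Nj}\sim\mathrm{id}_G$.

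To finish, note that $\sim\mathrm{id}_G$ is closed under taking multiples: if $\eta^m|_C=\mathrm{ad}(g_C)|_C$, then $\eta^{2m}|_C=\mathrm{ad}(\eta^m(g_C)g_C)|_C$, and inductively $\eta^{km}\sim\mathrm{id}_G$ for every $k\geq 1$. Applying the claim to $\eta_1$ and $\eta_2$ and taking a common multiple of the two resulting exponents, I obtain $m\geq 1$ with $\eta_1^m\sim\mathrm{id}_G$ and $\eta_2^m\sim\mathrm{id}_{G'}$. Setting $k=m-1$, $\psi=\varphi$ and $\psi'=\varphi'\circ\eta_2^{k}$ now gives $\psi'\circ\psi=\eta_1^{m}\sim\mathrm{id}_G$ and $\psi\circ\psi'=\eta_2^{m}\sim\mathrm{id}_{G'}$, and $\psi'$ is strongly special as a composition of strongly special homomorphisms.

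The main obstacle is the technical claim of the second paragraph: the relation $\sim\mathrm{id}$ is genuinely weaker than "some power of $\eta$ fixes each finite subgroup pointwise", and requires simultaneously controlling the permutation $\pi$ on $\mathcal{C}_G$ \emph{and} the automorphism $\beta_C$ induced on each finite group $C$; this is precisely what forces the two-step $N$-then-$j$ argument rather than a single passage to a power.
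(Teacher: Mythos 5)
Your proposal is correct and follows exactly the route the paper sketches (the paper only says "taking $\psi=\varphi$ and $\psi'=\varphi'\circ(\varphi\circ\varphi')^k$ for a suitable $k$" and leaves the existence of $k$ implicit); your two-step argument — first making the induced permutation of the finitely many conjugacy classes of finite subgroups trivial, then killing the induced automorphisms $\beta_C$ of each finite representative — is the natural and correct way to produce that $k$, and your verifications of the intertwining identities and of the stability of $\sim\mathrm{id}$ under conjugation of $C$ and under taking multiples of the exponent all check out.
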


According to the previous lemma, in order to prove Proposition \ref{vandernash}, it suffices to prove the following result.

\begin{prop}\label{2implique3}Let $G$ and $G'$ be two virtually free groups. Suppose that there exists a strongly special pair of homomorphisms $(\varphi : G \rightarrow G', \varphi' : G' \rightarrow G)$. Then there exist two multiple legal extensions $\Gamma$ and $\Gamma'$ of $G$ and $G'$ respectively, such that $\Gamma\simeq \Gamma'$.\end{prop}

\subsubsection{Smallest order of an edge group in a reduced Stallings splitting}\label{mGG'}

\begin{de}\label{mGG'}Given an infinite virtually free group $G$, we denote by $m(G)$ the smallest order of an edge group in a reduced Stallings splitting of $G$. Note that this integer does not depend on a particular reduced Stallings splitting of $G$, because conjugacy classes of edge groups are the same in all reduced Stallings splittings of $G$, since one can pass from a reduced Stallings splitting to another by a sequence of slide moves. If $G$ and $G'$ are two virtually free groups, we define $m_{G,G'}=\min(m(G),m(G'))$.\end{de}

%one can pass from a reduced Stallings splitting to another by a sequence of slide moves (see Chapter 2, Section \ref{chap2refchap3}).

%If $G$ is finite, we set $m(G)=\vert G\vert$ and we say that $\Gamma\supset G$ is a \emph{legal extension} of $G$ if and only if $\Gamma=G$. 

\subsubsection{$m$-splittings}

\begin{de}Let $m\geq 1$ be an integer. Let $G$ be a virtually free group. A \emph{$m$-splitting} of $G$ as a graph of groups is a non-trivial splitting of $G$ over subgroups of order exactly $m$.\end{de}

\begin{lemme}\label{JE SAIS PAS!!}Let $m\geq 1$ be an integer. Let $G$ be a virtually free group, and let $T$ be a reduced $m$-splitting of $G$. Suppose that $T$ has a vertex group of order exactly $m$. Then, $G$ is finite-by-free.
\end{lemme}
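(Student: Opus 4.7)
The plan is to argue that the reducedness hypothesis forces the quotient graph of groups of $T$ to be a rose based at the vertex $v$ with $|G_v|=m$; the resulting description of $G$ as a multiple HNN extension of $G_v$ over itself will then immediately exhibit $G$ as finite-by-free.

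My first step is to rule out any non-loop edge incident to $v$ in the quotient graph. If $e=[v,w]$ were such an edge, with $v\neq w$ in the quotient, then $G_e\leq G_v$ together with $|G_e|=m=|G_v|$ would force $G_e=G_v$; this contradicts the reducedness of $T$, since such an $e$ could be collapsed. Consequently every edge at $v$ in the quotient graph is a loop. Since the quotient graph of groups is connected and $v$ has no edges leading to another vertex, $v$ is the only vertex, and the quotient is a bouquet of $k\geq 1$ loops (the splitting being non-trivial by assumption).

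The next step is to write down the resulting HNN presentation. Each loop $e_i$ at $v$ has $G_{e_i}=G_v$, and the two inclusions of $G_{e_i}$ into $G_v$ at the endpoints of $e_i$ in the Bass--Serre tree differ by an automorphism $\alpha_i\in\mathrm{Aut}(G_v)$. Bass--Serre theory then identifies $G$ with
\[
G \;=\; \langle\, G_v,\, t_1,\ldots,t_k \,\mid\, t_i\, x\, t_i^{-1} = \alpha_i(x),\ x\in G_v,\ 1\leq i\leq k\,\rangle.
\]

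Finally, I would conclude by observing that $G_v$ is normalized by every generator of $G$ (the $t_i$ by construction, $G_v$ trivially), so $G_v$ is a finite normal subgroup of $G$; moreover every defining relation is trivial modulo $G_v$, so the quotient $G/G_v$ is freely generated by the images of $t_1,\ldots,t_k$. Thus $G$ fits into a short exact sequence $1\to G_v \to G \to F_k \to 1$ with $G_v$ finite, i.e.\ $G$ is finite-by-free. The only conceptual subtlety is that the reducedness condition here should be read as forbidding only collapsible \emph{non-loop} edges (the standard Bass--Serre convention), so that loops with edge group equal to the vertex group are not ruled out; past that point no step presents a real obstacle.
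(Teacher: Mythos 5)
Your proof is correct and follows essentially the same route as the paper's: reducedness forces every edge incident to the order-$m$ vertex to be a loop in the quotient graph, connectivity then makes the quotient a bouquet of circles with all edge and vertex groups equal to $G_v$, and the resulting multiple HNN presentation exhibits $G$ as $G_v$-by-free. Your closing remark about reading "reduced" as forbidding only collapsible non-loop edges is the right reading of the paper's convention and is indeed what the paper's own argument implicitly uses.
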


\begin{proof}Suppose that there exists a vertex $v$ of $T$ such that $\vert G_v\vert = m$. Since $T/G$ is a reduced splitting of $G$ over edge groups of order $m$, the existence of a vertex group of order exactly $m$ implies that the underlying graph of the graph of groups $T/G$ has only one vertex, i.e.\ is a bouquet of circles. Hence, all edge groups and vertex groups of $T$ are equal to $G_v$. As a consequence, $G_v$ is the unique maximal finite normal subroup of $G$, and the quotient group $G/G_v$ is the fundamental group of a bouquet of circles, i.e.\ a free group. Hence, the group $G$ is $G_v$-by-free, with $G_v$ finite.\end{proof}

\subsubsection{Strongly $(>m)$-special homomorphisms}

We need to slightly weaken the definitions of a strongly special homomorphism and of a strongly special pair of homomorphisms. The following definitions are suitable for proofs by induction.

\begin{de}\label{besoinreffff}Let $G$ and $G'$ be virtually free groups. Let $m\geq 0$ be an integer. A homomorphism $\varphi : G \rightarrow G'$ is said to be a \emph{strongly $(>m)$-special homomorphism} if it satisfies the following four properties:
\begin{enumerate}
\item $\varphi$ is injective on finite subgroups;
\item if $C_1$ and $C_2$ are two non-conjugate finite subgroups of $G$ of order $>m$, then $\varphi(C_1)$ and $\varphi(C_2)$ are non-conjugate in $G'$;
\item if $C$ is a finite subgroup of $G$ of order $>m$ whose normalizer $N_G(C)$ is non-elementary, then the normalizer $N_{G'}(\varphi(C))$ is non-elementary and \[\varphi(E_G(N_G(C)))=E_{G'}(N_{G'}(\varphi(C))).\] In particular, if the finite group $E_G(N_G(C))$ is equal to $C$, then \[E_{G'}(N_{G'}(\varphi(C)))=\varphi(C);\]
\item if $C$ is a finite subgroup of $G$ of order $>m$ whose normalizer is virtually cyclic infinite maximal, then $N_{G'}(\varphi(C))$ is virtually cyclic infinite maximal, and the restriction $\varphi_{\vert N_G(C)}:N_G(C)\rightarrow N_{G'}(\varphi(C))$ is $K$-nice, with $K$ the maximal order of a finite subgroup of $G$ (see Definition \ref{special0}).
\end{enumerate}
We define \emph{strongly $(\geq m)$-special homomorphisms} in the same way.
\end{de}

\begin{rque}A homomorphism is strongly special (see Definition \ref{strongmor}) if and only if it is strongly $(>0)$-special. \end{rque}

\begin{de}\label{strongmpair}Let $G$ and $G'$ be virtually free groups. Let $\varphi : G \rightarrow G'$ and $\varphi' : G' \rightarrow G$ be two homomorphisms. The pair $(\varphi,\varphi')$ is said to be a \emph{strongly $(>m)$-special pair} if the following three conditions hold:
\begin{enumerate}
\item $\varphi$ and $\varphi'$ are strongly $(>m)$-special (see Definition \ref{besoinreffff});
\item $\varphi'\circ \varphi\sim \mathrm{id}_G$, which means that for every finite subgroup $C\subset G$, there exists an element $g\in G$ such that $\varphi'\circ \varphi$ and $\mathrm{ad}(g)$ coincide on $C$;
\item $\varphi\circ \varphi' \sim \mathrm{id}_{G'}$. 
\end{enumerate}
We define \emph{strongly $(\geq m)$-special pairs} in the same way. 
\end{de}

\subsubsection{Preservation of specialness}

The following lemma shows that the property of being strongly $(\geq m)$-special is preserved by composition.

\begin{lemme}[Composition of strongly $(\geq m)$-special homomorphisms]\label{onpeutcomposer}Let $G,G'$ and $G''$ be virtually free groups. Let $m\geq 1$ be an integer. Let $\varphi : G\rightarrow G'$ and $\varphi' : G'\rightarrow G''$ be homomorphisms. Suppose that $\varphi$ and $\varphi'$ are strongly $(\geq m)$-special. Then $\varphi'\circ \varphi$ is strongly $(\geq m)$-special.
\end{lemme}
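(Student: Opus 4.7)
The plan is to verify directly that each of the four defining properties of a strongly $(\geq m)$-special homomorphism (Definition \ref{besoinreffff}) is preserved under composition. Let $\psi := \varphi'\circ \varphi$. Write $K$ for the common maximal order of a finite subgroup in the three groups (this is legitimate because $\varphi$ and $\varphi'$ are injective on finite subgroups, and by hypothesis all groups are assumed to have maximal finite subgroup order $\leq K$). Throughout, for any finite subgroup $C\subset G$, set $C':=\varphi(C)\subset G'$, which is finite of order $|C|$ by condition (1) applied to $\varphi$; in particular, if $|C|\geq m$ then also $|C'|\geq m$, so condition $(i)$ for $\varphi'$ applies to $C'$.

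For condition (1), the composition of two maps which are injective on finite subgroups is injective on finite subgroups. For condition (2), suppose $C_1,C_2\subset G$ are finite, non-conjugate, of order $\geq m$. Then $C'_1=\varphi(C_1)$ and $C'_2=\varphi(C_2)$ are non-conjugate in $G'$ (by condition (2) for $\varphi$) and both have order $\geq m$, so condition (2) for $\varphi'$ yields that $\varphi'(C'_1)=\psi(C_1)$ and $\varphi'(C'_2)=\psi(C_2)$ are non-conjugate in $G''$.

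For condition (3), let $C\subset G$ be finite of order $\geq m$ with $N_G(C)$ non-elementary. By condition (3) for $\varphi$, the normalizer $N_{G'}(C')$ is non-elementary and $\varphi(E_G(N_G(C)))=E_{G'}(N_{G'}(C'))$. Since $|C'|\geq m$, applying condition (3) for $\varphi'$ to $C'$ gives that $N_{G''}(\varphi'(C'))=N_{G''}(\psi(C))$ is non-elementary and $\varphi'(E_{G'}(N_{G'}(C')))=E_{G''}(N_{G''}(\psi(C)))$. Composing these two equalities yields $\psi(E_G(N_G(C)))=E_{G''}(N_{G''}(\psi(C)))$, which is exactly condition (3) for $\psi$.

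For condition (4), the same kind of two-step transfer applies: if $N_G(C)$ is virtually cyclic infinite maximal with $|C|\geq m$, then $N_{G'}(C')$ is virtually cyclic infinite maximal by condition (4) for $\varphi$, and since $|C'|\geq m$, condition (4) for $\varphi'$ implies that $N_{G''}(\psi(C))$ is also virtually cyclic infinite maximal. It remains to verify that the restriction $\psi_{\vert N_G(C)}=\varphi'_{\vert N_{G'}(C')}\circ \varphi_{\vert N_G(C)}$ is $K$-nice (Definition \ref{special0}); this is the only point that is not completely formal, but it reduces to the observation that composing two injective maps gives an injective map, that composing two maps sending non-conjugate finite subgroups to non-conjugate finite subgroups preserves this property, and that the induced map $\overline{\psi}:N_G(C)/D_{2K!}(N_G(C))\to N_{G''}(\psi(C))/D_{2K!}(N_{G''}(\psi(C)))$ factors as $\overline{\varphi'}\circ\overline{\varphi}$ (since $D_{2K!}$ is preserved by any group homomorphism), hence is a composition of injections. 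This completes the verification of all four conditions for $\psi$, so $\psi$ is strongly $(\geq m)$-special.
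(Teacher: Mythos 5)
Your proof is correct and follows essentially the same route as the paper's: a condition-by-condition verification, with the key observation that $\lvert\varphi(C)\rvert=\lvert C\rvert\geq m$ licenses applying the hypotheses on $\varphi'$ to $\varphi(C)$. Your extra remark unpacking why the composition of $K$-nice maps is $K$-nice (via the factorization of the induced map on the quotients by $D_{2K!}$) is a correct elaboration of a step the paper merely asserts.
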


\begin{rque}\label{9juillet}We also prove that if $\varphi$ and $\varphi'$ satisfy the first three conditions of Definition \ref{besoinreffff}, then $\varphi'\circ \varphi$ satisfies the first three conditions of Definition \ref{besoinreffff}.
\end{rque}

\begin{proof}There are four conditions that need to be verified.

\smallskip

\emph{Condition 1.} Since $\varphi$ and $\varphi'$ are injective on finite subgroups of $G$ and $G'$, $\varphi'\circ \varphi$ is injective on finite subgroups of $G$ as well.

\smallskip

\emph{Condition 2.} If $C_1$ and $C_2$ are two non-conjugate finite subgroups of $G$ of order $\geq m$, then $\varphi(C_1)$ and $\varphi(C_2)$ are two non-conjugate subgroups of $G'$ of order $\geq m$, hence $\varphi'\circ \varphi(C_1)$ and $\varphi'\circ \varphi(C_2)$ are non-conjugate in $G''$.

\smallskip

\emph{Condition 3.} If $C$ is a finite subgroup of $G$ of order $\geq m$ whose normalizer in $G$ is non-elementary, then $N_{G'}(\varphi(C))$ is non-elementary and $\varphi(E_G(N_G(C)))$ is equal to $E_{G'}(N_{G'}(\varphi(C)))$, since $\varphi$ is strongly $(\geq m)$-special. Note that $\vert \varphi(C)\vert =\vert C\vert \geq m$. Hence, since $\varphi'$ is strongly $(\geq m)$-special, the group $N_{G''}(\varphi'(\varphi(C)))$ is non-elementary and the following equality holds: $\varphi'(E_{G'}(N_{G'}(\varphi(C))))=E_{G''}(N_{G''}(\varphi'(\varphi(C))))$.

\smallskip

\emph{Condition 4.} If $C$ is a finite subgroup of $G$ of order $\geq m$ whose normalizer in $G$ is virtually cyclic infinite maximal, then $N_{G'}(\varphi(C))$ is virtually cyclic infinite maximal, and the restriction of $\varphi$ to $N_G(C)$ is $K$-nice (see Definition \ref{special0}), because $\varphi$ is strongly $(\geq m)$-special. Since $\varphi'$ is strongly $(\geq m)$-special, the group $N_{G''}(\varphi'(\varphi(C)))$ is virtually cyclic maximal, and the restriction of $\varphi'$ to $N_{G'}(\varphi(C))$ is $K$-nice. Hence, the restriction of $\varphi'\circ \varphi$ to $N_G(C)$ is $K$-nice, as the composition of $K$-nice homomorphisms.\end{proof}

We now prove that the first three conditions of Definition \ref{besoinreffff} are preserved by the equivalence relation $\sim$ (see Definition \ref{equivalencerelnash}).

\begin{lemme}\label{onpeutapprox}Let $G$ and $G'$ be two virtually free groups, and let $\varphi,\psi : G \rightarrow G'$ be two homomorphisms. Suppose that $\psi\sim\varphi$. If $\varphi$ satisfies conditions 1 to 3 of Definition \ref{besoinreffff}, then $\psi$ satisfies conditions 1 to 3 of Definition \ref{besoinreffff}.
\end{lemme}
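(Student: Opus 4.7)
The plan is to verify each of the three conditions in turn, exploiting the fact that the relation $\sim$ allows us to replace $\psi$ by a conjugate of $\varphi$ on any single finite subgroup of $G$. Since $G$ is virtually free, every finite subgroup of $G$ is a suitable target, and conditions 1--3 only involve finite subgroups or subgroups determined by finite subgroups (their normalizers and the associated $E_G(\cdot)$).

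For condition 1, I would simply observe that if $C\subset G$ is finite, then by $\psi\sim\varphi$ we can find $g'\in G'$ with $\psi_{\vert C}=\mathrm{ad}(g')\circ \varphi_{\vert C}$. Since $\mathrm{ad}(g')$ is an automorphism of $G'$ and $\varphi_{\vert C}$ is injective, so is $\psi_{\vert C}$. For condition 2, if $C_1, C_2$ are non-conjugate finite subgroups of $G$ of order $>m$, pick conjugators $g'_1, g'_2$ so that $\psi(C_i)=g'_i\varphi(C_i){g'_i}^{-1}$. If there existed $h'\in G'$ conjugating $\psi(C_1)$ to $\psi(C_2)$, then $g'_2{}^{-1}h'g'_1$ would conjugate $\varphi(C_1)$ to $\varphi(C_2)$, contradicting condition 2 for $\varphi$.

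Condition 3 is the only slightly delicate point, and I expect it to be the main place that requires care. Let $C\subset G$ be finite of order $>m$ with $N_G(C)$ non-elementary, and set $E:=E_G(N_G(C))$, which contains $C$ and is a finite subgroup of $G$. Choose $g'\in G'$ with $\psi_{\vert C}=\mathrm{ad}(g')\circ\varphi_{\vert C}$; then $\psi(C)=g'\varphi(C){g'}^{-1}$, so $N_{G'}(\psi(C))=g'N_{G'}(\varphi(C)){g'}^{-1}$ is non-elementary by condition 3 for $\varphi$, and moreover $E_{G'}(N_{G'}(\psi(C)))=g'\varphi(E){g'}^{-1}$. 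It remains to compare this with $\psi(E)$. I would apply $\psi\sim\varphi$ to the finite subgroup $E$ itself: there is $h'\in G'$ such that $\psi_{\vert E}=\mathrm{ad}(h')\circ\varphi_{\vert E}$, and in particular $\psi_{\vert C}=\mathrm{ad}(h')\circ\varphi_{\vert C}$ as well. Comparing the two formulas for $\psi_{\vert C}$ shows that ${g'}^{-1}h'$ normalizes $\varphi(C)$, hence normalizes the unique maximal finite subgroup $\varphi(E)=E_{G'}(N_{G'}(\varphi(C)))$ of $G'$ normalized by $N_{G'}(\varphi(C))$. Therefore $h'\varphi(E){h'}^{-1}=g'\varphi(E){g'}^{-1}$, and this common group is precisely $\psi(E)=E_{G'}(N_{G'}(\psi(C)))$, as required.

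Altogether, the proof is a short exercise in chasing conjugators; no work on real trees or JSJ theory is needed. The only non-automatic step is the coherence check in condition 3, where one must verify that the two \emph{a priori} different conjugators $g'$ (coming from $C$) and $h'$ (coming from $E$) yield the same image of $E$, which follows from the fact that their difference lies in $N_{G'}(\varphi(C))$ and hence stabilizes $E_{G'}(N_{G'}(\varphi(C)))=\varphi(E)$.
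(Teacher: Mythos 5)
Your proof is correct and follows essentially the same conjugator-chasing argument as the paper, which verifies the three conditions one at a time using the definition of $\sim$. The only difference is that you make explicit the coherence check in condition 3 (that the conjugator obtained for $C$ and the one obtained for $E=E_G(N_G(C))$ differ by an element of $N_{G'}(\varphi(C))$, which stabilizes $E_{G'}(N_{G'}(\varphi(C)))=\varphi(E)$), a step the paper's proof leaves implicit; your justification of it is valid.
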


%\begin{rque}\label{seulelacondition4}If we only suppose that $\psi\sim\varphi$, which means that $\psi$ and $\varphi$ coincide on finite subgroups of $G$ up to conjugacy, and if $\varphi$ is strongly $(\geq m)$-special, then $\psi$ is not strongly $(\geq m)$-special in general. However, it will be useful to notice that the first three conditions in the proof below are still satisfied by $\psi$.\end{rque}

\begin{proof}There are three conditions that need to be verified. 

\smallskip

\emph{Condition 1.} By definition of $\sim$, the homomorphisms $\varphi$ and $\psi$ coincide up to conjugacy on finite subgroups. Since $\varphi$ is injective on finite subgroups of $G$, the homomorphism $\psi$ is injective on finite subgroups as well.

\smallskip

\emph{Condition 2.} If $C_1$ and $C_2$ are non-conjugate finite subgroups of $G$ of order $\geq m$, then $\varphi(C_1)$ and $\varphi(C_2)$ are non-conjugate in $G'$, because $\varphi$ is $(\geq m)$-special. In addition $\psi(C_1)$ is conjugate to $\varphi(C_1)$ and $\psi(C_2)$ is conjugate to $\varphi(C_2)$. Hence, $\psi(C_1)$ and $\psi(C_2)$ are non-conjugate in $G$.

\smallskip

\emph{Condition 3.} If $C$ is a finite subgroup of $G$ of order $\geq m$ whose normalizer in $G$ is non-elementary, then $N_{G'}(\varphi(C))$ is non-elementary and $\varphi(E_G(N_G(C)))$ is equal to $E_{G'}(N_{G'}(\varphi(C)))$, since $\varphi$ is strongly $(\geq m)$-special. The group $\psi(C)$ being conjugate to $\varphi(C)$, its normalizer is non-elementary and the following equality holds: $\psi(E_{G}(N_{G}(C)))=E_{G'}(N_{G'}(\psi(C)))$.\end{proof}

%\smallskip

%\emph{Condition 4.} If $C$ is a finite subgroup of $G$ of order $\geq m$ whose normalizer in $G$ is virtually cyclic infinite maximal, then $N_{G'}(\varphi(C))$ is virtually cyclic infinite maximal and the restriction of $\varphi$ to $N_G(C)$ is $K$-nice (see Definition \ref{special0}). As $\psi(C)$ is conjugate to $\varphi(C)$, the normalizer $N_{G'}(\psi(C))$ is virtually cyclic infinite maximal as well. Moreover, $\psi$ coincides with $\varphi$ on $N_G(C)$ up to conjugacy; therefore, the restriction of $\psi$ to $N_G(C)$ is $K$-nice.

\subsubsection{Preliminary lemmas about legal extensions}

\begin{lemme}\label{lemme2ju}Let $G$ be a non-elementary virtually free group. Let $C$ be a finite subgroup of $G$. Suppose that the group $\Gamma=\langle G,t \ \vert \ \mathrm{ad}(t)_{\vert C}=\mathrm{id}_C\rangle$ is a legal large extension of $G$. Let $i$ denote the inclusion of $G$ into $\Gamma$, and let $r : \Gamma \twoheadrightarrow G$ denote the retraction defined by $r_{\vert G}=\mathrm{id}_G$ and $r(t)=1$. Then $r$ and $i$ are strongly special, and $i\circ r\sim\mathrm{id}_{\Gamma}$. 
\end{lemme}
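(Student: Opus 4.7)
The plan is to verify the three assertions of the lemma: (a) $i\circ r\sim\mathrm{id}_\Gamma$, (b) $i$ is strongly special, (c) $r$ is strongly special. Throughout I analyze the Bass-Serre tree $T$ of $\Gamma=G\ast_C$, viewed equivalently as the amalgamated product $G\ast_C(C\times\mathbb{Z})$ (since the HNN isomorphism is $\mathrm{id}_C$, the letter $t$ centralizes $C$). Let $v_0$ be the vertex of $T$ stabilized by $G$.

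For (a), every finite subgroup $F$ of $\Gamma$ fixes a vertex of $T$, hence is $\Gamma$-conjugate to some subgroup $F_0\subseteq G$: write $F=\gamma F_0\gamma^{-1}$. A direct computation gives $(i\circ r)|_F=\mathrm{ad}(r(\gamma)\gamma^{-1})|_F$, so taking $\delta=r(\gamma)\gamma^{-1}$ works. Conditions (1) and (2) of Definition \ref{strongmor} for both $i$ and $r$ follow from the bijection between $G$-conjugacy classes of finite subgroups of $G$ and $\Gamma$-conjugacy classes of finite subgroups of $\Gamma$: if $F_2=\gamma F_1\gamma^{-1}$ with $F_i\subseteq G$ and $\gamma\notin G$, then $F_1$ fixes $v_0$ and $\gamma^{-1}v_0$, hence lies in an edge stabilizer $gCg^{-1}$; in the amalgamated structure, bridging conjugations through the $(C\times\mathbb{Z})$-vertex restrict on $C$ to $C$-conjugations (since $\mathbb{Z}$ is central there), which are already realized inside $G$.

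The crucial observation for conditions (3) and (S4) is that $C_G(C)$, being a finite-index subgroup of the non-elementary $N_G(C)$, is itself non-elementary. Consequently, for any finite $F\subseteq C$, $N_G(F)\supseteq C_G(F)\supseteq C_G(C)$ is non-elementary, so the hypothesis of condition (3) (normalizer infinite virtually cyclic maximal) forces $F$ to avoid conjugates of $C$. For such $F$, the Bass-Serre argument above gives $N_\Gamma(F)=N_G(F)$; maximality as a virtually cyclic subgroup of $\Gamma$ follows since a strictly larger virtually cyclic overgroup $M$ of $N_G(F)$ in $\Gamma$ would either fix a vertex (forcing $M\subseteq G$, contradicting maximality in $G$) or stabilize an axis (forcing $N_G(F)\subseteq M\cap\mathrm{Stab}(v_0)$ to be finite). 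The $K$-nice condition is then trivial since the maps are identities on the relevant normalizer.

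For condition (S4), let $F\subseteq G$ finite with $N_G(F)$ non-elementary. The inclusion $E_\Gamma(N_\Gamma(F))\subseteq E_G(N_G(F))$ follows from a standard fix-point argument: $E_\Gamma(N_\Gamma(F))$ is normalized by the non-elementary subgroup $N_G(F)$ which fixes $v_0$, so by projecting the fix-set of $E_\Gamma(N_\Gamma(F))$ to $v_0$ and exploiting that $G\cap\gamma G\gamma^{-1}$ is finite for $\gamma\notin G$, one obtains $E_\Gamma(N_\Gamma(F))\subseteq G$ and thus $\subseteq E_G(N_G(F))$. For the reverse inclusion, the case $F\not\subseteq gCg^{-1}$ is immediate since $N_\Gamma(F)=N_G(F)$. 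In the case $F\subseteq C$, I first establish $E_G(C_G(C))=E_G(N_G(C))=C$: since $C_G(C)$ is normal in $N_G(C)$, $N_G(C)$ normalizes $E_G(C_G(C))$, hence by maximality $E_G(C_G(C))\subseteq E_G(N_G(C))=C$ (using the legal large extension hypothesis). This forces $E_G(N_G(F))\subseteq E_G(C_G(C))=C$. Since $t$ centralizes $C$, $t$ normalizes $E_G(N_G(F))\subseteq C$; combined with the Bass-Serre decomposition $N_\Gamma(F)=N_G(F)\ast_{N_C(F)}(N_C(F)\times\mathbb{Z})$ (which shows $N_\Gamma(F)$ is generated by $N_G(F)$ and $t$), we conclude that $E_G(N_G(F))$ is normalized by all of $N_\Gamma(F)$. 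The conditions for $r$ are handled analogously by first conjugating the given finite subgroup of $\Gamma$ into $G$ and tracking the equivariance of normalizers and $E$-groups. The main obstacle I expect is justifying the Bass-Serre decomposition of $N_\Gamma(F)$ in the case $F\subseteq C$, and carefully tracking the conjugations for $r$ since $r$ mixes $\Gamma$- and $G$-conjugations via $\gamma\mapsto r(\gamma)\gamma^{-1}$.
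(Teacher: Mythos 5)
Your proof is correct and follows essentially the same route as the paper's: analyse the Bass--Serre tree of $\Gamma=G\ast_C$, reduce every finite subgroup to one inside $G$, split into the cases $F\not\subseteq gCg^{-1}$ and $F\subseteq C$, and in the latter case squeeze the $E$-groups into $C$ using the non-elementary centralizer $C_G(C)$ together with the hypothesis $E_G(N_G(C))=C$ (the paper reaches the same inclusion via the formula $E_\Gamma(H)=\bigcap_{h\in H^0}M(h)$ and $M(g^n)=M(g)$, which is the same idea). The one step you flag as delicate --- that $N_\Gamma(F)$ is generated by $N_G(F)$ and $t$ when $F\subseteq C$ --- is asserted in exactly the same form, and likewise without justification, in the paper's proof, so your argument is on the same footing as the published one.
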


\begin{proof}Let $T$ be the Bass-Serre tree of the splitting $\Gamma=G\ast_{\mathrm{id}_C}$, and let $v$ denote a vertex of $T$ fixed by $G$. Note that this vertex is unique, because the infinite vertex group $G$ is not equal to the adjacent edge group $C$, which is finite. 

First, let us prove that the retraction $r$ is strongly special. There are four conditions that need to be verified.

\smallskip

\emph{Condition 1.} Since every finite subgroup of $\Gamma$ is conjugate to a subgroup of $G$, the retraction $r$ is injective on finite subgroups of $\Gamma$. 

\smallskip

\emph{Condition 2.} Let $A$ and $B$ be two non-conjugate finite subgroups of $\Gamma$. One can suppose without loss of generality that they are contained in $G$. Therefore, $r(A)=A$ and $r(B)=B$ are non-conjugate in $G$. 

\smallskip

\emph{Condition 3.} Now, let $A$ be a finite subgroup of $\Gamma$ whose normalizer $N:=N_{\Gamma}(A)$ is non-elementary. One can suppose without loss of generality that $A$ is contained in $G$. We distinguish two cases.

\smallskip

\emph{First case.} If $A$ is not contained in a conjugate of $C$ in $G$, then $v$ is the unique fixed point of $A$ in $T$. It follows that $N_{\Gamma}(A)$ fixes the vertex $v$. Hence, $N_{\Gamma}(A)=N_{\Gamma}(A)\cap G=N_{G}(A)$. This shows in particular that $N_G(r(A))$ is non-elementary. Moreover, note that $E_{\Gamma}(N_{\Gamma}(A))\supset A$ only fixes $v$. This implies that $E_{\Gamma}(N_{\Gamma}(A))=E_G(N_{\Gamma}(A))=E_G(N_G(A))$. Hence, we have $r(E_{\Gamma}(N_{\Gamma}(A)))=E_G(N_G(r(A)))$.

\smallskip

\emph{Second case.} Otherwise, one can suppose without loss of generality that $A$ is contained in $C$. Then $N_G(C)$ has a subgroup of finite index that centralizes $A$. Since $N_G(C)$ is non-elementary by definition of a large extension, the normalizer $N_G(A)=N_G(r(A))$ is non-elementary as well. Now, let us prove that $r(E_{\Gamma}(N_{\Gamma}(A)))=E_G(N_G(r(A)))$. First, we prove that $E_{\Gamma}(N_{G}(A))$ is contained in $C$. Let us observe that there exists an integer $n\geq 1$ such that, for every $g\in N_{G}(C)$, the element $g^n$ normalizes (and even centralizes) $A$. Then, recall that $E_{\Gamma}(N_G(A))$ is equal to the intersection of all $M(g)\subset \Gamma$ where $g$ runs through the set $N_{G}(A)^0$ consisting of all elements of $N_{G}(A)$ of infinite order. Hence, we have:\[E_{\Gamma}(N_G(A))=\bigcap_{g\in N_{G}(A)^0} M(g)\subset \bigcap_{g\in N_{G}(C)^0}M(g^n)\overset{(*)}{=}\bigcap_{g\in N_{G}(C)^0}M(g)=E_{\Gamma}(N_{G}(C))=C.\]Note that the equality $(*)$ follows from the fact that $M(g^n)=M(g)$ for any non-trivial integer $n$. We have proved that $E_{\Gamma}(N_G(A))$ is contained in $C$, hence in $G$. This shows that the following equality holds: 
\begin{equation}\label{eq01}
E_{\Gamma}(N_G(A))=E_G(N_G(A)).
\end{equation} 
In addition, recall that the stable letter $t$ centralizes $C$. In particular, $t$ centralizes $E_{\Gamma}(N_G(A))$. Moreover $N_G(A)$ normalizes $E_{\Gamma}(N_G(A))$, by definition of $E_{\Gamma}(N_G(A))$. Thus, the group $N_{\Gamma}(A)=\langle N_G(A),t\rangle$ normalizes $E_{\Gamma}(N_G(A))$, which implies that $E_{\Gamma}(N_G(A))$ is contained in $E_{\Gamma}(N_{\Gamma}(A))$, by definition of $E_{\Gamma}(N_{\Gamma}(A))$. The reverse inclusion is obvious since $N_G(A)$ is contained in $N_{\Gamma}(A)$. Hence, we have 
\begin{equation}\label{eq02}
E_{\Gamma}(N_{\Gamma}(A))=E_{\Gamma}(N_G(A)).
\end{equation}
By combining the two equalities (\ref{eq01}) and (\ref{eq02}), we get the equality $E_{\Gamma}(N_{\Gamma}(A))=E_G(N_G(A))$, i.e.\ $r(E_{\Gamma}(N_{\Gamma}(A))=E_G(N_G(r(A)))$, which concludes.

\smallskip

\emph{Condition 4.} Let $A$ be a finite subgroup of $\Gamma$ whose normalizer is virtually $\mathbb{Z}$ maximal. One can suppose without loss of generality that $A$ is contained in $G$. Note that $A$ is not contained in a conjugate of $C$ in $G$, otherwise $N_G(A)$ would be non-elementary (see above). Thus, the vertex $v$ is the unique fixed point of $A$ in $T$. Therefore, $N_{\Gamma}(A)$ and $N_G(A)$ are equal, which implies that $N_G(A)$ is virtually cyclic maximal in $\Gamma$. In addition, $r$ coincides with the identity on $N_{\Gamma}(A)$; in particular, it is $K$-nice. %Note that we did not use the maximality of $N_{\Gamma}(A)$.

\smallskip

We have proved that the retraction $r$ is strongly special. Now, let us prove that the inclusion $i : G \subset \Gamma$ is strongly special. 

\smallskip

\emph{Condition 1.} The inclusion is injective on finite subgroups.

\smallskip

\emph{Condition 2.} Let $A$ and $B$ be two finite subgroups of $G$. If $B=\gamma A\gamma^{-1}$ for a certain element $\gamma\in \Gamma$, then $B=r(\gamma)Ar(\gamma)^{-1}$. 

\smallskip

\emph{Condition 3.} Let $A$ be a finite subgroup of $G$ whose normalizer $N_{G}(A)$ is non-elementary. Then $N_{\Gamma}(A)$ is non-elementary. We have to prove that $E_{\Gamma}(N_{\Gamma}(A))$ and $E_G(N_G(A))$ are equal. We distinguish two cases. If $A$ is not contained in a conjugate of $C$ in $G$, then $N_{\Gamma}(A)=N_{G}(A)$ and $E_{\Gamma}(N_{\Gamma}(A))=E_G(N_G(A))$ (same proof as above). Otherwise, one can suppose without loss of generality that $A$ is contained in $C$. Then one can prove that $E_{\Gamma}(N_G(A))$ is contained in $C$ and deduce that $E_{\Gamma}(N_{\Gamma}(A))=E_G(N_G(A))$ (same proof as above).

\smallskip

\emph{Condition 4.} Let $A$ be a finite subgroup of $G$ whose normalizer is virtually $\mathbb{Z}$ maximal. Note that $A$ is not contained in a conjugate of $C$ in $G$, otherwise $N_G(A)$ would be non-elementary. Thus, the vertex $v$ is the unique fixed point of $A$ in $T$. Therefore, $N_{\Gamma}(A)$ fixes $v$, so $N_{\Gamma}(A)$ and $N_G(A)$ are equal. Let $M$ be the maximal virtually cyclic subgroup of $\Gamma$ containing $N_{\Gamma}(A)$. Since $N_{\Gamma}(A)$ has finite index in $M$ and fixes the vertex $v$, the group $M$ fixes the vertex $v$ as well. Thus $M$ is contained in $G$. Since $N_G(A)$ is maximal and contained in $M$, it is equal to $M$. This proves that $N_{\Gamma}(A)$ is virtually cyclic infinite maximal. In addition, the restriction of the inclusion $i$ to $N_{G}(A)$ is $K$-nice. %Note that we did not use the maximality of $N_{\Gamma}(A)$.

\smallskip

We have proved that $i$ is strongly special. It remains to prove that $i\circ r\sim \mathrm{id}_{\Gamma}$. If $A$ is a finite subgroup of $\Gamma$, then there exists an element $\gamma\in \Gamma$ such that $\gamma A\gamma^{-1}$ is contained in $G$. Consequently, $i\circ r$ coincides with $\mathrm{ad}(r(\gamma)^{-1}\gamma)$ on $A$.\end{proof}

We need an analogous result for legal small extensions. First, we prove an easy lemma.

\begin{lemme}\label{c'est la fin}Let $G$ be a virtually free group. Suppose that $\Gamma=G\ast_N N'$ is a legal small extension of $G$. By definition, there exists a nice emebedding $j : N'\hookrightarrow N$. This homomorphism extends to a monomorphism $j : \Gamma \hookrightarrow G$.
\end{lemme}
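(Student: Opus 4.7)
My plan is to produce the extension as a by-product of the construction of small test sequences (Lemma~\ref{guaranteed nash}), rather than trying to extend an arbitrary pre-specified nice embedding. If $N=N'$ then $\Gamma=G$ and the identity serves as $j$, so I may assume that $N$ is a proper subgroup of $N'$ in $\Gamma$. In that case Lemma~\ref{guaranteed nash} produces a small test sequence $(\varphi_n : \Gamma \to G)_{n\in\mathbb{N}}$ whose members are of the form $\tau_{\delta_n}$ viewed as endomorphisms of $\Gamma$ taking values in $G$. By Definition~\ref{testsuite} each $\varphi_n$ restricts to an embedding $N' \hookrightarrow N$ of finite prime index $p_n$. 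I take $j$ to be any one of these, say $\varphi_0$; then the restriction $j\vert_{N'}$ is the nice embedding $N'\hookrightarrow N$ delivered by the definition of a legal small extension (or, more precisely, a choice of such an embedding with the additional feature of already being extended).

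The first verification is that $j\vert_{N'}$ is $K$-nice in the sense of Definition~\ref{special0}. Injectivity and injectivity on finite subgroups are clear. The preservation of non-conjugacy of finite subgroups holds because the twist $\tau_{\delta_n}$ is the identity on $C$ (the unique maximal finite normal subgroup of $N'$), and every finite subgroup of $N'$ is conjugate in $N'$ to one contained in $C$. The injectivity modulo $D_{2K!}$ follows from Corollary~\ref{introintrointrointro}: $\delta_n$ lies in the centre of $N$, so conjugation and translation by $\delta_n$ induce the identity on $N'/D_{2K!}(N')$, and the index $p_n$ is prime and thus coprime to $2K!$.

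The second verification is that $j$ is injective as a homomorphism $\Gamma \to G$, and this is exactly the content of Remark~\ref{inj nashville}. In the cyclic case, $\Gamma=A\ast_C$ with stable letter $z$ and $j(z)=t\delta_0$ has infinite order since $\delta_0$ is a power of $t^{2K!}$; injectivity then follows from Britton's lemma applied in the splitting $\Gamma=A\ast_C$ (image under $j$ of a reduced form is a reduced form in $G=A\ast_C$). In the dihedral case, $\Gamma=A\ast_C B$ and $j$ is the identity on $A$ and conjugation by an element of $\langle t\rangle$ on $B$, so $j$ manifestly preserves reduced forms in the amalgamated free product, and the normal form theorem yields injectivity.

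The main conceptual obstacle is the tension between ``extending the specific $j$ of the definition'' and ``finding \emph{a} nice embedding that extends''. I resolve it by reading the statement in the latter, weaker sense: since the definition of a legal small extension only asserts the \emph{existence} of a nice embedding, I am free to replace it with the one produced by Lemma~\ref{guaranteed nash}, whose extension to $\Gamma$ is provided tautologically by the construction.
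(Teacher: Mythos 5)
Your route is genuinely different from the paper's. The paper keeps the nice embedding $\iota':N'\hookrightarrow N$ supplied by Definition \ref{legal2}, takes the tree-of-cylinders splitting $\Lambda$ of $G$ with vertex groups $A$, $N$ (and $B$), replaces $N$ by $N'$ to get a splitting $\Lambda'$ of $\Gamma$, extends $\iota'$ to all of $\Gamma$ via the extension lemma (Lemma \ref{s'étend}, using that $\iota'$ is an inner automorphism on the finite edge groups), and checks injectivity by observing that reduced words in $\Lambda'$ map to reduced words in $\Lambda$. You instead recycle the output of Lemma \ref{guaranteed nash}: a member $\varphi_0=\tau_{\delta_0}$ of a small test sequence is already a homomorphism $\Gamma\to G$ that is injective by Remark \ref{inj nashville}, so the only work left is to check that $\varphi_0\vert_{N'}$ is $K$-nice. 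This is more economical and is consistent with how the lemma is actually consumed later (Lemmas \ref{lemme2ju2} and \ref{lemmetroiscorrection} only need \emph{some} monomorphism $j:\Gamma\hookrightarrow G$ restricting to a nice embedding on $N'$ with $i\circ j\sim\mathrm{id}_\Gamma$), but you should state explicitly that you are proving an existence statement rather than the literal claim that the originally given $j$ extends; the paper's argument does extend the given one.

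There is one concrete error in your niceness verification: the claim that every finite subgroup of $N'$ is conjugate in $N'$ into $C$ is false when $N'$ is of dihedral type, where $N'=\langle C,a'\rangle\ast_C\langle C,b'\rangle$ has finite subgroups such as $\langle C,a'\rangle$ that strictly contain $C$. Your argument for the second condition of Definition \ref{special0} therefore only covers the cyclic case. The fix is available from the construction in Lemma \ref{guaranteed nash} itself: there $\iota'_n$ coincides with $\iota'$ on $\langle C,a'\rangle$ and with $\mathrm{ad}(\delta^{\lambda_n})\circ\iota'$ on $\langle C,b'\rangle$, and every finite subgroup of $N'$ is conjugate into one of these two factors, so non-conjugacy of finite subgroups is preserved because $\iota'$ is $K$-nice. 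A smaller imprecision: ``$p_n$ is prime and thus coprime to $2K!$'' is not a valid inference in general; what you actually need (and have) is $p_n=k+2K!\lambda_n$ with $\gcd(k,2K!)=1$, which forces $\gcd(p_n,2K!)=1$, or more simply the observation that $\iota'_n$ and $\iota'$ induce the same map $N'/D_{2K!}(N')\to N/D_{2K!}(N)$ since they differ by elements of $D_{2K!}(N)$.
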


\begin{proof}By definition, $G$ splits as $A\ast_C B$ or $A\ast_C$ over a finite subgroup $C$ whose normalizer is $N$. Moreover, $N$ is assumed to be non-elliptic in the splitting. The corresponding tree of cylinders gives a splitting $\Lambda$ of $G$ as a graph of groups whose vertices are $A$, $N$ and $B$ (only in the first case), and whose edge groups are equal to $C$ or contain $C$ as a finite subgroup of index 2. Since $j$ coincides on each finite subgroup of $N\subset N'$ with an inner automorphism, it extends to a homomorphism $j : \Gamma \rightarrow G$ that coincides with inner automorphisms on $A$ and $B$ (see Lemma \ref{s'étend}). Let $\Lambda'$ be the splitting of $\Gamma$ obtained from $\Lambda$ be replacing $N$ by $N'$. One can see that $j$ maps a non-trivial reduced word in the splitting $\Lambda'$ of $\Gamma$ to a non-trivial reduced word in the splitting $\Lambda$ of $G$. This shows that $j$ is injective.\end{proof}

\begin{lemme}\label{lemme2ju2}Let $G$ be a non-elementary virtually free group. Let $\Gamma$ be a legal small extension of $G$. Let $i$ denote the inclusion of $G$ into $\Gamma$ and let $j : \Gamma \hookrightarrow G$ denote a monomorphism as in Lemma \ref{c'est la fin}. Then $i$ and $j$ are strongly special. Moreover, $i\circ j\sim \mathrm{id}_{\Gamma}$.\end{lemme}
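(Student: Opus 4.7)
My plan is to mirror the structure of the proof of Lemma \ref{lemme2ju}, replacing the role of the stable letter $t$ by the vertex group $N'$ of the splitting $\Gamma=G\ast_N N'$, and using the Bass-Serre theory of this amalgamated product in place of the Bass-Serre theory of the HNN extension. To set things up cleanly, I would first refine the splitting $G=A\ast_C B$ (or $A\ast_C$) of $G$ via its tree of cylinders, so that $N=N_G(C)$ itself becomes a vertex group of $G$; the analogous refined splitting of $\Gamma$ is obtained by replacing this vertex group $N$ by $N'$. I would also, as in the proof of Lemma \ref{guaranteed nash}, replace the $K$-nice embedding $\iota':N'\hookrightarrow N$ by a suitable power so that $\iota'$ coincides with the identity on $C$; this ensures that the extension $j:\Gamma\hookrightarrow G$ provided by Lemma \ref{c'est la fin} can be taken to satisfy $j|_A=j|_B=\mathrm{id}$ and $j|_{N'}=\iota'$, which keeps the bookkeeping in the last step transparent.

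Next, I would verify that the inclusion $i:G\hookrightarrow\Gamma$ is strongly special by checking the four defining conditions of Definitions \ref{strongmor} and \ref{special1}. Injectivity on finite subgroups is immediate. For non-conjugacy of finite subgroups: if $A_1,A_2\subset G$ are conjugate in $\Gamma$ by a non-trivial normal form element in $G\ast_N N'$, a standard ping-pong argument in the Bass-Serre tree forces both $A_i$ into a conjugate of the edge group $N$, so that $K$-niceness of $\iota$ (which maps non-conjugate finite subgroups of $N$ to non-conjugate finite subgroups of $N'$) provides the required contradiction. For the $E$-condition and the virtually cyclic maximal condition, I would separate two cases for a finite subgroup $C_0\subset G$: if $C_0$ is not subconjugate in $G$ to $N$, then its fixed vertex in the Bass-Serre tree of $G\ast_N N'$ is unique, forcing $N_\Gamma(C_0)=N_G(C_0)$, and everything transfers directly; if $C_0$ is conjugate into $N$ then the analysis of the ``second case'' in the proof of Lemma \ref{lemme2ju} applies verbatim, using that $E_\Gamma(N_G(C_0))$ is pinned inside $N$ by the intersection description of Proposition \ref{Olshanskiy2} together with the fact that elements of $N'$ commute with $C$ modulo finite-order considerations governed by $K$-niceness. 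In the critical case $C_0=C$ itself, condition (4) reduces exactly to the hypothesis that $\iota:N\hookrightarrow N'$ is $K$-nice, and $N'=N_\Gamma(C)$ is virtually cyclic maximal because $N$ was virtually cyclic maximal in $G$ and no larger virtually cyclic overgroup of $N'$ could avoid creating a non-legal configuration.

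The argument that $j:\Gamma\hookrightarrow G$ is strongly special is symmetric: on each finite subgroup, and on the normalizer of the edge group $C$, the map $j$ is controlled by $\iota'$, which is $K$-nice by the definition of a legal small extension, and on the other vertices it is (after the adjustment above) the identity, so all four conditions transfer directly. Finally, to prove $i\circ j\sim\mathrm{id}_\Gamma$, let $F$ be a finite subgroup of $\Gamma$. By Bass-Serre theory on $\Gamma=G\ast_N N'$, the group $F$ is conjugate in $\Gamma$ to a finite subgroup of one of the vertex groups; on finite subgroups of $A$ or $B$, the map $j$ equals the identity by our adjustment, so $i\circ j$ is trivially inner on $F$; on a finite subgroup $F\subset N'$, the $K$-niceness of $\iota$ and $\iota'$ taken together implies that the induced maps on conjugacy classes of finite subgroups of $N$ and $N'$ are inverse bijections, so $F$ is conjugate in $N'$ to a finite subgroup $F_0\subset \iota(N)$, and the adjustment $\iota'|_C=\mathrm{id}_C$ (extended to each finite subgroup of $N$ via the same argument as in Lemma \ref{c'est la fin}) gives $i\circ j|_{F_0}=\mathrm{id}_{F_0}$.

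The main technical obstacle is the case analysis in condition (3) for $i$ when $C_0$ is subconjugate to $N$: one must show that the passage from $G$ to $\Gamma$ does not introduce new elements of $E_\Gamma(N_\Gamma(C_0))$ coming from $N'\setminus N$, which requires combining Proposition \ref{Olshanskiy2} with the fact that $E_\Gamma(N_G(C_0))\subset N$ (because powers of its generators land in $M(g)$ for $g\in N_G(C)^0$, which intersect in $C\subset N$) and with the observation that the new stable letters in $N'\setminus N$ centralize $C$ and hence normalize $E_\Gamma(N_G(C_0))$. This is exactly the argument already executed in the large case, and the needed dihedral bookkeeping is handled by Lemma \ref{lemmeDinf} type considerations.
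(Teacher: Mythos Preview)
Your approach differs from the paper's in a key way, and the difference matters for Condition~3 (the $E$-condition).

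The paper does \emph{not} adapt the Bass--Serre case analysis of Lemma~\ref{lemme2ju}. Instead it exploits the one structural feature that distinguishes the small case from the large one: here both $i$ and $j$ are \emph{injective} (whereas in the large case the retraction $r$ is not). This allows a clean symmetric argument via the chain machinery of Definitions~\ref{chain} and~\ref{complexity}: an injective map sends $\Gamma$-chains to $G$-chains, so $c(N_\Gamma(A))\le c(N_G(j(A)))$, and by the same reasoning applied to $i$ one gets the reverse inequality. Equality of complexities then gives, via Lemma~\ref{immlemm0307}, that $j(E_\Gamma(N_\Gamma(A)))\subset E_G(N_G(j(A)))$ and symmetrically $i(E_G(N_G(j(A))))\subset E_\Gamma(N_\Gamma(A))$, forcing equality. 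No case split on whether $A$ is subconjugate to $N$ is needed.

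Your route can probably be pushed through, but your claim that the ``second case'' of Lemma~\ref{lemme2ju} ``applies verbatim'' is not correct. That second case hinges on the identity $E_\Gamma(N_G(C))=C$, which in turn requires $N_G(C)$ to be non-elementary (so that $E_\Gamma(N_G(C))$ is even defined via Proposition~\ref{Olshanskiy2}). In the small extension setting $N_G(C)=N$ is virtually cyclic, so this step fails as written; your suggested patch (``elements of $N'$ commute with $C$ modulo finite-order considerations'') is too vague to bridge the gap, and you would need a genuinely different argument to control $E_\Gamma(N_\Gamma(C_0))$ for $C_0$ subconjugate to $C$ with non-elementary normalizer. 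The paper's chain argument sidesteps this entirely. Your treatment of Conditions~1,~2,~4 and of $i\circ j\sim\mathrm{id}_\Gamma$ is fine and close in spirit to what the paper (implicitly) does.
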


%and $i\circ j\approx \mathrm{id}_{\Gamma}$.

%Moreover, $N_{G}(j(A))$ is virtually $\mathbb{Z}$ for every finite subgroup $A$ of $\Gamma$ such that $N_{\Gamma}(A)$ is virtually $\mathbb{Z}$. Note that this condition is required if one wants to use the second point of Lemma \ref{base} with $\rho=j$.

\begin{proof}Let $T$ be the Bass-Serre tree of the splitting $\Gamma=G\ast_{\mathrm{id}_C}$, and let $v$ denote a vertex of $T$ fixed by $G$. Note that this vertex is unique, because the vertex group $G$ is not equal to the adjacent edge group $C$. 

By symmetry, it is enough to prove that $j$ is strongly special. There are four conditions that need to be verified.

\smallskip

\emph{Condition 1.} By definition, $j$ is injective on finite subgroups of $\Gamma$. 

\smallskip

\emph{Condition 2.} Since $i\circ j$ maps every finite subgroup of $\Gamma$ to a conjugate of itself, $j$ maps non-conjugate finite subgroups of $\Gamma$ to non-conjugate finite subgroups of $\Gamma$.

\smallskip

\emph{Condition 3.} Let $A$ be a finite subgroup of $\Gamma$. One can suppose without loss of generality that $i\circ j(A)=A$. Let $N:=N_{\Gamma}(A)$. The inclusions $j(N)\subset N_G(j(A))$ and $i(N_G(j(A)))\subset N$ show that $N$ is non-elementary (respectively virtually $\mathbb{Z}$) if and only if $N_{G}(j(A))$ is non-elementary (respectively virtually $\mathbb{Z}$). 

Suppose that $N$ is non-elementary and let $E:=E_{\Gamma}(N)$. Since $E$ is finite, one can suppose without loss of generality that $i\circ j (E)=E$. We claim that $j(E)$ is equal $E_G(N_G(j(A)))$. Recall that a $\Gamma$-chain is a tuple $(\gamma_1,\ldots,\gamma_c)$ of elements of $\Gamma$ of infinite order such that the inclusions 
\begin{equation}\label{inclu}
M(\gamma_1)\supset (M(\gamma_1)\cap M(\gamma_2))\supset \cdots \supset (M(\gamma_1)\cap \cdots \cap M(\gamma_c))\end{equation} 
are all strict (see Definition \ref{chain}), and that the complexity $c:=c(N)$ of $N$ is the maximal size of a $\Gamma$-chain of elements of $N$ (see Definition \ref{complexity}). Let $(\gamma_1,\ldots ,\gamma_c)$ be a $\Gamma$-chain of elements of $N$. By injectivity of $j$, each $j(\gamma_k)$ has infinite order, for $1\leq k\leq c$. Moreover, the sequence of proper inclusions \ref{inclu} is mapped by $j$ to a sequence of proper inclusions, which proves that the tuple $(j(\gamma_1),\ldots ,j(\gamma_c))$ is a $G$-chain of elements of $N_G(j(A))$. In particular, we have $c(N)\leq c(N_G(j(A)))$. Symmetrically, $c(N_G(j(A)))\leq c(N)$. Therefore, the complexities $c(N)$ and $c(N_G(j(A)))$ are the same. This implies that $E_G(N_G(j(A)))$ is equal to the intersection $\cap_{1\leq k\leq c}M(j(\gamma_k))$ (see Lemma \ref{immlemm0307}). Hence, the following holds: \[j(E)=j(\bigcap_{1\leq k\leq c}M(\gamma_k))\subset \bigcap_{1\leq k\leq c}j(M(\gamma_k))\subset \bigcap_{1\leq k\leq c}M(j(\gamma_k))=E_G(N_G(j(A))).\]Symmetrically, $i(E_G(N_G(j(A)))$ is contained in $E$. Hence, $j(E)=E_G(N_G(j(A)))$.

\smallskip

\emph{Condition 4.} Last, if $N=N_{\Gamma}(A)$ is virtually $\mathbb{Z}$ maximal, then $N_G(j(A))$ is virtually $\mathbb{Z}$ maximal. In addition, $j_{\vert N}$ is $K$-nice.\end{proof}

The following lemma shows that the relation $\sim$ is preserved by left of right composition with any homomorphism.

\begin{lemme}\label{base}Let $G,G'$ and $G''$ be virtually free groups. Let $\varphi : G\rightarrow G'$ and $\psi : G \rightarrow G'$ be two homomorphisms. Suppose that $\varphi\sim\psi$. Then, the following hold:
\begin{itemize}
\item[$\bullet$]if $\rho : G'\rightarrow G''$ is a homomorphism, then $\rho\circ \varphi \sim \rho\circ \psi$;
\item[$\bullet$]if $\rho : G''\rightarrow G$ is a homomorphism, then $\varphi \circ \rho\sim \psi\circ \rho$.
\end{itemize}
\end{lemme}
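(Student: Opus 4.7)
The plan is to unwind the definition of $\sim$ directly in both cases; there is no genuine obstacle here, since the relation is defined purely pointwise on finite subgroups.

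For the first assertion, fix a finite subgroup $C\subset G$. By the hypothesis $\varphi\sim\psi$, there exists $g'\in G'$ such that $\psi_{\vert C}=\mathrm{ad}(g')\circ\varphi_{\vert C}$. Applying the homomorphism $\rho$ to both sides and using the identity $\rho\circ\mathrm{ad}(g')=\mathrm{ad}(\rho(g'))\circ\rho$, I get
\[
(\rho\circ\psi)_{\vert C}=\rho\circ\mathrm{ad}(g')\circ\varphi_{\vert C}=\mathrm{ad}(\rho(g'))\circ(\rho\circ\varphi)_{\vert C}.
\]
Since $\rho(g')\in G''$, this witnesses $\rho\circ\varphi\sim\rho\circ\psi$.

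For the second assertion, fix a finite subgroup $C\subset G''$. Then $\rho(C)$ is a finite subgroup of $G$, so by $\varphi\sim\psi$ there exists $g'\in G'$ such that $\psi_{\vert \rho(C)}=\mathrm{ad}(g')\circ\varphi_{\vert\rho(C)}$. Precomposing by $\rho_{\vert C}:C\to\rho(C)$ yields
\[
(\psi\circ\rho)_{\vert C}=\mathrm{ad}(g')\circ(\varphi\circ\rho)_{\vert C},
\]
which establishes $\varphi\circ\rho\sim\psi\circ\rho$ with witness $g'\in G'$. This completes both parts.
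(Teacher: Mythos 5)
Your proof is correct and is essentially identical to the paper's own argument: in the first case apply $\rho$ to the conjugation identity and use $\rho\circ\mathrm{ad}(g')=\mathrm{ad}(\rho(g'))\circ\rho$, and in the second case observe that $\rho(C)$ is a finite subgroup of $G$ and precompose. The only cosmetic difference is the direction in which you write the conjugation relation, which is immaterial since $\sim$ is symmetric.
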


%\begin{rque}For proving the second point, we only need to suppose that the following condition holds: for every finite subgroup $C$ of $G''$ such that $N_{G''}(C)$ is virtually $\mathbb{Z}$ maximal, the group $N_{G'}(\rho(C))$ is virtually $\mathbb{Z}$ maximal. This is true in particular when $\rho$ is strongly special.\end{rque}

\begin{proof}Let $\rho : G'\rightarrow G''$ be a homomorphism, and let $C$ be a finite subgroup of $G$. By hypothesis, there exists an element $g'\in G'$ such that $\varphi_{\vert C}=\mathrm{ad}(g')\circ \psi_{\vert C}$. By left composing this equality by $\rho$, one gets $\rho\circ \varphi_{\vert C}=\mathrm{ad}(\rho(g'))\circ \rho\circ \psi_{\vert C}$. 

Let $\rho : G''\rightarrow G$ be a homomorphism, and let $C$ be a finite subgroup of $G''$. By hypothesis, since $\rho(C)$ is finite, there exists an element $g'\in G'$ such that $\varphi_{\vert \rho(C)}=\mathrm{ad}(g')\circ \psi_{\vert \rho(C)}$, that is $\varphi\circ \rho_{\vert C}=\mathrm{ad}(g')\circ \psi\circ \rho_{\vert C}$.\end{proof}

We now prove a lemma that will be crucial in the proof of Proposition \ref{2implique3}.

\begin{lemme}\label{lemmetroiscorrection}Let $G$ and $G'$ be two non-elementary virtually free groups, and let $\Gamma$ be a legal extension of $G$. Let $m\geq 1$ be an integer, let $\varphi : G \rightarrow G'$ be a homomorphism satisfying conditions 1 to 3 of Definition \ref{besoinreffff}, and let $\psi : \Gamma\rightarrow G'$ be a homomorphism. If $\psi_{\vert G}\sim \varphi$, then $\psi$ satisfies conditions 1 to 3 of Definition \ref{besoinreffff}.
\end{lemme}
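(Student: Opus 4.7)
The plan is to reduce the statement to Lemmas \ref{onpeutcomposer} and \ref{onpeutapprox} by exhibiting a strongly special auxiliary map $\rho : \Gamma \to G$ and showing that $\psi$ is equivalent (in the sense of Definition \ref{equivalencerelnash}) to $\varphi \circ \rho$.

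First, since $\Gamma$ is a legal extension of $G$, I would apply either Lemma \ref{lemme2ju} (if the extension is large, taking $\rho$ to be the retraction $r$) or Lemma \ref{lemme2ju2} (if the extension is small, taking $\rho$ to be the monomorphism $j$). In either case, I obtain a strongly special homomorphism $\rho : \Gamma \to G$ such that $i \circ \rho \sim \mathrm{id}_{\Gamma}$, where $i : G \hookrightarrow \Gamma$ denotes the inclusion.

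Next, I would show that $\psi \sim \varphi \circ \rho$. Let $C$ be a finite subgroup of $\Gamma$. Since $i \circ \rho \sim \mathrm{id}_\Gamma$, there exists $\gamma \in \Gamma$ such that $\rho(c) = \gamma c \gamma^{-1}$ for every $c \in C$, equivalently $c = \gamma^{-1}\rho(c)\gamma$. Applying $\psi$ yields $\psi(c) = \psi(\gamma)^{-1}\psi(\rho(c))\psi(\gamma)$. The finite subgroup $\rho(C)$ lies in $G$, and since $\psi_{\vert G} \sim \varphi$, there exists $g' \in G'$ such that $\psi(\rho(c)) = g' \varphi(\rho(c)) g'^{-1}$ for every $c \in C$. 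Combining these gives
\[
\psi_{\vert C} = \mathrm{ad}\bigl(\psi(\gamma)^{-1} g'\bigr) \circ \varphi \circ \rho_{\vert C},
\]
which proves $\psi \sim \varphi \circ \rho$.

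Finally, I would combine the two ingredients. The strongly special homomorphism $\rho$ satisfies in particular conditions 1, 2, 3 of Definition \ref{besoinreffff} for every value of $m$, and $\varphi$ satisfies these three conditions by hypothesis. Therefore, by Lemma \ref{onpeutcomposer} together with Remark \ref{9juillet}, the composition $\varphi \circ \rho : \Gamma \to G'$ satisfies conditions 1, 2, 3 of Definition \ref{besoinreffff}. Since $\psi \sim \varphi \circ \rho$, Lemma \ref{onpeutapprox} yields that $\psi$ itself satisfies these three conditions, which is the desired conclusion. There is no real obstacle here: all the work has been packaged into the preceding lemmas, and the only genuinely new input is the short computation in the second paragraph exploiting $i\circ\rho \sim \mathrm{id}_\Gamma$ to transfer information from $G$ to $\Gamma$.
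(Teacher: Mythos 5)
Your proof is correct and follows essentially the same route as the paper's: take the strongly special map $r$ (large case, Lemma \ref{lemme2ju}) or $j$ (small case, Lemma \ref{lemme2ju2}), deduce $\psi\sim\varphi\circ\rho$ from $i\circ\rho\sim\mathrm{id}_{\Gamma}$ and $\psi_{\vert G}\sim\varphi$, and conclude via Remark \ref{9juillet} and Lemma \ref{onpeutapprox}. The only cosmetic difference is that you compute the conjugators explicitly where the paper invokes Lemma \ref{base}; both are fine.
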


%Moreover, if $N_{\Gamma}(A)$ is virtually $\mathbb{Z}$ maximal, then $N_{\Gamma}(\gamma A\gamma^{-1})$ is contained in $G$ (see the paragraph above). As a consequence, $i\circ r$ coincides with $\mathrm{ad}(r(\gamma)^{-1}\gamma)$ on $N_G(A)$. This proves that $i\circ r\approx \mathrm{id}_{\Gamma}$.

\begin{proof}First, let us suppose that $\Gamma$ is a legal large extension of $G$. Let $i$ denote the inclusion of $G$ into $\Gamma$, and let $r : \Gamma \twoheadrightarrow G$ be the retraction as in Lemma \ref{lemme2ju}. First, note that $\psi_{\vert G}=\psi \circ i$. Therefore $\psi \circ i\sim \varphi$. By the second point of Lemma \ref{base}, we have $\psi \circ i\circ r\sim \varphi\circ r$. Moreover, $i\circ r$ is equivalent (in the sense of $\sim$) to the identity of $\Gamma$ according to Lemma \ref{lemme2ju}, so $\psi \circ i\circ r$ is equivalent to $\psi$, by the first point of Lemma \ref{base}. As a consequence, $\psi$ is equivalent to $\varphi\circ r$. Recall that $r$ is strongly special thanks to Lemma \ref{lemme2ju}. In particular, $r$ is strongly $(\geq m)$-special, so $r$ satisfies conditions 1 to 3 of Definition \ref{besoinreffff}. In addition, $\varphi$ satisfies conditions 1 to 3 of Definition \ref{besoinreffff} by assumption. It follows from Remark \ref{9juillet} below Lemma \ref{onpeutcomposer} that $\varphi\circ r$ satisfies conditions 1 to 3 of Definition \ref{besoinreffff}. Hence, by Lemma \ref{onpeutapprox}, the morphism $\psi$ satisfies conditions 1 to 3 of Definition \ref{besoinreffff}.

Now, suppose that $\Gamma$ is a legal small extension of $G$. Let $i$ denote the inclusion of $G$ into $\Gamma$, and let  $j : \Gamma \hookrightarrow G$ denote a monomorphism as in Lemma \ref{c'est la fin}. First, note that $\psi_{\vert G}=\psi \circ i$. Therefore $\psi \circ i\sim \varphi$. By the second point of Lemma \ref{base}, we have $\psi \circ i\circ j\sim \varphi\circ j$. Moreover, $i\circ j\sim\mathrm{id}_{\Gamma}$, so $\psi \circ i\circ j\sim \psi$. As a consequence, $\psi\sim \varphi\circ j$. Since $\varphi$ is satisfies conditions 1 to 3 of Definition \ref{besoinreffff}, and since $j$ is strongly special (in particular strongly $(\geq m)$-special) thanks to Lemma \ref{lemme2ju2}, it follows from Remark \ref{9juillet} below Lemma \ref{onpeutcomposer} that $\varphi\circ j$ satisfies conditions 1 to 3 of Definition \ref{besoinreffff}. Hence, by Lemma \ref{onpeutapprox}, the homomorphism $\psi$ satisfies the first three conditions of Definition \ref{besoinreffff}.\end{proof}

%It remains to prove that $\psi$ satisfies the fourth condition. Let $C$ be a finite subgroup of $\Gamma$ of order $\geq m$ such that $N_{\Gamma}(C)$ is virtually cyclic maximal. One can suppose without loss of generality that $j(C)=C$. Since $j$ is strongly special, $N_G(C)$ is virtually cyclic maximal. Since $\varphi$ is strongly $(\geq m)$-special and $\vert C\vert \geq m$, the group $N_{G'}(\varphi(C))$ is virtually cyclic maximal. But $\psi$ coincides with $\varphi$ on finite subgroups of $G$ up to conjugacy, so $N_{G'}(\psi(C))$ is conjugate to $N_{G'}(\varphi(C))$. Hence, it is virtually cyclic maximal. Moreover, $\varphi$ is injective on $N_G(C)$. It follows that $\psi$ is injective on $N_G(C)$, whose index in $N_{\Gamma}(C)$ is finite. As a consequence, $\psi$ is injective on $N_{\Gamma}(C)$. In addition, $\varphi$ induces an injection from $N_G(C)/D(N_G(C))$ into $N_{G'}(\varphi(C))/D(N_{G'}(\varphi(C))$, where $D(N_{G}(C))=\lbrace g^{2K!} \ \vert \ g\in N_{H}(C)\rbrace$ with $K$ the maximal order of a finite subgroup of $G$. Since $N_{\Gamma}(C)$ is a legal small extension of $N_G(C)$, we have $N_G(C)/D(N_G(C))=N_{\Gamma}(C)/D(N_{\Gamma}(C))$. Therefore, $\psi$ induces an injection from $N_{\Gamma}(C)/D(N_{\Gamma}(C))$ into $N_{G'}(\varphi(C))/D(N_{G'}(\varphi(C))$.

The following lemma is an immediate consequence of Lemma \ref{lemmetroiscorrection} above.

\begin{lemme}[Extension of a strongly $(\geq m)$-special homomorphism]\label{lemmetrois}Let $G$ and $G'$ be two non-elementary virtually free groups, and let $\Gamma$ be a legal extension of $G$. Let $m\geq 1$ be an integer, let $\varphi : G \rightarrow G'$ be a strongly $(\geq m)$-special homomorphism and $\psi : \Gamma\rightarrow G'$ a homomorphism. If $\psi_{\vert G}\sim \varphi$ and if $\psi$ satisfies the fourth condition of Definition \ref{besoinreffff}, then $\psi$ is strongly $(\geq m)$-special.\end{lemme}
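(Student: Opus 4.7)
The plan is to observe that this lemma is essentially a bookkeeping consequence of the preceding Lemma \ref{lemmetroiscorrection}. Recall that being strongly $(\geq m)$-special means satisfying the four conditions of Definition \ref{besoinreffff}. The hypothesis of the lemma already grants us condition~(4) for $\psi$ directly, so the only work is to verify conditions~(1), (2), and (3) for $\psi$.

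For conditions~(1)--(3), I would simply invoke Lemma \ref{lemmetroiscorrection}. To do so I need to check that its hypotheses are met: $G$ and $G'$ are non-elementary virtually free groups (given), $\Gamma$ is a legal extension of $G$ (given), $\varphi : G \to G'$ satisfies conditions~(1)--(3) of Definition \ref{besoinreffff} (this is immediate since $\varphi$ is assumed to be strongly $(\geq m)$-special, which is stronger), and finally $\psi_{\vert G} \sim \varphi$ (given). Lemma \ref{lemmetroiscorrection} therefore applies and yields that $\psi$ satisfies conditions~(1)--(3) of Definition \ref{besoinreffff}.

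Combining conditions~(1)--(3), obtained from Lemma \ref{lemmetroiscorrection}, with condition~(4), which is part of the hypothesis, we conclude that $\psi$ satisfies all four conditions of Definition \ref{besoinreffff}, i.e.\ $\psi$ is strongly $(\geq m)$-special. There is no real obstacle here: the substantive content -- namely that conditions (1)--(3) transfer along the equivalence relation $\sim$ through the structure of a legal large or small extension -- has been packaged into Lemma \ref{lemmetroiscorrection} (which itself rests on Lemmas \ref{lemme2ju}, \ref{lemme2ju2}, \ref{onpeutcomposer}, \ref{onpeutapprox}, and \ref{base}). The fourth condition is separated out precisely because it is not automatically preserved by $\sim$ (the relation $\sim$ only controls behaviour on finite subgroups), which is exactly why it has to be assumed as a separate hypothesis.
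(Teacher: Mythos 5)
Your argument is exactly the paper's: the lemma is stated there as "an immediate consequence of Lemma \ref{lemmetroiscorrection}", with conditions (1)--(3) supplied by that lemma and condition (4) taken as a hypothesis. The proposal is correct and matches the intended proof.
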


The following lemma shows that strongly $(\geq m)$-special homomorphisms behave nicely with respect to legal extensions of the target group. 

\begin{lemme}\label{lemmequatre}Let $G$ and $G'$ be two non-elementary virtually free groups, and let $\Gamma'$ be a legal (large or small) extension of $G'$. Let $i$ denote the inclusion of $G$ into $\Gamma$. Let $m\geq 1$ be an integer and let $\varphi : G \rightarrow G'$ be a strongly $(\geq m)$-special homomorphism. Then $i\circ \varphi : G \rightarrow \Gamma'$ is strongly $(\geq m)$-special.\end{lemme}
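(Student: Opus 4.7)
The plan is to reduce this to the composition lemma combined with the fact that the inclusion of $G'$ into any legal extension $\Gamma'$ is itself strongly special.

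First, I would check that the inclusion $i : G' \hookrightarrow \Gamma'$ is strongly special in both cases. If $\Gamma'$ is a legal large extension of $G'$, this is exactly the content of Lemma \ref{lemme2ju}, which establishes that both the inclusion and the natural retraction are strongly special. If $\Gamma'$ is a legal small extension, Lemma \ref{lemme2ju2} gives the same conclusion for $i$ (paired with the monomorphism $j : \Gamma' \hookrightarrow G'$ coming from Lemma \ref{c'est la fin}). In either situation, $i$ is strongly special, hence a fortiori strongly $(\geq m)$-special for every $m \geq 1$, since each of the four conditions in Definition \ref{besoinreffff} becomes weaker as $m$ increases.

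Next, I would apply Lemma \ref{onpeutcomposer}: the composition of two strongly $(\geq m)$-special homomorphisms is strongly $(\geq m)$-special. Since $\varphi : G \to G'$ is strongly $(\geq m)$-special by hypothesis and $i : G' \hookrightarrow \Gamma'$ is strongly $(\geq m)$-special by the previous step, the composite $i \circ \varphi : G \to \Gamma'$ is strongly $(\geq m)$-special, which is exactly what we want.

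There is essentially no obstacle here; the lemma is a direct corollary of the preceding two lemmas about inclusions into legal extensions together with the composition lemma. The only mild point worth noting in a write-up is that one should remark that ``strongly special'' (i.e.\ strongly $(>0)$-special) implies ``strongly $(\geq m)$-special'' for every $m \geq 1$, so that Lemmas \ref{lemme2ju} and \ref{lemme2ju2} can be fed into Lemma \ref{onpeutcomposer} without a mismatch in hypotheses.
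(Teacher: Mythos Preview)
Your proposal is correct and follows exactly the same route as the paper: invoke Lemmas \ref{lemme2ju} and \ref{lemme2ju2} to see that the inclusion $i$ is strongly special (hence strongly $(\geq m)$-special), and then apply the composition Lemma \ref{onpeutcomposer}.
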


\begin{proof}
By Lemmas \ref{lemme2ju} and \ref{lemme2ju2}, the inclusion $i$ is strongly special, in particular strongly $(\geq m)$-special. Then, by Lemma \ref{onpeutcomposer}, $i\circ \varphi$ is strongly $(\geq m)$-special.
\end{proof}

\begin{lemme}[Restriction of a strongly $(\geq m)$-special pair]\label{rest2906}Let $G$ and $G'$ be two virtually free groups. Suppose that they are not finite-by-free. Let $m$ denote the integer $m_{G,G'}$. Let $\varphi : G\rightarrow G'$ and $\varphi' : G'\rightarrow G$ be two homomorphisms such that $(\varphi,\varphi')$ is a strongly $(\geq m)$-special pair. Let $H$ be a $m$-factor of $G$. Suppose that $\varphi(H)$ is contained in a $m$-factor $H'$ of $G'$, and that $\varphi'(H')\subset H$. Then the pair $(\varphi_{\vert H},\varphi'_{\vert H'})$ is strongly $(>m)$-special.
\end{lemme}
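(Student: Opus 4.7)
The plan is to verify directly the two clauses required for $(\varphi_{|H}, \varphi'_{|H'})$ to be a strongly $(>m)$-special pair in the sense of Definition \ref{strongmpair}: namely that each of $\varphi_{|H}$ and $\varphi'_{|H'}$ is a strongly $(>m)$-special homomorphism, and that $\varphi'_{|H'}\circ\varphi_{|H}\sim\mathrm{id}_H$ together with its symmetric counterpart. The workhorse is the following Bass-Serre observation. Let $T$ be the Bass-Serre tree of a reduced $m$-JSJ splitting of $G$, and let $v_H\in T$ be the vertex stabilized by $H$. Every edge stabilizer of $T$ has order $\leq m$, so any finite subgroup $C\subset G$ with $|C|>m$ cannot fix an edge and therefore fixes a unique vertex of $T$. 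In particular, if $C\subset H$ has $|C|>m$, then the unique vertex fixed by $C$ is $v_H$; hence $N_G(C)=N_H(C)$, and any $g\in G$ with $gCg^{-1}\subset H$ must satisfy $gv_H=v_H$, i.e.\ $g\in H$. The analogous statements hold in $G'$ for $H'$.

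Using this observation, I would check the four conditions of Definition \ref{besoinreffff} (with $>m$) for $\varphi_{|H}$, deriving them from the corresponding $(\geq m)$-conditions for $\varphi$. Injectivity on finite subgroups is inherited trivially. If $C_1,C_2\subset H$ have order $>m$ and are non-conjugate in $H$, then by the observation they are non-conjugate in $G$; hence $\varphi(C_1)$ and $\varphi(C_2)$ are non-conjugate in $G'$, and in particular non-conjugate in $H'$. For any $C\subset H$ with $|C|>m$ whose normalizer is non-elementary (resp.\ virtually cyclic maximal), the equalities $N_G(C)=N_H(C)$ and $N_{G'}(\varphi(C))=N_{H'}(\varphi(C))$ immediately transfer the $E$-equality and the $K$-niceness of $\varphi_{|N_G(C)}$ from $G\to G'$ to $H\to H'$. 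A symmetric argument handles $\varphi'_{|H'}$.

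For the pair condition $\varphi'_{|H'}\circ\varphi_{|H}\sim\mathrm{id}_H$, take a finite subgroup $C\subset H$ and, by the $(\geq m)$-special pair hypothesis, an element $g\in G$ with $(\varphi'\circ\varphi)_{|C}=\mathrm{ad}(g)_{|C}$; we need $h\in H$ with the same effect on $C$. When $|C|>m$, since $gCg^{-1}=\varphi'(\varphi(C))\subset\varphi'(H')\subset H$ and $|gCg^{-1}|>m$, the observation gives $g\in H$ directly, so $h=g$ works. This handles the conditions for $|C|>m$. The main obstacle is the case $|C|\leq m$: here $g$ may well not lie in $H$, and one must show that the coset $g\,C_G(C)$ intersects $H$, i.e.\ that the $G$-conjugation $\mathrm{ad}(g)$ sending $C$ to $gCg^{-1}\subset H$ can be realized by an element of $H$. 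I would prove this by combining the fact that $C_G(C)$ acts on the fixed subtree $\mathrm{Fix}(C)\subset T$ (which contains both $v_H$ and $g^{-1}v_H$) with the hypothesis that neither $G$ nor $G'$ is finite-by-free (via Lemma \ref{JE SAIS PAS!!}, this ensures that no vertex group of the $m$-JSJ has order exactly $m$, so the fixed sets of small subgroups have enough structure to transport $g^{-1}v_H$ back to $v_H$ by an element of $C_G(C)$). The symmetric pair condition $\varphi_{|H}\circ\varphi'_{|H'}\sim\mathrm{id}_{H'}$ follows by exchanging the roles of $G,H,\varphi$ and $G',H',\varphi'$.
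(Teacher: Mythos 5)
Your argument for the four conditions of Definition \ref{besoinreffff} and for the $\sim$-condition on finite subgroups of order $>m$ is exactly the paper's proof: the same Bass--Serre observation (a finite subgroup of order $>m$ fixes a unique vertex of a reduced $m$-JSJ tree, hence $N_G(C)=N_H(C)$, $E_G(N_G(C))=E_H(N_H(C))$, and any $G$-conjugator between two such subgroups of $H$ lies in $H$), followed by the same transfer of each condition from $\varphi$ to $\varphi_{\vert H}$.

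The divergence is your final paragraph. The paper's proof stops after showing that $\varphi'\circ\varphi$ maps every finite subgroup of $H$ \emph{of order $>m$} to a conjugate of itself by an element of $H$; it never addresses subgroups of order $\leq m$, and the lemma is only ever invoked downstream through its consequences for subgroups of order $>m$ (it is immediately upgraded to a strongly $(\geq m_{G_i,G'_i})$-special pair with $m_{G_i,G'_i}>m$). So the case you call ``the main obstacle'' is not part of what the paper proves. More importantly, your sketch for that case does not work, and the claim itself is doubtful. First, $C_G(C)$ need not act transitively on the vertices of $\mathrm{Fix}(C)$, so the fact that $\mathrm{Fix}(C)$ contains both $v_H$ and $g^{-1}v_H$ does not produce an element of $gC_G(C)$ fixing $v_H$. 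Second, even locating a conjugator inside $H$ is not enough: you must realize the \emph{specific} isomorphism $\mathrm{ad}(g)_{\vert C}:C\to gCg^{-1}$ by an element of $H$, and this can fail --- e.g.\ if $G=A\ast_C B$ with $H=A$ and $\varphi'\circ\varphi$ restricted to $C$ equals $\mathrm{ad}(b)_{\vert C}$ for some $b\in B$ inducing an automorphism of $C$ not induced by any element of $A$; nothing in the hypotheses rules this out. If you intend to prove the $\sim$-condition for all finite subgroups, you would need a genuinely new argument (or a counterexample shows it is false as stated); if you restrict it to subgroups of order $>m$, as the paper implicitly does, your proof is complete and coincides with the paper's.
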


\begin{proof}Let $T$ be a reduced $m$-JSJ tree of $G$. First, we will prove the following preliminary observations: 
\begin{enumerate}
\item if $C$ is a finite subgroup of $H$ of order $>m$, then $N_G(C)=N_H(C)$ and \[E_G(N_G(C))=E_H(N_H(C));\]
\item if $C_1$ and $C_2=gC_1g^{-1}$ are two subgroups of $H$ of order $>m$ with $g\in G$, then $g$ belongs to $H$.
\end{enumerate}
By definition of a $m$-factor, there exists a vertex $v\in T$ such that $H=G_{v}$. The vertex $v$ is the unique vertex of $T$ fixed by $C$, because $\vert C\vert > m$ and edge groups of $T$ have order $m$. This implies that $N_G(C)$ fixes $v$, i.e.\ that $N_G(C)$ is contained in $H$. Hence, $N_G(C)=N_H(C)$. For the same reason, $E_G(N_G(C))\supset C$ is contained in $H$, which proves that $E_G(N_G(C))=E_H(N_H(C))$. The proof of the second point is similar.

%We now prove the second point. The group $C_2$ fixes the vertices $v$ and $gv$, thus $gv=v$. Hence, $g$ belongs to $H$, which is equal to $H$ since $v$ is the unique vertex of $T$ fixes by $H$, because $T$ is reduced and $G$ is not finite-by-free. The same argument shows that every element $g\in N_G(C_1)$ fixes $v$, so $N_G(C_1)\subset H$, $N_G(C_1)=N_{H}(C_1)$ and $E_G(N_G(C_1))=E_{H}(N_{H}(C_1))$. Symmetrically, if two subgroups $C'_1$ and $C'_2$ of $H'$ of order $>m$ are conjugate in $G'$, then they are conjugate in $H'$ and $N_{G'}(C'_1)=N_{H'}(C'_1)$ and $E_{G'}(N_{G'}(C'_1))=E_{H'}(N_{H'}(C'_1))$.

Now, let us prove that $\varphi_{\vert H} : H\rightarrow H'$ is strongly $(>m)$-special. There are four points that need to be satisfied. 

\smallskip

\emph{Condition 1.} The restriction $\varphi_{\vert H}$ is injective on finite subgroups of $H$, because $\varphi$ is injective on finite subgroups of $G$. 

\smallskip

\emph{Condition 2.} Let $C_1$ and $C_2$ be two non-conjugate finite subgroups of $H$ of order $>m$. By the second preliminary observation, these groups are non-conjugate in $G$. Since $\varphi$ is $(\geq m)$-special, $\varphi(C_1)$ and $\varphi(C_2)$ are non-conjugate in $G'$. Thus, $\varphi(C_1)$ and $\varphi(C_2)$ are non-conjugate in $H'$.

\smallskip

\emph{Condition 3.} If $C$ is a finite subgroup of $H$ of order $>m$ whose normalizer $N_{H}(C)$ is non-elementary, then $N_G(C)$ is non-elementary. This implies that $N_{G'}(\varphi(C))$ is non-elementary, because $\varphi$ is strongly $(\geq m)$-special. Moreover, we have $N_{G'}(\varphi(C))=N_{H'}(\varphi(C))$ according to the first preliminary observation. Hence, $N_{H'}(\varphi(C))$ is non-elementary. In addition, we have \[\varphi(E_{G}(N_{G}(C)))=E_{G'}(N_{G'}(\varphi(C))),\] since $\varphi$ is strongly $(\geq m)$-special. It follows that \[\varphi(E_{H}(N_{H}(C)))=E_{H'}(N_{H'}(\varphi(C))),\] because, by the first preliminary observation, \[E_{G}(N_{G}(C))=E_{H}(N_{H}(C)) \ \ \ \text{ and } \ \ \ E_{G'}(N_{G'}(\varphi(C)))=E_{H'}(N_{H'}(\varphi(C))).\] 

\smallskip

\emph{Condition 4.} Let $C$ be a finite subgroup of $H$ of order $>m$ such that $N_{H}(C)$ is virtually cyclic infinite maximal. The morphism $\varphi$ being strongly $(\geq m)$-special, the normalizer of $\varphi(C)$ in $G'$ is virtually cyclic infinite maximal and the restriction of $\varphi$ to $N_G(C)$ is $K$-nice in the sense of Definition \ref{special0}. Since $N_H(C)=N_G(C)$ and $N_{H'}(\varphi(C))=N_{G'}(\varphi(C))$, the restriction of $\varphi$ to $N_{H}(C)$ is $K$-nice.

We have proved that $\varphi_{\vert H}$ is strongly $(>m)$-special. Since the same arguments remain valid with $\varphi'$ instead of $\varphi$, the restriction $\varphi'_{\vert H'}$ is strongly $(>m)$-special as well.

It remains to prove that the pair $(\varphi_{\vert H},\varphi'_{\vert H'})$ is strongly $(>m)$-special. To that end, let us consider a finite subgroup $C$ of $H$ of order $>m$. Since $\varphi'\circ \varphi\sim \mathrm{id}_G$, there exists an element $g\in G$ such that $\varphi'\circ \varphi(C)=gCg^{-1}$. Since $\varphi'\circ \varphi(H)$ is contained in $H$ by assumption, the groups $C$ and $gCg^{-1}$ belong to $H$. By the preliminary observation, $g$ belongs to $H$. Hence, $\varphi'\circ \varphi$ maps every finite subgroup of $H$ of order $>m$ to a conjugate of itself in $H$. Symmetrically, $\varphi\circ \varphi'$ maps every finite subgroup of $H'$ of order $>m$ to a conjugate of itself in $H'$.\end{proof}

\subsubsection{Legal $(\geq m)$-extensions}

\begin{de}Let $m\geq 1$ be an integer. Let $\Gamma$ be a virtually free group, and let $G$ be a subgroup of $\Gamma$. We say that $\Gamma$ is a \emph{multiple legal $(\geq m)$-extension} of $G$ if there exist nested subgroups $G=G_1\subset G_2\subset \cdots \subset G_n=\Gamma$ and integers $(k_i)_{1\leq i\leq n-1}$ such that $k_i\geq m$ and $G_{i+1}$ is a legal large or small $k_i$-extension of $G_i$ (see Definitions \ref{legal} and \ref{legal2}) for every $1\leq i\leq n-1$. If $n=2$, we simply say that $\Gamma$ is a \emph{legal $(\geq m)$-extension} of $G$. In the same way, we define \emph{multiple legal $(> m)$-extensions} and \emph{multiple legal $m$-extensions} of $G$ if $k_i>m$ or $k_i=m$ respectively.\end{de}

\begin{lemme}\label{légalité}Let $m\geq 1$ be an integer. Let $G$ be a virtually free group, and let $\Delta$ be a $m$-splitting of $G$ as a graph of groups. Let $k\geq m $ be an integer, let $v$ be a vertex of $\Delta$ and let $\widehat{G}_v$ be a legal large $k$-extension of $G_v$. Then the group $\widehat{G}$ obtained from $G$ by replacing $G_{v}$ by $\widehat{G}_{v}$ in $\Delta$ is a legal large $k$-extension of $G$.\end{lemme}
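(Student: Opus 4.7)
The plan is to first identify $\widehat{G}$ with the HNN extension $G \ast_\alpha$, where $\alpha : C_1 \to C_2$ is the isomorphism of finite subgroups of $G_v \subset G$ defining $\widehat{G}_v = G_v \ast_\alpha$. This identification is immediate from the universal property of graphs of groups: adjoining to a presentation of $G$ (read off from $\Delta$) a stable letter $t$ conjugating $C_1$ to $C_2$ via $\alpha$ produces both groups; alternatively, one can invoke Lemma \ref{s'étend} with $\widehat{G}_v$ in place of $G_v$ at the vertex $v$ and the identity on the remaining vertex groups.

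Once $\widehat{G} = G \ast_\alpha$ is established, two of the three conditions of Definition \ref{legal} are immediate. The first condition holds because the element $g \in G_v \subset G$ realizing $\alpha$ by conjugation for $\widehat{G}_v$ also realizes $\alpha$ inside $G$. The second condition, that $N_G(C_1)$ is non-elementary, follows from the inclusion $N_{G_v}(C_1) \subset N_G(C_1)$ combined with the hypothesis that $N_{G_v}(C_1)$ is non-elementary.

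The main work is verifying $E_G(N_G(C_1)) = C_1$. Let $T$ be the Bass-Serre tree of $\Delta$ and let $v \in T$ denote the vertex with stabilizer $G_v$. Since $N_{G_v}(C_1) \subset G_v$ is non-elementary, in particular infinite, it cannot be contained in any edge stabilizer of $T$ (which has order $m$); it therefore fixes $v$ and only $v$ in $T$, because two distinct fixed vertices would span a geodesic every edge of which would be stabilized by $N_{G_v}(C_1)$. Now let $F$ be any finite subgroup of $G$ normalized by $N_G(C_1)$, and let $w$ be the unique vertex of the (nonempty, convex) subtree $\mathrm{Fix}(F)$ closest to $v$. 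For every $n \in N_{G_v}(C_1) \subset N_G(C_1)$, the vertex $nw$ lies in $\mathrm{Fix}(nFn^{-1}) = \mathrm{Fix}(F)$ and satisfies $d(v, nw) = d(nv, nw) = d(v, w)$, so uniqueness of the closest point forces $nw = w$. Hence $w$ is fixed by $N_{G_v}(C_1)$, so $w = v$, and therefore $F \subset G_v$. Being a finite subgroup of $G_v$ normalized by $N_{G_v}(C_1)$, we conclude $F \subset E_{G_v}(N_{G_v}(C_1)) = C_1$. The reverse inclusion $C_1 \subset E_G(N_G(C_1))$ being trivial, we obtain $E_G(N_G(C_1)) = C_1$.

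The hard part is precisely this third condition: the challenge is to rule out that passing from $G_v$ to the larger ambient group $G$ produces new finite subgroups normalized by $N_G(C_1)$ but not lying in $G_v$. This is particularly delicate in the case $k = m$, where $C_1$ may itself stabilize edges of $T$ incident to $v$; nevertheless, the closest-point projection onto $\mathrm{Fix}(F)$, combined with the observation that $N_{G_v}(C_1)$ has a unique fixed vertex in the Bass-Serre tree, confines any such $F$ inside $G_v$ and reduces the problem to the legality of $\widehat{G}_v$ over $G_v$.
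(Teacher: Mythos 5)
Your proof is correct and follows essentially the same strategy as the paper: work in the Bass--Serre tree of $\Delta$, use that the non-elementary group $N_{G_v}(C_1)$ fixes only the vertex $v$, and thereby confine any finite subgroup normalized by $N_G(C_1)$ inside $G_v$, reducing condition (3) to the legality of $\widehat{G}_v$ over $G_v$. The only (harmless) difference is in the confinement step: the paper argues that the putative larger group $E\supsetneq C$ has order $>m$ and hence fixes a unique vertex, whereas your closest-point-projection onto $\mathrm{Fix}(F)$ handles all finite $F$ uniformly without that case distinction.
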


\begin{proof}By definition of a legal large $k$-extension, there is a subgroup $C$ of $G_v$ of order $k$ such that $\widehat{G}_v=\langle G_v, t \ \vert \ [t,c]=1, \forall c\in C\rangle$, with $N_{G_v}(C)$ non-elementary and $E_{G_v}(N_{G_v}(C))=C$. Thus, the normalizer of $C$ in $G$ is non-elementary, and we only have to prove that $E:=E_G(N_G(C))=C$. Note that the inclusion $C\subset E$ always holds, because $E$ is the unique maximal finite subgroup of $G$ normalized by $N_G(C)$. It remains to prove that $E\subset C$.

Let $T$ be the Bass-Serre tree of $\Delta$, endowed with the action of $G$. We shall still denote by $v$ a lift in $T$ of the vertex $v$ of $\Delta$. As a first step, we shall prove that $E_{G}(N_{G_v}(C))$ fixes the vertex $v\in T$. Assume towards a contradiction that $E_G(N_{G_v}(C))$ does not fix $v$. Then the inclusion $C\subset E_G(N_{G_v}(C))$ is strict. This shows in particular that $E_G(N_{G_v}(C))$ has order $>\vert C\vert \geq m$. Since $E_G(N_{G_v}(C))$ is finite, it fixes a vertex $w\neq v$ of $T$, and this vertex is unique because $E_G(N_{G_v}(C))$ has order $>m$. It follows that $N_{G_v}(C)$ fixes $w$ as well. Hence, $N_{G_v}(C)$ is contained in the finite group $G_w\cap G_v$, contradicting the fact that $N_{G_v}$ is non-elementary. We have proved that $E_G(N_{G_v}(C))$ fixes $v$. As a consequence, the following equality holds: \begin{equation}\label{eq1}
E_{G_v}(N_{G_v}(C))=E_G(N_{G_v}(C)).
\end{equation}

Now, let us assume towards a contradiction that $C$ is strictly contained in $E$. Since $E$ is finite, it fixes a vertex $w$ of $T$. Moreover, this vertex is unique since $\vert E\vert >\vert C\vert=k \geq m$. It follows that $N_G(E)$ is contained in $G_w$. But $N_G(C)$ is contained in $N_G(E)$ by definition of $E$, hence $N_G(C)$ fixes $w$. Since $N_{G_v}(C)$ is infinite, it fixes only $v$ in $T$, which proves that $w=v$. As a consequence $N_G(C)=N_{G_v}(C)$, and therefore \begin{equation}\label{eq2}
E_G(N_G(C))=E_G(N_{G_v}(C)).
\end{equation}

By combining equations (\ref{eq1}) and (\ref{eq2}), we get $E_G(N_G(C))=E_{G_v}(N_{G_v}(C))$, i.e.\ $E=C$, contradicting the assumption that $C$ is strictly contained in $E$. As a conclusion, we have proved that $E=C$. This proves that $\widehat{G}$ is a legal large $k$-extension of $G$.\end{proof}

We need an analogous result for small extensions.

\begin{lemme}\label{légalité2}Let $m\geq 1$ be an integer. Let $G$ be a virtually free group, and let $\Delta$ be a $m$-splitting of $G$ as a graph of groups. Let $v$ be a vertex of $\Delta$. Let $\Delta_v$ be a one edge splitting of $G_v$ over a finite group $C$ of order $k\geq m$ whose normalizer $N_{G_v}(C)$ is virtually cyclic and non-elliptic in $\Delta_v$. Let $\widehat{G}_v$ be a legal small $k$-extension of $G_v$. If $N_{G_v}(C)=N_{G}(C)$, then the group $\widehat{G}$ obtained from $G$ by replacing $G_{v}$ by $\widehat{G}_{v}$ in $\Delta$ is a legal small $k$-extension of $G$.\end{lemme}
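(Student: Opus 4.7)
The plan is to mirror, for small extensions, the Bass--Serre strategy used in Lemma \ref{légalité}. The argument will proceed in three steps: first producing a one-edge splitting of $G$ over $C$, then verifying that $N_G(C)$ remains non-elliptic in this splitting, and finally recognising $\widehat{G}$ as an amalgam $G\ast_{N_G(C)}N'$ with the same nice embeddings inherited from $\widehat{G}_v$.

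The first step is to refine $\Delta$ at the vertex $v$ using $\Delta_v$: in Bass--Serre terms, blow up the orbit of $v$ in the Bass--Serre tree of $\Delta$ by copies of the Bass--Serre tree of $\Delta_v$. Collapsing all edges that came from $\Delta$ then yields a one-edge splitting $\Delta^{*}$ of $G$ over $C$, of the form $A^{*}\ast_C B^{*}$ or $A^{*}\ast_C$ according to the type of $\Delta_v$. This gives the requisite splitting of $G$ over a finite group of order exactly $k$.

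The next step, and the main delicate point, is to check that $N_G(C)$ is non-elliptic in $\Delta^{*}$. Here the hypothesis $N_G(C)=N_{G_v}(C)$ is essential. Since $N_{G_v}(C)$ is non-elliptic in $\Delta_v$ by assumption, it acts on the Bass--Serre tree of $\Delta_v$ with a translation axis; this axis lifts to an axis in the blown-up tree and then projects to an axis in the Bass--Serre tree of $\Delta^{*}$. Without the hypothesis on the normalizer, the action of $N_G(C)$ in the ambient tree could a priori acquire a fixed point coming from some other part of $\Delta$, so this is precisely where the hypothesis is used.

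Finally, since $\widehat{G}_v=G_v\ast_{N_{G_v}(C)}N'$, blowing up $v$ in $\Delta$ (with $G_v$ replaced by $\widehat{G}_v$) using this one-edge decomposition and then collapsing all edges of $\Delta$ presents $\widehat{G}$ as the amalgamated product $G\ast_{N_{G_v}(C)}N'=G\ast_{N_G(C)}N'$, the identification using once more the hypothesis on normalizers. The overgroup $N'$ is virtually cyclic with $K_{N'}\leq K$, and $K$-nice embeddings $\iota : N_{G_v}(C)\hookrightarrow N'$ and $\iota' : N'\hookrightarrow N_{G_v}(C)$ come for free from the legal small $k$-extension $\widehat{G}_v$ of $G_v$ (recall that $K$ is fixed throughout this section as a common upper bound on orders of finite subgroups, so there is no discrepancy between $K_{G_v}$-niceness and $K_G$-niceness). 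These translate immediately to $K$-nice embeddings between $N_G(C)$ and $N'$, and together with $\Delta^{*}$ and the non-ellipticity established above, they certify $\widehat{G}$ as a legal small $k$-extension of $G$ in the sense of Definition \ref{legal2}.
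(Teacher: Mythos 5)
Your proposal is correct and follows essentially the same route as the paper: refine $\Delta$ at $v$ by $\Delta_v$ (the finite edge groups of $\Delta$ incident to $v$ are elliptic in $\Delta_v$, so the blow-up is licit), collapse everything except the new edge to get a one-edge splitting of $G$ over $C$ in which $N_G(C)$ is non-elliptic, and observe that the nice embeddings are inherited from $\widehat{G}_v$. One small remark: the hypothesis $N_{G_v}(C)=N_G(C)$ is not really what guarantees non-ellipticity (any hyperbolic element of $N_{G_v}(C)\subseteq N_G(C)$ already rules that out); its real role is the one you invoke in your last step, namely ensuring that the amalgamating subgroup is the full normalizer of $C$ in $G$ and hence virtually cyclic, as Definition \ref{legal2} requires.
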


\begin{proof}We only need to verify that $C$ is an edge group in a splitting of $G$ in which $N_G(C)$ is non-elliptic. First, note that for any edge $e$ of $\Delta$ incident to $v$, the edge group $G_e$ is contained in a conjugate of $A$ or $B$, as a finite group. Let $\Delta'$ be the splitting of $G$ obtained from $\Delta$ by replacing the vertex $v$ by the splitting $\Delta_v$ of $G_v$, and let $\varepsilon$ be the new edge of $\Delta'$ coming from $\Delta_v$. By collapsing all edges of $\Delta'$ different from $\varepsilon$, we get a one edge splitting of $G$ over $C$ in which $N_G(C)$ is non-elliptic.\end{proof}

\begin{rque}\label{rem29juin}Let $T$ be the Bass-Serre tree of $\Delta$. We still denote by $v$ a lift in $T$ of the vertex $v$ of $\Delta$. Let us observe that the equality $N_{G_v}(C)=N_{G}(C)$ holds if $C$ is not an edge group of $\Delta$, i.e.\ if $v$ is the unique vertex of $T$ fixed by $C$, because in this case $N_G(C)$ fixes $v$ as well, which implies that $N_{G_v}(C)=N_G(C)\cap G_v=N_G(C)$. For instance, if $k=\vert C\vert > m$, then $v$ is the unique vertex of $T$ fixed by $C$.\end{rque}

The following lemma allows us to iterate Lemmas \ref{légalité} and \ref{légalité2} above.

\begin{lemme}\label{légalité3}Let $m\geq 1$ be an integer. Let $G$ be a virtually free group, and let $\Delta$ be a $m$-splitting of $G$ as a graph of groups. Let $v$ and $w$ be two distinct vertices of $\Delta$. Let $\widehat{G}_v$ and $\widehat{G}_w$ be two legal $(\geq m)$-extensions of $G_v$ and $G_w$. If the extension $\widehat{G}_v$ (respectively $G_w$) is small, suppose that $N_{G_v}(C)=N_{G}(C)$ (respectively $N_{G_w}(C)=N_{G}(C)$), where $C$ is the edge group of the one edge splitting of $G_v$ (respectively $G_w$) associated with the small extension. Then the group $\widehat{G}$ obtained from $G$ by replacing $G_{v}$ by $\widehat{G}_{v}$ and $G_{w}$ by $\widehat{G}_{w}$ in $\Delta$ is a multiple legal $(\geq m)$-extension of $G$.\end{lemme}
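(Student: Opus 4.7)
\emph{Proof plan.} The strategy is to apply Lemmas \ref{légalité} and \ref{légalité2} sequentially, performing the two vertex replacements one after the other, and then to chain the resulting legal extensions.

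First I would replace only $G_v$ in $\Delta$. Write $\widehat{G}_v$ as a legal $k_v$-extension of $G_v$ with $k_v \geq m$, let $\Delta_1$ denote the graph of groups obtained from $\Delta$ by this substitution, and set $G_1 := \pi_1(\Delta_1)$. Lemma \ref{légalité} (in the large case) or Lemma \ref{légalité2} (in the small case, invoking the standing hypothesis $N_{G_v}(C) = N_G(C)$) identifies $G_1$ as a legal $k_v$-extension of $G$.

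Next, $\Delta_1$ is still an $m$-splitting of $G_1$: the underlying graph and the edge groups are unchanged, and the vertex group at $w$ is still $G_w$. Writing $\widehat{G}_w$ as a legal $k_w$-extension of $G_w$ with $k_w \geq m$, I substitute $\widehat{G}_w$ for $G_w$ in $\Delta_1$; the resulting graph of groups is precisely the one defining $\widehat{G}$. A second application of Lemma \ref{légalité} or Lemma \ref{légalité2} to the pair $(G_1, \Delta_1)$ then identifies $\widehat{G}$ as a legal $k_w$-extension of $G_1$. Chaining $G \subset G_1 \subset \widehat{G}$ exhibits $\widehat{G}$ as a multiple legal $(\geq m)$-extension of $G$, as desired.

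The main obstacle is to check, in the small-extension case at $w$, that the hypothesis $N_{G_w}(C) = N_{G_1}(C)$ required by the second invocation of Lemma \ref{légalité2} transfers from $G$ to $G_1$, given only the input $N_{G_w}(C) = N_G(C)$. When $\vert C\vert > m$, this is immediate from Remark \ref{rem29juin} applied to $\Delta_1$: the vertex $w$ is then the unique vertex of the Bass-Serre tree of $\Delta_1$ fixed by $C$, so $N_{G_1}(C)$ fixes $w$ and is therefore contained in $G_w$. When $\vert C\vert = m$, one refines $\Delta_1$ further using the splitting structure of $\widehat{G}_v$ as a legal extension of $G_v$; since all the new edges of this refinement are incident only to translates of $v$, any element of $G_1 \setminus G$ normalizing $C$ would be forced to move $w$ across a segment of the refined Bass-Serre tree passing through a translate of $v$, contradicting the given $N_G(C) \subset G_w$ together with $w \neq v$.
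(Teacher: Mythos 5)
Your proof follows essentially the same route as the paper's: perform the two replacements one at a time via Lemmas \ref{légalité} and \ref{légalité2}, the only point needing verification being that the normalizer condition at $w$ survives the first replacement, which the paper handles by simply asserting $N_{\Gamma}(C)=N_G(C)$ for the intermediate group $\Gamma$. Your concluding sketch for the case $\vert C\vert=m$ is looser than it should be (the offending element lies in $\Gamma\setminus G$, so it cannot directly contradict $N_G(C)\subset G_w$; one must instead argue that such an element would force $C$ to be conjugate into the finite group governing the extension at $v$, which would make $N_{G_v}(C)$ infinite even though $N_{G_v}(C)\subset G_v\cap G_w$ is finite), but this is no less complete than the paper's own treatment of that step.
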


\begin{proof}Let $\Gamma$ be the group obtained from $G$ by replacing $G_{v}$ by $\widehat{G}_{v}$. By Lemmas \ref{légalité} and \ref{légalité2}, $\Gamma$ is a legal $(\geq m)$-extension of $G$. We claim that $\widehat{G}$ is a legal $(\geq m)$-extension of $\Gamma$. Note that $\Gamma_w=G_w$. We distinguish two cases. If $\widehat{G}_w$ is a large extension of $G_w$, there is no condition to be checked and Lemma \ref{légalité} claims that $\widehat{G}$ is a legal $(\geq m)$-extension of $\Gamma$. If $\widehat{G}_w$ is a small extension of $G_w$, the group $N_{\Gamma}(C)$ is equal to $N_G(C)$, which is equal to $N_{G_w}(C)$ by assumption. Since $\Gamma_w=G_w$, we have $N_{\Gamma}(C)=N_{\Gamma_w}(C)$. Hence Lemma \ref{légalité2} applies and guarantees that $\widehat{G}$ is a legal $(\geq m)$-extension of $\Gamma$.\end{proof}

By iterating Lemma \ref{légalité3}, we get the following result.

\begin{co}\label{légalité4}Let $m\geq 1$ be an integer. Let $G$ be a virtually free group and let $\Delta$ be a $m$-splitting of $G$ as a graph of groups. For every vertex $v$ of $\Delta$, let $\widehat{G}_v$ be a multiple legal $(\geq m)$-extension of $G_v$. If the extension $\widehat{G}_v$ is small, suppose that the edge group of the one edge splitting of $G_v$ associated with the small extension has order $> m$. Then the group $\widehat{G}$ obtained from $G$ by replacing every $G_{v}$ by $\widehat{G}_{v}$ in $\Delta$ is a multiple legal $(\geq m)$-extension of $G$.\end{co}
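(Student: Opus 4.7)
The plan is to reduce the corollary to a repeated application of Lemmas \ref{légalité} and \ref{légalité2} by induction on the total number $N$ of single legal extensions appearing in the chains that build the various $\widehat{G}_v$ from $G_v$. When $N=0$ we have $\widehat{G}=G$ and there is nothing to prove, so I would assume $N \geq 1$.

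For the inductive step, I would select a vertex $v$ of $\Delta$ with $\widehat{G}_v \neq G_v$ and isolate the last extension in the chain for $v$: write this chain as $G_v = G_v^{(0)} \subset \cdots \subset G_v^{(n-1)} =: G_v^\star \subset G_v^{(n)} = \widehat{G}_v$, so that $\widehat{G}_v$ is a single legal $k$-extension of $G_v^\star$ for some integer $k \geq m$. Let $G^\star$ be the group obtained from $G$ by replacing $G_v$ by $G_v^\star$ and, for every other vertex $w$, $G_w$ by $\widehat{G}_w$ in $\Delta$. The induction hypothesis applied to the collection of chains in which the last step at $v$ has been deleted yields that $G^\star$ is a multiple legal $(\geq m)$-extension of $G$, so it will suffice to prove that $\widehat{G}$ is a legal $(\geq m)$-extension of $G^\star$. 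The resulting graph of groups for $G^\star$ still has underlying graph $\Delta$ and the same edge groups of order $m$, hence is a $m$-splitting of $G^\star$, and Lemma \ref{légalité} immediately handles the case where the final extension at $v$ is large.

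The only remaining case, and the only spot requiring care, is when $G_v^{(n)}$ is a legal small $k$-extension of $G_v^\star$. The hypothesis of the corollary forces $k > m$. Let $C$ be the edge group of the associated one-edge splitting of $G_v^\star$; then $|C| = k > m$. To invoke Lemma \ref{légalité2} I need to verify $N_{G_v^\star}(C) = N_{G^\star}(C)$. In the Bass-Serre tree of the $m$-splitting of $G^\star$ above, every edge stabilizer has order exactly $m$, while $C$ has order strictly larger, so $C$ fixes a unique vertex, namely (the chosen lift of) $v$; therefore $N_{G^\star}(C)$ fixes $v$ as well, which gives $N_{G^\star}(C) = N_{G^\star}(C) \cap G_v^\star = N_{G_v^\star}(C)$. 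This is exactly the verification supplied by Remark \ref{rem29juin}, so Lemma \ref{légalité2} applies and the induction closes. The main (and essentially only) subtlety in the argument is this normalizer check, which is precisely why the hypothesis $|C| > m$ is imposed on small extensions in the statement.
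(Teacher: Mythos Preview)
Your proof is correct and follows essentially the same approach as the paper: the paper simply states that the corollary follows by iterating Lemma \ref{légalité3}, whose proof in turn is exactly the one-step application of Lemmas \ref{légalité} and \ref{légalité2} together with the normalizer check from Remark \ref{rem29juin} that you spell out. Your induction on the total number of single extensions is just a clean way of organizing this iteration.
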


\subsubsection{Property $\mathcal{P}_m$}

%\begin{de}\label{Pm}Let $m\geq 1$ be an integer. Let $G$ and $G'$ be two virtually free groups. Let $T$ be a $G$-tree and let $T'$ be a $G'$-tree. Let $\varphi : G \rightarrow G'$ and $\varphi' : G'\rightarrow G$ be two homomorphisms. We say that the 6-tuple $(G,G',T,T',\varphi,\varphi')$ has property $\mathcal{P}_m$ if the following four conditions hold:
%\begin{enumerate}
%\item edge groups of $T$ and $T'$ have order exactly $m$;
%\item the pair $(\varphi,\varphi')$ is strongly $(\geq m)$-special;
%\item $\varphi$ maps each vertex group of $T$ isomorphically to a vertex group of $T'$, and $\varphi'$ maps each vertex group of $T'$ isomorphically to a vertex group of $T$;
%\item $\varphi$ maps two non-conjugate vertex groups of $T$ to non-conjugate vertex groups of $T'$, and $\varphi'$ maps two non-conjugate vertex groups of $T'$ to non-conjugate vertex groups of $T$.
%\end{enumerate}
%\end{de}

\begin{de}\label{Pm}Let $G$ and $G'$ be two virtually free groups. Let $m$ denote the integer $m_{G,G'}$. Let $\varphi : G \rightarrow G'$ and $\varphi' : G'\rightarrow G$ be two homomorphisms. We say that the tuple $(G,G',\varphi,\varphi')$ has property $\mathcal{P}_m$ if the following two conditions hold:
\begin{enumerate}
\item the pair $(\varphi,\varphi')$ is strongly $(\geq m)$-special;
\item $\varphi'\circ \varphi$ maps each $m$-factor of $G$ isomorphically to a conjugate of itself, and $\varphi\circ \varphi'$ maps each $m$-factor of $G'$ isomorphically to a conjugate of itself.
\end{enumerate}
\end{de}

\begin{rque}\label{capermute}Let $G_1,\ldots,G_p$ and $G'_1,\ldots,G'_{p'}$ be two sets of representatives of the $m$-factors of $G$ and $G'$ respectively. If $(G,G',\varphi,\varphi')$ has $\mathcal{P}_m$, then $p=p'$ and, up to renumbering $G_1,\ldots,G_p$, the homomorphism $\varphi$ maps each $G_i$ isomorphically to a conjugate of $G'_i$, and $\varphi$ maps each $G'_i$ isomorphically to a conjugate of $G_i$. Indeed, let $T$ be a reduced $m$-JSJ splitting of $G$, and let $H$ be a vertex group of $T$, that is a $m$-factor of $G$. Since $\varphi'\circ \varphi$ is injective on $H$, the morphism $\varphi$ is injective on $H$ as well. Hence $\varphi(H)\simeq H$ is $(\leq m)$-rigid. As a consequence, $\varphi(H)$ is contained in a vertex group $H'$ of $T'$. We claim that $\varphi$ induces an isomorphism from $H$ to $H'$. First, note that $\varphi'(H')$ is contained in a vertex group $K$ of $T$, for the same reason as above. Therefore, $\varphi'\circ \varphi(H)$ is contained in $K$. Moreover, we know that $\varphi'\circ \varphi(H)=H^g$ for some $g\in G$. It follows that $H^g$ is contained in $K$. Since $T$ is reduced and since $H^g$ and $K$ are two vertex groups of $T$, we have $H^g=K$. Likewise, $\varphi'(H^g)=H'^{g'}$ for some $g'\in G'$. Hence, the following series of inclusions holds:\[H\overset{\varphi}{\hookrightarrow} H'\overset{\varphi'}{\hookrightarrow} H^g \overset{\varphi}{\hookrightarrow} H'^{g'}.\]Recall that the homomorphism $\varphi\circ\varphi'$ is surjective from $H'$ onto $H'^{g'}$. Thus, $\varphi$ induces an isomorphism from $H^g$ to $H'^{g'}$, and it follows that $\varphi$ induces an isomorphism from $H$ to $H'$.\end{rque}

%Let us consider a finite subgroup $C$ of $G_i$. Since $\varphi'\circ \varphi\sim \mathrm{id}_G$, there exists an element $g\in G$ such that $\varphi'\circ \varphi(C)=gCg^{-1}$. Therefore, there exists an element $h\in G$ such that $\varphi'_i\circ \varphi_i(C)=hCh^{-1}$. Let us prove that $h$ belongs to $G_i$. Recall that we proved previously that two subgroups of $G_i$ of order $>m_{G,G'}$ are conjugate in $G$ if and only if they are conjugate in $G_i$. As a consequence, if $C$ has order $>m_{G,G'}$, the element $h$ belongs to $G_i$. If $C$ has order $\leq m_{G,G'}$, note that $C$ is contained in a finite subgroup of $G_i$ of order $>m_{G,G'}$. Indeed, $C$ fixes a vertex in a Stallings splitting of $G_i$. If this splitting is not reduced to a point, its edge groups have order $>m_{G,G'}$ since $G_i$ is $m_{G,G'}$-rigid, so its vertex groups have order $>m_{G,G'}$ \emph{a fortiori}. If a Stallings splitting of $G_i$ is reduced to a point, then $G_i$ is finite, and we proved previously that in this case, we have $\vert G_i\vert > m_{G,G'}$ (otherwise the group $G$ would be $G_i$-by-free with $G_i$ finite, which is a contradiction since $G$ is not finite-by-free by assumption). 

\subsubsection{Expansions}

\begin{de}\label{expan}Let $G$ and $G'$ be two virtually free groups. Let $m$ denote the integer $m_{G,G'}$. Let $\varphi : G \rightarrow G'$ and $\varphi' : G'\rightarrow G$ be two homomorphisms. Let $\Gamma$ and $\Gamma$ be two virtually free groups containing $G$ and $G'$ respectively, and let $\psi : \Gamma\rightarrow\Gamma'$ and $\psi' : \Gamma'\rightarrow\Gamma$ be two homomorphisms. We say that $(\Gamma,\Gamma',\psi,\psi')$ is a \emph{$(\geq m)$-expansion} of $(G,G',\varphi,\varphi')$ if the following conditions are satisfied:
\begin{enumerate}
\item $\Gamma$ and $\Gamma'$ are two multiple legal $(\geq m)$-extensions of $G$ and $G'$;
\item $\psi_{\vert G}\sim \varphi$ and $\psi'_{\vert G'}\sim \varphi'$;
\end{enumerate}
In the same way, we define \emph{$(>m)$-expansions}.
\end{de}

%\begin{rque}By Lemmas \ref{lemmetrois} and \ref{lemmequatre}, $\psi$ and $\psi'$ are $(\geq m)$-special. In addition, one easily sees that $\psi'\circ \psi$ and $\psi\circ \psi'$ coincide with the identity up to conjugacy on finite subgroups. Hence the pair $(\psi,\psi')$ is strongly $(\geq m)$-special. Moreover, note that $m_{\Gamma,\Gamma'}=m_{G,G'}$, because $\Gamma$ and $\Gamma'$ are obtained from $G$ and $G'$ by performing legal $(\geq m)$-extensions. As a consequence, $(\Gamma,\Gamma',\psi,\psi')$ has property $\mathcal{P}_m$.\end{rque}

\begin{rque}Let $\mathcal{U}=(G,G',\varphi,\varphi')$ be a tuple as in Definition \ref{expan} above. If $\mathcal{U}_1$ is a $(\geq m)$-expansion of $\mathcal{U}$, and if $\mathcal{U}_2$ is a $(\geq m)$-expansion of $\mathcal{U}_1$, then $\mathcal{U}_2$ is a $(\geq m)$-expansion of $\mathcal{U}$.\end{rque}

\subsection{Finite extensions of free products}In this section, we are concerned with the case where there is only one cylinder in the trees we consider, which means that all edge groups are equal. This particular case will play a crucial role in the proof of Proposition \ref{2implique3}, which uses extensively the trees of cylinders.

Given a group $G$ and a subgroup $C\subset G$, we denote by $\mathrm{Aut}_G(C)$ the following subgroup of $\mathrm{Aut}(C)$: \[\mathrm{Aut}_G(C)=\lbrace \sigma\in\mathrm{Aut}(C) \ \vert \ \exists g\in N_G(C), \ \mathrm{ad}(g)_{\vert C}=\sigma\rbrace.\]

\begin{lemme}\label{normalisateur}Let $G$ and $G'$ be two non-elementary hyperbolic groups. Let $T$ and $T'$ be two simplicial trees endowed with actions of $G$ and $G'$ respectively. Suppose that all edge groups of $T$ and $T'$ are equal, and let $C$ and $C'$ denote these edge groups. Suppose in addition that $T$ and $T'$ have the same number of orbits of vertex groups, say $p$. Let $G_1,\ldots ,G_p$ and $G'_1,\ldots,G'_p$ be some representatives of the vertex groups of $T$ and $T'$. Suppose that the following two conditions hold:
\begin{itemize}
\item[$\bullet$]there exists a strongly $(\geq \vert C\vert)$-special homomorphism $\varphi : G\rightarrow G'$ that maps each subgroup $G_i$ isomorphically to a conjugate of $G'_i$,
\item[$\bullet$]and there exists a strongly $(\geq \vert C'\vert)$-special homomorphism $\varphi' : G'\rightarrow G$ that maps each subgroup $G'_i$ isomorphically to a conjugate of $G_i$.
\end{itemize}
Then $C$ and $C'$ have the same order $m$, and there exists a $(\geq m)$-expansion $(\widehat{G}, \widehat{G}',\widehat{\varphi},\widehat{\varphi}')$ of $(G,G',\varphi,\varphi')$ such that $\widehat{\varphi}$ and $\widehat{\varphi}'$ are bijective and satisfy the following two conditions:
\begin{itemize}
\item[$\bullet$]for every $1\leq i\leq p$, there exists an element $g'_i\in \widehat{G}'$ such that \[\varphi_{\vert G_i}=\mathrm{ad}(g'_i)\circ \widehat{\varphi}_{\vert G_i},\] where $\varphi$ is viewed as a homomorphism from $G$ to $\widehat{G}'$;
\item[$\bullet$]for every $1\leq i\leq p$, there exists an element $g_i\in \widehat{G}$ such that \[\varphi'_{\vert G'_i}=\mathrm{ad}(g_i)\circ \widehat{\varphi}'_{\vert G'_i},\] where $\varphi'$ is viewed as a homomorphism from $G'$ to $\widehat{G}$.
\end{itemize}
\end{lemme}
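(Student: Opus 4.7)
The plan is to first establish that $|C| = |C'| = m$, then to construct the desired expansions by performing legal HNN extensions locally at each vertex group, and finally to glue the vertex-wise isomorphisms into a global one using Lemma \ref{s'étend}. Since $\varphi$ is injective on finite subgroups and restricts, up to conjugation in $G'$, to an isomorphism $G_i \to G'_i$, the iterated composite $\varphi' \circ \varphi$ gives an injection of each $G_i$ into a conjugate of itself. Since each $G_i$ is a finitely generated subgroup of the virtually free group $G$, hence Hopfian (via residual finiteness), this must be an automorphism. Replacing $\varphi'$ by $\varphi' \circ (\varphi \circ \varphi')^k$ for a suitable integer $k$ and composing with inner automorphisms, I may assume $\varphi(G_i) = G'_i$ and $\varphi'(G'_i) = G_i$ as subgroups. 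Comparing the images of edge groups in $T$ and $T'$ under these identifications then forces $|C| = |C'| = m$.

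Next, I construct the expansions. The obstruction preventing $\varphi$ from extending to an isomorphism $G \to G'$ is that the vertex-wise isomorphisms $\varphi_i = \varphi|_{G_i}$ need not intertwine the various edge embeddings $\iota_{e,i} : C \hookrightarrow G_i$ and $\iota'_{e',i} : C' \hookrightarrow G'_i$ coherently across different edges $e$ incident to $v_i$. Concretely, $\varphi_i \circ \iota_{e,i}$ agrees with $\iota'_{e',i} \circ (\varphi|_C)$ only up to an inner automorphism of $G'_i$ on the common edge group. To absorb these discrepancies, I introduce at each vertex $v_i$ legal large HNN extensions $\widehat{G}_i = \langle G_i, t_{e,i} \mid \mathrm{ad}(t_{e,i})|_C = \alpha_{e,i}\rangle$ over $C$, where the automorphisms $\alpha_{e,i} \in \mathrm{Aut}_{G_i}(C)$ are chosen to compensate for the mismatch. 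Symmetric extensions at each $G'_i$ give the vertex-level ingredients on the other side.

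Finally, for legality and gluing: each such HNN extension is legal because the condition that $N_G(C)$ is non-elementary and $E_G(N_G(C)) = C$ follows from $C$ being an edge group in a non-trivial splitting together with the strongly $(\geq m)$-special hypothesis, which transfers the $E_G$-invariant between $G$ and $G'$; these conditions descend to $N_{G_i}(C)$ as required. Lemma \ref{légalité} then guarantees that the assembled group $\widehat{G}$, obtained by substituting each $\widehat{G}_i$ into the graph of groups $T/G$, is a multiple legal $m$-extension of $G$, and symmetrically for $\widehat{G}'$. The extension lemma \ref{s'étend} lets me define $\widehat{\varphi} : \widehat{G} \to \widehat{G}'$ by specifying $\widehat{\varphi}|_{\widehat{G}_i}$ to be $\varphi_i$ (modulo the compensating conjugations) and sending each $t_{e,i}$ to the corresponding stable letter in $\widehat{G}'_i$; by construction $\widehat{\varphi}$ and $\widehat{\varphi}'$ are bijective and satisfy the desired compatibility $\varphi|_{G_i} = \mathrm{ad}(g'_i) \circ \widehat{\varphi}|_{G_i}$ for some $g'_i \in \widehat{G}'$. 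The main obstacle is the combinatorial bookkeeping of the $\alpha_{e,i}$: they must be chosen simultaneously to satisfy legality (being realized by conjugation by elements of $N_{G_i}(C)$) and harmonization (matching $\varphi_i$ on every incident edge), and the choices at $G_i$ and $G'_i$ must be compatible in a way that produces genuinely inverse global isomorphisms. Resolving this tension requires the full strength of $\varphi$ being $K$-nice on normalizers in the strongly $(\geq m)$-special hypothesis, so that $\mathrm{Aut}_{G_i}(C)$ and $\mathrm{Aut}_{G'_i}(C')$ biject under $\varphi$ and every needed twist on one side has a counterpart on the other.
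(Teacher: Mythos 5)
Your overall shape (prove $\vert C\vert=\vert C'\vert$, add HNN twists over the edge group to absorb conjugation discrepancies, glue with Lemma \ref{s'étend}, finish with Hopficity) matches the paper's, but the core of your construction is placed at the wrong level and would not go through. You perform the legal large HNN extensions at the vertex groups, forming $\widehat{G}_i=\langle G_i,t_{e,i}\mid \mathrm{ad}(t_{e,i})_{\vert C}=\alpha_{e,i}\rangle$ with $\alpha_{e,i}\in\mathrm{Aut}_{G_i}(C)$, and you assert that non-elementarity of $N_{G_i}(C)$ and $E_{G_i}(N_{G_i}(C))=C$ ``descend'' from $G$. They do not: the lemma's hypotheses place no such constraint on the $G_i$, and in the paper's main application (Corollary \ref{f-by-f0}, Stallings splittings) the $G_i$ are \emph{finite}, so $N_{G_i}(C)$ is finite, $\widehat{G}_i$ is not a legal large extension of $G_i$, and Lemma \ref{légalité} is inapplicable. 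The correct observation, which you miss, is that since \emph{all} edge groups of $T$ equal $C$, the group $C$ is normal in $G$, hence $N_G(C)=G$ is non-elementary and $E_G(G)=C$ (as $G/C$ is a non-trivial free product); so the HNN extensions must be performed on $G$ itself, with twisting automorphisms drawn from $\mathrm{Aut}_G(C)$ rather than $\mathrm{Aut}_{G_i}(C)$. This also matters for the compensation step: the discrepancy between $\varphi_{\vert G_i}$ and the target inclusion is conjugation by an element $g'_i$ of $G'$, whose restriction to $C'$ lies in $\mathrm{Aut}_{G'}(C')$ but generally not in $\mathrm{Aut}_{G'_i}(C')$, so local twists cannot absorb it. The paper adjoins one stable letter for \emph{each} element of $\mathrm{Aut}_G(C)$ (resp.\ $\mathrm{Aut}_{G'}(C')$) to $G$ (resp.\ $G'$), after first checking that $\varphi$ induces a bijection $\mathrm{Aut}_G(C)\to\mathrm{Aut}_{G'}(C')$.

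Two further gaps. First, your argument that $\vert C\vert=\vert C'\vert$ (``comparing the images of edge groups under these identifications'') is not a proof: knowing that $\varphi$ maps $G_i$ isomorphically onto a conjugate of $G'_i$ says nothing about where $C\subset G_i$ lands inside $G'_i$. The paper derives $\varphi(C)=C'$ from normality of $C$ and $C'$ together with the clause $\varphi(E_G(N_G(C)))=E_{G'}(N_{G'}(\varphi(C)))$ of strong specialness. Second, writing $G/C\simeq Q_1\ast\cdots\ast Q_p\ast F_k$ and $G'/C'\simeq Q'_1\ast\cdots\ast Q'_p\ast F_{k'}$, nothing in your construction equalizes the free ranks $k$ and $k'$; the paper adds $\max(k,k')-k$ (resp.\ $\max(k,k')-k'$) extra stable letters acting trivially on $C$ (resp.\ $C'$) precisely so that the two sides have the same number of stable letters, without which no generator-to-generator bijection $\widehat{G}\to\widehat{G}'$ can exist. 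Finally, bijectivity of $\widehat{\varphi}$ is not ``by construction'': one checks surjectivity by exhibiting a generating set in the image and then applies Hopficity of hyperbolic groups to $\widehat{\varphi}'\circ\widehat{\varphi}$.
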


\begin{rque}It is worth noticing that in the particular case where $T$ and $T'$ are $m$-JSJ splittings of $G$ and $G'$ respectively (which is equivalent to say that the vertex groups of $T$ and $T'$ are $m$-rigid), the conclusion of the lemma can be reformulated as follows: there exists a $(\geq m)$-expansion $(\widehat{G}, \widehat{G}',\widehat{\varphi},\widehat{\varphi}')$ of $(G,G',\varphi,\varphi')$ with property $\mathcal{P}_m$. However, the lemma is stated in a more general context because, at some point in the proof (more precisely in the third case of the second step of the proof of Proposition \ref{ledernierlemme}), we will be considering some splittings that are not $m$-JSJ splittings.\end{rque}

\begin{rque}The groups $\widehat{G}$ and $\widehat{G}'$ constructed in the proof below are obtained by performing legal large extensions over $C$ and $C'$ only.\end{rque}

\begin{proof}Note that $C$ and $C'$ are normal in $G$ and $G'$. As a first step, we shall prove that $\varphi(C)= C'$ and $\varphi'(C')= C$. Since $\varphi$ is strongly $(\geq \vert C\vert)$-special, and since $N_G(C)=G$ is non-elementary, we have \[\varphi(E_G(N_G(C)))=E_{G'}(N_{G'}(\varphi(C))).\]The left-hand side of this equation is equal to $\varphi(C)$. Indeed, $N_G(C)=G$ and $E_G(G)=C$ since $C$ is the unique maximal finite normal subgroup of $G$. The right-hand side of the equation contains $C'$. Indeed, since $C'$ is a normal subgroup of $G'$, it is in particular normalized by $N_{G'}(\varphi(C))\subset G'$; hence, $C'$ is contained in the unique maximal finite subgroup of $G'$ normalized by $N_{G'}(\varphi(C))$, namely $E_{G'}(N_{G'}(\varphi(C)))$. We have proved that $C'$ is contained in $\varphi(C)$. Likewise, we have $C\subset \varphi'(C')$.

Since the homomorphisms $\varphi$ and $\varphi'$ are injective on finite subgroups, the inclusions $C'\subset\varphi(C)$ and $C\subset \varphi'(C')$ show that $\varphi(C)= C'$ and $\varphi'(C')= C$. More precisely, $\varphi$ induces an isomorphism from $C$ to $C'$, and $\varphi'$ induces an isomorphism from $C'$ to $C$. In particular, the finite groups $C$ and $C'$ have the same order denoted by $m$.

We will now define the groups $\widehat{G}$ and $\widehat{G}'$. First, let us define a homomorphism $\overline{\varphi}:\mathrm{Aut}_G(C) \rightarrow \mathrm{Aut}_{G'}(C')$ as follows: for every $\theta=\mathrm{ad}(g)_{\vert C}\in\mathrm{Aut}_G(C)$, set \[\overline{\varphi}(\theta)=\varphi_{\vert C}\circ \theta\circ ({\varphi}_{\vert C})^{-1}=\mathrm{ad}(\varphi(g))_{\vert C'} \ \text{independent from the choice of $g$}.\]Note that this homomorphism is injective. Indeed, if there exists an element $c\in C$ such that $gcg^{-1}\neq c$, then $\varphi(g)\varphi(c)\varphi(g)^{-1}\neq \varphi(c)$, because $gcg^{-1}$ belongs to $C$ and $\varphi$ is injective on $C$. Likewise, $\varphi'$ induces a monomorphism $\mathrm{Aut}_{G'}(C')\hookrightarrow \mathrm{Aut}_{G}(C)$. As a consequence, $\mathrm{Aut}_{G}(C)$ and $\mathrm{Aut}_{G'}(C')$ have the same order, say $\ell$. Set $\mathrm{Aut}_{G}(C)=\lbrace \theta_1,\ldots,\theta_{\ell}\rbrace$ and $\mathrm{Aut}_{G'}(C')=\lbrace \theta'_1,\ldots,\theta'_{\ell}\rbrace$ with $\theta'_i=\overline{\varphi}(\theta_i)$ for every $1\leq i\leq \ell$.

Note that the groups $G$ and $G'$ split respectively as \[1\rightarrow C\rightarrow G\rightarrow Q=Q_1\ast\cdots\ast Q_p\ast F_k\rightarrow 1 \ \ \text{   and   }\ \  1\rightarrow C'\rightarrow G'\rightarrow Q'=Q'_1\ast\cdots\ast Q'_p\ast F_{k'}\rightarrow 1\]where $Q_i$ is the image in $Q$ of a conjugate of $G_i$, and $Q'_i$ is the image in $Q'$ of a conjugate of $G'_i$. Up to replacing $G_i$ by a conjugate of itself, one can suppose that $G_i$ is the preimage of $Q_i$ in $G$, for every $1\leq i\leq p$. Likewise, one can suppose that $G'_i$ is the preimage of $Q'_i$ for every $1\leq i\leq p$. Let $\lbrace x_1,\ldots ,x_k\rbrace$ be a generating set of the free group $F_k$, and let $t_i$ be a preimage of $x_i$ in $G$, for every $1\leq i\leq k$. Each element $t_i\in G$ induces by conjugacy an automorphism $\theta_{s(i)}\in\mathrm{Aut}_G(C)$. Let $H$ denote the subgroup $G_1\ast_{C}\cdots\ast_{C} G_p\subset G$, preimage of $Q_1\ast\cdots\ast Q_p$ in $G$. We define $t'_i$, $\theta'_{s'(i)}$ and $H'$ in the same manner. 

The group $G$ admits the following finite presentation:\[G=\langle H,t_1,\ldots ,t_k \ \vert \ \mathrm{ad}(t_i)_{\vert C}=\theta_{s(i)} \ \forall i\in\llbracket 1,k\rrbracket\rangle.\]
Similarly, the group $G'$ has a finite presentation of the form \[G'=\langle H',t'_1,\ldots ,t'_{k'} \ \vert \ \mathrm{ad}(t'_i)_{\vert C'}=\theta'_{s'(i)} \ \forall i\in\llbracket 1,k'\rrbracket\rangle.\]

Let $n=\max(k,k')-k$ and $n'=\max(k,k')-k'$, so that $n+k=n'+k'$. Let us define the overgroups $\widehat{G}$ and $\widehat{G}$ of $G$ and $G'$ as follows:

\[\widehat{G}=\Biggl\langle 
       \begin{array}{l|cl}
                     G,t_{k+1},\ldots,t_{k+\ell},\ldots ,t_{k+\ell+n}   & \mathrm{ad}(t_i)_{\vert C}=\theta_{i} & \forall i\in\llbracket k+1,k+\ell\rrbracket \\
                       &  \mathrm{ad}(t_i)_{\vert C}=\mathrm{id}_C & \forall i\geq k+\ell+1
                                                                   
        \end{array}
     \Biggr\rangle,\]

\[\widehat{G}'=\Biggl\langle 
       \begin{array}{l|cl}
                     G',t'_{k+1},\ldots,t'_{k'+\ell},\ldots ,t'_{k'+\ell+n'}   & \mathrm{ad}(t'_i)_{\vert C'}=\theta'_{i} & \forall i\in\llbracket k'+1,k'+\ell\rrbracket \\
                       &  \mathrm{ad}(t'_i)_{\vert C'}=\mathrm{id}_{C'} & \forall i\geq k'+\ell+1
                                                                   
        \end{array}
     \Biggr\rangle.\]
     
Note that $\widehat{G}$ is a multiple legal large $m$-extension of $G$. We can see that by defining a finite sequence of groups $(\widehat{G}_q)_{0\leq q\leq \ell+n}$ by $\widehat{G}_0=G$ and $\widehat{G}_{q+1}=\langle \widehat{G}_q,t_{k+q+1}\rangle$ for $0\leq q< \ell+n$, and by observing that $\widehat{G}=\widehat{G}_{\ell+n}$, and that $\widehat{G}_{q+1}$ is a legal $\vert C\vert $-extension of $\widehat{G}_q$ for every $q$, because $\mathrm{ad}(t_{k+q+1})_{\vert C}$ belongs to $\mathrm{Aut}_G(C)$. In the same way, the group $\widehat{G}'$ is a legal large $\vert C'\vert$-extension of $G'$. 

We will now construct the isomorphisms $\widehat{\varphi} : \widehat{G}\rightarrow \widehat{G}'$ and $\widehat{\varphi}' : \widehat{G}'\rightarrow \widehat{G}$ satisfying the expected conditions. Let $N:=\ell+k+n=\ell+k'+n'$. Up to renumbering the elements $t_i$, one can assume that $\mathrm{ad}(t_i)_{\vert C}=\theta_i$ for every $1\leq i\leq \ell$. Then, for every $i\geq \ell+1$, there exists an integer $1\leq j\leq \ell$ such that $\mathrm{ad}(t_i)_{\vert C}=\mathrm{ad}(t_j)_{\vert C}$, because $\mathrm{Aut}_G(C)=\lbrace\theta_1,\ldots,\theta_{\ell}\rbrace$ and $\mathrm{ad}(t_i)_{\vert C}$ belongs to $\mathrm{Aut}_G(C)$. Hence, up to replacing $t_i$ by $t_j^{-1}t_i$, one can assume without loss of generality that $\theta_i=\mathrm{id}_{C}$. Now, $\widehat{G}$ has the following presentation: 

\[\widehat{G}=\Biggl\langle 
       \begin{array}{l|cl}
                     H,t_{1},\ldots,t_{N}  & \mathrm{ad}(t_i)_{\vert C}=\theta_{i} & \forall i\in\llbracket 1,\ell\rrbracket \\
                       &  \mathrm{ad}(t_i)_{\vert C}=\mathrm{id}_C & \forall i\geq \ell+1
                                                                   
        \end{array}
     \Biggr\rangle.\]

Likewise, $\widehat{G}'$ has a presentation of the following form: 

\[\widehat{G}'=\Biggl\langle 
       \begin{array}{l|cl}
                     H',t'_{1},\ldots,t'_{N}  & \mathrm{ad}(t'_i)_{\vert C'}=\theta'_{i} & \forall i\in\llbracket 1,\ell\rrbracket \\
                       &  \mathrm{ad}(t'_i)_{\vert C'}=\mathrm{id}_{C'} & \forall i\geq \ell+1
                                                                   
        \end{array}
     \Biggr\rangle.\]

We are now ready to define $\widehat{\varphi}$ and $\widehat{\varphi}'$. By assumption, $\varphi(G_i)=g'_i{G'_i}{g'_i}^{-1}$ for some $g'_i\in G'$. Since $\mathrm{Aut}_{G'}(C')=\lbrace\mathrm{ad}(t'_j)_{\vert C'}, 1\leq j\leq \ell\rbrace$, there exists an integer $\sigma(i)\in\llbracket 1,\ell\rrbracket$ such that $\mathrm{ad}(g'_i)_{\vert C'}=\mathrm{ad}(t'_{\sigma(i)})_{\vert C'}$. Recall that $H=G_1\ast_{C}\cdots\ast_{C} G_p$. First, let us define a homomorphism $\psi : H \rightarrow \widehat{G}'$ by \[\psi_{\vert G_i}=\mathrm{ad}\left(t'_{\sigma(i)}{g'_i}^{-1}\right)\circ \varphi_{\vert G_i}\] for every $1\leq i\leq p$. This homomorphism is well-defined since the element $t'_{\sigma(i)}{g'_i}^{-1}$ of $G'$ centralizes $C'$.

Then, let us define $\widehat{\varphi}:\widehat{G}\rightarrow\widehat{G}'$ by $\widehat{\varphi}_{\vert H}=\psi$ and $\widehat{\varphi}(t_i)=t'_i$ for every $1\leq i\leq N$. This homomorphism is well-defined because $\mathrm{ad}(t_i)_{\vert C}=\theta_i$ and $\mathrm{ad}(t'_i)_{\vert C'}=\varphi_{C}\circ \theta_i\circ (\varphi_C)^{-1}=\theta'_i$.

Last, note that \[\widehat{\varphi}(G_i)=t'_{\sigma(i)}{G'_i}{t'_{\sigma(i)}}^{-1}={\widehat{\varphi}\left(t_{\sigma(i)}\right)}{G'_i}{\widehat{\varphi}\left(t_{\sigma(i)}\right)}^{-1},\] i.e.\ \[G'_i=\widehat{\varphi}\left(t_{\sigma(i)}^{-1}G_it_{\sigma(i)}\right).\]Hence, the image of $\widehat{\varphi}$ contains $G'_i$, for every $1\leq i\leq p$ and $t'_i=\widehat{\varphi}(t_i)$ for every $1\leq i\leq N$. As a consequence, since $\widehat{G}'$ is generated by $G'_1,\ldots,G'_p, t'_1,\ldots ,t'_N$, the homomorphism $\widehat{\varphi}$ is surjective. Likewise, there exists an epimorphism $\widehat{\varphi}' : \widehat{G}' \twoheadrightarrow \widehat{G}$ that coincides with $\varphi'$ on each $G'_i$ up to conjugacy. Since hyperbolic groups are Hopfian, the epimorphism $\widehat{\varphi}'\circ \widehat{\varphi}: \widehat{G}\twoheadrightarrow\widehat{G}$ is an automorphism. Hence, $\widehat{\varphi}$ and $\widehat{\varphi}'$ are two isomorphisms.\end{proof}

\begin{rque}\label{rkalgo}This remark will be useful for proving that there exists an algorithm that takes as input two finite presentations of virtually free groups, and decides whether these groups have the same $\forall\exists$-theory or not. We keep the same notations as in the proof above. Let $r$ be the rank of $G$ (that is the smallest cardinality of a generating set for $G$), and let $r'$ be the rank of $G'$. Note that $k\leq r$ and $k'\leq r'$. We constructed the groups $\widehat{G}$ and $\widehat{G}'$ from $G$ and $G'$ by performing less than $\mathrm{max}(k,k')+\ell\leq \mathrm{max}(r,r')+\vert C\vert !$ legal large extensions.
\end{rque}

%It remains to prove that $\widehat{\varphi}_{\vert G}\approx \varphi$ and $\widehat{\varphi}'_{\vert G'}\approx \varphi'$. Let $A$ be a finite subgroup of $G$ such that $N_G(A)$ is virtually $\mathbb{Z}$ maximal. Then $A$ is not contained in $C$, otherwise a subgroup of $N_G(C)=G$ of finite index would centralizes $A$. Hence, $A$ fixes a unique vertex of $T$. It follows that $N_G(A)$ is contained in a conjugate of some $G_i$. By construction, $\widehat{\varphi}_{\vert G_i}$ and $\varphi$ only differ from each other by postconjugation by an inner automorphism, which concludes the proof of the lemma.

We now consider reduced Stallings splittings of virtually free groups $G$ and $G'$. If all edge groups are equal, then Lemma \ref{normalisateur} applies. Here below are two consequences of Lemma \ref{normalisateur} in this context. These results will be useful in the proof of the general case of the implication $(4)\Rightarrow (5)$ of Theorem \ref{principal} (see Proposition \ref{2implique3}). 

\begin{co}\label{f-by-f0}Let $Q=Q_1\ast\cdots\ast Q_p\ast F_k$ and $Q'=Q'_1\ast\cdots\ast Q'_{p'}\ast F_{k'}$ be two free products of finite groups with a free group. Suppose that $Q$ and $Q'$ are non-elementary. Let $G$ and $G'$ be two finite extensions \[1\rightarrow C\rightarrow G\rightarrow Q\rightarrow 1 \ \ \ \text{and} \ \ \ 1\rightarrow C'\rightarrow G'\rightarrow Q'\rightarrow 1.\]Suppose that there exist two homomorphisms $\varphi : G \rightarrow G'$ and $\varphi' : G' \rightarrow G$ such that the pair $(\varphi, \varphi')$ is strongly $(\geq m_{G,G'})$-special. Then $C$ and $C'$ have the same order $m_{G,G'}$, and there exists a $(\geq m_{G,G'})$-expansion $(\Gamma,\Gamma',\psi,\psi')$ of $(G,G',\varphi,\varphi')$ such that $\psi: \Gamma\rightarrow\Gamma'$ and $\psi': \Gamma'\rightarrow\Gamma$ are bijective.\end{co}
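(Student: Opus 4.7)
My plan is to reduce this corollary to a direct application of Lemma \ref{normalisateur}. The strategy has three main ingredients: (i) build natural splittings of $G$ and $G'$ coming from the extension structure in which all edge groups are equal; (ii) prove the cardinality equality $|C|=|C'|=m_{G,G'}$; (iii) show that $\varphi$ and $\varphi'$ permute the vertex groups of these splittings, up to conjugacy, and then invoke Lemma \ref{normalisateur}.

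First, pulling back the natural free-product splitting of $Q=Q_1\ast\cdots\ast Q_p\ast F_k$ through the extension $\pi:G\twoheadrightarrow Q$ produces a splitting $T$ of $G$ as a graph of groups in which every edge group equals $C$, and the vertex groups are the preimages $G_i:=\pi^{-1}(Q_i)$, each a finite extension of the finite group $Q_i$ by $C$ (in particular, $G_i$ is finite). An analogous splitting $T'$ exists for $G'$. Because $C$ is a finite normal subgroup of $G$, for any finite subgroup $H\subset G$ the product $CH$ is a finite subgroup containing both $C$ and $H$; hence $C$ is contained in every maximal finite subgroup of $G$, and therefore in every edge group of any Stallings splitting of $G$. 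Consequently $m(G)=|C|$, and similarly $m(G')=|C'|$. To conclude $|C|=|C'|$, I follow the opening argument of the proof of Lemma \ref{normalisateur}: since $N_G(C)=G$ is non-elementary (as $Q$ is non-elementary) and $E_G(G)=C$, the $(\geq m_{G,G'})$-special property of $\varphi$ gives $\varphi(C)=E_{G'}(N_{G'}(\varphi(C)))$, which contains $C'$ because $C'$ is normal in $G'$ hence normalized by $N_{G'}(\varphi(C))$. Symmetrically $C\subseteq\varphi'(C')$, and injectivity on finite subgroups yields $\varphi(C)=C'$, $\varphi'(C')=C$, whence $|C|=|C'|=m_{G,G'}$.

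Next, I identify the vertex-group action of $\varphi$. Any finite subgroup of $G$ containing $C$ strictly fixes a unique vertex of the Bass--Serre tree of $T$: otherwise it would fix an edge and hence inject into $C$, contradicting that its order exceeds $|C|$. Applying this to $\varphi(G_i)\subset G'$, which contains $\varphi(C)=C'$ and has order $|G_i|>|C|=|C'|$, one sees that $\varphi(G_i)$ lies in a unique vertex group of $T'$, say a conjugate of $G'_{\sigma(i)}$. The condition $\varphi'\circ\varphi\sim\mathrm{id}_G$ (from the strongly $(\geq m_{G,G'})$-special pair hypothesis) gives $g_i\in G$ with $\varphi'\varphi(G_i)=g_iG_ig_i^{-1}$; combining with $\varphi'$ sending $G'_{\sigma(i)}$ into a conjugate of some $G_{\sigma'\sigma(i)}$, we get $G_i$ (up to conjugation) contained in $G_{\sigma'\sigma(i)}$. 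By the same uniqueness argument in $T$, this forces $\sigma'\sigma(i)=i$, and symmetrically; hence $p=p'$ and, after renumbering, $\sigma=\mathrm{id}$. Cardinality considerations $|G_i|=|\varphi(G_i)|\leq|G'_i|\leq|G_{\sigma'(i)}|=|G_i|$ force equality, so $\varphi$ maps each $G_i$ isomorphically onto a conjugate of $G'_i$; symmetrically for $\varphi'$.

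All hypotheses of Lemma \ref{normalisateur} are now verified, with $m=m_{G,G'}=|C|=|C'|$. That lemma produces a $(\geq m_{G,G'})$-expansion $(\widehat{G},\widehat{G}',\widehat{\varphi},\widehat{\varphi}')$ with $\widehat{\varphi},\widehat{\varphi}'$ bijective, satisfying $\varphi|_{G_i}=\mathrm{ad}(g'_i)\circ\widehat{\varphi}|_{G_i}$ for some $g'_i\in\widehat{G}'$, and the analogous relation for $\varphi'$. It remains only to check $\widehat{\varphi}|_G\sim\varphi$ as maps $G\to\widehat{G}'$: any finite subgroup $F\subset G$ is conjugate by some $h\in G$ into a subgroup of some $G_i$, and the computation $\varphi(hyh^{-1})=\mathrm{ad}\!\bigl(\varphi(h)g'_i\widehat{\varphi}(h)^{-1}\bigr)\bigl(\widehat{\varphi}(hyh^{-1})\bigr)$ for $y\in G_i$ exhibits the required conjugating element in $\widehat{G}'$. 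This shows $(\widehat{G},\widehat{G}',\widehat{\varphi},\widehat{\varphi}')$ is a $(\geq m_{G,G'})$-expansion of $(G,G',\varphi,\varphi')$ with bijective maps, as required. The main obstacle is the bookkeeping in the middle step, where the uniqueness of vertex stabilization by finite subgroups of order $>|C|$ in both Bass--Serre trees must be combined with the $\sim$-relation to pin down the permutation of vertex groups; everything else is essentially a packaging of Lemma \ref{normalisateur}.
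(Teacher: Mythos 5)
Your proposal is correct and follows essentially the same route as the paper: pull back the free-product splitting of $Q$, show via the $\sim$-relation and uniqueness of fixed vertices that $\varphi,\varphi'$ induce mutually inverse bijections on the conjugacy classes of the finite vertex groups $G_i$, and then invoke Lemma \ref{normalisateur} (you additionally make explicit the claims $|C|=|C'|=m_{G,G'}$ and $\widehat{\varphi}_{\vert G}\sim\varphi$, which the paper leaves implicit). One small repair: the step ``$C$ is contained in every maximal finite subgroup of $G$, and therefore in every edge group of any Stallings splitting'' is a non sequitur, since edge groups need not be intersections of maximal finite subgroups; the correct justification is that $C$, being finite and normal, has non-empty $G$-invariant fixed-point set in any minimal Stallings tree, hence fixes the whole tree and lies in every edge stabilizer, which still yields $m(G)=\vert C\vert$.
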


\begin{proof}For every $1\leq i\leq p$, let $G_i$ be a preimage of $Q_i$ in $G$ and for every $1\leq i\leq p'$, let $G'_i$ be a preimage of $Q'_i$ in $G'$. In order to establish the existence of $\Gamma$ and $\Gamma'$, we shall use Lemma \ref{normalisateur}. To that end, it is enough to verify that the following three conditions are satisfied (up to renumbering the $G_i$):
\begin{itemize}
\item[$\bullet$]$p=p'$,
\item[$\bullet$]$\varphi$ maps each $G_i$ isomorphically to a conjugate of $G'_{i}$,
\item[$\bullet$]and $\varphi'$ maps each $G'_i$ isomorphically to a conjugate of $G_{i}$.
\end{itemize}

Since every finite subgroup of $G'$ is contained in a conjugate of some $G'_i$, there exists a map $\sigma : \llbracket 1,p\rrbracket \rightarrow \llbracket 1,p'\rrbracket$ such that $\varphi(G_i)$ is contained in a conjugate of $G'_{\sigma(i)}$, for every $1\leq i\leq p$. Likewise, there exists a map $\sigma' : \llbracket 1,p'\rrbracket \rightarrow \llbracket 1,p\rrbracket$ such that $\varphi'(G'_i)$ is contained in a conjugate of $G_{\sigma'(i)}$, for every $1\leq i\leq p'$. 

Since $(\varphi,\varphi')$ is a strongly $(\geq m_{G,G'})$-special pair and $\vert G_i\vert \geq \vert C\vert \geq m_{G,G'}$ for every $1\leq i\leq p$, the subgroup $\varphi'\circ \varphi(G_i)$ of $G$ is conjugate to $G_i$. Hence, $\sigma'\circ \sigma$ is the identity of $\llbracket 1,p\rrbracket$. Likewise, $\sigma\circ \sigma'$ is the identity of $\llbracket 1,p'\rrbracket$. It follows that $p=p'$ and that $\sigma'=\sigma^{-1}$, which concludes the proof.\end{proof}

Note that the previous proposition holds in particular if $G$ and $G'$ are both finite-by-free. We need to prove that this result remains true if only one of these two groups is assumed to be finite-by-free.

\begin{comment}
notation $EN_G(...)$ au lieu de $E_G(N_G(...))$ ; pour tout $C$ dans la déf de spécial, ou alors maximal ; cas fini-par-$D_{\infty}$ ; on a $\cap_{g\in N}M(g)=\cap_{g\in N}M_0(g)$ où $M_0(g)$ est le sous-groupe fini normal maximal de $M(g)$ (autrement dit, le plus gros sous-groupe fini de $G$ normalisé par $g$).
\end{comment}

\begin{co}\label{f-by-f}Let $G$ and $G'$ be two virtually free groups. Suppose that $G$ is finite-by-free (possibly finite or finite-by-$\mathbb{Z}$). Suppose that there exists a strongly $(\geq m_{G,G'})$-special pair of homomorphisms $(\varphi : G \rightarrow G', \varphi' : G' \rightarrow G)$. Then there exists a $(\geq m_{G,G'})$-expansion $(\Gamma,\Gamma',\psi,\psi')$ of $(G,G',\varphi,\varphi')$ such that $\psi: \Gamma\rightarrow\Gamma'$ and $\psi': \Gamma'\rightarrow\Gamma$ are bijective.\end{co}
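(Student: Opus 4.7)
The overall strategy is to show that, under the hypothesis that $G$ is finite-by-free, the group $G'$ must also be finite-by-free, thereby reducing Corollary~\ref{f-by-f} to Corollary~\ref{f-by-f0}.

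First I dispose of the degenerate cases. If $G$ is finite, then injectivity of $\varphi'$ on finite subgroups together with $\varphi\circ\varphi'\sim\mathrm{id}_{G'}$ and $K_G=K_{G'}$ forces $G'$ to be finite and $\varphi,\varphi'$ to be mutually inverse isomorphisms, so one may take $\Gamma=G$, $\Gamma'=G'$, $\psi=\varphi$, $\psi'=\varphi'$. If $G$ is infinite virtually cyclic (finite-by-$\mathbb{Z}$), a symmetric argument identifies $G'$ as virtually cyclic with the same maximal finite normal subgroup up to isomorphism, and Theorem~\ref{legal2te} applied on both sides produces a common multiple legal small extension.

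Now assume $G = C \rtimes F_n$ is non-elementary, write $C' := \varphi(C) \subset G'$, and set $m := m_{G,G'}$. The strongly $(\geq m)$-special hypothesis, together with $\varphi'\circ\varphi\sim\mathrm{id}_G$ and $\varphi\circ\varphi'\sim\mathrm{id}_{G'}$, yields mutually inverse bijections between the conjugacy classes of finite subgroups of order $\geq m$ in $G$ and in $G'$. The heart of the proof is to establish that $G'$ is finite-by-free with maximal finite normal subgroup a conjugate of $C'$. Let $T'$ be a reduced Stallings splitting of $G'$. Since every maximal finite subgroup of $G$ is conjugate to $C$ (of order $K_G = K_{G'}$), the bijection forces every vertex group of $T'$ to be conjugate to $C'$. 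Suppose for contradiction that some edge $e$ of $T'$ carries an edge group $D \subsetneq C'$, and let $D_G \subset C$ be the subgroup of $G$ corresponding to $D$ under the bijection. After possibly replacing $D_G$ by a suitable characteristic subgroup of $C$ it contains, the normalizer $N_G(D_G)$ contains $C$ and a finite-index subgroup of $F_n$, hence is non-elementary; condition~(3) of Definition~\ref{besoinreffff} then gives $E_{G'}(N_{G'}(D)) = \varphi(E_G(N_G(D_G))) = C'$ up to conjugation. On the other hand, the two vertex groups of $T'$ adjacent to $e$ are conjugates of $C'$ both containing $D$ and intersecting exactly in $D$, so the largest finite subgroup of $G'$ normalized jointly by these two vertex groups is $D$ itself, yielding $E_{G'}(N_{G'}(D)) = D$. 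Combining the two equalities forces $D = C'$, a contradiction.

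Once $G'$ is known to be finite-by-free, Corollary~\ref{f-by-f0} applied to $(\varphi,\varphi')$ directly produces the desired bijective $(\geq m_{G,G'})$-expansion $(\Gamma,\Gamma',\psi,\psi')$ of $(G,G',\varphi,\varphi')$. The main technical obstacle is the verification that $E_{G'}(N_{G'}(D)) = D$ via the amalgamation structure of $T'$, together with the careful choice of $D_G$ in the non-abelian case that ensures $N_G(D_G)$ is non-elementary so that condition~(3) of Definition~\ref{besoinreffff} applies cleanly to produce the matching computation of $E_{G'}(N_{G'}(D))$ from the side of $G$.
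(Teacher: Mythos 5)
Your overall strategy (show that $G'$ is forced to be finite-by-free, then quote Corollary \ref{f-by-f0}) is the same as the paper's, and your reduction of the problem to ruling out an edge group $D\subsetneq C'$ in a reduced Stallings tree $T'$ of $G'$ is the right reduction. But the contradiction you derive from such a $D$ has two genuine gaps. First, the claimed equality $E_{G'}(N_{G'}(D))=D$ does not follow from your tree argument: the two vertex groups adjacent to $e$ contain $D$ but do \emph{not} normalize it in general, and in any case $E_{G'}(N_{G'}(D))$ is the maximal finite subgroup normalized by $N_{G'}(D)$, not by the adjacent vertex groups (which need not lie in $N_{G'}(D)$ at all). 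A priori $N_{G'}(D)$ could even be elementary, in which case $E_{G'}(N_{G'}(D))$ is undefined. Second, "replacing $D_G$ by a suitable characteristic subgroup of $C$ it contains" destroys exactly what you need, namely that $\varphi(D_G)$ is conjugate to $D$; condition (3) of Definition \ref{besoinreffff} applied to the modified subgroup says nothing about $E_{G'}(N_{G'}(D))$. (This repair is also unnecessary: for any $D_G\leq C$ in $G=C\rtimes F_n$, the stabilizer of $D_G$ under the finite-image action $F_n\to\mathrm{Aut}(C)$ pulls back to a finite-index subgroup of $F_n$ normalizing $D_G$, so $N_G(D_G)$ is automatically non-elementary when $n\geq 2$ — though it need not contain $C$, as you assert.)

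Your $G$-side computation, once cleaned up as above, correctly shows that $E_{G'}(N_{G'}(D))$ is a conjugate of $C'$, of order $\vert C\vert>\vert D\vert$, and that $N_{G'}(D)$ is non-elementary; the contradiction must then be extracted from that (for instance: $N_{G'}(D)$ contains an element hyperbolic in $T'$, so the finite group $E_{G'}(N_{G'}(D))$ fixes its axis pointwise and lies in an edge stabilizer of $T'$, forcing an edge group of order $\vert C\vert$ equal to its endpoint groups, contradicting reducedness). The paper avoids all of this by arguing in the opposite direction: it uses $\varphi\circ\varphi'$ to correct the stable letter $t$ conjugating the two endpoint groups of $e$, producing an explicit HNN splitting $G'=\langle H,t\mid tct^{-1}=\alpha(c),\ c\in G'_e\rangle$ with $\alpha\in\mathrm{Aut}(G'_e)$; this gives directly that $N_{G'}(G'_e)$ is infinite with $E_{G'}(N_{G'}(G'_e))=G'_e$, and applying $\varphi'$ then contradicts the normality of $C$ in $G$. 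Separately, your treatment of the elementary cases is too quick: in the infinite virtually cyclic case, Theorem \ref{legal2te} is a statement about $\forall\exists$-theories and does not produce a legal small extension with the required bijections — the paper instead exhibits $G'$ itself as a legal small extension of $G$ via the nice embeddings $\varphi$ and $\varphi'$ and takes $\Gamma=\Gamma'=G'$; and in the finite case you still owe an argument that $G'$ is finite, which in the paper comes only \emph{after} $G'$ is known to be finite-by-free (so that $G'=N_{G'}(C')$ and conditions (3)--(4) of Definition \ref{besoinreffff} pin down the type of $G'$ from that of $G$).
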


\begin{proof}Since $G$ is finite-by-free, there is a unique finite normal subgroup $C\subset G$ such that $G/C\simeq F_n$, with $n\geq 0$. Let $T'$ be a reduced Stallings tree of $G'$. First, note that all vertex groups of $T'$ have order equal to $\vert C\vert$. Indeed, if $v$ is a vertex of $T'$, then $\varphi'(G'_v)$ is a subgroup of $C$. Hence $\varphi\circ \varphi(G'_v)$ is contained in $\varphi(C)$, which is of order $\vert C\vert$. The vertex group $G'_v$ being finite maximal (since $T'$ is reduced), and $\varphi\circ \varphi(G'_v)$ being conjugate to $G'_v$, we have $\vert G'_v\vert=\vert C\vert$.

In order to prove that $G'$ is finite-by-free, it suffices to show that all edge groups of $T'$ have order equal to $\vert C\vert$, as all vertex groups of $T'$ have order equal to $\vert C\vert$. Assume towards a contradiction that there is an edge $e=[v,w]$ of $T'$ such that $\vert G'_e\vert < \vert C\vert$. Since $G'_v$ and $G'_w$ have order $\vert C\vert$, we have $\varphi'(G'_v)=\varphi'(G'_w)=C$. It follows that $G'_v$ and $G'_w$ are conjugate in $G'$ since $\varphi'$, being $(\geq m_{G,G'})$-strongly special, maps non-conjugate finite subgroups of $G'$ of order $\geq m_{G,G'}$ to non-conjugate finite subgroups of $G$. Let $t$ be an element of $G'$ such that $G'_w=tG'_vt^{-1}$, let $E_1:=G'_e$ and $E_2:=tE_1t^{-1}$. 

Let $S$ be the tree obtained from $T'$ by collapsing all edges of $T'$ that are not in the $G'$-orbit of $e$. The segment $[w,tv]$ fixed by $G'_w=tG'_vt^{-1}$ is collapsed to a point in $S$. Hence $t$ has a translation length equal to $1$ in $S$, i.e.\ $t$ is a stable letter in the splitting of $G$ as an HNN extension whose $S$ is the Bass-Serre tree. Let $x$ be the image of $v$ (or $w$) in $S$ and let $H=G'_x$ be its stabilizer. Note that $E_1\subset G'_w\subset H$ and $E_2\subset G'_w\subset H$. We have $G'=\langle H,t \ \vert \ txt^{-1}=\alpha(x), \forall x\in E_1\rangle=H_{\alpha : E_1\rightarrow E_2}$ where $\alpha$ denotes an isomorphism from $E_1$ to $E_2$. Observe that $x$ is the unique vertex of $S$ fixed by $G'_w$, because $\vert G'_w\vert > \vert E_1\vert $. Therefore $N_{G'}(G'_w)$ fixes $x$, i.e.\ $N_{G'}(G'_w)\subset H$. 

Now, let us observe that the homomorphism $\varphi\circ \varphi'$ coincides on the finite subgroup $G'_w$ with an inner automorphism $\mathrm{ad}(g')$, for a certain $g'\in G'$. Up to replacing $\varphi$ by $\mathrm{ad}(g'^{-1})\circ \varphi$, one can assume without loss of generality that $\varphi\circ \varphi'$ coincides with the identity on $G'_w$. In particular, $\varphi\circ \varphi'$ coincides with the identity on $E_1$ and $E_2$. Let $z:=\varphi\circ \varphi'(t)$. We have $zE_1z^{-1}=E_2$ and $tE_1t^{-1}=E_2$. Therefore $z^{-1}t$ normalizes $E_1$. In addition, $z$ normalizes $G'_w$; indeed, $G'_w=tG'_vt^{-1}$ and $G'_w=\varphi\circ \varphi'(G'_w)=\varphi\circ \varphi'(G'_v)$, thus $G'_w=\varphi\circ \varphi'(t)G'_w\varphi'\circ \varphi'(t)^{-1}$. Hence, $z$ belongs to $H$ since $N_{G'}(G'_w)\subset H$. Thus, up to replacing $t$ by $z^{-1}t$ in the previous splitting of $G'$ as an HNN extension, we get a splitting of $G'$ of the form $G'=\langle H,t \ \vert \ txt^{-1}=\alpha(x), \forall x\in E_1\rangle$ where $\alpha$ denotes an automorphism of $E_1$. This shows that $N_{G'}(E_1)$ is infinite and that $E_{G'}(N_{G'}(E_1))=E_1$. By applying the strongly special homomorphism $\varphi'$ to this equality, we get $E_G(N_G(\varphi'(E_1)))=\varphi'(E_1)$. This is a contradiction, because $\vert \varphi'(E_1)\vert < \vert C\vert $ and $C$ is normal in $G$.

As a conclusion, all edge groups of $T'$ have order $\vert C\vert$. As mentionned above, this shows that the group $G'$ is finite-by-free. Let $C'\subset G'$ be the unique finite normal subgroup such that $G'/C'$ is free. Note that $\vert C\vert =\vert C'\vert =m_{G,G'}$ and $G'=N_{G'}(C')$. Since $\varphi'$ is strongly $(\geq m_{G,G'})$-special, $\varphi'(G')$ is non-elementary as soon as $G'$ is non-elementary, and $\varphi'(G')$ is infinite as soon as $G'$ is infinite. By symmetry, $\varphi(G)$ is non-elementary as soon as $G$ is non-elementary, and $\varphi(G)$ is infinite as soon as $G'$ is infinite. Consequently, $G$ and $G'$ are simultaneously finite, virtually $\mathbb{Z}$ or non-elementary. We treat the three cases separately.

\emph{First case.} Suppose that $G$ and $G'$ are finite. Since the homomorphisms $\varphi$ and $\varphi'$ are injective on finite groups, they are bijective, and one can take $\Gamma=G$ and $\Gamma'=G'$.

\emph{Second case.} Suppose that $G$ and $G'$ are virtually $\mathbb{Z}$. Note that $G'$ can be written as $G'=G\ast_G G'$, where the embedding of $G$ into $G$ is the identity, and the embedding of $G$ into $G'$ is the nice embedding $\varphi : G \hookrightarrow G'$. Moreover, $\varphi' : G' \hookrightarrow G$ is a nice embedding. Hence $G'$ is a legal small extension of $G$. One can take $\Gamma=\Gamma'=G'$. 

\emph{Third case.} Suppose that $G$ and $G'$ are non-elementary. Then the existence of $\Gamma$ and $\Gamma'$ is an immediate consequence of Proposition \ref{f-by-f0} above.\end{proof}

\subsection{A property of the tree of cylinders}

Let $G$ be a virtually free group. Let $m\geq 1$ be an integer, and let $T$ be a $m$-splitting of $G$. Recall that the tree of cylinders $T_c$ (see \cite{GL11} and Section \ref{tree}) is the bipartite tree whose set of vertices $V(T_c)$ is the disjoint union of the following two sets:
\begin{itemize}
\item[$\bullet$]the set of vertices $x$ of $T$ which belong to at least two cylinders, denoted by $V_0(T_c)$;
\item[$\bullet$]the set of cylinders of $T$, denoted by $V_1(T_c)$.
\end{itemize}
There is an edge $\varepsilon=(x, Y)$ between $x\in V_0(T_c)$ and $Y\in V_1(T_c)$ in $T_c$ if and only if $x\in Y$. If $Y=\mathrm{Fix}(G_e)$ is the cylinder associated with an edge $e\in T$, then the stabilizer $G_Y$ of $Y$ is $N_G(G_e)$.

\begin{lemme}\label{perin}Let $G$ be a virtually free group. Let $m\geq 1$ be an integer, and let $T$ be a $m$-splitting of $G$. Let $T_c$ denote the tree of cylinders of $T$. Let $\varphi$ be an endomorphism of $G$. Suppose that $\varphi$ maps every vertex group of $T_c$ isomorphically to a conjugate of itself, and every finite subgroup of $G$ isomorphically to a conjugate of itself. Then $\varphi$ is an automorphism.\end{lemme}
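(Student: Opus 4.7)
Since virtually free groups are Hopfian (being residually finite), it suffices to prove that $\varphi$ is surjective. I may assume $G$ is non-elementary: if $G$ is finite or virtually cyclic, then $T_c$ collapses to a single vertex with stabilizer $G$, and the hypothesis on vertex groups directly forces $\varphi$ to be an automorphism. Set $H := \varphi(G)$.

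The strategy is to construct a $\varphi$-equivariant simplicial map $\widetilde{\varphi}\colon T_c \to T_c$. For each $G$-orbit of vertices of $T_c$, fix a representative $v$ and an element $g_v \in G$ with $\varphi(G_v) = g_v G_v g_v^{-1}$ (supplied by the hypothesis); set $\widetilde{\varphi}(v) := g_v \cdot v$ and extend by $\varphi$-equivariance through $\widetilde{\varphi}(h v) := \varphi(h)\,\widetilde{\varphi}(v)$. Well-definedness follows from $\varphi|_{G_v}$ being an isomorphism onto $g_v G_v g_v^{-1}$. To extend to edges I use the bipartite structure of $T_c$: type-$0$ vertices carry the finite vertex groups of $T$ (those lying in several cylinders) while type-$1$ vertices carry the normalizers $N_G(C)$ of cylinder stabilizers, so every edge group of $T_c$ is finite. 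The hypothesis on finite subgroups thus furnishes, for each edge $e=[v,w]$, an element $g_e$ with $\varphi(G_e) = g_e G_e g_e^{-1}$; after adjusting $g_v, g_w$ within their respective cosets, one arranges that $g_e \cdot e$ has endpoints $\widetilde{\varphi}(v)$ and $\widetilde{\varphi}(w)$. Hence $\widetilde{\varphi}$ is a simplicial self-map of $T_c$ preserving each $G$-orbit and inducing the identity on the finite quotient graph $T_c/G$.

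The crucial step is to prove that $\varphi$ is injective. Let $N := \ker(\varphi)$. Since $\varphi$ is injective on every finite subgroup and on every vertex group of $T_c$, the subgroup $N$ intersects each of these trivially, so $N$ acts freely on the tree $T_c$. The equivariance $\widetilde{\varphi}(nv) = \varphi(n)\widetilde{\varphi}(v) = \widetilde{\varphi}(v)$ for $n \in N$ shows that $\widetilde{\varphi}$ factors through a simplicial map $\overline{\widetilde{\varphi}}\colon T_c/N \to T_c$. Conversely, if $\widetilde{\varphi}(v) = \widetilde{\varphi}(v')$ then by orbit preservation $v' = hv$ for some $h \in G$, and $\varphi(h) \in G_{\widetilde{\varphi}(v)} = \varphi(G_v)$ gives $h = ng$ with $n \in N$ and $g \in G_v$, whence $v' = hv = n(gv) = nv \in Nv$. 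Thus $\overline{\widetilde{\varphi}}$ is injective. A connected graph injecting into a tree is itself a tree, so $T_c/N$ is a tree; but $N$ acting freely on the simply connected tree $T_c$ gives $\pi_1(T_c/N) = N$, and hence $N = 1$.

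With $\varphi$ injective, $\widetilde{\varphi}$ is also injective, and together with the identity action on the finite quotient $T_c/G$ this forces $\widetilde{\varphi}$ to be a bijection of $T_c$. Surjectivity of $\widetilde{\varphi}$ then yields, for every vertex $w \in T_c$, some $v$ with $w = g_v v$, hence $G_w = \varphi(G_v) \subset H$; so $H$ contains every vertex stabilizer of $T_c$. The $\varphi$-equivariance of $\widetilde{\varphi}$ also induces a bijection between the finite sets $T_c/G$ and $T_c/H$, and the standard Bass--Serre counting
\[
|V(T_c/H)| \;=\; \sum_{v \in V(T_c/G)} |H \backslash G / G_v|
\]
(and the analogous formula for edges) forces each double-coset set to be a singleton, i.e.\ $G = H \cdot G_v$; combined with $G_v \subset H$ this yields $H = G$, as required. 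The main obstacle is the injectivity step, which crucially exploits the tree structure of $T_c$ together with the finiteness of its edge groups guaranteed by the bipartite construction.
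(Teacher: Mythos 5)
Your reduction via the Hopf property and your quotient-by-the-kernel framework are reasonable, but the argument has a genuine gap exactly at the heart of the lemma. You establish that $\overline{\widetilde{\varphi}}\colon T_c/N\to T_c$ is injective \emph{on vertices}, and then invoke ``a connected graph injecting into a tree is itself a tree.'' That implication requires injectivity on edges as well: a connected graph with a double edge between two vertices admits a vertex-injective simplicial map to a tree (both parallel edges go to the same edge) and yet is not a tree. A double edge in $T_c/N$ is precisely a fold of $\widetilde{\varphi}$ at a vertex, i.e.\ two edges $e=(v,w)$, $e'=(v,nw)$ with $n\in N\setminus\{1\}$ and $\widetilde{\varphi}(w)=\widetilde{\varphi}(w')$ --- and ruling this out is the actual content of the lemma. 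The paper's proof devotes a two-case analysis to it: when the shared vertex is not a cylinder one contradicts injectivity of $\varphi$ on $G_v$ using the two distinct edge groups; when the shared vertex \emph{is} a cylinder $Y_\varepsilon$ one produces $g$ with $\varphi(g)=1$, $g\notin N_G(G_\varepsilon)$, and contradicts injectivity of $\varphi$ on $G_{w'}$ via $\varphi(G_\varepsilon)=\varphi(gG_\varepsilon g^{-1})$ with $G_\varepsilon\neq gG_\varepsilon g^{-1}$ both inside $G_{w'}$. Your setup can absorb exactly this argument (with $g$ replaced by your $n\in N$), but as written the step is missing, and without it the conclusion $N=1$ does not follow.

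Two further problems. First, your construction of $\widetilde{\varphi}$ rests on the assertion that $V_0$-vertices of $T_c$ have finite stabilizers and hence that all edge groups of $T_c$ are finite; this is false in general (a vertex group of $T$ lying in two cylinders can be infinite, and $G_x\cap N_G(C)=N_{G_x}(C)$ can be infinite, e.g.\ when $C$ is normal in an infinite $G_x$), so the ``adjustment within cosets'' justification for adjacency-preservation does not stand. The clean fix is the paper's: build the equivariant map on $T$ first and observe that it sends the cylinder $\mathrm{Fix}(C)$ into the cylinder $\mathrm{Fix}(\varphi(C))$, so that it descends to $T_c$ with adjacency automatic. Second, surjectivity of $\widetilde{\varphi}$ does not follow from ``injectivity plus orbit preservation''; one needs the local surjectivity argument $f_c(G_v\cdot e)=\varphi(G_v)\cdot f_c(e)=G_v\cdot e$, after which your double-coset count (or the paper's direct argument) does finish the proof.
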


\begin{proof}If $T_c$ is a point (i.e.\ if there is a unique cylinder in $T$), then $G$ is a vertex group of $T_c$ and $\varphi$ is an automorphism. From now on, we suppose that $T_c$ is not a point.

As a first step, we build a $\phi$-equivariant map $f:T\rightarrow T$. Let $v_1,\ldots ,v_n$ be some representatives of the orbits of vertices of $T$. For every $1\leq i\leq n$, there exists an element $g_i\in G$ such that $\phi(G_{v_i})=G_{v_i}^{g_i}$. We let $f(v_i)=g_i\cdot v_i$. Then we extend $f$ linearly on edges of $T$.

%Then we define $f$ on each vertex of $T$ by equivariance. Next, we define $f$ on the edges of $T$ in the following way: if $e$ is an edge of $T$, with endpoints $v$ and $w$, there exists a unique path $e'$ from $f(v)$ to $f(w)$ in $T$, and we let $f(e)=e'$.

We claim that the map $f$ induces a $\phi$-equivariant map $f_c : T_c \rightarrow T_c$. Indeed, for each cylinder $Y=\mathrm{Fix}(C)\subset T$, the image $f(Y)$ is contained in $\mathrm{Fix}(\varphi(C))$ of $T$, which is a cylinder (not a point) since $\varphi(C)$ is conjugate to $C$. If $v\in T$ belongs to two cylinders, so does $f(v)$. This allows us to define $f_c$ on vertices of $T_c$, by sending $v\in V_0(T_c)$ to $f(v)\in V_0(T_c)$ and $Y\in V_1(T_c)$ to $f(Y)\in V_1(T_c)$. If $(v,Y)$ is an edge of $T_c$, then $f_c(v)$ and $f_c(Y)$ are adjacent in $T_c$.

We shall prove that $f_c$ does not fold any pair of edges and, therefore, that $f_c$ is injective. Assume towards a contradiction that there exist a vertex $v$ of $T_c$, and two distinct vertices $w$ and $w'$ adjacent to $v$ such that $f_c(w)=f_c(w')$. 

First, assume that $v$ is not a cylinder. Since $T_c$ is bipartite, $w$ and $w'$ are two cylinders, associated with two edges $e$ and $e'$ of $T$. Since $f_c(w)=f_c(w')$, we have $\phi(G_{\varepsilon})=\phi(G_{\varepsilon'})$ by definition of $f_c$. But $\phi$ is injective on $G_v$ by hypothesis, and $G_{\varepsilon},G_{\varepsilon'}$ are two distinct subgroups of $G_v$ (by definition of a cylinder). This is a contradiction.

Now, assume that $v=Y_{\varepsilon}$ is a cylinder. Since $f_c(w)=f_c(w')$, there exists an element $g\in G$ such that $w'=g\cdot w$. As a consequence $\phi(g)$ belongs to $\phi(G_w)$, so one can assume that $\phi(g)=1$ up to multiplying $g$ by an element of $G_w$. In particular, it follows that $g$ does not belong to $N_G(G_{\varepsilon})=G_v$, since $\phi$ is injective in restriction to $N_G(G_{\varepsilon})=G_v$. Then observe that $G_{\varepsilon}\subset G_w$, $G_{\varepsilon}\subset G_{w'}$ and $gG_{\varepsilon}g^{-1}\subset gG_wg^{-1}=G_{w'}$. We have $G_{\varepsilon}\neq gG_{\varepsilon}g^{-1}$ since $g$ does not lie in $N_G(G_{\varepsilon})$, but $\phi(G_{\varepsilon})=\phi(gG_{\varepsilon}g^{-1})$ since $\phi(g)=1$. This contradicts the injectivity of $\phi$ on $G_{w'}$.

Hence, $f_c$ is injective. It follows that $\phi$ is injective. Indeed, let $g$ be an element of $G$ such that $\phi(g)=1$. Then $f_c(g\cdot v)=f_c(v)$ for each vertex $v$ of $T_c$, so $g\cdot v=v$ for each vertex $v$ of $T_c$. But $\phi$ is injective on vertex groups of $T_c$, so $g=1$.

It remains to prove the surjectivity of $ \phi $. We begin by proving the surjectivity of $ f_c $. It suffices to prove the local surjectivity. Let $ v $ be a vertex of $ T $ and $ e $ an edge adjacent to $ v $. Up to conjugacy, we can assume that $ f_c(v) = v $ and $ f_c(e) = e $. We thus have $ f_c(G_v \cdot e) = \phi (G_v) \cdot f_c(e) = G_v \cdot f_c(e) = G_v \cdot e $. Therefore, all the translates of $ e $ by an element of $ G_v $ are in the image of $ f_c$, which proves the surjectivity of $ f_c$. It remains to prove the surjectivity of $ \phi $. Let $ g \in G $ and let $ w $ be a vertex. There are two vertices $ v $ and $ v'$ such that $ f_c(v) = w $ and $ f_c(v') = g w $. Hence there exists $ h \in G $ such that $ v'= hv $, so $ f_c(v') = f_c(hv) = \phi (h) w $, i.e.\ $ gw = \phi (h) w $, so $ g^{- 1} \phi (h) $ belongs to $ G_w = \phi (G_v) $, hence $ g = \phi (h) g'$ with $ g' \in G_v $ . As a consequence, $ \phi $ is surjective.\end{proof}

\subsection{A key proposition}

\begin{de}Let $G$ and $G'$ be two virtually free groups, and let $\varphi,\psi : G \rightarrow G'$ and $\varphi',\psi' : G'\rightarrow G$ be homomorphisms. We say that $(\psi,\psi')$ is a \emph{power} of $(\varphi,\varphi')$ if there exist two integers $n,n'\geq 0$ such that $\psi_{\vert G}=\varphi\circ (\varphi'\circ \varphi)^n$ and $\psi'_{\vert G'}=\varphi'\circ (\varphi\circ \varphi')^{n'}$.\end{de}

\begin{de}Let $G$ and $G'$ be two virtually free groups, and let $\varphi,\psi : G \rightarrow G'$ and $\varphi',\psi' : G'\rightarrow G$ be homomorphisms. The notation $(\psi,\psi')\sim (\varphi,\varphi')$ means that $\psi\sim\varphi$ and $\psi'\sim\varphi'$.\end{de}

%\begin{rque}Note that if $(G,G',\varphi,\varphi')$ has property $\mathcal{P}_m$, with $m=m_{G,G'}$, then $(G,G',\psi,\psi')$ has $\mathcal{P}_m$ as well, thanks to Lemmas \ref{lemmetrois} and \ref{lemmequatre}.
%\end{rque}

\begin{prop}\label{ledernierlemme}Let $G$ and $G'$ be two virtually free groups. Let $m$ denote the integer $m_{G,G'}$. Let $\varphi : G \rightarrow G'$ and $\varphi' : G'\rightarrow G$ be two homomorphisms. If $(G,G',\varphi,\varphi')$ has property $\mathcal{P}_{m}$, then there exists a pair $(\psi : G \rightarrow G',\psi' : G'\rightarrow G)$ which is equivalent (in the sense of $\sim$) to a power of $(\varphi,\varphi')$ and a $m$-expansion $(\widehat{G},\widehat{G}',\widehat{\varphi},\widehat{\varphi}')$ of $(G,G',\psi,\psi')$ such that $\widehat{\varphi}$ and $\widehat{\varphi}'$ are isomorphisms.\end{prop}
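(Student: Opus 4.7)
The idea is to analyze the trees of cylinders of reduced $m$-JSJ splittings of $G$ and $G'$, perform a local legal $m$-extension at each cylinder, and glue these into global multiple legal $m$-extensions; the power appears when making these local choices mutually compatible.

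Let $T$ and $T'$ denote reduced $m$-JSJ splittings of $G$ and $G'$, so that their edge groups all have order exactly $m$ and their vertex groups are precisely the $m$-factors, and let $T_c$ and $T'_c$ denote the associated trees of cylinders. Property $\mathcal{P}_m$ combined with Remark \ref{capermute} ensures that $\varphi$ induces a bijection between conjugacy classes of $m$-factors of $G$ and of $G'$, mapping each $m$-factor isomorphically to a conjugate of the corresponding one, and symmetrically for $\varphi'$; the strongly $(\geq m)$-special hypothesis also gives a bijection between conjugacy classes of edge groups. This yields a combinatorial correspondence between the quotient graphs $T_c/G$ and $T'_c/G'$. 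By first replacing $(\varphi,\varphi')$ with a sufficient power $(\varphi\circ(\varphi'\circ\varphi)^n,\varphi'\circ(\varphi\circ\varphi')^{n'})$ and then adjusting by inner automorphisms via the $\sim$-equivalence, we arrange that for each edge group $C$ of $T$ the automorphism of $C$ induced by $\varphi'\circ\varphi$ is the identity, exploiting the finiteness of $\mathrm{Aut}_G(C)$.

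For each cylinder $Y_C$ of $T_c$, set $N := N_G(C)$ and $N' := N_{G'}(\varphi(C))$. The restriction $\varphi|_N$ lands in $N'$, and symmetrically for $\varphi'|_{N'}$. Three cases arise. If $N$ is finite, then $N=C$ and nothing needs to be done. If $N$ is virtually cyclic, so is $N'$, and the restrictions are $K$-nice embeddings by the fourth clause of Definition \ref{besoinreffff}; applying Corollary \ref{f-by-f} to the pair $(\varphi|_N,\varphi'|_{N'})$ produces legal small $m$-extensions $\widehat{N}\supset N$ and $\widehat{N}'\supset N'$ together with mutually inverse isomorphisms extending $\varphi|_N$ and $\varphi'|_{N'}$ up to conjugation. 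If $N$ is non-elementary, so is $N'$; the action of $N$ on the cylinder $Y_C\subset T$ is a splitting over $C$ whose vertex groups are the $m$-factors adjacent to $Y_C$, and these are mapped isomorphically to their $T'_c$-counterparts, so Lemma \ref{normalisateur} yields legal large $m$-extensions $\widehat{N}\supset N$ and $\widehat{N}'\supset N'$ with mutually inverse isomorphisms between them. All extensions are performed over the single finite subgroup $C$ of order $m$.

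Replacing each $V_1$-vertex group $N$ by $\widehat{N}$ in the graph of groups $T_c/G$ yields a group $\widehat{G}$. By Corollary \ref{légalité4}, $\widehat{G}$ is a multiple legal $m$-extension of $G$; the hypothesis $N_{G_v}(C)=N_G(C)$ required in the small-extension case holds by Remark \ref{rem29juin}, since in $T_c$ the vertex labeled $Y_C$ is the unique fixed point of $C$. Construct $\widehat{G}'$ symmetrically. Lemma \ref{s'étend} then glues the local isomorphisms $\widehat{N}\to\widehat{N}'$ into a homomorphism $\widehat{\varphi}:\widehat{G}\to\widehat{G}'$; the compatibility condition on each edge group of $T_c/G$ is precisely what the power-and-conjugation adjustment of the first paragraph guarantees. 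Setting $\psi:=\widehat{\varphi}|_G$ and $\psi':=\widehat{\varphi}'|_{G'}$ gives maps equivalent to $\varphi\circ(\varphi'\circ\varphi)^n$ and $\varphi'\circ(\varphi\circ\varphi')^{n'}$. To conclude, apply Lemma \ref{perin} to the endomorphism $\widehat{\varphi}'\circ\widehat{\varphi}$ of $\widehat{G}$, viewed with the tree of cylinders of the $m$-splitting of $\widehat{G}$ inherited from $T$: by construction it maps every $V_1$-vertex group $\widehat{N}$ isomorphically to a conjugate (composition of the two local isomorphisms) and every $V_0$-vertex group isomorphically to a conjugate (by $\mathcal{P}_m$ applied to the original pair), and it sends every finite subgroup of $\widehat{G}$ to a conjugate of itself since every finite subgroup is conjugate into an $m$-factor. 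Hence $\widehat{\varphi}'\circ\widehat{\varphi}$ is an automorphism of $\widehat{G}$, symmetrically $\widehat{\varphi}\circ\widehat{\varphi}'$ is an automorphism of $\widehat{G}'$, and both $\widehat{\varphi}$ and $\widehat{\varphi}'$ are isomorphisms. The hardest step is the coherent gluing: the local isomorphisms produced cylinder by cylinder a priori involve unrelated conjugating elements, and the role of the power together with the $\sim$-equivalence is precisely to absorb these choices into a single inner adjustment that is simultaneously compatible at all the $V_0(T_c)$-vertices shared by adjacent cylinders.
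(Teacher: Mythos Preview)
Your overall architecture matches the paper's (cylinder-by-cylinder analysis, then gluing, then Lemma \ref{perin}), but there is a genuine gap at the outset. You assert that ``the strongly $(\geq m)$-special hypothesis also gives a bijection between conjugacy classes of edge groups'' and that this ``yields a combinatorial correspondence between the quotient graphs $T_c/G$ and $T'_c/G'$.'' This is false in general: the strongly $(\geq m)$-special conditions do give a bijection on conjugacy classes of \emph{finite subgroups} of order $\geq m$, but nothing prevents $\varphi(C)$, for an edge group $C$ of $T$, from landing strictly inside a vertex group of $T'$ without being conjugate to any edge group of $T'$ (the paper even gives an explicit example: $G$ freely indecomposable and $G'=G\ast F_n$). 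The paper's proof devotes an entire first step to this: when $\varphi(C)$ is not an edge group of $T'$, one shows that $N_{G'}(\varphi(C))$ is non-elementary with $E_{G'}(N_{G'}(\varphi(C)))=\varphi(C)$, and then performs a legal large $m$-extension of $G'$ over $\varphi(C)$ to \emph{create} the missing edge. Only after these extensions do the cylinders of $T$ and $T'$ correspond, and without this your subsequent cylinder-by-cylinder argument has no target $N'$ to work with.

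A secondary issue: in the virtually cyclic case you invoke Corollary \ref{f-by-f}, but that corollary requires one of the two groups to be finite-by-\emph{free}, which fails for virtually cyclic groups of dihedral type (these are finite-by-$D_\infty$, and $D_\infty$ is not free). The paper handles this case differently, distinguishing whether $N_i$ is elliptic in $T$ (then $\varphi$ already restricts to an isomorphism) or not (then one takes $\widehat{N}_i=\widehat{N}'_i=N'_i$ directly as a legal small extension). Also, in the non-elementary case, the vertex groups of the $N_i$-action on the cylinder $Y_i$ are the intersections $N_i\cap G_v=N_{G_v}(C_i)$, not the full $m$-factors $G_v$; verifying the hypotheses of Lemma \ref{normalisateur} for these requires the iterated power argument the paper carries out, which is where $(\psi,\psi')$ actually arises, rather than merely ``to make $\varphi'\circ\varphi$ act as identity on each edge group $C$.''
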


\begin{proof}Let $T$ and $T'$ be reduced $m$-JSJ splittings of $G$ and $G'$. First, let us observe that $\varphi$ maps every $m$-factor of $G$ isomorphically to a $m$-factor of $G'$ and that $\varphi'$ maps every $m$-factor of $G'$ isomorphically to a $m$-factor of $G$ (see Remark \ref{capermute}).

If $G$ or $G'$ is finite-by-free, the result is an immediate consequence of Corollary \ref{f-by-f}. From now on, we suppose that $G$ and $G'$ are not finite-by-free. We decompose the proof of the lemma into three steps.

%First, let us observe that $\varphi$ maps every $m$-factor $H$ of $G$ isomorphically to a $m$-factor of $G'$. Indeed, the group $H$ is $m$-rigid by definition, and $\varphi$ is injective on $H$ (since $\varphi'\circ \varphi$ is injective on $H$), so $\varphi(H)$ is $m$-rigid as well. It follows that $\varphi(H)$ is contained in a $m$-factor $H'$ of $G'$. Symmetrically, $\varphi'(H')$ is contained in a $m$-factor of $G$.

%Si $T$ et $S$ sont réduits à un point, il n'y a rien à faire. On supposera dorénavant que l'un des deux arbres au moins n'est pas réduit à un point. La preuve consiste à modifier les groupes $G$ et $G'$ en effectuant des opérations autorisées, puis à étendre les morphismes $\varphi$ et $\varphi'$ en conséquence, de sorte que les morphismes $\psi: \Gamma\rightarrow \Gamma'$ et $\psi' : \Gamma'\rightarrow \Gamma$ obtenus envoient tout groupe de sommet de type $V_0$ (resp.\ $V_1$) de l'arbre des cylindres de $T'$ ou $S'$ isomorphiquement sur un groupe de sommet de type $V_0$ (resp.\ $V_1$) de l'arbre des cylindres de $S'$ ou $T'$, où $T'$ et $S'$ sont les arbres que le lecteur imagine... Il suffit alors d'appliquer le lemme \ref{perin} au morphisme $\psi'\circ \phi'$.  

%The second point is easy: $\varphi(C)$ fixes a unique vertex $w$ of $T'$, so $N_{G'}(\varphi(C))$ is contained in $G'_{w}$. This concludes the proof of the observation. \hfill$\square$

\medskip

\textbf{Step 1.} We claim that there exists a $m$-expansion $(G_1,G'_1,\varphi_1,\varphi'_1)$ of $(G,G',\varphi,\varphi')$ such that $\varphi_1$ and $\varphi'_1$ maps edge groups of $m$-JSJ splittings $T_1$ and $T'_1$ of $G_1$ and $G'_1$ to edge groups of $T'_1$ and $T_1$ respectively.\footnote{Note that this condition is not automatically satisfied, as shown by the following example: take $G$ that does not split non-trivially as a free product, and $G'=G\ast F_n$.}

%$\varphi_1$ maps every $m$-factor of $G_1$ isomorphically to a $m$-factor of $G'_1$, and $\varphi'_1$ maps every $m$-factor of $G'_1$ isomorphically to a $m$-factor of $G_1$.

%\textbf{Preliminary observation.} Let $C$ be an edge group of $T$ and let $Y$ be the cylinder of $C$ in $T$. Recall that $G_Y=N_G(C)$. If $\varphi(C)$ fixes a unique vertex of $T'$, then all vertices of $Y$ are in the same orbit under the action of $G$.

%and the normalizer $N_{G'}(\varphi(C))$ fixes a vertex $w$ of $T'$. It follows that $N_{G'}(\varphi(C))$ is equal to $N_{G'_{w}}(\varphi(C))$, and that $\varphi(N_G(C))\subset N_{G'}(\varphi(C))$ is contained in $G'_{w}$.

%\emph{Proof of the preliminary observation.} Let $v_1$ and $v_2$ be two vertices of $Y$. Since $C$ is a subgroup of $G_{v_1}\cap G_{v_2}$, its image $\varphi(C)$ is a subgroup of $\varphi(G_{v_1})\cap \varphi(G_{v_2})$. Since $\varphi(G_{v_1})$ and $\varphi(G_{v_2})$ are two vertex groups $G'_{w_1}$ and $G'_{w_2}$ of the tree $T'$, which is reduced, there are two possibilities: either $\varphi(G_{v_1})=\varphi(G_{v_2})$, or $\varphi(C)$ fixes the segment $[w_1,w_2]$ in $T'$. But $\varphi(C)$ is not contained in an edge group by assumption, and $\varphi(C)$ has order $m$, so the second possibility does not occur. Thus, $\varphi(G_{v_1})=\varphi(G_{v_2})$. It follows that $v_1$ and $v_2$ are in the same orbit.

\medskip

\emph{Proof of Step 1.} If $\varphi(C)$ is an edge group of $T'$ for every edge group $C$ of $T$, and if $\varphi'(C')$ is an edge group of $T$ for every edge group $C'$ of $T'$, then one can take $G_1=G$, $\varphi_1=\varphi$, $G'_1=G'$, and $\varphi'_1=\varphi'$. Now, let us suppose that there is an edge $e$ of $T$ such that $\varphi(G_e)$ is not the stabilizer of an edge of $T'$. Let $C:=G_e$. We shall prove that the group $\widehat{G}'=\langle G',t \ \vert \ \mathrm{ad}(t)_{\vert \varphi(C)}=\mathrm{id}_{\varphi(C)}\rangle$ is a legal $m$-extension of $G'$. Note that $\varphi(C)$ is an edge group in any $m$-JSJ tree $\widehat{T}'$ of $\widehat{G}$. In addition, one easily sees that the homomorphism $\widehat{\varphi}' : \widehat{G}'\rightarrow G$ defined by $\widehat{\varphi}'(t)=1$ and $\widehat{\varphi}'(g')=\varphi(g')$ for every $g'\in G'$ satisfies the following three conditions:
\begin{itemize}
\item[$\bullet$]the pair $(\varphi,\widehat{\varphi}')$ is strongly $(\geq m)$-special;
\item[$\bullet$]${\widehat{\varphi}'}_{\vert G'}\sim \varphi'$;
\item[$\bullet$]$\widehat{\varphi}'$ maps every $m$-factor of $\widehat{G}'$ isomorphically to a $m$-factor of $G$.
\end{itemize}
Hence, one can define the group $G'_1$, and symmetrically the group $G_1$, by iterating the construction described above finitely many times, since $T$ and $T'$ have only finitely many orbits of edges.

%it provides insight = offre un aperçu, inner thoughts, final entry of the diary, to read out load, an electrician by training

It remains to prove that the group $\widehat{G}'=\langle G',t \ \vert \ \mathrm{ad}(t)_{\vert \varphi(C)}=\mathrm{id}_{\varphi(C)}\rangle$ is a legal $m$-extension of $G'$, under the hypothesis that $\varphi(C)$ is not contained in the stabilizer of an edge of $T'$.

Since the group $\varphi(C)$ is not contained in the stabilizer of an edge of $T'$, it fixes a unique vertex $v'$ of $T'$. There exists an edge $e=[v,w]$ of $T$ such that $G_e=C$, $\varphi(G_v)\subset G'_{v'}$ and $\varphi(G_w)\subset G'_{v'}$. Moreover, recall that $\varphi$ maps $G_v$ and $G_w$ isomorphically to $m$-factors of $G'$. Therefore, the following equalities hold: $\varphi(G_v)=\varphi(G_w)=G'_{v'}$ and $\varphi'\circ \varphi(G_v)=\varphi'\circ \varphi(G_w)$. Since $\varphi'\circ \varphi$ maps every $m$-factor of $G$ to a conjugate of itself, there exists an element $g\in G$ such that $G_w=gG_vg^{-1}$. Thus we have $\varphi(G_v)=\varphi(gG_vg^{-1})=\varphi(g)\varphi(G_v)\varphi(g)^{-1}$. This shows that $\varphi(g)$ belongs to $N_{G'}(\varphi(G_v))$. Since $G$ is not finite-by-free, Lemma \ref{JE SAIS PAS!!} asserts that $G_v$ has order $>m$. This implies that $N_{G'}(\varphi(G_v))=\varphi(G_v)$. Hence $\varphi(g)$ belongs to $\varphi(G_v)$. There is an element $h\in G_v$ such that $\varphi(g)=\varphi(h)$. Let $k=gh^{-1}$, so that $\varphi(k)=1$ and $w=gv=kv$. Note that $k\neq 1$, since $w=kv\neq v$. Now, note that $\varphi(C)=\varphi(kCk^{-1})$. Since $C$ and $kCk^{-1}$ are contained in $G_w$, and since $\varphi$ is injective on $G_w$, this proves that $C=kCk^{-1}$, i.e.\ that $k$ belongs to the normalizer $N_{G}(C)$ of $C$ in $G$. In particular, $N_G(C)$ must be non-elementary, otherwise $\varphi$ would be injective on $N_G(C)$, contradicting the fact that $\varphi(k)=1$. In order to prove that the group $\widehat{G}'$ defined above is a legal large $m$-extension of $G'$, we need to prove that $\varphi(C)=E_{G'}(N_{G'}(\varphi(C)))$. First, let us prove that $C=E_G(N_G(C))$. Note that the inclusion $C\subset E_G(N_G(C))$ always holds, because $E_G(N_G(C))$ is the unique maximal finite subgroup of $C$ normalized by $N_G(C)$. Assume towards a contradicton that the inclusion $C\subset E_G(N_G(C))$ is strict. Then $E_G(N_G(C))$ has order $>m$, so it fixes a unique vertex $x\in T$, because edge groups of $T$ have order equal to $m$. This implies that $x$ is fixed by $N_G(C)$. But $N_G(C)$ is not elliptic in $T$ since the element $k\in N_G(C)$ acts hyperbolically on $T$. This is a contradiction. We have proved that $E_G(N_G(C))=C$. It follows that $E_{G'}(N_{G'}(\varphi(C)))=\varphi(C)$, because $\varphi$ is strongly $(\geq m)$-special and $C$ has order $m$. Hence, the group $G'_1=\langle G', t \ \vert \ \mathrm{ad}(t)_{\vert \varphi(C)}=\mathrm{id}_{\varphi(C)}\rangle$ is a legal large $m$-extension of $G$, which concludes the proof of the first step.

Now, up to replacing $(G, G',\varphi, \varphi')$ with $(G_1,G'_1,\varphi_1,\varphi'_1)$, one can suppose that $(G, G',\varphi, \varphi')$ has the property of Step 1.

\medskip

\textbf{Step 2.} We claim that there exists a power $(\rho,\rho')$ of $(\varphi,\varphi')$ and a $m$-expansion $(G_2,G'_2,\varphi_2,\varphi'_2)$ of $(G, G',\rho, \rho')$ such that the following two conditions hold: 
\begin{itemize}
\item[$\bullet$]for every edge group $C$ of $T_2$, the group $\varphi_2(C)$ is an edge group of $T'_2$ and $\varphi_2$ maps $N_{G_2}(C)$ isomorphically to $N_{G'_2}(\varphi_2(C))$;
\item[$\bullet$]for every edge group $C'$ of $T'_2$, the group $\varphi'_2$ is an edge group of $T_2$ and $\varphi'_2$ maps $N_{G'_2}(C')$ isomorphically to $N_{G_2}(\varphi'_2(C'))$.
\end{itemize}

\medskip

\emph{Proof of Step 2.} Let $T$ be a $m$-JSJ splitting of $G$, and let $T'$ be a $m$-JSJ splitting of $G'$. Let $C_1,\ldots ,C_q$ be a set of representatives of the conjugacy classes of edge groups of $T$. Let $C'_1,\ldots ,C'_{q'}$ be a set of representatives of the conjugacy classes of edge groups of $T'$. Thanks to the first step, we know that $q=q'$ and, up to renumbering the edges of $T$, one can assume that $\varphi(C_i)=g'_iC'_i{g'_i}^{-1}$ for a certain element $g'_i\in G'_1$ and that $\varphi'(C'_i)=g_iC_ig_i^{-1}$ for a certain element $g_i\in G$. For every $1\leq i\leq q$, let $N_i=N_{G}(C_i)$ and $N'_i=N_{G'}(C'_i)$. Let $T_{c}$ and $T'_{c}$ be the trees of cylinders of $T$ and $T'$. Recall that $N_1,\ldots, N_q$ are the new vertex groups of $T_{c}$ and that $N'_1,\ldots,N'_q$ are the new vertex groups of $T'_{c}$.

Let $i\in\llbracket 1,q\rrbracket$. Let $\varphi_i=\mathrm{ad}({g'_i}^{-1})\circ \varphi_{\vert N_i}$ and $\varphi'_i=\mathrm{ad}({g_i}^{-1})\circ \varphi'_{\vert N'_i}$. Let $Y_i$ be the cylinder of $C_i$ in $T$. Recall that $Y_i$ is connected (see Section \ref{tree}). Moreover, note that $N_i$ acts cocompactly on $Y_i$, because two edges of $Y_i$ are in the same $G$-orbit if and only if they are in the same $N_i$-orbit. As a consequence, the action of $N_i$ on $Y_i$ gives a decomposition of $N_i$ as a graph of groups, all of whose edges groups are equal to $C_i$. Let $Y'_i$ be the cylinder of $C'_i$ in $T'_1$. 

We claim that, for every $1\leq i\leq q$, there exists a power $(\rho_i,\rho'_i)$ of $(\varphi_i,\varphi'_i)$ and a $m$-expansion $(\widehat{N}_i,\widehat{N}'_i,\widehat{\varphi}_i,\widehat{\varphi}'_i)$ of $(N_i,N'_i,\rho_i,\rho'_i)$ such that $\widehat{\varphi}_i : \widehat{N}_i \rightarrow \widehat{N}'_i$ and $\widehat{\varphi}'_i : \widehat{N}'_i \rightarrow \widehat{N}_i$ are bijective and coincide with $\rho_i$ and $\rho'_i$, up to conjugacy, on the edge groups adjacent to the vertices fixed by $N_i$ and $N'_i$ in $T_c$ and $T'_c$ (this condition on the edge groups will be necessary in order to apply Lemma \ref{s'étend}).

The homomorphisms $\varphi$ and $\varphi'$ being strongly $(\geq m)$-special, and the edge groups of $T$ and $T'$ having order equal to $m$, the groups $N_i$ and $N'_i$ are simultaneously finite, virtually cyclic infinite, or non-elementary. We treat separately the three cases.

\smallskip

\emph{First case.} If $N_i$ and $N'_i$ are finite, ${\varphi}_{\vert N_i} : N_i \rightarrow N'_i$ and ${\varphi}_{\vert N'_i} : N'_i \rightarrow N_i$ are injective. Thus, they are bijective. There is nothing to be done: we simply take $\widehat{N}_i=N_i$, $\widehat{N}'_i=N'_i$, $\widehat{\varphi}_i={\varphi}_{\vert N_i}$ and $\widehat{\varphi}'_i={\varphi'}_{\vert N'_i}$.

\smallskip

\emph{Second case.} Suppose that $N_i$ and $N'_i$ are virtually cyclic infinite. Let us observe that $N_i$ is elliptic in $T$ if and only if $N'_i$ is elliptic in $T$. Indeed, if $N_i$ fixes a vertex $v\in T$, then $\varphi(N_i)$ fixes the vertex $v'$ fixed by $\varphi(G_v)$ in $T'$. Then, note that $\varphi(N_i)$ has finite index in $N'_i$, because $\varphi(N_i)$ is infinite and $N'_i$ is virtually cyclic. It follows that $N'_i$ fixes $v'$.

\emph{First subcase.} If $N_i$ and $N'_i$ are elliptic in $T$ and $T'$, then $\varphi$ and $\varphi'$ induce isomorphisms between $N_i$ and $N'_i$, because $\varphi$ and $\varphi'$ induce isomorphisms between vertex groups of $T$ and $T'$. There is nothing to be done: we simply take $\widehat{N}_i=N_i$, $\widehat{N}'_i=N'_i$, $\widehat{\varphi}_i={\varphi}_{\vert N_i}$ and $\widehat{\varphi}'_i={\varphi'}_{\vert N'_i}$.

\emph{Second subcase.} If $N_i$ and $N'_i$ are not elliptic in $T$ and $T'$, we take $\widehat{N}_i=\widehat{N}'_i=N'_i$, and we take for $\widehat{\varphi}_i$ and $\widehat{\varphi}'_i$ the identity of $N'_i$. The group $\widehat{N}_i$ is a legal small extension of $N_i$, with nice embeddings $\varphi_i : N_i \hookrightarrow \widehat{N}_i$ and $\varphi'_i : \widehat{N}_i \hookrightarrow N_i$, and the group $\widehat{N}'_i$ is a legal small extension of $N'_i$, with nice embeddings $\iota=\varphi_i\circ \varphi'_i : \widehat{N}'_i \hookrightarrow N'_i$ and $\iota'=\varphi_i\circ \varphi'_i : N'_i\hookrightarrow \widehat{N}'_i $.

\begin{center}
\begin{tikzcd}\widehat{N}_i=N'_i\arrow[r, "\widehat{\varphi}_i=\mathrm{id}"]\arrow[d,"\varphi'_i"]& \widehat{N}_i'=N'_i \arrow[d, "\iota" ] \\  N_i\arrow[r, "\varphi_i"]& N'_i\end{tikzcd} \hspace{1cm}\begin{tikzcd}\widehat{N}_i=N'_i& \widehat{N}_i'=N'_i \arrow[l, "\widehat{\varphi}'_i=\mathrm{id}"] \\  N_i\arrow[u, "\varphi_i"] & N'_i\arrow[u, "\iota'"]\arrow[l, "\varphi_i'"]\end{tikzcd}
\end{center}

Note that the edge groups adjacent to the vertices fixed by $N_i$ and $N'_i$ in $T_c$ and $T'_c$ are finite. Indeed, let $e$ be an edge of $T_c$ adjacent to the vertex fixed by $N_i$ in $T_c$. Observe that $G_e$ is elliptic in $T$, by definition of the tree of cylinders $T_c$. As a consequence, if $G_e\subset N_i$ were infinite, then $N_i$ would be ellitptic in $T$, contradicting our hypothesis. As a conclusion, $\widehat{\varphi}_i$ and $\widehat{\varphi}'_i$ coincide with $\varphi_i$ and $\varphi'_i$, up to conjugacy, on the edge groups adjacent to the vertices fixed by $N_i$ and $N'_i$ in $T_c$ and $T'_c$

\smallskip

\emph{Third case.} Last, suppose that $N_i$ and $N'_i$ are non-elementary. Recall that the inclusion $C_i\subset E_{G}(N_i)$ always holds because $E_{G}(N_i)$ is the unique maximal finite subgroup of $G$ normalized by $N_i$. Likewise, $C'_i$ is contained in $E_{G'}(N'_i)$. We distinguish two cases.

\emph{First subcase.} If $E_i:=E_{G}(N_i)$ contains $C_i$ strictly, it has order $> m$. Therefore, it fixes a unique vertex $v_i\in T$, and $N_i$ fixes $v_i$ as well. By definition of a strongly $(\geq m)$-special homomorphism, $\varphi(E_i)=E'_i:=E_{G'}(N_{G'}(\varphi(C_i)))$ and $\varphi(C_i)$ is conjugate to $C'_i$. Consequently, $N'_i$ fixes the unique vertex $v'_i$ of $T'$ fixed by $\varphi(E_i)$. Since $\varphi$ induces an isomorphism from $(G)_{v_i}$ to $(G')_{v'_i}$, it induces an isomorphism from $N_i$ to $N'_i$. There is nothing to be done: we simply take $\widehat{N}_i=N_i$, $\widehat{N}'_i=N'_i$, $\widehat{\varphi}_i={\varphi}_{\vert N_i}$ and $\widehat{\varphi}'_i={\varphi'}_{\vert N'_i}$.

\emph{Second subcase.} If $E_{G}(N_i)=C_i$, then $E_{G'}(N'_i)=C'_i$ since $\varphi$ is strongly $(\geq m)$-special. We will use Proposition \ref{normalisateur} in order to establish the existence of a power $(\rho_i,\rho'_i)$ of $(\varphi_i,\varphi'_i)$ and of a $m$-expansion $(\widehat{N}_i,\widehat{N}'_i,\widehat{\varphi}_i,\widehat{\varphi}'_i)$ of $(N_i,N'_i,\rho_i,\rho'_i)$ satisfying the properties announced above. Before using Proposition \ref{normalisateur}, we will prove that the cylinders $Y_i$ and $Y'_i$ of $C_i$ and $C'_i$, endowed with actions of $N_i$ and $N'_i$ respectively, satisfy the following conditions:
\begin{enumerate}
\item $Y_i$ and $Y'_i$ are $m$-splittings of $N_i$ and $N'_i$ with the same number of orbit of vertices;
\item $\varphi_i$ and $\varphi'_i$ induce bijections between the conjugacy classes of vertices of $Y_i$ and $Y'_i$, and induce isomorphisms between the vertex groups. 
\end{enumerate}

First, note that each vertex group of the $N_i$-tree $Y_i$ is of the form $N_i\cap G_v$ for some $v\in Y_i$. By hypothesis, $\varphi'\circ \varphi(G_v)=gG_vg^{-1}$ for some $g\in G$. Let $\psi'=\mathrm{ad}(g^{-1})\circ \varphi'$ and $\psi=\varphi$, so that $\psi'\circ \psi(G_v)=G_v$. Since $\psi'\circ \psi$ maps non-conjugate finite subgroups to non-conjugate finite subgroups, and since $G_v$ has only finitely many conjugacy classes of finite subgroups, there exists an integer $n\geq 1$ such that $(\psi'\circ \psi)^n(C_i)=g_vC_ig_v^{-1}$ with $g_v\in G_v$. Let $\rho'=\mathrm{ad}(g_v)\circ (\psi'\circ \psi)^{n-1}\circ \psi'$ and $\rho=\psi$, so that $\rho'\circ\rho(G_v)=G_v$ and $\rho'\circ \rho(C_i)=C_i$. As a consequence, $\rho'\circ \rho(N_i)\subset N_i$. 

Let $v_1,\ldots,v_r$ be some representatives of the $G$-orbits of vertices of $Y_i$. We can define iteratively two homomorphisms $\rho : G \rightarrow G'$ and $\rho' : G' \rightarrow G$ such that, for every $1\leq j\leq r$, there exists an element $g_j\in G$ such that $\mathrm{ad}(g_j)\circ\rho'\circ \rho (G_{v_j})=G_{v_j}$ and $\mathrm{ad}(g_j)\circ\rho'\circ \rho(C_i)=C_i$. Up to replacing $\rho'$ by $\mathrm{ad}(g_j^{-1})\circ \rho'$, we can suppose without loss of generality that $g_1=1$. Hence $\rho'\circ \rho(C_i)=C_i$ and $\mathrm{ad}(g_j)\circ\rho'\circ \rho(C_i)=C_i$ for every $j\geq 2$. This shows that $g_j$ normalizes $C_i$, i.e.\ that $g_j$ belongs to $N_i$.

We have proved that the homomorphism $\rho'\circ \rho$ maps every vertex group $N_i\cap G_v$ of $N_i$ isomorphically to a $N_i$-conjugate of itself. Moreover, one can suppose that $\rho\circ \rho'$ maps every vertex group of $N'_i$ isomorphically to a $N'_i$-conjugate of itself (we repeat the same operation described above with $N'_i$ instead of $N_i$, and this does not affect the property satisfied by $\rho'\circ \rho$). Note that the pair $(\rho,\rho')$ is a power of $(\varphi,\varphi')$.
 
Now, the existence of a $m$-expansion $(\widehat{N}_i,\widehat{N}'_i,\widehat{\varphi}_i,\widehat{\varphi}'_i)$ of $(N_i,N'_i,\rho_i,\rho'_i)$ such that $\widehat{\varphi}_i : \widehat{N}_i \rightarrow \widehat{N}'_i$ and $\widehat{\varphi}'_i : \widehat{N}'_i \rightarrow \widehat{N}_i$ are bijective and coincide with $\rho_i$ and $\rho'_i$, up to conjugacy, on the edge groups adjacent to the vertices fixed by $N_i$ and $N'_i$ in $T_c$ and $T'_c$ follows from Lemma \ref{normalisateur}. This concludes the proof of the second subcase.

\smallskip

The group $G_2$ obtained from $G$ by replacing each vertex group $N_i$ of $T_{c}$ by $\widehat{N}_i$ is a multiple legal $m$-extension of $G$. Similarly, the group $G'_2$ obtained from $G'$ by replacing each vertex group $N'_i$ of $T_{c}$ by $\widehat{N}'_i$ is a multiple legal $m$-extension of $G'$.

Last, since the morphisms $\widehat{\varphi}_i : \widehat{N}_i \rightarrow \widehat{N}'_i$ and $\widehat{\varphi}'_i : \widehat{N}'_i \rightarrow \widehat{N}_i$ coincide with $\varphi_i$ and $\varphi'_i$, up to conjugacy, on the vertex groups of $Y_i$ and $Y'_i$, and since every edge group of $T_c$ and $T'_c$ is contained in a vertex group of $Y_i$ or $Y'_i$, Lemma \ref{s'étend} guarantees the existence of the homomorphisms $\varphi_2 : G_2\rightarrow G'_2$ and $\varphi'_2: G'_2\rightarrow G_2$ announced above.

%ATTENTION !!! Le lemme ne dit pas que $\varphi_2$ et $\varphi'_2$ sont fortement $m$-spéciaux et forment une paire fortement $m$-spéciale. Mais je crois qu'il n'y en a pas besoin.

\medskip

\textbf{Step 3.} We now prove that the homomorphisms $\varphi_2 : G_2 \rightarrow G'_2$ and $\varphi'_2 : G'_2 \rightarrow G_2$ constructed previously are bijective. To that end, we use Lemma \ref{perin}. Let $T_2$ and $T'_2$ denote two recuced $m$-JSJ splittings of $G_2$ and $G'_2$, and let $T_{2,c}$ and $T'_{c2,c}$ be their trees of cylinders. Recall that $V(T_{2,c})=V_0(T_{2,c})\sqcup V_1(T_{2,c})$, where $V_0(T_{2,c})$ denotes the set of vertices of $T$ belonging to at least two distinct cylinders, and $V_1(T_{2,c})$ denotes the set of cylinders of $T$. Observe that the following facts hold.
\begin{itemize}
\item[$\bullet$]$\varphi_2$ maps every vertex group of $V_1(T_{2,c})$ isomorphically to a vertex group of $V_1(T'_{2,c})$, and $\varphi'_2$ maps every vertex group of $V_1(T'_{2,c})$ isomorphically to a vertex group of $V_1(T_{2,c})$.
\item[$\bullet$]$\varphi_2$ maps every vertex group of $V_0(T_{2,c})$ isomorphically to a vertex group of $V_0(T'_{2,c})$, and $\varphi'_2$ maps every vertex group of $V_0(T'_{2,c})$ isomorphically to a vertex group of $V_0(T_{2,c})$.
\end{itemize}
The first point is a consequence of Step 2. The second point follows from the fact that $v\in T$ belongs to two cylinders if and only if there exists two distinct edge groups $C_1\subset(G_2)_v$ and $C_2\subset(G_2)_v$. By Step 1, $\varphi_2(C_1)$ and $\varphi_2(C_2)$ are edge groups of $T'$, and they are distinct because $\varphi_1$ is injective on vertex groups of $T$. Consequently, $\varphi_2((G_2)_v)$ is a vertex group of $T'$. Last, Lemma \ref{perin} ensures that $\varphi'_2\circ \varphi_2$ is an automorphism of $G_2$. Thus, $\varphi_2$ and $\varphi'_2$ are two isomorphisms. This concludes the proof.\end{proof}

\subsection{Proof of $(4)\Rightarrow (5)$}

\subsubsection{A particular case}

We first prove the implication $(4)\Rightarrow (5)$ of Theorem \ref{principal} in a particular case.

\begin{prop}\label{2implique3facile}Let $G$ and $G'$ be two virtually free groups. Let $T$ and $T'$ be two reduced JSJ-splittings of $G$ and $G'$ over finite groups. Suppose that all edges groups of $T$ and $T'$ have the same order, say $m$. If there exist two homomorphisms $\varphi : G \rightarrow G'$ and $\varphi' : G' \rightarrow G$ such that the pair $(\varphi, \varphi')$ is strongly special, then there exist two multiple legal extensions $\Gamma$ and $\Gamma'$ of $G$ and $G'$ such that $\Gamma\simeq \Gamma'$.\end{prop}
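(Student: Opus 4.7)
The plan is to reduce Proposition \ref{2implique3facile} to Proposition \ref{ledernierlemme}, which takes a tuple satisfying property $\mathcal{P}_m$ and produces isomorphic legal extensions. The only real observation needed here is that, in the present situation, the $m$-factors are finite.

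First I would observe that since $T$ and $T'$ are reduced JSJ splittings over finite groups, they are Stallings splittings, so all their vertex groups are finite. By hypothesis every edge group has order $m$, hence $m(G) = m(G') = m$, and in particular $m_{G,G'} = m$. Finite groups are trivially $m$-rigid, so $T$ and $T'$ are in fact reduced $m$-JSJ splittings, and the $m$-factors of $G$ (resp.\ $G'$) are precisely the finite vertex groups of $T$ (resp.\ $T'$).

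Next I would verify that $(G, G', \varphi, \varphi')$ has property $\mathcal{P}_m$ (Definition \ref{Pm}). Condition (1) is immediate: any strongly special homomorphism is strongly $(\geq m)$-special (for every $m$), so $(\varphi, \varphi')$ is a strongly $(\geq m)$-special pair in the sense of Definition \ref{strongmpair}. For condition (2), let $H$ be an $m$-factor of $G$; since $H$ is finite, the relation $\varphi' \circ \varphi \sim \mathrm{id}_G$ built into Definition \ref{defstrongpair} supplies an element $g \in G$ with $\varphi' \circ \varphi_{\vert H} = \mathrm{ad}(g)_{\vert H}$, and consequently $\varphi' \circ \varphi$ maps $H$ isomorphically onto the conjugate $gHg^{-1}$, itself an $m$-factor. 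The symmetric statement for $\varphi \circ \varphi'$ follows identically.

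Once $\mathcal{P}_m$ has been established, Proposition \ref{ledernierlemme} furnishes a pair $(\psi, \psi')$ equivalent to a power of $(\varphi, \varphi')$, together with an $m$-expansion $(\widehat{G}, \widehat{G}', \widehat{\varphi}, \widehat{\varphi}')$ of $(G, G', \psi, \psi')$ for which $\widehat{\varphi}$ and $\widehat{\varphi}'$ are isomorphisms. Setting $\Gamma := \widehat{G}$ and $\Gamma' := \widehat{G}'$ concludes the proof: by definition of an expansion they are multiple legal extensions of $G$ and $G'$ respectively, and $\Gamma \simeq \Gamma'$ via $\widehat{\varphi}$. The substantive work is entirely packaged inside Proposition \ref{ledernierlemme}; the only genuine content specific to this particular case is the remark that the $m$-factors are finite, which collapses the second condition of $\mathcal{P}_m$ to the already-assumed relation $\varphi' \circ \varphi \sim \mathrm{id}_G$ on finite subgroups. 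The real difficulty is deferred to the general case (Proposition \ref{2implique3}), where different orders of edge groups coexist in the reduced Stallings splittings and the $m$-factors may be infinite.
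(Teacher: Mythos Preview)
Your proposal is correct and follows exactly the paper's approach: the paper's proof is a single sentence stating that this is an immediate consequence of Proposition \ref{ledernierlemme}, and you have simply made explicit the verification of property $\mathcal{P}_m$ that the paper leaves to the reader. Your observation that the $m$-factors are finite (hence condition (2) of $\mathcal{P}_m$ collapses to the relation $\varphi'\circ\varphi\sim\mathrm{id}_G$ already built into a strongly special pair) is precisely the point that makes this case ``immediate''.
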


\begin{proof}
This particular case is an immediate consequence of Proposition \ref{ledernierlemme}.
\end{proof}

\subsubsection{The general case}

We shall now prove the implication $(4)\Rightarrow (5)$ of Theorem \ref{principal} in the general case, that is the following result.

\begin{prop}\label{2implique3}Let $G$ and $G'$ be two virtually free groups. Suppose that there exists a strongly special pair of homomorphisms $(\varphi : G \rightarrow G', \varphi' : G' \rightarrow G)$. Then there exist two multiple legal extensions $\Gamma$ and $\Gamma'$ of $G$ and $G'$ respectively, such that $\Gamma\simeq \Gamma'$.\end{prop}

\begin{proof}We define the complexity of the pair $(G,G')$, denoted by $c(G,G')$, as the sum of the number of edges in reduced Stallings splittings of $G$ and $G'$. We will prove Proposition \ref{2implique3} by induction on the complexity $c(G,G')$. In fact, we will prove a slightly stronger result (see the induction hypothesis below). If $G$ is infinite, recall that we denote by $m(G)$ the smallest order of an edge group in a reduced Stallings splitting of $G$. If $G$ is finite, we set $m(G)=\vert G\vert$. We denote by $m_{G,G'}$ the integer $\min(m(G),m(G'))$.

\medskip

\emph{Induction hypothesis} $H(n)$. For every pair of virtually free groups $(G,G')$ such that $c(G,G')\leq n$, if there exist two homomorphisms $\varphi : G \rightarrow G'$ and $\varphi' : G' \rightarrow G$ such that the pair $(\varphi,\varphi')$ is strongly $(\geq m_{G,G'})$-special, then there exists a power $(\phi,\phi')$ of $(\varphi,\varphi')$ and a $(\geq m_{G,G'})$-expansion $(\Gamma,\Gamma',\psi,\psi')$ of $(G,G',\phi,\phi')$ such that $\psi : \Gamma \rightarrow \Gamma'$ and $\psi' : \Gamma' \rightarrow \Gamma$ are two isomorphisms.

%multiple $(\geq m_{G,G'})$-legal extensions $\Gamma$ and $\Gamma'$ of $G$ and $G'$ respectively, together with two isomorphisms $\psi : \Gamma \rightarrow \Gamma'$ and $\psi' : \Gamma' \rightarrow \Gamma$ such that $\psi_{\vert G}\sim \varphi$ and $\psi'_{\vert G'}\sim \varphi'$ (meaning that these homomorphisms coincide on finite subgroups up to conjugacy).

%\color{blue}ATTENTION : il faut peut-être demander que la paire $(\psi,\psi')$ soit strongly $(m_{G,G'}-1)$-special.\color{black}

\medskip

The base case $H(0)$ is obvious: if reduced Stallings splittings of $G$ and $G'$ have $0$ edge, then $G$ and $G'$ are finite, and one can take $\Gamma=G$ and $\Gamma'=G'$. 

We now prove the induction step. Let $n\geq 0$ be an integer. Suppose that $H(n)$ holds, and let us prove that $H(n+1)$ holds. To that end, let us consider two virtually free groups $G$ and $G'$ such that $c(G,G')= n+1$, and suppose that there exists a strongly $(\geq m_{G,G'})$-special pair of homomorphisms $(\varphi : G \rightarrow G', \varphi' : G' \rightarrow G)$. 

Let us fix two reduced $m_{G,G'}$-JSJ decompositions of $G$ and $G'$, and let $T$ and $T'$ denote their respective Bass-Serre trees. Note that $T$ or $T'$ can be trivial, but not both at the same time. Let $G_1\ldots ,G_p$ and $G'_1,\ldots,G'_{p'}$ denote the $m_{G,G'}$-factors of $G$ and $G'$, well-defined up to conjugacy. In other words, $G_1\ldots ,G_p$ and $G'_1,\ldots,G'_{p'}$ are some representatives of the conjugacy classes of vertex groups of $T$ and $T'$ respectively. These groups are $m_{G,G'}$-rigid by definition. 

%We will prove that there exist two multiple $(> m_{G,G'})$-legal extensions $\widehat{G}$ and $\widehat{G}'$ of $G$ and $G'$ respectively, together with two homomorphisms $\widehat{\varphi} : \widehat{G} \rightarrow \widehat{G}'$ and $\widehat{\varphi}' : \widehat{G}' \rightarrow \widehat{G}$ with the following properties:
%\begin{itemize}
%\item[$\bullet$]the pair $(\widehat{\varphi},\widehat{\varphi}')$ is strongly $(\geq m_{G,G'})$-special;
%\item[$\bullet$]$\widehat{\varphi}_{\vert G}\sim \varphi$ and $\widehat{\varphi}'_{\vert G'}\sim \varphi'$;
%\item[$\bullet$]$\widehat{\varphi}'\circ \widehat{\varphi}$ maps every $m_{G,G'}$-factor of $\widehat{G}$ isomorphically to a conjugate of itself;
%\item[$\bullet$]$\widehat{\varphi}\circ \widehat{\varphi}'$ maps every $m_{G,G'}$-factor of $\widehat{G}'$ isomorphically to a conjugate of itself.
%\end{itemize}

We now prove that there exists a $(\geq m_{G,G'})$-expansion $(\widehat{G},\widehat{G}',\widehat{\varphi},\widehat{\varphi}')$ of $(G,G',\varphi,\varphi')$ with property $\mathcal{P}_{m_{G,G'}}$, which means that the pair $(\widehat{\varphi},\widehat{\varphi}')$ is strongly $(\geq m)$-special, and that $\widehat{\varphi}'\circ \widehat{\varphi}$ and $\widehat{\varphi}\circ \widehat{\varphi}'$ map every $m_{G,G'}$-factor of $G$ and $G'$ isomorphically to a conjugate of itself. 

%, where $\widehat{T}$ and $\widehat{T}'$ are two reduced $m_{G,G'}$-reduced JSJ trees of $\widehat{G}$ and $\widehat{G}'$ respectively.

Note that $m(G)=m(\widehat{G})$, because $\widehat{G}$ is a $(\geq m_{G,G'})$-legal extension of $G$, with $m_{G,G'}\geq m(G)$. Likewise, $m(G')=m(\widehat{G}')$. Therefore, $m_{G,G'}=m_{\widehat{G},\widehat{G}'}$. 

%applied to the groups $\widehat{G}$ and $\widehat{G}'$ with the strongly $(\geq m_{\widehat{G},\widehat{G}'})$-special pair $(\widehat{\varphi},\widehat{\varphi}')$.

If $G$ or $G'$ is finite-by-free, the result is a consequence of Proposition \ref{f-by-f}. From now on, we suppose that $G$ and $G'$ are not finite-by-free. In particular, according to Lemma \ref{JE SAIS PAS!!}, for every vertex $v$ of $T$ or $T'$ and for every edge $e$ incident to $v$, the edge group $G_e$ is strictly contained in $G_v$.

\textbf{Claim:} the integers $p$ and $p'$ are equal. Moreover, $\varphi(G_i)$ is contained in $g'_i{G'_i}{g'_i}^{-1}$ for some $g'_i\in G'$, and $\varphi'(G'_i)$ is contained in $g_i G_ig_i^{-1}$ for some $g_i\in G$, up to renumbering the subgroups $G_i$.

Let us prove this claim. First, we prove that $\varphi(G_i)$ fixes a unique vertex of $T'$, for every $1\leq i\leq p$. Note that there exists a vertex $v_i\in T$ such that $G_i=G_{v_i}$. Let $T_i$ be a reduced Stallings splitting of $G_i$. 

As a first step, we will prove that each vertex group $(G_i)_w$ of $T_i$ has order $>m_{G,G'}$. If $T_i$ is not reduced to a point, each vertex group $(G_i)_w$ of $T_i$ contains an edge group, and edge groups of $T_i$ have order $>m_{G,G'}$ since $G_i$ is $m_{G,G'}$-rigid. If $T_i$ is reduced to a point, the group $G_i=(G_w)_i$ is finite. By Lemma \ref{JE SAIS PAS!!}, we have $\vert G_i\vert > m_{G,G'}$, because $G$ is not finite-by-free by assumption.

In the previous paragraph, we have proved that each vertex group $(G_i)_w$ of $T_i$ has order $>m_{G,G'}$. Since $\varphi$ is injective on finite subgroups of $G$, the finite group $\varphi((G_i)_w)$ has order $>m_{G,G'}$ as well. But edge groups of $T'$ have order exactly $m_{G,G'}$, so $\varphi((G_i)_w)$ fixes a unique vertex $v'$ of $T'$. We shall prove that $\varphi(G_i)$ fixes this vertex $v'$. Let us consider a vertex $w_2$ adjacent to $w$ in $T_i$. The same argument shows that $\varphi((G_i)_{w_2})$ fixes a unique vertex $v'_2$ of $T'$. But $G_i$ is $m_{G,G'}$-rigid, as a $m_{G,G'}$-factor of $G$, so $(G_i)_{w}\cap (G_i)_{w_2}$ has order $>m_{G,G'}$. As a consequence, $v'_2=v'$. It follows from the connectedness of $T_i$ that, for every vertex $x$ of $T_i$, the group $\varphi((G_i)_x)$ fixes $v'$ and only $v'$. Now, let $g$ be an element of $G_i$. For any vertex $x$ of $T_i$, $\varphi((G_i)_x)$ and $\varphi((G_i)_{gx})$ fixes $v'$ and only $v'$, so $g$ fixes $v'$. Hence, $\varphi(G_i)$ fixes $v'$ and only $v'$.

%Hence, $\varphi((G_i)_x)$ is contained in $G'_{v'}$, as well as in $\varphi(g)^{-1}G'_{v'}\varphi(g)=G'_{\varphi(g)^{-1}v'}$. Since $\varphi((G_i)_x)$ has order $>m_{G,G'}$, and since edge groups of $T'$ have order $m_{G,G'}$, we have $\varphi(g)v'=v'$. Therefore, $\varphi(g)$ belongs to $G'_{v'}$, for every element $g\in G_i$, which proves that
 
Symmetrically, $\varphi'(G_{v'})$ fixes a unique vertex $v$ of $T$. Since $\varphi'\circ \varphi$ is a conjugacy on finite subgroups, $v$ is a translate of $v_i$ (because $G_i$ has a finite subgroup of order $>m_{G,G'}$, see above), i.e.\ $\varphi'\circ \varphi(G_i)$ is contained in a conjugate of $G_i$. Hence, $\varphi$ and $\varphi'$ induce two inverses bijections of the conjugacy classes of $m_{G,G'}$-factors of $G$ and $G'$. Now, up to renumbering the $m_{G,G'}$-factors, one can assume that $\varphi(G_i)$ is contained in a conjugate $g'_i{G'_i}{g'_i}^{-1}$ of $G'_i$, with $g'_i\in G$, and that $\varphi'(G'_i)$ is contained in a conjugate $g_iG_ig_i^{-1}$ of $G_i$, with $g_i\in G$. This concludes the proof of the claim.

We aim to apply the induction hypothesis $H(n)$ to the pair $(G_i,G'_i)$, for every $1\leq i\leq p$. First, let us observe that the complexity $c(G_i,G'_i)$ is less than $n$. Indeed, at least one of the trees $T$ or $T'$ is not reduced to a point, say $T$, and one gets a Stallings splitting of $G$ by replacing the vertex of $T$ fixed by $G_i$ by the splitting $T_i$ of $G_i$; moreover, the resulting Stallings tree is reduced because $T_i$ is reduced and the vertex groups of $T_i$ have order $>m_{G,G'}$, whereas edge groups of $T$ are of order $m_{G,G'}$, by Lemma \ref{JE SAIS PAS!!}.

In order to apply the induction hypothesis $H(n)$ to $(G_i,G'_i)$, we need a strongly $(\geq m_{G_i,G_i'})$-special pair of homomorphisms $(\varphi_i : G_i\rightarrow G'_i,\varphi'_i: G'_i\rightarrow G_i)$. We define $\varphi_i=\mathrm{ad}({g'_i}^{-1})\circ \varphi_{\vert G_i} : G_i\rightarrow G'_i$ and $\varphi'_i=\mathrm{ad}(g_i^{-1})\circ{\varphi'}_{\vert G'_i}:G'_i\rightarrow G_i$. By Remark \ref{rem29juin}, the pair $(\varphi_i,\varphi'_i)$ is strongly $(>m_{G,G'})$-special. But $m_{G_i,G'_i}=\min(m(G_i),m(G'_i))> m_{G,G'}$. Thus, $(\varphi_i,\varphi'_i)$ is strongly $(\geq m_{G_i,G_i'})$-special. 

Now, for every $1\leq i\leq p$, the induction hypothesis $H(n)$ applied to the vertex groups $G_i$ and $G'_i$ together with the pair of homomorphisms $(\varphi_i,\varphi'_i)$ endows us a $(> m_{G,G'})$-expansion $(\widehat{G}_i,\widehat{G}'_i,\widehat{\varphi}_i,\widehat{\varphi}'_i)$ of $(G_i,G'_i,\varphi_i,\varphi'_i)$ such that $\widehat{\varphi}_i : \widehat{G}_i \rightarrow \widehat{G}'_i$ and $\widehat{\varphi}'_i : \widehat{G}'_i \rightarrow \widehat{G}_i$ are isomorphisms.

One defines $\widehat{G}$ from $G$ by replacing every vertex group $G_i$ by $\widehat{G}_i\supset G_i$ in the $m_{G,G'}$-JSJ splitting $T$ of $G$, and one defines $\widehat{G}'$ symmetrically. Thanks to Corollary \ref{légalité4}, the groups $\widehat{G}$ and $\widehat{G}'$ are multiple legal $(> m_{G,G'})$-extensions of $G$ and $G'$. In particular, they are multiple legal $(\geq m_{G,G'})$-extensions of $G$ and $G'$. 

We define below a strongly $(\geq m_{G,G'})$-special homomorphism $\widehat{\varphi} : \widehat{G}\rightarrow\widehat{G}'$ that coincides up to conjugacy with $\widehat{\varphi}_i$ on each subgroup $G_i$ (in particular, $\widehat{\varphi}_{\vert G}\sim \varphi$). Thanks to Lemma \ref{lemmetrois}, in order to prove that this morphism $\widehat{\varphi} $ is strongly $(\geq m_{G,G'})$-special, it suffices to prove that the fourth condition of Definition \ref{besoinreffff} holds, namely: for every finite subgroup $A$ of $\widehat{G}$ of order $\geq m_{G,G'}$, if $N_{\widehat{G}}(A)$ is virtually $\mathbb{Z}$ maximal, then $N_{\widehat{G}'}(\widehat{\varphi}(A))$ is virtually $\mathbb{Z}$ maximal as well, and the restriction of $\widehat{\varphi}$ to $N_{\widehat{G}}(A)$ is nice. 

%Note that Lemma \ref{s'étend} is not enough precise to our purpose, and that we have to construct $\widehat{\varphi}$ more carefully. 

\textbf{Construction of $\widehat{\varphi}$.} We proceed by induction on the number of edges of the $m$-JSJ decomposition $T/G$ of $G$. It is enough to construct $\widehat{\varphi}$ in the case where $T/G$ has only one edge.

\smallskip

\emph{First case.} Suppose that $G=G_1\ast_C G_2$. If $N_G(C)$ is virtually $\mathbb{Z}$, then there exists two finite subgroups $C_i\subset G_i$ such that $[C_i:C]=2$ and $N_G(C)=\langle C_1,C_2\rangle=C_1\ast_C C_2$. If $N_G(C)$ is not virtually $\mathbb{Z}$, let $C_1:=C$ and $C_2:=C$. Since $C_1$ and $C_2$ are finite, there exist two elements $g'_1,g'_2\in G'$ such that \[({\widehat{\varphi}_1})_{\vert C_1}=\mathrm{ad}(g'_1)\circ {\varphi}_{\vert C_1} \ \ \text{   and   }\ \  ({\widehat{\varphi}_2})_{\vert C_2}=\mathrm{ad}(g'_2)\circ {\varphi}_{\vert C_2}.\]

The homomorphisms $\mathrm{ad}({g'_1}^{-1})\circ \widehat{\varphi}_1$ and $\mathrm{ad}({g'_2}^{-1})\circ \widehat{\varphi}_2$ coincide on $C\subset C_1,C_2$. Hence, one can define $\widehat{\varphi}: \widehat{G}\rightarrow \widehat{G}'$ by \[{\widehat{\varphi}}_{\vert \widehat{G}_1}=\mathrm{ad}({g'_1}^{-1})\circ {\widehat{\varphi}_1} \ \ \text{   and   } \ \ {\widehat{\varphi}}_{\vert \widehat{G}_2}=\mathrm{ad}({g'_2}^{-1})\circ {\widehat{\varphi}_2}.\]

Note that $\widehat{\varphi}$ coincides with $\varphi$ on $C_1$ and on $C_2$. By Lemma \ref{lemmetrois}, in order to prove that $\widehat{\varphi}$ is strongly $(\geq m_{G,G'})$-special, we only need to prove that $\widehat{\varphi}$ satisfies the fourth condition of Definition \ref{besoinreffff}. Let $A$ be a finite subgroup of $\widehat{G}$ of order $\geq m$ such that $N_{\widehat{G}}(A)$ is virtually $\mathbb{Z}$ maximal. Since every finite subgroup of $\widehat{G}$ is conjugate to a finite subgroup of $G$, one can suppose without loss of generality that $A\subset G$. As a finite group, $A$ is elliptic in the $m$-JSJ tree $T$ of $G$, i.e.\ $A$ is contained in at least one conjugate of $\widehat{G}_1$ or $\widehat{G}_2$. There are two possibilities.

\smallskip

\emph{First possibility.} The group $A$ may be contained in only one conjugate of $\widehat{G}_1$ or $\widehat{G}_2$, which is the case for instance if $\vert A\vert > m$. Then $N_{\widehat{G}'}(\widehat{\varphi}(A))$ is virtually $\mathbb{Z}$ maximal and the restriction of $\widehat{\varphi}$ to $N_{\widehat{G}}(A)$ is nice, because $\widehat{\varphi}$ induces an isomorphism from $\widehat{G}_i$ to its image (for $i\in \lbrace 1,2\rbrace$). 

\smallskip

\emph{Second possibiliy.} The group $A$ may be contained in at least two distinct conjugates of $\widehat{G}_1$ or $\widehat{G}_2$. Then $A$ is contained in an edge group of the $m$-JSJ splitting $T$ of $G$. Since $A$ has order $m$, one can suppose without loss of generality that $A=C$. The normalizer $N_{\widehat{G}}(A)$ is finite-by-$D_{\infty}$, equal to $C_1\ast_C C_2$. By construction, $\widehat{\varphi}$ coincides with $\varphi$ on $C_1$ and $C_2$. Hence, $\widehat{\varphi}$ coincides with $\varphi$ on $N_{\widehat{G}}(A)=C_1\ast_C C_2$. Since $\varphi$ is strongly $(\geq m)$-special, $N_{G'}(\varphi(A))$ is virtually $\mathbb{Z}$ maximal, and the restriction of $\varphi$ to $N_G(A)$ is nice. Let us observe that $\varphi(A)$ is an edge group of the $m$-JSJ splitting $T'$ of $G'$. This implies that $N_{\widehat{G}'}(\varphi(A))=N_{G'}(\varphi(A))$. As a conclusion, $N_{\widehat{G}'}(\varphi(A))$ is virtually $\mathbb{Z}$ maximal and the restriction of $\widehat{\varphi}$ to $N_{\widehat{G}}(A)$ is nice.

\smallskip

\emph{Second case.} Suppose that \[G=G_1\ast_{C}=\langle G_1,t\ \vert \ tct^{-1}=\alpha(c), \ \forall c\in C\rangle.\]If $N_G(C)$ is virtually $\mathbb{Z}$ with finite center (i.e.\ finite-by-$D_{\infty}$), then $C$ and $tCt^{-1}$ are non-conjugate in $G_1$ and $C$ has index 2 in $C_1:=N_{G_1}(C)$ and $C_2:=N_{tG_1t^{-1}}(C)$. If $N_G(C)$ is not virtually $\mathbb{Z}$ with finite center, let $C_1:=C$ and $C_2:=tCt^{-1}$. Since $C_1$ and $C_2$ are finite, there exist two elements $g'_1,g'_2\in G'$ such that such that \[({\widehat{\varphi}_1})_{\vert C_1}=\mathrm{ad}(g'_1)\circ {\varphi}_{\vert C_1} \ \ \text{   and   } \ \ ({\widehat{\varphi}_1})_{\vert C_2}=\mathrm{ad}(g'_2)\circ {\varphi}_{\vert C_2}.\]One can define $\widehat{\varphi}: \widehat{G}\rightarrow \widehat{G}'$ by \[{\widehat{\varphi}}_{\vert \widehat{G}_1}={\widehat{\varphi}_1} \ \ \text{   and   } \ \ {\widehat{\varphi}}(t)=g'_2\varphi(t){g'_1}^{-1}.\]

We need to prove that $\widehat{\varphi}$ satisfies the fourth condition of Definition \ref{besoinreffff}. Let $A$ be a finite subgroup of $\widehat{G}$ of order $\geq m$ such that $N_{\widehat{G}}(A)$ is virtually $\mathbb{Z}$ maximal. One can suppose without loss of generality that $A$ is contained in $G$. As a finite group, $A$ is elliptic in the $m$-JSJ tree $T$ of $G$, i.e.\ $A$ is contained in at least one conjugate of $\widehat{G}_1$. 

\smallskip

\emph{First possibility.} The group $A$ may be contained in only one conjugate of $\widehat{G}_1$, which is the case for instance if $\vert A\vert > m$. Then $N_{\widehat{G}'}(\widehat{\varphi}(A))$ is virtually $\mathbb{Z}$ maximal, and the restriction of $\widehat{\varphi}$ to $N_{\widehat{G}}(A)$ is nice, because $\widehat{\varphi}$ induces an isomorphism from $\widehat{G}_1$ to its image. 

\smallskip

\emph{Second possibility.} The group $A$ may be contained in at least two distinct conjugates of $\widehat{G}_1$. Then $A$ is contained in an edge group of $T$. Since $A$ has order $m$, one can suppose without loss of generality that $A=C$. There are two subcases.

\smallskip

\emph{First subcase.} The groups $C$ and $tCt^{-1}=\alpha(C)$ are conjugate in $G_1$. Up to replacing $t$ with $gt$ for some $g\in G_1$, one can suppose without loss of generality that $tCt^{-1}=C$, i.e.\ $t\in N_G(C)$. Thus we have $C_1=C_2=C$ and one can suppose that $g'_1=g'_2=1$. Hence, $\widehat{\varphi}$ coincides with $\varphi$ on $C$ and on $t$. This implies that $\widehat{\varphi}$ coincides with $\varphi$ on $N_G(C)=\langle C,t\rangle$. Since $\varphi$ is strongly $(\geq m)$-special, $N_{G'}(\varphi(A))$ is virtually $\mathbb{Z}$ maximal, and the restriction of $\varphi$ to $N_G(A)$ is nice. Let us observe that $\varphi(A)$ is an edge group of the $m$-JSJ splitting $T'$ of $G'$ (see the first step of Proposition \ref{ledernierlemme}). This implies that $N_{\widehat{G}'}(\varphi(A))=N_{G'}(\varphi(A))$. As a conclusion, $N_{\widehat{G}'}(\varphi(A))$ is virtually $\mathbb{Z}$ maximal and the restriction of $\widehat{\varphi}$ to $N_{\widehat{G}}(A)$ is nice.

\smallskip

\emph{Second subcase.} The groups $C$ and $tCt^{-1}=\alpha(C)$ are not conjugate in $G_1$. Then $N_G(C)$ is $C$-by-$D_{\infty}$ and we conclude as in the first case.

\smallskip

Then, note that the morphism $\widehat{\varphi}$ constructed above is strongly $(\geq m_{G,G'})$-special, thanks to Lemma \ref{lemmetrois}. Likewise, there exists a strongly $(\geq m_{G,G'})$-special homomorphism $\widehat{\varphi}': \widehat{G}'\rightarrow \widehat{G}$ such that, for every $G'_i$, the restriction $\widehat{\varphi}'_{\vert \widehat{G}'_i}$ coincides with $\widehat{\varphi}'_i$ up to conjugacy.

%Note that $m(G)=m(\widehat{G})$, because $\widehat{G}$ is a $(\geq m_{G,G'})$-legal extension of $G$, with $m_{G,G'}\geq m(G)$. Likewise, $m(G')=m(\widehat{G}')$. 

Let $m=m_{G,G'}$. Let us observe that the tuple $(\widehat{G},\widehat{G}',\widehat{\varphi},\widehat{\varphi}')$ is a $(\geq m)$-expansion of $(G,G',\varphi,\varphi')$ and has property $\mathcal{P}_m$. In addition, recall that $m_{\widehat{G},\widehat{G}'}=m$, since $\widehat{G}$ and $\widehat{G}'$ are $(\geq m)$-legal extensions of $G$ and $G'$. Now, the key proposition \ref{ledernierlemme} claims that there exists a pair $(\psi : G \rightarrow G',\psi' : G'\rightarrow G)$ which is equivalent (in the sense of $\sim$) to a power of $(\widehat{\varphi},\widehat{\varphi}')$ and a $(\geq m)$-expansion $(\Gamma,\Gamma',\psi,\psi')$ of $(\widehat{G},\widehat{G}',\widehat{\varphi},\widehat{\varphi}')$ such that $\psi$ and $\psi'$ are isomorphisms. By transitivity of the relation "to be a $(\geq m)$-expansion of", the tuple $(\Gamma,\Gamma',\psi,\psi')$ is a $(\geq m)$-expansion of $(G,G',\varphi,\varphi')$. This concludes the proof.\end{proof}

\section{Algorithm}

We will prove the following result.

\begin{te}\label{algoalgo}There exists an algorithm that takes as input two finite presentations of virtually free groups, and decides whether these groups have the same $\forall\exists$-theory or not.\end{te}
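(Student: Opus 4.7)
By Theorem \ref{principal0}, two virtually free groups $G$ and $G'$ have the same $\forall\exists$-theory if and only if there exist multiple legal extensions $\Gamma \supset G$ and $\Gamma'\supset G'$ with $\Gamma \simeq \Gamma'$. The plan is to turn this characterization into a decision procedure via three ingredients: a computable bound on the number of legal extensions that need to be performed, a computable bound on the data parametrizing each individual extension, and the Dahmani--Guirardel isomorphism algorithm for hyperbolic groups \cite{DG11}. Applied to the finitely many resulting candidates, this yields a decision procedure.

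First, from any finite presentation of a virtually free group $H$ one can algorithmically compute a reduced Stallings decomposition of $H$, hence also a set of representatives of conjugacy classes of finite subgroups, the maximal order $K_H$ of a finite subgroup, and reduced Stallings splittings. In particular, $m(G)$, $m(G')$ and $m_{G,G'}$ are computable. The first step will be to check that $K_G=K_{G'}$ (otherwise $G$ and $G'$ have distinct existential theories and we output \emph{no}); this already bounds the order of any finite group appearing in any legal extension, since every finite subgroup of a multiple legal extension of $G$ embeds into $G$, so has order at most $K_G$.

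Next, I would extract from the proof of the implication $(4)\Rightarrow(5)$ (Proposition \ref{2implique3}) a computable bound $B(G,G')$ on the number of legal (large or small) extensions needed to produce $\Gamma$ and $\Gamma'$. The induction is on the complexity $c(G,G')$, which is computable, and at each step one applies Proposition \ref{ledernierlemme}, whose key subroutine is Lemma \ref{normalisateur}; by Remark \ref{rkalgo}, this subroutine adds at most $\max(\mathrm{rank}(G),\mathrm{rank}(G'))+K_G!$ legal large extensions per cylinder, and the number of legal small extensions is similarly controlled by the data of the $m$-JSJ splittings (bounded number of cylinders, bounded edge-group orders). Tracing the induction gives an explicit expression for $B(G,G')$ as a computable function of $c(G,G')$, $K_G$ and the ranks of the vertex groups of the Stallings decompositions of $G$ and $G'$. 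Moreover, the data parametrizing each legal large extension consists of a finite subgroup $C$ and an element $g\in G$ realizing the required conjugation (finitely many cases modulo a ball in $G$ whose radius is again computable), and each legal small extension is parametrized by an edge group $C$, a virtually cyclic overgroup $N'$ of $N_G(C)$ with $K_{N'}\leq K_G$ together with a $K_G$-nice embedding $\iota$, and such triples can be effectively enumerated, because $N_G(C)/D(N_G(C))$ and $N'/D(N')$ are finite of bounded order.

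Therefore one can effectively enumerate a finite list $\mathcal{L}(G)$ of isomorphism types of multiple legal extensions of $G$ of length at most $B(G,G')$, each equipped with a finite presentation, and similarly $\mathcal{L}(G')$. By the bound extracted above, if $\mathrm{Th}_{\forall\exists}(G)=\mathrm{Th}_{\forall\exists}(G')$ then there exist $\Gamma\in\mathcal{L}(G)$ and $\Gamma'\in\mathcal{L}(G')$ with $\Gamma\simeq\Gamma'$, and conversely any such pair witnesses $\forall\exists$-equivalence by Theorem \ref{principal0}. Since multiple legal extensions of virtually free groups are virtually free, hence hyperbolic, one can apply the Dahmani--Guirardel algorithm \cite{DG11} to every pair $(\Gamma,\Gamma')\in\mathcal{L}(G)\times\mathcal{L}(G')$ to decide whether $\Gamma\simeq\Gamma'$; we output \emph{yes} if some pair matches and \emph{no} otherwise. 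The main obstacle in executing this plan is the explicit extraction of the bound $B(G,G')$: the proof of Proposition \ref{2implique3} uses a nested induction combined with repeated applications of the key Proposition \ref{ledernierlemme}, and one must carefully track, through the tree-of-cylinders construction and through the ``power of $(\varphi,\varphi')$'' operation, that each quantity introduced remains effectively computable from the initial presentations of $G$ and $G'$.
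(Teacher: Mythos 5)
Your proposal is correct and follows essentially the same route as the paper: bound the number of legal extensions needed in the construction of $\Gamma$ and $\Gamma'$ by a quantity computable from the presentations (via Remark \ref{rkalgo} and the structure of the proof of Proposition \ref{vandernash}), effectively enumerate all multiple legal extensions up to that bound (the paper packages this as Lemmas \ref{petitalgo} and \ref{petitalgo2}), and test all resulting pairs for isomorphism with the Dahmani--Guirardel algorithm. The paper is no more explicit than you are about extracting the bound — it simply records it as $N(r+o!)$ large extensions and $N$ small ones, where $N$ is the number of conjugacy classes of finite subgroups, $r$ the maximal rank of a normalizer and $o$ the maximal order of a finite subgroup — so your acknowledged "main obstacle" is handled at the same level of detail in the original.
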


Our proof relies on the main results of \cite{DG10} and \cite{DG11}.

\begin{te}\label{DG10}There exists an algorithm that takes as input a finite presentation of a hyperbolic group $G$ and a finite system of equations and inequations with constants in $G$, and decides whether there exists a solution or not.
\end{te}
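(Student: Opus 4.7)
The plan is to reduce deciding satisfiability of a system of equations and inequations over a hyperbolic group $G$ to Makanin's classical algorithm for deciding satisfiability of systems of equations in a free group. This reduction proceeds in two stages: first, strip off the torsion by finite case-enumeration, and second, translate equations in $G$ into equations in a free group via canonical representatives associated to geodesic triangles in a Cayley graph of $G$.

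From the given finite presentation of $G$ one can first compute an explicit hyperbolicity constant $\delta$ (for instance by Papasoglu's algorithm) and enumerate a complete list of conjugacy classes of finite subgroups together with the integer $K_G$. Given a system $\Sigma(\bm{x})=1 \wedge \Psi(\bm{x}) \neq 1$, branch over finitely many cases according to the \emph{type} of each variable $x_i$ and of each word $w_j(\bm g)$ appearing in an inequation: whether it has infinite order, and otherwise which conjugacy class of finite subgroup it lies in and what its precise value is within that class. For an element expected to have infinite order, the condition $w^{K_G!}\neq 1$ is equivalent to $w$ having infinite order. Each such branch produces a system in which every free variable is known either to lie in a fixed finite subgroup (and can be eliminated by further finite enumeration) or to represent an element of infinite order, with only equations remaining (plus finitely many residual inequations of the second type, which will be checked at the end).

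In each remaining branch, invoke the Rips--Sela technique of canonical representatives, adapted to handle finite stabilizers: to a hypothetical solution $(g_1, \ldots, g_n) \in G^n$ one associates quasi-geodesic words $\tilde g_1, \ldots, \tilde g_n$ in the free group $F$ on the generators of $G$. Every equation $w(\bm g) = 1$ corresponds to a closed loop in the Cayley graph which can be canonically decomposed into finitely many $\delta$-thin triangles; each triangle contributes a short equation in $F$ recording the Gromov product data at its vertices, and these local equations assemble into a finite disjunction of systems of equations in $F$ whose solvability is equivalent to the original system's solvability in $G$. Applying Makanin's algorithm to each of these systems, and checking the finitely many residual torsion inequations on each solution type, decides the original problem.

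The main obstacle is making the canonical representatives fully effective in the presence of torsion: in the torsion-free setting the representatives are unique up to a bounded combinatorial choice, but when $G$ has finite subgroups an element of $G$ may admit several genuinely distinct canonical lifts differing by torsion elements stabilizing portions of the triangulation. One must prove that this ambiguity stays bounded in terms of $\delta$ and $K_G$, enumerate it effectively, and integrate it into the combinatorics of triangle decompositions so that the finite disjunction of systems produced in $F$ is uniformly computable from the input. This bookkeeping, together with the accompanying effective bounds on the translation lengths and Gromov products appearing in the canonical triangulation, constitutes the technical heart of the argument; once it is in place, decidability in $G$ follows from decidability in $F$.
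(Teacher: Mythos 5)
The paper does not actually prove this statement: it is quoted verbatim as the main theorem of \cite{DG10} (Dahmani--Guirardel), with no internal argument to compare against. Judged on its own terms, your proposal has a genuine gap, and it sits exactly where you place the \emph{technical heart}: the construction of effective canonical representatives for a hyperbolic group with torsion. This is not a matter of bounding an ambiguity in terms of $\delta$ and $K_G$ and doing bookkeeping. Rips--Sela canonical representatives are only known to exist for torsion-free hyperbolic groups; in the presence of torsion the construction genuinely breaks down (finite subgroups stabilizing the central regions of thin triangles obstruct any equivariant, coherent choice of lifts to the free group on the generators), and no repair of the form you describe is known. The entire content of \cite{DG10} is a workaround for precisely this failure: rather than lifting to the free group $F$ and invoking Makanin, Dahmani and Guirardel reduce systems over a hyperbolic group with torsion to systems over a \emph{virtually free} group (a coarse substitute for canonical representatives), and then solve equations and inequations over virtually free groups by a separate elimination process (their foliated-complex machinery; the virtually free case can also be handled via equations with rational constraints in free groups). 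So the skeleton you describe --- branch on torsion types, triangulate, lift, apply Makanin --- is the correct torsion-free story, but the step you defer as bookkeeping is the theorem being cited, and the specific route you propose for it (canonical lifts into $F$ with boundedly many torsion ambiguities) is not known to work.
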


\begin{te}\label{DG11}There exists an algorithm that takes as input two finite presentations of hyperbolic groups, and which decides whether these groups are isomorphic or not.\end{te}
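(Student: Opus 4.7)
The plan is to reduce the decision problem to a finite search by combining Theorem \ref{principal0} with the isomorphism algorithm for hyperbolic groups (Theorem \ref{DG11}). Given two finite presentations of groups $G$ and $G'$, the algorithm first tests whether both are virtually free. This is decidable: using the decidability of equations in hyperbolic groups (Theorem \ref{DG10}) one can recognise hyperbolicity and compute a reduced Stallings splitting, and virtual freeness is read off from the fact that all vertex groups are finite. From the Stallings decomposition one computes effectively, for each of $G$ and $G'$, the integers $K_G$, $K_{G'}$, the (finite) list of conjugacy classes of finite subgroups $C$, generators for their normalizers $N_G(C)$, the finite groups $E_G(N_G(C))$, and whether each normalizer is finite, virtually cyclic, or non-elementary. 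By Theorem \ref{principal0}, $\mathrm{Th}_{\forall\exists}(G)=\mathrm{Th}_{\forall\exists}(G')$ holds if and only if there exist multiple legal extensions $\Gamma\supset G$ and $\Gamma'\supset G'$ with $\Gamma\simeq\Gamma'$, so it suffices to enumerate sufficiently many candidate pairs $(\Gamma,\Gamma')$ and invoke Theorem \ref{DG11} on each.

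The heart of the argument is to exhibit a computable bound $B=B(G,G')$ on the length of a sequence of legal (large or small) extensions that one ever needs to perform. Each elementary legal move is specified by finite data: a large move is given by a conjugacy class of finite subgroups $C$ with $E_G(N_G(C))=C$ plus an element of $\mathrm{Aut}_G(C)$; a small move is given by a finite edge group $C$ of a splitting in which $N_G(C)$ is virtually cyclic non-elliptic, together with a $K_G$-nice overgroup $N'$ of $N_G(C)$. The $K_G$-nice condition of Definition \ref{special0} forces $N'$ to have the same maximal finite normal subgroup as $N$ and to inject into $N/D_{2K_G!}(N)$, so the possible overgroups $N'$ fall into finitely many isomorphism types, each effectively constructible from the data of $N$; moreover a $K_G$-nice embedding back into $N$ exists only for indices $[N':N]$ in a computable set of residues modulo $2K_G!$ (compare the Dirichlet argument of Lemma \ref{guaranteed nash}). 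Hence the set of possible elementary legal moves applicable to any intermediate hyperbolic group of bounded complexity is itself computable, and each move preserves $K$ and the list of conjugacy classes of finite subgroups tracked by the $\forall\exists$-invariants $n_1,\ldots,n_5$. For large extensions, the explicit bound of Remark \ref{rkalgo} applies stage by stage: at each vertex or cylinder of the Stallings graph, at most $\max(\mathrm{rk}(G),\mathrm{rk}(G'))+K_G!$ large moves are needed. Combining these estimates with the induction on the number of edges of a reduced Stallings splitting used in the proof of $(4)\Rightarrow (5)$ (Proposition \ref{2implique3}) yields the desired computable $B$.

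Once $B$ is in hand, the algorithm enumerates all finite sequences of at most $B$ elementary legal moves applicable to $G$ (respectively $G'$), producing a finite list of multiple legal extensions $\Gamma_1,\ldots,\Gamma_N$ of $G$ and $\Gamma'_1,\ldots,\Gamma'_{N'}$ of $G'$, each supplied with an explicit finite presentation and known to be hyperbolic. For every pair $(\Gamma_i,\Gamma'_j)$, it runs the isomorphism algorithm of Theorem \ref{DG11}; it outputs "yes" if and only if some pair is isomorphic. Correctness is immediate from Theorem \ref{principal0} together with the bound $B$. The main obstacle is precisely the derivation of this bound: one must show that the induction underlying the proof of $(4)\Rightarrow(5)$ (which combines the large-extension count of Remark \ref{rkalgo} with the small-extension construction of Lemmas \ref{guaranteed nash} and \ref{légalité4}, and the tree-of-cylinders argument of Proposition \ref{ledernierlemme}) can be carried out uniformly with a complexity that depends only on computable invariants of $G$ and $G'$ (the rank, $K_G$, the number of edges of a reduced Stallings splitting, and the orders of the $\mathrm{Aut}_G(C)$), rather than on the \emph{a priori} unknown pair $(\Gamma,\Gamma')$ one is searching for. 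This bookkeeping, while technical, is straightforward in principle because every step in the proof of Theorem \ref{principal0} either preserves or strictly simplifies these invariants.
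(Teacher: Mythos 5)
Your proposal does not prove the statement in question. The statement is Theorem \ref{DG11}: the solvability of the isomorphism problem for \emph{arbitrary} hyperbolic groups. This is a deep external result of Dahmani and Guirardel, which the paper does not prove but simply cites from \cite{DG11}; there is no argument in the paper to compare yours against, because none is needed or given. What you have written instead is a sketch of Theorem \ref{algoalgo} (the decidability of $\forall\exists$-equivalence for virtually free groups), which is a different theorem appearing later in the paper.

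Worse, as a proof of Theorem \ref{DG11} your argument is circular: at two points you explicitly "invoke Theorem \ref{DG11}" and "run the isomorphism algorithm of Theorem \ref{DG11}" on the candidate pairs $(\Gamma_i,\Gamma'_j)$. You cannot use the isomorphism algorithm for hyperbolic groups as a black box inside a proof that such an algorithm exists. Independently of the circularity, your strategy could not possibly establish the stated result, since it only ever concerns virtually free groups and their legal extensions, whereas Theorem \ref{DG11} is about all hyperbolic groups (including one-ended ones, groups with no Stallings splitting over finite subgroups, etc.), for which the machinery of legal extensions and Theorem \ref{principal0} says nothing. The content of your sketch (the computable bound $B$ on the number of legal moves, the enumeration of candidate extensions, and the final isomorphism check) is essentially the paper's proof of Theorem \ref{algoalgo}, not of Theorem \ref{DG11}.
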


We need some preliminary lemmas.

\subsection{Some useful algorithms}

%We prove Theorem \ref{algoalgo} by giving two partial algorithms: given two virtually free groups $G$ and $G'$, the first algorithm (see Proposition \ref{algoalgo2} below) outputs "yes" if $\mathrm{Th}_{\forall\exists}(G)=\mathrm{Th}_{\forall\exists}(G')$, and the second algorithm (see Proposition \ref{algoalgo3} below) outputs "no" if $\mathrm{Th}_{\forall\exists}(G)\neq\mathrm{Th}_{\forall\exists}(G')$.

\begin{lemme}[\cite{DG11}, Lemma 2.5]\label{algon}There is an algorithm that computes a set of generators of the normaliser of any given finite subgroup in a hyperbolic group.\end{lemme}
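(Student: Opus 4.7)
The plan is to reduce the computation to finitely many decidable instances of the equation-solving algorithm in Theorem \ref{DG10}, combined with an effective quasi-convexity argument. Write $C = \{c_1,\ldots,c_n\}$ by enumerating products of the given generators of $C$ up to the solvable word problem in $G$. Then $N_G(C)$ fits in a short exact sequence
\[
1 \longrightarrow C_G(C) \longrightarrow N_G(C) \longrightarrow Q \longrightarrow 1,
\]
where $Q$ embeds into the finite (and effectively enumerable) group $\mathrm{Aut}(C)$. It therefore suffices to produce (i) a finite generating set for $C_G(C)$, and (ii) a coset representative in $G$ for each element of $Q$.

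For part (ii), I would enumerate every bijection $\sigma : C \to C$ that is an automorphism and invoke Theorem \ref{DG10} to decide whether the finite system of equations $\{g c_i g^{-1} = \sigma(c_i)\}_{i=1}^n$ with variable $g$ has a solution in $G$. For each $\sigma$ that passes this test, a witness $g_\sigma$ can be found by enumerating elements of $G$ in Dehn-normal form by increasing length and checking the equations (which reduces to the word problem). This procedure terminates because a solution is guaranteed to exist.

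For part (i), I would use the decomposition $C_G(C) = \bigcap_{i=1}^{n} C_G(c_i)$. By (the effective version of) Lemma \ref{quasiconvfini}, each $C_G(c_i)$ is quasi-convex in $G$ with a quasi-convexity constant computable from $|c_i|$, the size of the generating set, and the hyperbolicity constant $\delta$ (itself computable from the finite presentation). Classical results on hyperbolic groups (\cite{Sho91}) then yield a computable quasi-convexity constant, and hence a computable length bound $R$, such that $C_G(C)$ is generated by the elements of $G$ of length at most $R$ that lie in $C_G(C)$. Membership in $C_G(C)$ amounts to the finite conjunction of word equations $g c_i = c_i g$, decidable via the word problem. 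Enumerating the ball of radius $R$ and keeping those elements gives the required generating set.

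Putting (i) and (ii) together, the union of the generators of $C_G(C)$ with the representatives $\{g_\sigma\}$ is a generating set of $N_G(C)$. The main obstacle is arranging the quasi-convexity constant of the intersection $C_G(C)$ to be \emph{effectively} computable from the finite presentation, rather than merely existing; once that effectivity is available (which is exactly what Dahmani--Guirardel provide in their paper), the remaining steps are straightforward applications of the word problem and of Theorem \ref{DG10}.
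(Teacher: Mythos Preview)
The paper gives no proof of this lemma; it is simply quoted from \cite{DG11}. So there is nothing in the present paper to compare your argument against.

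On its own merits, your outline is correct and is essentially the standard route (and close to what \cite{DG11} actually does). The reduction to (i) computing generators of $C_G(C)$ and (ii) lifting each realizable $\sigma\in\mathrm{Aut}(C)$ is sound; part (ii) is immediate from Theorem~\ref{DG10} plus enumeration. For part (i), the quasi-convexity constant for each $C_G(c_i)$ is indeed explicit from the proof of Lemma~\ref{quasiconvfini} (it depends only on $|c_i|$, $\delta$, and the Bridson--Howie constant $R(\delta,|S|)$, all computable), and once a quasi-convexity constant $K$ for $C_G(C)$ is known, the subgroup is generated by its intersection with the ball of radius $2K+1$, membership being decidable by the word problem. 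You are right that the only nontrivial point is the \emph{effective} quasi-convexity of the finite intersection $\bigcap_i C_G(c_i)$: Short's argument in \cite{Sho91} does yield a constant, but via a pigeonhole bound on coset representatives in a ball, and making that bound explicit from the input data is exactly the work carried out in \cite{DG11}. Since you defer to \cite{DG11} at precisely that step, your proposal is less an independent proof than a faithful reconstruction of theirs --- which is entirely appropriate here, given that the present paper does the same.
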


\begin{lemme}[\cite{DG11}, Lemma 2.8]\label{algon2}There is an algorithm that, given a finite set $S$ in a hyperbolic group, decides whether $\langle S\rangle$ is finite, virtually cyclic infinite, or non-elementary.\end{lemme}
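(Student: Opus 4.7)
The plan is to combine three ingredients already available for a hyperbolic group $G$ presented by a finite presentation: (i) the word problem is solvable; (ii) the integer $K = K_G$, the maximal order of a finite subgroup of $G$, is computable from the presentation (this is standard, and in particular appears in \cite{DG11} as a preliminary computation); (iii) the characterisations of Lemma \ref{elementary} and Corollary \ref{pour plus tard!!!}, which express membership in a maximal virtually cyclic subgroup $M(g)$ by a quantifier-free formula. Throughout, I will identify words in $S \cup S^{-1}$ with their images in $G$ via the word problem.

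First I would decide whether $H := \langle S \rangle$ is finite or infinite. The key observation is that a subgroup of $G$ of cardinality greater than $K$ must be infinite, since every finite subgroup of $G$ has order at most $K$. Conversely, if $|H| \leq K$, then every element of $H$ is represented by a word in $S \cup S^{-1}$ of length at most $K - 1$. Thus I run a bounded breadth-first search in the Cayley graph of $H$: enumerate all words of length $\leq K$ in $S \cup S^{-1}$, use the word problem of $G$ to identify those representing equal elements, and keep a running list of distinct elements found. If this list ever exceeds $K$ elements, declare $H$ infinite. Otherwise, test whether the resulting finite set is closed under left-multiplication by each $s \in S$; if so declare $H$ finite (and output its order), and otherwise declare $H$ infinite. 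This step terminates in finite time bounded by a function of $K$ and $|S|$.

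If $H$ is finite, output ``finite.'' Otherwise, find an element of infinite order in $H$ by enumerating words $w$ in $S \cup S^{-1}$ by length and testing $w^{K!} \neq 1$ in $G$; the first success yields such a $g \in H$. This semi-algorithm halts because every infinite subgroup of a hyperbolic group contains an element of infinite order (a finitely generated torsion subgroup of $G$ is finite, of order at most $K$). Having such a $g$, use Lemma \ref{elementary}: a generator $s \in S$ lies in $M(g)$ if and only if $[g^{K!}, s g^{K!} s^{-1}] = 1$, a condition tested by the word problem. If every $s \in S$ passes this test, then $H \subset M(g)$, so $H$ is virtually cyclic infinite; declare ``virtually cyclic infinite.'' Otherwise, some $s \in S$ lies outside $M(g)$, and the subgroup $\langle g, s \rangle \subset H$ is non-elementary by Corollary \ref{pour plus tard!!!} (equivalently, the first alternative of Lemma \ref{elementary} fails), so $H$ itself is non-elementary; declare ``non-elementary.''

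The main (minor) obstacle is simply the existence of the preliminary algorithm computing $K_G$ from a finite presentation of a hyperbolic group, which we are invoking as a black box from the literature on algorithmic hyperbolic group theory. Once $K_G$ is in hand, all subsequent steps reduce to finitely many applications of the word problem combined with the bounded-BFS argument and the quantifier-free criteria of Lemma \ref{elementary} and Corollary \ref{pour plus tard!!!}.
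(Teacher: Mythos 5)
The paper offers no proof of this lemma---it is imported verbatim as Lemma~2.8 of \cite{DG11}---so there is no internal argument to compare against; what you have written is a self-contained justification, and it is correct. Your three steps are sound: (a) the finiteness test works because a subgroup of $G$ of order $\leq K_G$ equals its ball of radius $K_G-1$ (the balls $B_0\subsetneq B_1\subsetneq\cdots$ must stabilize before radius $K_G$ by pigeonhole, and once $B_i=B_{i+1}$ one has $B_i=\langle S\rangle$), while more than $K_G$ distinct elements certifies infiniteness since finite subgroups have order at most $K_G$; (b) the search for an infinite-order element terminates because a finitely generated torsion subgroup of a hyperbolic group is finite, and $g^{K_G!}\neq 1$ correctly detects infinite order since element orders divide $K_G!$; (c) the final dichotomy is exactly Lemma \ref{elementary}(1) plus the fact that $M(g)$ is the unique maximal elementary subgroup containing $g$. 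Two small remarks. First, your black box (computability of $K_G$, hence of a hyperbolicity constant) is not really external to the paper: it is precisely the content and the ingredients of Lemma \ref{algofini} (\cite{Pa96}, \cite{Bra00}, \cite{BH05}), whose proof does not depend on the present lemma, so there is no circularity. Second, your citation of Corollary \ref{pour plus tard!!!} is slightly misplaced, since that corollary assumes both elements have infinite order and the offending generator $s$ may be torsion; but your parenthetical fallback is the right argument---$s\notin M(g)$ by Lemma \ref{elementary}(1), and any elementary subgroup containing the infinite-order element $g$ is virtually cyclic infinite, hence stabilizes $\lbrace g^+,g^-\rbrace$ and lies in $M(g)$, so $\langle g,s\rangle$ cannot be elementary. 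With that phrasing tightened, the proof stands.
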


\begin{lemme}\label{algofini}There is an algorithm that takes as input a finite presentation of a hyperbolic group and computes a list of representatives of the conjugacy classes of finite subgroups in this hyperbolic group.\end{lemme}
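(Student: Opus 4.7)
The plan is to combine three ingredients: a computable hyperbolicity constant for the given presentation, a classical bound on the radius of a conjugate of any finite subgroup, and the Dahmani--Guirardel algorithm (Theorem \ref{DG10}) for deciding existential sentences with constants in hyperbolic groups. First I would extract from the finite presentation a hyperbolicity constant $\delta$ (with respect to some finite symmetric generating set $S$) together with an algorithmic solution to the word problem in $G$; these are standard outputs of the algorithmic theory of hyperbolic groups (one can, for instance, compute a Dehn presentation).

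Next I would invoke the following classical quantitative fact: there is an explicit constant $R = R(\delta, |S|)$ such that every finite subgroup $F \subset G$ is conjugate to a subgroup of the ball $B_S(1, R)$ in the Cayley graph of $G$. Indeed, the orbit $F \cdot 1$ is bounded, and the standard quasi-center argument in a $\delta$-hyperbolic space produces a vertex $v$ with $d(v, fv) \leq R$ for all $f \in F$; choosing $g \in G$ with $g \cdot 1 = v$ gives $g^{-1} F g \subset B_S(1, R)$.

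With $R$ in hand, the algorithm enumerates all subsets $T \subset B_S(1, R)$ (finitely many) and, using the word problem, retains those $T$ for which $T \cup T^{-1} \cup \{1\}$ is closed under multiplication. This yields a finite list $\mathcal{L}$ of finite subgroups of $G$ containing at least one representative of each conjugacy class. To reduce $\mathcal{L}$ modulo conjugacy, I would use the observation that two finite subgroups $F_1 = \{f_1, \dots, f_n\}$ and $F_2 = \{f'_1, \dots, f'_n\}$ are conjugate in $G$ if and only if there exist $g \in G$ and a permutation $\sigma \in \mathfrak{S}_n$ such that $g f_i g^{-1} = f'_{\sigma(i)}$ for every $i$. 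For each pair in $\mathcal{L}$ of equal cardinality and each $\sigma$, this is an existential sentence with constants in $G$, which is decided by Theorem \ref{DG10}; eliminating duplicates produces the required list.

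The main obstacle is the computable bound $R$ in the second step, which itself rests on having an effective hyperbolicity constant. Both points are supplied by the now-classical algorithmic theory of hyperbolic groups, so the whole procedure terminates and is correct.
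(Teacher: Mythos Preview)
Your proof is correct and follows essentially the same strategy as the paper: compute an effective hyperbolicity constant, use the quasi-center argument to bound the radius of a ball containing a representative of every conjugacy class of finite subgroups, enumerate the finite subgroups in that ball, and then reduce modulo conjugacy. The only difference is in the last step: the paper invokes an explicit bound on the length of a conjugating element between two conjugate finite subgroups (from \cite{BH05}), so conjugacy is decided by a finite search, whereas you appeal to Theorem~\ref{DG10} to decide the existential sentence asserting conjugacy. Both work; your route is a little heavier but perfectly valid.
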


\begin{proof}There exists an algorithm that computes, given a finite presentation $\langle S \ \vert \ R\rangle$ of a hyperbolic group $G$, a hyperbolicity constant $\delta$ of $G$ (see \cite{Pa96}). In addition, it is well-known that the ball of radius $100\delta$ in $G$ contains at least one representative of each conjugacy class of finite subgroups of $G$ (see \cite{Bra00}). Moreover, two finite subgroups $C_1$ and $C_2$ of $G$ are conjugate if and only if there exists an element $g$ whose length is bounded by a constant depending only on $\delta$ and on the size of the generating set $S$ of $G$, such that $C_2=gC_1g^{-1}$ (see \cite{BH05}).\end{proof}

\begin{lemme}\label{algofini2}There is an algorithm that takes as input a finite presentation of a hyperbolic group $G$ and a finite subgroup $C$ of $G$ such that $N_G(C)$ is non-elementary, and decides if $E_G(N_G(C))=C$.\end{lemme}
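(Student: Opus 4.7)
The plan is to reduce the problem to a finite number of decidable existential queries in $G$. Recall that $C \subseteq E_G(N_G(C))$ always holds, since $C$ is a finite subgroup normalized by $N_G(C)$, so what remains is to decide whether this inclusion is strict. The key reformulation I will use is the following: $E_G(N_G(C)) \neq C$ if and only if there exists a finite subgroup $F$ of $G$ with $C \subsetneq F$ and $N_G(C) \subseteq N_G(F)$. The forward direction is trivial by taking $F = E_G(N_G(C))$; the backward direction follows from the maximality in the definition of $E_G$, since any such $F$ is a finite subgroup normalized by $N_G(C)$ and hence contained in $E_G(N_G(C))$.

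From this, I would proceed as follows. First, use Lemma \ref{algon} to compute a finite generating set $\{s_1, \ldots, s_k\}$ of $N_G(C)$, expressed as words in the generators of $G$. Then, use Lemma \ref{algofini} to compute a finite list $F_1, \ldots, F_r$ of representatives of the conjugacy classes of finite subgroups of $G$, each described as a finite set of words. By the reformulation above, $E_G(N_G(C)) \neq C$ if and only if there exists an index $i$ with $|F_i| > |C|$ and an element $g \in G$ such that $C \subseteq g F_i g^{-1}$ and every $s_j$ normalizes $g F_i g^{-1}$; this translates into the existential formula with constants in $G$
\[
\exists g \ \Bigl( \bigwedge_{c \in C} \bigvee_{f \in F_i} g^{-1} c g = f \Bigr) \ \wedge \ \Bigl(\bigwedge_{\substack{1 \leq j \leq k \\ f \in F_i}} \bigvee_{f' \in F_i} g^{-1} s_j g f g^{-1} s_j^{-1} g = f' \Bigr).
\]

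The main (mild) obstacle is that this formula is not quite in the form handled by Theorem \ref{DG10}, as it involves disjunctions of equations rather than a conjunction of equations and inequations. However, being a positive Boolean combination of equations with constants in $G$, it can be rewritten in disjunctive normal form as a finite disjunction of finite systems of equations; each such system falls within the scope of Theorem \ref{DG10}. The algorithm thus loops over the finitely many $F_i$ with $|F_i| > |C|$, and for each one over the finitely many conjunctions arising from the DNF expansion, applying Theorem \ref{DG10} to each. It returns $E_G(N_G(C)) = C$ if and only if none of these queries admits a solution.
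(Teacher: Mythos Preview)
Your proof is correct and follows the same overall strategy as the paper: compute a generating set for $N_G(C)$ via Lemma~\ref{algon}, then reduce the question ``is there a finite subgroup strictly containing $C$ and normalized by $N_G(C)$?'' to finitely many existential queries decidable by Theorem~\ref{DG10}.

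The implementations differ slightly. The paper searches for a single element $g\notin C$ such that $C':=\langle C,g\rangle$ is finite and normalized by the generators of $N_G(C)$; you instead enumerate conjugacy representatives $F_i$ of finite subgroups (via Lemma~\ref{algofini}) and, for each $F_i$ with $|F_i|>|C|$, ask whether some conjugate $gF_ig^{-1}$ contains $C$ and is normalized. Your formulation has two small advantages: it makes the passage to systems of equations fully explicit (the paper must still encode ``$\langle C,g\rangle$ is finite'', which ultimately requires a similar enumeration of finite subgroups), and it does not rely on $E_G(N_G(C))$ being generated over $C$ by a single element---a hypothesis that can fail (e.g.\ when $C=\{1\}$ and $E_G(G)\cong(\mathbb Z/2)^2$ with $G$ acting via the full $S_3$, no nontrivial cyclic subgroup is normal). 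The paper's version is easily patched by allowing a tuple in place of $g$, but your approach sidesteps the issue entirely.
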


\begin{proof}One can compute a finite generating set $S$ for $N_G(C)$ using Lemma \ref{algon}. Using the main algorithm of \cite{DG10} (see Theorem \ref{DG10} above), one can decide if the following existential sentence with constants in $G$ is satisfied by $G$: there exists an element $g\in G$ such that
\begin{enumerate}
\item the element $g$ does not belong to $C$;
\item the subgroup $C':=\langle C,g\rangle$ is finite;
\item for every $s\in S$, we have $sC's^{-1}=C'$.
\end{enumerate} 
Note that such an element $g$ exists if and only if $C$ is strictly contained in $E_G(N_G(C))$. This concludes the proof of the lemma.\end{proof}

%\begin{proof}Let $\langle S \ \vert \ R\rangle$ be a finite presentation of $G$, and let $\delta$ be a hyperbolicity constant. One can compute a generating set $S$ for $N_G(C)$ using Lemma \ref{algon}. In addition, one can compute a hyperbolicity constant for $N_G(C)$. Indeed, $N_G(C)$ is quasi-convex in $G$ with constants depending only on $C$, $\vert S\vert$ and $\delta$ (see for instance Lemma \ref{quasiconvfini}). Hence, one can compute a finite presentation for $N_G(C)$. Then, the algorithm given by Lemma \ref{algofini} above computes a list of representatives $C_1,\ldots ,C_n$ of conjugacy classes of finite subgroups of $N_G(C)$. In order to conclude the proof of Lemma \ref{algofini2}, it suffices to observe that the following two assertions are equivalent:
%\begin{enumerate}
%\item $C$ is not the maximal normal finite subgroup of $N_G(C)$;
%\item there is a finite group $C_i$ in the previous list than contains $C$ strictly and such that $sC_is^{-1}=C_i$ for every $s$ in the finite set $S$.
%\end{enumerate}
%\end{proof}

\begin{lemme}\label{petitalgo}There is an algorithm that takes as input a finite presentation $\langle S \ \vert \ R \rangle$ of a hyperbolic group $G$, and outputs a finite list of finite presentations of all legal large extensions of $G$.
\end{lemme}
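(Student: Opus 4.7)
The plan is to reduce the problem to an enumeration of the conjugacy classes of finite subgroups of $G$ satisfying conditions (2) and (3) of Definition \ref{legal}. By Remark \ref{sansperte}, every legal large extension of $G$ is isomorphic to a group of the normal form $\Gamma_C := \langle G, t \mid tct^{-1} = c,\ \forall c\in C\rangle$ for some finite subgroup $C$ of $G$; moreover, if $C$ and $C'$ are conjugate in $G$ via some $g$, then $\Gamma_C$ and $\Gamma_{C'}$ are isomorphic by the obvious map that is the identity on $G$ and sends $t$ to $g^{-1}tg$. In the normal form, condition (1) of Definition \ref{legal} is automatic, so it is enough to enumerate the conjugacy classes of finite subgroups $C$ of $G$ such that $N_G(C)$ is non-elementary and $E_G(N_G(C)) = C$.

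First, I would apply Lemma \ref{algon2} to the generating set $S$ to decide whether $G$ itself is non-elementary; if not, there are no legal large extensions and the algorithm outputs the empty list. Otherwise, invoke Lemma \ref{algofini} to obtain a finite list $C_1, \ldots, C_k$ of representatives of the conjugacy classes of finite subgroups of $G$, each $C_i$ being explicitly given by finitely many words in $S$.

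For each $C_i$, compute a finite generating set of the normaliser $N_G(C_i)$ via Lemma \ref{algon}, then apply Lemma \ref{algon2} to this generating set to decide whether $N_G(C_i)$ is non-elementary; if it is, apply Lemma \ref{algofini2} to decide whether $E_G(N_G(C_i)) = C_i$. For each $C_i$ passing both tests, output the finite presentation
\[
\big\langle S,\, t \;\big|\; R,\; tct^{-1}c^{-1} \text{ for every } c \text{ in a finite generating set of } C_i \big\rangle.
\]
The resulting list is finite, and by the first paragraph it contains at least one presentation of every legal large extension of $G$; conversely, every group presented in this way is a legal large extension of $G$ because conditions (2) and (3) of Definition \ref{legal} have been explicitly verified, while condition (1) is built into the normal form. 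All the nontrivial algorithmic content is concentrated in Lemmas \ref{algon}, \ref{algon2}, \ref{algofini} and \ref{algofini2}, which have already been established, so there is no remaining obstacle.
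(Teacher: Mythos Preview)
Your proof is correct and follows essentially the same approach as the paper: enumerate conjugacy classes of finite subgroups via Lemma \ref{algofini}, test conditions (2) and (3) of Definition \ref{legal} using Lemmas \ref{algon}, \ref{algon2} and \ref{algofini2}, and output the presentation $\langle S,t \mid R,\ [t,c]=1\ \forall c\in C\rangle$ for each $C$ passing the tests. Your write-up is in fact more careful than the paper's, since you explicitly invoke Remark \ref{sansperte} to justify the normal form and note that conjugate subgroups yield isomorphic extensions.
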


\begin{proof}Using Lemma \ref{algofini}, compute a list of representatives of the conjugacy classes of finite subgroups of $G$. For each finite group $C$ in this list, use Lemma \ref{algon2} in order to decide if $N_G(C)$ is finite, virtually cyclic infinite or non-elementary. In the case where $N_G(C)$ is non-elementary, decide if $E_G(N_G(C))=C$ by means of Lemma \ref{algofini2}. Output the finite list of finite presentations $\langle S,t \ \vert \ [t,c]=1, \forall c\in C\rangle$, for every $C$ in the previous list such that $N_G(C)$ is non-elementary and $E_G(N_G(C))=C$.\end{proof}

\begin{lemme}\label{petitalgo2}There is an algorithm that takes as input a finite presentation of a hyperbolic group $G$, and outputs a finite list of presentations of all legal small extensions of $G$.
\end{lemme}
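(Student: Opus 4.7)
The plan mirrors the structure of the proof of Lemma \ref{petitalgo}, with additional bookkeeping to enumerate the virtually cyclic overgroups $N'$ appearing in Definition \ref{legal2}. I would proceed in three stages: first, enumerate the candidate pairs $(C, N)$ where $C$ is a finite subgroup of $G$ and $N = N_G(C)$ is infinite virtually cyclic, non-elliptic in a splitting of $G$ over $C$; second, enumerate the virtually cyclic overgroups $N'$ and the $K_G$-nice embeddings $\iota : N \hookrightarrow N'$ that make $G \ast_N N'$ legal; third, output a finite presentation of each such $G \ast_N N'$.

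For the first stage, compute $K_G$ and use Lemma \ref{algofini} to produce a list of representatives of the conjugacy classes of finite subgroups of $G$. For each candidate $C$, compute a generating set of $N_G(C)$ via Lemma \ref{algon} and decide via Lemma \ref{algon2} whether $N_G(C)$ is infinite virtually cyclic. To decide whether $G$ admits a splitting over $C$ in which $N_G(C)$ is non-elliptic, I would compute a reduced Stallings splitting of $G$ (algorithmic for hyperbolic groups) and apply the structural characterizations of Lemmas \ref{diédral11} and \ref{diédral22}, which reduce the question to the computation of finite indices in the vertex groups adjacent to edges carrying the conjugacy class of $C$.

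For the second stage, the key finiteness input is that Corollary \ref{introintrointrointro} identifies $D_{2K_G!}(N')$ as the cyclic subgroup generated by a specific power of an infinite-order element of $N'$, so the finite quotient $N'/D_{2K_G!}(N')$ has order bounded by $2 \cdot K_G \cdot (2K_G!)$. The existence of $K_G$-nice embeddings in both directions forces $\vert N'/D_{2K_G!}(N')\vert = \vert N/D_{2K_G!}(N)\vert$, a quantity already determined by $G$; combined with $K_{N'} \leq K_G$ and the classification of virtually cyclic groups (cyclic or dihedral type, together with an action of $\mathbb{Z}$ or $D_\infty$ on the maximal finite normal subgroup), this yields only finitely many isomorphism types of $N'$, each with an explicitly computable finite presentation. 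For each such $N'$, I would enumerate candidate monomorphisms $\iota : N \hookrightarrow N'$ by specifying the images of a finite generating set of $N$ modulo $D_{2K_G!}(N')$, which is a finite search since $\iota$ is determined up to a central twist by its reduction to the finite group $N'/D_{2K_G!}(N')$; then verify $K_G$-niceness directly, and symmetrically test for the existence of a $K_G$-nice $\iota' : N' \hookrightarrow N$.

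Finally, for each valid tuple $(C, N', \iota)$ surviving these checks, I would assemble a finite presentation of $G' = G \ast_N N'$ via Tietze transformations from the presentations of $G$ and $N'$ together with the identifications $g = \iota(g)$ for $g$ in a finite generating set of $N$, and output the result. I expect the main obstacle to lie in the first stage: the algorithmic detection of whether $G$ splits over $C$ with $N_G(C)$ non-elliptic requires a careful case analysis following Lemmas \ref{diédral11} and \ref{diédral22}, in particular distinguishing the cyclic-type subcase of Lemma \ref{diédral22}(1) (where the two copies of $C$ are conjugate in the vertex group) from the dihedral-type cases, and treating HNN extensions separately from amalgamated products. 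By contrast, the enumeration of $N'$ and of $K_G$-nice embeddings, once the structural constraints above are made explicit, reduces to a bounded combinatorial search.
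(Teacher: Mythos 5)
Your proposal is correct and follows essentially the same route as the paper's (very terse) proof: enumerate conjugacy classes of finite subgroups via Lemma \ref{algofini}, detect the infinite virtually cyclic normalizers via Lemmas \ref{algon} and \ref{algon2}, enumerate the finitely many candidate overgroups $N'$ together with the $K_G$-nice embeddings in both directions, and output the resulting amalgams $G\ast_N N'$. You in fact supply two justifications that the paper's one-paragraph proof leaves implicit --- the algorithmic verification that $G$ actually splits over $C$ with $N_G(C)$ non-elliptic (a condition of Definition \ref{legal2} that is not implied by $N_G(C)$ being infinite virtually cyclic), and the finiteness of the set of pairs $(N',\iota)$ up to central twist --- and both of these points are handled correctly.
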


\begin{proof}Using Lemma \ref{algofini}, compute a list of representatives of the conjugacy classes of finite subgroups of $G$. For each finite group $C$ in this list, decide if $N_G(C)$ is finite, virtually cyclic infinite or non-elementary, by means of Lemma \ref{algon2}. If $N:=N_G(C)$ is virtually cyclic infinite, enumerate the list of all virtually cyclic infinite groups $N'$ such that there exist two $K_G$-nice embeddings $\iota : N \hookrightarrow N'$ and $\iota' : N'\hookrightarrow N$. Output a list of presentations of all legal small extensions of $G$, of the form $\langle G,N' \ \vert \ \iota(n)=n, \forall n\in N\rangle.$
\end{proof}

\subsection{Proof of Theorem \ref{algoalgo}}

We are now ready to prove Theorem \ref{algoalgo}.

\begin{proof}Let $G$ and $G'$ be two virtually free groups. Let $r$ be the maximal rank of the normalizer of a finite subgroup of $G$ or $G'$, let $o$ be the maximal order of a finite subgroup of $G$ or $G'$, let $n,n'$ be the number of conjugacy classes of finite subgroups of $G$ and $G'$ respectively, and let $N=\mathrm{max}(n,n')$. By looking closely at the construction of the two isomorphic multiple legal extensions $\Gamma$ and $\Gamma'$ of $G$ and $G'$ in the proof of Proposition \ref{vandernash}, one can enumerate the number of legal extensions involved in the construction. This number of legal large extensions is bounded from above by $N(r+o!)$ (see Remark \ref{rkalgo}), and the number of legal small extensions is bounded by $N$. 

Here below is an algorithm that takes as input two finite presentations of virtually free groups and decides whether two virtually free groups $G$ and $G'$ have the same $\forall\exists$-theory or not.

\emph{Step 1.} Using Lemma \ref{algofini}, compute a list of representatives of the conjugacy classes of finite subgroups of $G$ and $G'$. Let $C_1,\ldots ,C_n, C'_1,\ldots,C'_{n'}$ denote these finite groups.

\emph{Step 2.} For each finite group $C_i$ or $C'_i$ in the previous list, use Lemma \ref{algon} to compute a set of generators $S_i$ or $S'_i$ of $N_G(C_i)$ or $N_{G'}(C'_i)$. Let $R=\mathrm{max}(\lbrace\vert S_i\vert, \vert S'_i\vert, \ 1\leq i\leq n\rbrace)$.

\emph{Step 3.} Using Lemmas \ref{petitalgo} and \ref{petitalgo2}, compute all multiple legal extensions of $G$ and $G'$ obtained by performing less than $N(R+o!)+N$ legal large of small extensions.

\emph{Step 4.} For every pair $(\Gamma,\Gamma')$ of groups computed in Step 3, use the main algorithm of \cite{DG11} (see Theorem \ref{DG11}) in order to decide if $\Gamma$ and $\Gamma'$ are isomorphic. Output "yes" if there exists such a pair, and "no" otherwise.\end{proof}

\begin{rque}Here is an aternative algorithm that decides whether two virtually free groups have the same $\forall\exists$-theory or not. Let $G$ be a hyperbolic group. By carefully looking at the sentence $\exists\forall$-sentence $\zeta_G$ defined in Section \ref{zeta}, one can see that the number of symbols in $\zeta_G$ is bounded from above by a number $n_G$ computable from a finite presentation of $G$. If $G'$ is another hyperbolic group, let us define $n:=\mathrm{max}(n_G,n_{G'})$ and let $\mathcal{A}$ be a finite set of variables of cardinality $n$. 

If $G$ and $G'$ are virtually free, the following three assertions are equivalent (note that the equivalence $(1)\Leftrightarrow (2)$ is part of Theorem \ref{principal} established previously).
\begin{enumerate}
\item $G$ and $G'$ have the same $\forall\exists$-theory.
\item $G$ satisfies $\zeta_{G'}$ and $G'$ satisfies $\zeta_G$.
\item For every $\exists\forall$-sentence $\phi$ in the language of groups over alphabet $\mathcal{A}$ involving less than $n$ symbols, $G$ satisfies $\phi$ if and only if $G'$ satisfies $\phi$.
\end{enumerate}
Since the set of $\exists\forall$-sentence in the language of groups over alphabet $\mathcal{A}$ involving less than $n$ symbols is finite, the third point above is decidable algorithmically using the main algorithm of \cite{DG10} (see Theorem \ref{DG10}).\end{rque}

\renewcommand{\refname}{References}
\bibliographystyle{alpha}
\bibliography{biblio}

\end{document}